\definecolor{refkey}{gray}{.75}
\definecolor{labelkey}{gray}{.5}
\colorlet{DarkGreen}{green!50!black}
\colorlet{DarkGray}{gray!60!black}
\numberwithin{equation}{section}
\renewcommand{\restriction}{\mathord{\upharpoonright}}
\renewcommand{\epsilon}{\varepsilon}
\newcommand{\given}{\;\big|\;}
\newcommand{\one}{\mathbf{1}}
 \definecolor{refkey}{gray}{.5}
 \definecolor{labelkey}{gray}{.5}
\definecolor{light}{gray}{.9}
\newtheorem{maintheorem}{Theorem}
\newtheorem{maincoro}[maintheorem]{Corollary}
\newtheorem{theorem}{Theorem}[section]
\newtheorem*{theorem*}{Theorem}
\newtheorem{lemma}[theorem]{Lemma}
\newtheorem{lem}[theorem]{Lemma}
\newtheorem{claim}[theorem]{Claim}
\newtheorem{proposition}[theorem]{Proposition}
\newtheorem{prop}[theorem]{Proposition}
\newtheorem{observation}[theorem]{Observation}
\newtheorem{corollary}[theorem]{Corollary}
\theoremstyle{definition}{

\newtheorem{definition}[theorem]{Definition}

\newtheorem*{definition*}{Definition}
\newtheorem{problem}[theorem]{Problem}

\newtheorem{remark}[theorem]{Remark}

}
\newcommand{\E}{\mathbb E}
\renewcommand{\P}{\mathbb P}
\newcommand{\R}{\mathbb R}
\newcommand{\Z}{\mathbb Z}
\newcommand{\cA}{\ensuremath{\mathcal A}}
\newcommand{\cB}{\ensuremath{\mathcal B}}
\newcommand{\cC}{\ensuremath{\mathcal C}}
\newcommand{\cE}{\ensuremath{\mathcal E}}
\newcommand{\cF}{\ensuremath{\mathcal F}}
\newcommand{\cG}{\ensuremath{\mathcal G}}
\newcommand{\cH}{\ensuremath{\mathcal H}}
\newcommand{\cI}{\ensuremath{\mathcal I}}
\newcommand{\cJ}{\ensuremath{\mathcal J}}
\newcommand{\cL}{\ensuremath{\mathcal L}}
\newcommand{\cN}{\ensuremath{\mathcal N}}
\newcommand{\cP}{\ensuremath{\mathcal P}}
\newcommand{\cS}{\ensuremath{\mathcal S}}
\newcommand{\cZ}{\ensuremath{\mathcal Z}}
\newcommand{\llb }{\llbracket}
\newcommand{\rrb }{\rrbracket}
\newcommand{\fC}{\mathfrak{C}}
\newcommand{\fm}{\mathfrak{m}}
\newcommand{\fF}{\mathfrak{F}}
\newcommand{\fS}{\mathfrak{S}}
\newcommand{\fW}{\mathfrak{W}}
\newcommand{\fX}{\mathfrak{X}}
\newcommand{\sB}{{\ensuremath{\mathscr B}}}
\newcommand{\sF}{{\ensuremath{\mathscr F}}}
\newcommand{\sP}{{\ensuremath{\mathscr P}}}
\newcommand{\sT}{{\ensuremath{\mathscr T}}}
\newcommand{\sX}{{\ensuremath{\mathscr X}}}
\newcommand{\sZ}{{\ensuremath{\mathscr Z}}}
\newcommand{\g}{{\ensuremath{\mathbf g}}}
\newcommand{\br}{{\ensuremath{\mathbf r}}}
 \renewcommand{\epsilon}{\varepsilon}
\DeclareMathOperator{\var}{Var}
\DeclareMathOperator{\dist}{dist}
\DeclareMathOperator{\diam}{diam}
\DeclareMathOperator{\cov}{Cov}
\DeclareMathOperator{\ber}{Ber}
\DeclareMathOperator{\hgt}{ht}
\newcommand{\tv}{{\textsc{tv}}}
\newcommand{\spine}{{\textsc{sp}}}
\newcommand{\Tsp}{{\tau_\spine}}
\newcommand{\trivincr}{\varnothing}
\newcommand{\superimpose}[2]{%
  {\ooalign{$#1\@firstoftwo#2$\cr\hfil$#1\@secondoftwo#2$\hfil\cr}}}
\begin{document}

\title{Maximum and shape of Interfaces in 3D Ising Crystals}

\author{Reza Gheissari}
\address{R.\ Gheissari\hfill\break
Department of Statistics\\ UC Berkeley \\ 367 Evans Hall\\ Berkeley, CA 94720, USA.}
\email{gheissari@berkeley.edu}

\author{Eyal Lubetzky}
\address{E.\ Lubetzky\hfill\break
Courant Institute\\ New York University\\
251 Mercer Street\\ New York, NY 10012, USA.}
\email{eyal@courant.nyu.edu}

\vspace{-1cm}

\begin{abstract}
Dobrushin (1972) showed that the interface of a 3D Ising model with minus boundary conditions above the $xy$-plane and plus below is rigid (has $O(1)$-fluctuations) at every sufficiently low temperature. 
Since then, basic features of this interface---such as the asymptotics of its maximum---were only  identified in more tractable random surface models that approximate  the Ising interface at low temperatures, e.g., for the (2+1)D Solid-On-Solid model.
Here we study the large deviations of the interface of the 3D Ising model in a cube of side-length~$n$ with Dobrushin's boundary conditions, and in particular obtain a law of large numbers for $M_n$, its maximum: if the inverse-temperature $\beta$ is large enough, then $M_n / \log n \to 2/\alpha_\beta$ as $n\to\infty$, in probability, where $\alpha_\beta$ is given by a large deviation rate in infinite volume.

We further show that, on the large deviation event that the interface connects the origin to height $h$, it consists of a 1D spine that behaves like a random walk, in that it decomposes into a linear (in $h$) number of asymptotically-stationary weakly-dependent increments that have exponential tails. As the number $T$ of increments diverges, properties of the interface such as its surface area, volume, and the location of its tip, all obey CLTs with variances linear in $T$.
These results generalize to every dimension $d\geq 3$.
\end{abstract}

{\mbox{}
\vspace{-.8cm}
\maketitle
}
\vspace{-0.8cm}

\section{Introduction}

We study the plus-minus Ising interface in $d$-dimensions at sufficiently low temperatures, where for $d\geq 3$ the interface is known to be rigid and yet its large deviations, including the asymptotic behavior of its maximum, were unknown. The Ising model on a finite subgraph $\Lambda\subset\mathbb Z^d$ is an assignment of $\pm 1$ to the $d$-dimensional \emph{cells} of $\mathbb Z^d$ (faces when $d=2$ and cubes of side-length 1 when $d=3$), collected in the set $\mathcal C(\Lambda)$. These cells are identified with their midpoints, corresponding to the vertices of the dual graph $(\mathbb Z+\frac 12)^d$, and $u,v\in \mathcal C(\Lambda)$ are considered adjacent (denoted $u\sim v$) if their midpoints are at Euclidean distance 1. The Ising model on $\Lambda$ is then the Gibbs distribution $\mu_\Lambda=\mu_{\Lambda,\beta}$ over configurations in $ \Omega = \{\pm 1\}^{\mathcal C(\Lambda)}$ given by 
\[ \mu_\Lambda(\sigma) \propto \exp\left[ -\beta \cH(\sigma)\right]\,, \quad\mbox{for}\quad\cH(\sigma)=\sum_{u\sim v} \one\{\sigma_u \neq \sigma_v\}\,,\]
 where $\beta>0$ is the inverse temperature. Placing boundary condition $\eta$ on the model, $\mu_\Lambda^\eta$, refers to the conditional distribution of $\mu_H$, for some larger given graph $H\supset \Lambda$, where the configuration of $\mathcal C(H)\setminus \mathcal C(\Lambda)$ coincides with $\eta$. These definitions extend to infinite graphs via weak limits, and in the low temperature regime studied here, different boundary conditions $\eta$ on boxes in $\Z^d$ lead to distinct limiting Gibbs distributions~\cite[\S6.2]{Georgii11}.
 
Here, we consider $\beta > \beta_0$ for some fixed $\beta_0$ and $\Lambda=\Lambda_n$, the infinite cylinder of side-length $2n$ in $\Z^d$,
\[\Lambda_n = \llb -n,n\rrb^{d-1}\times \llb -\infty,\infty\rrb=\{ -n,\dots,n\}^{d-1} \times \{-\infty,\dots,\infty\}\,,\]
with boundary conditions that are ($+$) in the lower half-space $\mathcal C(\Z^{d-1}\times \{-\infty,\ldots,0\})$ and ($-$) elsewhere, called \emph{Dobrushin's boundary conditions}.
Let $\mu_n=\mu_{\Lambda_n,\beta}^{\mp}$ denote the Ising model with these boundary conditions, and note that every $\sigma\sim \mu_n$ defines a set of $(d-1)$-cells separating disagreeing spins, which in turn give rise to an \emph{interface} $\cI$ separating the minus and plus phases: in 2D, it is a (maximal) connected component of such separating edges connecting $(-n,0)$ and $(n,0)$; in three dimensions, it is the (maximal) connected component of such separating faces containing $\partial\Lambda_n \cap (\Z^{d-1}\times\{0\})$ (we defer more detailed definitions to~\S\ref{sec:notation}).

The classical argument of Peierls, which established the phase transition in the Ising model for $d\geq 2$, shows that in the above described setting, the size of ``bubbles'' (finite connected components of plus or minus spins) has an exponential tail. One thus looks to determine the behavior of the interface $\cI$.

In the 2D Ising model, the properties of this random interface between plus/minus phases in $\mu_n$ is very well-understood: for $\beta>\beta_c$, the critical point of the Ising model, this interface converges to a Brownian bridge as $n\to\infty$, and detailed quantitative estimates are available for its fluctuations and large deviations for large $n$,  mimicking those of a random walk (see, e.g.,~\cite{DH97,DKS,Hryniv98,Ioffe94,Ioffe95,PV97,PV99}). In view of its height fluctuations that diverge with $n$ (in this case, with variance $C_\beta n$ in the bulk), the interface is referred to as \emph{rough}.

For the 3D Ising model (and in fact extending to every dimension $d\geq 3$), Dobrushin~\cite{Dobrushin72a} famously showed that, for large enough  $\beta$, the plus/minus interface $\cI$ is \emph{rigid} (localized) around height $0$: the height fluctuations are $O(1)$ everywhere. Namely, Dobrushin established that the probability that the interface $\cI$ reaches height at least $h$ above any given $xy$-coordinate in $\llb-n,n\rrb^2$ is $O(\exp(-\tfrac13\beta h))$.
An important consequence of rigidity is that the Gibbs distribution $\mu_{\Z^3}^{\mp}$ arising as the weak limit of $\mu_n$
is not translation-invariant in its $z$-coordinate. It is believed that the interface becomes rigid only after a roughening threshold $\beta_{\textsc r} > \beta_c$, with this roughening phase transition being exclusive to dimension 3.  Interfaces of tilted Dobrushin boundary conditions are, unlike the flat ones, believed to always be rough; see \S\ref{sec:related-work} for more details.

Since Dobrushin's work showing that the interface $\cI$ is typically a flat surface at height $0$, basic features of this interface---such as the asymptotics of its maximum, the shape of the surface near the maximum and the effect of entropic repulsion---were only identified in more tractable random surface models that approximate the Ising interface at low temperatures, e.g., the (2+1)D Solid-On-Solid model by Bricmont, El-Mellouki and Fr\"ohlich~\cite{BEF86} and Caputo et al.~\cite{CLMST1,CLMST2}, and the Discrete Gaussian and $|\nabla\phi|^p$-models in~\cite{LMS16} (in these, the surfaces are height functions, with no overhangs or interacting bubbles that do exist in the Ising model).

\begin{figure}
\centering
\includegraphics[width=0.6\textwidth]{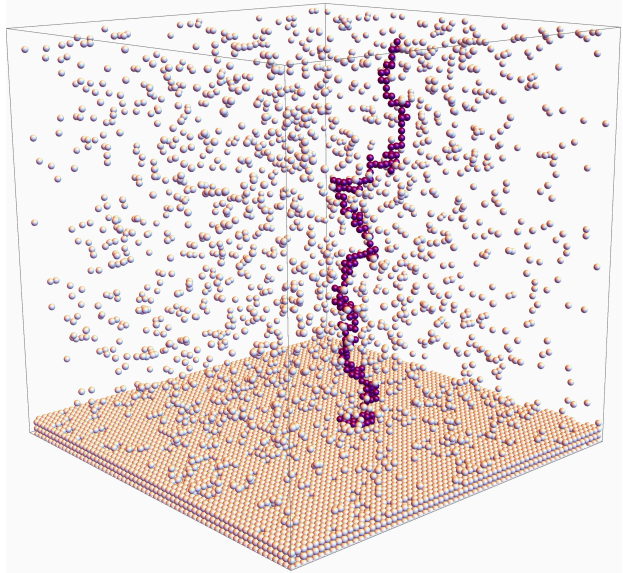}
 \caption{The plus/minus interface $\cI$ in the 3D Ising model $\mu_n$ (side length $n=64$)   with Dobrushin's boundary conditions, when conditioning on $\cI$ reaching height $h=64$.}
  \label{fig:num_pillar}
  \vspace{-0.15cm}
\end{figure}

\medskip
In what follows, for the sake of the exposition, we state our new results on the interface $\cI$ in the context of the 3D Ising model, noting that they extend to any dimension $d\geq 3$ (see Remark~\ref{rem:d>3}).

\subsection{Maximum height} Let $M_n$ be the maximum height ($z$-coordinate) of a face in $\cI$.
Dobrushin's estimate that $\mu_n(\cI\ni(y_1,y_2,h))=O( \exp(-\frac13\beta h))$ shows, by a union bound, that $M_n / \log_n \leq C_\beta$ in probability as $n\to\infty$ for some $C_\beta>0$. As we later explain, a lower bound of matching order, $M_n / \log_n \geq c_\beta$ in probability for some other $c_\beta>0$, can also be deduced from those methods 
via decorrelation estimates.
Our main goals here are obtaining the asymptotics of $M_n$ (law of large numbers (LLN) for the maximum) and characterizing the typical structure of the surface around points conditioned to achieve large deviations.
The first result establishes the LLN and expresses the limit in terms of a large deviation (LD) rate function of having the origin be $*$-connected to height $h$ via ($+$)-spins (denoted $\xleftrightarrow{+}$) within $\mathcal C(\Z^2\times\llb0,h\rrb)$ in the measure $\mu_{\Z^3}^{\mp}$.
\begin{maintheorem}[LLN for the maximum]\label{mainthm:max-lln}
There exists $\beta_0$ such that, for all $\beta>\beta_0$, the maximum $M_n$ of the interface $\cI$ in the 3D Ising model with Dobrushin's boundary conditions $\mu_{\Lambda_n,\beta}^{\mp}$ satisfies
\begin{equation}\label{eq:lln} \lim_{n\to\infty} \frac{M_n}{\log n} = \frac2{\alpha_\beta}\quad\mbox{in probability}\,,\end{equation}
where the constant $\alpha_\beta>0$ is given by
\begin{equation}\label{eq:alpha-beta-def} \alpha_\beta = \lim_{h\to\infty} -\frac1h\log\mu_{\Z^3}^{\mp}\left((\tfrac 12,\tfrac 12,\tfrac 12)\xleftrightarrow[\mathcal C(\Z^2\times\llb0,h\rrb)]{+} ((\Z+\tfrac 12)^2\times\{h-\tfrac 12\})\right)\,,\end{equation}
and satisfies $\alpha_\beta/\beta \to 4$ as $\beta\to\infty$.
\end{maintheorem}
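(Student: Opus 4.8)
The plan is to reduce the law of large numbers to a single sharp estimate on the probability that a \emph{pillar} reaches a prescribed height, and then to dispatch the identity $\alpha_\beta/\beta\to4$ by a short construction-plus-Peierls computation. Write $\cL_p^h$ for the event that the portion of the interface lying above a column $p\in\llb-n,n\rrb^{d-1}$ reaches height $h$; equivalently (up to the trivial loss of one unit of height) that the plus phase emanating from the floor is $*$-connected through $(+)$-spins to a cell at height $h$ above $p$. The core input is that, uniformly over $p$ at distance $\ge(\log n)^2$ from $\partial\Lambda_n$, over $n$, and over $h\le C\log n$,
\[ \mu_n(\cL_p^h) = e^{-(\alpha_\beta+o(1))h}, \qquad \text{where } o(1)\to 0 \text{ as } h\to\infty, \]
and moreover that one may restrict $\cL_p^h$ to the sub-event on which the pillar stays inside the box of side $h^2$ in the $xy$-directions around $p$ without changing this rate (a ``fat'' pillar of horizontal extent $\ell$ carries $\gtrsim\beta\ell$ of excess surface energy, and $\sum_\ell e^{-c\beta\ell}<\infty$, so unconfined pillars are super-exponentially rarer). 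Establishing this is where the work lies, for two reasons: one must identify the decay rate of $\mu_n(\cL_p^h)$ with the infinite-volume constant $\alpha_\beta$ of \eqref{eq:alpha-beta-def} --- via a near-submultiplicativity estimate $q_{h_1+h_2}\ge c_\beta h_1^{-C}q_{h_1}q_{h_2}$ for $q_h=\mu_{\Z^3}^{\mp}(E_0^h)$, obtained from FKG together with the rigidity of the interface (which lets one ``restart'' a pillar near its tip), an estimate which also yields the existence and the positivity $\alpha_\beta\ge\beta/3$ of the limit \eqref{eq:alpha-beta-def} from Dobrushin's bound --- and one must pass between $\mu_n$ and $\mu_{\Z^3}^{\mp}$ uniformly in $h=h(n)$, which for the confined event costs only a multiplicative $1+o(1)$ by exponential decay of correlations in the rigid phase. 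All of this amounts to controlling the Ising pillar despite its overhangs and enclosed bubbles, and is the main obstacle; the remaining steps are comparatively routine.

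Granting the core estimate, the upper bound on $M_n$ is a union bound. Since $\{M_n\ge h\}\subseteq\bigcup_p\cL_p^{h-1}$, and splitting each $\cL_p^{h-1}$ according to whether the pillar is confined to the $h^2$-box around $p$, we get $\mu_n(\cL_p^{h-1})\le(1+o(1))e^{-(\alpha_\beta-o(1))h}+e^{-c\beta h^2}$: the first term comes from comparison with the confined infinite-volume event together with the fact that a pillar joining two prescribed columns inside the box --- and there are only polynomially many such pairs --- still has probability at most $e^{-(\alpha_\beta-o(1))h}$, while the second is the contribution of unconfined pillars. Hence $\mu_n(M_n\ge h)\le C\,n^{d-1}e^{-(\alpha_\beta-o(1))h}$, which tends to $0$ as soon as $h\ge\bigl(\tfrac{d-1}{\alpha_\beta}+\epsilon\bigr)\log n$; for $d=3$ this gives $M_n\le(2/\alpha_\beta+\epsilon)\log n$ with high probability.

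For the matching lower bound, fix $\epsilon>0$ and $h=h_n=(2/\alpha_\beta-\epsilon)\log n$, and pick columns $p_1,\dots,p_k$ with $k\asymp\bigl(n/(\log n)^{6}\bigr)^2$, pairwise at $\ell^\infty$-distance $\ge(\log n)^6$ and at distance $\ge(\log n)^2$ from $\partial\Lambda_n$. Let $\widetilde\cL_{p}^{h}$ be $\cL_p^h$ intersected with the event that the pillar is confined to the box of side $h^2$ around $p$; this is a local event, depending on $O(h^{5})=\mathrm{polylog}(n)$ cells, and by the core estimate $\mu_n(\widetilde\cL_{p_j}^{h_n})=(1-o(1))e^{-(\alpha_\beta+o(1))h_n}=n^{-(2-\alpha_\beta\epsilon)+o(1)}$. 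Set $X=\sum_{j}\one_{\widetilde\cL_{p_j}^{h_n}}$, so that $\E X=k\,\mu_n(\widetilde\cL_{p_1}^{h_n})\ge n^{\alpha_\beta\epsilon-o(1)}\to\infty$. Because the events $\widetilde\cL_{p_j}^{h_n}$ are pairwise separated by $(\log n)^6\gg\mathrm{polylog}(n)$, exponential decay of correlations gives $\mu_n(\widetilde\cL_{p_i}^{h_n}\cap\widetilde\cL_{p_j}^{h_n})\le\mu_n(\widetilde\cL_{p_i}^{h_n})\mu_n(\widetilde\cL_{p_j}^{h_n})+e^{-(\log n)^5}$ for $i\ne j$, whence $\E X^2\le(\E X)^2+\E X+o(1)$ and Chebyshev yields $\mu_n(X\ge1)\to1$; on $\{X\ge1\}$ some pillar reaches $h_n$, so $M_n\ge h_n$ with high probability. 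Together with the preceding paragraph this establishes \eqref{eq:lln}.

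Finally, $\alpha_\beta/\beta\to4$. For the upper bound, $q_h\ge\mu_{\Z^3}^{\mp}(\sigma_v=+\text{ for the }h\text{ cells }v\text{ in the column directly above the origin at heights }1,\dots,h)$; conditioning on the typical local environment (the interface flat near the origin column, the plus phase below and the minus phase above), an event of probability $\ge c_\beta>0$ uniformly in $h$ by summability of $\sum_k e^{-c\beta k}$, each such cell is forced to $(+)$ at a cost $(4+o_\beta(1))\beta$ per unit height because its only plus neighbour is the cell directly below it; hence $q_h\ge c_\beta'\,e^{-(4+o_\beta(1))\beta h}$ and $\alpha_\beta\le(4+o_\beta(1))\beta$. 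For the lower bound, on $E_0^h$ the plus $*$-cluster $S$ of the origin inside the slab $\mathcal C(\Z^2\times\llb0,h\rrb)$ meets every height $1,\dots,h$, so flipping $S$ to $(-)$ is an injection whose effect on the energy shows the configuration carries at least $\sum_{i=1}^h\mathrm{perim}(S\cap\{\text{height }i\})\ge4h$ units of plus/minus surface beyond the flat interface; since the number of such clusters with excess surface $Q$ is at most $C^Q$ (each slice being encoded by its boundary contour and the positions of consecutive slices by horizontal increments of total length $O(Q)$), one gets $q_h\le\sum_{Q\ge4h}C^Q e^{-\beta Q}\le e^{-(4-o_\beta(1))\beta h}$, i.e.\ $\alpha_\beta\ge(4-o_\beta(1))\beta$. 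Combining the two bounds gives $\alpha_\beta/\beta\to4$, completing the proof.
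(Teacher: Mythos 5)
Your union bound for the upper bound on $M_n$, your second-moment argument for the lower bound, and your column-forcing estimate $\alpha_\beta\le(4+o_\beta(1))\beta$ all match what the paper does (\S\ref{subsec:max-lln} and Prop.~\ref{prop:rate-upper-lower-bounds}). The decisive gap is in the piece you yourself single out as ``where the work lies'': the existence of the limit in~\eqref{eq:alpha-beta-def}. You invoke a super-multiplicativity bound $q_{h_1+h_2}\ge c_\beta h_1^{-C}q_{h_1}q_{h_2}$, attributed to ``FKG together with rigidity (restarting a pillar near its tip).'' This does not follow from FKG: conditioning on the increasing event $A_{h_1}$ and applying FKG to the increasing event $\theta_{h_1}A_{h_2}$ (that the cell just above the tip connects upward a further $h_2$ levels) only gives $\mu_{\Z^3}^\mp(\theta_{h_1}A_{h_2}\mid A_{h_1})\ge\mu_{\Z^3}^\mp(\theta_{h_1}A_{h_2})$, but $\mu_{\Z^3}^\mp$ is not vertically translation-invariant --- at height $h_1$ the ambient environment is deep in the minus phase --- so $\mu_{\Z^3}^\mp(\theta_{h_1}A_{h_2})\approx e^{-\alpha_\beta(h_1+h_2)}$ rather than $e^{-\alpha_\beta h_2}$, and the resulting bound is vacuous. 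Rigidity does not rescue this: the conditional environment at the revealed pillar tip (surrounded by revealed minus boundary spins, with no plus bulk beneath it) is \emph{less} favorable for continuing upward than the environment at the origin, where the plus half-space lies just below, so the natural comparison goes the other way. The paper flags exactly this obstruction in \S\ref{subsec:tools} and proves the \emph{opposite} inequality instead: approximate sub-multiplicativity $\mu(A_{h_1+h_2})\le e^{O(\log^2(h_1+h_2))}\mu(A_{h_1})\mu(A_{h_2})$ (Prop.~\ref{prop:limiting-rate}), which makes $-\log\mu(A_h)$ approximately superadditive and gives the limit by the approximate Fekete lemma. Its proof reveals the plus $*$-component $\sP$ up to height $h_1$, discards the negative information from the revealed minus boundary via monotonicity and domain Markov, and tilts out the positive information carried by the plus spins of $\sP$ at height $\tfrac12$ using finite energy; the tilting factor is controlled precisely because the base $\sB_x$ has diameter $O(\log h_1)$ except with vanishing probability (Prop.~\ref{prop:base-exp-tail}, i.e., Theorem~\ref{mainthm:structure}(i)). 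This base control is the substantive new input driving the subadditivity, and your sketch offers no substitute for it.

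A secondary issue: your Peierls computation for $\alpha_\beta\ge(4-o_\beta(1))\beta$ is not self-contained. Flipping the plus $*$-cluster $S$ of the origin removes the side boundary of $S$ inside the slab, which you correctly lower bound by $4h$, but it also \emph{creates} disagreements across the bottom of the slab, one for each plus cell of $S$ at height $\tfrac12$ sitting over a plus cell of the lower half-space; there is no a priori control on the size of that footprint, so the energy gap can fall far below $4h$. This interaction with the bulk plus phase is exactly the obstacle that forces Dobrushin's cluster-expansion and wall-group framework; the paper instead derives the bound from Theorem~\ref{thm:dobrushin-rigidity}. The columns within $O((\log n)^2)$ of $\partial\Lambda_n$ are also unaddressed in your union bound, though that is easily patched with Dobrushin's estimate; the essential missing ingredient is the base control underlying the subadditivity of $-\log\mu(A_h)$.
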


Note that the existence of the limit in~\eqref{eq:alpha-beta-def} is both nontrivial and essential, and its proof (see \S\ref{subsec:limiting-ldp-rate} and in particular Proposition~\ref{prop:limiting-rate}) relies on our results on the structure of the interface $\cI$ conditioned on large deviations in $\mu_n$, which drive an approximate sub-additivity argument.

\begin{figure}
  \centering
  \vspace{-0.25in}
  \begin{tikzpicture}
  
    \node[ellipse,draw,fill=gray!10,inner sep=0mm,minimum width=75pt, minimum height=200pt] (zoom) at (0,0.25) {
    };
    
    \node[left=-26pt] at (zoom) {        \includegraphics[width=0.13\textwidth]{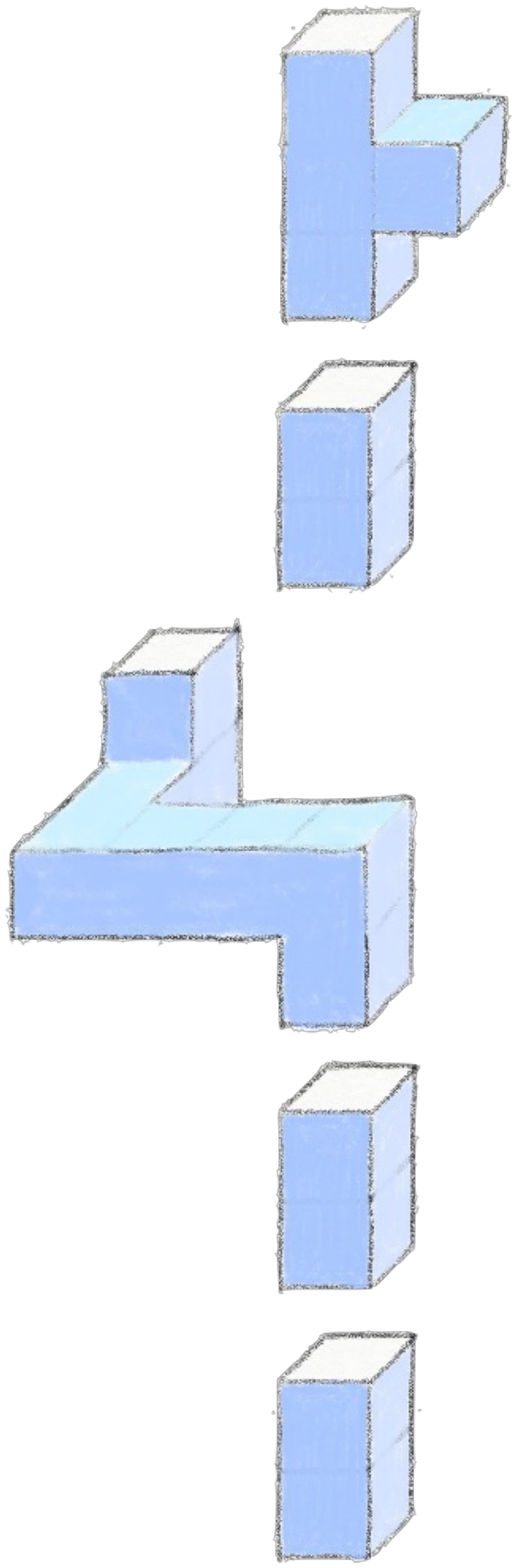}
    };

    \node[circle,draw,fill=gray!10,inner sep=0mm,minimum width=25pt] (org) at (6,1.4) {
    };
    
    \node (pillar) at (6,0)
   {\includegraphics[width=0.65\textwidth]{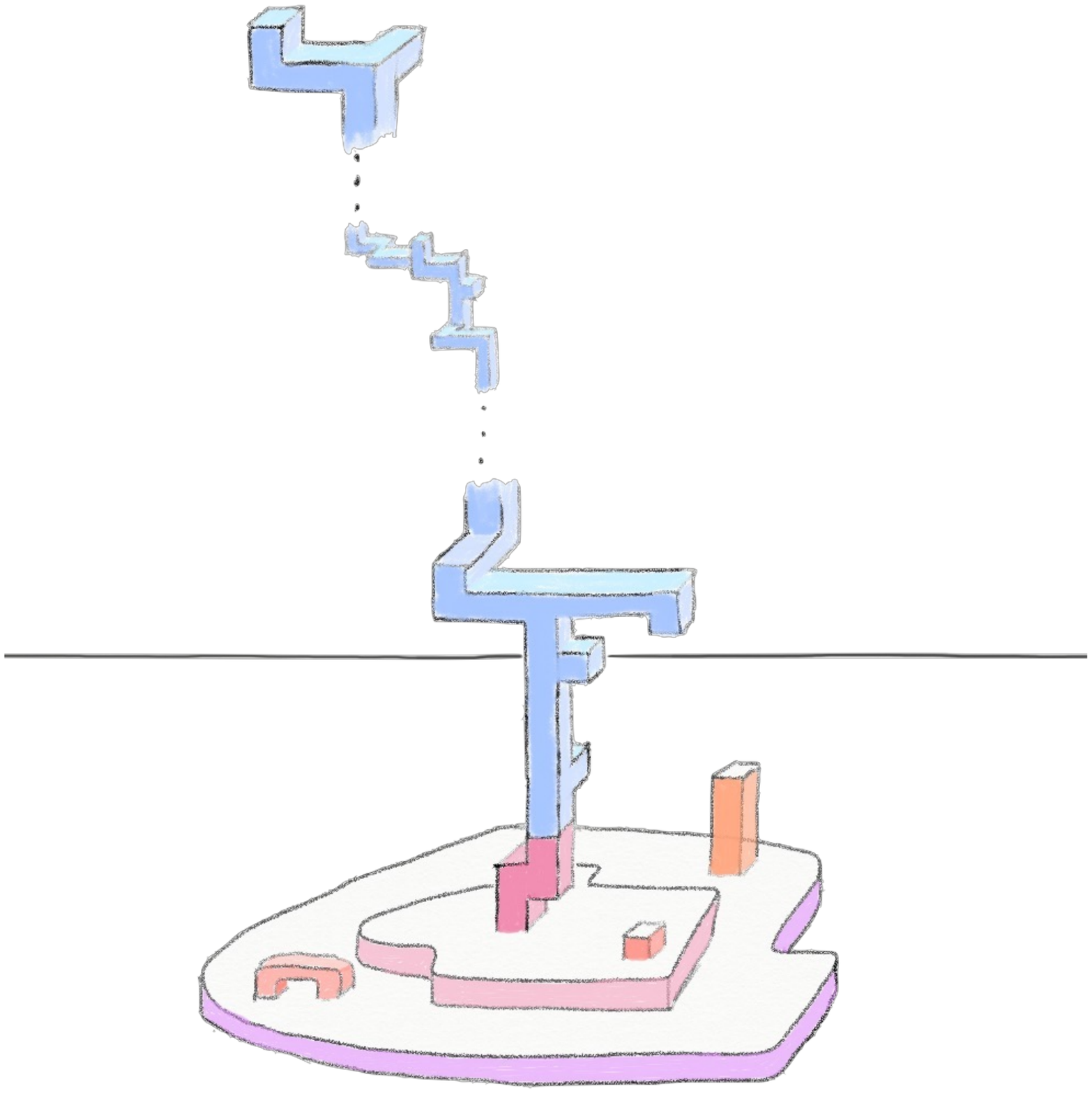}};

	\draw[gray!75] (zoom.70) -- (org.120);
	\draw[gray!75] (zoom.290) -- (org.240);
	
  \end{tikzpicture}
  \vspace{-0.27in}
  \caption{The pillar above a point $x$, denoted $\cP_x$; in blue, the \emph{spine}, partitioned into \emph{increments}.}
  \label{fig:pillar_incr}
  \vspace{-0.05in}
\end{figure}

\subsection{Structure of tall pillars} To formalize the notion of $\cI$ achieving a large deviation above a point $x$, define the \emph{pillar} associated to a point $x\in\llb-n+\frac 12,n-\frac 12 \rrb^2\times\{0\}$ (we defer detailed definitions to~\S\ref{sec:pillar-def}): from a configuration $\sigma\sim\mu_n$, repeatedly delete every finite cluster of ($+$) or ($-$) by flipping its spins (thus eliminating all bubbles), then discard $\mathcal C(\llb-n,n\rrb^2\times\Z_-)$; the pillar of $x$, denoted $\cP_x$, is the resulting (possibly empty) $*$-connected component of ($+$) cells containing $x+(0,0,\frac 12)$, along with all faces of $\cI$ that bound it.

The height of the pillar $\cP_x$, denoted $\hgt(\cP_x)$, is the maximal $y_3$ such that some $(y_1,y_2,y_3)\in\cP_x$. The proof of~\eqref{eq:lln} in Theorem~\ref{mainthm:max-lln} hinges on a large deviation estimate for $\hgt(\cP_x)$ stating (see Proposition~\ref{prop:limiting-ldp-rate-pillar}) that
\[ \lim_{h\to\infty}-\frac1h\log\mu_n(\hgt(\cP_x)\geq h) = \alpha_\beta\,.\]
(Observe that the upper bound on $M_n/\log n$ in~\eqref{eq:lln} readily follows from this by a union bound over $x$.)

A key step in the analysis of the typical structure of $\cP_x$ conditioned on $\{\hgt(\cP_x)\geq h\}$ is to decompose the pillar into increments: define the \emph{cut-points} of $\cP_x$ to be every $y=(y_1,y_2,y_3)\in \cP_x$ such that $y$ is the unique cell in the horizontal slab with height $y_3$ belonging to $\cP_x$. Ordering the cut-points as $v_1,\ldots,v_\sT$ with an increasing third coordinate, their role mimics regeneration points of random walks (though the increment sequence is far from Markovian); thus we refer to the subset of $\cP_x$ delimited by $v_i,v_{i+1}$ (including these two cells) as a \emph{pillar increment} (see Figure~\ref{fig:pillar_incr}). 
Let $\fX$ be the (countable infinite) set of possible increments, and let $A(X)$ be the surface area (number of bounding dual-faces) of an increment $X$. Our next result is a central limit theorem (CLT) for averages of a function along the pillar increment sequence. 

\begin{maintheorem}[CLT for the increments]\label{mainthm:clt}
There exist $\beta_0,\kappa_0>0$ so that the following holds for all $\beta>\beta_0$: for every sequence $T=T_n$ with $1\ll T \ll n$, every non-constant observable on increments $f:\fX\to\R$ such that
\[ f(X) \leq 
e^{\kappa_0 A(X)}\quad\mbox{for every $X\in\fX$}\,,
\]
and every $x=(x_1,x_2,0)$ with $(x_1,x_2)\in\llb-n+\Delta_n+\frac 12,n-\Delta_n-\frac 12\rrb^2$ for some $\Delta_n\gg T$, 
if $(\sX_1,\ldots,\sX_{\sT})$ is the random increment sequence of $\cP_x$, then conditional on the event $\{\sT\geq T\}$, one has that
\[ \frac1{\sqrt{T}} \sum_{t=1}^T \left(f(\sX_t)-\E [f(\sX_t)]\right) \implies \cN(0,\upsigma^2)\quad\mbox{for some $\upsigma(\beta,f)>0$}\,.
\]
\end{maintheorem}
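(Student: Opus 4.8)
The plan is to view the increment sequence $(\sX_1,\dots,\sX_{\sT})$ of the pillar $\cP_x$, conditioned on $\{\sT\ge T\}$, as a weakly dependent, asymptotically stationary triangular array with exponential tails, to run Bernstein's big-block/small-block CLT on it, and to extract strict positivity of the limiting variance from exponential decay of correlations together with $\beta$ being large. I would draw three inputs from the structural analysis of tall pillars (the ``random-walk'' picture of the introduction): \textbf{(a)} \emph{exponential tails} --- there is $c_\beta>0$ with $\mu_n(A(\sX_t)\ge a\mid \sT\ge T)\le e^{-c_\beta a}$ uniformly over the admissible range of parameters and over $t$ in the bulk of $\{1,\dots,T\}$, so that, $f$ obeying the stated growth bound with $\kappa_0<c_\beta/3$, one has $\E[\,|f(\sX_t)|^{2+\delta}\mid\sT\ge T\,]\le C$ for some $\delta>0$; \textbf{(b)} \emph{exponential decay of correlations} --- for index sets $I,J$ separated along the spine by a gap $g$, the increments indexed by $I$ are ratio-mixing from those indexed by $J$ at rate $e^{-c_\beta g}$ (coming from a cluster-expansion/coupling representation of the increment law with exponentially decaying interactions), so that $|\cov(g(\sX_i)_{i\in I},h(\sX_j)_{j\in J})|\le e^{-c_\beta g}\|g\|_{L^1}\|h\|_{L^1}$; \textbf{(c)} \emph{asymptotic stationarity} --- as $t,T-t,n\to\infty$ the finite-dimensional marginals of $(\sX_t,\sX_{t+1},\dots)$ converge to those of a stationary increment law $\nu=\nu_\beta$ of full support on $\fX$, giving a limiting mean $\bar f=\E_\nu f(\sX_0)$, variance $c_0=\var_\nu f(\sX_0)$, and covariances $c_k=\cov_\nu(f(\sX_0),f(\sX_k))$ with $|c_k|\le e^{-c_\beta k}c_0$ (applying \textbf{(b)} to $f-\bar f$ and using $\|f-\bar f\|_{L^1(\nu)}\le\|f-\bar f\|_{L^2(\nu)}$).

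\textbf{Step 1 (CLT for the bulk sum).} First I would discard the $O(1)$ increments near $v_1$ and near the tip, whose $f$-contribution is $o(\sqrt T)$ in $L^2$ by \textbf{(a)}, and replace each $\E[f(\sX_t)]$ by $\bar f$ at total cost $o(\sqrt T)$ by \textbf{(c)}; thus it suffices to show $\tfrac1{\sqrt T}\sum_{t=1}^T(f(\sX_t)-\bar f)\implies\cN(0,\upsigma^2)$ with $\upsigma^2:=c_0+2\sum_{k\ge1}c_k$ (finite by \textbf{(c)}). I would then run Bernstein blocking: alternating big blocks of length $L=\lfloor\sqrt T\rfloor$ and small blocks of length $\ell=\lceil(\log T)^2\rceil$; by \textbf{(b)} the big-block sums couple to independent copies up to total error $\tfrac TL e^{-c_\beta\ell}=o(1)$; by \textbf{(a)}--\textbf{(b)} the small-block sums are $o(\sqrt T)$ in $L^2$; the independent big-block sums satisfy a Lyapunov CLT via the uniform $(2+\delta)$-moments of \textbf{(a)}, with variance normalization tending to $\upsigma^2$ by \textbf{(c)}. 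Since $T=T_n$ and the event $\{\sT\ge T_n\}$ both vary with $n$, the entire computation should be phrased as a triangular-array statement, which is harmless once \textbf{(a)}--\textbf{(c)} hold uniformly; decomposing $\{\sT\ge T\}$ into the fixed levels $\{\sT=T'\}$, $T'\ge T$, reduces matters to a version uniform in $T'$, so nothing is lost.

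\textbf{Step 2 (strict positivity of $\upsigma^2$).} This is where the hypotheses ``$f$ non-constant'' and ``$\beta>\beta_0$'' enter. Since $f$ is non-constant and $\nu$ has full support on $\fX$, $c_0>0$; and from $|c_k|\le e^{-c_\beta k}c_0$ we get $2\sum_{k\ge1}|c_k|\le 2c_0\,e^{-c_\beta}(1-e^{-c_\beta})^{-1}<c_0$ once $\beta_0$ (hence $c_\beta$) is large enough, so $\upsigma^2\ge c_0-2\sum_{k\ge1}|c_k|>0$, uniformly over the admissible class of $f$. (Without largeness of $\beta$ one can instead argue by local surgery: choose $X\ne X'\in\fX$ of bounded surface area with $f(X)\ne f(X')$ and matching spin patterns on the horizontal slabs through their lowest and highest cut-points, so that either can be spliced in as the $t$-th increment of a spine without disturbing any other increment; a Peierls/finite-energy estimate then bounds $\var(f(\sX_t)\mid(\sX_s)_{s\ne t})$ below by a constant for bulk $t$, and a careful accounting of the mixing-induced cross terms upgrades this to $\var(\sum_{t\le T}f(\sX_t))\ge cT$.)

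\textbf{Main obstacle.} The CLT bookkeeping in Steps 1--2 is routine; the genuine difficulty is establishing \textbf{(a)}--\textbf{(c)} \emph{uniformly} for the \emph{conditioned} increment sequence --- the exponential tails, and especially the ratio-mixing estimate \textbf{(b)}, in the presence of the non-local constraint $\{\sT\ge T_n\}$ and of the two spine endpoints, together with the existence and stationarity of the limiting law $\nu$. This is exactly the cluster-expansion/coupling analysis of the pillar underlying the random-walk description announced in the introduction; granting it, Theorem~\ref{mainthm:clt} follows as above.
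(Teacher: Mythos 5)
The overall architecture of the proposal is reasonable and parallels the paper's (exponential tails + mixing + asymptotic stationarity feeding a CLT for a triangular array, with a separate argument for $\upsigma^2>0$), and swapping Bolthausen's Stein-type criterion for Bernstein big-block/small-block is a legitimate alternative. However, there is a genuine gap in input \textbf{(b)}: you assume \emph{exponential} decay of correlations for the conditioned increment sequence, and you lean on this in two places, but the paper only establishes \emph{polynomial} $\alpha$-mixing (Proposition~\ref{prop:increment-mixing}, rate $|k-j|^{-\gamma}$ for any fixed $\gamma$, with $\beta_0$ depending on $\gamma$). This is not an oversight on the paper's part: the two-to-two map $\Phi_{\textsc{mix}}$ achieves decorrelation by splicing along a run of $2L$ trivial increments, and the weight distortion is only $e^{-cL}$ while such a run fails to appear within a window of length $|k-j|$ with probability roughly $\exp[-|k-j|\,e^{-c'L}]$; optimizing $L$ gives polynomial, not exponential, decay. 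Your blocking scheme with gap $\ell=(\log T)^2$ then fails (coupling error $\sqrt{T}\cdot\ell^{-\gamma}$ does not vanish), though this is repairable by taking $\ell=T^\epsilon$ with $\gamma\epsilon>1/2$, which is available since $\gamma$ may be taken large.

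The unrepairable part is Step~2. The bound $|c_k|\le e^{-c_\beta k}c_0$ and the ensuing inequality $2\sum_{k\ge1}|c_k|<c_0$ for $\beta$ large do not follow from polynomial mixing; with only $|c_k|\le C k^{-\gamma}$ (and the constant $C$ not controlled relative to $c_0$), the absolute sum of covariances can dominate $c_0$, so $\upsigma^2>0$ does not drop out. Indeed the paper devotes Proposition~\ref{prop:lin-var} to this point, and the argument there is precisely the ``local surgery'' you sketch in your parenthetical: identify many well-separated runs of trivial increments, inject variance at their centers via a finite-energy/Peierls estimate (Claim~\ref{claim:increment-lower-bound}, using a reduced version of $\Psi_i$), and control the cross-terms with the mixing map. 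Your parenthetical is the right idea, but it is not fleshed out, and it is the load-bearing part of the variance positivity, not a fallback. (There is also a minor point: your covariance inequality $|\cov|\le \alpha\,\|g\|_{L^1}\|h\|_{L^1}$ is not the standard Davydov form and is too strong as stated for unbounded $g,h$; the paper uses a truncation $f_M$ plus Davydov's inequality with third moments, which is what you actually need.)
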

The variance $\upsigma^2$ and asymptotic behavior of  $\frac{1}{\sqrt{T}}\sum_{t=1}^T\E[ f(\sX_t)]$ in Theorem~\ref{mainthm:clt} are expressed in terms of a stationary distribution on increments (see Theorem~\ref{mainthm:structure}\eqref{it:struct-stat}, and Proposition~\ref{prop:clt} for their explicit expressions).
While the above is only conditional on $\{\sT\geq T\}$, we find that $\sT$ and the height of $\cP_x$ are typically comparable (see Lemma~\ref{lem:increment-height-equivalence}):  $\lim_{h\to\infty}\mu_n(\sT\geq (1-\delta_\beta)h\mid \hgt(\cP_x)\geq h)= 1$ (and $\hgt(\cP_x)\geq \sT$ deterministically). In fact, we establish (see Theorem~\ref{mainthm:structure}) that, conditioned on $\{\sT \geq T\}$, the first cut-point typically appears at height $O(\log T)$, and the increment sequence captures all but a negligible portion of the pillar~$\cP_x$.

\begin{figure}
\vspace{-10pt}
    \centering
    \includegraphics[width=0.27\textwidth]{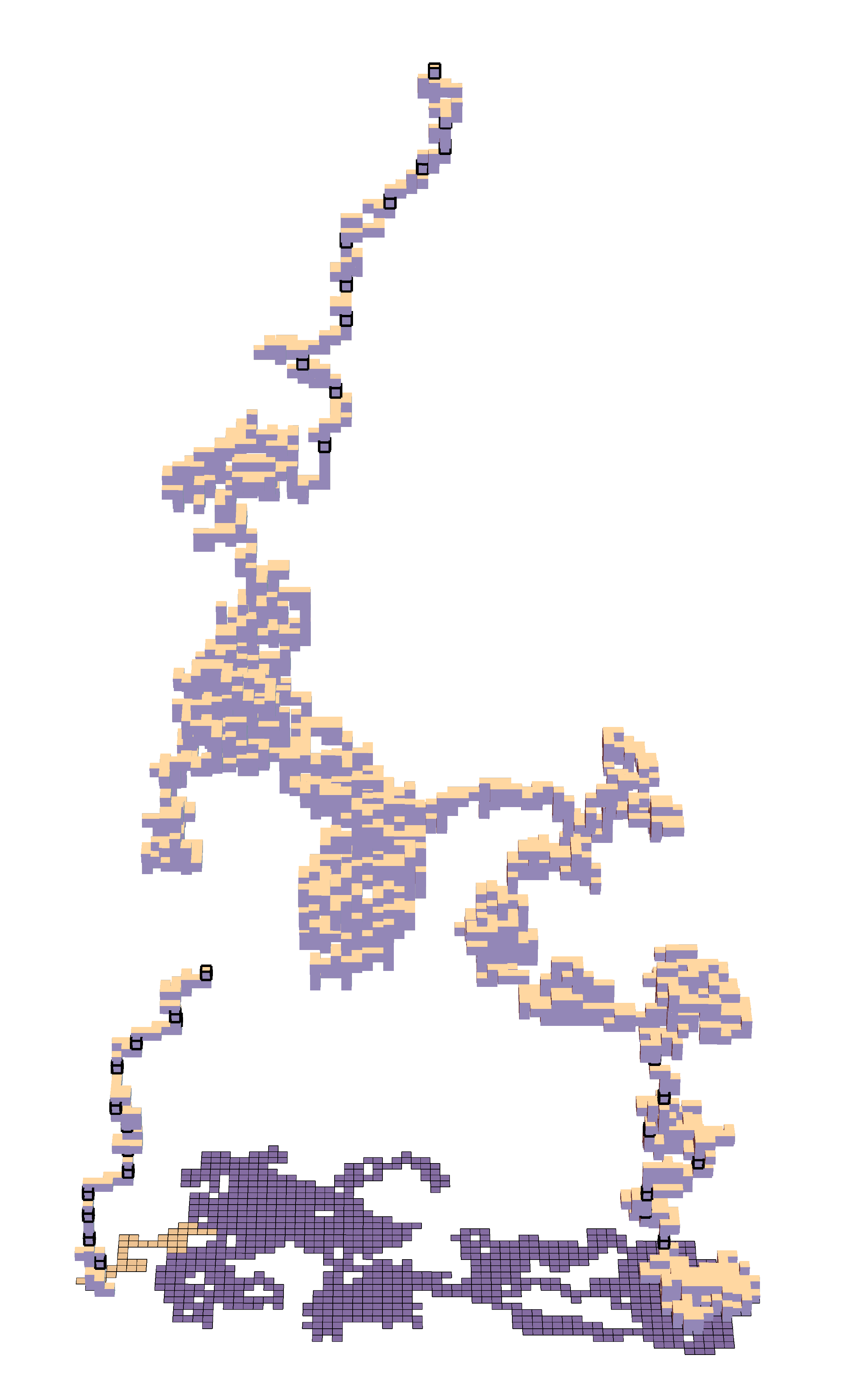}
    \hspace{50pt}
    \raisebox{2pt}{\includegraphics[width=0.24\textwidth]{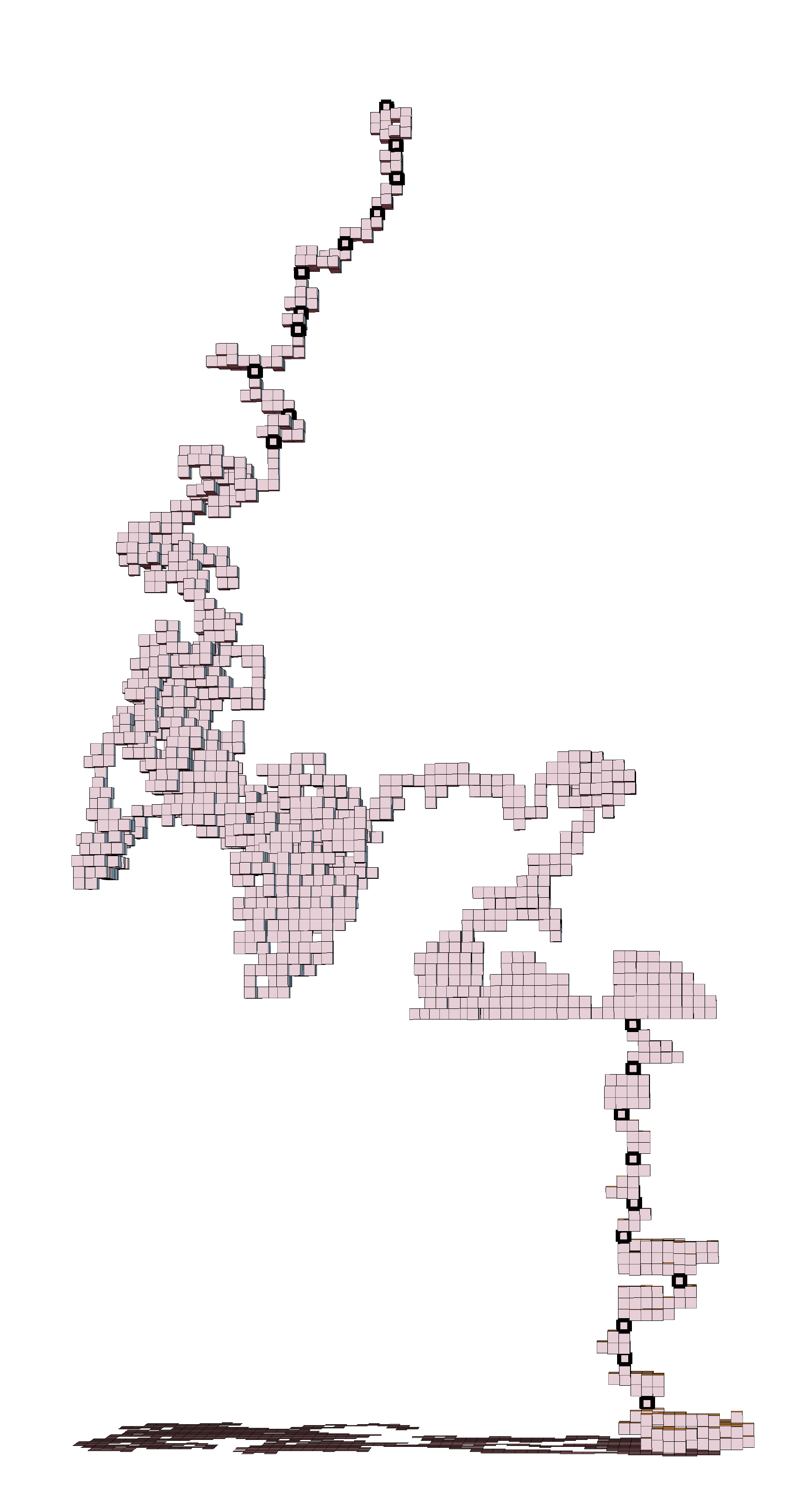}}
    \vspace{-5pt}
    \caption{Two views of a pillar $\cP_x$ with $\sT=20$ increments and its cut-points highlighted. On left: every  pillar $\cP_y$ whose ``shadow'' (its projection on $\R^2\times \{0\}$) intersects that of $\cP_x$ will belong to the \emph{same wall} in Dobrushin's interface decomposition into walls and ceilings.}
    \vspace{-5pt}
    \label{fig:pillar-shadow}
\end{figure}

A special case of the above CLT is that the distribution of the ``tip'' of the pillar conditioned on having at least $T$ increments is asymptotically Gaussian, as are its volume $V(\cP_x)$ and surface area $A(\cP_x)$.

\begin{maincoro}\label{maincor:displacement-surface-area}
There exists $\beta_0$ such that, for every $\beta>\beta_0$ and sequences $T=T_n$ with $1\ll T \ll n$ and  $x=(x_1,x_2,0)$ where $(x_1,x_2)\in\llb -n+\Delta_n+\frac 12,n-\Delta_n-\frac 12\rrb^2$ for some $\Delta_n\gg T$, the pillar at $x$ has that its number of increments $\sT=\sT(\cP_x)$ and height $\hgt(\cP_x)$ satisfy, for some $\lambda(\beta)>1$,
\begin{equation}\label{eq:ratio-T-H} 
\hgt(\cP_x)/\sT \stackrel{\mathrm{p}}\longrightarrow \lambda
\mbox{ conditional on $\{\sT \geq T\} $}\,. 
\end{equation}
Furthermore, conditional on $\{\sT\geq T\}$, the height of $\cP_x$ is asymptotically Gaussian, and moreover:
\begin{enumerate}
\item \emph{distribution of the tip:} the variables $(Y_1,Y_2,\hgt(\cP_x))\in\cP_x$ (arbitrarily chosen if ambiguous) satisfy
\[ \frac{(Y_1,Y_2,\hgt(\cP_x))-(x_1,x_2,\lambda T)}{\sqrt{T}} \implies \cN\bigg(0,
\left(\!\!\begin{smallmatrix} \ \upsigma^2 & 0 & 0\\ 0 &\ \upsigma^2 & 0\\ 0 &0& \ (\upsigma')^2\!\end{smallmatrix}\!\right)
\bigg)\quad\mbox{for some $\lambda(\beta)>1$ and $\upsigma(\beta),\upsigma'(\beta)>0$}\,.\]
\item \emph{volume and surface area:} there exist $\lambda_i(\beta)>1$ and $\upsigma_i(\beta)>0$ ($i=1,2$) such that
\[ \frac{V(\cP_x)-\lambda_1 T}{ \sqrt{T} }\implies\cN(0,\upsigma_1^2)\,,\qquad\mbox{and}\qquad \frac{A({\cP_x})-\lambda_2 T}{\sqrt{T} }\implies\cN(0,\upsigma_2^2)\,.\]
\end{enumerate}
\end{maincoro}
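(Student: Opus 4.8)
The plan is to write each quantity in Corollary~\ref{maincor:displacement-surface-area} as an additive functional of the increment sequence, up to a $o_\P(\sqrt T)$ error, and then invoke Theorem~\ref{mainthm:clt}. To each increment $X\in\fX$ (an object delimited by two consecutive cut-points, which as an abstract shape records the positions of those cut-points inside it) attach the observables: its surface area $A(X)$; its volume $V(X)$; its vertical span $\zeta(X)\ge1$; and the two horizontal coordinates $\delta^{(1)}(X),\delta^{(2)}(X)\in\Z$ of the displacement from its lower to its upper cut-point. Telescoping over $t$, the sum $\sum_{t=1}^{\sT-1}(\delta^{(1)}(\sX_t),\delta^{(2)}(\sX_t),\zeta(\sX_t))$ is the displacement of $v_\sT$ from $v_1$, while $\sum_{t=1}^{\sT-1}A(\sX_t)$ and $\sum_{t=1}^{\sT-1}V(\sX_t)$ equal $A(\cP_x)$ and $V(\cP_x)$ once one subtracts the $O(1)$ faces, resp.\ cells, double-counted at each cut-point. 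By the structure theorem (Theorem~\ref{mainthm:structure}), conditionally on $\{\sT\ge T\}$ the base of $\cP_x$ below $v_1$ has size $O_\P(\log T)$, the part above $v_\sT$ (hence the tip relative to $v_\sT$) has size $O_\P(1)$, and $\sT=T+O_\P(1)$; since the area, volume, span and displacement of a single increment all have exponential tails, passing from $\sum_{t=1}^{\sT-1}$ to $\sum_{t=1}^{T}$ perturbs each sum by $o_\P(\sqrt T)$. Hence, on $\{\sT\ge T\}$,
\begin{multline*}
\big(A(\cP_x),\,V(\cP_x),\,\hgt(\cP_x),\,Y_1-x_1,\,Y_2-x_2\big)\\
=\sum_{t=1}^{T}\big(A(\sX_t),\,V(\sX_t),\,\zeta(\sX_t),\,\delta^{(1)}(\sX_t),\,\delta^{(2)}(\sX_t)\big)+o_\P(\sqrt T)\,.
\end{multline*}

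Since $V(X),A(X),\zeta(X)$ and $|\delta^{(i)}(X)|$ are all at most a universal constant times $A(X)$, and $A(X)\ge6$ for every increment, there is $\epsilon_0=\epsilon_0(\kappa_0)>0$ with $\epsilon_0 g(X)\le e^{\kappa_0 A(X)}$ for every $X\in\fX$ and every $g\in\{V,\,A,\,\zeta,\,a\delta^{(1)}+b\delta^{(2)}+c\zeta:\ |a|+|b|+|c|\le1\}$; each such $g$ is also non-constant, so Theorem~\ref{mainthm:clt} (applied to $\epsilon_0 g$ and rescaled by $\epsilon_0^{-1}$) gives $\frac1{\sqrt T}\sum_{t=1}^T(g(\sX_t)-\E[g(\sX_t)])\implies\cN(0,\upsigma(g)^2)$ with $\upsigma(g)>0$, and moreover $\sum_{t=1}^T\E[g(\sX_t)]=T\,\E_\nu[g]+o(\sqrt T)$ for the stationary increment law $\nu$ of Theorem~\ref{mainthm:structure}\eqref{it:struct-stat}. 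Combined with the displayed identity, this gives the stated Gaussian limits for $V(\cP_x)$, $A(\cP_x)$ and $\hgt(\cP_x)$, with $\lambda_1=\E_\nu[V]$, $\lambda_2=\E_\nu[A]$ and $\lambda=\E_\nu[\zeta]$; since an increment contains two distinct endpoint cells (so $V\ge2$ and $A\ge6$) and its endpoints differ in height by at least $1$ with strict inequality of positive $\nu$-probability, one has $\lambda,\lambda_1,\lambda_2>1$. The ratio $\hgt(\cP_x)/\sT\to\lambda$ of~\eqref{eq:ratio-T-H} follows from the law of large numbers $\frac1T\sum_{t=1}^T\zeta(\sX_t)\to\lambda$ (a byproduct of the CLT) together with $\sT=T+O_\P(1)$.

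For the joint law of the tip, apply the Cram\'er--Wold device: fixing $(a,b,c)\in\R^3$ and using the above for $f=a\delta^{(1)}+b\delta^{(2)}+c\zeta$ yields, via the displayed identity, $\frac1{\sqrt T}\big((Y_1,Y_2,\hgt(\cP_x))-(x_1,x_2,\lambda T)\big)\cdot(a,b,c)\implies\cN(0,\upsigma(f)^2)$. By bilinearity $\upsigma(f)^2$ is a quadratic form in $(a,b,c)$, and its matrix is the claimed covariance matrix: the off-diagonal entries vanish because the stationary increment sequence is invariant under the reflections $y_1\mapsto2x_1-y_1$ and $y_2\mapsto2x_2-y_2$, which negate $\delta^{(1)}$, resp.\ $\delta^{(2)}$, while fixing $\zeta$ and the other horizontal coordinate; the first two diagonal entries agree by invariance under interchanging the two horizontal coordinates; and $\upsigma^2,(\upsigma')^2>0$ are the positivity asserted by Theorem~\ref{mainthm:clt} for the non-constant observables $\delta^{(1)}$ and $\zeta$.

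The crux is the displayed reduction: one must show that the non-increment parts of $\cP_x$ --- the base below $v_1$, the region above $v_\sT$, and the increments past index $T$ --- are negligible at scale $\sqrt T$ (and not merely at scale $T$), since it is exactly this refinement that separates a CLT from an LLN. All of this is what Theorem~\ref{mainthm:structure} is built to deliver, so the remaining work is bookkeeping: combining its control on $v_1$, on $\sT-T$, and on the tip with the exponential tails of a single increment's area, volume, span and displacement, so that the $O_\P(1)$ surplus increments and the $O_\P(\log T)$ base together contribute only $o_\P(\sqrt T)$.
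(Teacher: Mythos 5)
Your proposal is correct and matches the paper's own proof in essence. The paper defines observables $f_i(X)=(v_+(X)-v_-(X))\cdot e_i$ (your $\delta^{(1)},\delta^{(2)},\zeta$) and $f_V(X)=|\cC(X)|-1$, $f_A(X)=|\cF(X)|-4$ (your $V,A$ minus the double-counted cut-point cell/faces), bounds the discrepancy between $\sum_{t\leq T}f_i(\sX_t)$ and the target quantity by $\diam(\sB_x)+\hgt(v_\Tsp)+\tfrac12+|\cF(\sX_{>T})|$ (resp.\ its cubes/squares) which is $o_\P(\sqrt T)$ via Proposition~\ref{prop:base-exp-tail} and Lemma~\ref{lem:tip-highest-increment}, applies Corollary~\ref{cor:clt:Rd} (itself Cram\'er--Wold, as you do), and kills the off-diagonals and equates the first two diagonals of $\Sigma$ via the reflection/$\pi/2$-rotation symmetries of Proposition~\ref{prop:mean-zero-observables}---exactly the argument you sketch. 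The only minor bookkeeping wrinkles in your write-up (e.g., the telescoping should run $t=1,\dots,\sT$ and the base is the region below $v_\Tsp$ rather than $v_1$) do not affect the $o_\P(\sqrt T)$ estimate; and your observation that $\lambda>1$ needs the existence with positive $\nu$-probability of an increment with $\zeta\ge2$ (which follows from the finite-energy lower bound of Claim~\ref{claim:increment-lower-bound}) is a point the paper itself glosses over.
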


In order to establish the above results, one must control the behavior of the pillar below its first cut-point. But, it is precisely this part of the pillar where the effect of neighboring pillars is the most difficult to control: the abundance of nearby pillars around height $0$ might in principal cause a pillar, conditioned to contain $T$ increments, to have a large (diverging with $T$) segment preceding its first increment. We account for this via a novel decomposition of the pillar into a \emph{base} and a \emph{spine}: the next result shows that the former's total size is typically negligible, while the latter admits a  detailed characterization in terms of its increment sequence.

\begin{maintheorem}[pillar structure]\label{mainthm:structure}
There exists $\beta_0>0$ such that the following holds for all $\beta>\beta_0$: for every sequence $T=T_n$ with $1\ll T \ll n$ and $x=(x_1,x_2,0)$ with $(x_1,x_2)\in\llb-n+\Delta_n+\frac 12,n-\Delta_n-\frac 12\rrb^2$ for some $\Delta_n\gg T$, there exist $c,C>0$ such that, conditional on $\sT\geq T$, the pillar $\cP_x$ has the following structure: 
\begin{enumerate}[(i)]
\item \label{it:struct-base} [Base] There is a cut-point $v_{\Tsp}$ so that the \emph{base} of $\cP_x$, defined as $\sB_x = \{ y\in\cP_x : \hgt(y)\leq \hgt(v_\Tsp)\}$, satisfies $\diam (\sB_x) \leq r$ except with probability $O(\exp(-c \beta r))$ for every $C\log T  \leq r\leq T$.  
\item \label{it:struct-incr} [Spine] The increments $\sX_{\Tsp+1},\ldots\sX_{\sT}$ of the \emph{spine} $\cS_x:=\cP_x\setminus\sB_x$ satisfy, for every $ k,r \leq h$, that the probability that $A({\sX_{\Tsp+k}})\geq r$ is $O(\exp(-c \beta r))$ (letting $A({\sX_t}):=0$ for $t> \sT$).
\item \label{it:struct-mix} [$\alpha$-mixing] For every $k(T)>j(T)$, if $A_1\in \cF_1 := \sigma((\sX_{i})_{i=C\log T}^j)$ and $A_2\in\cF_2:= \sigma((\sX_{i})_{i= k}^{T})$ then the probability of $A_1\cap A_2$ differs from the product of the probabilities of $A_i$ by $O((k-j)^{-10})$.
\item \label{it:struct-stat} [Asymptotic stationarity] There exists a stationary distribution $\nu$ on $\fX^{\Z}$ so that the conditional law of the increments $(\ldots, \sX_{T/2-1}, \sX_{T/2} , \sX_{T/2+1}, \ldots)$ given $\sT\geq T$ converges weakly to $\nu$.
\end{enumerate}
\end{maintheorem}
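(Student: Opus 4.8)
The plan is to derive all four parts from two workhorses applied on top of Dobrushin's decomposition of the interface $\cI$ into \emph{walls} and \emph{ceilings}, together with the exponential tail bounds it furnishes (via rigidity and a Peierls argument) on the sizes of walls, ceilings and bubbles when $\beta>\beta_0$. The first workhorse is a \emph{surgery} estimate for pillars: excising from a configuration a connected sub-structure of $\cP_x$ carrying $m$ dual-faces and re-gluing the remainder by a vertical translation defines a map that raises the Gibbs weight by a factor at least $e^{\beta(m-C)}$ (it removes $\gtrsim m$ disagreement faces) and is at most $e^{Cm}$-to-one (there are at most $e^{Cm}$ connected sub-structures of surface area $m$); hence for $\beta$ large any pillar feature costing an extra $\ge m$ faces has conditional probability $O(e^{-c\beta m})$. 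The second workhorse is a \emph{regeneration} mechanism: a positive density of cut-points are ``clean'', meaning no wall of $\cI$ straddles their height within the shadow of $\cP_x$, and conditioning on a clean cut-point approximately factorizes $\mu_n$ into its restrictions below and above that height, with an error governed by the convergent low-temperature cluster expansion for the interface ensemble, whose straddling contributions carry total weight $e^{-\Theta(\beta)}$. All of this is run conditionally on $\{\sT\ge T\}$, an event of probability $e^{-\Theta(T)}$ whose cost is essentially additive over increments and is therefore absorbed harmlessly into the surgery bounds.

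For part~(i), I would first show that conditional on $\{\sT\ge T\}$ some cut-point occurs at height $O(\log T)$ with probability $1-T^{-\omega(1)}$: if $\cP_x$ has no cut-point at heights in $[1,r]$ then some slab there contains at least two cells of $\cP_x$, so that portion of $\cP_x$ carries $\gtrsim r$ surplus dual-faces over a straight column, and compressing it (which leaves every increment above unchanged, hence preserves $\sT$) gains $\gtrsim\beta r$ in energy; the surgery estimate then bounds this event by $e^{-c\beta r}$. Take $v_\Tsp$ to be the highest cut-point at height $\le C\log T$; running the same surgery on the entire sub-pillar below $v_\Tsp$, now replacing it by a minimal base of unit width, controls the \emph{diameter} of $\sB_x$, not merely its height, and yields the claimed $O(e^{-c\beta r})$ for all $C\log T\le r\le T$ (the upper cutoff is imposed only so the excised piece lies inside the conditioned pillar). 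Part~(ii) is the most direct use of the surgery estimate: replacing a single spine increment $\sX_{\Tsp+k}$ by the trivial increment $\trivincr$ preserves $\sT$ (cut-points above are local and merely translate down) and removes $A(\sX_{\Tsp+k})-O(1)$ faces, so $\mu_n(A(\sX_{\Tsp+k})\ge r\mid\sT\ge T)\le e^{Cr}e^{-\beta(r-C)}\le e^{-c\beta r}$, uniformly in $k$ and $x$ precisely because the spine sits above the crowded bottom region.

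For the $\alpha$-mixing in part~(iii), I would use the regeneration mechanism: in a window of $\ell=k-j$ consecutive spine increments the probability that none is a clean trivial increment is at most $e^{-c\ell}$ (another surgery/Peierls estimate), and on the complement the pillar and walls below such a clean increment are conditionally independent of those above, up to the residual coupling through far walls and through the partition function, which estimating yields the stated $O((k-j)^{-10})$ bound (the exponent is not optimized and merely needs to make the mixing coefficients summable, as required by Theorem~\ref{mainthm:clt}). For the asymptotic stationarity in part~(iv), the uniform exponential tails from part~(ii) make the conditional laws of $(\sX_{T/2+i})_{i\in\Z}$ on $\fX^{\Z}$ tight; any subsequential weak limit is shift-invariant because moving the basepoint by one increment deep in the bulk is measure-preserving up to $o(1)$ (a compression--insertion surgery, balanced since a bulk increment and its neighbour play symmetric roles once both are far from $\sB_x$ and from the tip), and the limit is unique because the regeneration mechanism builds $\nu$ intrinsically --- as the law of the bi-infinite increment sequence issued from a clean cut-point --- and one then checks the finite-$T$ conditional laws converge to it.

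The step I expect to be the main obstacle is controlling the pillar's interaction with its \emph{environment} --- the walls, ceilings and bubbles of the ambient interface, which are densest precisely near height $0$, where $\sB_x$ lives. Every surgery must be arranged so as to leave this environment intact, or only controllably perturbed, which forces one to carry out the compressions inside a suitable tube around $\cP_x$ and to account for each wall the tube meets; and the decouplings underlying parts~(iii) and~(iv) must exclude a single large wall, or a chain of bubbles, bridging far-separated heights of the pillar. Both are handled by the exponential tails on large walls and bubbles from Dobrushin rigidity and Peierls, together with cluster-expansion control of the residual long-range coupling --- but making these estimates quantitative and uniform in $T$, in $n$, and in $x$, and in particular ruling out that the rare conditioning $\{\sT\ge T\}$ secretly manufactures such long bridges, is where the real work lies.
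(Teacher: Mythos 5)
Your high-level skeleton — Dobrushin's wall/ceiling decomposition, cut-point/increment renewal, and Peierls-style excision maps on the interface ensemble to get exponential tails, followed by a decoupling argument across a run of trivial increments to get mixing and stationarity — matches the paper's, but there are three places where the step you gesture at would not go through as stated, and in each case the paper's mechanism is materially different.

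\smallskip
\textbf{Part (ii): the $\g$-term cost of a surgery is not $O(r)$ without iterated marking.}
Your surgery bound $\mu_n(A(\sX_{\Tsp+k})\ge r\mid\sT\ge T)\le e^{Cr}e^{-\beta(r-C)}$ implicitly assumes that the change in $\sum_f\g(f,\cdot)$ under the replacement of $\sX_{\Tsp+k}$ by trivial increments is $O(r)$. It is not: removing an increment shifts every cut-point above $v_{\Tsp+k+1}$ \emph{horizontally} (not merely vertically, as your ``cut-points above... merely translate down'' suggests), so for every face $f$ in some $\sX_j$ with $j>\Tsp+k$ the quantity $|\g(f,\cI)-\g(\theta f,\Psi(\cI))|$ is $\bar K e^{-\bar c(j-\Tsp-k)}$ and the total contribution is $\sim\sum_{j>\Tsp+k}|\cF(\sX_j)|\,e^{-\bar c(j-\Tsp-k)}$. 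There is no a priori bound on this sum in terms of $r=A(\sX_{\Tsp+k})$, and conditioning on it being $O(r)$ is circular (that bound is exactly what one is trying to establish). The paper resolves this by iterated marking in the definition of $\Psi_i$ (Definition~\ref{def:increment-shift-map}): besides $\sX_{\Tsp+k}$, it also deletes every later increment whose excess area exceeds $\fm(\sX_{\Tsp+k})\,e^{\bar c(j-\Tsp-k)/2}$, so that by construction the residual $\g$-contribution is bounded by a multiple of the excess area of the map.

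\smallskip
\textbf{Part (i): the source point cannot be ``the highest cut-point at height $\le C\log T$.''}
The paper's $v_\Tsp$ is not chosen by a fixed $\log T$ threshold; it is the lowest cut-point that clears \emph{every} wall of $\cI\setminus\cP_x$ indexed within a cylinder of radius $R_0 T$ (Definition~\ref{def:spine}). That environment-dependent choice is what guarantees that the spine above $v_\Tsp$ never meets any wall of the ambient interface, so that in the base map $\Phi_\sB$ the spine undergoes only a rigid horizontal translation while the base is flattened. If you instead set $v_\Tsp$ to be the highest cut-point below $C\log T$, the spine above it can still be shadowed by external walls of height up to $\Theta(\log T)$ (and, under the rare conditioning, potentially larger), and your compression of everything below it does not leave the increments above ``unchanged'': it shifts them so as to possibly collide with those walls, and the $\g$-coupling to them is uncontrolled. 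The paper pays precisely here: the whole of \S\ref{sec:base}, the choice of $h_\star$ and $y^\dagger$, and the multiplicity bound in Proposition~\ref{prop:base-shrinking-multiplicity} are what make this compression both legal and quantitatively favorable.

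\smallskip
\textbf{Parts (iii)--(iv): a single clean cut-point does not factorize the interface measure, and the paper's 2-to-2 maps are not optional.}
Your regeneration mechanism claims that conditioning on a clean cut-point ``approximately factorizes $\mu_n$ into its restrictions below and above that height.'' The interface ensemble $\mu_n(\cI)\propto\exp[-\beta|\cI|+\sum_f\g(f,\cI)]$ has no domain-Markov property, and only \emph{differences} $|\g(f,\cI)-\g(f',\cI')|$ decay exponentially in the congruence radius — the $\g(f,\cI)$ themselves are $O(1)$. Consequently a single trivial increment between $j$ and $k$ decorrelates nothing: a face in $\sX_j$ still has $\g$-coupling of constant order to faces in $\sX_{j+1}$. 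To get mixing at rate $|k-j|^{-\gamma}$ one needs a run of $L\asymp\log|k-j|$ consecutive trivial increments; and even then a one-to-one map replacing $X_k$ by $X_k'$ still costs $e^{C(|X_k|\vee|X_k'|)}$, which is $\Theta(1)$, not $o(1)$. The paper's resolution is the 2-to-2 map $\Phi_{\textsc{mix}}$ (resp.\ $\Phi_{\textsc{stat}}$) acting on \emph{pairs} of interfaces: it splices both spines at a common run of trivial increments and swaps their tops, identifying every face of the input pair with a congruent face of the output pair, so the $\g$-cost collapses to $O(e^{-\bar c L/2})$. This pairing trick is the key new idea for items (iii) and (iv), and your proposal has no substitute for it. (Your remark that ``the regeneration mechanism builds $\nu$ intrinsically'' as the law issued from a clean cut-point also presupposes a renewal structure that, as the paper emphasizes in its comparison to Ornstein--Zernike theory, the pillar does not have below its first cut-point.)

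You correctly flag the interaction with the environment near height $0$ as the main obstacle, but the specific devices the paper uses to clear it — the $T$-dependent, environment-aware source point, the iterated marking of large increments in $\Psi_i$, and the 2-to-2 maps — are not present in your outline, and each would have to be invented to make your argument rigorous.
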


(These are special cases of stronger statements, which do require additional definitions; for those results implying Items~\eqref{it:struct-base}--\eqref{it:struct-stat}, see Prop.~\ref{prop:base-exp-tail}, Prop.~\ref{prop:exp-tails-increments}, Prop.~\ref{prop:increment-mixing} and Cor.~\ref{cor:limiting-distribution}, respectively.)
As mentioned, each of these require delicately designed maps on interfaces, for which we can control both the change in probability under the map, and its multiplicity; the maps for Items~\eqref{it:struct-base}--\eqref{it:struct-stat} are depicted in Figures~\ref{fig:increment-map}--\ref{fig:stationarity-map} respectively.  

\begin{remark}\label{rem:d>3}
Theorems~\ref{mainthm:max-lln}--\ref{mainthm:structure} generalize naturally to all dimensions $d\geq 3$; the main changes will be that the maximum $M_n$ will have $\frac{M_n}{\log n}\to \frac{(d-1)}{\alpha_\beta}$ in probability, and $\alpha_\beta /\beta \to 2(d-1)$ as $\beta \to\infty$. The results and proofs are otherwise unchanged except that the constants will depend on the dimension $d$, and the lattice notation would be changed, e.g., the interface will be a connected set of $(d-1)$-cells, or plaquettes. For the sake of clarity of exposition and visualization we present all proofs in the most physical $d=3$ setting.
\end{remark}

\begin{remark}
While Theorems~\ref{mainthm:max-lln}--\ref{mainthm:structure} are w.r.t.\ the measure $\mu_n$ (which is the Ising model on the infinite cylinder $\Lambda_n$ with Dobrushin boundary conditions), the fact that the same results hold on the box  $\llb-n,n\rrb^d$ follows from a standard coupling argument. Indeed, by the exponential tails on interface fluctuations and on bubbles,  the interfaces on $\Lambda_n$ and $\llb -n,n\rrb^d$ can be coupled to match  with probability $1-O(e^{-cn})$; likewise their pillars $\cP_x$ conditioned on having at least $T$ increments, agree with probability $1-O(e^{-cn})$ since $T\ll n$.  
\end{remark}

\subsection{Tools and key ideas}\label{subsec:tools}

\subsubsection*{Cluster expansion vs.\ Peierls maps under mixed boundary conditions}

The classical Peierls map---an  
injection from configurations with a specified bubble (a connected set of $(d-1)$-cells homeomorphic to a $(d-1)$-sphere) to ones without it, demonstrating that the energetic cost of such a  bubble outweighs its entropy at large enough $\beta$---is a strikingly effective and robust tool for handling low-temperature behavior under homogeneous boundary conditions. There (within the plus or minus phase) it implies that for any dimension $d\geq 2$, bubbles are microscopic (and their size obeys an exponential tail) at low enough temperature.
However, Peierls maps are insufficient to address the rigidity of the interface in the presence of Dobrushin's boundary conditions: the natural attempt to define a Peierls map on configurations which would ``flatten" the interface is hindered by (a) the interaction of the interface with nearby bubbles, and (b) its self-interactions due to overhangs.

 To overcome this obstacle, Dobrushin used \emph{cluster expansion} (cf.\ also~\cite{Minlos-Sinai}), a robust machinery that, in this case, allows one to disregard the floating bubbles and move to a distribution over interfaces $\cI$ given by 
\begin{equation}\label{eq:cluster-exp-dist} \mu_n(\cI) \propto \exp\bigg[-\beta|\cI| +\sum_{f\in\cI}\g(f,\cI)\bigg]\,,\end{equation}
where $\g$ is a function (over interfaces $\cI$ with a marked face $f$) which is uniformly bounded and local in the sense that $|\g(f,\cI)-\g(f',\cI')|$ decays exponentially in the radius $\br$ about which the balls $B_{\br}(f)$ in $\cI$ and the local neighborhoods of $f$ in $\cI$ and $f'$ in $\cI'$ are isomorphic (see Theorem~\ref{thm:cluster-expansion} in \S\ref{sec:cluster-expansion} for the full statement).
 N.b.\ that by moving to distributions on random interfaces, hiding the interacting bubbles in the Ising model, one loses several useful features of the Ising model: 
 the law of $\cI$ does not have the domain Markov property, and there are long range interactions between faces in $\cI$. 

With this representation, properties of the Ising interface can be deduced from Peierls-like maps. The general strategy for utilizing such maps is as follows. Suppose we wish to show that some set of interfaces $\cA_r$ (e.g., those with height oscillations of at least $r$ above the origin) is exponentially in $r$ rare at $\beta$ large. Then we construct a map $\Psi$ sending $\cA_r$ to a subset $\Psi(\cA_r)$ of interfaces for which we have the following control:
\begin{enumerate}
    \item\label{it:maps-energy} energy gain: for every $\cI\in \cA_r$, the map $\Psi$ induces an energy gain $|\cI| - |\Psi(\cI)|\geq r$. 
    \item\label{it:maps-weight} weight modification: for every $\cI\in \cA_r$, we obtain  $\frac{\mu_n(\cI)}{\mu_n(\Psi(\cI))}\leq e^{ - c \beta (|\cI|- |\Psi(\cI)|)}$ from~\eqref{eq:cluster-exp-dist}.
    \item\label{it:maps-mult} multiplicity: for all $\ell\geq r$, every $\cJ$ in the image of $\Psi$ has at most $C^{\ell}$ pre-images $\cI$ with $|\cI|-|\cJ|=\ell$.
\end{enumerate}
(If we wish to show $\cA_r$ has small probability conditionally on some set $\cB$, we further require  $\Psi(\cA_r)\subset \cB$.) 
%
%
The complication in carrying this out is, of course, the function $\g$, which captures the very same obstacles that hindered the basic Peierls approach---the (hidden in the cluster expansion framework) bubbles in the Ising model and
self-interactions of the interface.
Ideally, one would be able to bound the effect of $\g$ by comparing the faces $f\in\cI$ which were modified under $\Psi$ to faces $f'\in\cJ$ with isomorphic local neighborhoods.

\subsubsection*{Dobrushin's walls and ceilings decomposition and why it fails for LLN} Dobrushin was able to carry out the above approach via a clever combinatorial decomposition of the interface, which reduced the analysis of the maps on the 3D interface to two-dimensional interactions. This decomposition is based on the following partition of $\cI$ tailored to view it as a perturbation of the flat interface $\cL_0:= \cF(\mathbb R^2  \times \{0\})$:  
\begin{itemize}
    \item A \emph{ceiling face} $f\in\cI$ is a horizontal face whose projection on the $xy$-plane is unique among all faces of the interface $\cI$. A \emph{ceiling} of $\cI$ is a maximal connected component of ceiling faces.
    \item 
A \emph{wall face} $f\in\cI$ is a non-ceiling face.
    A \emph{wall} of $\cI$ is a maximal connected component of wall faces.
\end{itemize}
Consequently, one can ``disregard" the ceilings as well as the vertical positions of every wall, and ``standardize" each wall by moving it down to height zero, obtaining a \emph{standard wall representation} of the 3D Ising interface. Importantly, this yields a bijection between collections of standard walls, and interfaces (see Lemma~\ref{lem:interface-reconstruction}), akin to the contour representation of the 2D Ising configuration. 

The natural attempt at a map $\Psi$ is then to have it delete a specific wall $W$ rooted at a face $x\in\cL_0$, from the  standard wall representation of the interface $\cI$, then recover from the resulting standard wall collection, the interface $\Psi(\cI)$. The difficulty is, as usual, due to the function $\g$, and specifically due to \emph{non-deleted} faces whose local neighborhoods would be vertically shifted by $\Psi$. To circumvent this, one may further delete any wall that is ``too close'' to $W$; formally, one defines a \emph{group of walls} according to some criterion of proximity, while relying on the fact that when walls are sufficiently far apart, the exponential decay of $\g$ will negate their interaction. However, deleting too many additional walls can forfeit the second requirement from the map---control over its multiplicity. Dobrushin's criterion was a carefully chosen middle-ground, importantly based solely on two-dimensional distances in the $xy$ directions (see also Definition~\ref{def:group-of-walls}):
\begin{itemize}
     \item Two walls $W$ and $W'$ are said to be ``close" if the interface $\cI$ contains at least
    $\dist(x,x')^2$ faces above $x$ or above $x'$ for some  $x,x'\in\cL_0$ in the projections of $W$ and $W'$ onto $\cL_0$ respectively.
    \item A \emph{group of walls} if a maximal component of pairwise close walls.
\end{itemize}
(Note that ``tall'' walls are easier to group with, and the seemingly arbitrary threshold $\dist(x,x')^2$ plays a special role, via an isoperimetric inequality, in the analysis of faces deleted vs.\ ones that are only shifted.)
The advantage in Dobrushin's combinatorial decomposition is then that under the map $\Psi$, faces only undergo \emph{vertical} shifts, and $xy$-distances between faces are preserved: as such the radius $\br$ coming from $\g$ can be expressed in terms of an $xy$-distance to the nearest deleted wall, so that the above definition of closeness enables the desired control on the contribution from the $\g$ terms in~\eqref{eq:cluster-exp-dist} in terms of $\beta (|\cI|-|\cJ|)$. 

This argument showed that the group of walls adjacent to a fixed face $x
\in\cL_0$ in $\cI$ has an exponential tail, implying the rigidity of $\cI$ and that its maximum height is $O(\log n)$ with probability tending to $1$. However, it is far too crude to handle subtle quantities of interest such as the asymptotics of the maximum (LLN) and the structure of the interface in a local neighborhood surrounding it (e.g., results \`a la Corollary~\ref{maincor:displacement-surface-area}):
\begin{enumerate}[1.]
    \item The classification of faces into walls and ceilings does not relate well to the local spin configuration---as it depends on the behavior of the interface far above/below a face. But, the LLN (Theorem~\ref{mainthm:max-lln}) does embed local spin-spin correlation: the leading order term of the maximum of $\cI$ is given in terms of a connective constant of spin agreement in infinite volume, which operations of walls are too coarse to reflect.
    \item Recall that treating connected sets of wall faces as a single wall means any two connected wall-sets with intersecting shadows on the $xy$-plane are one and the same. While crucial to Dobrushin's reduction of the problem to 2D, this comes in the way of analyzing the connected component of plus spins emanating from a fixed face $x\in\cL_0$; the taller this component is, the more pronounced this issue is  (see Fig.~\ref{fig:pillar-shadow}, left).
    \item Further bundling of walls into groups of walls attaches an extra layer of walls to a connected component of plus spins; moreover, the criterion for this bundling says that if the wall $W_x$ of some face $x\in\cL_0$ has $h$ faces above $x$, then it will collect every distinct wall $W_y$ for $y$ within a circle of area $h$ centered about~$x$ (and so on, in a cascading manner). This would make it impossible to use this framework for more delicate questions such as tightness for the centered maximum (Problem~\ref{prob:tightness}).
    \item Analyzing the effect of operations on walls (beyond simply \emph{deleting the entire} group of walls of~$x\in\cL_0$) is problematic: the collection of walls does not enjoy monotonicity / FKG inequalities, nor a domain Markov property (these properties are critical in the proof of sub-multiplicativity, as explained below). 
\end{enumerate}

\subsubsection*{Maps on the increment sequence and base}

Unlike Dobrushin's proof of the rigidity of $\cI$ which used maps to compare $\cI$ to flatten interfaces,  
in this work we construct Peierls-type arguments with
reference interfaces that, rather than flat, have a three-dimensional large deviation above a point $x\in \cL_0$:
\begin{enumerate}[1.]
    \item At a high level, we would like our maps to ``straighten'' the pillar in the input interface $\cI$, namely we would like to replace an increment in the pillar by a straight column of singleton boxes. 
    The potential interactions of the pillar with its base, whose size and shape are much more difficult to control, necessitates that every map should first ``flatten'' the base as well.
    Consequently, we wish to use a map with a reference interface consisting of a flat plane appended to a modification of the random pillar $\cP_x$ (altered at its base and the designated increment we wish to control). This is achieved in two steps:
    \begin{enumerate}[(i)]
        \item A map $\Psi_i$ to straighten the increment $\mathscr X_{i}$ (see~\S\ref{subsec:def-incr-map} for its definition, and \S\ref{subsec:strategy-increment-map} for its proof strategy).
        \item A map $\Phi_{\mathscr B}$ to flatten the base (see~\S\ref{subsec:def-base-map} for its definition, and \S\ref{subsec:strategy-base-map} for its proof strategy).
    \end{enumerate}
    
    \item Whereas Dobrushin proofs only had vertical shifts, and thus interaction distances were controlled by 2D distances, in the above maps we must account for both horizontal and vertical shifts and their interplay. The subtle choice of $v_\Tsp$, the ``source point'' for the spine as given in Theorem~\ref{mainthm:structure}, serves as a key ingredient: in a sense it protects the pillar from interaction with neighboring ones (whose analysis is essential in the LLN for the maximum---see below) and isolates the effects of horizontal and vertical shifts: below $v_\Tsp$ faces will only be shifted vertically by our maps, and above it they will only undergo horizontal shifts. 
\end{enumerate}



\begin{figure}
    \centering
    \vspace{-0.4in}
    \includegraphics[width=0.32\textwidth]{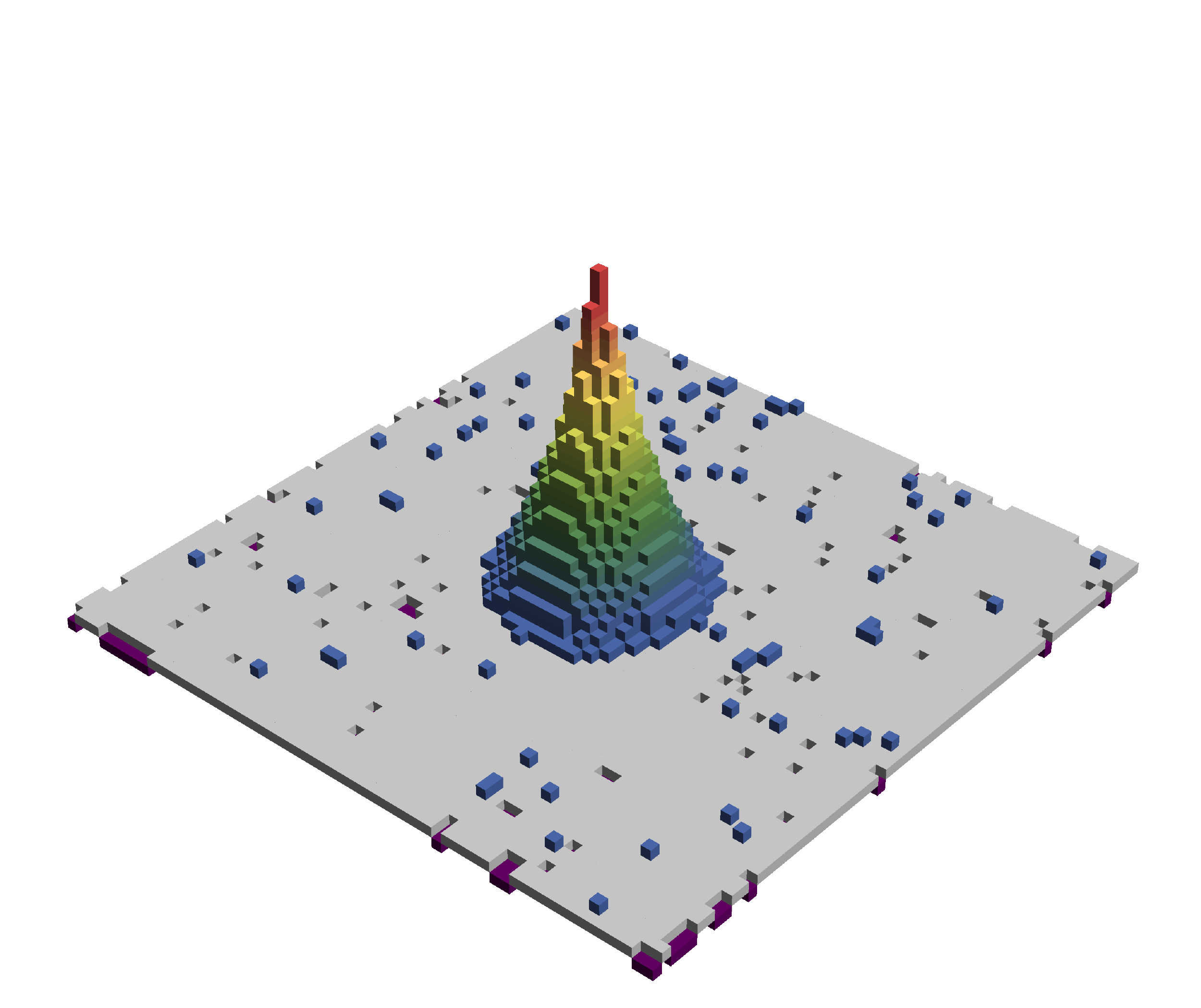}
    \includegraphics[width=0.32\textwidth]{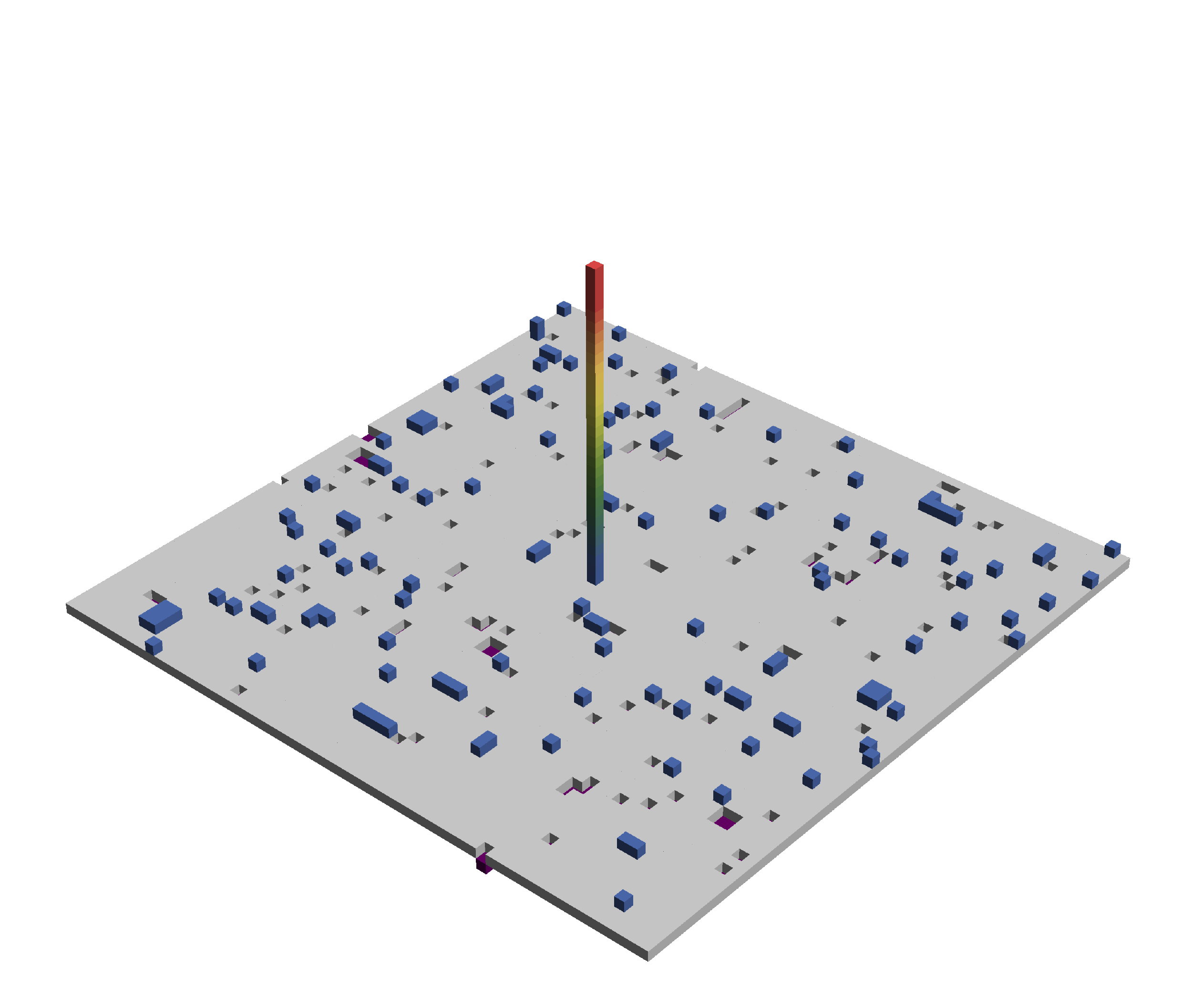}
    \includegraphics[width=0.32\textwidth]{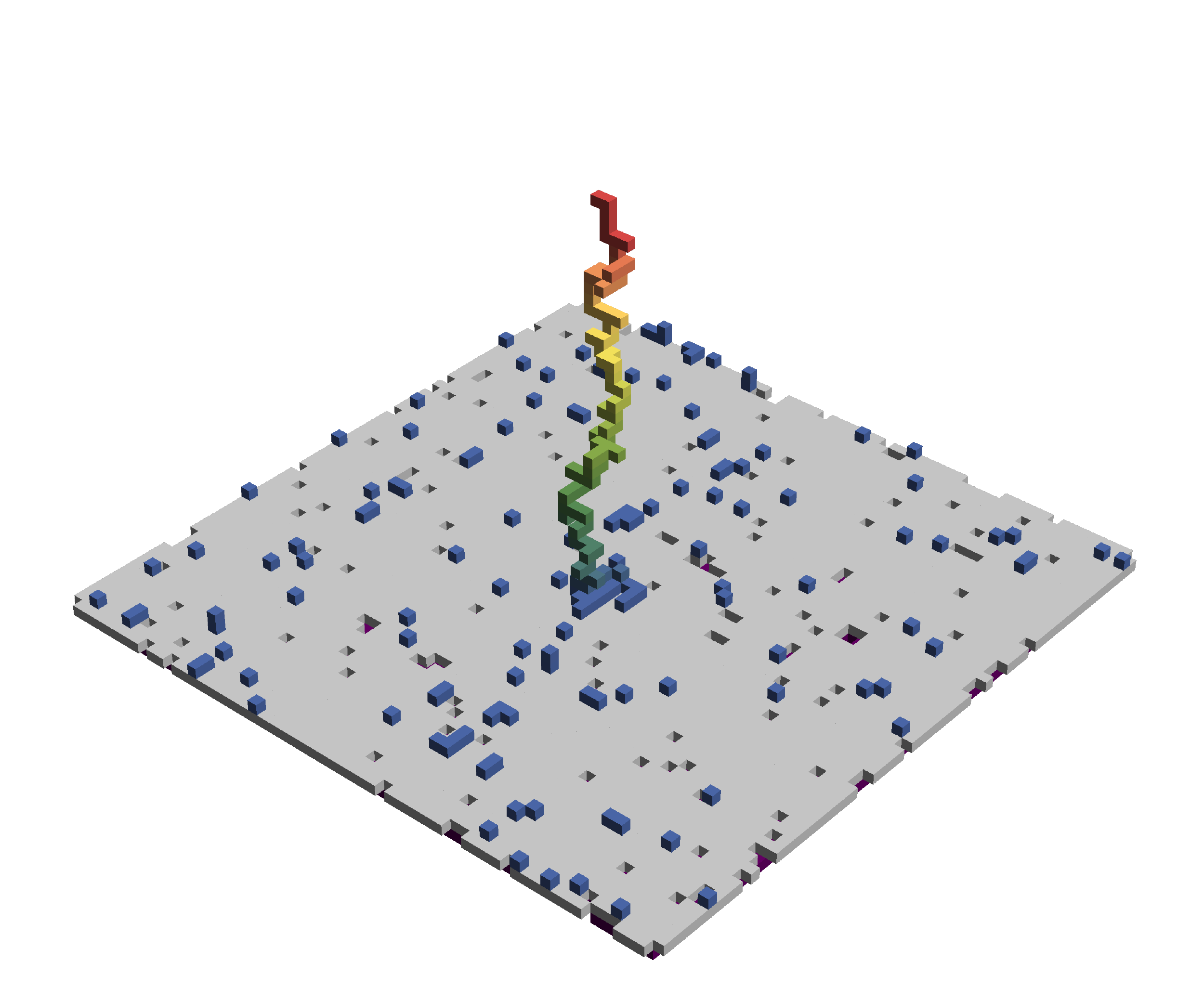}
    \vspace{-0.1in}
    \caption{Typical pillars of the Discrete Gaussian model (left), the SOS model (middle), and the 3D Ising model (right) conditioned on the large deviation event $\hgt(\cP_x)\geq h$.}
    \label{fig:pillar-lds}
    \vspace{-0.1cm}
\end{figure}
\begin{figure}
  \begin{tikzpicture} 
  
  \pgfmathsetmacro{\circmult}{0.35}
  \pgfmathsetmacro{\circscale}{0.6}
  
  \begin{scope}
  \foreach \a / \b in {1/6, 2/5, 2/6, 3/4, 3/5, 3/6, 4/6, 4/7, 4/8, 5/8, 6/4, 6/5, 
6/6, 6/8, 7/4, 7/6, 7/8, 8/4, 8/5, 8/6, 8/7, 8/8, 9/4, 9/6, 
9/7, 9/8, 9/9, 10/4, 10/9, 11/4, 11/5, 11/6, 11/7, 11/8, 11/9,
12/4, 12/5, 13/5, 14/5, 15/5, 16/5, 17/5, 17/6, 17/7, 18/7, 18/8, 
19/8, 20/6, 20/7, 20/8}
	\node [circle,fill=red!50,scale=\circscale,outer sep=0.1cm] (V\a\b) at (\a*\circmult,\b*\circmult) {};
  \foreach \a / \b in {1/4, 1/5,2/3, 2/4,3/3,4/3,4/4,4/5,5/3, 5/4,5/5,5/6,5/7,6/3,6/7,7/3,7/7,8/3,9/3,9/5,10/3,10/5,10/6,10/7,10/8,11/3,12/3,13/3, 13/4,14/4,15/4,16/4,17/4,18/4, 18/5,18/6,19/5, 19/6,19/7,20/5}
	\node [circle,fill=blue!50,scale=\circscale,outer sep=0.1cm] (V\a\b) at (\a*\circmult,\b*\circmult) {};
    \draw[dashed, gray, thin] (V15.north)--(V205.north);    
	\newcommand{\interA}{($(V15.north) + (-0.5*\circmult,0)$) -- (V15.north)  to [bend left=45] (V15.east) to [bend right=45] (V24.north) to [bend left=45] (V24.east) to [bend right=45] (V33.north) to [bend right=45] (V44.west) to (V45.west) to [bend left=45] (V45.north) to [bend right=45] (V56.west) to (V57.west) to [bend left=45] (V57.north) to (V77.north) to [bend left=45] (V77.east) to [bend left=45] (V77.south) to (V67.south) to [bend right=45] (V56.east) -- (V54.east) to [bend right=45] (V63.north) -- (V123.north) to [bend right=45] (V134.west) to [bend left=45] (V134.north) -- (V174.north) to [bend right=45] (V185.west) -- (V186.west) to [bend left=45] (V186.north) to [bend right=45] (V197.west) to [bend left=45] (V197.north) to [bend left=45] (V197.east) -- (V196.east) to [bend right=45] (V205.north) to ++ (0.5*\circmult,0)
	};
	\newcommand{\closeA}{($(V205.north) + (0.5*\circmult,0)$) -- ++ (0,-3*\circmult) -- ++(-20*\circmult,0) -- ++(0,3*\circmult) }
	\draw[ultra thick] \interA ;
	\path[fill=blue!50, fill opacity=0.2] \interA -- \closeA;
	\draw[ultra thick, cyan, fill=blue!30, fill opacity=0.2] (V95.west) to [bend right=45] (V95.south) to (V105.south) to [bend right=45] (V105.east)
	 -- (V108.east) to [bend right=45] (V108.north)	
	  to [bend right=45] (V108.west) to (V106.west) to [bend left=45] (V95.north) to [bend right=45] (V95.west)	 
	 ;
	 \end{scope}
  \begin{scope}[shift={(8,0)}]
  
  \foreach \a / \b in {1/6, 2/6, 2/5, 3/4, 3/5, 3/6, 4/6, 4/7, 4/8, 5/8, 6/4, 6/5, 
6/6, 6/8, 7/4, 7/5, 7/6, 7/7, 7/8, 8/7, 8/8, 8/9, 8/10, 9/5, 9/10, 10/5,10/6,10/7,10/8, 10/10,  11/9,11/10,
12/4, 12/5,12/6,12/7,12/8, 12/9, 13/5, 14/5, 15/5, 16/5, 17/5, 17/6, 17/7, 18/7, 18/8,
19/8, 20/6, 20/7, 20/8}
	\node [circle,fill=red!50,scale=\circscale,outer sep=0.1cm] (V\a\b) at (\a*\circmult,\b*\circmult) {};
  \foreach \a / \b in {1/4,1/5,2/3,2/4,3/3,4/3,4/4,4/5,5/3, 5/4,5/5,5/6,5/7,6/3,6/7,7/3,8/3,8/4,8/5,8/6,9/4, 9/6, 
9/7, 9/8, 9/9,10/4, 10/9,11/3, 11/4,11/5, 11/6, 11/7, 11/8,12/3,13/3, 13/4,14/4,15/4,16/4,17/4,18/4, 18/5,18/6, 19/5, 19/6,19/7,20/5}
	\node [circle,fill=blue!50,scale=\circscale,outer sep=0.1cm] (V\a\b) at (\a*\circmult,\b*\circmult) {};
\draw[dashed, gray, thin] (V15.north)--(V205.north);
	\newcommand{\interA}{($(V15.north) + (-0.5*\circmult,0)$) -- (V15.north)  to [bend left=45] (V15.east) to [bend right=45] (V24.north) to [bend left=45] (V24.east) to [bend right=45] (V33.north) to [bend right=45] (V44.west) to (V45.west) to [bend left=45] (V45.north) to [bend right=45] (V56.west) to (V57.west) to [bend left=45] (V57.north) to (V67.north) to [bend left=45] (V67.east) to [bend left=45] (V67.south) to [bend right=45] (V56.east) -- (V54.east) to [bend right=45] (V63.north) 
-- (V73.north) to [bend right=45] (V84.west) -- (V86.west) to [bend left=45] (V86.north)  to [bend right=45] (V97.west) -- (V99.west) to [bend left=45] (V99.north) -- (V109.north) 
to [bend left=45] (V109.east) to [bend right=45] (V118.north) to [bend left=45] (V118.east) -- (V114.east) to [bend right=45] (V123.north) to [bend right=45] (V134.west) to [bend left=45] (V134.north) -- (V174.north) to [bend right=45] (V185.west) -- (V186.west) to [bend left=45] (V186.north) to [bend right=45] (V197.west) to [bend left=45] (V197.north) to [bend left=45] (V197.east) -- (V196.east) to [bend right=45] (V205.north) to ++ (0.5*\circmult,0)
	};
	\newcommand{\closeA}{($(V205.north) + (0.5*\circmult,0)$) -- ++ (0,-3*\circmult) -- ++(-20*\circmult,0) -- ++(0,3*\circmult) }
	\draw[ultra thick] \interA ;
	\path[fill=blue!50, fill opacity=0.2] \interA -- \closeA;
	\draw[ultra thick, purple, fill=purple!30, fill opacity=0.2] (V95.west) to [bend right=45] (V95.south) to (V105.south) to [bend right=45] (V105.east)
	 -- (V108.east) to [bend right=45] (V108.north)	
	  to [bend right=45] (V108.west) to (V106.west) to [bend left=45] (V95.north) to [bend right=45] (V95.west)	 
	 ;\end{scope}
\end{tikzpicture}
  \caption{The pillar $\cP_x$ vs.\ the $(+)$-component $\sP$ above $x$: on left, $\cP_x=\emptyset$ whereas $\sP\neq\emptyset$ (the interface tunnels underneath $\mathscr P$); on right, $\cP_x\neq\emptyset$ whereas  $\sP=\emptyset$ (a minus bubble).}
  \label{fig:interface-nonmon}
	\end{figure}
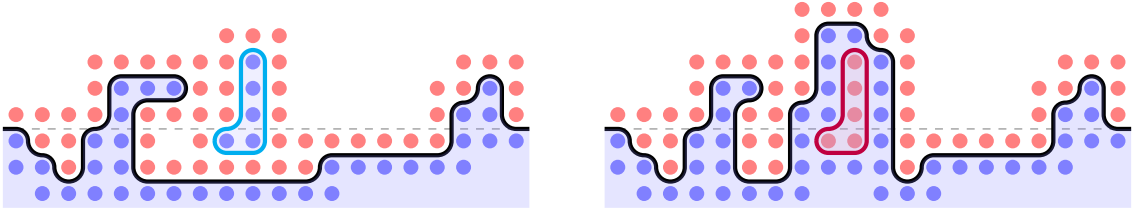

\subsubsection*{Establishing the limiting LD rate function}
As the leading order constant of the maximum of the interface is given by a solution to the LD problem of plus connectivity in infinite volume (much like the maximum of the surface in approximating models for the 3D Ising model such as the ($2+1$)D SOS and DG models were governed by LD problems; see Figure~\ref{fig:pillar-lds}), a prerequisite to the proof of Theorem~\ref{mainthm:max-lln} is to establish existence of the limit given in~\eqref{eq:alpha-beta-def}.
A standard approach to accomplish this would be to establish sub-multiplicativity or super-multiplicativity for $a_h := \mu_{\Z^3}^\mp(A_h)$, where $A_h$ is the event in the right-hand of~\eqref{eq:alpha-beta-def}:
\begin{itemize}
    \item One may expect $(a_h)$ to be super-multiplicative, just like other increasing connection events in the Ising model and other monotone spin systems. However, if we reveal the $+$ connection up to height $h_1$ due to $A_{h_1}$ in hope that only positive information is given on $A_{h_1+h_2}$ (whereby FKG would provide the sought estimate), we find that at height $h_1$ the measure is more negative than at height $0$---the non-translation invariance of the boundary conditions makes a connection from $h_1$ to $h_2$ exponentially less likely than one from height $0$ to $h_1$. 
    \item Instead, we prove approximate sub-multiplicativity via a crucial application of Theorem~\ref{mainthm:structure}\eqref{it:struct-base}. 
    The notion of a pillar is well-suited to describe the ($+$)-component of $x$ above height $0$---which we may reveal up to height $h_1$. The ($-$) spins on its boundary yield negative information, which we may discard via monotonicity and domain Markov; however, this reveals additional ($+$)-spins at height $0$, which encompass positive information. Yet these are part of the \emph{base} $\sB_x$, which Theorem~\ref{mainthm:structure} shows has size at most $C \log^2 h_1$ with probability $1-o(1)$. Tilting the measure by these ($+$)-spins thus costs a factor of $e^{O(\log^2 h_1)}=e^{o(h_1)}$, which does not affect the sought sub-multiplicativity bound.
\end{itemize}
A subtle point worthwhile stressing is that, despite the close connection between the pillar $\cP_x$ and the ($+$)-component above $x$ in $\mathbb R^2 \times [0,\infty)$, neither one necessarily contains the other (see Figure~\ref{fig:interface-nonmon}).


\subsubsection*{Maps on pairs of interfaces for mixing and stationarity}
In order to prove the more refined $\alpha$-mixing and stationarity properties of the increment sequence, we introduce \emph{$2$-to-$2$ maps} that act not on a single interface, but on a pair of interfaces. Importantly, with mixing and stationarity, our aim is not to show some set of interfaces is unlikely, but rather that some set of interfaces have roughly equal probability to some other set of interfaces: e.g., the pair $(\mathscr X_j, \mathscr X_k) = (X_j, X_k)$ is roughly equally likely as $(\mathscr X_j, \mathscr X_k) = (X_j, X_k')$ in the case of mixing, and $\mathscr X_j = X$ is roughly equally likely as $\mathscr X_{k} = X$ in the case of stationarity. There is no relative energy gain here, so we need the cost in the exponent coming from the function $\g$ in~\eqref{eq:cluster-exp-dist} to be~$o(1)$. To resolve this, we instead pair up interfaces, and apply the map to pairs of interfaces, performing a swapping operation to be able to identify each face in the original pair of interfaces, with some face in the image pair of interfaces. 
We explain the subtleties in carrying this through in more detail in~\S\ref{subsec:strategy-mix} and~\S\ref{subsec:strategy-stat}. 

\subsubsection*{Stein's method argument for the CLT}
The proof of the CLT in Proposition~\ref{prop:clt} (which implies Corollary~\ref{maincor:displacement-surface-area}) uses a Stein's method type argument which was used by Bolthausen~\cite{Bolthausen82} to handle stationary, mixing sequences of random variables (appealing to the new results on $\alpha$-mixing and stationarity obtained via the $2$-to-$2$ maps). We explain the complications in our setting compared to that of~\cite{Bolthausen82} in~\S\ref{subsec:strategy-clt}.

\subsubsection*{Comparison to Ornstein--Zernike theory.}
We pause to compare our proof approach above to the well-known Ornstein--Zernike (OZ) theory of which the results of Theorem~\ref{mainthm:structure} may be reminiscent. Since the pioneering works~\cite{CI02,CIV03}, there has been a remarkable line of work analyzing the structure of ``long connections" in the high-temperature Ising model (all $\beta<\beta_c$) in all dimensions $d\geq 2$ using what is known as modernized OZ theory; the analysis was extended to the FK and Potts models (see, e.g.,~\cite{CIV08,Io98}).

Namely, these works have analyzed, in the setting of the Ising model, the shape of a plus cluster connecting the origin to a site $\vec x$ at distance $\|x\|$. Via a decomposition into cut-points or cone-points, and increments between these, these works have identified a renewal structure in the long finite clusters of the high-temperature Ising model, with diffusive random-walk behavior at cut-points, and microscopic excursions in between. 

In $d=2$, by the duality between $\beta<\beta_c$ and $\beta>\beta_c$, OZ theory directly translates to the low-temperature interface under Dobrushin boundary conditions. As such, for all $\beta>\beta_c$, the 2D Ising interfaces have been decomposed into cut-points with a renewal structure, and small increments in between with rapid decay of correlations; this was instrumental in pushing convergence of the interface to a Brownian bridge all the way to $\beta_c$~\cite{GrIo05}. In $d\geq 3$, there is no correspondence between high-temperature connections and low-temperature interfaces; rather, the more naturally analogous low-temperature event is a truncated connection event of the origin being connected by pluses to some $x$ under the infinite-volume minus measure---in percolation language, a connection from $0$ to $x$ not connected to the \emph{unique} infinite component. 

By contrast, in our setting, the pillars of the plus phase are part of the infinite plus component, and are thinned by the \emph{distinct} infinite minus component whose coexistence is forced by the boundary conditions. By Theorem~\ref{mainthm:structure}, these pillars appear to have similar behavior beyond their first cut-point to long finite plus clusters in the minus phase. But, below that first cut-point there is a strong influence from the connection to the infinite plus component. The cut-point, increment decomposition is not helpful for dealing with these interactions with other branches of the infinite component (at the base); thus, controlling the base of the pillar is the most delicate part of our analysis. 
%

It is therefore important to stress that, while appearing similar to our cut-point decomposition of pillars, \emph{one cannot hope to characterize the pillars of the low temperature 3D Ising interface via
the OZ theory}. Indeed, the OZ behavior is valid for all $\beta>\beta_c$ in any dimension, whereas rigidity, let alone the results we prove, is conjectured to be false near $\beta_c$ in dimension $d=3$, as well as under any tilt in dimension $d=3$.

\subsection{Related work and open problems}\label{sec:related-work}

In this section, we give a (by no means complete) overview of literature related to the analysis of random interfaces/surfaces describing separation of phases, and highlight some unresolved problems. 
As discussed, the pioneering work of Dobrushin rigorously established results on such  interfaces of the Ising model via cluster expansion, including in particular rigidity at low temperatures in three (and higher) dimensions, and thus the existence of (infinite-volume) Gibbs measures describing the coexistence of phases. The approach of~\cite{Dobrushin72a}, outlined in \S\ref{sec:dobrushin-definitions}--\ref{sec:dobrushin-proof}, has been used to show rigidity for various other statistical physics models in $d\geq 3$, e.g., for the Widom--Rowlinson model~\cite{BLOP79a,BLP79b}, the Falicov--Kimball models~\cite{DMN00} and percolation and random-cluster/Potts models~\cite{GielisGrimmett,CeKo03}. We also mention that Van-Beijeren gave an elegant and simplified proof of the rigidity of the Ising interface using correlation inequalities in~\cite{vanBeijeren75}. 

Subsequently, cluster expansion was instrumental in analyzing the analogous interface in two dimensions. This line of work culminated in the seminal monograph~\cite{DKS}, showing that the shape of a macroscopic minus droplet in the plus phase takes after the \emph{Wulff shape}, the convex body minimizing the surface energy to volume ratio (where the former is in terms of some explicit, analytic, surface tension $\tau_{\beta}>0$). Microscopic properties of an interface of angle $\theta$ in an $n\times n$ box are by now also very well-understood, with fluctuations on $O(\sqrt n)$ scales, and a scaling limit to a Brownian bridge~\cite{DH97,DKS,GrIo05,Hryniv98}; these hold up to the critical $\beta_c$~\cite{Ioffe94,Ioffe95}. 

 In dimensions three and higher, the microscopic features of the interface are only well-understood for approximations to the random surface separating the plus and minus phases, given by integer valued height functions $\phi:\llb -n,n\rrb^2 \to \Z$ on an $n\times n$ box. Perhaps the most well-studied of these approximations is the Solid-On-Solid (SOS) model, going back to the 1950's (see~\cite{Temperley52} and~\cite{Abraham}); the $(2+1)$-dimensional SOS model (approximating 3D Ising) is a special case of $|\nabla\phi|^p$ models: a class of gradient models with Hamiltonians  $\cH(\phi) = \sum_{x} \sum_i |\nabla_i \phi(x)|^p$ ($p=1$ is the SOS model, and $p=2$ is the discrete Gaussian model (DG)). 
 In particular, the SOS Hamiltonian matches that of Ising with Dobrushin boundary conditions restricted to  configurations where the intersection of the plus spins with each column $\{(x_1, x_2, h):h\in \Z\}$ is connected (i.e., SOS configurations have no overhangs or bubbles, which are microscopic in Ising in the $\beta\to\infty$ limit). 
 
 In the setting of the SOS model at low temperatures, the maximum of the surface is typically of order $\log n$ (see~\cite{BEF86}). In~\cite{CLMST1,CLMST2}, its maximum was found to be tight around $\frac{1}{2\beta} \log n$, by showing that the probability of a ``pillar above a face $x$ reaching height $h$" is $\exp[-4\beta h+O(1)]$; on this large deviation event the interface looks like a vertical column of height $h+O(1)$ with an $O(1)$ ``base" (c.f., Corollary~\ref{maincor:displacement-surface-area}, where for instance, the tip is delocalized, and see the depiction in Figure~\ref{fig:pillar-lds}). Related properties in the presence of a floor inducing entropic repulsion were studied in~\cite{CLMST2}, and extended to the discrete Gaussian and other $|\nabla\phi|^p$-models in~\cite{LMS16}.

\begin{problem}
For $\alpha_\beta$ defined in~\eqref{eq:alpha-beta-def}, what are the asymptotics of $\alpha_\beta - 4\beta$ (next order asymptotics of $\alpha_\beta$) as $\beta\to\infty$? in particular, is it the case that $\alpha_\beta <4\beta$, so that 3D Ising  is ``rougher" than  ($2+1$)D SOS?
\end{problem}

While cluster expansion only converges at sufficiently large $\beta$, it is natural to ask if the rigidity of the interface, and our new results, hold for all $\beta> \beta_c$. This is not believed to be the case, as the Ising model is widely believed to undergo a \emph{roughening transition} for $d=3$ (and no other dimension): much like the SOS and DG approximations, which exhibit phase transitions in $\beta$---whereby they roughen and resemble the discrete Gaussian free field~\cite{BrWa82,FrSp81} for small $\beta$---it is conjectured that for the 3D Ising model there exists a point $\beta_{\textsc r}>\beta_c$ such that, for $\beta \in (\beta_c,\beta_{\textsc r})$, the model has long-range order, yet the typical fluctuations of its horizontal interface diverge with $n$; proving this transition is a longstanding open problem (see, e.g,~\cite{Abraham,BFL82}). 

Much progress has been made in recent years on understanding the distribution of the maximum of the 2D discrete Gaussian free field and its local geometry. It is known for instance (\cite{BDG01,BrZe12,BDZ16}; see also, e.g.,~\cite{Zeitouni16}) that this maximum is tight around an expected maximum that is asymptotically $2\sqrt{2/\pi}( \log n - \frac38\log\log n)$, and that the centered maximum has the law of a randomly shifted Gumbel random variable. 
\begin{problem}\label{prob:tightness}
What are the asymptotics of $\E[M_n] - \frac{2}{\alpha_\beta}\log n$ (next order asymptotics of $\E [M_n]$) as $n\to\infty$?
Are the fluctuations of the centered maximum $O(1)$, i.e., is the sequence $\{\mu_n (M_n - \E [M_n]\in \cdot)\}$ tight?  
\end{problem}

We end this section with other well-studied perspectives on the 3D Ising model at low temperatures. While the interface-based approach of Dobrushin~\cite{Dobrushin72a} 
proved to be extremely fruitful in 2D (where the results hold for interfaces in any angle), in 3D the combinatorics of that argument break down as soon as the ground state is not flat. It remains a well-known open problem to show that there do not exist non-translation invariant Gibbs measures corresponding to interfaces other than those parallel to the coordinate axes. The progress to date on roughness and fluctuations of ``tilted interfaces" has been limited either to 1-step perturbations of a flat interface~\cite{Miracle-Sole1995}, or to results at zero temperature using rich connections to exactly solvable models~\cite{CerfKenyon}. 

In lieu of these approaches, a coarse-graining technique of Pisztora~\cite{Pisztora1996} enabled the establishment of surface tension and a Wulff shape scaling limit for the 3D Ising model at low-temperature: Cerf and Pisztora~\cite{CerfPisztora} considered an Ising model on an $n\times n \times n$ box with all-plus boundary conditions, and showed that conditional on having $ (1+\epsilon) \mu^+(\sigma_0 = -1)n^3$ minus spins (atypically many), the largest minus cluster macroscopically takes on the corresponding Wulff shape.  Results of this sort are focused on the macroscopic behavior of the model (as opposed to the interface-based approach) and do not describe the fluctuations around the limiting shape. In particular, the convergence to the Wulff shape holds all the way up to $\beta_c$ (when combined with~\cite{Bodineau1999,Bodineau2005}), even though near $\beta_c$ (above the roughening transition) it is expected that the interface is not only delocalized, but that the minus cluster actually percolates all the way to the boundary of the box~\cite{BFL82}. 

\subsection{Outline of Paper} In \S\ref{sec:preliminaries}, we first overview the notation of the paper and introduce Dobrushin's decomposition of the interface $\cI$ into \emph{walls and ceilings}; then, in \S\ref{sec:dobrushin-proof}, we recap the proof of rigidity from~\cite{Dobrushin72a} and the bounds this implies on $\mu_n (\hgt(\cP_x)\geq h)$. In \S\ref{sec:increment-prelim}, we define increments of $\cP_x$, and use them to split $\cP_x$ into its base and spine; in \S\ref{sec:increment-exp-tail}, we show that spine increments have an exponential tail on their size. In \S\ref{sec:base}, we prove that the base of a pillar consisting of $T$ increments has an exponential tail on its diameter beyond $C\log T$. Then in \S\ref{sec:ldp-lln}, we use the structural results of \S\ref{sec:increment-prelim}--\ref{sec:base} to prove the existence of the large deviations rate~\eqref{eq:alpha-beta-def}; with this we prove the law of large numbers for the maximum, Theorem~\ref{mainthm:max-lln}.  In \S\ref{sec:mixing-stationarity}, we analyze finer properties of the increment sequence of $\cP_x$, showing in \S\ref{subsec:increment-mixing} that correlations between increments decay polynomially in their distance, and in \S\ref{subsec:increment-stationarity} that the increment sequences are asymptotically stationary. With these in hand, in \S\ref{sec:mean-variance}, we prove a priori estimates on the mean and variance of observables of the increment sequence of $\cP_x$, and in \S\ref{sec:clt} combine the above to prove the CLT of Theorem~\ref{mainthm:clt} and deduce Corollary~\ref{maincor:displacement-surface-area}.

\section{Preliminaries: interfaces, cluster expansion and rigidity}\label{sec:preliminaries}
In this section, we introduce key definitions from Dobrushin's decomposition of 3D Ising interfaces into \emph{walls} and \emph{ceilings} and recap his proof of rigidity of the Ising model interface. We modify the presentation of~\cite{Dobrushin72a} slightly to track certain constants, and this will serve as a useful indication of the difficulties we will encounter when our reference interface is no longer a flat plane. 

\subsection{Notation}\label{sec:notation} In this section we compile much of the notation used globally throughout the paper. 

\subsubsection{Lattice notation}
Since the object of study in the present paper is the interface separating the plus and minus phases, we consider the Ising model as an assignment of spins to the vertices of the dual graph $(\Z^3)^*= (\Z+\frac 12)^3$ so that spins are assigned to the cells of $\Z^3$ and interfaces are subsets of the faces of~$\Z^3$.  

Namely, let $\Z^3$ be the integer lattice graph with vertices at $(x_1,x_2,x_3)\in \Z^3$ and edges between nearest neighbor vertices (at Euclidean distance one). A \emph{face} of $\Z^3$ is the open set of points bounded by four edges (or four vertices) forming a square of side-length one, lying parallel to one of the coordinate axes. A face is \emph{horizontal} if its normal vector is $\pm e_3$, and is \emph{vertical} if its normal vector is one of $\pm e_1$ or $\pm e_2$. 

A \emph{cell} or \emph{site} of $\Z^3$ is the open set of points bounded by six faces (or eight vertices) forming a cube of side-length one. We will frequently identify edges, faces, and cells with their midpoints, so that points with two integer and one half-integer coordinate are midpoints of edges,  points with one integer and two half-integer coordinates are midpoints of faces, and points with three half-integer coordinates are midpoints of cells. A subset $\Lambda \subset \Z^3$ identifies an edge, face, and cell collection via the edges, faces, and cells whose bounding vertices are all in $\Lambda$; denote this edge set $\cE(\Lambda)$, its face set $\cF(\Lambda)$ and its cell set $\cC(\Lambda)$.  

Two edges are adjacent if they share a vertex; two faces are adjacent if they share a bounding edge; two cells are adjacent if they share a bounding face. A set of faces (resp., edges, cells) is connected if for any pair of faces (edges, cells), there is a sequence of adjacent faces (edges, cells) starting at one and ending at the other. We will denote adjacency by the notation $\sim$.

It will also be useful to have a notion of connectivity in $\R^3$ (as opposed to $\Z^3$); we say that an edge/face/cell is $*$-adjacent to another edge/face/cell if and only if they share a bounding vertex.

Throughout the paper, we will use the notation $d(x,y)=|x-y|$ to denote the Euclidean distance in $\mathbb R^3$ between two points $x,y$ (or if they are edges/faces/cells their respective midpoints). Similarly, we will use the notation $B_r(x)$ to denote the (closed) Euclidean ball of radius $r$ about the point $x$. When these balls are viewed as subsets of edges/faces/cells, we include all those whose midpoint is in $B_r(x)$. 
We further denote by $A\oplus B$ the symmetric difference of the face sets $A$ and $B$.

\subsubsection*{Subsets of $\Z^3$}The main subsets of $\Z^3$ with which we will be concerned are of the form of cubes and cylinders. In view of that, define the centered $2n\times 2m \times 2h$ box,  
\begin{align*}
\Lambda_{n,m,h} :=\llb -n,n\rrb \times \llb -m,m\rrb \times \llb -h,h\rrb \subset \Z^3\,,
\end{align*}
where $\llb a,b\rrb := \{a,a+1,\ldots,b-1,b\}$. We can then let $\Lambda_n$ denote the special case of the cylinder $\Lambda_{n,n,\infty}$. The (outer) boundary $\partial \Lambda$ of the cell set $\cC(\Lambda)$ is the set of cells in $\cC(\Z^3)\setminus \cC(\Lambda)$ adjacent to a cell in $\cC(\Lambda)$. 

Additionally, for any $h \in \Z$ let $\cL_h$ be the subgraph of $\Z^3$ having vertex set $\Z^2\times \{h\}$  and correspondingly defined edge and face sets $\cE(\cL_h)$ and $\cF(\cL_h)$. For a half-integer $h\in \Z + \frac 12$, let $\cL_h$ collect the faces and cells in $\cF(\Z^3) \cup \cC(\Z^3)$ whose midpoints have half-integer $e_3$ coordinate $h$. Finally we occasionally use $\cL_{>0} = \bigcup_{h>0} \cL_h$ for the upper half-space and $\cL_{<0} = \bigcup_{h<0} \cL_h$ for the lower half-space.

\subsubsection{Ising model}

An Ising configuration $\sigma$ on $\Lambda \subset \Z^3$ is an assignment of $\pm1$-valued spins to the cells of $\Lambda$, i.e., $\sigma \in \{\pm 1\}^{\cC(\Lambda)}$.  For a finite connected subset $\Lambda\subset \Z^3$, the Ising model on $\Lambda$ with boundary conditions $\sigma(\partial \Lambda) = \eta$ is the probability distribution over $\sigma \in \{\pm 1\}^{\cC(\Lambda)}$ given by 
\begin{align*}
\mu_{\Lambda}^{\eta} (\sigma) \propto \exp \left[ - \beta  \cH(\sigma)\right]\,, \qquad \mbox{where}\qquad \cH(\sigma) =  \sum_{\substack{ v,w\in \cC(\Lambda) \\  v\sim w }} \one\{\sigma_v\neq \sigma_w\} +  \sum_{\substack{ v\in \cC(\Lambda), w\in \partial \Lambda \\  v\sim w}} \one\{\sigma_v\neq \eta_w\}\,. 
\end{align*}
Throughout this paper, we will be considering the boundary conditions $\eta_w = -1$ if $w$ is in the upper half-space ($w_3 > 0$) and $\eta_w = +1$ if $w$ is in the lower half-space ($w_3 <0$). We refer to these boundary conditions as \emph{Dobrushin boundary conditions}, and denote them by $\eta = \mp$; for ease of notation, let $\mu_{n,m,h} = \mu_{\Lambda_{n,m,h}}^{\mp}$. 

\subsubsection*{Domain Markov and FKG properties} 
The Ising model is said to satisfy the \emph{domain Markov property}, meaning that for any two finite subsets $A \subset B \subset \cC(\Z^3)$, and every configuration $\eta$ on $B\setminus A$,  
$$\mu_{B}(\sigma_A \in \cdot \mid \sigma_{B\setminus A} = \eta_{B\setminus A}) = \mu_A^{\eta_{\partial A}}(\sigma_{A}\in \cdot )\,,$$
where we use $\sigma_A$ to denote the restriction of the configuration to the set $A$.
It also satisfies an important consequence of its monotonicity, known as the \emph{FKG inequality}. That is, for any two increasing (in the natural partial order on configurations) functions $f,g:\{\pm 1\}^{\cC(\Lambda)}$, we have 
\begin{align*}
\E_{\mu_\Lambda} \big[f(\sigma)g(\sigma)\big]  \geq \E_{\mu_\Lambda} \big[ f(\sigma)\big] \E_{\mu_{\Lambda}} \big[g(\sigma)\big]\,.
\end{align*} 
A special case of this inequality, is when $f$ and $g$ are indicator functions of increasing events $A$ and $B$ (meaning that if $\sigma \leq \sigma'$ and $\sigma \in A$, then $\sigma'\in A$, and similarly for $B$), yielding $\mu_\Lambda (A,B)\geq \mu_\Lambda (A)\mu_\Lambda (B)$. 

An increasing event that will appear in the proof of Theorem~\ref{mainthm:max-lln}, is a connection event. Namely, we call a cell set a connected set of plus sites in $\sigma$, if it is a connected set of cells such that all the cells are assigned $+1$ under $\sigma$. A \emph{plus cluster} in $\sigma$ is a maximal connected set of plus sites.  If we denote by $\{v\xleftrightarrow{+\,} w\}$ the event that $v,w\in \cC(\Lambda)$ are in the same plus cluster, we see that this is an increasing event. Finally, for a subset $\Lambda'\subset \Lambda$, denote by $\{v \xleftrightarrow[\Lambda']{\;+\,\;}w\}$ the event that $v,w$ are part of the same plus cluster using only cells of $\Lambda'$. 

\subsubsection*{Infinite-volume measures} Care is needed to define the Ising model on infinite graphs, as the partition function becomes infinite; infinite-volume Gibbs measures are therefore defined via what is known as the \emph{DLR conditions}; namely, for an infinite graph $G$, a measure $\mu_G$ on $\{\pm 1\}^G$, defined in terms of its finite dimensional distributions, satisfies the DLR conditions if for every finite subset $\Lambda \subset G$, 
\begin{align*}
\E_{\mu_G(\sigma_{G\setminus \Lambda}\in \cdot)}\big[\mu_G(\sigma_\Lambda \in \cdot\mid \sigma_{G\setminus \Lambda} )\big ] = \mu_{G}(\sigma_\Lambda\in \cdot)\,.
\end{align*}
On $\Z^d$, infinite-volume Gibbs measures arise as weak limits of finite-volume measures, say $n\to\infty$ limits of the Ising model on boxes of side-length $n$ with certain prescribed boundary conditions. At low temperatures $\beta>\beta_c(d)$, the Ising model on $\Z^d$ admits multiple infinite-volume Gibbs measures; taking plus and minus boundary conditions on boxes of side-length $n$ yield the distinct infinite-volume measures $\mu^+_{\Z^3}$ and $\mu^-_{\Z^3}$~\cite{Minlos-Sinai}.

\subsection{Interfaces under Dobrushin boundary conditions}\label{sec:dobrushin-definitions}
We begin with the key combinatorial decomposition from~\cite{Dobrushin72a} describing the interface separating the minus and plus phases under the Dobrushin boundary conditions. We refer the reader to~\cite{Dobrushin72a} for more details. 

\begin{definition}[Interfaces]\label{def:interface} For a domain $\Lambda_{n,m,h}$ with Dobrushin boundary conditions, and an Ising configuration
$\sigma$ on $\mathcal C(\Lambda_{n,m,h})$, the \emph{interface} $\cI=\cI(\sigma)$
is defined as follows:  
\begin{enumerate}
\item Extend $\sigma$ to a configuration on $\mathcal C(\mathbb Z^3)$ by taking $\sigma_v=+1$ (resp., $\sigma_v=-1$) if $v\in \cL_{<0}\setminus \mathcal C(\Lambda_{n,m,h})$ (resp., $v\in \cL_{>0}\setminus \cC(\Lambda_{n,m,h})$). 
\item Let $F(\sigma)$ be the set of faces in $\cF(\mathbb Z^3)$ separating cells with differing spins under $\sigma$.
\item  Call the (maximal) $*$-connected component of $\cL_0 \setminus \cF(\Lambda)$ in $F(\sigma)$, the \emph{extended interface}. (This is also the unique infinite $*$-connected component in $F(\sigma)$.)
\item The interface $\mathcal I$  is the restriction of the extended interface to $\cF(\Lambda_{n,m,h})$. 
\end{enumerate}
\end{definition}


It is easily seen (by Borel--Cantelli) that taking the $h\to\infty$ limit $\mu_{n,m,h}$ to obtain the infinite-volume measure $\mu_{n,m,\infty}^\mp$, the interface defined above stays finite almost surely. Thus, $\mu_{n,m,\infty}^\mp$-almost surely, the above process also defines the interface for configurations on all of $\mathcal C(\Lambda_{n,m,\infty})$. 


\begin{remark}\label{rem:different-interface-conventions}
In lieu of the above definition of the interface due to~\cite{Dobrushin72a}, one could consider other flavors, e.g., letting $\cI$ be a \emph{minimal} connected set of faces separating differing spins (or following some splitting rule which singles out a unique connected set of such faces, e.g., along the northeast diagonal in 2D). A simple Peierls argument implies that the set difference between that definition and Dobrushin's definition consists of finite connected sets of faces with exponential tails on their size.
\end{remark}

\begin{remark}\label{rem:interface-spin-config}
Just as Ising configurations with Dobrushin boundary conditions define an interface, every interface uniquely defines a configuration with exactly one $*$-connected plus component and exactly one $*$-connected minus component. For every $\cI$, we can obtain this configuration $\sigma(\cI)$ by iteratively assigning spins to $\cC(\Lambda_{n,m,h})$, starting from the boundary and proceeding inwards, in such a way that adjacent sites have differing spins if and only if they are separated by a face in $\cI$. 
 Informally, $\sigma(\cI)$ is distinguishing the sites that are in the ``plus phase" and ``minus phase" given the interface $\cI$.

(Note that the extended interface also splits $\cC(\Z^3)$ into precisely two infinite connected (as opposed to $*$-connected) components, along with possibly additional finite connected components.)
\end{remark}

Following~\cite{Dobrushin72a}, we can decompose the faces in $\cI$ and define certain useful  subsets of $\cI$. 
For a face $f\in \cF(\Z^3)$, its \emph{projection} $\rho(f)$ is the edge or face of $\cL_0$ given by $\{(x_1,x_2,0):(x_1,x_2,s)\in f \mbox{ for some $s\in \R$}\}\subset \R^2\times \{0\}$. Specifically, the projection of a \emph{horizontal} face (a face that is parallel to the plane $\cL_0$) is a face in $\cF(\cL_0)$, while the projection of a \emph{vertical} face (one that is not parallel to $\cL_0$) is an edge in $\cE(\cL_0)$. 
The projection of a collection of faces $F$ is $\rho(F) := \bigcup_{f\in F} \rho(f)$, which may consist both of edges and faces of $\cL_0$. 

\begin{figure}
    \centering
    \includegraphics[width=0.21\textwidth]{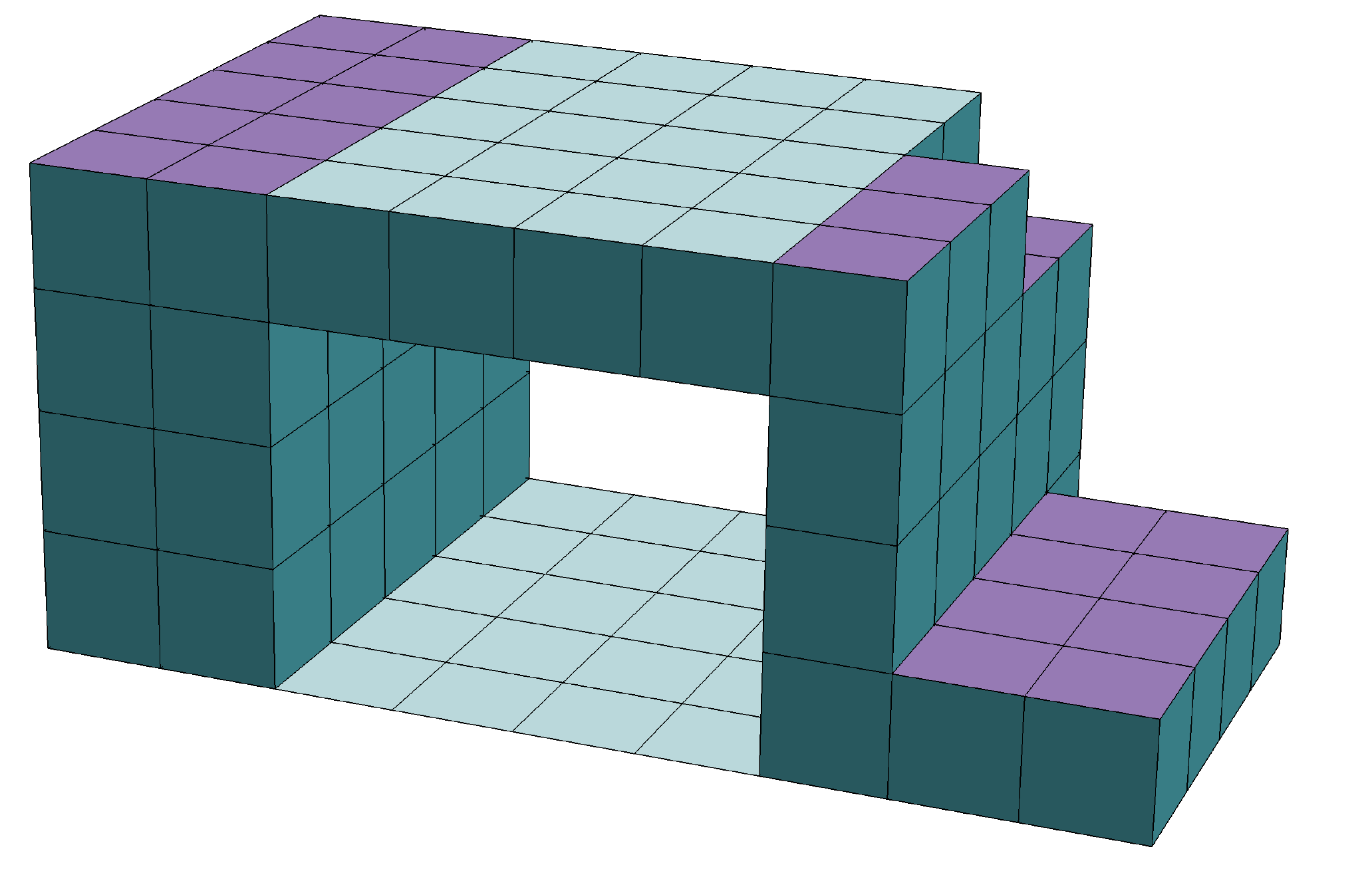}
    \hspace{20pt}
    \raisebox{-8pt}{\includegraphics[width=0.3\textwidth]{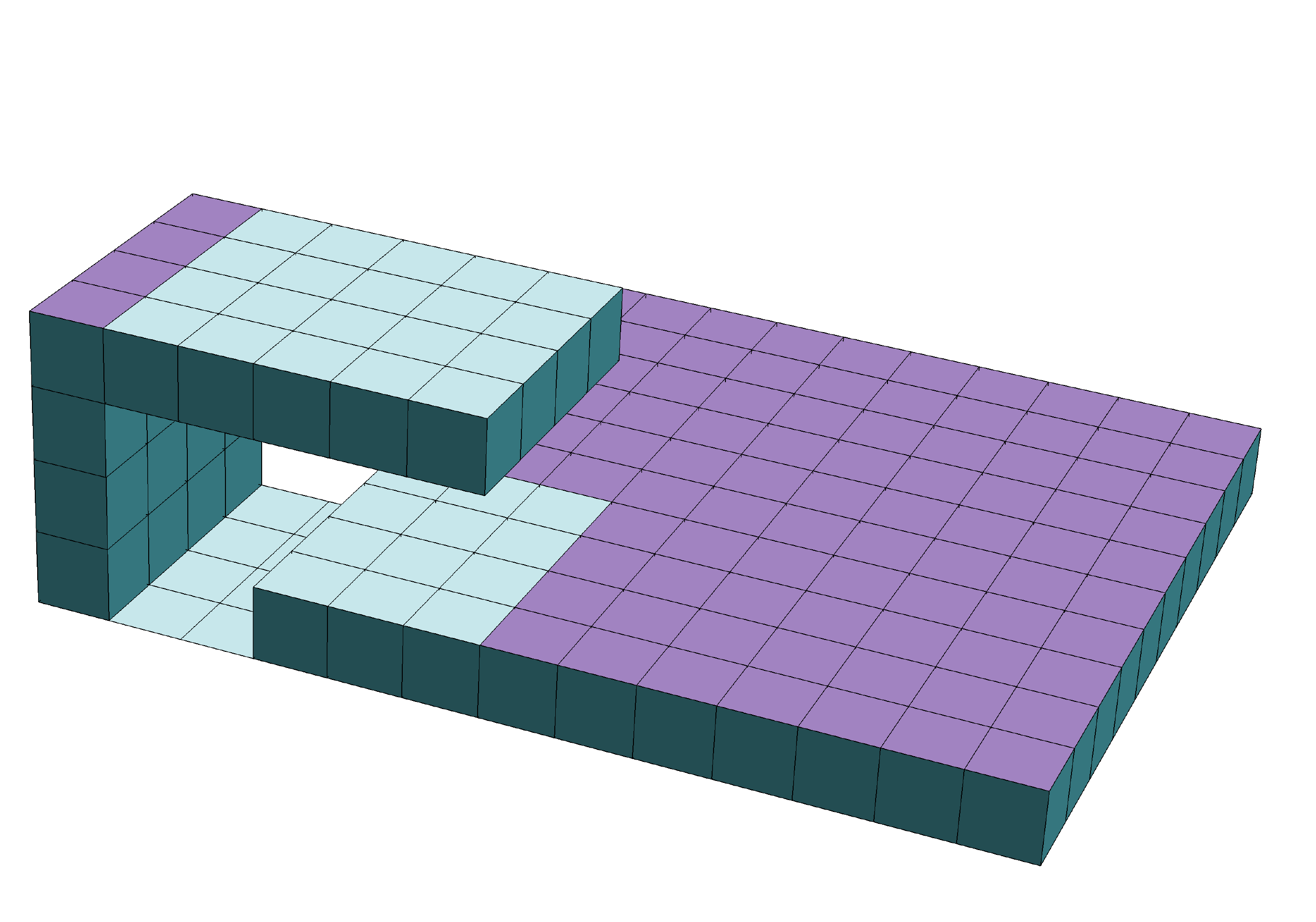}}
    \hspace{20pt}
    \raisebox{-2pt}{\includegraphics[width=0.19\textwidth]{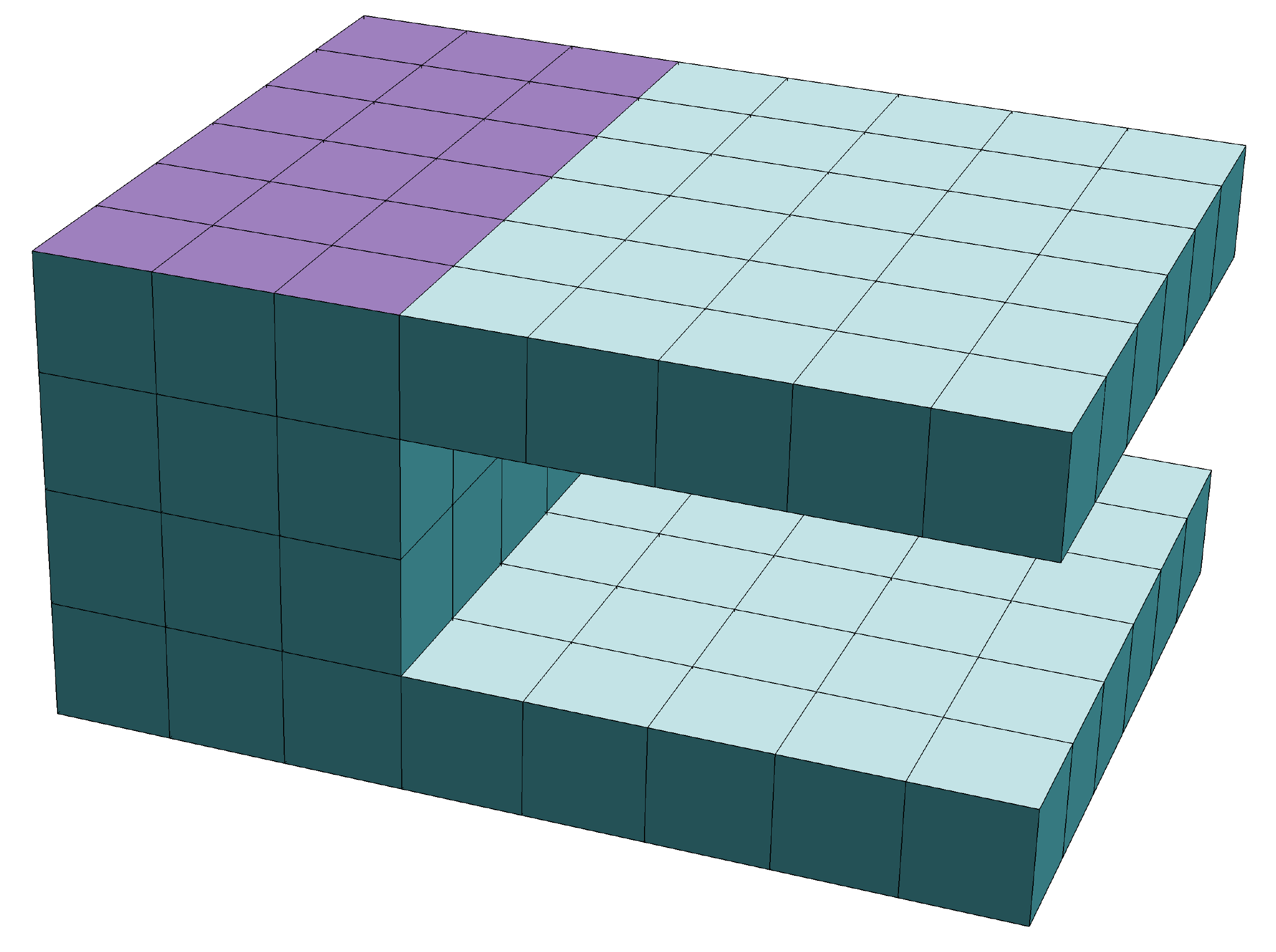}}
    \caption{Three distinct standard walls, with their interior ceiling faces (purple) and wall faces (vertical in teal, horizontal in light blue) as per Definition~\ref{def:ceilings-walls}. Middle example features two distinct ($+$) components in $\R^2\times \R_+$ which correspond to two distinct pillars but form a single wall (consistent with the fact that projections of distinct walls on $\cL_0$ are disjoint).}
    \label{fig:walls-ceil-ex}
\end{figure}

\begin{definition}[Ceilings and walls]\label{def:ceilings-walls}
A face $f\in \cI$ is a \emph{ceiling face} if it is horizontal and there is no $f'\in \cI \setminus  \{f\}$ such that $\rho(f)=\rho(f')$. A face $f\in \cI$ is a \emph{wall face} if it is not a ceiling face. 
A \emph{wall} is a (maximal) $*$-connected set  of wall faces.  A \emph{ceiling} of $\cI$ is a (maximal) $*$-connected set of ceiling faces. 
%
\end{definition}

\begin{definition}[Floors of walls]\label{def:floors}
For a wall $W$, the complement of its projection (a subset of $\R^2$) $$\rho(W)^c:= (\cE(\cL_0)\cup \cF(\cL_0)) \setminus \rho(W)$$ splits into one infinite component, and some finite ones. Any ceiling adjacent to the wall $W$ projects into one of these components; the one that projects into the infinite component is called the \emph{floor} of $W$. 
\end{definition}

This can be reinterpreted with the following notion of nesting of walls and ceilings.

\begin{definition}\label{def:nesting-of-walls}
We say an edge or face $u\in \cE(\cL_0) \cup \cF(\cL_0)$ is interior to a wall $W$ if $u$ is not in the infinite component of $\rho(W)^c$.   

A wall $W$ is \emph{interior} to (or nested in) a wall $W'$ if every element of $\rho (W)$ is interior to $W'$. Similarly, a ceiling $C$ is \emph{interior} to a wall $W$ if every element of $\rho(C)$ is interior to $W$. 
\end{definition}

Observe that of the ceilings $C_0, C_1,..., C_l$ adjacent to a wall $W$, one of them is the floor of $W$---say $C_0$---and the rest are interior to $W$. For any admissible pair of standard walls, as their projections are disjoint,  either one wall is nested in the other, or $\rho(W_x)$ is contained in the infinite component of $\rho(W_y)^c$ and vice versa.

\begin{definition}[Standard walls]\label{def:standard-walls}
A wall $W$ is a \emph{standard wall} if there exists an interface $\cI_W$ such that $\cI_W$ has exactly one wall, $W$---as such it must have as its unique floor a subset of $\cL_0$. 
A collection of standard walls is \emph{admissible} if they are all disjoint and have pairwise disjoint projections (see Figure~\ref{fig:walls-ceil-ex}). 
\end{definition}

\begin{lemma}[{\cite{Dobrushin72a}}]\label{lem:wall-ceiling-bijection}
For a projection of the walls of an interface, each connected component of that projection (as a subset of edges and faces) corresponds to a single wall. Moreover, there is a 1-1 correspondence between the ceilings adjacent to a standard wall $W$ and the connected components of $\rho(W)^c$. Similarly, for a wall $W$, all other walls $W'\neq W$ can be identified to the connected component of $\rho(W)^c$ they project into, and in that manner they can be identified to the ceiling of $W$ to which they are interior.
\end{lemma}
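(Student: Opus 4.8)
The plan is to establish the three assertions of Lemma~\ref{lem:wall-ceiling-bijection} by unwinding Definition~\ref{def:ceilings-walls}--\ref{def:standard-walls} and using the planarity of the projection map $\rho$ onto $\cL_0\cong\R^2\times\{0\}$. The key structural fact driving everything is: \emph{distinct walls have disjoint projections}. To see this, suppose $f,f'$ are wall faces with $\rho(f)\cap\rho(f')\neq\varnothing$; since both are non-ceiling faces (hence either vertical, or horizontal but sharing a projection with another interface face), one checks directly from the definition of the interface that there is a $*$-connected chain of wall faces in $\cI$ joining $f$ to $f'$, living in the ``column'' of $\cI$ over the edge/face $\rho(f)\cap\rho(f')$ of $\cL_0$; hence $f,f'$ lie in the same wall. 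The contrapositive is the disjoint-projection statement, and it immediately gives the first assertion: a connected component of $\rho(\text{walls of }\cI)$ cannot receive contributions from two different walls, and conversely the projection of a single $*$-connected set of faces is connected in $\cL_0$ (as $\rho$ sends $*$-adjacent faces to overlapping or adjacent cells/edges of $\cL_0$); so components of the projection biject with walls.

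For the second assertion, fix a \emph{standard} wall $W$, so there is an interface $\cI_W$ whose only wall is $W$ and whose floor is a subset of $\cL_0$. Every face of $\cI_W$ that is not in $W$ is a ceiling face, and these ceiling faces organize into ceilings; I want to match ceilings of $\cI_W$ adjacent to $W$ with connected components of $\rho(W)^c=(\cE(\cL_0)\cup\cF(\cL_0))\setminus\rho(W)$. Given a ceiling $C$ adjacent to $W$, its faces are horizontal and uniquely projecting, so $\rho$ restricted to $C$ is injective and $\rho(C)$ is a $*$-connected set of faces of $\cL_0$ disjoint from $\rho(W)$; I claim $\rho(C)$ lies in a single component of $\rho(W)^c$ and that $C$ is determined by that component. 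The first half is clear since $\rho(C)$ is connected and misses $\rho(W)$. For the second half: given a component $U$ of $\rho(W)^c$, reconstruct the height at which $\cI_W$ sits over $U$ by following the interface across the wall faces of $W$ bounding $U$ (here is where one uses that $W$ is standard—there are no other walls to interfere, so the interface is flat over each component of $\rho(W)^c$, at a well-defined integer height determined by ``how many times $W$ steps up or down'' along a path in $\cL_0$ from the infinite component to $U$); the ceiling over $U$ is then exactly $\{(x_1,x_2,\text{that height}) : (x_1,x_2)\in U\}$, so the component $U$ recovers $C$. This gives the bijection; in particular the floor corresponds to the (unique) infinite component, consistent with Definition~\ref{def:floors}.

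For the third assertion, let $W$ be an arbitrary wall of $\cI$ (not necessarily standard) and $W'\neq W$ another wall. By the disjoint-projection fact, $\rho(W')$ is a connected subset of $\cL_0$ disjoint from $\rho(W)$, hence contained in a single connected component of $\rho(W)^c$; this is the claimed identification of $W'$ with a component of $\rho(W)^c$. Composing with the bijection of the second assertion applied to the standardization of $W$ (the walls of $\cI$ are in bijection with an admissible family of standard walls by the standard-wall representation, and standardizing only shifts heights, leaving all projections—hence all the combinatorics on $\cL_0$—unchanged), each such component corresponds to a ceiling adjacent to the standardized $W$, and $W'$ is interior to $W$ precisely when that component is finite, i.e.\ when the corresponding ceiling is one of the interior ceilings rather than the floor. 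This matches Definition~\ref{def:nesting-of-walls} and the observation immediately preceding Definition~\ref{def:standard-walls}.

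The main obstacle I anticipate is making rigorous the claim that over each connected component of $\rho(W)^c$ the (standardized) interface is \emph{flat at a well-defined height}, and that this height is read off unambiguously from the wall faces of $W$ bounding the component—i.e.\ that stepping around a closed loop in $\cL_0$ produces a net height change of zero. This is essentially a discrete ``closed $1$-form has a primitive'' statement for the height function associated to the interface, and it is exactly the point where one must use that $W$ is a standard wall (only one wall present) so that there are no other walls contributing competing height jumps. Everything else is bookkeeping with $*$-connectivity and the projection map, but this flatness/well-definedness step is the combinatorial heart and is where I would spend the most care; fortunately it is already implicit in Dobrushin's reconstruction of interfaces from standard wall families, so it can be quoted rather than reproven.
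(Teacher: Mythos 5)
The paper does not prove this lemma itself — it is stated with the citation tag \verb|[Dobrushin72a]| and no proof block follows it — so there is no internal proof to compare against; your attempt is a reconstruction of Dobrushin's argument, and its overall architecture (disjoint projections $\Rightarrow$ components of $\rho(\cdot)$ biject with walls; flatness over components of $\rho(W)^c$ $\Rightarrow$ ceiling--component bijection; then compose) is the right one. Two places deserve tightening.

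First, for the disjoint-projection claim your ``one checks directly\ldots a $*$-connected chain of wall faces living in the column over $\rho(f)\cap\rho(f')$'' is not literally correct: if $\rho(f)=\rho(f')=e$ is a single edge, the column over $e$ is one-dimensional and contains only vertical faces parallel to $e$, and these need not be a contiguous stack of interface faces from $f$ up to $f'$. The chain has to route laterally through the \emph{horizontal} interface faces over the two plaquettes $F_1,F_2\in\cF(\cL_0)$ bounding $e$. The point one should make is that between the heights of $f$ and $f'$ the spin parity on the $F_1$ and $F_2$ sides must flip, so there are horizontal faces of $\cI$ over $F_1$ or $F_2$ in that height range, and because $f$ and $f'$ are already interface faces projecting near $F_1,F_2$, those horizontal faces project non-uniquely, hence are wall faces, and they are $*$-adjacent to the vertical faces. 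Moreover, to get that a \emph{connected component} of $\rho(\text{walls})$ corresponds to a single wall, ``disjoint projections'' alone is not quite enough; you need the stronger statement that wall faces with \emph{adjacent} (edge-sharing or vertex-sharing) projections lie in the same wall, and the same lateral routing through horizontal wall faces handles that case. As stated, your argument only addresses $\rho(f)=\rho(f')$.

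Second, the flatness-over-$\rho(W)^c$ claim, which you correctly flag as the heart of the matter, is actually more elementary than a ``closed $1$-form has a primitive'' argument. If $u,u'\in\cF(\cL_0)$ are two faces in the same component of $\rho(W)^c$ sharing an edge $e$, then $e\notin\rho(W)$, so there are no vertical interface faces projecting onto $e$; hence the (unique, since both $u,u'\notin\rho(W)$ are ceiling positions) horizontal interface faces over $u$ and $u'$ must lie at the same height, or the interface would fail to separate the spins correctly along $e$. This shows the ceiling height is \emph{locally constant}, hence constant on connected components, with no monodromy argument needed. Standardness of $W$ is used only to guarantee that $\rho(W)^c$ is indeed the complement of \emph{all} wall projections in $\cI_W$; in a general interface the same local-constancy argument shows the interface is flat over each component of the complement of the union of all wall projections, which is what one wants for the third assertion anyway.
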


\begin{definition}[Standardization of walls]\label{def:std-of-wall}
To each ceiling $C$, we can identify a unique \emph{height} $\hgt(C)$ since all faces in the ceiling have the same $x_3$ coordinate. For every wall $W$, we can define its \emph{standardization} $\theta_{\textsc {st}}(W)$ which is the translate of the wall by $(0,0,-s)$ where $s$ is the height of its floor.   
\end{definition}

\begin{remark}\label{rem:indexing-walls}
We can index walls as follows: assign an ordering of the faces of $\cL_0$, and index $W$ by the minimal face in $\cL_0$ that shares an edge with $\rho(W)$, and lies either in $\cF(\rho(W))$ or in one of the finite connected components of $\rho(W)^c$.
For any admissible collection of standard walls, the indices of the walls are distinct. 
\end{remark}

\begin{figure}
    \begin{tikzpicture}
    \node (fig1) at (0,0) {
    	\includegraphics[width=0.48\textwidth]{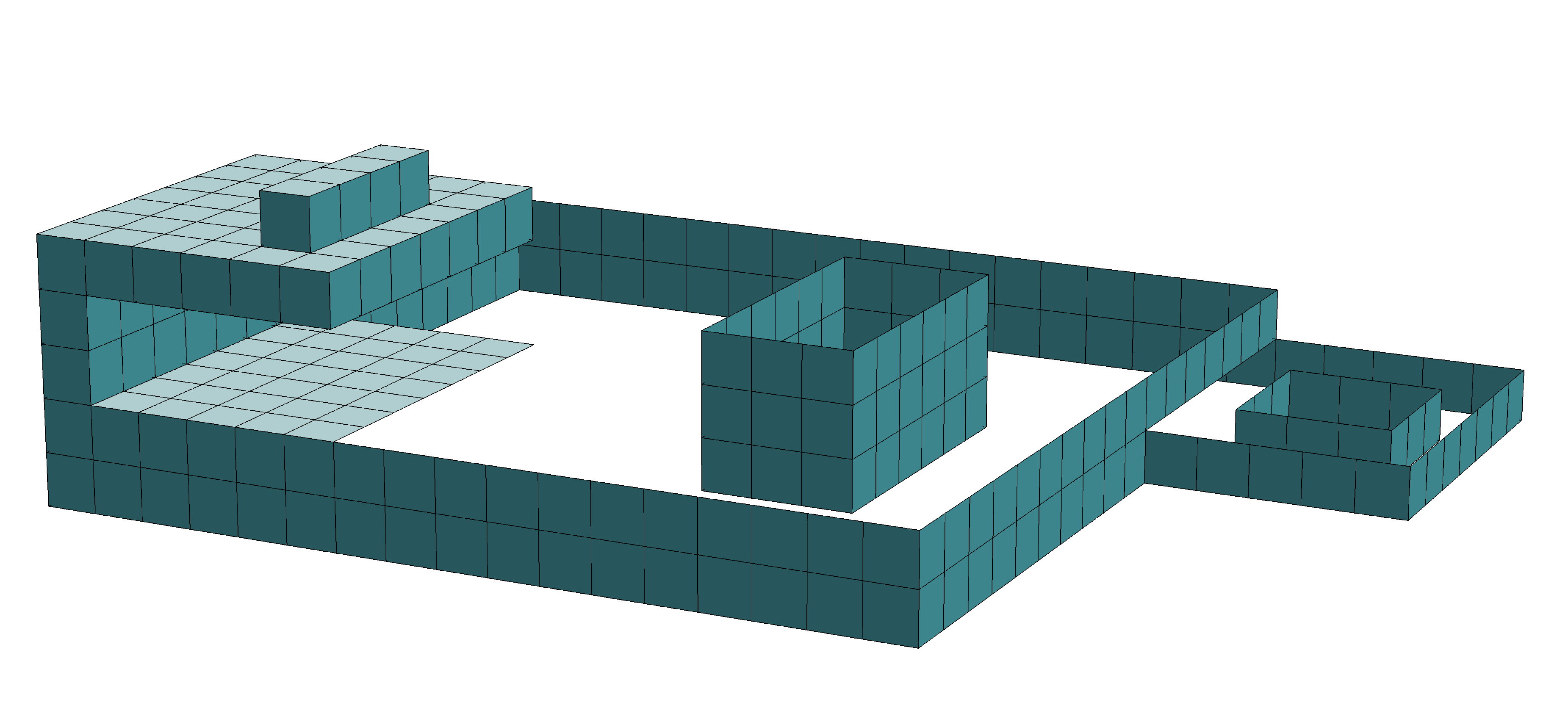}};
    \node (fig2) at (8,0) {
    	\includegraphics[width=0.48\textwidth]{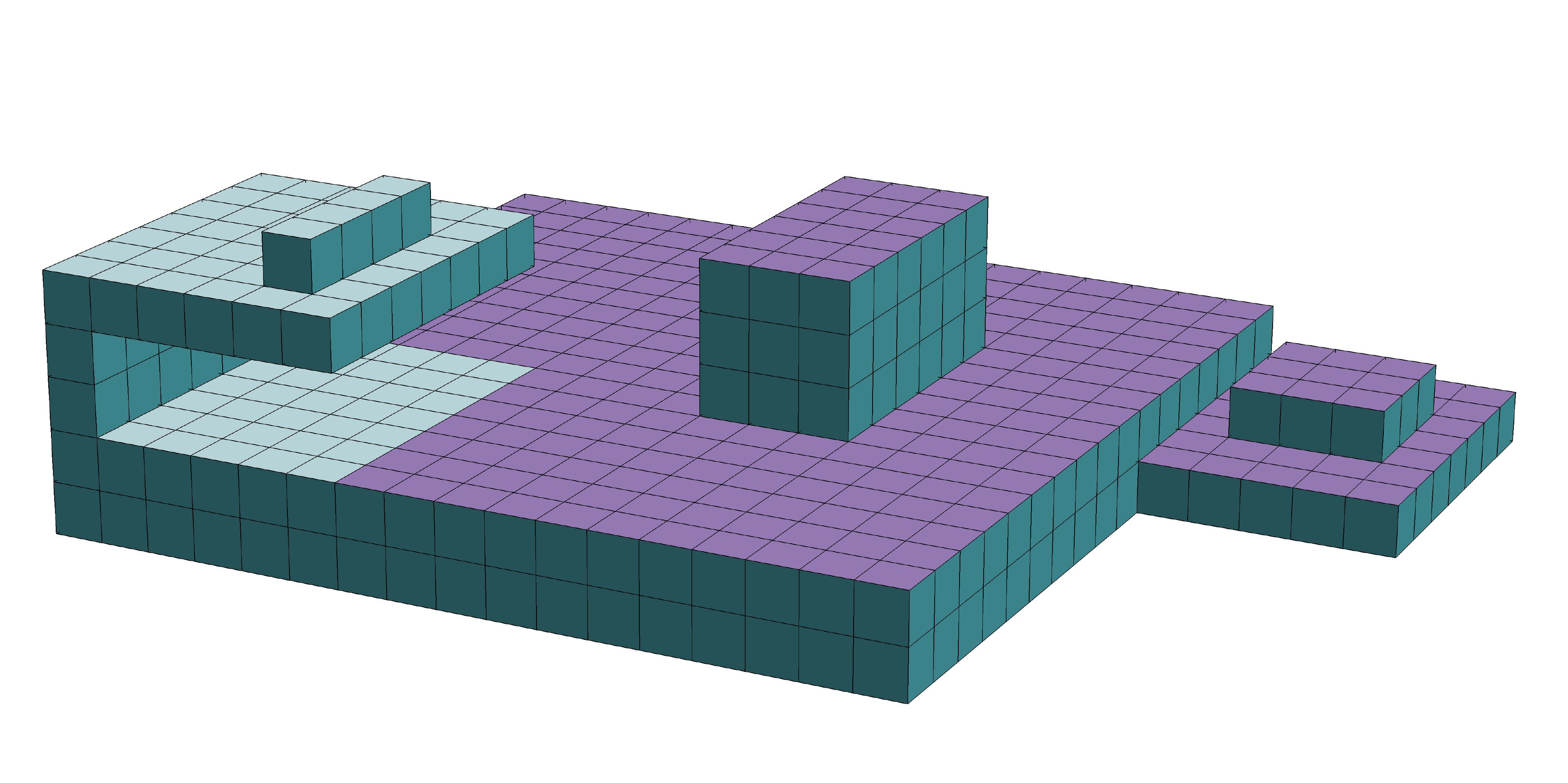}};
    \end{tikzpicture}
    \caption{Correspondence between an interface and its standard wall representation (Lemma~\ref{lem:interface-reconstruction}): three distinct standard walls (left) and their corresponding interface (right).}
    \label{fig:1-1-correspond-std}
\end{figure}

We then have the following important bijection between interfaces and their standard wall representation. 

\begin{definition}\label{def:standard-wall-representation}
Let the \emph{standard wall representation} of an interface $\cI$ be the collection of standard walls given by standardizing all walls of $\cI$. 
\end{definition}

\begin{lemma}[{\cite{Dobrushin72a}}]\label{lem:interface-reconstruction}
There is a 1-1 correspondence between the set of interfaces and the set of admissible collections of standard walls. In particular, the standardization $\theta_{\textsc {st}}(W)$ of a wall $W$ is a standard wall. 
\end{lemma}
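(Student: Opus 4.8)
The plan is to exhibit the bijection explicitly, constructing a forward map (interface $\mapsto$ standardized walls) and a reconstruction map (admissible wall collection $\mapsto$ interface), and then to check they are mutually inverse. First I would treat the forward direction. Given an interface $\cI$, take its maximal $*$-connected components of wall faces and apply $\theta_{\textsc{st}}$ to each. Walls are pairwise disjoint because they are maximal $*$-connected components, and by Lemma~\ref{lem:wall-ceiling-bijection} distinct walls project into distinct connected components of the projection of the wall faces, hence have disjoint projections; since $\theta_{\textsc{st}}$ is a vertical translation it does not change projections. So the image is an admissible collection — once we know each $\theta_{\textsc{st}}(W)$ is genuinely a standard wall.

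The second step is exactly that claim: $\theta_{\textsc{st}}(W)$ is a standard wall. Here I would construct an interface whose unique wall is $\theta_{\textsc{st}}(W)$ by placing $\theta_{\textsc{st}}(W)$ in $\cF(\Z^3)$ and, to each connected component $U$ of $\rho(\theta_{\textsc{st}}(W))^c$, attaching the flat horizontal ceiling over $U$ at the unique height forced by the faces of $\theta_{\textsc{st}}(W)$ that bound $U$ (the infinite component receiving height $0$, consistent with the wall being standardized, cf.\ Definitions~\ref{def:floors} and~\ref{def:std-of-wall}). One checks this yields a single locally finite $*$-connected set of faces separating a well-defined spin configuration that matches the Dobrushin boundary values away from the wall, so it satisfies Definition~\ref{def:interface}; and since every horizontal face outside $\theta_{\textsc{st}}(W)$ has a unique projection by construction, it is a ceiling face, so $\theta_{\textsc{st}}(W)$ is the only wall.

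For the reconstruction map, given an admissible collection $\cW=\{W_i\}$ of standard walls, I would use that the nesting relation of Definition~\ref{def:nesting-of-walls} is a partial order on $\cW$ which is a forest: by admissibility (disjoint projections) and Lemma~\ref{lem:wall-ceiling-bijection}, either one of any two walls is nested in the other or their projections are mutually exterior, so each $W_i$ has a well-defined parent (the minimal wall properly containing it, if any) and projects into a unique ceiling of that parent. Then reconstruct $\cI(\cW)$ recursively from the roots inward: place each root wall with floor in $\cL_0$; having positioned a wall and thereby its interior ceilings (each at a determined height $\hgt$), translate every child wall projecting into a given ceiling so its floor lies in that ceiling, and recurse; finally fill the remaining horizontal faces by the ceilings they bound. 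Disjointness of projections guarantees that distinct walls placed by this procedure neither share a face nor overlap in projection, so the procedure is unambiguous and, being over a finite forest, terminates; the verification that the output is an interface is as in the single-wall case.

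Finally I would check the two maps are mutually inverse. Applied to $\cI(\cW)$, the forward map recovers $\cW$: each $W_i$ as positioned is a maximal $*$-connected set of wall faces (no two merge because their projections are disjoint, and no non-ceiling face is missed since each lies on some $W_i$), and $\theta_{\textsc{st}}$ undoes the vertical translation used in the reconstruction. Conversely, starting from $\cI$, reconstruction applied to its standard wall representation re-places each standardized wall at the height of the floor it originally had in $\cI$ — which is precisely $\hgt$ of the ceiling it is interior to, read off correctly by the recursion — so $\cI$ is recovered. The main obstacle I expect is making the reconstruction map rigorous: showing the nesting order on an admissible family is a well-founded forest with a consistent height assignment, so the inductive placement is unambiguous, and then that the resulting face set genuinely meets Definition~\ref{def:interface} (in particular is $*$-connected and equals the infinite $*$-connected component of separating faces of its induced configuration, with no merging or splitting of the $W_i$). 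All of this rests on the admissibility hypothesis together with Lemma~\ref{lem:wall-ceiling-bijection}.
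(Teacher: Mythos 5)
Your proposal is correct and takes essentially the same route as the paper's proof, with two differences worth noting. First, you construct the interface by placing walls from the \emph{outermost} nesting level inward, so that each wall is positioned at its final height in one step (the height of the parent ceiling it projects into). The paper instead builds up from $\cL_0$ by adding walls \emph{innermost outward}: each newly added outer wall triggers a vertical shift of all previously placed, nested walls by the height of the ceiling they project into. The two orderings are equivalent; yours avoids the cascading shifts at the cost of having to argue explicitly that the nesting relation on an admissible family is a well-founded forest with unambiguous parent pointers, whereas the paper's one-wall-at-a-time update never needs that structure. Second, you flesh out the sub-claim that $\theta_{\textsc{st}}(W)$ is itself a standard wall by explicitly constructing the interface whose unique wall is $\theta_{\textsc{st}}(W)$ (attaching the forced horizontal ceiling over each component of $\rho(\theta_{\textsc{st}}(W))^c$); the paper only asserts this in passing. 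Both write-ups leave the final verification — that the resulting face set satisfies Definition~\ref{def:interface} and has the intended standard wall representation — at a similar level of detail ("one can check"), so your proof matches the paper's rigor while being organized a little more cleanly around the nesting forest.
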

\begin{proof}
From an interface, the standard wall representation is an admissible collection of standard walls as projections of distinct walls are disjoint. 
To obtain an interface from an admissible collection of standard walls, it suffices to take the standard wall representation of an interface $\cI$ and describe how the addition of one standard wall $\theta_{\textsc{st}}(W_{t_0})$, compatible with the standard walls of $\cI$ and not interior to any walls in $\cI$, changes $\cI$ to $\cI'$. (One could then construct an interface $\cI$ from its standard wall representation by beginning with the interface $\cL_0$ with empty standard wall representation, and iterating the above procedure, adding the standard walls from innermost outward). 

Consider an interface $\cI$ with standard wall collection $(\theta_{\textsc{st}} W_{t})_{t\neq t_0}$ such that $((\theta_{\textsc{st}} W_{t})_{t\neq t_0})\cup \theta_{\textsc {st}}(W_{t_0})$ is admissible; suppose further that $\theta_{\textsc{st}}$ is not interior to any wall of $\cI$. 
Let $\cJ_{W_{t_0}}$ be the interface whose only wall is the standard wall $\theta_{\textsc{st}} W_{t_0}$, and denote its floor by $C_0$ and non-floor ceilings by $C_1,...,C_l$. 

Construct a face set from $\cI$ and $\theta_{\textsc{st}}W_{t_0}$ as follows: 
\begin{enumerate}
\item Remove all horizontal faces of $\cI$ in $\rho(W_{t_0})$.
\item Vertically shift every face of $\cI$ projecting into one of $\rho(C_i)_{1\leq i \leq l}$ by $\hgt(C_i)$. 
\item Add all faces of $\theta_{\textsc{st}} W_{t_0}$. 
\end{enumerate}
The resulting face set is evidently a valid interface $\cI'$ and one can check that it has standard wall representation $(\theta_{\textsc{st}} W_{t})_{t\neq t_0})\cup \theta_{\textsc{st}} W_{t_0}$. 
\end{proof}

We note the following important observation based on the above bijection.  

\begin{observation}\label{obs:1-to-1-map-faces}
Consider interfaces $\cI$ and $\cJ$, such that the standard wall representation of $\cI$ contains that of $\cJ$ (and additionally has the standardizations $\mathbf W=W_1,...,W_r$). By the construction  in Lemma~\ref{lem:interface-reconstruction}, there is a 1-1 map between the faces of $\cI \setminus \mathbf W$ and the faces of $\cJ \setminus \mathbf H$ where $\mathbf H$ is the set of faces in $\cJ$ projecting into $\rho (\mathbf W)$. Moreover, this bijection can be encoded into a map $f\mapsto \tilde f$ that only consists of vertical shifts, and such that all faces projecting into the same component of $\rho(\mathbf W)^c$ undergo the same vertical shift. 
\end{observation}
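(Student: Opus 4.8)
The plan is to read off the bijection directly from the inductive reconstruction procedure in the proof of Lemma~\ref{lem:interface-reconstruction}, tracking how faces move at each step. First I would order the standardized walls $W_1,\dots,W_r$ of the standard wall representation of $\cI$ that are absent from that of $\cJ$ so that whenever $W_i$ is nested in $W_j$ we have $i<j$ (i.e.\ innermost-first); this is possible since nesting is a partial order, and it is exactly the order in which the reconstruction adds walls. I would then build $\cJ$ from $\cI$ by \emph{removing} these walls one at a time in the reverse order $W_r, W_{r-1},\dots,W_1$ (outermost-first), each removal being the inverse of the three-step insertion operation in Lemma~\ref{lem:interface-reconstruction}: delete the faces of $\theta_{\textsc{st}}W_i$, then un-shift (vertically) every face projecting into an interior ceiling component $\rho(C_1),\dots,\rho(C_l)$ of $W_i$ by the corresponding $-\hgt(C_j)$, then restore the horizontal ceiling faces of $\cJ^{(i)}$ lying over $\rho(W_i)$. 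Since at the moment $W_i$ is removed it is outermost among the walls still present that are being removed, admissibility guarantees its projection $\rho(W_i)$ is disjoint from that of every other wall still to be removed, so the operation is well-defined and the intermediate object is a genuine interface.

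The key point is bookkeeping: a face $f$ of $\cI\setminus\mathbf W$ is never deleted by any of these removals (deletions only hit faces of the $\theta_{\textsc{st}}W_i$ themselves and horizontal faces over $\rho(\mathbf W)$, the latter being precisely the faces of $\cJ$ that lie over $\rho(\mathbf W)$, i.e.\ the set $\mathbf H$ which gets \emph{restored}, not removed). Each surviving $f$ only ever gets shifted vertically, so composing the shifts across the $r$ removal steps yields a single vertical translation $f\mapsto\tilde f$, and the end result is by construction the standard wall representation with $\mathbf W$ deleted, i.e.\ $\cJ$ (using the 1-1 correspondence of Lemma~\ref{lem:interface-reconstruction}). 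For the uniformity claim, I would argue that the total vertical shift applied to $f$ depends only on which nested sequence of removed ceilings $f$ projects into; since $\rho(\mathbf W)^c$ is a disjoint union of connected components and two faces projecting into the same component of $\rho(\mathbf W)^c$ are interior to exactly the same sub-collection of the $W_i$ (and to the same ceilings thereof), they accumulate the identical sequence of shifts, hence the same $\tilde f$-translation. Conversely, the inverse insertion procedure recovers $f$ from $\tilde f$, giving injectivity and surjectivity of $f\mapsto\tilde f$ onto $\cJ\setminus\mathbf H$.

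I expect the only real subtlety — and the step I would write most carefully — to be verifying that processing the removals in outermost-first order keeps every intermediate face-set an admissible interface, so that the nested-ceiling shifts compose cleanly; once that is in place, the vertical-shift structure and the uniformity-per-component statement are immediate consequences of the shape of the three-step operation in Lemma~\ref{lem:interface-reconstruction}. Everything else is routine combinatorial bookkeeping on top of that lemma.
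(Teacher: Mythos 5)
Your proposal is correct and is exactly the intended reading of the observation: it is stated in the paper without a separate proof precisely because it follows by unwinding the three-step construction of Lemma~\ref{lem:interface-reconstruction} wall-by-wall and tracking the accumulated vertical shifts, which is what you do. The one small imprecision — when $W_i$ is still nested inside a surviving wall of $\cJ$, the faces deleted at that step are those of the vertical translate of $\theta_{\textsc{st}}W_i$ sitting at the height of the ambient ceiling, rather than $\theta_{\textsc{st}}W_i$ itself — does not affect the argument, since the restored horizontal faces and all shifts remain purely vertical and are constant on each component of $\rho(\mathbf W)^c$ as you observe.
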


 Finally, we introduce a notion of nested walls which will prove useful to bounding the base of tall pillars. 
 
 \begin{definition}\label{def:nested-sequence-of-walls}
To any edge/face/cell $x$, we can assign a \emph{nested sequence of walls} $\fW_x = \bigcup_{s} W_{u_s}$ that is composed of all walls that $\rho(x)$ is interior to (by Definition~\ref{def:nesting-of-walls}, this forms a nested sequence of walls). 
\end{definition}

\begin{observation}
For $u\in \cL_0$, for a nested sequence of walls $\fW_u$, one can read off the height of the face(s) of $\cI$ projecting onto $u$. In particular, if a face $f\in \cI$ has height $h$, its nested sequence of walls must be such that the sum of the heights of the walls in $\fW_{\rho(f)}$  exceeds~$h$.
\end{observation}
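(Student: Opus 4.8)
The plan is to obtain both assertions by tracking the faces lying above $u$ through the reconstruction of $\cI$ from its standard wall representation provided by Lemma~\ref{lem:interface-reconstruction}. Recall that this reconstruction inserts the standardized walls one at a time, innermost first, and that inserting a standard wall $\theta_{\textsc{st}}(W)$ rigidly shifts every face projecting into a non-floor ceiling $C_i$ of $\theta_{\textsc{st}}(W)$ upward by $\hgt(C_i)$, while faces projecting into the floor of $W$ stay put.

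First I would check that only the walls of $\fW_u$ move the faces above $u$. If $u$ is not interior to a wall $W$, then $u$ lies in the infinite component of $\rho(W)^c$, i.e.\ under the floor of $W$, so inserting $\theta_{\textsc{st}}(W)$ leaves the stack above $u$ unmoved. If instead $W\in\fW_u$ then, by admissibility of the standard wall representation (disjointness of wall projections) together with the nesting structure of Lemma~\ref{lem:wall-ceiling-bijection}, either $u\in\rho(W)$---which, by disjointness of wall projections, can hold for at most the innermost wall of $\fW_u$---or $u$ lies in a finite component of $\rho(W)^c$, which corresponds to a genuine non-floor ceiling $C_W(u)$ of $\theta_{\textsc{st}}(W)$; in the latter case set $\delta_W(u):=\hgt(C_W(u))$.

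Next I would run the reconstruction explicitly. Writing $\fW_u=\{W_{u_1}\supsetneq\cdots\supsetneq W_{u_m}\}$ ordered from outermost to innermost, after $\theta_{\textsc{st}}(W_{u_m})$ is inserted the faces above $u$ are precisely those of $\theta_{\textsc{st}}(W_{u_m})$ lying above $u$: a single ceiling face at height $\delta_{W_{u_m}}(u)$ when $u\notin\rho(W_{u_m})$, or the finite stack of wall faces of $\theta_{\textsc{st}}(W_{u_m})$ above $u$ when $u\in\rho(W_{u_m})$. Each subsequent insertion of $\theta_{\textsc{st}}(W_{u_j})$ for $j=m-1,\dots,1$ shifts this entire stack upward by $\delta_{W_{u_j}}(u)$, and no later insertion---being a wall that does not contain $u$ in its interior---affects it. Hence the heights of the faces of $\cI$ above $u$ equal those of $\theta_{\textsc{st}}(W_{u_m})$ above $u$ translated by $\sum_{j<m}\delta_{W_{u_j}}(u)$; in particular they are read off from $\fW_u$, which is the first claim. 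For the second, a face $f\in\cI$ above $u=\rho(f)$ at height $h$ then satisfies $h=\sum_{j=1}^m\delta_{W_{u_j}}(u)$, where $\delta_{W_{u_m}}(u)$ is interpreted as the height of the relevant wall face of $\theta_{\textsc{st}}(W_{u_m})$ above $u$ when $u\in\rho(W_{u_m})$. Since every standardized wall reaches at least as high as any face or ceiling interior to it, $\delta_W(u)\le\hgt(W)$ for each $W\in\fW_u$, and summing yields $\sum_{W\in\fW_{\rho(f)}}\hgt(W)\ge h$, the stated bound.

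I do not expect a substantive obstacle here: the entire argument is bookkeeping along the reconstruction map. The one point that genuinely needs care is the dichotomy in the second step---whether $u$ belongs to $\rho(W)$ itself or sits strictly under a non-floor ceiling of $W$---since this is what makes each shift $\delta_W(u)$ a well-defined ceiling height dominated by $\hgt(W)$; it rests on the fact that in an admissible collection the wall projections are pairwise disjoint, so that $u$ can lie in $\rho(W)$ for at most the innermost wall of $\fW_u$.
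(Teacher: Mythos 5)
Your proof is correct; the paper states this observation without proof, and the bookkeeping-along-the-reconstruction argument you give (tracking the stack of faces above $u$ through the innermost-outward insertion of Lemma~\ref{lem:interface-reconstruction}, noting that only walls of $\fW_u$ act on that stack, and bounding each ceiling-height shift $\delta_W(u)$ by $\hgt(W)$) is exactly the intended one. The only slight imprecision is the phrase ``those of $\theta_{\textsc{st}}(W_{u_m})$ lying above $u$'': when $u\notin\rho(W_{u_m})$, the single face above $u$ after inserting the innermost wall is a ceiling face of the one-wall interface $\cI_{W_{u_m}}$, not a face of the standard wall $\theta_{\textsc{st}}(W_{u_m})$ itself; this does not affect the argument.
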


\subsection{Interface pillars}\label{sec:pillar-def}
The above definitions were all  from~\cite{Dobrushin72a} and, informally, they reduce the analysis of 3D Ising interfaces to that of a low-temperature 2D polymer model given by the projections of walls. For us, this is insufficient as we aim to study the structure of tall walls, wherein the projection does not carry much information about the shape and height. As such, we define the notion of a pillar above $x\in \cL_0$.  

\begin{definition}[Pillars]\label{def:pillar}
For every interface $\cI$, consider the restriction $\sigma(\cI)\restriction_{\cL_{>0}}$ of the Ising configuration $\sigma(\cI)$ to the upper half-space. For any face $x\in\cL_{0}$, the cell-set $\sigma(\cP_x)$ of the \emph{pillar} $\cP_x=\cP_{x}(\cI)$
above $x$ will be the (possibly empty) $*$-connected plus component in $\sigma(\cI)\restriction_{\cL_{>0}}$ containing $x+(0,0,\frac 12)$. The pillar $\cP_x$ will have face-set consisting of the bounding faces of $\sigma(\cP_x)$ in the upper half-space, so that it is a subset of $\cI$.
\end{definition}

Pillars can be viewed as a subset of some collection of nested walls along with their ceilings as follows: 

\begin{observation}\label{obs:pillar-wall}
The pillar $\cP_x$ is described by $\fW_x$ together with all walls that are nested in some wall of $\fW_x$; namely, if we index walls by enumerating faces of $\cL_0$ in terms of distance to $x$, then the set of walls $\bigcup_{y:\,d(y,x)\leq \diam(\sB_x)} W_{y}$ contain all the information about the pillar~$\cP_x$. 
Moreover, $\cP_x \cap (\mathbb R^2 \times (\lfloor\hgt(v_1)\rfloor,\infty))$ (possibly with the exception of one upper delimiting face) is all a subset of a single wall. 
\end{observation}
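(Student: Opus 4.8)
The plan is to prove the two assertions separately, in each case reducing matters to Dobrushin's wall/ceiling bookkeeping (Definitions~\ref{def:ceilings-walls}--\ref{def:nested-sequence-of-walls}) and the reconstruction bijection (Lemma~\ref{lem:interface-reconstruction}).

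For the first assertion, let $W_\star$ be the outermost wall of the nested sequence $\fW_x$, and set $\mathbf W := \fW_x \cup \{W : W \text{ nested in a wall of } \fW_x\}$. Since $W_\star$ cannot be interior to any wall (any wall containing it would contain $\rho(x)$ in its interior, hence belong to $\fW_x$, contradicting maximality), its floor is a subset of $\cL_0$, and from Definition~\ref{def:nesting-of-walls} one sees that $\mathbf W$ is precisely the set of walls whose projection lies in $\rho(W_\star)$ or in a finite component of $\rho(W_\star)^c$. First I would show that the shadow of $\cP_x$ is confined to this region: any $\ast$-connected chain of plus cells of $\sigma(\cI)$ in $\cL_{>0}$ from $x+(0,0,\tfrac12)$ to a cell projecting into the infinite component of $\rho(W_\star)^c$ would, upon projection, cross $\rho(W_\star)$, and I would rule this out using the height bookkeeping recorded just after Definition~\ref{def:nested-sequence-of-walls} (which relates the interface height over each vertical column to the nested wall sequence through it), since the chain must remain on the plus side of $\cI$. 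Granting this, each face $f\in\cP_x\subseteq\cI$ --- whether a wall face (lying in a unique wall) or a ceiling face (lying in a unique ceiling, which has a well-defined parent wall in the sense of Definition~\ref{def:nesting-of-walls}) --- has its host (resp.\ parent) wall equal to $W_\star$ or nested in it, hence in $\mathbf W$; so $\cP_x \subseteq \bigcup_{W\in\mathbf W}\big(W \cup \{\text{ceilings adjacent to } W\}\big)$ and, via Lemma~\ref{lem:interface-reconstruction}, $\cP_x$ is recovered from $\mathbf W$ alone (equivalently, deleting $\mathbf W$ from the standard wall representation --- a legitimate operation, an admissible subcollection of standard walls being admissible, cf.\ Observation~\ref{obs:1-to-1-map-faces} --- empties the plus component above $x$). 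Finally, I would observe that every wall of $\mathbf W$ meeting $\cP_x$ has, being standard or nested inside one that is, faces adjacent to its floor, and that tracing these down inside $\cP_x$ shows each such wall has a face in the base $\sB_x$; since $x+(0,0,\tfrac12)\in\sB_x$ as well, the index of such a wall (its root on $\cL_0$, per Remark~\ref{rem:indexing-walls}) lies within $\diam(\sB_x)$ of $x$, which yields the stated description in terms of $\bigcup_{y:\,d(y,x)\le\diam(\sB_x)} W_y$.

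For the second assertion, put $m := \lfloor\hgt(v_1)\rfloor$ and consider the faces of $\cP_x$ of height exceeding $m$. I would first argue that all of these are wall faces, save possibly one horizontal face lying directly above $v_1$. Indeed, if $f\in\cP_x$ were a ceiling face with $\hgt(f)>m$, then $f$ is horizontal and no other face of $\cI$ shares $\rho(f)$; but a face of $\cI$ directly below $f$ would violate this, so the entire vertical column of cells beneath $f$ down to height $\tfrac12$ is plus, hence lies in $\cP_x$ (being $\ast$-connected to $f$), and in particular its cell at height $\hgt(v_1)$ must be $v_1$ by the defining property of the first cut-point --- forcing $f$ to sit directly above $v_1$; and the column directly above $v_1$ contributes just its single topmost horizontal face. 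It then remains to check that $\cP_x\cap(\R^2\times(m,\infty))$, with that face deleted, is $\ast$-connected: the cells of $\sigma(\cP_x)$ of height $\ge\hgt(v_1)$ together with $v_1$ form a $\ast$-connected cell set, because a $\ast$-path in $\cP_x$ descending from such a cell toward $x+(0,0,\tfrac12)$ must cross the slab of height $\hgt(v_1)$, which contains $v_1$ alone; and the faces of $\cI$ bounding a $\ast$-connected cell set from a fixed side form a $\ast$-connected set of faces. A $\ast$-connected set of wall faces lies in a single wall by definition, and the claim follows.

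The main obstacle I anticipate is the shadow-confinement step of the first assertion --- making precise that a plus $\ast$-chain cannot ``tunnel under'' the curtain of wall faces of $W_\star$ --- together with the bookkeeping needed to upgrade ``$\cP_x$ is determined by $\mathbf W$'' into the quantitative radius-$\diam(\sB_x)$ statement, which couples the two-dimensional combinatorics of wall projections to the three-dimensional geometry of the base. The boundary-connectivity lemma used at the end of the second assertion, though routine, also requires the usual care distinguishing $\ast$-adjacency from adjacency.
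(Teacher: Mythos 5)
The paper states this as a bare \emph{Observation} with no accompanying proof, so there is nothing of the paper's to compare against; I assess your proposal on its own terms. The plan is the natural one, and the second assertion (your third paragraph) is argued correctly --- the only thing worth adding there is that deleting the single candidate ceiling face cannot disconnect the remaining $*$-connected wall faces, since each of its four bounding edges is shared with a vertical face of $\cP_x$ at the same height.

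For the shadow-confinement step of the first assertion, a more direct route than ``height bookkeeping'' is available, and I recommend it since you flag this as your main obstacle. Let $W_\star$ be the outermost wall of $\fW_x$. Since $W_\star$ is not interior to any wall, its floor is a ceiling lying at height $0$. No distinct wall $W'$ with floor at height $0$ can have $\rho(W')$ share a bounding vertex with $\rho(W_\star)$: both walls necessarily contain faces at height $\le 1$, and such faces over $*$-adjacent projections are themselves $*$-adjacent, which would merge the two walls. Hence the floor of $W_\star$ surrounds $\rho(W_\star)$ by at least one face, and every cell of $\cL_{>0}$ in a column adjacent to $\rho(W_\star)$ from outside is minus in $\sigma(\cI)$; a plus $*$-chain from $x+(0,0,\tfrac12)$ inside $\cL_{>0}$ therefore cannot escape the filled shadow of $W_\star$. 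This closes the gap.

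The genuinely soft spot is your closing step: that every wall of $\mathbf W$ meeting $\cP_x$ has a face in $\sB_x$ and is therefore indexed within $\diam(\sB_x)$ of $x$. As written this does not constitute a proof --- a wall nested deep inside $W_\star$ need not have faces near its own floor at base-level height, and in any case the leap from ``$W$ has a (3D) face in $\sB_x$'' to ``the index of $W$ (a face of $\cL_0$ selected from $W$'s projection, per Remark~\ref{rem:indexing-walls}) lies within $\diam(\sB_x)$ of $x$'' is not established, since a wall's index is not the projection of any single face of that wall. That said, this quantitative clause is not load-bearing in the paper: the places where the observation is invoked (e.g.\ the proof of Lemma~\ref{lem:increment-height-equivalence} and the coupling arguments of \S\ref{subsec:decorrelation}) only use the qualitative statement that $\cP_x$ is determined by $\fW_x$ together with the walls nested therein, and pair that with Proposition~\ref{prop:pillar-groups-of-walls}, which separately controls the radius within which those walls are indexed. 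I would either give a more careful geometric argument for the radius claim, or downgrade it to the qualitative statement.
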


Much of this paper is interested in the large deviations regime for the height of such pillars, so we formally define heights of interface subsets. 
\begin{definition}\label{def:heights}
For a point $(x_1,x_2, x_3)\in \R^3$, we say its height is $\hgt(x) = x_3$. The height of a cell is the height of its midpoint. For a pillar  $\cP_x \subset \cI$, its height is given by $$\hgt(\cP_x) = \sup\{x_3: (x_1,x_2,x_3)\in f, f\in \cP_x\}\,.$$ 
It is important to distinguish between situations where $\cP_x$ is empty because the interface lies exactly at face $x$, and when it goes below face $x$; in view of this, if $\cP_x= \emptyset$, we say that $\hgt(\cP_x) = 0$ if $x- (0,0, \frac 12)$ is in the plus phase (i.e., is plus in $\sigma(\cI)$), and  $\hgt(\cP_x) <0$ if $x-(0,0, \frac 12)$ is in the minus phase. 
\end{definition}

\subsection{Excess area} For a pair of interfaces, we need to quantify the energy cost/gain of having one interface over the other. The competition of this energy cost  with respect to the interface $\cL_0$ with the entropy gain from additional fluctuations governs the behavior of the Dobrushin interface.    
\begin{definition}[Excess area]
\label{def:excess-area}
For two interfaces $\cI,\cI'$,
the \emph{excess area} of $\cI$ with respect to $\cI'$,
denoted $\fm(\cI;\cI')$, is given by $$\fm(\cI;\cI'):= |\cI|-|\cI'|\,,$$ where these are the cardinalities of the face-sets of $\cI$ and $\cI'$ respectively. Evidently, for any Dobrushin interface $\cI$, we have that $\fm(\cI; \cL_0\cap \Lambda)\geq 0$.   

We can also define excess areas for subsets of interfaces, and interpret these as the ``excess area of the interface that contains the subset with respect to a reference one that does not." For instance, for a standard wall $W$, if we denote by $\cI_W$ the interface whose only wall is $W$, then  $\fm(W)=\fm(\cI_{W}; \cL_0\cap \Lambda)$. For a wall $W$, its excess area is given by the excess area of the standard wall $\theta_{\textsc {st}}(W)$. The excess area of a collection of walls $F$ is analogously defined, and one can easily see that $\fm(F) = \sum_{W\in F} \fm(W)$.

Finally, define the excess area of a pillar $\fm(\cP_x)$, and of one pillar with respect to another, $\fm(\cP_x;\cP_x')$, via the excess areas of the unique interfaces consisting only of the faces in $\cP_x$ (resp., $\cP_x'$) along with faces of $\cL_0$. 
\end{definition}

\begin{remark}\label{rem:excess-area-properties}
Notice that for a wall $W_x$, its excess area is exactly given by 
\begin{align*}
\fm(W_x) = \fm(\theta_{\textsc {st}}(W_x))= |W_x| - |\cF(\rho(W_x))|
\end{align*}
where $\cF(\rho(W_x))$ is the face set of the projection $\rho(W_x)$. Moreover, for an interface $\cI$ having standard wall representation $(W_t)_{t\in \cL_0}$ per Lemma~\ref{lem:interface-reconstruction}, we have that 
\begin{align*}
\fm(\cI) = \fm(\cI; \cL_0\cap \Lambda) = \sum_{W\in (W_x)_{x\in \cL_0}} \fm(W)\,. 
\end{align*}
\noindent As observed in~\cite{Dobrushin72a}, this form of the excess area makes a few key properties clear: 
\begin{align}\label{eq:excess-area-wall-relations}
\fm(W_x) \geq \frac{1}{2} |W_x| \qquad \mbox{and} \qquad \fm(W_x) \geq |\cE(\rho(W_x)|+ |\cF(\rho(W_x))|\,.
\end{align} 
Moreover, any two faces $x,y\in \cL_0$ interior to the projection $\rho(W_x)$ satisfy $|x-y| \leq \fm(W_x)$. 
\end{remark}

\subsection{Cluster expansion for interfaces describing phase coexistence}\label{sec:cluster-expansion}
Cluster expansion is a classical tool for expressing the partition function of a spin system on a domain as a product of polymer weights (in an appropriate polymer representation of the model) rather than as a sum over weights of configurations. Crucially, this product is an infinite product that only converges in perturbative regimes (e.g., for us $\beta \gg 1$). 

In our setting of the Ising model, these polymers are minimal connected sets of faces which separate differing spins, and are the bounding face-set of a connected set of cells. The associated weight of such a face-set $\gamma$ is given by $e^{-\beta |\gamma|}$. The polymers are then endowed with hard-core interaction rules encoding the admissibility of a collection of polymers, so that it in fact encodes uniquely, an Ising spin configuration. For a full derivation of the validity of cluster expansion, we refer the reader to the book~\cite[Chapter 5]{velenikbook}. 
In our setting, the hard-core polymer interactions 
preclude distinct polymers from sharing any edges or vertices.  

Using this cluster expansion,~\cite{Minlos-Sinai} proved properties of the single-phase Ising measures $\mu^-_{\Z^3}$ and $\mu^+_{\Z^3}$ at low temperatures. An easy implication of this cluster expansion is that one can take a limit of $\mu^{\mp}_{\Lambda_{n,n,h}}$ as $h\to\infty$ and obtain an infinite-volume Gibbs measure on the cylinder $\Lambda_n = \Lambda_{n,n,\infty}$ whose interface is finite almost surely (for each fixed $n$), and this limit does not depend on the boundary conditions taken at the top and bottom of $\Lambda_{n,n,h}$: see~\cite[(2.7) as well as Lemma 3]{Dobrushin72a}. Denote this limiting measure $\mu_n = \mu^\mp_{\Lambda_{n,n,\infty}}$.

Applying the cluster expansion one can compute probabilities of interfaces under this $\mu_n$ measure.

\begin{theorem}[{\cite[Lemma 1]{Dobrushin72a}}]
\label{thm:cluster-expansion} Consider the Ising measure $\mu_{n}= \mu^{\mp}_{n}$ on the cylinder $\Lambda_{n,n,\infty}$. There exists $\beta_0>0$ and a function $\g$ such that for every $\beta>\beta_{0}$ and any
two interfaces $\cI$ and $\cI'$, 
\begin{align*}
\frac{\mu_n(\cI)}{\mu_n(\cI')}= & \exp\bigg(-\beta \fm(\cI; \cI') +\Big(\sum_{f\in\cI}\g(f,\cI)-\sum_{f'\in\cI'}\g(f',\cI')\Big)\bigg)
\end{align*}
and $\g$ satisfies the following for some $\bar{c},\bar{K}>0$
independent of $\beta$: for all $\cI, \cI'$ and $f\in \cI$ and $f'\in\cI'$,
\begin{align}
|\g(f,\cI)| & \leq\bar{K} \label{eq:g-uniform-bound} \\
|\g(f,\cI)-\g(f',\cI')| & \leq \bar K e^{-\bar{c}\br(f,\cI;f',\cI')} \label{eq:g-exponential-decay}
\end{align}
where $\br(f,\cI;f',\cI')$ is the largest radius around
the origin on which $\cI-f$ ($\cI$ shifted by the midpoint of the face $f$) is \emph{congruent} to $\cI'-f'$. That is to say,
\begin{align*}
\br(f,\cI; f', \cI') := \sup \{r: (\cI - f) \cap B_{r}(0) \equiv (\cI' - f') \cap B_r(0)\}
\end{align*}
where the congruence relation $\equiv$ is equality as subsets of $\R^3$, up to, possibly, reflections and $\pm \frac \pi2$ rotations in the horizontal plane.  
\end{theorem}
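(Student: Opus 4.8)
The plan is to reprove this via the classical low-temperature polymer (contour) expansion of Minlos--Sinai and Dobrushin, treating the general convergence of the cluster expansion (via the Koteck\'y--Preiss criterion, as developed in~\cite[Ch.~5]{velenikbook}) as a black box, and spending the effort on reorganizing the expansion into a sum of surface terms obeying the locality bound~\eqref{eq:g-exponential-decay}.

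\emph{Polymer representation.} I would first work in a finite box $\Lambda_{n,n,h}$, taking $h\to\infty$ at the end. For a fixed interface $\cI$, the Ising configurations on $\cC(\Lambda_{n,n,h})$ whose interface is exactly $\cI$ are in bijection with admissible families of \emph{bubbles}: finite, $*$-connected, closed face-sets that are pairwise $*$-disjoint and $*$-disjoint from $\cI$ --- precisely the stated hard-core rule, with $\cI$ acting as a frozen polymer. Since each such configuration carries weight $\exp[-\beta(|\cI|+\sum_i|\gamma_i|)]$, summing over bubble families gives $\mu_{n,n,h}(\cI)\propto e^{-\beta|\cI|}\,\Xi(\cI)$ with $\Xi(\cI)$ the polymer partition function with weights $w(\gamma)=e^{-\beta|\gamma|}$.

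\emph{Cluster expansion and the definition of $\g$.} Because the number of $*$-connected closed face-sets of size $m$ through a fixed edge is at most $C_0^m$, for all $\beta>\beta_0$ the weights $w$ satisfy the Koteck\'y--Preiss condition, so $\log\Xi(\cI)=\sum_{C\,\sim\,\cI}\Psi(C)$ converges absolutely, where $C$ ranges over clusters (connected multisets for the incompatibility graph of bubbles), the Ursell weight $\Psi(C)$ depends on $C$ only through the sizes of its bubbles (hence is invariant under lattice isometries, including reflections and $\pm\tfrac\pi2$ horizontal rotations), and $\sum_{C:\,f\in\partial_\cI C}|\Psi(C)|\,e^{\bar c\|C\|}\le \bar K$ for suitable $\bar c,\bar K>0$ uniform in $\beta\ge\beta_0$, where $\|C\|=\sum_{\gamma\in C}|\gamma|$ and $\partial_\cI C\subset\cI$ is the set of interface faces sharing a vertex with a bubble of $C$. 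Writing $\log\Xi(\cI)=\mathrm{const}-\sum_{C\,\not\sim\,\cI}\Psi(C)$, where the constant is the $\cI$-independent bubble-gas free energy, I would \emph{define}
\[
\g(f,\cI):=-\sum_{\substack{C\,\not\sim\,\cI\\ f\in\partial_\cI C}}\frac{\Psi(C)}{|\partial_\cI C|}\,,
\]
so that $\sum_{f\in\cI}\g(f,\cI)=\log\Xi(\cI)-\mathrm{const}$. Dividing the two interface weights, the constant and the common normalization cancel, and one obtains the displayed identity with $\fm(\cI;\cI')=|\cI|-|\cI'|$; letting $h\to\infty$, the sum defining $\g(f,\cI)$ converges (its clusters must touch $\cI$ near $f$) to an $n$-independent function of the infinite-volume interface.

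\emph{The two bounds, and the main obstacle.} Bound~\eqref{eq:g-uniform-bound} is immediate: $|\g(f,\cI)|\le\sum_{C:\,f\in\partial_\cI C}|\Psi(C)|\le\bar K$ by the Koteck\'y--Preiss estimate, uniformly in $\beta$ and $\cI$. For~\eqref{eq:g-exponential-decay}, set $r=\br(f,\cI;f',\cI')$ and let $g$ be the lattice isometry (a reflection or $\pm\tfrac\pi2$ horizontal rotation, composed with the translation $f\mapsto f'$) realizing the congruence of $\cI$ and $\cI'$ on $B_r$. A cluster $C$ with $f\in\partial_\cI C$ lying well inside $B_r(f)$ has its incompatibility with $\cI$ and its set $\partial_\cI C$ determined by $\cI\cap B_r(f)$, so $g$ maps it bijectively and weight-preservingly (and preserving $|\partial_\cI C|$) to a cluster $gC$ with $f'\in\partial_{\cI'}(gC)$ well inside $B_r(f')$; thus the contributions of these ``local'' clusters to $\g(f,\cI)$ and to $\g(f',\cI')$ are equal. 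Any remaining cluster contributing to $\g(f,\cI)$ touches $f$ and reaches outside $B_{r-O(1)}(f)$, and since the union of its bubble supports is connected and joins $f$ to distance $\gtrsim r$, it has $\|C\|\gtrsim r$; hence the total weight of the remaining clusters is $\le\sum_{\|C\|\gtrsim r}|\Psi(C)|\le\bar K e^{-\bar c' r}$, and symmetrically for $\g(f',\cI')$, giving $|\g(f,\cI)-\g(f',\cI')|\le\bar K e^{-\bar c r}$ after renaming constants. The one genuinely delicate point is this last locality argument: one must verify that a cluster confined to $B_r(f)$ truly ``sees'' nothing of $\cI$ outside that ball, so that $g$ transports it to a legitimate cluster for $(f',\cI')$ with an unchanged Ursell weight --- this is exactly where the precise shape of $\br$, the inclusion of reflections and $\pm\tfrac\pi2$ rotations (needed because the local picture at a face is only defined up to these symmetries), and the dependence of $\Psi(C)$ on bubble sizes alone, all enter; one must separately confirm that every cluster failing to be ``local'' is large enough ($\|C\|\gtrsim r$) to be absorbed into the exponential error. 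Everything else is routine convergent-cluster-expansion bookkeeping.
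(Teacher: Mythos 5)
The paper does not actually give a proof of this theorem: it is stated as a citation of \cite[Lemma~1]{Dobrushin72a}, and the only commentary the paper supplies is the remark explaining that the Ursell-function derivation is equivariant under horizontal reflections and $\pm\tfrac\pi 2$ rotations, which is what justifies the congruence relation $\equiv$ allowing these isometries (Dobrushin's original statement only allowed translations). Your proposal essentially fills in the proof that the paper delegates to Dobrushin, and it follows the same route the paper's remark sketches: identify $\mu_n(\cI)\propto e^{-\beta|\cI|}\Xi(\cI)$ via a hard-core polymer gas of bubbles incompatible with $\cI$, expand $\log\Xi(\cI)$ by Koteck\'y--Preiss, and distribute each Ursell weight $\Psi(C)$ of an incompatible cluster over the faces of $\partial_\cI C$ to define $\g$; then~\eqref{eq:g-uniform-bound} is the uniform KP bound and~\eqref{eq:g-exponential-decay} follows by transporting the ``local'' clusters under the isometry $g$ realizing the congruence and controlling the non-local ones by the exponential KP tail. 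This is exactly the standard argument.

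Two small imprecisions worth tightening: first, $\Psi(C)$ does \emph{not} depend on $C$ only through the sizes of its bubbles --- it also depends on their pairwise incompatibility graph --- but that graph is a geometric relation (sharing a vertex), so the correct statement is that $C\mapsto\Psi(C)$ is \emph{equivariant} under lattice isometries, which is what you actually use. Second, the paper (following Dobrushin) splits the gas into two: one Ising polymer gas above $\cI$ with minus boundary and one below with plus boundary, rather than your single gas with $\cI$ as a frozen polymer; the two descriptions coincide once one notes that a bubble above $\cI$ and a bubble below $\cI$, each $*$-disjoint from $\cI$, cannot share a vertex (any shared vertex would lie on $\cI$), so the single gas does not create spurious incompatibilities across the interface --- you should say this explicitly. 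Finally, for~\eqref{eq:g-exponential-decay} you should note that boundary effects from $\partial\Lambda_n$ are automatically absorbed: any cluster rooted at $f$ that ``sees'' the boundary has size $\gtrsim d(f,\partial\Lambda_n)$, and when one of $f,f'$ is close to $\partial\Lambda_n$, the congruence radius $\br$ is itself bounded by that distance, so such clusters fall into your $\|C\|\gtrsim r$ error bucket.
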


Throughout the rest of the paper, the constants $\bar c$ and $\bar K$ will be reserved for those of~\eqref{eq:g-uniform-bound}--\eqref{eq:g-exponential-decay}.

\begin{remark}
In~\cite{Dobrushin72a} and other works, the congruence above is written only as a congruence up to translation. However, one can see by following the derivation of Theorem~\ref{thm:cluster-expansion}, that this congruence can also be up to reflections and $\pm \frac \pi 2$ rotations in the $xy$-plane (under which the Ising Hamiltonian is invariant). More precisely, for polymer weights $w(\gamma)$ and interactions $\delta(\gamma, \gamma')$, we can define the \emph{Ursell functions} as
\begin{align*}
\varphi(\gamma_1,\ldots, \gamma_m) = \frac 1{m!}\sum_{G\subset K_m} \prod_{(i,j)\in G}[\delta(\gamma_i,\gamma_j)-1]\,,
\end{align*}
where the sum is over connected subgraphs of the complete graph on $m$ vertices. The cluster expansion formally expresses the partition function of the Ising model on a graph as 
\begin{align*}
\cZ = \exp\Big[\sum_{m\geq 1}\sum_{\gamma_1,\ldots, \gamma_m}\varphi(\gamma_1,\ldots,\gamma_m)\prod_{i\leq m} w(\gamma_i)\Big]\,.
\end{align*}
Theorem~\ref{thm:cluster-expansion} arises from viewing $\mu_{n}^\mp$ with interface $\cI$ as a cost from the disagreements along $\cI$, along with one Ising model above $\cI$ with minus boundary conditions, and one below $\cI$ with plus boundary conditions. The function $\g$ is therefore given by simple algebraic manipulations from the Ursell functions and polymer weights, all of which are invariant under reflections and rotations in the $xy$-plane.
\end{remark}

We end this section with a piece of terminology that we will use frequently. We will say that the radius $\br(f,\cI; f',\cI')$ is \emph{attained} by a face $ g \in \cI$ (resp., $g' \in \cI'$) of minimal distance to $f$ (resp., $f'$) whose presence prevents $\br(f, \cI; f',\cI')$ from being any larger.

\subsection{Rigidity of Dobrushin interfaces}\label{sec:dobrushin-proof}
For the benefit of the reader, we include Dobrushin's proof of rigidity for 3D interfaces from~\cite{Dobrushin72a}, namely that the walls corresponding to horizontal interfaces have exponential tails on their excess areas. This will straightforwardly imply that the probability that the pillar above a face $x\in \cL_0$ reaches a height $h$ has an exponentially decaying tail. We will need the following definition of~\cite{Dobrushin72a} that collects walls that are close, and therefore excessively interact with one another, together. 

\begin{definition}\label{def:group-of-walls}
For a wall $W$, for every edge or face $u\in \rho(W)$, let $N_\rho(u)= \#\{f\in W: \rho(f)= u\}$. We say that two walls $W_1$ and $W_2$ are \emph{close} if there exist $u_1\in \rho(W_1)$ and $u_2 \in \rho(W_2)$ such that 
\begin{align*}
|u_1 - u_2| \leq \sqrt{N_\rho(u_1)} + \sqrt {N_\rho(u_2)}\,.
\end{align*}
Then an admissible set of standard walls $F = \bigcup_i W_i$ is a \emph{group of walls} if it is a maximal connected  component (via the adjacency relation induced by closeness) of walls i.e., every wall in $F$ is close to some other wall in $F$ and no wall not in $F$ is close to a wall of $F$. Index a group of walls by the minimal index of its walls, and let $(\cF_x)_{x\in \cL_0}$ be the admissible group of wall collection of $\cI$. 
\end{definition}

Following the definition of admissible sets of standard walls and Lemma~\ref{lem:interface-reconstruction}, it should be clear how admissible collections of groups of walls would be defined, and that the set of all admissible collections of groups of walls are in 1-1 correspondence with the set of all possible Dobrushin interfaces (see \S5 of~\cite{Dobrushin72a}). 

\begin{remark}\label{rem:strongest-enumerations}
The procedure for sorting the faces of $\cL_0$ and using this ordering to identify each group of walls by the appropriate minimal face in $\cL_0$ that can be used to identify the group of walls, will be  called an \emph{indexing} of $\cI$. Our results will easily be seen to hold uniformly over this indexing (i.e., uniformly over all orderings of the faces in $\cL_0$). 
\end{remark}

\begin{lemma}[{\cite[Lemma 8]{Dobrushin72a}}]\label{lem:dobrushin-wall-ratio}
There exists $\beta_0$ and a universal $C$ such that for $\beta>\beta_0$, for any admissible collection of groups of walls $(F_y)_{y\neq x},F_{x}$,  we have 
\begin{align*}
\frac{\mu_n(\sF_x = F_{x} ,  (\sF_y)_{y\neq x} = (F_y)_{y\neq x})}{\mu_n(\sF_x = \emptyset, (\sF_y)_{y\neq x} = (F_y)_{y\neq x})} \leq \exp [- (\beta-C) \fm(F_{x})\big]\,.
\end{align*}
\end{lemma}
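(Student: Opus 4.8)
The plan is to apply a Peierls-type map that deletes the group of walls $F_x$ from the interface, using the standard wall representation from Lemma~\ref{lem:interface-reconstruction} and the cluster expansion formula of Theorem~\ref{thm:cluster-expansion}. Concretely, I would fix an admissible collection $(F_y)_{y\neq x}, F_x$ and let $\cI$ be the interface with this group-of-walls representation; let $\Psi(\cI)$ be the interface with representation $(F_y)_{y\neq x}$ obtained by deleting the standard walls comprising $F_x$ (and vertically shifting faces in the ceilings nested in $F_x$ down to their new heights, per Observation~\ref{obs:1-to-1-map-faces}). The map is clearly injective on interfaces with the prescribed $(F_y)_{y\neq x}$. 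Then by Theorem~\ref{thm:cluster-expansion},
\[
\frac{\mu_n(\cI)}{\mu_n(\Psi(\cI))} = \exp\bigg(-\beta\,\fm(\cI;\Psi(\cI)) + \sum_{f\in\cI}\g(f,\cI) - \sum_{f'\in\Psi(\cI)}\g(f',\Psi(\cI))\bigg),
\]
and $\fm(\cI;\Psi(\cI)) = \fm(F_x)$ by Remark~\ref{rem:excess-area-properties}. So the whole task reduces to showing the $\g$-difference is at most $C\,\fm(F_x)$.

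To control $\sum_f \g(f,\cI) - \sum_{f'}\g(f',\Psi(\cI))$, I would split the faces of $\cI$ into (a) the faces of $F_x$ themselves, of which there are at most $2\fm(F_x)$ by~\eqref{eq:excess-area-wall-relations}, each contributing $O(\bar K)$ by the uniform bound~\eqref{eq:g-uniform-bound}; and (b) the faces $f\notin F_x$, which by Observation~\ref{obs:1-to-1-map-faces} are in bijection $f\mapsto\tilde f$ with faces of $\Psi(\cI)$ via vertical shifts only. For the latter we use the exponential decay~\eqref{eq:g-exponential-decay}: the contribution is bounded by $\bar K\sum_{f\notin F_x} e^{-\bar c\,\br(f,\cI;\tilde f,\Psi(\cI))}$. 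The radius $\br(f,\cI;\tilde f,\Psi(\cI))$ is attained by some face of $F_x$ (or a ceiling face moved by the shift), so it is comparable to the $xy$-distance from $\rho(f)$ to $\rho(F_x)$. The point of Dobrushin's closeness criterion (Definition~\ref{def:group-of-walls}) and the $\sqrt{N_\rho}$ threshold is exactly that walls outside $F_x$ are \emph{not} close to $F_x$, so this distance is large relative to the local height, and one gets
\[
\sum_{f\notin F_x} e^{-\bar c\,\br(f,\cI;\tilde f,\Psi(\cI))} \;\leq\; C'\sum_{u\in\rho(F_x)} N_\rho(u) \;\leq\; C'\,\fm(F_x),
\]
where the key geometric input is an isoperimetric/summation estimate: the number of faces of $\cI$ over a column $u$ at $xy$-distance $d$ from $\rho(F_x)$, weighted by $e^{-\bar c d}$, sums (over all such columns) to something controlled by $\sum_u N_\rho(u)$, using that a column contributing height $H$ forces $\rho(F_x)$ to reach within distance $\sqrt{H}$ and that $\fm$ dominates both $|\cF(\rho(F_x))|$ and total wall area. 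Combining (a) and (b) gives the $\g$-difference $\leq C\,\fm(F_x)$ for a universal $C$, hence the claimed bound with $\beta_0$ chosen so the cluster expansion converges.

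The main obstacle is step (b): carefully bounding $\sum_{f\notin F_x} e^{-\bar c\,\br(f,\cI;\tilde f,\Psi(\cI))}$ by $C\fm(F_x)$. This requires (i) verifying that for $f$ projecting far from $\rho(F_x)$ the local neighborhood of $f$ in $\cI$ genuinely agrees with that of $\tilde f$ in $\Psi(\cI)$ out to a radius $\gtrsim \dist(\rho(f),\rho(F_x))$ — which uses that the only modification is the deletion/shift localized near $\rho(F_x)$, together with the fact that faces nested in the \emph{same} ceiling undergo the \emph{same} vertical shift (Observation~\ref{obs:1-to-1-map-faces}), so no new discrepancies are created far away; and (ii) the combinatorial summation that turns the exponential-in-distance decay into a linear bound in $\fm(F_x)$, which is precisely where the threshold $\sqrt{N_\rho(u_1)}+\sqrt{N_\rho(u_2)}$ in the definition of ``close'' is used, via the third inequality in Remark~\ref{rem:excess-area-properties} ($|x-y|\le\fm(W_x)$ for $x,y$ interior to $\rho(W_x)$) and an isoperimetric bound on how tall a wall must be to reach a given $xy$-distance. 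Getting the constants to come out universal (independent of $\beta$) is the delicate bookkeeping, but conceptually it is the standard Dobrushin argument, and I would follow~\cite[\S5, Lemma 8]{Dobrushin72a} for the details.
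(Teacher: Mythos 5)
Your proposal follows the same approach as the paper: delete the group of walls $F_x$ via the standard-wall-representation bijection, apply the cluster-expansion ratio from Theorem~\ref{thm:cluster-expansion}, bound the $\g$-difference by splitting into deleted faces (uniform bound) and retained faces (exponential decay in $\br$), and control the latter via the $\sqrt{N_\rho}$ closeness threshold together with the projection inequalities of Remark~\ref{rem:excess-area-properties}. This is correct and matches the paper; the only minor imprecision is your intermediate display $\sum_{f\notin F_x} e^{-\bar c\,\br} \leq C'\sum_{u\in\rho(F_x)} N_\rho(u)$ --- the paper instead reorganizes the double sum over $u'\in\rho(F_x)$, $u\in\rho(F_x)^c$ with weight $(|u-u'|^2+1)e^{-\bar c|u-u'|}$ and integrates the exponential tail to get $\bar C(|\cE(\rho(F_x))|+|\cF(\rho(F_x))|)\le\bar C\fm(F_x)$, rather than $\sum_{u\in\rho(F_x)}N_\rho(u)$ --- but both expressions are $O(\fm(F_x))$ so the conclusion and the mechanism you describe are correct.
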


The above readily implies an exponential tail on the size of the group of walls indexed by face $x\in \cL_0$. In fact, it can easily be used to show that the probability that the interface intersects the column $\{(x_1,x_2,s): s\in \R\}$ above a height $H$ decays exponentially in $H$, and with our definition of pillars, we can also use it to show that it implies an exponential tail on $\hgt(\cP_x)$. 

\begin{theorem}[{\cite{Dobrushin72a,Dobrushin72b,Dobrushin73}, see also \cite{BLP79b}}]\label{thm:dobrushin-rigidity}
There exists $C>0$ such that for every $\beta>\beta_0$, for every $x\in \cL_0 \cap \Lambda$, and every $r\geq 1$, 
\begin{align*}
\mu_n(\fm(\sF_x) \geq r)\leq \exp\big[-(\beta-C)r\big]\,.
\end{align*}
Furthermore, we have that for every $h\geq 1$, 
\begin{align*}
\mu_n(\hgt(\cP_x) \geq h)\leq \exp\big[-4(\beta-C) h \big]\,.
\end{align*}
\end{theorem}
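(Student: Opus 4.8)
Both estimates will follow from Dobrushin's wall-deletion bound (Lemma~\ref{lem:dobrushin-wall-ratio}) together with a Peierls-type count of admissible groups of walls of prescribed excess area; the second bound is then extracted from the first by an isoperimetric inequality that supplies the factor~$4$. For the tail of $\fm(\sF_x)$ I would write $\mu_n(\fm(\sF_x)\geq r)=\sum_{m\geq r}\sum_{F_x:\,\fm(F_x)=m}\mu_n(\sF_x=F_x)$ and, for each candidate $F_x$, expand $\mu_n(\sF_x=F_x)$ over the configurations $(F_y)_{y\neq x}$ of the remaining groups for which $(F_y)_{y\neq x}\cup\{F_x\}$ is admissible. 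By Lemma~\ref{lem:dobrushin-wall-ratio} each summand is at most $e^{-(\beta-C)m}$ times the same probability with $F_x$ replaced by $\varnothing$; since dropping a group from an admissible collection leaves it admissible, summing over $(F_y)_{y\neq x}$ gives $\mu_n(\sF_x=F_x)\leq e^{-(\beta-C)m}$. It then remains to bound the number of admissible groups indexed at $x$ with $\fm(F_x)=m$ by $C_1^m$, a standard lattice-animal count: the relations $\fm(W)\geq|\cE(\rho(W))|+|\cF(\rho(W))|$ and $\fm(W)\geq\frac12|W|$ from~\eqref{eq:excess-area-wall-relations} control the size and planar extent of each wall, and the closeness criterion of Definition~\ref{def:group-of-walls} (close walls lie within $O(\sqrt{\fm})$ of each other) controls how the walls of a group are arranged. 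Summing the geometric series $\sum_{m\geq r}C_1^m e^{-(\beta-C)m}$ and renaming the constant yields the first bound for $\beta_0$ large.

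For $\hgt(\cP_x)$, the factor $4$ comes from the fact that a standard wall $W$ with a ceiling at height $s$ interior or adjacent to it has $\fm(W)\geq 4s$ --- the straight $1\times 1\times s$ column of singleton cells being extremal, with exactly $4$ vertical wall faces at each of the $s$ levels below its top ceiling, none projecting into $\cF(\rho(W))$, while its horizontal wall faces already account for $|\cF(\rho(W))|$. The event $\{\hgt(\cP_x)\geq h\}$ forces a face $f\in\cI$ at height $\geq h-O(1)$ with $\rho(f)$ in the connected shadow $\rho(\cP_x)\ni\rho(x)$; by the observation following Definition~\ref{def:nested-sequence-of-walls}, the walls of $\fW_{\rho(f)}$ have heights summing to $\geq h-O(1)$, so the groups of walls containing them have total excess area $\geq 4h-O(1)$ and, by the diameter bound of Remark~\ref{rem:excess-area-properties}, are indexed within distance $O(h)$ of $\rho(x)$. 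When this tall structure lies in a single group $\sF_y$, one gets $\mu_n(\hgt(\cP_x)\geq h)\leq\sum_{r\geq 4h-O(1)}\#\{y:d(\rho(x),y)\leq r\}\,\mu_n(\fm(\sF_y)\geq r)\leq\sum_{r\geq 4h-O(1)}Cr^2 e^{-(\beta-C)r}$, which is dominated by its first term, and the polynomial factor and the $O(1)$ shift are absorbed into the constant; when $\fW_{\rho(f)}$ splits across several groups of heights $s_1,\dots,s_k$ with $\sum_j s_j\geq h-O(1)$, iterating Lemma~\ref{lem:dobrushin-wall-ratio} bounds each prescribed configuration by $\prod_j e^{-4(\beta-C)s_j}=e^{-4(\beta-C)\sum_j s_j}$ and the sum over $k$, the $s_j$, and the admissible positions is again a convergent geometric series for $\beta_0$ large.

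The main obstacle is the second part, and within it the two geometric ingredients: proving $\fm(W)\geq 4\,\hgt(W)$ while keeping careful track that the vertical faces being counted are genuine wall faces of $W$ (not shared with ceilings) and that the horizontal wall faces of $W$ already absorb $|\cF(\rho(W))|$; and localizing the tall part of the pillar near $\rho(x)$, via the $\fm$-versus-extent bounds, so that the union bound over positions costs only a factor polynomial in $h$ and the estimate remains independent of $n$. Everything else is routine exponential-sum bookkeeping of Peierls type, valid for $\beta>\beta_0$ where the cluster expansion of Theorem~\ref{thm:cluster-expansion} converges.
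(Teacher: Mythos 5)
Your first bound is proved by exactly the paper's route: bound $\mu_n(\sF_x=F_x)$ by $e^{-(\beta-C)\fm(F_x)}$ via Lemma~\ref{lem:dobrushin-wall-ratio} after summing over the remaining groups (dropping $F_x$ preserves admissibility), then pay for the combinatorics of $F_x$ with the lattice-animal count of Lemma~\ref{lem:counting-groups-of-walls}. No issues there, and the geometric-series step is exactly the paper's.

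For the second bound your strategy is also the same in spirit (isoperimetric $\fm(W)\geq 4\,\hgt(W)$, localize the supporting walls near $x$ via $\diam\rho(W)\leq\fm(W)$, union over positions), but two of the steps you call ``routine bookkeeping'' are precisely where the paper has to do real work, and as written your sketch has a gap there. First, the localization is not immediate: $\rho(f)$ sits in the shadow of $\cP_x$, and by Observation~\ref{obs:pillar-wall} every wall of $\cP_x$ lies in $\fW_x$ or is nested inside it, so you only get $d(\rho(f),x)\leq\fm(\fW_x)$, a quantity that is a priori unbounded in terms of $h$. The paper handles this by first splitting off the event $\{\fm(\fW_x)\geq 4h\}$ (already exponentially costly), and only then conditioning on $\fm(\fW_x)=h_1<4h$, which is what makes $d(y,x)\leq h_1$ available for the union bound. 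Second, and more substantively, the multi-group case is where you only gesture: ``iterating Lemma~\ref{lem:dobrushin-wall-ratio} \ldots the sum over $k$, the $s_j$, and the admissible positions is again a convergent geometric series.'' Iterating the ratio does give $\prod_j e^{-(\beta-C)\fm(F_{y_j})}$, but now you must sum over the (unbounded) number $k$ of contributing groups, their indexing faces, and their internal structure simultaneously, and there is no a priori upper bound on $k$ or on how far each $y_j$ can wander. The paper sidesteps this cleanly by introducing $\phi^x_z(F_z)=\fm(F_z)\one\{\fm(F_z)\geq|z-x|\}$, noting $\fm(\fW_x)\leq\fm(\fF_x)\leq\sum_z\phi^x_z(\sF_z)$, and then bounding the exponential moment $\E[e^{(\beta-2C)\sum_z\phi^x_z(\sF_z)}]\leq\prod_z(1+Ce^{-C|z-x|})<\infty$ via the conditional tail bounds from part (1); Markov then gives $\mu_n(\fm(\fF_x)\geq r)\leq Ke^{-(\beta-2C)r}$ in one step, with no enumeration over $k$. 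If you flesh out your iterated-product argument carefully it should close, but as submitted it is not a proof of the step where the combinatorics actually bite, and you should either carry out that enumeration or adopt the moment-generating-function packaging.
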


\begin{proof}[\textbf{\emph{Proof of Lemma~\ref{lem:dobrushin-wall-ratio}}}]
Let $\Phi_{x}$ be the map that takes an interface $\cI$ and eliminates its group of walls $\sF_{x}$ (if such a group of walls is nonempty), generating the new interface as per Lemma~\ref{lem:interface-reconstruction}. Now for ease of notation, let $\cI$ be the interface with the collection of groups of walls $(\sF_y)_y = (F_y)_y$ and let $\cI'$ be the one with the collection of groups of walls $(\sF'_y)_y$ where $\sF'_y=F_y$ for $y\neq x$ whereas $F'_{x}=\emptyset$, so that $\cI'= \Phi_{x}(\cI)$ and $\fm(\cI;\cI') = |\cI|-|\cI'|  = \fm(F_{x})$. By Theorem~\ref{thm:cluster-expansion}, we have 
\begin{align*}
\frac{\mu_n(\sF_x = F_{x} ,  (\sF_y)_{y\neq x} = (F_y)_{y\neq x})}{\mu_n(\sF_x = \emptyset, (\sF_y)_{y\neq x} = (F_y)_{y\neq x})} = \frac{\mu_n (\cI)}{\mu_n(\cI')} = \exp \Big( - \beta \fm(F_{x}) + \big(\sum_{f\in \cI} \g(f, \cI)- \sum_{f'\in \cI'}\g(f',\cI')\big)\Big)\,.
\end{align*}
We wish to bound the absolute value of the difference of the sums in the right-hand side. Denote the walls constituting $F_{x}$ by $W_{x_1}, W_{x_2}, \ldots, W_{x_l}$ for some $l$. Recall from Observation~\ref{obs:1-to-1-map-faces}, the 1-1 correspondence between  $\cI\setminus F_x$, and the faces of $\cI'$ that do not project in to $\cF(\rho(F_x))$ and encode it with the notation $f \mapsto \tilde f$. Then, we have 
\begin{align*}
\Big| \sum_{f\in \cI} \g(f, \cI) - \sum_{f'\in \cI'} \g(f',\cI')\Big| & \leq \sum_{f\in F_{x}} |\g(f,\cI)| + \sum_{f'\in \cI': \rho(f')\in \cF(\rho(F_{x}))} |\g(f',\cI')|+ \sum_{f\notin F_{x}} \big|\g(f,\cI)- \g(\tilde f, \cI')\big| \\ 
& \leq 3\bar K \fm(F_{x})+   \sum_{f\notin F_{x}}  \bar K \exp \big[- \bar c \br\big(f,\cI; \tilde f,\cI'\big)\big]\,.
\end{align*}
It is clear by construction, that for every $f, \tilde f$ the distance $\br(f,\cI; \tilde f, \cI')$ is attained by the distance to a wall face.  Since the distance between two faces is at least the distance between their projections, and projections of distinct walls are distinct, 
\begin{align*}
 \sum_{f\notin F_x} \bar K \exp\big[{- c \br (f,\cI;\tilde f, \cI')}\big] \leq   \sum_{f\notin F_x} \bar K \max_{u\in \rho(F_x)} \exp\big[-\bar c d(\rho(f),u)\big]\,.
\end{align*}
Then by the definition of groups of walls and closeness of walls, for a ceiling face $f$, $N_\rho(\rho(f)) =1$, and for a wall face $f\notin F_x$, $N_\rho (\rho(f)) \leq |\rho(f) - \rho(g)|^2$ for all $g\in F_x$. Thus this is at most
\begin{align*}
 \sum_{u\in \rho(F_{x})^c} \bar K N_\rho(u) \max_{u'\in \rho(F_x)} \exp[-\bar c d(u,u')] 
& \leq   \sum_{u'\in \rho(F_{x})} \sum_{u\in \rho(F_{x})^c}  \bar K (|u-u'|^2 +1)\exp [-\bar c |u-u'|]\,.
\end{align*}
which by integrability of exponential tails is easily seen to be at most $\bar C (|\cE(\rho(F_{x}))|+ |\cF(\rho(F_{x})|)$ for some  constant $\bar C$, which is in turn at most $\bar C \fm(F_{x})$ by~\eqref{eq:excess-area-wall-relations}.
\end{proof}

It is also important for us to control the number of interfaces that get mapped to the same interface under application of the map $\Phi_x$. We begin with the following geometric observation. 

\begin{observation}[e.g., Lemma 2 in~\cite{Dobrushin72a}]\label{obs:counting-connected}
The number of $*$-connected collections of $k$ faces in $\Z^d$ containing a specified face $f_\star$ is at most $s^k$ for some universal (only lattice-dependent) $s>0$.  
\end{observation}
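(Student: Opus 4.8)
The plan is to reduce this to the textbook fact that a graph of bounded degree has at most exponentially many connected subgraphs of a given size through a fixed vertex, and to prove that fact by the classical depth-first encoding. First I would record that $*$-adjacency turns the faces of $\Z^d$ into a graph $G$ of bounded degree: a face has a fixed (dimension-dependent) number of bounding vertices, and each vertex of $\Z^d$ lies on only a bounded number of faces, so each face is $*$-adjacent to at most $D=D(d)<\infty$ others. I would also fix, once and for all, a total ordering of the at most $D$ possible ``neighbour displacements'' --- the finitely many relative positions a face can occupy with respect to a $*$-adjacent one; this is legitimate because the $*$-adjacency pattern is invariant under the translations of $\Z^d$, so the same list of displacements serves at every face.

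Next, given a $*$-connected set $S$ of $k$ faces with $f_\star\in S$, I would choose any spanning tree $T$ of the subgraph of $G$ induced by $S$ and root it at $f_\star$; using the fixed ordering of neighbour displacements, $T$ becomes a rooted plane tree with direction-labelled edges. Running depth-first search from $f_\star$ traverses each edge of $T$ exactly twice, and thus produces a word $w(S)$ of length $2(k-1)$ over the alphabet of size $D+1$ consisting of the (at most $D$) direction labels together with one extra symbol meaning ``backtrack to parent.'' The key point is that from $f_\star$ and the word $w(S)$ one can replay the search --- each direction symbol prescribes the next face to occupy, the backtrack symbol returns to the previously occupied face --- and hence reconstruct the embedded tree $T$, and in particular its vertex set $S$. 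So $S\mapsto w(S)$ is injective, whence the number of $*$-connected $k$-sets containing $f_\star$ is at most $(D+1)^{2(k-1)}\le (D+1)^{2k}$, and the Observation follows with $s:=(D+1)^2$.

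The only step that needs any care is the reconstruction: one must be sure that the DFS word determines the embedded tree unambiguously, which is exactly why the ordering of neighbour displacements is fixed globally at the outset rather than chosen afresh at each face. Everything else is routine bookkeeping (and indeed one could instead simply invoke the standard lattice-animal counting bound), so I do not expect a genuine obstacle here --- this is a short, self-contained combinatorial lemma.
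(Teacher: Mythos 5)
The paper does not prove this Observation — it cites it to Lemma~2 of Dobrushin's 1972 paper — so there is no in-text argument to compare against. Your proof via a spanning tree of the induced $*$-adjacency subgraph rooted at $f_\star$, together with the depth-first-search word of length $2(k-1)$ over a $(D+1)$-letter alphabet, is correct and is the standard way to establish lattice-animal counting bounds of this type (and is, in all likelihood, essentially Dobrushin's argument). One small imprecision worth noting: the set of neighbour displacements actually depends on the orientation of the face (a face normal to $e_1$ has a different neighbour pattern than one normal to $e_2$), so ``the same list of displacements serves at every face'' is not literally true; but since the decoder always knows the current face, and hence its orientation, when it reads the next direction symbol, the decoding is still unambiguous — equivalently, one can just enumerate the at most $D$ neighbours of each face in lexicographic order by midpoint and let the symbol be the index in that list. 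This does not affect the bound $s=(D+1)^2$.
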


The following follows from Observation~\ref{obs:counting-connected} and Definition~\ref{def:group-of-walls}; we do not include the proof here, but it can be found as part of the proof of the more complicated combinatorial estimate in Proposition~\ref{prop:base-shrinking-multiplicity}.

\begin{lem}[{\cite[Lemma 9]{Dobrushin72a}}]\label{lem:counting-groups-of-walls}
There exists $s$ such that for any $x\in \cL_0 \cap \Lambda$, the number of possible groups of walls $F_{x}$ with excess area $\fm(F_{x})=k$ is at most $s^{k}$. Likewise, there exists $s'$ such that the number of possible groups of walls $F$ containing $x$ in their interior, with $\fm(F) = k$ is at most $(s')^k$ .
\end{lem}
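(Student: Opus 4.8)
The plan is to show that a group of walls of excess area $k$ is determined, up to $e^{O(k)}$ choices, by a single $*$-connected subset of $\cL_0$ of size $O(k)$ together with $e^{O(k)}$ bits of bookkeeping, and then to invoke Observation~\ref{obs:counting-connected}. Given a group $F=\{W_1,\dots,W_l\}$ with $\sum_i \fm(W_i)=k$ (so $l\le k$), write $\rho(F)=\bigcup_i\rho(W_i)$ and, for $u\in\rho(F)$, let $N_\rho(u)$ be as in Definition~\ref{def:group-of-walls}; let $D_u$ be the set of edges and faces of $\cL_0$ within distance $\sqrt{N_\rho(u)}+1$ of $u$, and set
\[
\Gamma(F):=\rho(F)\cup\bigcup_{u\in\rho(F)}D_u\,.
\]
I would first establish two properties of $\Gamma(F)$. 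It is $*$-connected: each $\rho(W_i)$ is $*$-connected and is joined to all the balls $D_u$ ($u\in\rho(W_i)$) it contains, while any two close walls $W_i,W_j$ — with witnesses $u\in\rho(W_i)$, $u'\in\rho(W_j)$ and $|u-u'|\le\sqrt{N_\rho(u)}+\sqrt{N_\rho(u')}$ — have $D_u\cap D_{u'}\neq\varnothing$; since the closeness graph of a group of walls is connected, so is $\Gamma(F)$. And it is small: $|D_u|=O(N_\rho(u))$ (as $N_\rho(u)\ge 1$), while $\sum_{u\in\rho(W_i)}N_\rho(u)=|W_i|\le 2\fm(W_i)$ by~\eqref{eq:excess-area-wall-relations}, so $|\Gamma(F)|\le|\rho(F)|+\sum_i\sum_{u\in\rho(W_i)}O(N_\rho(u))=O(\sum_i\fm(W_i))=O(k)$.

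With this, the count is routine. For the first assertion, $W_1$ is indexed by $x$, so (by Remark~\ref{rem:indexing-walls}) $x$ lies in, or is adjacent to, $\rho(W_1)\subseteq\Gamma(F)$; hence $\Gamma(F)\cup\{x\}$ is a $*$-connected set of size $O(k)$ containing $x$, and the number of possibilities for it is $e^{O(k)}$ by Observation~\ref{obs:counting-connected} applied within $\cL_0$. Given $\Gamma=\Gamma(F)$, the shadow $\rho(F)$ is one of at most $2^{|\Gamma|}=e^{O(k)}$ subsets of $\Gamma$; and its $*$-connected components are \emph{exactly} the projections $\rho(W_1),\dots,\rho(W_l)$ — projections of distinct walls are disjoint and lie in distinct components (Lemma~\ref{lem:wall-ceiling-bijection}) — so recovering $\rho(F)$ recovers the individual projections at no further cost. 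It then remains to assign each component $C$ its excess area $m_C$, where the number of ways to pick positive integers summing to $k$ over the (now known) components is at most $2^{k-1}$, and, for each $C$, to recover the standard wall $W$ with $\rho(W)=C$ and $\fm(W)=m_C$: such a $W$ is $*$-connected, has $|W|\le 2m_C$ by~\eqref{eq:excess-area-wall-relations}, and has a face near $\cL_0$ along $C$, so there are at most $s^{m_C}$ of them by Observation~\ref{obs:counting-connected}, and $\prod_C s^{m_C}=s^k$. Multiplying these $e^{O(k)}$ bounds gives $s^k$ for a suitable $s$. For the second assertion, $x$ is interior to some $W\in F$ (by the nesting of walls within a group), so $d(x,\rho(W))\le\fm(W)\le k$ by the last display of Remark~\ref{rem:excess-area-properties}; thus $\Gamma(F)$ meets $B_k(x)$, there are only $|B_k(x)|\cdot e^{O(k)}=e^{O(k)}$ $*$-connected sets of size $O(k)$ meeting $B_k(x)$, and the remainder of the argument is unchanged, yielding $(s')^k$.

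The step I expect to be the crux is the size estimate $|\Gamma(F)|=O(k)$: one must choose the fattening radii so that closeness is realized by overlaps of balls whose \emph{total} area is controlled by $\sum_i|W_i|$ and not by something like $\sum_i\fm(W_i)^2$ or a degree‑weighted sum over a spanning tree of the closeness graph — both of which would only give $s^{k^{3/2}}$ or worse, the obstruction being that a tall thin wall has a large ``reach'' but a tiny projection, and its contribution $\sum_u N_\rho(u)$ is nonetheless just $|W_i|$. A secondary point, needed to keep the multiplicity at $e^{O(k)}$ rather than $e^{O(k\log k)}$, is that one must not pay to \emph{label} which part of $\Gamma$ came from which wall; this is exactly what Lemma~\ref{lem:wall-ceiling-bijection} provides, since the wall projections are forced to be the $*$-connected components of $\rho(F)$. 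This argument is the combinatorial core of — and reappears inside — the more elaborate multiplicity bound of Proposition~\ref{prop:base-shrinking-multiplicity}.
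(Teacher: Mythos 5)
Your proof is correct, and the core combinatorial idea is the same as the paper's (which is not spelled out for this lemma but is embedded in the proof of Proposition~\ref{prop:base-shrinking-multiplicity}): fatten $\rho(F)$ by $\sqrt{N_\rho(u)}$-balls so that closeness forces overlap, note that the total ball area is controlled by $\sum_i |W_i| = O(\sum_i \fm(W_i))$ rather than anything quadratic, and invoke Observation~\ref{obs:counting-connected}. Where you diverge is in the final reconstruction stage. The paper's variant keeps the set three-dimensional: the $*$-connected object it counts is the union of the actual wall faces of $F$ together with the $R_u$'s (and, in the nested setting, connecting paths), and it recovers $F$ directly from that 3D set via a projection criterion (``$f\in\cL_0$ is a wall face iff another face of the union projects onto it''). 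You instead keep everything two-dimensional — $\Gamma(F)\subset\cE(\cL_0)\cup\cF(\cL_0)$ — and pay an extra $2^{|\Gamma|}=e^{O(k)}$ to select $\rho(F)\subset\Gamma$, then use Lemma~\ref{lem:wall-ceiling-bijection} to split $\rho(F)$ into per-wall components for free, a $2^{k-1}$ factor to distribute the excess area, and a fresh application of Observation~\ref{obs:counting-connected} (now in 3D, once per component) to rebuild each standard wall with $|W|\le 2\fm(W)$ from a face rooted at height $\tfrac12$ on the boundary of its projection. Both schemes give $s^k$; yours is a bit more modular and arguably easier to audit, since each exponential factor is isolated, while the paper's is shorter because encoding the 3D walls into the connected set sidesteps the subset-selection and per-component steps. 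One small imprecision to fix: the wall whose projection $x$ is adjacent to (per Remark~\ref{rem:indexing-walls}) is the minimal-index wall of the group, not necessarily ``$W_1$''; and you should note in passing that Observation~\ref{obs:counting-connected}, stated for faces, applies verbatim (with the same lattice constant) to $*$-connected collections of edges and faces in $\cL_0$, which is what $\Gamma(F)$ is. Neither affects the conclusion.
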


Together, Lemmas~\ref{lem:dobrushin-wall-ratio} and \ref{lem:counting-groups-of-walls} imply an exponential tail on groups of walls. In various papers~\cite{Dobrushin72b,BLP79b,GielisGrimmett} proving rigidity for such models, they were used to show that the height of the interface above a face $x=(x_1,x_2,0)\in \cL_0$, defined there as $\max  \{h: (x_1, x_2,h)\in\cI\}$, has an exponential tail. Since that definition of height above $x$ differs from the pillar-based perspective we take in the present paper, we modify the argument therein slightly to prove an exponential tail on the height of the pillar $\hgt(\cP_x)$.

\begin{proof}[\textbf{\emph{Proof of Theorem~\ref{thm:dobrushin-rigidity}}}] We begin with the first estimate. Let $\mathbf I_{F_x= \emptyset} = \mbox{Im}(\Phi_x)$ be the set of interfaces where the group of walls $\sF_x$ is empty. By Lemma~\ref{lem:counting-groups-of-walls} (and the definition of the map $\Phi_{x}$ defined above, relying on Lemma~\ref{lem:interface-reconstruction}), we see that for every $\cI'\in \mathbf I_{F_x = \emptyset}$, the pre-image $$\{\cI \in \Phi_x^{-1}(\cI'): \fm(\cI;\cI')=k\}$$ has cardinality at most $s^k$. Then by Lemma~\ref{lem:dobrushin-wall-ratio}, for every $r\geq 1$, 
\begin{align*}
\mu_n(\fm(\sF_x)\geq r)\leq \sum_{k \geq r} \,\,\sum_{\cI'\in \mathbf I_{F_x = \emptyset}} \sum_{\cI\in \Phi_x^{-1}(\cI'):  \fm(\cI; \Phi_x(\cI))=k} \mu_n(\cI) \leq \sum_{k \geq r} \sum_{\cI'\in \mathbf I_{F_x=\emptyset}} \mu_n(\cI') s^k \exp[- (\beta- C) k]
\end{align*} 
from which we obtain by summability of exponential tails, that for some $C'$, for $\beta>\beta_0$, 
\[\mu_n(\fm(\sF_x)\geq r) \leq C'\exp[-(\beta - C') r ] \mu_n(\mathbf I_{F_x = \emptyset}) \leq C' \exp[- (\beta  - C')r]\,.\]
We now turn to bounding the probability of $\hgt(\cP_x)\geq h$. 

In order for $\hgt(\cP_x)\geq h$, by Observation~\ref{obs:pillar-wall}, there must be one sequence of nested walls $\fW_x = (W_{x_s})_s$ all of which contain $x$ in their interior, with $\sum_s \fm(W_{x_s})= h_1$, along with a sequence of nested walls $\fW_y = ( W_{y_t})_t$ with $y_t \neq x_s$ for any $t,s$, containing some $y$ in the interior ceilings of $\fW_x$, such that $\sum_t \fm(W_{y_t}) \geq 4h - h_1$. 
In order to bound this, we can therefore write 
\begin{align*}
\mu_n( & \hgt(\cP_x)   \geq h) \\
&  \leq \mu_n(\fm(\fW_x) \geq 4 h) + \sum_{h_1\leq 4h} \mu_n(\fm(\fW_x) = h_1) \mu_n (\exists y: |y-x|\leq h_1,  \fm(\fW_y)\geq 4h- h_1\mid \fm(\fW_x) = h_1) \\ 
& \leq \mu_n(\fm(\fW_x)\geq 4h)+ \sum_{h_1 \leq 4h} \mu_n(\fm(\fW_x)\geq h_1) \sum_{y:|y-x|\leq h_1} \sup_{(W_{x_s})_s: \fm(\fW_x) = h_1} \mu_n(\fm(\fW_y)\geq 4h - h_1\mid (W_{x_s})_s)\,.
\end{align*}
To bound the probabilities expressed above, let us turn to groups of walls instead of walls, denoting by $\fF_x$ the group of walls of the nested sequence $\fW_x$ and $\fF_y$ corresponding to the nested sequence of walls of $\fW_y$. 
Following ~\cite{Dobrushin72b,BLP79b}, for a group of walls $F_z$, set 
\begin{align*}
\phi^x_z( F_z) = \fm(F_z) \one_{\{\fm(F_z)\geq |z-x|\}}
\end{align*}
and notice that $\fm(\fW_x)\leq \sum_{z} \phi^x_{{z}}(\sF_{{z}})$. Indeed, every wall $W_y\in \fW_x$ must nest $x$ and therefore must have excess area at least $d(y,x)$, from which it follows that its group of walls $F_y$ in turn has $\fm(F_y)\geq d(y,x)$.    
By the tail estimates of part (1) of Theorem~\ref{thm:dobrushin-rigidity}, we can bound  
\begin{align*}
\E_{\mu_n}[ e^{(\beta-2C) \phi^x_z (\sF_z)}\mid (\sF_{z'})_{z'\neq z}] \leq 1+ Ce^{-C |z-x|}
\end{align*} 
(where $\E_{\mu_n}$ is expectation with respect to $\mu_n$) which implies (by iteratively revealing $(F_{z})$ for all $z$) that
$$\E[e^{(\beta-2C) \sum_z \phi^x_{z} (\sF_{z})}] \leq \prod_z (1+Ce^{-C|{z}-x|}) \leq K<\infty\,.$$
By Markov's inequality, then, 
\begin{align}\label{eq:nested-seq-group-of-wall-exp-tail}
\mu_n(\fm(\fF_x)\geq r) \leq \mu_n\Big(e^{(\beta-2C) \sum \phi^x_{z}(F_{z})} \geq e^{(\beta-2C) r}\Big) \leq Ke^{- (\beta-2C) r}\,.
\end{align}
By the same reasoning, for any collection $\fF_x = (F_{x_s})_s$, since the exponential tail of Theorem~\ref{thm:dobrushin-rigidity} on $F_{y}$ for $y\neq x_s$ for any $s$ holds conditionally on $(F_{x_s})_s$, we see that similarly,
\begin{align*}
\sup_{(F_{x_s})_s: \fm(\fF_x) = h_1} \mu_n (\fm(\fF_y)\geq r \mid (F_{x_s})_s) \leq K e^{-(\beta - 2C)r}\,.
\end{align*}
Using that $\fm(\fW_x)\leq \fm(\fF_x)$ deterministically, we can plug in these estimates to see that   
\begin{align*}
\mu_n(\hgt(\cP_x)\geq h) & \leq  Ke^{- 4(\beta - 2C) h} +  K \sum_{h_1 \leq 4h} e^{ -\beta h_1 + 2C h_1} h_1^2 e^{ - (\beta - 2C) (4h - h_1)}  \\ 
& \leq K \sum_{h_1 \leq 4h} h_1^2 e^{-4\beta h + 8Ch} \leq 16K  h^4 e^{- 4\beta h + 8Ch}
\end{align*}
which for $h$ large, is at most $\exp[- 4(\beta -C')h]$ for some other universal constant $C'$. 
\end{proof}

By pairing the Dobrushin result with a straightforward forcing argument, we see the following. 
\begin{prop}\label{prop:rate-upper-lower-bounds}
There exist $\beta_{0}, C>0$ such that for every $\beta>\beta_{0}$, every $n,h$ large and $x\in \cL_0 \cap \Lambda_n$,
\begin{align*}
-4\beta - e^{-4\beta} \leq \frac 1h \log \mu_n(\hgt (\cP_x) \geq h)\leq & - 4\beta +C\,.
\end{align*}
\end{prop}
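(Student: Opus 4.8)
This half is already contained in the excerpt: the second estimate of Theorem~\ref{thm:dobrushin-rigidity} states $\mu_n(\hgt(\cP_x)\ge h)\le\exp[-4(\beta-C)h]$, hence $\tfrac1h\log\mu_n(\hgt(\cP_x)\ge h)\le-4\beta+4C$, which is the right-hand inequality (after relabelling $4C$ as $C$). So the work is entirely in the lower bound, which I will obtain by a forcing construction. Let $\cI_h$ be the interface obtained from the flat interface $\cL_0\cap\Lambda_n$ by deleting the single horizontal face lying above $x$ and inserting a straight vertical stack of $h$ unit cells directly above $x$: thus $\cI_h$ equals $\cL_0\cap\Lambda_n$ minus that one face, together with the $4h$ vertical side-faces of the stack and its one horizontal top face. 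It is a valid (admissible) interface; its spin configuration $\sigma(\cI_h)$ has a column of $h$ plus cells above $x$ attached to the plus phase below $\cL_0$; and a direct count gives $\fm(\cI_h;\cL_0\cap\Lambda_n)=|\cI_h|-|\cL_0\cap\Lambda_n|=4h$, which is the origin of the constant $4$ (consistent with $\alpha_\beta/\beta\to4$).

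\textbf{Passing to a local event.} One cannot simply write $\mu_n(\hgt(\cP_x)\ge h)\ge\mu_n(\cI_h)=\mu_n(\cL_0\cap\Lambda_n)e^{-4\beta h+\Delta_\g}$, because $\mu_n(\cL_0\cap\Lambda_n)\to0$ as $n\to\infty$ (the flat interface competes with order $n^2$ independent local excitations), far faster than $e^{-4\beta h}$ in the regime $h\ll n$. Instead I localize. Let $S=\{|y_1-x_1|\le1,\ |y_2-x_2|\le1,\ 0\le y_3\le h+1\}$, let $\cA_h$ be the set of interfaces that agree with $\cI_h$ throughout $S\cap\Lambda_n$, and let $\cA_0$ be the analogous event with $\cI_h$ replaced by $\cL_0$ (so $\cA_0$ just says $\cI\cap S$ is flat). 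Since the restriction $\cI\cap S$ fixes the spins inside $S$ and the forced column is robustly connected to the infinite plus cluster, $\cA_h\subseteq\{\hgt(\cP_x)\ge h\}$ regardless of the behavior of $\cI$ away from $S$. Inserting the stack above $x$ is an injection $\Psi\colon\cA_0\to\cA_h$, and for every $\cI\in\cA_0$ Theorem~\ref{thm:cluster-expansion} gives
\[
\frac{\mu_n(\Psi(\cI))}{\mu_n(\cI)}=\exp\Big(-4\beta h+\Delta_\g(\cI)\Big),\qquad \Delta_\g(\cI):=\sum_{f\in\Psi(\cI)}\g(f,\Psi(\cI))-\sum_{f\in\cI}\g(f,\cI).
\]
Summing over $\cI$ in a high-probability sub-event $\cA_0'\subseteq\cA_0$ (on which $\Delta_\g$ is controlled uniformly, see below) and using $\mu_n(\cA_0')\ge c_0>0$ uniformly in $n$ (a direct consequence of rigidity, Theorem~\ref{thm:dobrushin-rigidity}, applied to the $O(h)$ faces in $S$ and to the nearby wall excess areas), we get $\mu_n(\hgt(\cP_x)\ge h)\ge\mu_n(\cA_h)\ge e^{-4\beta h}\,(\inf_{\cA_0'}e^{\Delta_\g})\,c_0$. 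Taking $\tfrac1h\log$ and $h\to\infty$, the claim reduces to $\inf_{\cA_0'}\Delta_\g\ge-e^{-4\beta}h-O(1)$.

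\textbf{The main obstacle: controlling $\Delta_\g$.} This is the one genuinely delicate step. The faces on which $\cI$ and $\Psi(\cI)$ differ are precisely the $4h+1$ stack faces together with the deleted face above $x$; each contributes a $\g$-value of modulus at most $\bar K$, but the point is that $\g$ is a convergent cluster-expansion sum over polymers whose smallest member is a unit cube of weight $e^{-6\beta}$, so in fact $|\g(f,\cdot)|\le C_1e^{-c\beta}$ with $c>4$, uniformly; thus these $O(h)$ differing faces contribute at most $C_2he^{-c\beta}$ in modulus. For the faces $f$ common to $\cI$ and $\Psi(\cI)$ we use $|\g(f,\Psi(\cI))-\g(f,\cI)|\le\bar Ke^{-\bar c\,\br(f,\Psi(\cI);f,\cI)}$ from~\eqref{eq:g-exponential-decay}; since $\cI$ and $\Psi(\cI)$ differ only along the column region, $\br(f,\Psi(\cI);f,\cI)\ge\dist(f,\text{column})-O(1)$, and summing the resulting geometric tails over such $f$ — only $O(R^2)$ of which lie within distance $R$ of the column in a flat region, this uniformity being exactly what the sub-event $\cA_0'$ (``no nearby tall wall'') buys — contributes $O(1)$, independent of $h$. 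Hence $|\Delta_\g|\le C_2he^{-c\beta}+O(1)$, which for $\beta$ large is $\le e^{-4\beta}h+O(1)$ because $c>4$; feeding this back yields $\tfrac1h\log\mu_n(\hgt(\cP_x)\ge h)\ge-4\beta-C_2e^{-c\beta}-O(1/h)\ge-4\beta-e^{-4\beta}$ once $\beta$ is large and $h$ is large (depending on $\beta$), as required. If one prefers to avoid invoking $|\g(f,\cdot)|\le C_1 e^{-c\beta}$ with $c>4$ directly, the same conclusion follows by computing the common $\g$-value of the side-faces of an infinite straight column and checking via a short Peierls estimate that its modulus is $O(e^{-4\beta})$, since it is precisely this per-layer quantity that enters the exponent.
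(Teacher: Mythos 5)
Your upper bound is the same as the paper's (it is just Theorem~\ref{thm:dobrushin-rigidity}). For the lower bound, however, you take a genuinely different route from the paper, and it is worth comparing. The paper's argument is purely probabilistic and sidesteps the cluster expansion entirely: write $\{\hgt(\cP_x)\ge h\}\supseteq\{\hgt(\cP_x)\ge 0\}\cap\{\sigma(\cP_\emptyset)\equiv+1\}$ for the straight column of cells $\cP_\emptyset=\{x+(0,0,\ell-\tfrac12):\ell\le h\}$, apply FKG (both events are increasing), and then lower bound $\mu_n(\sigma(\cP_\emptyset)\equiv+1)$ by revealing the column's spins one at a time from the bottom; in the worst case each cell has one plus neighbor (below) and five unrevealed (hence, by monotonicity, minus) neighbors, giving a per-step probability of at least $e^{-4\beta}/(1+e^{-4\beta})$. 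The $e^{-4\beta}$ in the error term then falls out of $\log(1+e^{-4\beta})\le e^{-4\beta}$. That is the whole proof; no map, no $\g$.

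Your approach instead constructs the insertion map $\Psi$ and controls the resulting interface-weight ratio via Theorem~\ref{thm:cluster-expansion}, which is in keeping with the heavier machinery used throughout the paper, but here it forces a genuine gap. You need $|\Delta_\g|\le e^{-4\beta}h+O(1)$, i.e.\ the per-column-face contribution of $\g$ must be strictly smaller in order than $e^{-4\beta}$. The paper's Theorem~\ref{thm:cluster-expansion} records only the \emph{uniform} bound $|\g(f,\cI)|\le\bar K$ in~\eqref{eq:g-uniform-bound}, with $\bar K$ independent of $\beta$; with only this, the $4h$ column faces give $|\Delta_\g|=O(\bar K h)$, which swamps $e^{-4\beta}h$ entirely. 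You correctly observe that in fact $|\g|=O(e^{-c\beta})$ with $c>4$ (indeed $c=6$, from the smallest polymer being the boundary of a unit cube), and \emph{with} this fact the argument closes; but this $\beta$-dependent decay of $\g$ is not stated in the excerpt and would need to be derived from the cluster expansion before you can use it. Your fallback suggestion, that the per-layer contribution be shown to be ``$O(e^{-4\beta})$,'' is not quite enough: with four side faces per layer you need the per-layer error to be at most $e^{-4\beta}$, and an $O(e^{-4\beta})$ bound with an unspecified constant does not deliver that — you need $o(e^{-4\beta})$, which again requires the strict inequality $c>4$. Finally, the claim $\mu_n(\cA_0')\ge c_0>0$ uniformly in $n$ and $h$ is true but needs a little more than a bare citation of rigidity (one must rule out the interface intruding into the full height-$h$ tube $S$ from the side; this works by summing wall-excess tails over the $O(1)$ columns of the footprint, but it is a step, not a triviality). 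In short: the route is viable, but it imports an unstated quantitative input on $\g$ and a few extra technical checks, all of which the paper's simpler FKG/finite-energy argument avoids.
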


\begin{proof}The upper bound here was given by the second part of Theorem~\ref{thm:dobrushin-rigidity}. It remains to prove the lower bound; this proof will follow a more traditional coupling  argument.  First of all, with probability $1-\epsilon_\beta$ for some $\epsilon_\beta$ vanishing as $\beta \to\infty$, we have that $\hgt (\cP_x) \geq 0$ using e.g., the reflected version of Theorem~\ref{thm:dobrushin-rigidity}; (also notice that the event $\{\hgt (\cP_x) \geq 0\}$ is an increasing event).

Let $\cP_{\emptyset}$ be the set of all sites $\{x+(0,0,\ell-\frac 12 ):\ell=1,\ldots,h\}$. On the intersection of $\hgt(\cP_x)\geq 0$ with $\sigma(\cP_{\emptyset}) \equiv +1$, the interface has $\hgt(\cP_x)\geq h$, so that by the FKG inequality, it suffices to show the lower bound 
\[\frac 1h \log \mu_n(\sigma(\cP_{\emptyset}) \equiv +1)>  - 4\beta h - e^{-4\beta} h\,.\]

In order to show this estimate, we can expose the spins of $\cP_{\emptyset}$ from bottom up, starting with the one at $x+(0,0,\frac 12)$. With probability at least $\frac 12$, $\sigma_{x- (0,0, \frac 12)} = +1$, and by monotonicity, at worst, all other spins in $\sigma(\cP_{\emptyset}^c)$  are minus; by the domain Markov property and an elementary calculation, the probability of the spin at $x+(0,0,\frac 12)$ being plus is at least $\frac{\exp(- 4\beta)}{1+\exp(-4\beta) }=\frac12(1-\tanh(2\beta))$. Continuing on to the next site in $\cP_{\emptyset}$, conditional on the first one being plus, the same lower bound applies. As such, we can lower bound 
\begin{align*}
\mu_n\left(\sigma(\cP_{\emptyset}) =+1 \given \sigma_{x-(0,0,\frac 12)}=+1\right) \geq 
\frac{e^{-4\beta h}}{(1+e^{-4\beta})^h}> e^{-4\beta h - e^{-4\beta}h}\,,
\end{align*}
concluding the proof as long as $h$ is sufficiently large. 
\end{proof}

\section{Increments and the shape of tall pillars}\label{sec:increment-prelim}
In this section, we give a structural decomposition of a pillar, in the large deviation regime where it reaches a height of $h$. We prove that it is composed of a base---shown in \S\ref{sec:base} to have an exponential tail beyond height $O(\log h)$)---and a spine protruding from this base up to a height of $h$. This spine is further decomposed into a sequence of \emph{increments} between \emph{cut-points} where the spine is one-dimensional and vertical. In the remainder of this section, we give preliminary bounds regarding this decomposition, showing that the total number of increments is comparable to $h$, and has an exponential tail beyond that. In the following \S\ref{sec:increment-exp-tail}, we analyze individual increments, showing that they each have an exponential tail on their excess area.

\subsection{Increments of the pillar} We begin by defining the building blocks of the pillar where the 3D Ising interface undergoes an atypical fluctuation. 

\begin{definition}[Cut-points]\label{def:cut-point}
Call a height $h\in \Z+ \frac 12$ a \emph{cut-height} of the pillar $\cP_x$ if the intersection of the slab $\cL_h$ with $\sigma(\cP_x)$ consists of exactly one (midpoint of a) cell. We can call that single plus site $v\in\sigma(\cP_x)$ a \emph{cut-point} and identify it with its midpoint.
\end{definition}

\begin{definition}[Increments of the pillar]\label{def:increment}
For a pillar $\cP$, we define its increment collection $(\sX_i)_{i}$ as follows. Enumerate the cut-points of $\cP$ as $v_1, v_2, \ldots, v_{\sT}, v_{\sT+1}$ in order of increasing height, for some $\sT$. The $k$-th increment of the pillar $\cP$ is the set of all plus sites in $\sigma(\cP)$ centered at heights between $\hgt(v_{k})$ and $\hgt(v_{k+1})$, inclusively (this is also identified with the bounding sets of faces in $\cP \cap \R^2 \times (\lfloor \hgt(v_{k})\rfloor, \lceil\hgt(v_{k+1})\rceil)$, as before). 
Denote by $\mathbf I_{x,T}$ the set of interfaces which have $\sT \geq T$. 
\end{definition}

Since the pillar does not necessarily end at a cut-point, there may be a remainder of plus sites in the pillar above the height $\hgt(v_{\sT+1})$. We can call this the \emph{remainder} and denote it by $\sX_{>\sT}$; in fact for any $t\leq \sT$, we could denote the remainder beyond the $t$-th increment $\sX_{>t}$ which consists of $(\sX_{t+1},\ldots,\sX_{\sT}, \sX_{>\sT})$. 

\subsection{Comparability of height and number of increments}\label{sec:increment-height-equivalence} In this section, we show that the number of increments (as defined in the preceding subsection) serves as a good proxy for the height of a pillar.  We remark that the converse part of the next lemma would have readily followed had we had an exponential tail for $\fm(\cP_x)$ (when added to Proposition~\ref{prop:rate-upper-lower-bounds})---however, this is false, since $\cP_x$ may contain a wall with surface area $r^2$ and $\epsilon r^2$ nested thermal fluctuations (resulting in $\fm(\cP_x) \geq c r^2$) at a cost of only $\exp(-c r)$. 
\begin{lemma}\label{lem:increment-height-equivalence}
One always has $\mathbf I_{x,k} \subset \{\hgt(\cP_x)\geq k+1\}$ for every $k$. Conversely, there exist absolute constants $C,c>0$ such that, if $\beta>\beta_0$ and $T = \lfloor (1-C/\beta)h \rfloor$ then
\[
\mu_n(\mathbf I_{x,T} \mid \hgt(\cP_x)\geq h) \geq 1-O(e^{-ch})\,.
\]
\end{lemma}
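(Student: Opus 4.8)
The first inclusion $\mathbf I_{x,k}\subset\{\hgt(\cP_x)\ge k+1\}$ is essentially immediate: if there are at least $k$ cut-points $v_1,\dots,v_k$ (and one more, $v_{k+1}$) at strictly increasing half-integer heights, then their heights differ by at least $1$ each, so $\hgt(v_{k+1})\ge \hgt(v_1)+k\ge \tfrac12+k$, whence a cell of $\cP_x$ sits at height at least $k+\tfrac12$ and $\hgt(\cP_x)\ge k+1$. The content of the lemma is the converse, so I will spend the rest of the plan there.

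The strategy for the converse is a Peierls-type map in the spirit of \S\ref{subsec:tools}, combined with the exponential tail on the group of walls of $x$ (Theorem~\ref{thm:dobrushin-rigidity}). The key geometric observation is this: fix an interface $\cI$ with $\hgt(\cP_x)\ge h$; slice the pillar $\cP_x$ by the horizontal slabs $\cL_{j+1/2}$ for $j=0,1,\dots,h-1$, letting $n_j=\#(\sigma(\cP_x)\cap\cL_{j+1/2})$ denote the number of pillar cells at height $j+\tfrac12$. Each such slab meets $\sigma(\cP_x)$ (since $\cP_x$ is $*$-connected from height $\tfrac12$ up to height $\ge h$, it cannot skip a slab), so $n_j\ge 1$ for every $j<h$; the slab is a \emph{cut-height} exactly when $n_j=1$. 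Thus the number of cut-points satisfies $\sT\ge \#\{0\le j<h : n_j=1\}$, and if $\sT<T=\lfloor(1-C/\beta)h\rfloor$ then $\#\{j<h: n_j\ge 2\}> h-T\ge (C/\beta)h - 1$, so $\sum_{j<h}(n_j-1)\ge (C/\beta)h-1$. The point is that extra cells in slabs contribute extra vertical faces to $\cP_x$ — roughly, a slab with $n_j$ cells forces at least $2n_j+2$ lateral faces plus reconciliation faces above and below — and so one gets a lower bound of the form $\fm(\cP_x)\ge c\sum_{j<h}(n_j-1) + c h$, hence $\fm(\cP_x)\ge c' h$ on the event $\{\hgt(\cP_x)\ge h,\ \sT<T\}$ with $c'=c'(C)$ that can be made large by taking $C$ large.

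Now I combine this with the probabilistic input. On one hand, Proposition~\ref{prop:rate-upper-lower-bounds} gives $\mu_n(\hgt(\cP_x)\ge h)\ge e^{-4\beta h - e^{-4\beta}h}$. On the other hand, $\fm(\cP_x)$ is controlled by the excess area of the nested sequence of walls $\fW_x$ and the walls nested inside it (Observation~\ref{obs:pillar-wall}), and more crudely $\fm(\cP_x)\le \fm(\fF_x)$ where $\fF_x$ is the group of walls of $x$, which by~\eqref{eq:nested-seq-group-of-wall-exp-tail} (or directly Theorem~\ref{thm:dobrushin-rigidity}) satisfies $\mu_n(\fm(\fF_x)\ge r)\le K e^{-(\beta-2C)r}$ — except that this is not quite enough, because $\fm(\cP_x)$ can in principle be as large as $(\text{height})^2$ from a wide wall, and we need the full excess area of \emph{everything feeding the pillar}. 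The cleanest route: write $\mu_n(\hgt(\cP_x)\ge h,\ \sT<T)\le \mu_n(\hgt(\cP_x)\ge h,\ \fm(\cP_x)\ge c'h) \le \sum_{r\ge c'h}\mu_n(\hgt(\cP_x)\ge h,\ \fm(\cP_x)=r)$, and bound $\mu_n(\hgt(\cP_x)\ge h,\ \fm(\cP_x)= r)$ by a Peierls map that deletes the group of walls carrying $\cP_x$: this gives a weight gain $e^{-(\beta-C)r}$ and multiplicity $s^r$ (Lemmas~\ref{lem:dobrushin-wall-ratio}--\ref{lem:counting-groups-of-walls}), yielding $\mu_n(\hgt(\cP_x)\ge h,\ \fm(\cP_x)\ge c'h)\le K' e^{-(\beta - C')c'h}$. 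Dividing by the lower bound $e^{-4\beta h - e^{-4\beta}h}$ on $\mu_n(\hgt(\cP_x)\ge h)$, we get
\[
\mu_n(\sT<T \mid \hgt(\cP_x)\ge h)\ \le\ K' \exp\bigl[-(\beta-C')c' h + 4\beta h + e^{-4\beta}h\bigr],
\]
and choosing $C$ (hence $c'$) large enough that $(\beta-C')c' > 4\beta + 1$ for all $\beta>\beta_0$ makes the exponent $\le -ch$, which is the claim.

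The main obstacle I anticipate is the geometric lemma $\fm(\cP_x)\ge c\sum_{j<h}(n_j-1)+ch$: one must argue carefully that two or more cells in a horizontal slab of the pillar genuinely cost excess \emph{surface} area in a way not already charged to neighboring slabs, i.e.\ that the lateral boundary faces of the slab-slices are not over-counted and that the per-slab bound $n_j\ge1$ together with the column connectivity really does force $\fm(\cP_x)\gtrsim h$ even when all $n_j=1$ (a straight column of $h$ singletons has excess area $\approx 4h$, consistent with the $4\beta$ rate). A clean way to make this rigorous is to note that $\cP_x$, viewed with its reconciliation faces, has at least one vertical face per unit height per unit of horizontal ``spread,'' and to invoke the isoperimetric-type bound $\fm(W)\ge\tfrac12|W|$ from~\eqref{eq:excess-area-wall-relations} together with a count of how many faces of the pillar lie strictly between consecutive slabs. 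The other place requiring care is making sure the Peierls deletion map really does control $\mu_n(\hgt(\cP_x)\ge h,\ \fm(\cP_x)\ge c'h)$ with the right constant in the exponent uniformly in $n$; this is routine given Lemmas~\ref{lem:dobrushin-wall-ratio} and~\ref{lem:counting-groups-of-walls}, but one should phrase it as a statement about the group of walls $\fF_x$ (which contains all walls feeding $\cP_x$) rather than about $\cP_x$ directly, to legitimately apply those lemmas.
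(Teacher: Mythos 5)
The first assertion is fine. For the converse, your approach is genuinely different from the paper's, and it has a fatal gap that the paper in fact explicitly anticipates.

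The paper's proof constructs a map $\Phi_{x,h}$ that sends every $\cI$ in $\{\hgt(\cP_x)\geq h,\,\sT<T\}$ to an interface \emph{still inside} $\{\hgt(\cP_x)\geq h\}$ (a straight column of height $h$, plus the untouched rest of the interface); summing over images then yields $\mu_n(\hgt(\cP_x)\geq h,\,\sT<T)\leq e^{-c h}\mu_n(\hgt(\cP_x)\geq h)$ directly, with no need for a lower bound on the conditioning event. You instead bound the bad event by $\mu_n(\fm(\cP_x)\geq c'h)$ and divide by the lower bound of Proposition~\ref{prop:rate-upper-lower-bounds}. This route requires $\mu_n(\fm(\cP_x)\geq c'h)\leq K e^{-(\beta-C')c'h}$ with $c'>4+4C'/\beta$, and this is precisely where the argument collapses. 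The remark immediately preceding Lemma~\ref{lem:increment-height-equivalence} states that $\fm(\cP_x)$ does \emph{not} have an exponential tail: a wall enclosing an elevated ceiling of footprint area $r^2$, with $\epsilon r^2$ nested thermal fluctuations on it, gives $\fm(\cP_x)\geq c r^2$ at a probability cost of only $e^{-c r}$; attaching a thin stem of height $h$ produces configurations with $\hgt(\cP_x)\geq h$ and $\fm(\cP_x)\geq c'h$ at cost roughly $e^{-c\sqrt{h}}e^{-4\beta h}$, which overwhelms your claimed bound $e^{-(\beta-C')c'h}$ once $c'>4$. Note these mushroom-with-stem configurations have $\sT\approx h$ and so are \emph{not} in $\{\sT<T\}$; your chain of inclusions $\{\hgt\geq h,\,\sT<T\}\subset\{\fm(\cP_x)\geq c'h\}$ is logically valid but loses exactly the information that makes the left-hand event exponentially small. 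This is why the paper builds a map keyed to the cut-point structure rather than passing through $\fm(\cP_x)$.

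A secondary, smaller issue: the geometric bound you propose, $\fm(\cP_x)\geq c\sum_{j<h}(n_j-1)+ch$ with an absolute $c>0$, is false — a slab with $n_j$ cells contributes $n_j-1$ to the sum but only on the order of $\sqrt{n_j}$ to the lateral perimeter, so for fat pillars the sum dwarfs the surface area. The bound you actually need is $\fm(\cP_x)\geq 4h+2\,\#\{j<h:n_j\geq 2\}$, which does hold (six lateral faces per non-cut slab versus four per cut slab, with the $n_0$ top-cap count offsetting the removed faces of $\cL_0$) and gives $c'=4+2C/\beta-o(1)$. With your stated form, $c'\leq c(1+C/\beta)<2c$ (since $C<\beta$ is forced to keep $T>0$), and the arithmetic does not reach $c'>4$. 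But fixing this still leaves the main gap above.
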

\begin{proof}
The first assertion follows from the fact that, by definition, each increment increases the height of the pillar by at least $1$ and the extremal increment contributes two to the height. 

The lower bound is substantially more involved, and requires the use of a map that replaces a pillar of height $h$ and fewer than $\lfloor (1-C/\beta)h\rfloor $ increments, by  a straight column of height $h$ (consisting of $h-1$ total increments). This will combine the proof of Lemma~\ref{lem:dobrushin-wall-ratio} with some new ideas that will serve as a warm-up for the more sophisticated maps on pillars used in Section~\ref{sec:increment-exp-tail} and especially Section~\ref{sec:base}. 

Let $\Phi_{x,h}$ be the map that takes an interface $\mathcal I$ and generates an interface $\mathcal J$ as follows: 
\begin{enumerate}
    \item Let $(W_z)_{z\in\cL_0}$ be the standard wall representation of $\cI$ per Lemma~\ref{lem:interface-reconstruction}. 
    \item If $[x]:= \{x\}\cup \bigcup_{f\in \cL_0:f\sim x}\{f\}$ delete from the collection $(W_z)_{z\in\cL_0}$, $\fF_{[x]}:= \bigcup_{f\in [x]} \fF_f$ as well as $\fF_{\rho(v_1)}$. 
    \item If the interface $\cI'$ whose standard wall representation equals $\fF_{[x]}\cup\fF_{\rho(v_1)}$ has a cut-height below~$\hgt(v_1)$, let $h^\dagger$ be the highest such cut-height and let  $y^\dagger$ be the index of a wall of $\cP_x$ that attains height $h^\dagger$ and is not included in $\fF_{[x]}\cup\fF_{\rho(v_1)}$. Delete $\fF_{y^\dagger}$ from the standard wall representation obtained after step (2). (The existence of such a $y^\dagger$ is guaranteed by the definition of $v_1$ as the lowest cut-point.)
    \item Add to this standard wall representation the bounding vertical faces of a straight column of $h$ cells above $x$, centered at $x+(0,0,\ell -\frac 1 2): \ell=1,\ldots,h$.
    \item Let $\cJ$ be the interface with the standard wall representation resulting from step (4) as per Lemma~\ref{lem:interface-reconstruction}. 
\end{enumerate}
The map is well-defined because after step (2), there are no walls incident to $x$ nor its bounding edges and the addition of the standard wall in step (4) maintains the admissibility of the standard wall collection. The resulting $\cJ$ therefore has a pillar $\cP_x^\cJ$ consisting of exactly a column of $h-1$ increments, attaining height $h$. 

We next claim that if $\cI$ is such that $\{\hgt(\cP_x)\geq h\}$ but $\sT < (1-\delta)h$ (for $\delta$ to be chosen later), then 
\begin{align}\label{eq:excess-area-inequality}
    \fm(\cI;\cJ) = |\fF_{[x]}\cup \fF_{\rho(v_1)}\cup \fF_{y^\dagger}| - 4h \geq 2\delta h \,.
\end{align}
By Observation~\ref{obs:pillar-wall}, the entirety of the pillar above $\hgt(v_1)$ is deleted and therefore, for each height between $\hgt(v_1)$ and $h$ that is not a cut-height, there is an excess area contribution of 2 faces (due to 6 faces bounding two cells vs.\ 4 faces bounding one cell), totaling to  $2 (h- \frac 12 -\hgt(v_1) - \sT)$. For heights between $0$ and $\hgt(v_1)$, we claim that the interface having standard wall representation $\fF_{[x]}\cup \fF_{\rho(v_1)} \cup \fF_{y^\dagger}$ has no cut-heights, in which case it would follow that those heights together contribute at least $2(\hgt(v_1)-\frac 12)$ to the excess area and~\eqref{eq:excess-area-inequality} would follow. Indeed, if no $y^\dagger$ is chosen in step (3), then by definition there were no cut-heights of the interface corresponding to $\fF_{[x]}\cup \fF_{\rho(v_1)}$ below $\hgt(v_1)$, so suppose there was a highest such cut-height at $h^\dagger$ and a corresponding $y^\dagger$ was selected (noting that then $W_{y^\dagger}$ must be distinct from $W_{\rho(v_1)}$). Then since the walls $W_{y^\dagger}$ and $W_{\rho(v_1)}$ must each attain the height $h^\dagger$, there can be no cut-heights at or below $h^\dagger$ in the interface corresponding to $\mathfrak W_{\rho(v_1)} \cup \mathfrak W_{y^\dagger}$ and therefore there also cannot be any at or below $h^\dagger$ in the interface corresponding to $\fF_{[x]}\cup \fF_{\rho(v_1)}\cup \fF_{y^\dagger}$. 

Having constructed the map $\Phi_{x,h}$, the proof now proceeds in two parts: (1) we show that the relative weight $\mu_n(\cI)/\mu_n(\cJ)$ is exponentially decaying in $(\beta - C)\fm(\cI;\cJ)$ and (2) we show that the multiplicity of the map $\Phi_{x,h}$ is at most exponentially growing in $\fm(\cI;\cJ)$.

To begin with the first, it suffices for us to show the bound 
\begin{align}\label{eq:part-function-estimate}
    \Big|\sum_{f\in \cI} \g(f,\cI) - \sum_{f'\in \cJ} \g(f',\cJ)\Big| \leq C[\fm(\cI;\cJ)+ h]\leq C[1+\delta^{-1}]h\,.
\end{align}
To establish such a bound, we decompose $\cI$ and $\cJ$ into different subsets of faces as follows: 
\begin{itemize}
    \item Let $\mathbf E$ be the set of all faces in the groups of walls $\fF_{[x]}\cup \fF_{\rho(v_1)}\cup\fF_{y^\dagger}$.
    \item Let $\mathbf F$ be the set of $f\in \cJ$ such that $\rho(f)\in \rho(\cF(\mathbf E))$, added in place of a removed horizontal wall face in $\rho(\mathbf E)$ to ``fill in" the interface. 
    \item Let $\mathbf G$ be the bounding vertical faces of a column of $h$ cells above $x$, added in step (4) of $\Phi_{x,h}$.  
\end{itemize}
Under this decomposition, there is a 1-1 correspondence between $\cI \setminus \mathbf E$ and $\cJ \setminus (\mathbf F \cup \mathbf G)$ via vertical shifts as determined by Observation~\ref{obs:1-to-1-map-faces}; encode this into $f\mapsto \tilde f$. Then, 
\begin{align*}
    \Big|\sum_{f\in \cI} \g(f,\cI) - \sum_{f'\in \cJ} \g(f',\cJ)\Big| & \leq \sum_{f\in \mathbf E} |\g(f,\cI)|+ \sum_{f'\in \mathbf F} |\g(f',\cJ)| + \sum_{f'\in \mathbf G} |\g(f',\cJ)| + \sum_{f\in \cI \setminus \mathbf E} \Big|\g(f,\cI) -\g(\tilde f, \cJ)\Big|\,.
\end{align*}
The first quantity is bounded by $\bar K |\mathbf E|= \bar K |\fF_{[x]}\cup \fF_{\rho(v_1)} \cup \fF_{y^\dagger}|\leq 2\bar K (1+2\delta^{-1}) \fm(\cI;\cJ)$ by~\eqref{eq:excess-area-wall-relations} and \eqref{eq:excess-area-inequality}. Similarly, the second term is at most $\bar K (1+2\delta^{-1}) \fm(\cI;\cJ)$ and the third term is at most $4\bar K h$ which is in turn at most $2\bar K \delta^{-1}\fm(\cI;\cJ)$. The last term is bounded similarly to the proof of Lemma~\ref{lem:dobrushin-wall-ratio}. By construction, for every $f$, the radius $\br(f,\cI;\tilde f, \cJ)$ is attained by the distance to a wall face, and as before, moving to the distance between projections,
\begin{align*}
    \sum_{f\in \cI \setminus \mathbf E} \bar K \exp[-\bar c \br(f,\cI;\tilde f,\cJ)]\leq \sum_{f\in \cI \setminus \mathbf E} \bar K \max_{u\in \rho(\mathbf E \cup \mathbf G)}\exp[-\bar c d(\rho(f),u)]
\end{align*}
and using the definition of closeness of walls, there is a $\bar C>0$ such that this is at most 
\begin{align*}
    \sum_{u'\in \rho(\mathbf E\cup \mathbf G)} \sum_{u\in \rho(\mathbf E)^c} \bar K (|u-u'|^2 +1) \exp[-\bar c |u-u'|] \leq \bar C \bar K |\mathbf E\cup \mathbf G|\,,
\end{align*}
 yielding~\eqref{eq:part-function-estimate} by applying ~\eqref{eq:excess-area-wall-relations} and \eqref{eq:excess-area-inequality} as above. 

We next wish to bound the multiplicity of the map, i.e., we wish to show that for every $M$ and every $\cJ$ in the image of $\Phi_{x,h}$, the size of the set $\{\cI\in \Phi_{x,h}^{-1}(\cJ): \fm(\cI;\cJ)= M\}$. Every such $\cI$ can be identified with the choices of the standard walls in $\fF_{[x]}\cup \fF_{\rho(v_1)}\cup \fF_{y^\dagger}$, so that it suffices to bound the number of possible such choices leading to excess area $M$. By iteratively applying Lemma~\ref{lem:counting-groups-of-walls} (first choosing how many walls constitute $\fF_{[x]}$, and then the size of each such that the total size is at most $2M$), then enumerating over the at most $M^2$ choices of where to place $\rho(v_1)$ and $\rho(y^\dagger)$ (using Observation~\ref{obs:pillar-wall} these must be interior to some wall of $\mathfrak W_{x}$), and then enumerating over the choices for $\fF_{\rho(v_1)}$ and $\fF_{y^\dagger}$, we see that for some universal $\bar s>0$, 
\[
\{\cI\in \Phi_{x,h}^{-1}(\cJ): \fm(\cI;\cJ)= M\}\leq {\bar s}^M\,.
\]
(See e.g., the proof of Proposition~\ref{prop:base-shrinking-multiplicity} for more details on a similar enumeration process.) 

We now combine the two parts above to conclude the desired. Expressing $\mu_n(\mathbf I_{x,(1-\delta)h}^c, \hgt(\cP_x)\geq h)$ as 
\begin{align*}
    \sum_{\cI\in \mathbf I_{x,(1-\delta)h}^c, \hgt(\cP_x)\geq h} \mu_n(\cI)  & \leq \sum_{M\geq 2\delta h} \,\sum_{\cJ: \hgt(\cP_x^\cJ)\geq h} \,\sum_{\cI\in \Phi_{x,h}^{-1}(\cJ): \fm(\cI;\cJ) =M} \mu_n(\cJ) e^{ - \beta M+C(1+\delta^{-1})M} \\ 
    & \leq \sum_{M\geq 2\delta h} \bar s^M e^{ - \beta M+C(1+\delta^{-1})M+ M \log \bar s } \mu_n(\hgt(\cP_x)\geq h)\,,
\end{align*}
from which the lemma follows by dividing through by $\mu_n(\hgt(\cP_x)\geq h)$ and taking $\beta$ large and $\delta = C'/\beta$ for some sufficiently large $C'$.
\end{proof}

\subsection{Spine and base of pillars}
The fundamental difficulty in understanding the structure of pillars conditionally on reaching a height $h$, or on having $T$ increments, is the interactions of the pillar with nearby oscillations of the interface, particularly at low heights, where these are plentiful. Towards this, it will be important to us to isolate the portion of the pillar which interacts most strongly with other pillars near it---called the \emph{base}---and the rest of the pillar, which climbs above all oscillations in some ball of radius $O(h\vee T)$ about $x$, called the \emph{spine}. The ball of proximity grows with the number of increments $T$ we are conditioning on having, as the pillar's $(xy)$-coordinates diffuse as $T$ grows; this creates the complication that the definitions of the spine and base must be $T$ dependent.

For a set $A\subset \cF(\mathbb Z^3)$, let $\cC_{r}(A)$ denote the points in $\R^3$ whose projection is distance at most $r$ from $\rho(A)$:  
\[\cC_{r}(A) = \{y:  \min_{f\in A} |\rho(y)- \rho(f)| \leq r\}\,.
\]Let $R_0$ be some sufficiently large constant, e.g., to be chosen in Lemma~\ref{lem:tame} to be $100$. For two faces $x,y\in \cL_0$, let $\langle \langle x,y \rangle \rangle$ be a minimal connected set of faces of $\cL_0$ connecting $x$ to $y$. For ease of notation, for a cut-point $v$, we'll define 
$$\cC_{v, x, T}: = \cC_{R_0 T}\big(\langle \langle \rho (v),x\rangle \rangle\big)\,.$$

\begin{definition}[Spine]\label{def:spine}
Consider an interface $\cI$ with pillar $\cP_x = \cP_x(\cI)$. For each $T$, let $\Tsp=\Tsp(T)$ be the minimal index $i\geq 1$ such that the cut-point $v_i$ of $\cP_x$ lies above the largest height attained by walls in $\cI \setminus \cP_x$ indexed by faces in $\cC_{v_{i},x,T}$ (in every possible ordering of $\cF(\cL_0)$). We then call $v_{\Tsp}^T$ the $T$-\emph{source-point}. When $T$ is understood from the context (e.g., for $\cI\in \mathbf I_{x,T}$) we drop it from the notation and write $v_\Tsp$. With respect to that $T$ the \emph{spine} $\cS_x$ will then be the $*$-connected component of $\cP_x$ consisting of all sites/faces above $\hgt(v_\Tsp)- \frac 12$, i.e., consisting of the increments $\sX_{\Tsp},\ldots,\sX_{\sT}, \sX_{>\sT}$. 
\end{definition}

\begin{definition}[Base]\label{def:base}
 For an interface $\cI$ in ${\mathbf I}_{x,T}$ with pillar $\cP_x$, let the \emph{base} $\sB_x$ of the pillar be given by the entirety of the pillar below the height $\hgt(v_\Tsp)+ \frac 12$. In general, a base can be identified with the set-difference $\cP_x\setminus \cS_x$, along with the four bounding faces of the $T$-source-point $v_\Tsp$.  
\end{definition}

\noindent (For reference, in Figures~\ref{fig:pillar_incr} and~\ref{fig:base-map}, the spine is shaded in blue, and the base is shaded in pink and orange.) We will show in Section~\ref{sec:base} that, with high probability, most of the increments of a pillar belong to the spine.

\subsection{Properties of increments}
Let $\fX$ be the set of all possible rooted increments, where an increment is identified with a $*$-connected subset of plus sites in the upper half-space of $\Z^3$ consisting of a cut-point plus site at $(\frac 12,\frac 12,\frac 12)$, as well as a cut-point plus site at its largest height, and such that no height in between these is a cut-height. As usual, we also identify such increments with a $*$-connected collection of faces that bound its plus sites; however, in this face set $\cF(X)$ we exclude the bottom-most and top-most delimiting faces (since, viewing this increment as a subset of an interface, those faces would not be present in the interface). 

Let $\fX_{\textsc{rem}}$ be the set of rooted remainders i.e., $*$-connected subsets of plus sites in the upper half-space of $\Z^3$ where we only impose that they have exactly one cell in the slab $\cL_{\frac{1}{2}}$ at $(\frac 12,\frac 12,\frac 12)$. Correspondingly, its face set is the set of faces that bound it, now excluding only the bottom-most delimiting face (at $(\frac 12, \frac 12, 0)$). 
\begin{definition}
For each increment $\sX_i$ in the pillar $\cP_x$, recall that its bottom-most and top-most cells are $v_i$ and $v_{i+1}$ respectively. 
We define the height of an increment $X_i\in \fX$ by $\hgt(X_i) = \hgt(v_{i+1}) - \hgt(v_i)$. 
\end{definition} 

\begin{figure}
\vspace{-0.3cm}
\centering
  \begin{tikzpicture}
    \node (fig1) at (-4.5,0) {
	\includegraphics[width=.30\textwidth]{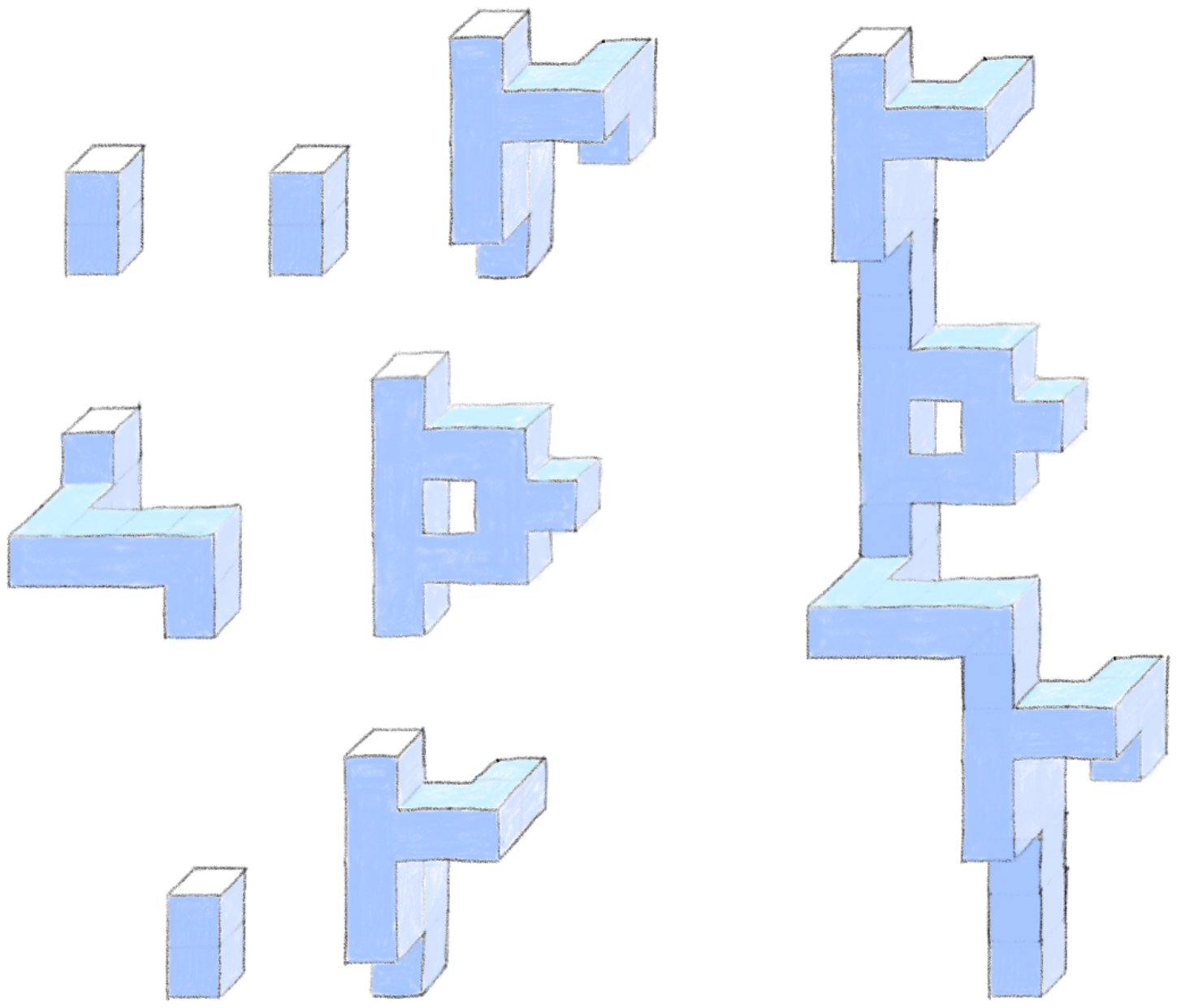}
	};
    \node (fig2) at (4.5,0) {
    \includegraphics[width=.220\textwidth]{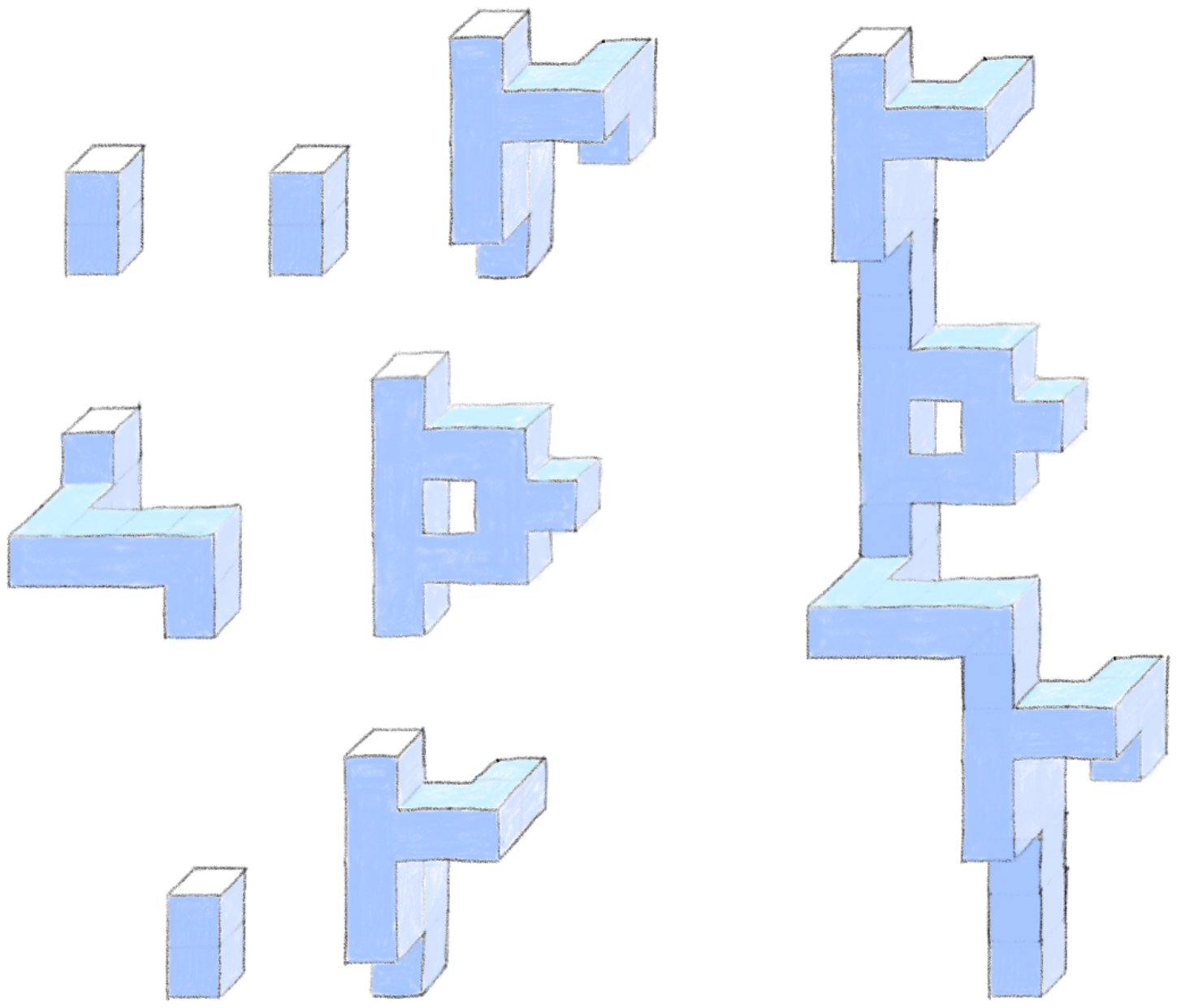}
    };
   \node at (-6.1,1.35) {$X_1$};
   \node at (-4.6,1.35) {$X_2$};
   \node at (-3.2, 1.35) {$X_3$};
   \node at (-5.35, -1.15) {$X_4$};
   \node at (-3.85, -1.15) {$X_5$};
   \node at (-5.35, -3.7) {$X_6$};
   \node at (-3.85, -3.7) {$X_7$};

    \draw [decorate,decoration={brace,amplitude=5pt}]
(4.45,-4.1) -- (4.45,-3.3) node [black,midway,xshift=-0.4cm] 
{\footnotesize $X_1$};
    \draw [decorate,decoration={brace,amplitude=5pt}]
(5.2,-2.7) -- (5.2,-3.5) node [black,midway,xshift=.4cm] 
{\footnotesize $X_2$};

    \draw [decorate,decoration={brace,amplitude=10pt}]
(4.3,-3.25) -- (4.3,-1.35) node [black,midway,xshift=-0.6cm] 
{\footnotesize $X_3$};
    \draw [decorate,decoration={brace,amplitude=5pt}]
(5,-.05) -- (5,-1.45) node [black,midway,xshift=.4cm] 
{\footnotesize $X_4$};

    \draw [decorate,decoration={brace,amplitude=10pt}]
(3.35,-.45) -- (3.35,1.65) node [black,midway,xshift=-0.6cm] 
{\footnotesize $X_{5}$};

    \draw [decorate,decoration={brace,amplitude=5pt}]
(4.15,2.3) -- (4.15,1.5) node [black,midway,xshift=.4cm] 
{\footnotesize $X_6$};

    \draw [decorate,decoration={brace,amplitude=10pt}]
(3.2,1.7) -- (3.2,3.7) node [black,midway,xshift=-0.6cm] 
{\footnotesize $X_{7}$};

\draw[->, thick] (-1.5,0)--(1.5,0);

  \end{tikzpicture}
\vspace{-0.5cm}
 \caption{A stretch of the spine (right) decomposes into a collection of rooted increments which are members of $\fX$ (left). The increments $X_1, X_2$ and $X_6$ are trivial increments $X_\trivincr$.}
  \label{fig:spine-construction}
\vspace{-0.2cm}
\end{figure}

\begin{lem}\label{lem:spine-increment-bijection}
There is a 1-1 correspondence between the triplets of $v_\Tsp$, the collection of $T$ rooted increments $(X_i)_{i\leq T}$ for $X_i \in \fX$, and remainder increment $X_{>T}\in \fX_{\textsc{rem}}$, and the set of spines of at least $T$ increments. 
\end{lem}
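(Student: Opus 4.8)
## Proof proposal for Lemma~\ref{lem:spine-increment-bijection}

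The plan is to exhibit the correspondence explicitly in both directions and check the two composites are the identity. In the forward direction, start from a spine $\cS_x$ with cut-points $v_{\Tsp}, v_{\Tsp+1}, \ldots, v_{\sT}, v_{\sT+1}$ (recall $\sT \geq T$ by assumption, and that $\cS_x$ carries the source-point $v_\Tsp$ as part of its data). First record $v_\Tsp \in (\Z+\tfrac12)^3$ as the ``root data''. Then, for each $i$ with $\Tsp \leq i \leq \Tsp + T - 1$, take the slab of plus sites of $\cS_x$ between heights $\hgt(v_i)$ and $\hgt(v_{i+1})$ inclusive, and translate it by $-v_i + (\tfrac12,\tfrac12,\tfrac12)$ so that its bottom cut-cell sits at $(\tfrac12,\tfrac12,\tfrac12)$; this is an element $X_i \in \fX$ — it is $*$-connected, its bottom and (by the definition of consecutive cut-points) its top are the unique cells in their slabs, and no intermediate height is a cut-height. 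Finally, take everything in $\cS_x$ at heights $\geq \hgt(v_{\Tsp+T})$, translate by $-v_{\Tsp+T}+(\tfrac12,\tfrac12,\tfrac12)$: this is the remainder $X_{>T}\in\fX_{\textsc{rem}}$, which only needs a single cell in its bottom slab. This produces the triple $(v_\Tsp,(X_i)_{i\leq T},X_{>T})$ (after reindexing $i\mapsto i-\Tsp+1$ so increments are labeled $1,\ldots,T$).

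For the inverse map, given $(v_\Tsp, (X_i)_{i\leq T}, X_{>T})$, reconstruct the spine by stacking: place $X_1$ with its root cell at $v_\Tsp$; having placed $X_1,\ldots,X_{i}$ so that the top cut-cell of $X_i$ sits at some cell $w_{i}$, place $X_{i+1}$ with its root cell at $w_i$ (the heights add up: the new top cut-cell is at height $\hgt(w_i)+\hgt(X_{i+1})$ in the notation of the height-of-increment definition). After all $T$ increments, attach $X_{>T}$ with its bottom cell at the top cut-cell $w_T$ of $X_T$. The union of all these translated plus-site sets, together with the bounding faces as specified in the definitions of $\fX$ and $\fX_{\textsc{rem}}$, is a $*$-connected set of plus sites in the upper half-space; I must check it is a legitimate spine, i.e.\ that the cut-heights of the resulting object are \emph{exactly} the interface heights $\hgt(w_0)=\hgt(v_\Tsp), \hgt(w_1),\ldots,\hgt(w_T)$ and none other. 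The ``none other'' part is the only genuinely substantive point: within a single $X_i$ no intermediate slab is a singleton by definition of $\fX$, and the gluing slabs $\hgt(w_i)$ \emph{are} singletons because both the top cut-cell of $X_i$ and the bottom cut-cell of $X_{i+1}$ are the unique cells of their respective slabs and they coincide after translation — so no two increments overlap except at the shared cut-cell, and hence no new non-singleton slab and no spurious singleton slab is created. This also shows the stacked object is $*$-connected and lies in $\cL_{>0}$ (since $\hgt(v_\Tsp)\geq 1$).

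Then one checks the two composites are the identity: starting from a spine, decomposing at cut-points and restacking returns the same plus-site set because consecutive increments share exactly the cut-cell $v_{i+1}$, so the union telescopes back; starting from a triple, decomposing the stacked spine at its cut-heights (which we just identified) returns the same increments (each translated back to its root) and the same $v_\Tsp$ and $X_{>T}$. The bookkeeping of face-sets — the convention in $\fX$ that the bottom-most and top-most delimiting faces are excluded, and in $\fX_{\textsc{rem}}$ that only the bottom-most is excluded — is exactly designed so that when increments are glued, the excluded top face of $X_i$ and excluded bottom face of $X_{i+1}$ are both absent, matching the fact that inside a spine there is no horizontal face separating two plus cells stacked vertically; so the reconstructed face-set is precisely $\cF(\cS_x)$.

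The main obstacle is the verification in the inverse direction that stacking produces an object whose cut-point set is exactly $\{v_\Tsp=w_0, w_1, \ldots, w_T\}$ and no additional cut-heights appear \emph{across} increment boundaries or \emph{within} the remainder — equivalently, that the gluing is ``clean'' and no two increments interpenetrate in a way that merges slabs. I expect this to follow cleanly from the definition of cut-points (Definition~\ref{def:cut-point}) and of $\fX$ (each increment has its unique bottom and top cells and no intermediate cut-height), together with the observation that two vertically consecutive increments occupy disjoint height ranges except for the single shared cut-height. Everything else — $*$-connectivity, membership in $\cL_{>0}$, and the face-set bookkeeping — is routine given the definitions already set up in \S\ref{sec:increment-prelim}.
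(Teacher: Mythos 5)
Your proposal follows the same route as the paper's proof: define the forward map (decompose at cut-points, translate each piece to its root), define the inverse map (stack the rooted increments at the inductively-determined cut-cells, then append the remainder), and check that the two composites are the identity. You spend more effort than the paper on verifying that the stacking operation produces an object whose cut-heights are precisely $\hgt(v_\Tsp), \hgt(w_1), \ldots, \hgt(w_T)$ and that no extraneous ones appear; the paper treats this as immediate from the definitions of $\fX$ and $\fX_{\textsc{rem}}$, but it is the right thing to verify and your argument for it is correct.
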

\begin{proof}
Identifying the increment sequence given a spine was described by the definition of increments. Obtaining from this increment sequence, the rooted increments, consists only of shifting each by the vector $- v_{i}+(\frac 12, \frac 12, \frac 12)$; the rooted remainder is similarly recovered. 

Given a sequence of $T$ rooted increments, a source point $v_\Tsp$ and a remainder $X_{>T}$, we can reconstruct the cell-set of the spine by taking the union over $i$ of the 
translates of $X_i$ by the vectors $-(\frac 12, \frac 12, \frac 12) +  v_j$ where $v_j$ are defined inductively as increments are stacked. (Naturally, the rooted remainder $X_{>T}$ is shifted by $-(\frac 12, \frac 12, \frac 12) + v_{T+1}$.) As a consequence, we can identify the set of all rooted spines of at least $T$ increments with the set $\fX^{T} \times \fX_{\textsc{rem}}$. See Figure~\ref{fig:spine-construction} for a visualization of this scheme.
\end{proof}

We will always use the notation $X_\trivincr$ to denote the trivial increment that consists of exactly two plus cells, one on top of the other (the rooted one has the plus sites centered at $(\frac 12 ,\frac 12,\frac 12)$ and $(\frac 12,\frac 12,\frac 32)$). The trivial remainder increment consists of exactly one plus cell at $(\frac 12,\frac 12,\frac 12)$. 

\begin{definition}\label{def:increment-excess-area}
The excess area of an increment $X\in \fX$ is given by its excess area as compared to the trivial increment $X_\trivincr$ so that $$\fm(X):= |\cF(X)| - |\cF(X_{\trivincr})| = |\cF(X)|- 8\,.$$ The excess area of a remainder $X_{>T}$ is measured with respect to the trivial remainder increment, so that $\fm(X_{>T}):= |\cF(X_{>T})|-5$ (recall that the remainder increment includes its upper delimiting face(s)). 
\end{definition}
\begin{remark}\label{rem:increment-excess-area}Notice that $\fm(\sX_i)\geq \sqrt 2 |\rho(v_{i+1})- \rho(v_{i})|$ (the nontrivial increment $X$ of height $2$ consisting of two $*$-adjacent cubes has $|\cF(X)|=10$ and $\fm(X)=2$) and for every $X\neq X_\trivincr$, 
$$\fm(X) \geq \frac{1}{5}|\cF(X)| \qquad \mbox{and}\qquad |\cF(X)|\geq 6(\hgt(X)-1)+8\,.$$
\end{remark}


\begin{definition}\label{spine-excess-area}

For a spine $\cS_x$ of $\sT-\Tsp$ increments $(\sX_i)_{\Tsp \leq i\leq \sT}$ and remainder $\sX_{>\sT}$, the excess area of the spine $\fm(\cS_x)$ with respect to a trivial increment sequence of height $t-\Tsp$, 
 $$\fm_t(\cS_x) = \fm(\cI; \cI_{\trivincr, t})= \fm(\sX_{>t}) + \sum_{\Tsp \leq i\leq t} \fm(\sX_i)\,,$$
 and if $t>\sT$, we set $\fm_t(\cS_x) = \fm(\cS_x)$. The excess area of the spine (dropping the index $t$) is $\fm(\cS_x):= \fm_\sT(\cS_x)$. 

 The height of a spine $\cS_x$ is $\hgt(\cS_x) =\hgt(\cP_x) - \frac 12 - \hgt(v_{\Tsp}) = \hgt(\sX_{>\sT}) + \sum_{\Tsp \leq i \leq \sT} \hgt(\sX_i)$.
\end{definition}
\begin{definition}\label{def:truncated-interface}
For any interface $\cI \in \mathbf I_{x,T}$, let $\cI_{\textsc {tr}}$ be its \emph{truncation}, with cell-set $\sigma(\cI_{\textsc {tr}}):=(\sigma(\cI)\setminus \sigma(\cS_x))\cup \{v_\Tsp\}$ where we have removed all plus sites of the spine besides $v_\Tsp$ from $\cI$, and face-set consisting of the faces in $\cI$ that bound cells in $\sigma(\cI_{\textsc {tr}})$. A truncation is $T$-admissible if its pillar $\cP_x(\cI_{\textsc {tr}})$ has a $T$-source point $v_\Tsp$ and nothing above $\hgt(v_\Tsp)+\frac 12$. (Recall that the property of being a $T$-source point is independent of the increment sequence of the spine above it).
\end{definition}

\subsection{Exponential tail on the number of increments}\label{sec:increment-tail}
Here, we show that a spine $\cS_x$ of an interface in $\mathbf I_{x,T}$ has an exponential tail on the surface area (as well as excess area) of its remainder $\sX_{>T}$. This implies an exponential tail on the number of increments beyond $T$ in a spine conditioned on having at least $T$ increments. Since we are only looking at a portion of the increment above a cut-point, it is droplet-like, and the proof does not involve any of the more delicate issues we will encounter in later sections.

\begin{lem}\label{lem:tip-highest-increment}
There exists $C>0$ such that for every $\beta>\beta_0$, every $T$ and every $r>0$, 
\begin{align*}
\mu_n\big( \fm(\sX_{>T}) \geq r \given \mathbf I_{x,T} \big)\leq \exp \big[-(\beta- C)r \big]\,.
\end{align*}
In particular, $\mu_n(\mathbf I_{x,T+k} \mid \, \mathbf I_{x,T})\leq \exp [-4k(\beta - C)]$. Moreover, these estimates also hold conditionally on any $T$-admissible truncation $\cI_{\textsc{tr}}$ and  spine increment sequence  $(\sX_i)_{\Tsp\leq i\leq T}= (X_i)_{\Tsp\leq i\leq T}$. 
\end{lem}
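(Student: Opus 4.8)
The plan is to reduce everything to a single droplet estimate for the ``cap'' of the pillar. Let $\mathcal D$ denote the conditioning on the entire portion of the interface lying at or below the cut-point $v_{T+1}$ (equivalently, on all cells of height $\le\hgt(v_{T+1})$ together with their bounding faces). Conditioning on $\mathbf I_{x,T}$, or on the finer data of a $T$-admissible truncation $\cI_{\textsc{tr}}$ together with a spine increment sequence $(\sX_i)_{\Tsp\le i\le T}=(X_i)_{\Tsp\le i\le T}$, is in each case obtained by further averaging over such $\mathcal D$, so it suffices to establish $\mu_n(\fm(\sX_{>T})\ge r\mid\mathcal D)\le e^{-(\beta-C)r}$ uniformly in $\mathcal D$. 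Given $\mathcal D$ the only remaining degree of freedom is the remainder increment $\sX_{>T}\in\fX_{\textsc{rem}}$ stacked above $v_{T+1}$, so the question becomes a Peierls estimate for a compact $*$-connected droplet sitting at the very top of $\cI$ --- precisely the regime the lemma's preamble flags as free of the subtleties of later sections.

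For a remainder $X\in\fX_{\textsc{rem}}$ compatible with $\mathcal D$, let $\cI_X$ be the resulting interface and let the map $\Psi$ send $\cI_X$ to $\cJ:=\cI_{X_\trivincr}$ (replace the cap by the trivial remainder). Since $v_{T+1}$ is a cut-point of the spine it lies above every wall near $x$, so, invoking the wall-reconstruction Lemma~\ref{lem:interface-reconstruction}, $\Psi$ is well defined: every $\cI_X$ is a bona fide interface, and $\Psi$ merely truncates above $\hgt(v_{T+1})$ the single wall carrying the spine, moving no face below it. Hence $\cI_X$ and $\cJ$ coincide outside a bounded neighbourhood of $\cF(X)$, and $\fm(\cI_X;\cJ)=|\cF(X)|-5=\fm(X)$ by Definition~\ref{def:increment-excess-area}. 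By Theorem~\ref{thm:cluster-expansion},
\[
\frac{\mu_n(\cI_X)}{\mu_n(\cJ)}=\exp\Big(-\beta\,\fm(X)+\sum_{f\in\cI_X}\g(f,\cI_X)-\sum_{f'\in\cJ}\g(f',\cJ)\Big),
\]
and the $\g$-difference is controlled exactly as in Lemma~\ref{lem:dobrushin-wall-ratio}: the faces of $\cF(X)$, together with the $O(1)$ faces of $\cJ$ near $v_{T+1}$, contribute at most $\bar K|\cF(X)|+O(1)$; every remaining face $f$ satisfies $\br(f,\cI_X;f,\cJ)\ge d(f,\cF(X))-O(1)$, so those faces contribute at most $C\sum_{f\in\cF(\Z^3)}e^{-\bar c\,d(f,\cF(X))}\le C|\cF(X)|$ after splitting over the nearest face of $\cF(X)$ and using summability of $\sum_{f}e^{-\bar c\,d(f,g)}$. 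With $|\cF(X)|=\fm(X)+5\le 6\fm(X)$ for $X\ne X_\trivincr$, this gives $\mu_n(\cI_X)/\mu_n(\cJ)\le e^{-(\beta-C)\fm(X)}$, and since $\cJ$ is consistent with $\mathcal D$ we have $\mu_n(\mathcal D)\ge\mu_n(\cJ)$, hence $\mu_n(\cI_X\mid\mathcal D)\le e^{-(\beta-C)\fm(X)}$.

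It remains to sum over caps, grouped by $m=\fm(X)$. A remainder with $\fm(X)=m$ is a $*$-connected set of $m+5$ faces containing the faces bounding $v_{T+1}$, so by Observation~\ref{obs:counting-connected} there are at most $s^{\,m+5}$ of them, and for $\beta>\beta_0$,
\[
\mu_n\big(\fm(\sX_{>T})\ge r\mid\mathcal D\big)\le\sum_{m\ge r}s^{\,m+5}\,e^{-(\beta-C)m}\le e^{-(\beta-C')r}
\]
(for $r\in(0,1)$ the claim follows from the $r=1$ case, $\fm$ being integer-valued). Averaging over $\mathcal D$ gives the first and the refined assertions. For the ``in particular'' statement: on $\mathbf I_{x,T+k}$ the cap $\sX_{>T}$ contains cut-points $v_{T+1},\dots,v_{T+k+1}$ of strictly increasing heights, hence occupies $k+1$ consecutive heights, forcing $|\cF(\sX_{>T})|\ge 4k+5$ and $\fm(\sX_{>T})\ge 4k$; plugging $r=4k$ into the tail bound gives $\mu_n(\mathbf I_{x,T+k}\mid\mathbf I_{x,T})\le e^{-4k(\beta-C')}$. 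The only step needing real attention is the well-definedness of $\Psi$ --- that the droplet $X$ may be stacked freely above $v_{T+1}$ and that the rest of $\cI$ is untouched --- which is immediate because $v_{T+1}$ is a cut-point lying above all interface structure near $x$; the remaining computation is the same weight-versus-multiplicity bookkeeping as in Lemmas~\ref{lem:dobrushin-wall-ratio} and~\ref{lem:increment-height-equivalence}.
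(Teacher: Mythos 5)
Your proof is correct and uses essentially the same Peierls argument as the paper: replace $\sX_{>T}$ with the trivial remainder $X_\trivincr$, control the $\g$-difference by $O(|\cF(X)|)$ via Theorem~\ref{thm:cluster-expansion}, and beat the $s^{\,O(\fm(X))}$ entropy of connected face-sets. The only organizational difference is that you condition on $\mathcal{D}$ (the truncation plus spine increments, which is exactly the paper's data $(\cI_{\textsc{tr}},(\sX_i)_{\Tsp\le i\le T})$) and compare to $\cJ$ directly via $\mu_n(\mathcal D)\ge\mu_n(\cJ)$, whereas the paper runs the map $\Phi_T$ on all of $\mathbf I_{x,T}$ and counts pre-images — two bookkeepings of the same argument; your appeal to Lemma~\ref{lem:interface-reconstruction} is unnecessary (this map does not pass through the standard wall representation), and your ``in particular'' step is spelled out slightly more carefully than the paper's one-line justification.
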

\begin{proof}
Let $\Phi_T: \mathbf I_{x,T} \mapsto \mathbf I_{x,T}$ be the map that, for each $\cI \in \mathbf I_{x,T}$, generates the interface $\Phi_T (\cI)$ by replacing $\sX_{>T}$ with the trivial remainder, and agrees with $\cI$ otherwise.
It should be clear that $\Phi_T(\cI) \in \mathbf I_{x,T}\setminus \mathbf I_{x,T+1}$; moreover, the pillar of $\Phi_T(\cI)$ will have height equal to $\hgt(v_{T+1})+\frac 12$. By Theorem~\ref{thm:cluster-expansion}, for any $\cI \in \mathbf I_{x,T}$, 
\begin{align*}
\frac{\mu_n(\cI)}{\mu_n(\Phi_T(\cI))}  = \exp \Big( -\beta \fm(\cI; \Phi_T (\cI)) + \sum_{f\in \cI} \g(f,\cI) - \sum_{f'\in \Phi_T (\cI)} \g(f',\Phi_T(\cI))\Big)\,.
\end{align*}
By definition of excess areas of remainders, $\fm(\cI; \Phi_{T}(\cI)) = \fm(\sX_{>T})$. Suppose without loss of generality that $\cI$ has remainder $X_{>T}$ such that $\fm(X_{>T})\geq 1$ as the lemma is trivially satisfied for $r=0$.  
For ease of notation, let $\cI' = \Phi_T (\cI)$ and consider the difference of the sums in the exponential. By~\eqref{eq:g-uniform-bound}--\eqref{eq:g-exponential-decay},
\begin{align*}
\Big| \sum_{f\in \cI} \g(f,\cI) - \sum_{f'\in \cI'} \g(f',\cI')\Big| & \leq \sum_{f\in \cI \cap \cI'} |\g(f,\cI)-\g(f,\cI')| + \sum_{f\in \cI\setminus \cI'} |\g(f,\cI)|+ \sum_{f\in \cI'\setminus \cI} |\g(f,\cI')|  \\
& \leq \sum_{f\in \cI \cap \cI'} \sum_{f'\in \cI \oplus \cI'} \bar K \exp[-\bar c d(f,f')] + \sum_{f\in \cI \oplus \cI'} \bar K \leq (\bar C+ \bar K)|\cI \oplus \cI'|\,,
\end{align*}
for some constant $\bar C$. 
But by construction, we have that  $|\cI \oplus \cI'| = \fm(X_{>T})+2$, where the additive 2 comes from the upper-bounding face of the remainder, which is shifted between $\cI$ and $\cI'$. Consequently, we have that for some universal $C$ independent of $\beta$, for every $\cI \in \mathbf I_{x,T}$,
\begin{align*}
\frac{\mu_n(\cI)}{\mu_n(\cI')} \leq \exp\big[-(\beta -C) \fm(\cI;\cI')\big] = \exp\big[- (\beta - C)\fm(X_{>T})]\,.
\end{align*}
At the same time, we claim that for every $\cI'\in \mathbf I_{x,T}\setminus \mathbf I_{x,T+1}$, there are at most $s^{k}$ elements in the pre-image $\Phi_T^{-1}(\cI')$ with excess area $k$, for some universal $s>0$. 
Since $\fm(\cI; \cI')= |\cI \oplus \cI'| - 2$, to every $\cI \in \Phi_T^{-1}(\cI')$ of excess area $\fm(\cI;\cI')=k$, we can uniquely identify the connected set of faces constituting $\cI\oplus \cI'$ of cardinality $k+2$ containing the upper bounding face of $v_{T+1}$. By Observation~\ref{obs:counting-connected}, the number of such sets is at most $s^{k+2}$. 
We now can expand the probability $\mu_n (\fm(X_{>T}) \geq r, \mathbf I_{x,T})$ 
\begin{align*}
\sum_{\cI\in \mathbf I_{x,T}: \fm(X_{>T})\geq r} \mu_n(\cI) = \sum_{k\geq r} \,\,\sum_{\cI\in \mathbf I_{x,T}:\fm(X_{>T}) = k} \mu_n (\cI) \leq \sum_{k\geq r} \,\,\sum_{\cI' \in \Phi_T(\mathbf I_{x,T})}  e^{-(\beta - C)k} s^{k+2} \mu_n(\cI')\,,
\end{align*}
At this point, since $\Phi_T(\mathbf I_{x,T}) \subset \mathbf I_{x,T}$, we see that for $\beta>\beta_0$, this is at most 
\begin{align*}
\sum_{\cI' \in \Phi_T (\mathbf I_{x,T})} C'e^{-(\beta - C')r} \mu_n(\cI') \leq C' e^{-(\beta - C')r} \mu_n(\mathbf I_{x,T})\,,
\end{align*}
for some other constant $C'>0$ independent of $\beta$; dividing both sides by $\mu_n(\mathbf I_{x,T})$ implies the first inequality. The second inequality follows because $\cI \in \mathbf I_{x,T+1}$ implies that $\fm(\cI;\Phi_T(\cI))= \fm(X_{>T}) \geq 4$. 

To see the analogous conditional estimates, fix a $T$-admissible truncation $\cI_{\textsc{tr}}$ and first $T$ increments of the spine $(X_i)_{i\leq T}$, and let $\hat {\mathbf I}_{x,T}$ be the set of interfaces in $\mathbf I_{x,T}$ having $\cI_{\textsc{tr}}$ and $(\mathscr X_i)_{i\leq T} = (X_i)_{i\leq T}$. Repeating the argument above, we see that $\mu_n(\fm(X_{>T})\geq r, \hat{\mathbf I}_{x,T})$ can be expressed as  
\begin{align*}
\sum_{\cI\in \mathbf I_{x,T}: \fm(X_{>T})\geq r} \mu_n(\cI) = \sum_{k\geq r} \,\,\sum_{\cI\in \mathbf I_{x,T}:\fm(X_{>T}) = k} \mu_n (\cI) \leq \sum_{k\geq r} \,\,\sum_{\cI' \in \Phi_T(\mathbf I_{x,T})}  e^{-(\beta - C)k} s^{k+2} \mu_n(\cI')\,.
\end{align*}
Observing that $\Phi_T(\hat{\mathbf I}_{x,T})\subset \hat{\mathbf I}_{x,T}$, we see that the right-hand side is at most $C'e^{-(\beta - C')r}\mu_n(\hat{\mathbf I}_{x,T})$ and dividing through by $\mu_n(\hat{\mathbf I}_{x,T})$ yields the desired conditional estimate. 
\end{proof}

\subsection{Increment sequences are typically tame}

Before turning to the tail estimates on the increments themselves, we  prove an easy preliminary estimate, showing that under the event $\mathbf I_{x,T}$, the probability that $\cS_x$ is not contained in a ball of radius of order $T$ centered at $v_\Tsp$ is exponentially small in $T$.

Let $r_0$ be a large constant, say $20$, and let $R_0:= 5r_0$; we will reserve these letters for these specific constants. We now define a notion of \emph{tameness} for spines, and subsequently in Lemma~\ref{lem:tame} demonstrate that with high probability, a spine is tame. 

\begin{definition}\label{def:tame}
Fix $T$; for every $t$,  a spine in $\cS_x \in \fX^{t} \times \fX_{\textsc{rem}}$ is \emph{tame} with respect to $\mathbf I_{x,T}$ if  
\begin{align*}
\fm(\cS_x) \leq r_0 T\,, \qquad\mbox{and}\qquad \hgt(\cS_x) \leq r_0 T\,.
\end{align*}
Call an interface $\cI\in \mathbf I_{x,T}$ \emph{tame} if its spine $\cS_x$ is tame, and denote by $\bar{\mathbf I}_{x,T}$ the set of tame interfaces in~$\mathbf I_{x,T}$. 
\end{definition}

Before turning to the proof that spines are typically tame, we pause to comment on the usefulness of restricting to tame spines going forward. 

\begin{remark}\label{rem:tame}
First of all, notice that the tameness of a spine is only a property of the increment sequence constituting the spine $\fX^{\sT-\Tsp} \times \fX_{\textsc{rem}}$ and does not depend on the truncation below it. Moreover, note that any spine $\cS_x$ with source point $v_\Tsp$ that is tame is such that the spine $\cS_x$ is contained entirely in a cylinder of radius $r_0 T$ and height $r_0 T$ above (and centered at) $v_\Tsp$.  
This is in turn confined to the cylinder $\cC_{2r_0 T}(v_\Tsp) \subset \cC_{v_\Tsp, x, T}$, so that for any $x$ such that $d(x, \partial \Lambda_n)\geq  100 T$, adjoining to any $T$-admissible interface $\cI_{\textsc{tr}}$ any tame spine (identified with an element of $\fX^{\sT- \Tsp} \times \fX_{\textsc{rem}}$),  yields a valid interface in $\bar{\mathbf I}_{x,T}$. 
\end{remark}

Additionally, notice that, by construction, if $\cI\in \bar{\mathbf I}_{x,T}$, for any face $f\in \cS_x$, the distance $d(f,\cI_{\textsc {tr}})$ is attained by a face in $\cI_{\textsc {tr}}\cap \cC_{R_0 T}(v_\Tsp) \subset \cC_{v_\Tsp, x, T}$, as the distance to $\cI_{\textsc {tr}}\setminus \cC_{R_0 T}(v_\Tsp)$ is at least $3r_0 T$ while the distance to $v_\Tsp$ is at most $2r_0 T$. 

We now prove that spines of interfaces in $\mathbf I_{x,T}$ are exponentially unlikely in $T$ to not be tame.

\begin{lem}\label{lem:tame}
There exists $C, \beta_0>0$ such that for every $\beta>\beta_0$,  such that for every $T$, every $T$-admissible truncated interface $\cI_{\textsc {tr}}$, we have that for every $r\geq 8 T$
\begin{align*}
\mu_n(\fm_T(\cS_x) \geq r \mid \cI_{\textsc {tr}}, \mathbf I_{x,T}) \leq \exp[-(\beta-C) r]\,.
\end{align*}
In particular, $\mu_n(\bar{\mathbf I}_{x,T}\mid \cI_{\textsc {tr}}, \mathbf I_{x,T}) \geq 1- O(e^{- (\beta-C) r_0 T})$, and hence also  $\mu_n(\bar{\mathbf I}_{x,T}\mid \mathbf I_{x,T}) \geq 1- O(e^{- (\beta-C) r_0 T})$.
\end{lem}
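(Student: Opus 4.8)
The plan is to prove the tail bound by a single Peierls-type map that ``straightens'' the entire spine, and then read off the two ``in particular'' claims as corollaries.

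\emph{The map.} Fix a $T$-admissible truncated interface $\cI_{\textsc{tr}}$; this determines $\Tsp$, the source-point $v_\Tsp$ and the base, and by Lemma~\ref{lem:spine-increment-bijection} the interfaces $\cI\in\mathbf I_{x,T}$ whose truncation is $\cI_{\textsc{tr}}$ are parametrised by their spines $\cS_x\in\fX^{\sT-\Tsp}\times\fX_{\textsc{rem}}$ (with $\sT\ge T$). Let $\cJ_0:=\cI_{\trivincr,T}$ be the interface obtained from $\cI_{\textsc{tr}}$ by stacking a straight column of $T-\Tsp+1$ unit cells above $v_\Tsp$ (so $\cJ_0$ has exactly $T$ increments and $\fm_T(\cS_x(\cJ_0))=0$), and let $\Psi$ send every such $\cI$ to $\cJ_0$. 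This is well defined: since $v_\Tsp$ is a $T$-source-point, no wall of $\cI\setminus\cP_x$ indexed by a face of $\cC_{v_\Tsp,x,T}$ reaches height $\hgt(v_\Tsp)$, and the column—sitting within horizontal distance $1$ of $v_\Tsp$, hence inside $\cC_{v_\Tsp,x,T}$—is clear of every base wall; thus $\cJ_0$ is a legitimate interface, with the same truncation $\cI_{\textsc{tr}}$ (the property of being a $T$-source-point does not depend on the spine, cf.\ Definition~\ref{def:truncated-interface}) and in $\mathbf I_{x,T}$, using that $x$ is far from $\partial\Lambda_n$ so the column fits. By the definition of $\fm_t$, every $\cI$ in this fibre has $\fm(\cI;\cJ_0)=\fm_T(\cS_x)$.

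\emph{Weight bound.} I would show $\mu_n(\cI)/\mu_n(\cJ_0)\le\exp[-(\beta-C)\fm_T(\cS_x)]$; by Theorem~\ref{thm:cluster-expansion} it suffices to bound $\bigl|\sum_{f\in\cI}\g(f,\cI)-\sum_{f'\in\cJ_0}\g(f',\cJ_0)\bigr|$ by $C\fm_T(\cS_x)$. Since $\cI$ and $\cJ_0$ differ only on the face-sets $\cS_x$ and $\cS_x^0$ (the original resp.\ trivial spine), exactly as in the proof of Lemma~\ref{lem:tip-highest-increment} each congruence radius $\br(f,\cI;\tilde f,\cJ_0)$ is attained on a spine face, and \eqref{eq:g-uniform-bound}--\eqref{eq:g-exponential-decay} together with summability of $e^{-\bar c|\cdot|}$ bound this difference by $\bar C(|\cS_x|+|\cS_x^0|)$. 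The point is then that $|\cS_x|+|\cS_x^0|=O(\fm_T(\cS_x))$ on the event $\{\fm_T(\cS_x)\ge r\}$: one has $|\cS_x^0|=O(T)$, while by additivity of the excess area under chaining $\fm_T(\cS_x)=\fm(\cS_x)+4(\sT-T)$, so $\fm(\cS_x)\le\fm_T(\cS_x)$ and $\sT-T\le\tfrac14\fm_T(\cS_x)$, and combined with Remark~\ref{rem:increment-excess-area} this gives $|\cS_x|\le\fm(\cS_x)+4(\sT-\Tsp+1)+O(1)\le C(\fm_T(\cS_x)+T)$; finally $r\ge8T$ forces $T\le\tfrac18\fm_T(\cS_x)$ on this event, whence $|\cS_x|+|\cS_x^0|\le C'\fm_T(\cS_x)$.

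\emph{Multiplicity and conclusion.} For $k\ge r$, a pre-image $\cI\in\Psi^{-1}(\cJ_0)$ with $\fm(\cI;\cJ_0)=k$ is determined by its spine $\cS_x$, a $*$-connected face-set of size $O(k)$ (by the previous paragraph) containing the four fixed side-faces of $v_\Tsp$; by Observation~\ref{obs:counting-connected} there are at most $s^{O(k)}$ of these. Writing $p:=\mu_n(\cI_{\textsc{tr}},\mathbf I_{x,T})\ge\mu_n(\cJ_0)$,
\[
\mu_n\bigl(\fm_T(\cS_x)\ge r,\,\cI_{\textsc{tr}},\,\mathbf I_{x,T}\bigr)\le\sum_{k\ge r}s^{O(k)}e^{-(\beta-C)k}\mu_n(\cJ_0)\le p\,e^{-(\beta-C')r}
\]
for $\beta$ large, by summability of exponentials; dividing by $p$ gives the displayed estimate. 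Finally $\fm(\cS_x)\le\fm_T(\cS_x)$, and a routine computation with Remark~\ref{rem:increment-excess-area} gives $\hgt(\cS_x)\le T+\tfrac12\fm_T(\cS_x)+O(1)$, so for $r_0$ large enough (e.g.\ $r_0=20$) one has $\{\fm_T(\cS_x)<r_0T\}\cap\mathbf I_{x,T}\subseteq\bar{\mathbf I}_{x,T}$; since $r_0T\ge8T$, the tail bound with $r=r_0T$ yields $\mu_n(\bar{\mathbf I}_{x,T}^{\,c}\mid\cI_{\textsc{tr}},\mathbf I_{x,T})\le e^{-(\beta-C)r_0T}$, and averaging this uniform estimate over the $T$-admissible $\cI_{\textsc{tr}}$ gives $\mu_n(\bar{\mathbf I}_{x,T}\mid\mathbf I_{x,T})\ge1-O(e^{-(\beta-C)r_0T})$. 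The delicate point—and the reason for the hypothesis $r\ge8T$—is the face-count estimate in the weight bound: the straightening map must delete the whole spine, which, having at least $T$ increments, may carry far more than $\fm_T(\cS_x)$ faces if most of these increments are trivial, so a priori both the $\g$-cost and the multiplicity are governed by $|\cS_x|$ rather than $\fm_T(\cS_x)$; the two are reconciled only because $T$ is dominated by $\fm_T(\cS_x)\ge r\ge8T$.
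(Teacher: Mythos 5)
Your proof is correct and follows essentially the same route as the paper: compare $\cI$ to $\cI_{\trivincr,T}$ via the cluster expansion (Theorem~\ref{thm:cluster-expansion}), bound the $\g$-difference by $O(|\cS_x|+|\cS_x^0|)=O(\fm_T(\cS_x)+T)$, use the hypothesis $r\geq 8T$ to absorb the $O(T)$ into $O(\fm_T(\cS_x))$, bound the multiplicity of a fixed $\cJ_0$ by counting rooted $*$-connected face-sets of size $O(k)$ via Observation~\ref{obs:counting-connected}, and sum. One minor caveat: your intermediate phrase "each congruence radius $\br(f,\cI;\tilde f,\cJ_0)$ is attained on a spine face" is imprecise, since for spine faces there is no canonical pairing $f\mapsto\tilde f$; the paper (and implicitly your estimate) instead handles those with the uniform bound~\eqref{eq:g-uniform-bound} and reserves the decay bound~\eqref{eq:g-exponential-decay} for the shared base faces where $\tilde f=f$—but your final bound $\bar C(|\cS_x|+|\cS_x^0|)$ is exactly what the paper obtains.
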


\begin{proof}
The second statement follows from the fact that $\fm(\cS_x) \leq \fm_T(\cS_x)$ and $\hgt(\cS_x) \leq T+ \frac 14 \fm_T(\cS_x)$ and $T \leq \frac 34 r $ when $r\geq 8T$. 
Therefore, we focus on proving the bound on $\{\fm_T(\cS_x)\geq r\}$. Let $\cI \in \mathbf I_{x,T}$ be such that it has $T$-admissible truncation $\cI_{\textsc {tr}}$ with source-point index $\Tsp$, and  spine $\cS_x$ with increment collection $(X_i)_{\Tsp\leq i\leq T}$ and $X_{>T}$, such that $\fm_T(\cS_x) = \fm(X_{>T})+\sum_{\Tsp\leq i\leq T} \fm(X_i) \geq r$. Let $\cI_{\trivincr,T}$ be the interface with the same $T$-admissible truncation and spine of exactly $T-\Tsp$ increments that are all $X_\trivincr$. By Theorem~\ref{thm:cluster-expansion},  
\begin{align*}
\frac{\mu_n(\cI)}{\mu_n(\cI_{\trivincr,T})} = \exp \Big(-\beta \fm(\cI;\cI_{\trivincr,T})+ \sum_{f\in \cI} \g(f,\cI) - \sum_{f'\in \cI_{\trivincr,T}} \g(f',\cI_{\trivincr,T})\Big)\,,
\end{align*}
and we recall that $\fm(\cI; \cI_{\trivincr,T}) = \fm_T(\cS_x)$. 
Denote by $\cS_x(\cI_{\trivincr,T})$ the spine of $\cI_{\trivincr,T}$. We can bound the difference,
\begin{align*}
\Big| \sum_{f\in \cI} \g(f,\cI)- \sum_{f'\in \cI_{\trivincr,T}} \g(f', \cI_{\trivincr,T})\Big| \leq \sum_{f\notin \cS_x} |\g(f,\cI )- \g(f,\cI_{\trivincr,T})| + \sum_{f\in \cS_x} |\g(f,\cI)| + \sum_{f'\in \cS_x(\cI_{\trivincr,T})} |\g(f',\cI_{\trivincr,T})|\,.
\end{align*}
By~\eqref{eq:g-uniform-bound}, the latter two terms contribute at most $\bar K (|\cS_x| + 4T+1) = \bar K (\fm(\cI;\cI_{\trivincr,T}) + 8T+2)$. By~\eqref{eq:g-exponential-decay}, the first term is bounded as 
\begin{align*}
 \sum_{f\notin \cS_x} \bar K \exp\big[-\bar c \br(f,\cI; f,\cI_\trivincr)\big] &  \leq  \sum_{f\notin \cS_x} \sum_{f'\in \cI \oplus \cI_\trivincr} \bar K e^{-\bar c d(f,f')}  
\leq  \sum_{f'\in \cI\oplus \cI_\trivincr} \sum_{f\in \mathbb Z^3} \bar K e^{-\bar c d(f,f')}\,,
\end{align*} 
which by integrability of exponential tails is at most $\bar C |\cI \oplus \cI_{\trivincr,T}|\leq \bar C (\fm(\cI ;\cI_{\trivincr,T})+ 8T+2)$ for some universal $\bar C$. As such, once $\fm(\cI; \cI_{\trivincr,T}) \geq 8T$, say, this is comparable up to a universal constant to $\fm_T(\cS_x) = \fm(\cI;\cI_{\trivincr,T})$. Also, notice that the number of possible spines $\cS_x$ of excess area $\fm_T(\cS_x)= k$ is at most the number of connected sets of faces of size $k+1$ incident to the upper-delimiting face of $v_\Tsp$, which is at most $s^{k+1}$, for some universal $s$ by Observation~\ref{obs:counting-connected}. 
Thus, there is a universal $C$ such that for any $r\geq r_0 T$, we have 
\begin{align*}
\mu_n(\fm_T(\cS_x) \geq r\mid \cI_{\textsc {tr}}, \mathbf I_{x,T}) & \leq \sum_{k\geq r} \sum_{\substack{\cS_x : \cS_x \cup \cI_{\textsc {tr}} \in \mathbf I_{x,T} \\ \fm_T(\cS_x) = k}} \mu_n (\cS_x \mid \cI_{\textsc {tr}}) \leq \sum_{k\geq r} \sum_{\substack{\cS_x : \cS_x \cup \cI_{\textsc {tr}} \in \mathbf I_{x,T} \\ \fm_T(\cS_x) = k}} \frac{\mu_n(\cS_x, \cI_{\textsc {tr}})}{\mu_n(\cS_x(\cI_{\trivincr,T}), \cI_{\textsc {tr}})} \\ 
&  \leq  \sum_{k\geq r} s^{k+1} \exp [ - (\beta -C)k]\,,
\end{align*}
at which point, absorbing the $s^{k}$ into the exponential, yields the desired bound for some different $C$. 
\end{proof}

\begin{remark}\label{rem:tame-conditional-on-height}
If $T$ is comparable to $h$, we can attain a version of Lemma~\ref{lem:tame} that also conditions on the event $\{\hgt(\cP_x)\geq h\}$. Namely, for any $\cI_{\textsc{tr}}$, if we set $T' = T \vee (\hgt(\cP_x) -\frac 12- \hgt(v_\Tsp)+\Tsp)$, and apply the proof of Lemma~\ref{lem:tame} with respect to $\cI_{\trivincr, T'}$, we would see see that for every $\cI \in \mathbf I_{x,T}\cap \{\hgt(\cP_x)\geq h\}$, we have 
\begin{align*}\frac{\mu_n(\cI)}{\mu_n(\cI_{\trivincr, T'})} \leq \exp \big[ - (\beta-C) \fm_{T'}(\cS_x) + 8CT'\big]\,.
\end{align*}     
As long as $r\geq 8T'$, this would imply that $\mu_n(\fm_{T'}(\cS_x)\geq r\mid \cI_{\textsc {tr}}, \mathbf I_{x,T}, \hgt(\cP_x)\geq h)\leq \exp(- (\beta - C) r)$; therefore, as long as $8T' \leq 8(T\vee h)$ is less than $20 T$, e.g.,  as long as $\frac h2 \leq T \leq h$, we have for every $\cI_{\textsc{tr}}$,
\begin{align}\label{eq:tame-conditional-on-height}
\mu_n\big(\bar{\mathbf I}^c_{x,T} \mid \mathbf I_{x,T}, \hgt(\cP_x)\geq h, \cI_{\textsc {tr}}\big) \leq \exp\big[- 4(\beta - C)h)\big]\,.
\end{align} 
\end{remark}

\section{Exponential tail on increment excess areas}\label{sec:increment-exp-tail}

In this section, we control the excess areas of the increments that constitute the spine of a tall pillar. Of course it could be that the source point of the spine is itself
an order $T$ distance from $x$ and the base contributes macroscopically to the surface area, but this is ruled out in Section~\ref{sec:base}. Henceforth, take $T$ to be large and take $x$ to be any point in the ``bulk" of $\cL_0 \cap \Lambda_{n,n,\infty}$ relative to $T$, e.g., $d(x, \partial \Lambda_n) \geq 100 T$.

We show an exponential tail on the excess area of the $i$-th increment of the spine of an interface $\cI \in \bar{\mathbf I}_{x,T}$;  the bound will be uniform over both the truncated interface and all the increments below the $i$-th one.

\begin{proposition}\label{prop:exp-tails-increments}
There exists $c_0>0$ such that for every $\beta >\beta_0$, every $T$, and every $i\leq T$, we have that 
\begin{align*}
\mu_n \big(\fm(\sX_{\Tsp+i}) \geq r \mid \bar{\mathbf I}_{x,T}\big)\leq \exp [-c_0 \beta r]\,,
\end{align*}
where if $\Tsp+i >\sT$, we define $\fm(\sX_{\Tsp+i })=0$. 
In fact, for every $T$-admissible truncation $\cI_{\textsc{tr}}$ and every sequence of increments $(X_{\Tsp+j})_{j< i}\in \fX$, we have the same estimate: 
\begin{align*}
\mu_n \big(\fm(\sX_{\Tsp+i}) \geq r \mid  \cI_{\textsc {tr}}, (\sX_{\Tsp+j})_{j< i} = (X_{\Tsp+j})_{j< i}, \bar{\mathbf I}_{x,T}\big) \leq \exp \big[-c_0 \beta r\big]\,.
\end{align*}
\end{proposition}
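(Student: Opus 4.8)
The plan is to prove the (stronger) conditional statement by the Peierls-map scheme of \S\ref{subsec:tools}, using a \emph{straightening map} $\Psi_i$ acting on the $i$-th increment of the spine. Fix $T$, a $T$-admissible truncation $\cI_{\textsc{tr}}$, and increments $X_{\Tsp+1},\dots,X_{\Tsp+i-1}\in\fX$; write $\hat{\mathbf I}$ for the sub-event of $\bar{\mathbf I}_{x,T}$ carrying this data, and assume $\Tsp+i\le\sT$ (otherwise $\fm(\sX_{\Tsp+i})=0$ and, for $r\ge 1$, there is nothing to prove). Given $\cI\in\hat{\mathbf I}$, let $\Psi_i(\cI)$ be the interface produced, via the bijection of Lemma~\ref{lem:spine-increment-bijection}, by keeping $v_\Tsp$, the rooted increments $X_{\Tsp+1},\dots,X_{\Tsp+i-1}$ and $\sX_{\Tsp+i+1},\dots,\sX_{\sT},\sX_{>\sT}$, replacing the $i$-th increment $\sX_{\Tsp+i}$ by the trivial increment $X_\trivincr$, and re-stacking. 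The truncation and the first $i-1$ increments are unchanged, the number $\sT$ of increments is unchanged (the source point also does not change, by Definition~\ref{def:truncated-interface}), and both $\fm(\cS_x)$ and $\hgt(\cS_x)$ only decrease, so $\Psi_i(\hat{\mathbf I})\subseteq\hat{\mathbf I}$. Since re-stacking shifts the part of the spine above increment $i$ rigidly — down by $\hgt(\sX_{\Tsp+i})-1\ge 0$, and possibly also horizontally — while everything at height $\le\hgt(v_{\Tsp+i})$ stays put, Definition~\ref{def:increment-excess-area} gives the energy gain $\fm(\cI;\Psi_i(\cI))=|\cF(\sX_{\Tsp+i})|-|\cF(X_\trivincr)|=\fm(\sX_{\Tsp+i})$, which is $\ge r$ on the event of interest.

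Next I would invoke Theorem~\ref{thm:cluster-expansion}: with $\cI'=\Psi_i(\cI)$,
\[
\frac{\mu_n(\cI)}{\mu_n(\cI')}=\exp\Big({-\beta\,\fm(\sX_{\Tsp+i})}+\sum_{f\in\cI}\g(f,\cI)-\sum_{f'\in\cI'}\g(f',\cI')\Big),
\]
and the goal is $\bigl|\sum_{f\in\cI}\g(f,\cI)-\sum_{f'\in\cI'}\g(f',\cI')\bigr|\le \bar C\,\fm(\sX_{\Tsp+i})$ for an absolute constant $\bar C$, so that the ratio is $\le e^{-(\beta-\bar C)\fm(\sX_{\Tsp+i})}$. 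Pair faces by the rigid-shift bijection $f\mapsto\tilde f$ between $\cI\setminus\sX_{\Tsp+i}$ and $\cI'\setminus X_\trivincr$ of Observation~\ref{obs:1-to-1-map-faces} (the identity at heights $\le\hgt(v_{\Tsp+i})$, the re-stacking shift above). The faces of $\sX_{\Tsp+i}$ and of $X_\trivincr$ contribute at most $\bar K(|\cF(\sX_{\Tsp+i})|+8)\le C\fm(\sX_{\Tsp+i})$ by~\eqref{eq:g-uniform-bound} and Definition~\ref{def:increment-excess-area} (using $r\ge 1$). For $f\notin\sX_{\Tsp+i}$ one uses~\eqref{eq:g-exponential-decay}: because the part of the spine above increment $i$ is a single wall above the first cut-point (Observation~\ref{obs:pillar-wall}), hence is re-stacked \emph{rigidly}, and because $\g$ is invariant under $xy$-translations/rotations/reflections, the congruence radius $\br(f,\cI;\tilde f,\cI')$ is controlled by the distance from $f$ (or its projection) to $\sX_{\Tsp+i}$; and, by the defining property of $v_\Tsp$ (Definition~\ref{def:spine}) together with tameness (Remark~\ref{rem:tame}), every face of $\cI_{\textsc{tr}}$ that can interact with the modified column lies within $\cC_{R_0T}(v_\Tsp)$ and at height $\le\hgt(v_\Tsp)$, so it is unshifted and its radius is likewise governed by its distance to $\sX_{\Tsp+i}$. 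Organising $\sum_f\bar K e^{-\bar c\,\br(f,\cI;\tilde f,\cI')}$ by the increments of the displaced spine and summing geometric tails then bounds it by $\bar C|\cF(\sX_{\Tsp+i})|\le 5\bar C\fm(\sX_{\Tsp+i})$. Making this last step rigorous — controlling the contribution of a possibly macroscopic displaced upper spine, where the rigidity of the re-stacking and the protective role of the source point $v_\Tsp$ are essential — is the heart of the proof and the main obstacle; I would isolate it as a dedicated lemma (cf.\ \S\ref{subsec:def-incr-map}--\S\ref{subsec:strategy-increment-map}).

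It remains to bound multiplicity. Since $\Psi_i$ fixes $v_\Tsp$ and the increments below $i$, the index $\Tsp+i$ is unchanged, so any $\cI\in\Psi_i^{-1}(\cI')$ is recovered from $\cI'$ by locating the $(\Tsp+i)$-th cut-point of $\cI'$ and replacing the trivial increment there by a rooted increment $X\in\fX$ with $\fm(X)=k$, then re-stacking; by Observation~\ref{obs:counting-connected} applied to the $*$-connected face set $\cF(X)$ (of size $k+8$, rooted at a fixed face adjacent to that cut-point), there are at most $s^{k+8}$ such $X$, so $\#\{\cI\in\Psi_i^{-1}(\cI'):\fm(\cI;\cI')=k\}\le s^{k+8}$. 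Combining the three ingredients,
\[
\mu_n\big(\fm(\sX_{\Tsp+i})\ge r,\ \hat{\mathbf I}\big)\le\sum_{k\ge r}\ \sum_{\cI'\in\Psi_i(\hat{\mathbf I})}\ \sum_{\substack{\cI\in\Psi_i^{-1}(\cI')\\ \fm(\cI;\cI')=k}}\mu_n(\cI)\le\sum_{k\ge r}s^{k+8}e^{-(\beta-\bar C)k}\,\mu_n\big(\hat{\mathbf I}\big),
\]
where the last inequality uses $\Psi_i(\hat{\mathbf I})\subseteq\hat{\mathbf I}$. For $\beta>\beta_0$ large the sum over $k\ge r$ is at most $e^{-c_0\beta r}\mu_n(\hat{\mathbf I})$, so dividing by $\mu_n(\hat{\mathbf I})$ yields the conditional estimate; taking $\hat{\mathbf I}=\bar{\mathbf I}_{x,T}$ (no conditioning on $\cI_{\textsc{tr}}$ or the lower increments) gives the first displayed bound of the proposition. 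The only step that is not a direct adaptation of the proofs of Lemma~\ref{lem:dobrushin-wall-ratio} and Lemma~\ref{lem:tip-highest-increment} is the cluster-expansion estimate $\bigl|\sum\g-\sum\g\bigr|\le\bar C\,\fm(\sX_{\Tsp+i})$ for an \emph{interior} increment, which is where the bulk of the work lies.
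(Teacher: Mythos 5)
Your overall architecture---a Peierls-type map $\Psi_i$ acting on the $i$-th spine increment, followed by a weight-ratio bound via Theorem~\ref{thm:cluster-expansion} and a multiplicity count via Observation~\ref{obs:counting-connected}---is exactly the route the paper takes, and replacing $\sX_{\Tsp+i}$ by a single $X_\trivincr$ (rather than a column of $\hgt(\sX_{\Tsp+i})$ trivial increments) is a simplification the paper itself notes is permissible when one conditions only on $\bar{\mathbf I}_{x,T}$. Your energy-gain calculation and your $s^{k+8}$ multiplicity count are both correct for the map you define. The gap is in the step you flag as ``the heart of the proof'': the claim that
\[
\sum_{m\ge 1}|\cF(\sX_{\Tsp+i+m})|\,e^{-\bar c\, m}\;\le\;\bar C\,\fm(\sX_{\Tsp+i})
\]
is simply false for your map. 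Tameness gives only $\sum_j \fm(\sX_j)\le r_0 T$, which does not control individual increments relative to $\fm(\sX_{\Tsp+i})$; e.g.\ if $\fm(\sX_{\Tsp+i})=r=O(1)$ while $\fm(\sX_{\Tsp+i+1})\sim \sqrt{T}$, the left side is $\sim \sqrt{T}\, e^{-\bar c}\gg r$. So the weight ratio $\mu_n(\cI)/\mu_n(\Psi_i\cI)$ cannot be bounded by $e^{-(\beta-\bar C)\fm(\sX_{\Tsp+i})}$ using this map, and no amount of ``summing geometric tails'' will fix it.

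What is missing is not a sharper estimate but a different map. The paper's $\Psi_i$ (Definition~\ref{def:increment-shift-map}) additionally marks and straightens \emph{every} increment $j>\Tsp+i$ with $\fm(\sX_j)\ge \fm(\sX_{\Tsp+i})\,e^{\frac12\bar c(j-\Tsp-i)}$ (and the remainder, under an analogous criterion), in a cascading manner. This changes the bookkeeping in two coupled ways: the energy gain is now $\fm(\cI;\Psi_i\cI)\ge\frac13\sum_k \fm(\sX_{j_k})$ rather than merely $\fm(\sX_{\Tsp+i})$, so the extra deletions pay for themselves; and for any \emph{unmarked} $j$ one has $\fm(\sX_j)\le \fm(\sX_{j_k})\,e^{\frac12\bar c(j-j_k)}$, which, against the $e^{-\bar c(j-j_k)}$ decay from~\eqref{eq:g-exponential-decay}, leaves a convergent $e^{-\frac12\bar c(j-j_k)}$ factor and yields the needed $\bar C\,\fm(\cI;\Psi_i\cI)$ bound (this is Proposition~\ref{prop:probability-ratio-increment}). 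The multiplicity argument must then also change: since $\Psi_i$ modifies many increments, the pre-image is reconstructed from the $*$-connected set $\bigcup_{j_0\le j\le j_\kappa}\cF(X_j)$ rooted at the $(\Tsp+i)$-th cut-point, whose size is bounded by $C\,\fm(\cI;\Psi_i\cI)$ precisely \emph{because} the unmarked increments in that stretch are dominated by the marked ones (Lemma~\ref{lem:multiplicity-increment}). Your cleaner single-increment multiplicity count is correct for your map, but it would not apply to the map that actually works.
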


A useful corollary of the above proposition is the following tail estimate on a quantity measuring the interaction of the spine with the truncated interface $\cI_{\textsc {tr}}$.  

\begin{corollary}\label{cor:increment-interaction-bound}
Let $c_0>0$ be the constant from Proposition~\ref{prop:exp-tails-increments}. There exists some $C>0$ such that for every $\beta>\beta_0$ and every $T$, for each $T$-admissible truncation $\cI_{\textsc{tr}}$ and every $r>0$, 
\begin{align*}
\mu_n \Big ( \sum_{i\geq 1} |\cF(\sX_{\Tsp+i})| e^{- \bar c i} +  |\cF(\sX_{>T})|e^{-\bar c (T+1-\Tsp)} \geq r \mid \cI_{\textsc{tr}}, \bar{\mathbf I}_{x,T}\Big) \leq C \exp \big[-\tfrac12 c_0 \beta(r-C)\big]\,.
\end{align*}
Similarly, for every $T$-admissible truncation $\cI_{\textsc{tr}}$ and increment sequence $(X_{\Tsp +i})_{i\leq i_0}$, we have  
\begin{align*}
\mu_n \Big( |\cF(\sX_{>T})|e^{ - \bar c (T+1 - \Tsp-i_0)} + \sum_{i\geq i_0} |\cF(\sX_{\Tsp+i})|e^{ - \bar c (i - i_0)}\geq r \given  \cI_{\textsc {tr}}, (\sX_{\Tsp+i})_{i<i_0}&  = (X_{\Tsp+i})_{i<i_0}, \bar{\mathbf I}_{x,T}\Big)\\ 
& \leq C \exp \big[-\tfrac12 c_0 \beta (r-C)\big]\,.
\end{align*}  
\end{corollary}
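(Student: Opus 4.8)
The plan is to deduce the corollary from the per-increment tail bound of Proposition~\ref{prop:exp-tails-increments} together with the remainder tail of Lemma~\ref{lem:tip-highest-increment}, by a routine exponential-moment computation along the increment sequence. First I would pass from surface areas to excess areas: by Definition~\ref{def:increment-excess-area}, $|\cF(\sX_{\Tsp+i})| = \fm(\sX_{\Tsp+i}) + 8$ and $|\cF(\sX_{>T})| = \fm(\sX_{>T}) + 5$, so that with $C_0 := \tfrac{8}{e^{\bar c}-1} + 5$ one has
\[
\sum_{i\geq 1}|\cF(\sX_{\Tsp+i})|e^{-\bar c i} + |\cF(\sX_{>T})|e^{-\bar c(T+1-\Tsp)} \;\leq\; Q + C_0, \quad\text{where}\quad Q := \sum_{i\geq 1}\fm(\sX_{\Tsp+i})e^{-\bar c i} + \fm(\sX_{>T})e^{-\bar c(T+1-\Tsp)},
\]
and it suffices to bound $\mu_n(Q \geq r - C_0 \mid \cI_{\textsc{tr}}, \bar{\mathbf I}_{x,T})$.

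Next I would fix $\lambda := \tfrac12 c_0\beta$ and estimate $\E_{\mu_n}[e^{\lambda Q} \mid \cI_{\textsc{tr}}, \bar{\mathbf I}_{x,T}]$ by revealing the spine piece by piece. Write $\cG_i := \sigma(\cI_{\textsc{tr}}, \bar{\mathbf I}_{x,T}, \sX_\Tsp, \sX_{\Tsp+1}, \dots, \sX_{\Tsp+i})$. By Proposition~\ref{prop:exp-tails-increments}, $\mu_n(\fm(\sX_{\Tsp+i}) \geq s \mid \cG_{i-1}) \leq e^{-c_0\beta s}$; since $\fm(\sX_{\Tsp+i})$ is a nonnegative integer and $\lambda e^{-\bar c i} \leq \lambda < c_0\beta$, the elementary bound $\E[e^{\theta Z}] \leq 1 + \theta/(a-\theta)$, valid for $\P(Z \geq s) \leq e^{-as}$ and $0 < \theta < a$, gives
\[
\E_{\mu_n}\!\big[\,e^{\lambda e^{-\bar c i}\fm(\sX_{\Tsp+i})} \,\big|\, \cG_{i-1}\big] \;\leq\; 1 + \frac{\lambda e^{-\bar c i}}{c_0\beta - \lambda e^{-\bar c i}} \;\leq\; 1 + e^{-\bar c i}\,.
\]
For the remainder factor I would invoke Lemma~\ref{lem:tip-highest-increment}, which (given $\cI_{\textsc{tr}}$ and $(\sX_i)_{\Tsp\leq i\leq T}$) bounds the tail of $\fm(\sX_{>T})$ by $e^{-(\beta-C)s}$; for $\beta$ large, $\lambda < \beta - C$, and the corresponding factor is $1 + O(e^{-\bar c(T+1-\Tsp)}) \leq 1 + O(e^{-\bar c})$. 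Peeling the increments off one at a time via the tower property yields $\E_{\mu_n}[e^{\lambda Q} \mid \cI_{\textsc{tr}}, \bar{\mathbf I}_{x,T}] \leq (1 + O(e^{-\bar c}))\prod_{i\geq 1}(1 + e^{-\bar c i}) =: e^{C_1}$, a finite constant depending only on $\bar c$ and $c_0$. Markov's inequality then gives $\mu_n(Q \geq r - C_0 \mid \cI_{\textsc{tr}}, \bar{\mathbf I}_{x,T}) \leq e^{C_1} e^{-\lambda(r - C_0)}$, and absorbing the factor $e^{C_1 + \frac12 c_0\beta C_0}$ into $C\exp[\tfrac12 c_0\beta C]$ (legitimate since $\beta \geq \beta_0$ and $C$ may be taken universal) produces the claimed $C\exp[-\tfrac12 c_0\beta(r - C)]$. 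The second displayed inequality follows in exactly the same way, now additionally conditioning on the prefix $(\sX_{\Tsp+i})_{i<i_0}$ and revealing $\sX_{\Tsp+i_0}, \sX_{\Tsp+i_0+1}, \dots$ with the geometric weights reindexed to start at $i_0$ (so the first factor is $1 + \lambda/(c_0\beta - \lambda) = 2$), which only changes the constant $C_1$.

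The step requiring care — and the main obstacle — is that Lemma~\ref{lem:tip-highest-increment} supplies the remainder tail conditionally on $\mathbf I_{x,T}$ (with $\cI_{\textsc{tr}}$ and $(\sX_i)_{\Tsp\leq i\leq T}$ fixed), whereas the computation above needs it conditionally on $\bar{\mathbf I}_{x,T}$. Given $\cI_{\textsc{tr}}$ and $(\sX_i)_{\Tsp\leq i\leq T}$, the event $\bar{\mathbf I}_{x,T}$ is exactly $\{\fm(\sX_{>T}) \leq m^*\} \cap \{\hgt(\sX_{>T}) \leq h^*\}$ for deterministic $m^*, h^* \geq 0$ (the slack remaining in the tameness budget), so one bounds
\[
\mu_n\big(\fm(\sX_{>T}) \geq s \,\big|\, \cI_{\textsc{tr}}, (\sX_i)_{\Tsp\leq i\leq T}, \bar{\mathbf I}_{x,T}\big) \;\leq\; \frac{\mu_n\big(\fm(\sX_{>T}) \geq s \,\big|\, \cI_{\textsc{tr}}, (\sX_i)_{\Tsp\leq i\leq T}, \mathbf I_{x,T}\big)}{\mu_n\big(\fm(\sX_{>T}) \leq m^*,\ \hgt(\sX_{>T}) \leq h^* \,\big|\, \cI_{\textsc{tr}}, (\sX_i)_{\Tsp\leq i\leq T}, \mathbf I_{x,T}\big)}\,;
\]
the numerator is controlled by Lemma~\ref{lem:tip-highest-increment}, and the denominator is bounded below by a positive universal constant because the map $\Phi_T$ from the proof of that lemma collapses the whole fiber over $(\cI_{\textsc{tr}}, (\sX_i)_{\Tsp\leq i\leq T})$ onto the single interface with trivial remainder, with weight ratio at most $1$ and multiplicity at most $s^k$ at excess area $k$ — so that fiber carries mass at most a universal constant times the trivial-remainder interface, i.e.\ $\mu_n(\fm(\sX_{>T}) = 0 \mid \cdots, \mathbf I_{x,T}) \geq c > 0$. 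As an alternative avoiding exponential moments, one could argue by union bound: if $Q \geq r$ then some term exceeds $r(1-\rho)\rho^{\,i-1}$ with $\rho = e^{-\bar c/2}$, forcing one of $\fm(\sX_{\Tsp+i})$ or $\fm(\sX_{>T})$ to be at least a constant multiple of $r e^{\bar c i/2}$, and one then sums the tails of Proposition~\ref{prop:exp-tails-increments} and Lemma~\ref{lem:tip-highest-increment} over $i$ (the range $r \leq \mathrm{const}$ being trivial since the target bound is $\geq 1$ there).
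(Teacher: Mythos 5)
Your proof follows the same route as the paper's: reveal the spine increments one at a time from the bottom, use the conditional tail estimate of Proposition~\ref{prop:exp-tails-increments} to control the exponential moment of each geometrically-weighted term at rate $\lambda = \tfrac12 c_0\beta$, and finish with Markov's inequality. In fact you are somewhat more careful than the printed argument: the paper silently omits the remainder term $|\cF(\sX_{>T})|e^{-\bar c(T+1-\Tsp)}$ from the exponential-moment calculation and writes $\xi_i+4$ where $\xi_i+8$ is meant (since $|\cF(X)| = \fm(X)+8$), whereas you correctly separate off the additive constant and you flag the genuine (if minor) subtlety that Lemma~\ref{lem:tip-highest-increment} conditions on $\mathbf I_{x,T}$ rather than $\bar{\mathbf I}_{x,T}$, which you resolve by lower-bounding the tameness probability on the fiber using the map $\Phi_T$. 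These are refinements of the same argument rather than a different proof.
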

\begin{proof}
By Proposition~\ref{prop:exp-tails-increments}, and in particular its second assertion, for any $T$-admissible truncated interface $\cI_{\textsc {tr}}$ with source-point index $\Tsp$,  the sequence $(\fm(\sX_{\Tsp+i}))_{i\geq 1}$ is dominated by a sequence of i.i.d.\ exponential random variables $\xi_i$ with rate $c_0 \beta$ (as seen by revealing the increments one at a time from bottom to top). Noting that for every $0< \lambda \leq \frac12c_0 \beta$ and every $i\geq 1$,
\[ \E_{\mu_n} [\exp\left(\lambda e^{-\bar c i} \xi_i\right) ] = \left[1-\lambda e^{-\bar c i}/(c_0 \beta) \right]^{-1} \leq 1 + 2 (c_0\beta)^{-1} \lambda e^{-\bar c i} \leq \exp(e^{-\bar c i})\,,\]
 we set $\lambda=\frac12 c_0 \beta$ and obtain that
\begin{align*}
\E_{\mu_n} \Big[ \exp\Big(\lambda \sum_{i\geq 1} |\cF(\sX_{\Tsp+i})|e^{ - \bar c i}\Big) \mid \cI_{\textsc {tr}}, \bar{\mathbf I}_{x,T} \Big] & \leq \prod_{i\geq 1} \E \Big[ \exp\Big( \lambda (\xi_i+4) e^{ - \bar c i} \Big) \Big] \leq \exp\bigg(\sum_{i\geq 1}\left(1+4\lambda\right) e^{-\bar c i}\bigg) \\
& \leq \exp\bigg(\frac{1+2c_0\beta}{1-e^{-\bar c}}\bigg)\,.
\end{align*} 
Letting $\gamma=1/(1-e^{-\bar c})$, this implies by Markov's inequality that
\begin{align*}
\mu_n\Big(\sum_{i\geq 1} |\cF(\sX_{\Tsp+i})| e^{- \bar c i} \geq r \mid \cI_{\textsc {tr}}, \bar{\mathbf I}_{x,T}\Big) \leq e^{(1+2c_0\beta)\gamma-\lambda r} = e^\gamma \exp\left[-\tfrac12 c_0 \beta (r-4\gamma)\right]\,.
\end{align*}
The matching conditional bounds follow from the analogous conditional estimates in Proposition~\ref{prop:exp-tails-increments}. 
\end{proof}

We prove Proposition~\ref{prop:exp-tails-increments} by constructing a map for shrinking increments of the pillar. In order to do so, we define a map between collections of pillars that replaces increments of the pillar with $X_\trivincr$, decreasing the excess area of the increment and, in turn the pillar---the complication is that unlike the map $\Phi_x$ of~\cite{Dobrushin72a}, the effect of this removal is not localized and translates the entirety of the pillar above that increment. 

\begin{figure}
\vspace{-0.3cm}
\centering
  \begin{tikzpicture}
      \draw[->, thick] (-1.5,2) -- (1.5,2);
    \node at (0,2.4) {$\Psi_i$};
    \draw [decorate,decoration={brace,amplitude=5pt}]
(4.3,-2.7) -- (4.3,-1) node [black,midway,xshift=-0.7cm] 
{\footnotesize $\mathbf E^{\emptyset; \cJ}_{0}$};
    \draw [decorate,decoration={brace,amplitude=5pt}]
(3.6,-.2) -- (3.6,1.5) node [black,midway,xshift=-.7cm] 
{\footnotesize $\mathbf E^{\emptyset;\cJ}_{1}$};
    \draw [decorate,decoration={brace,amplitude=5pt}]
(-4.4,-2.7) -- (-4.4,-1.15) node [black,midway,xshift=-0.6cm] 
{\footnotesize $X_{j_0}$};
    \draw [decorate,decoration={brace,amplitude=5pt}]
(-5.15,-.35) -- (-5.15,1.35) node [black,midway,xshift=-.6cm] 
{\footnotesize $X_{j_1}$};
    \node (fig1) at (-4.2,0) {
	\includegraphics[width=.40\textwidth]{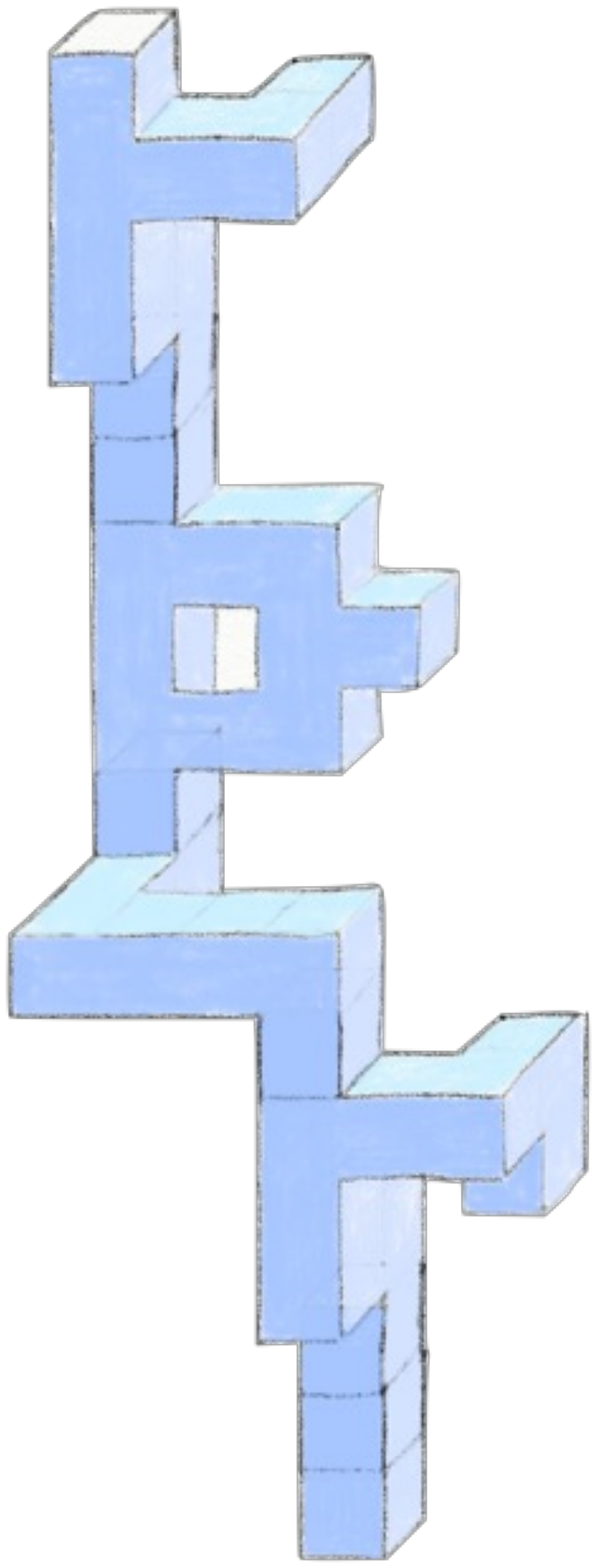}
	};
    \node (fig2) at (4.2,0) {
    \includegraphics[width=.40\textwidth]{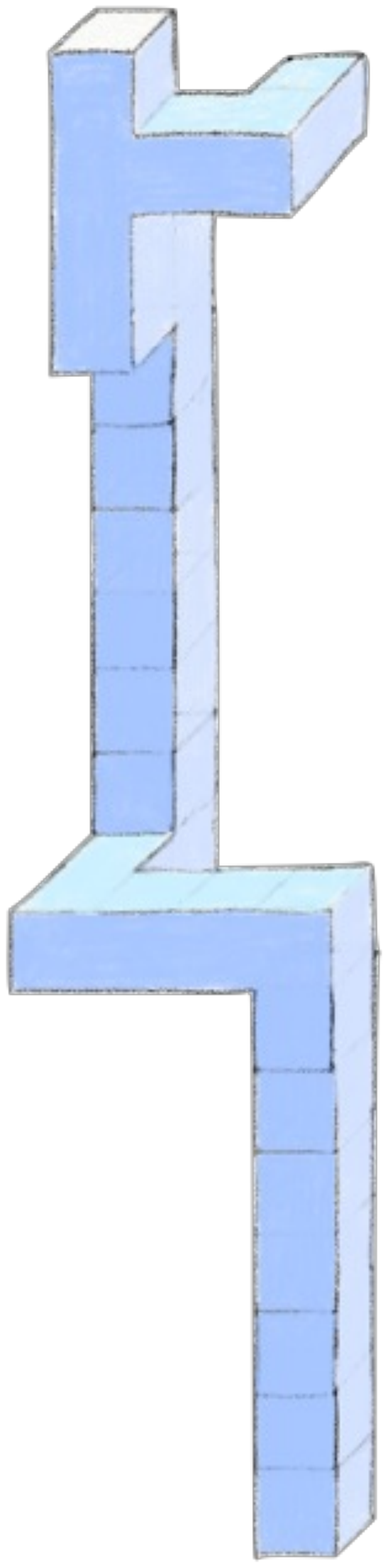}
    };
    \draw (5.0, -2.7)--(5.2, -2.7) -- (5.2, -3.35); 
    \node[font=\small] at (5.45, -3.2) {$\mathbf G$};
    
        \draw [decorate,decoration={brace,amplitude=5pt}]
(5.05,-.25) -- (5.05,-.95) node [black,midway,xshift=.75cm] 
{\footnotesize $\theta^{(0)}\mathbf F_{0}$};

        \draw [decorate,decoration={brace,amplitude=5pt}]
(4.3,3.35) -- (4.3,1.55) node [black,midway,xshift=.75cm] 
{\footnotesize $\theta^{(1)}\mathbf F_{1}$};

        \draw [decorate,decoration={brace,amplitude=5pt}]
(-4.45,3.2) -- (-4.45,1.45) node [black,midway,xshift=.55cm] 
{\footnotesize $\mathbf F_{1}$};

        \draw [decorate,decoration={brace,amplitude=5pt}]
(-3.7,-.25) -- (-3.7,-.95) node [black,midway,xshift=.55cm] 
{\footnotesize $\mathbf F_{0}$};

    \draw (-3.55, -2.7)--(-3.35, -2.7) -- (-3.35, -3.35); 
    \node[font=\small] at (-3.15, -3.2) {$\mathbf G$};
  \end{tikzpicture}
\vspace{-0.75cm}
 \caption{The increment map $\Psi_i$ sends the stretch of increments on the left to the stretch on the right. The increment $X_{j_0}= X_{\Tsp+i}$ is replaced by a stretch of five trivial increments; the increment $X_{j_1}=X_{\Tsp+i+2}$ is also replaced by trivial increments as $\fm(X_{j_1})\geq \fm(X_{j_0})e^{ \bar c }$.} 
  \label{fig:increment-map}
\vspace{-0.2cm}
\end{figure}

\subsection{The increment reduction map \texorpdfstring{$\Psi_i$}{Psi\_i}}\label{subsec:def-incr-map}
For each $T$ and $i\leq T$, we define a map $\Psi_i$ that replaces the $i$-th increment of a spine with a stretch of trivial increments $X_\trivincr$. 

\begin{definition}\label{def:increment-shift-map}
For every $i \leq T$, we will define the map $\Psi_i: \bar {\mathbf I}_{x,T}\to \bar {\mathbf I}_{x,T}$. 
Suppose $\cI \in \bar {\mathbf I}_{x,T}$, consists of a $T$-admissible truncated interface $\cI_{\textsc {tr}}$ with source point index $\Tsp$, an increment sequence $(X_{\Tsp+j})_{j\leq  T-\Tsp}$, and remainder $X_{>T}\in \fX_{\textsc{rem}}$. Then $\Psi_i(\cI)$ will have the same truncated interface $\cI_{\textsc {tr}}$, and its spine will have increment sequence $(X'_{\Tsp+j})_{j\leq T- \Tsp}$ and $X'_{>T}$ constructed as follows. If $X_i =X_{\trivincr}$ or if $\Tsp+i >\sT$, then let $(X'_j)_{j\leq T} = (X_j)_{j\leq T}$ and $X'_{>T} = X_{>T}$;  otherwise, construct the increment sequence of $\Psi_i(\cI)$ by taking the increment sequence $(X_j)_{\Tsp \leq j\leq T}$ and 
\begin{enumerate}
\item Mark the index $\Tsp + i$, as well as every index $j>\Tsp+ i$ having the property that 
\[\fm(X_{j}) \geq \fm(X_{\Tsp+i}) e^{ \frac 12 \bar c (j-\Tsp- i)}\,.
\]
Also mark the remainder if it has $\fm(X_{>T}) \geq \fm(X_{\Tsp+i}) e^{\frac 12 \bar c (T+1 - \Tsp - i)}$. 
\item Label the sequence of marked indices $j_0 = \Tsp+ i$, and $j_0 < j_1 <\ldots <j_\kappa$, where, if the remainder is marked, $j_\kappa$ is ${>T}$. 
\item For each marked index $j_k$, replace $X_{j_k}$ in the increment sequence $(X_{j})_{j}$ by a stretch of $\hgt(X_{j_k})$ consecutive trivial increments $(X_\trivincr, \ldots, X_\trivincr)$, to obtain $(X'_{j})_{j}$. 
\end{enumerate}
\end{definition}

We refer the reader to Figure~\ref{fig:increment-map} for a visualization of the map $\Psi_i$.

\subsection{Strategy of the map \texorpdfstring{$\Psi_i$}{Psi\_i}}\label{subsec:strategy-increment-map}
Let us briefly describe the strategy behind the construction of the map above. Our goal is to show an exponential tail on the excess area of the $(\Tsp+i)$'th increment conditionally either on having at least $T$ increments---Proposition~\ref{prop:exp-tails-increments}---or on having height at least $h$ and $T$ increments---Proposition~\ref{prop:increment-conditional-on-height}. Towards this, we wish to construct a map $\Psi_i$ having that 
\begin{enumerate}
    \item For every $\mathcal I \in \bar {\mathbf I}_{x,T}\cap \{\hgt(\cP_x)\geq h\}$, the interface $\Psi_i(\mathcal I) \in \bar {\mathbf I}_{x,T}\cap \{\hgt(\cP_x)\geq h\}$
   \item  $\mu_n(\bar {\mathbf I}_{x,T}\cap \{\fm(\mathscr X_{\Tsp+i})>r\})\leq e^{-(\beta - C)r} \mu_n(\Psi_i(\bar{\mathbf I}_{x,T}))$ via the steps (1)--(3) of~\eqref{it:maps-energy}--\eqref{it:maps-mult} in \S\ref{subsec:tools} as well as the analogue of this inequality, with both events also intersected with $\{\hgt(\cP_x)\geq h\}$. 
\end{enumerate}

Towards this, our map replaces the $(\Tsp+i)$'th increment by a sequence of $\hgt(\mathscr X_{\Tsp+i})$ trivial increments, yielding an energy gain that is comparable to $\fm(\mathscr X_{\Tsp+i})$. N.b.\ replacing it by just one trivial increment would not ensure that the resulting pillar also attains the same height as the original pillar. 

Unlike changes in the standard wall representation, changes in the increment sequence subsequently induce a \emph{horizontal} shift of all increments above the $(\Tsp+i)$'th one. These horizontally shifted increments $\sX_{j}$ can then interact with increments below $\mathscr X_{\Tsp+i}$ via the term $\g(f,\cI; f',\Psi_i(\cI))$ of~\eqref{eq:g-exponential-decay}.  By~\eqref{eq:g-exponential-decay}, this quantity decays exponentially in the distance to $\sX_{\Tsp+i}$, so that if the excess area $\fm(\sX_j)$ is larger than $e^{ - d(\sX_j,\sX_{\Tsp+i})}$, we cannot compare the contribution of the perturbative $\g$ term to the energy gain of the map. For this reason, we additionally delete all increments whose excess areas are greater than some exponential factor times their distance to $\sX_{\Tsp+i}$. Iterating this procedure up the spine yields the map $\Psi_i$.

The following remark summarizes the properties of the map $\Psi_i$ that we will use in its analysis.   

\begin{remark}
By construction, the excess area of the spine of $\Psi_i(\cI)$ is at most the excess area of the spine of $\cI$, the map $\Psi_i(\cI)$ keeps the height of $\cS_x$, and thus also $\cP_x$, fixed, and the map $\Psi_i$ only increases the number of increments of the spine. Therefore, for every $\cI\in \bar{\mathbf I}_{x,T}$, we have $\Psi_{i}(\cI)\in \bar{\mathbf I}_{x,T}$. Moreover, notice that the truncated interfaces of $\cI$ and $\Psi_{i}(\cI)$, and their first $\Tsp+i-1$ increments, agree. 
\end{remark}

\subsection{Analysis of the map \texorpdfstring{$\Psi_i$}{Psi\_i}}
We will bound the effect of the map on the energy in Proposition~\ref{prop:probability-ratio-increment}, and its multiplicity in Lemma~\ref{lem:multiplicity-increment}. Combining these will imply Lemma~\ref{prop:exp-tails-increments}, the main result in this section.

\begin{proposition}\label{prop:probability-ratio-increment}
There exists $C>0$ such that for every $\beta>\beta_0$ and every $i\leq T$, if $\cI\in \bar {\mathbf I}_{x,T}$, 
\begin{align*}
\Big| \log \frac{\mu_n(\cI)}{\mu_n(\Psi_i (\cI ))}  + \beta \fm(\cI; \Psi_i (\cI))\Big| \leq C \fm(\cI ;\Psi_i (\cI))\,.
\end{align*}
\end{proposition}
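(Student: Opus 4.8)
The plan is to compare $\mu_n(\cI)$ and $\mu_n(\Psi_i(\cI))$ via Theorem~\ref{thm:cluster-expansion}, so that
\[
\log\frac{\mu_n(\cI)}{\mu_n(\Psi_i(\cI))} = -\beta\fm(\cI;\Psi_i(\cI)) + \Big(\sum_{f\in\cI}\g(f,\cI) - \sum_{f'\in\Psi_i(\cI)}\g(f',\Psi_i(\cI))\Big),
\]
and the whole task reduces to bounding the bracketed $\g$-difference by $O(\fm(\cI;\Psi_i(\cI)))$. First I would fix notation for the decomposition induced by $\Psi_i$, mirroring the proof of Lemma~\ref{lem:dobrushin-wall-ratio} and especially the proof of Lemma~\ref{lem:increment-height-equivalence}: write $\cI = \cI_{\textsc{tr}}\cup\cS_x$ with increment sequence $(X_{\Tsp+j})_j$; let $j_0=\Tsp+i<j_1<\cdots<j_\kappa$ be the marked indices. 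Set $\mathbf E := \bigcup_k \cF(X_{j_k})$ the faces of $\cI$ that get deleted/replaced, let $\mathbf G$ be the faces of $\Psi_i(\cI)$ forming the new stretches of trivial increments that replace them, and let $\mathbf F$ be the (small, $O(1)$-per-index) set of faces in $\Psi_i(\cI)$ near the splice points whose local neighbourhoods change. By Lemma~\ref{lem:spine-increment-bijection} there is an explicit bijection $f\mapsto\tilde f$ between $\cI\setminus\mathbf E$ and $\Psi_i(\cI)\setminus(\mathbf F\cup\mathbf G)$ which acts on the portion of the spine above $j_0$ by a \emph{horizontal} shift (a fixed translation in the $xy$-plane accumulated from the deleted increments) and is the identity on $\cI_{\textsc{tr}}$ and on the increments below $j_0$.

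With this decomposition I would split the $\g$-difference as
\[
\Big|\sum_{f\in\cI}\g(f,\cI)-\sum_{f'\in\Psi_i(\cI)}\g(f',\Psi_i(\cI))\Big| \le \sum_{f\in\mathbf E}|\g(f,\cI)| + \sum_{f'\in\mathbf F\cup\mathbf G}|\g(f',\Psi_i(\cI))| + \sum_{f\in\cI\setminus\mathbf E}\big|\g(f,\cI)-\g(\tilde f,\Psi_i(\cI))\big|.
\]
The first two sums are bounded by $\bar K(|\mathbf E|+|\mathbf F|+|\mathbf G|)$ via~\eqref{eq:g-uniform-bound}; since by Remark~\ref{rem:increment-excess-area} $|\cF(X_{j_k})|\le 5\fm(X_{j_k})$ and the replacement stretch has $\le 6\hgt(X_{j_k})\le |\cF(X_{j_k})|$ faces, and $|\mathbf F|=O(\kappa)=O(\sum_k 1)=O(\sum_k\fm(X_{j_k}))$, all three are $O(\sum_k\fm(X_{j_k})) = O(\fm(\cI;\Psi_i(\cI)))$ (note $\fm(\cI;\Psi_i(\cI)) = \sum_k(\fm(X_{j_k}) - (\text{cost of the replacement stretch})) \ge c\sum_k\fm(X_{j_k})$ since each replacement stretch has nonnegative excess area bounded by $\fm(X_{j_k})$). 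For the third sum I use~\eqref{eq:g-exponential-decay}: $|\g(f,\cI)-\g(\tilde f,\Psi_i(\cI))|\le \bar K e^{-\bar c\,\br(f,\cI;\tilde f,\Psi_i(\cI))}$, and the congruence radius is attained by the distance from $f$ to a face where $\cI$ and $\Psi_i(\cI)$ differ, i.e. to $\mathbf E$ on the $\cI$ side or to $\mathbf F\cup\mathbf G$ on the other side. Crucially, tameness ($\cI\in\bar{\mathbf I}_{x,T}$) guarantees, as in Remark~\ref{rem:tame}, that the relevant nearest differing face lies within the cylinder about $v_\Tsp$, so distances to the truncated interface are not the binding constraint; the binding contribution is the distance from a shifted spine face above $j_0$ to the column of replaced increments.

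The main obstacle — and the reason the marking rule in step~(1) of Definition~\ref{def:increment-shift-map} is what it is — is controlling $\sum_{f\in\cI\setminus\mathbf E}\bar K e^{-\bar c\,\br(f,\cI;\tilde f,\Psi_i(\cI))}$ by $O(\fm(\cI;\Psi_i(\cI)))$. The point is: a face $f$ in increment $X_{\Tsp+i'}$ with $i'>i$ and $\Tsp+i'$ unmarked has, by the marking criterion, $\fm(X_{\Tsp+i'}) < \fm(X_{\Tsp+i})e^{\frac12\bar c(i'-i)}$; since increments are stacked vertically, the vertical distance from $f$ down to $X_{j_0}$ is at least $i'-i$ (each increment has height $\ge 1$), and more precisely one shows $\br(f,\cI;\tilde f,\Psi_i(\cI)) \ge c\,(i'-i)$ for such $f$ as the neighbourhoods agree until one reaches the replaced column. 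Hence $\sum_{f\in X_{\Tsp+i'},\, i'>i,\ \Tsp+i'\text{ unmarked}}\bar K e^{-\bar c\br} \le \sum_{i'>i}|\cF(X_{\Tsp+i'})|\bar K e^{-c(i'-i)} \le \sum_{i'>i} 5\fm(X_{\Tsp+i})e^{\frac12\bar c(i'-i)}\bar K e^{-c(i'-i)}$, which is a convergent geometric series (for $c$ chosen appropriately relative to $\bar c$, e.g. $c=\bar c$, so the exponent is $-\frac12\bar c(i'-i)<0$) summing to $O(\fm(X_{\Tsp+i})) = O(\fm(\cI;\Psi_i(\cI)))$; faces in marked increments are in $\mathbf E$ and already counted, faces below $j_0$ and in $\cI_{\textsc{tr}}$ have $\tilde f=f$ with the congruence radius controlled by the distance to $\mathbf G\cup\mathbf F$ which (for faces below $j_0$) is $\ge$ their vertical distance to $X_{j_0}$, giving another convergent sum of size $O(|\cF(X_{j_0})|+\sum_k|\cF(X_{j_k})|)=O(\fm(\cI;\Psi_i(\cI)))$, and faces whose nearest differing face is in the truncated interface contribute a further $O(1)$ by tameness and integrability of $e^{-\bar c d}$ over $\Z^3$. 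Summing symmetrically the $\Psi_i(\cI)$-side contributions (faces $\tilde f$ near $\mathbf G$) the same way completes the bound; absorbing all constants gives the claimed $C\fm(\cI;\Psi_i(\cI))$, with $C$ depending only on $\bar c,\bar K$ and the lattice, not on $\beta$, $i$, $T$ or $\cI$. $\hfill\blacksquare$
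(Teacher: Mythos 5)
Your overall strategy matches the paper's: invoke Theorem~\ref{thm:cluster-expansion}, decompose faces into deleted increments, new trivial stretches, and shifted survivors, bound the first two by the uniform estimate~\eqref{eq:g-uniform-bound} and the third by the exponential decay~\eqref{eq:g-exponential-decay}. But there is a genuine error in the key step, and it is exactly the place where the paper has to work harder.

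You assert that for a face $f$ in an unmarked spine increment $X_{\Tsp+i'}$ with $i'>i$, one has $\br(f,\cI;\tilde f,\Psi_i(\cI)) \geq c(i'-i)$ ``as the neighbourhoods agree until one reaches the replaced column.'' This is false whenever $\kappa\geq 1$: there is not a single replaced column, but a collection $X_{j_0},X_{j_1},\ldots,X_{j_\kappa}$, and each of them disagrees between $\cI$ and $\Psi_i(\cI)$. If $f$ lies in $X_j$ with $j_k<j<j_{k+1}$ and $k\geq 1$, the first disagreement in the congruence ball around $f$ appears at the nearest marked increment, at distance as small as $j-j_k$ (or $j_{k+1}-j$ from above), both of which can be much smaller than $j-j_0 = i'-i$. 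So the inequality $e^{-\bar c\,\br}\leq e^{-\bar c(i'-i)}$ goes the wrong way, and the geometric series you then sum is an under-estimate, not an upper bound. Concretely, pick $\fm(X_{\Tsp+i})=1$ and a marked $j_k$ with $j_k-j_0$ large; an unmarked increment at $j=j_k+1$ can have $\fm(X_j)\approx e^{\frac12\bar c(j-j_0)}$ while $\br$ is $O(1)$, so its true contribution to the $\g$-sum is of order $\fm(X_j)$, which your bound $\fm(X_{j_0})e^{-\frac12\bar c(j-j_0)}$ misses entirely.

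The fix is precisely what forces the more involved bookkeeping in Proposition~\ref{prop:probability-ratio-increment}: bound $\br$ by the distance to the \emph{nearest} marked index in either direction, as in~\eqref{eq:II_k-splitting}, namely $e^{-\bar c\,\br}\leq e^{-\bar c(j-j_k)}+e^{-\bar c(j_{k+1}-j)}+e^{-\bar c\, d(\{f,\theta^{(k)}f\},\mathbf G)}$, and then apply the marking criterion \emph{recursively}: since $j_k$ was marked and $j$ was not, $\fm(X_j)\leq\fm(X_{\Tsp+i})e^{\frac12\bar c(j-\Tsp-i)}\leq\fm(X_{j_k})e^{\frac12\bar c(j-j_k)}$, so that $\fm(X_j)e^{-\bar c(j-j_k)}\leq\fm(X_{j_k})e^{-\frac12\bar c(j-j_k)}$ sums to $O(\fm(X_{j_k}))$ per block, and similarly for the upward direction. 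Summing over $k$ then gives the desired $O(\fm(\cI;\Psi_i(\cI)))$. A second, minor point: the set $\mathbf F$ you introduce (``splice-point'' faces) is unnecessary --- the shift maps $\theta^{(k)}$ pair up the surviving spine faces exactly, stretch by stretch, with no leftover, so there is nothing to absorb there; if you keep it, its cardinality is not obviously $O(\kappa)$ unless you define it carefully, and it adds nothing to the argument.
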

\begin{proof}
For ease  of notation, fix any such $i$ and let $\cJ =\Psi_i (\cI)$. Suppose that $\cI$ has $T$-admissible truncation $\cI_{\textsc{tr}}$ with increment sequence $(X_{\Tsp+i})_{i\leq T-\Tsp}$ and remainder $X_{>T}$. If $\cJ = \cI$, the inequality trivially holds, so let us assume that $\cI$ is such that $\fm(X_{\Tsp+i}) >0$.  By Theorem~\ref{thm:cluster-expansion}, we can express
\begin{align*}
\frac{\mu_n(\cI)}{\mu_n(\cJ)} = \exp\Big(- \beta \fm(\cI ; \cJ) +  \sum_{f\in \cI} \g(f,\cI) - \sum_{f'\in \cJ} \g(f',\cJ)\Big)\,.
\end{align*}
Let $\{j_k\}_{k\leq \kappa}$ be as in Definition~\ref{def:increment-shift-map}. To allow us to consider the increments and remainder in a uniform manner, let $j_\kappa:= T+1$ if $j_\kappa$ is ``$>T$", so that $X_{j_{\kappa}}$ refers to $X_{>T}$ if $j_\kappa = T+1$. We have   
\[\fm(\cI; \Psi_i (\cI)) \geq \frac 13 \sum_{k\leq \kappa} \fm(X_{j_k})\,,\]
since every nontrivial increment with height bigger than $1$ must have at least six faces at each height between $\lfloor \hgt(v_{j})\rfloor $ and $\lceil \hgt(v_{j+1})\rceil$, whereas the stretch of trivial increments would have four faces at those heights.
Now let us split the set of faces in $\cI$ into the following sets (refer to Figure~\ref{fig:increment-map}): 
\begin{itemize}
\item For each $k\leq \kappa$, let $\mathbf E_{k}$ be the set of faces $\cF(X_{ j_k})$ in $\cI$.
\item For each $k\leq \kappa$, let $\mathbf F_k$ be the (possibly empty if $j_{k+1}= j_k +1$ or if $j_{k} = T+1$) set of all faces between $v_{j_k+1}$ and $v_{j_{k+1}}$ (not-inclusive), with $\mathbf F_{\kappa}$ defined as the set of all faces above $v_{ j_\kappa+1}$.
\item Let $\mathbf G$ be the set of all remaining faces in $\cI$
\end{itemize}
Also, for notation, let $\mathbf E^\emptyset_k\subset \mathbf E_k$ be the bounding faces of $v_{j_k}$ and $v_{j_{k+1}}$ in $\cI$ (so $|\mathbf{E}^\emptyset_k|=8$) and if $j_\kappa = T+1$, then $\mathbf E_{\kappa}^{\emptyset}$ will only be the four bounding faces of $v_{j_{\kappa}}$ in $\cI$. Let $\mathbf E^{\emptyset; \cJ}_k$ be the corresponding faces in $\cJ$, i.e., the faces of the $\hgt(X_{j_k})$ consecutive trivial increments, so that $|\mathbf E^{\emptyset;\cJ}_k| = 4 \hgt(X_{j_k})$ (if $j_\kappa = T+1$, also include the top-most bounding face of the spine in $\cJ$).  

By definition, the faces in $\mathbf G$ are shared between $\cI$ and $\cJ$, the faces $\bigcup_k \mathbf E_k\setminus \mathbf E^{\emptyset}_k$ are precisely those that are removed by the map $\Psi_i$, and the faces in $\mathbf F_k$ can be translated to correspond in a one-to-one fashion to the faces in $\cJ\setminus (\mathbf G\cup \bigcup_{k} \mathbf E^{\emptyset;\cJ}_k)$.  
Namely, if for every $k\leq \kappa$, we set $\theta^{(k)}$ to be the shift map by the vector 
\begin{align*}
 - \sum_{ 0\leq \ell \leq k} \rho(v_{j_\ell+1}) - \rho(v_{j_\ell})
\end{align*}
then every face $f\in \mathbf F_k$, is identified with the face $\theta^{(k)} f$ in $\cJ$, and for $k<\kappa$, each stretch $\theta^{(k)} \mathbf F_k$ is delimited from below by the upper-bounding face of $\theta^{(k)} \mathbf E^\emptyset_k \subset \mathbf E^{\emptyset; \cJ}_k$ and from above by the lower-bounding face of $\theta^{(k+1)} \mathbf E^{\emptyset}_{k+1}$. By construction, we have  
\begin{align*}
\cI = \mathbf G\cup \bigcup_{k\leq \kappa} \mathbf E_k \cup \mathbf F_k\,,  \qquad \mbox{and}\qquad \cJ = \mathbf G \cup \bigcup_{k\leq \kappa}\mathbf E_k^{\emptyset; \cJ}\cup \theta^{(k)}\mathbf F_k\,.
\end{align*}
We can therefore split up the sum 
\begin{align}\label{eq:increment-map-g-terms}
\Big|\sum_{f\in \cI} \g(f,\cI)- \sum _{f'\in \cJ} \g(f',\cJ) \Big| \leq & \sum_k \sum_{f\in \mathbf E_k}|\g(f,\cI)|   + \sum_k \sum_{f\in \mathbf E_k^{\emptyset; \cJ}} |\g(f,\cJ)| \nonumber \\
& + \sum_k \sum_{f\in \mathbf F_k} \big|\g(f,\cI)- \g(\theta^{(k)} f ,\cJ)\big|   + \sum_{f\in \mathbf G} |\g(f,\cI)-\g(f,\cJ)|\,.
\end{align}
We bound these sums one at a time. By~\eqref{eq:g-uniform-bound}, along with $|\cF(X_{j_k})| \leq 5\fm(X_{j_k})$ (by Remark~\ref{rem:increment-excess-area} and $\fm(X_{j_k})>0$), the first and second sums in~\eqref{eq:increment-map-g-terms} are bounded above as 
\[\sum_k \sum _{f\in \mathbf E_k} |\g(f,\cI)| + \sum_k \sum_{f\in \mathbf E_k^{\emptyset;\cJ}} |\g(f,\cJ)| \leq 2 \bar K \sum_k |\cF(X_{j_k})| \leq  10 \bar K \fm(\cI ;\cJ)\,.\]
Let us now turn to the third term of~\eqref{eq:increment-map-g-terms}, which we can bound as follows: first of all, notice that for any face $f\in \mathbf F_k$ in increment $\cF(X_j)$ for $j_k <j<j_{k+1}$, the radius $\br(f,\cI;\theta^{(k)} f,\cJ)$ is attained either by some face in a spine (belonging to precisely one of $\cI$ or $\cJ$), in which case its value is at least $(j-j_k)\wedge (j_{k+1}-j)$, or by some face in $\mathbf G$ (the faces in $\mathbf G$ are the same in both $\cI$ and $\cJ$, but will be at different relative locations to $f$ vs.\ $\theta^{(k)} f$). Let us take any $k\leq \kappa$, fix a $j_k < j <j_{k+1}$ ($j>j_\kappa$ if $k=\kappa$ and $j_{\kappa}\neq T+1$) and a face $f\in \cF(X_j) \subset \mathbf F_k$, and expand 
\begin{align*}
\big|\g(f,\cI)-\g(\theta^{(k)} f,\cJ)\big| & \leq \bar K \exp[-\bar c \br(f,\cI ;\theta^{(k)} f, \cJ)]  \nonumber \\ 
& \leq  \bar K \exp[{-\bar c (j-j_k)}] + \bar K \exp[{-\bar c (j_{k+1}-j)}] +  \bar K \exp[-\bar c d(\{f,\theta^{(k)}f\},\mathbf G)]\,. 
\end{align*}  
Notice that since both $\cI$ and $\cJ$ are in $\bar{\mathbf I}_{x,T}$, their spines are contained in $\cC_{2r_0 T}(v_\Tsp)$. As a consequence,  $d(f,\mathbf G)$ is attained by a face in $\cC_{R_0 T}(v_\Tsp)$, and is at least $j- \Tsp- i \geq j-j_k$ (n.b.\ there are $j-\Tsp-i$ cut-points separating $\mathbf{G}$ and $f$), and the same holds for $d(\theta^{(k)}f,\mathbf{G})$; therefore, the above becomes 
\begin{align}\label{eq:II_k-splitting}
|\g(f,\cI)-\g(\theta^{(k)} f,\cJ)| \leq 2\bar K \exp\big[- \bar c (j-j_k)\big] + \bar K \exp\big[- \bar c (j_{k+1} - j)\big]\,.
\end{align} 
Now summing the first term in the right-hand side in~\eqref{eq:II_k-splitting} over all $k$ and $f\in \mathbf{F}_k$,
\begin{align*}
\bar K \sum_{k\leq \kappa} \sum_{j_k < j <j_{k+1}} |\cF(X_j)| e^{-\bar c (j- j_k)}  \leq  \bar K \sum_{k\leq \kappa} \sum_{j_k<j< j_{k+1}} \Big[ 8+ \fm(X_{j_k})\Big] e^{\frac 12 \bar c (j-j_k)} e^{-\bar c (j-j_k)}\,,
\end{align*}
using that $|\cF(X_j)|= 8 + \fm(X_j)$ for $j_k <j<j_{k+1}$, and the facts that the reduction map was not applied at index $j$ and was applied at $j_k$, so that 
$$\fm(X_j)\leq \fm(X_{\Tsp+i})e^{\frac 12 \bar c (j-\Tsp-i)}\leq \fm(X_{j_k})e^{\frac 12 \bar c (j-j_k)}\,.$$ 
Then, by integrability of exponential tails, we see that the right-hand side above is in turn bounded by $\bar C\sum_{k\leq \kappa} \fm(X_{j_k}) = \bar C \fm(\cI;\cJ)$ for some universal $\bar C$. The second term in~\eqref{eq:II_k-splitting} can similarly be bounded as 
\begin{align*}
\bar K \sum_{k<\kappa} \sum_{j_k < j < j_{k+1}} \fm(X_j)e^{-\bar c (j_{k+1}-j)}\leq \bar K \sum_{k<\kappa} \sum_{j_k < j < j_{k+1}} \fm(X_j) 
 \leq \bar C\sum_{k<\kappa} \fm(X_{j_{k+1}})\,.
\end{align*}
for some constant $\bar C$, where we used that  
$$\sum_{j_k < j < j_{k+1}}\fm(X_j) \leq \sum_{j_k < j < j_{k+1}} \fm(X_{\Tsp - i}) e^{\frac 12 \bar c (j-\Tsp- i)} \leq \bar C\fm(X_{\Tsp-i}) e^{\frac 12 \bar c (j_{k+1}-{\Tsp -i})} \leq \bar C \fm(X_{j_{k+1}})\,.$$
Again, by integrability of exponential tails, we  see that for some other $\bar C>0$, the contribution of this term is bounded by $\bar C\sum_{k< \kappa} \fm(X_{j_k}) \leq \bar C\fm(\cI;\cJ)$. 
It remains to bound the fourth sum in~\eqref{eq:increment-map-g-terms}: for faces $f\in \mathbf G$, the radius $\br(f,\cI; f, \cJ)$ must be attained by a face in $\cI \oplus \cJ$, so that 
\begin{align*}
\sum_{f\in \mathbf G} |\g(f,\cI)-\g(f,\cJ)|  
& \leq \sum_{f\in \mathbf G\cap \cC_{R_0 T}(v_\Tsp)} \sum_{g\in \cI \oplus \cJ} \bar K e^{-\bar c d(f,g)}+ \sum_{f\in \mathbf G, f\notin \cC_{R_0 T}(v_\Tsp)} \sum_{g\in \cI \oplus \cJ} \bar K e^{-\bar c d(f,g)}\,.
\end{align*}
Since $\cI \oplus \cJ \subset \cC_{2r_0 T}(v_\Tsp)$, integrating the exponential tail, the second sum above is at most $O(T^2 e^{- \bar c r_0 T})$. On the other hand, by definition of the spine and the fact that it is tame, 
\begin{align*}
 \sum_{f\in \mathbf G\cap \cC_{R_0 T}(v_\Tsp)} \sum_{g\in (X_j)_{j\geq j_0}\cup (X_j')_{j\geq j_0}}  e^{-\bar c d(f,g)} & \leq 2 \sum_{j\geq j_0} |\cF(X_j)| e^{-\bar c (j-\Tsp - i)} \\ 
& \leq 8 \bar C + 2 \sum_{k \leq \kappa} \fm(X_{j_k}) + 2 \sum_{j\geq j_0: j \notin \{j_k\}_{k\leq \kappa}} \fm(X_{\Tsp + i})e^{- \frac{1}{2} \bar c (j-\Tsp - i)}\,,
\end{align*} 
which we again find to be bounded by $\bar C \fm(\cI;\cJ)$ for some other universal constant $\bar C$. 
Plugging all the above bounds into~\eqref{eq:increment-map-g-terms}, we see that for some universal $C$ (independent of $\beta$), we have 
\[
\Big|\sum_{f\in \cI} \g(f,\cI)-\sum_{f'\in \cJ} \g(f',\cJ)\Big| \leq C \fm(\cI;\cJ)\,. \qedhere
\]
\end{proof}

We now bound the multiplicity of the map $\Psi_i$ for a fixed excess area $\fm(\cI;\Psi_i (\cI))$. 
\begin{lemma}\label{lem:multiplicity-increment}
For every $i \leq T$ and every $\cJ\in \Psi_i (\bar {\mathbf I}_{x,T})$, there exists an $s>0$ such that for every $K$, 
$$|\{\cI \in \Psi_i^{-1}(\cJ): \fm(\cI;\cJ)=K\}| \leq s^{K}\,.$$
\end{lemma}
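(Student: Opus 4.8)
The plan is to reconstruct $\cI$ from $\cJ$ using only an amount of combinatorial data whose cardinality is exponential in $K$. Recall from Definition~\ref{def:increment-shift-map} that $\Psi_i$ fixes the truncation $\cI_{\textsc{tr}}$, the source-point index $\Tsp$, and the increments $\sX_{\Tsp},\dots,\sX_{\Tsp+i-1}$, and that it replaces each marked increment $\sX_{j_0},\dots,\sX_{j_\kappa}$ (where $j_0=\Tsp+i$) by a run of $\hgt(\sX_{j_k})$ trivial increments, leaving every other increment unchanged up to a horizontal translation. Hence $\cJ$ already pins down $\cI_{\textsc{tr}}$, $\Tsp$, and the increments $\sX_{\Tsp},\dots,\sX_{\Tsp+i-1}$ of $\cI$, and reading the spine of $\cJ$ upward from height $\hgt(v_\Tsp)$ one sees that $\cI$ is further determined by: (a) the ordered tuple of shapes $X_{j_0},\dots,X_{j_\kappa}$ of the marked increments, each viewed as a rooted increment in $\fX$ (the last lying in $\fX_{\textsc{rem}}$ when the remainder was marked, recorded by one extra bit); and (b) the gaps $j_{k+1}-j_k$ for $0\le k<\kappa$. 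Indeed, $X_{j_k}$ fixes the length $\hgt(X_{j_k})$ of the $k$-th block inside $\cJ$, the gap $j_{k+1}-j_k$ fixes how many interior (unmarked) increments $\sX_{j_k+1},\dots,\sX_{j_{k+1}-1}$ separate consecutive blocks, and those interior increments---as well as everything above the last block---are copied off $\cJ$ after undoing the horizontal shift that the already-placed marked increments induce.

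Next I would bound the number of choices in (a). From the proof of Proposition~\ref{prop:probability-ratio-increment} one has $K=\fm(\cI;\cJ)\ge\tfrac13\sum_{k=0}^{\kappa}\fm(X_{j_k})$, so $\sum_k\fm(X_{j_k})\le 3K$; since each $X_{j_k}$ is nontrivial, $\fm(X_{j_k})\ge1$, giving $\kappa+1\le 3K$, and by Remark~\ref{rem:increment-excess-area}, $\sum_k|\cF(X_{j_k})|\le 6\sum_k\fm(X_{j_k})\le18K$. Each $X_{j_k}$ is a $*$-connected rooted face set containing a fixed face, so by Observation~\ref{obs:counting-connected} there are at most $s_0^{n}$ rooted increments with $n$ faces, for a universal $s_0$; summing the product of these bounds over the (at most $2^{18K}$) admissible compositions $(n_0,\dots,n_\kappa)$ of an integer $\le18K$ shows the number of admissible shape-tuples is at most $(2s_0)^{18K}$.

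The crucial step is bounding the number of choices in (b). The marking criterion in Definition~\ref{def:increment-shift-map} gives, for every $k\ge1$,
\[
\fm(X_{j_k})\;\ge\;\fm(X_{j_0})\,e^{\frac12\bar c(j_k-\Tsp-i)}\;\ge\;e^{\frac12\bar c(j_k-j_0)}\,,
\]
so that $j_{k+1}-j_k\le j_{k+1}-j_0\le\tfrac2{\bar c}\log\fm(X_{j_{k+1}})$. Thus, once the shape-tuple is fixed, the gap $j_{k+1}-j_k$ ranges over at most $\tfrac2{\bar c}\log\fm(X_{j_{k+1}})+1\le C\,|\cF(X_{j_{k+1}})|$ integers for a universal $C$; since $Cm\le C_1^{m}$ for all integers $m\ge1$ and a universal $C_1$, the number of gap-tuples is at most $\prod_{k\ge1}C\,|\cF(X_{j_k})|\le C_1^{\sum_k|\cF(X_{j_k})|}\le C_1^{18K}$. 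Multiplying the $(2s_0)^{18K}$ bound on shape-tuples, the $C_1^{18K}$ bound on gap-tuples, and the factor $2$ for the remainder bit yields $|\{\cI\in\Psi_i^{-1}(\cJ):\fm(\cI;\cJ)=K\}|\le s^K$ for a suitable universal $s$ (the case $K=0$ being trivial, since then $\cI=\cJ$).

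The main obstacle, I expect, is exactly the need to encode the gaps in (b): because $\Psi_i$ blows up a marked increment of height $\ell$ into $\ell$ trivial increments, and because the interior increments between two blocks may themselves be trivial, $\cJ$ on its own does not reveal where a block ends and the interior region begins, so the gaps must be specified---and a priori each could be of order $T$, hence not controllable by $K$. What saves the argument is the geometric form of the marking threshold $\fm(X_j)\ge\fm(X_{j_0})e^{\frac12\bar c(j-j_0)}$: a marked increment lying $j_k-j_0$ steps above $j_0$ is forced to have excess area at least $e^{\frac12\bar c(j_k-j_0)}$, which makes each gap logarithmic in the excess area of the corresponding marked increment and hence makes the product of the gap-counts absorbable into $s^K$. (This is, of course, precisely why the map was designed to mark \emph{all} sufficiently large increments above $j_0$, rather than only $\sX_{j_0}$ itself.)
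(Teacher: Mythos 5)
Your proof is correct. You reach the same bound as the paper but via a noticeably different bookkeeping scheme, and it is worth contrasting the two. The paper also reduces the task to enumerating the marked increments $(X_{j_k})_{k\le\kappa}$ together with their indices $\{j_k\}$, but it then encodes all of this information (shapes \emph{and} positions) as a \emph{single} $*$-connected face set rooted at $v_{\Tsp+i}$, namely $\bigcup_{j_0\le j\le j_\kappa}\cF(X_j)$ — which, crucially, also contains the unmarked increments sandwiched between the blocks. The whole counting argument then reduces to showing $|\bigcup_{j_0\le j\le j_\kappa}\cF(X_j)|\le CK$ and applying Observation~\ref{obs:counting-connected} once; the geometric marking threshold $\fm(X_j)<\fm(X_{j_0})e^{\frac12\bar c(j-j_0)}$ is used there to show the unmarked increments contribute only $O(K)$ faces. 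You instead split the data into an ordered shape tuple (counted by compositions and Observation~\ref{obs:counting-connected}) and a gap tuple, and use the marking threshold to bound each gap $j_{k+1}-j_k$ by $O(\log\fm(X_{j_{k+1}}))$, so that $\prod_k C|\cF(X_{j_k})|\le C_1^{O(K)}$. The two uses of the marking threshold are therefore equivalent in spirit — both exploit that indices beyond $j_0$ can only be marked if the corresponding excess areas grow geometrically — but your version makes the role of the ``mark all sufficiently large increments above $j_0$'' rule more transparent, at the cost of a slightly longer composition/gap bookkeeping argument. (Minor bookkeeping note: for $X\in\fX\setminus\{X_\trivincr\}$ one has $|\cF(X)|\le 5\fm(X)$ rather than $6\fm(X)$, though your factor $6$ does cover the marked remainder $X_{>T}\in\fX_{\textsc{rem}}$, where $|\cF(X_{>T})|=\fm(X_{>T})+5\le6\fm(X_{>T})$; this does not affect the conclusion.)
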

\begin{proof}
Fix any $i$ and $\cJ \in \Psi_i (\bar {\mathbf I}_{x,T})$. For a fixed spine $\cS_x(\cJ)$ it suffices to bound the number of spines $\cS_x (\cI)$ for which the map $\Psi_i$ sends $\cS_x (\cI)$ to $\cS_x(\cJ)$ with $\fm(\cS_x(\cI);\cS_x(\cJ))=K$ (as the map $\Psi_i$ fixes all faces of the interface in $\cI_{\textsc {tr}}$). We first observe a few basic facts. 

By definition of $\Psi_i$, any spine $\cS_x(\cI)$ that gets mapped to $\cS_x(\cJ)$ by $\Psi_i$ is such that their increment sequences $(X_j)_{j< i}$ coincide, and therefore the spines agree up to the $i$-th increment of $\cS_x(\cJ)$, which will satisfy $X_{\Tsp + i} ' = X_\trivincr$. 
In particular, for a given $\cJ$, the interface $\cI$ is uniquely identified by the collection of increments $(X_{j_k})_{k\leq \kappa}$ and the indices of those increments $\{j_k\}_{k\leq \kappa}$, since the rest of its spine is given by increments which are the same in both $\cI$ and $\cJ$. 
Therefore, starting from $v_{\Tsp + i}$ for the interface $\cJ$ (which coincides with the same cell for $\cI$), we can build a set of faces that uniquely identify the interface $\cI$ by taking the union of all the increments $(X_j)$ between $X_{j_0} = X_{\Tsp + i}$ and the final $X_{j_\kappa}$ (inclusive). 

The union of these increments, viewed as a subset of the spine $\cS_x$,  clearly forms a $*$-connected set of faces in $\cF(\Z^3)$ that are $*$-adjacent to the upper-bounding face of the marked cell $v_{i}$. We claim that this subset of $\cS_x$ has cardinality bounded above by $CK$ for some universal $C$. This follows from the fact that the cardinality of the face set of an increment is at most $4\fm(X_j)$ so long as $X_j \neq X_{\trivincr}$, so that the total cardinality of the face set of $\bigcup_{j_0 \leq j \leq j_{\kappa}} \cF(X_j)$ is at most four times
\begin{align*}
 \sum_{k\leq \kappa} \fm(X_{j_k})+ \sum_{k<\kappa} \sum_{j_k < j < j_{k+1}} [1+\fm(X_j)] & \leq \fm(\cI;\cJ)+ \sum_{j\notin \{j_k\},j_0<j<j_{\kappa}} 2\fm(X_i) e^{\frac 12 \bar c( j-\Tsp - i )} \\
& \leq \fm(\cI;\cJ) + 2\bar C\sum_{k\leq \kappa} \fm(X_{j_k})\,,
\end{align*}
which is, in turn, bounded above by $C \fm(\cI;\cJ)$ for some large enough, universal $C$.  Since this rooted face-set uniquely identifies $\cI\in \Psi_{i}^{-1}(\cJ)$, the result then follows immediately from Observation~\ref{obs:counting-connected}. 
\end{proof}

\begin{proof}[\textbf{\emph{Proof of Proposition~\ref{prop:exp-tails-increments}}}]
Since $\fm(\cI; \Psi_i(\cI)) \geq \frac 13 \fm(\sX_{\Tsp+i})$, it will suffice for us to show the upper bound on $\mu_n(\fm(\cI;\Psi_i(\cI))\geq r \mid \cI_{\textsc{tr}}, \bar{\mathbf I}_{x,T})$. Fix a $T$-admissible truncation $\cI_{\textsc{tr}}$ and an $i\leq T$, and express $\mu_n(\fm(\cI;\Psi_i(\cI))\geq r , \cI_{\textsc{tr}}, \bar{\mathbf I}_{x,T})$ as
\begin{align*}
\sum_{\cI \in \bar{\mathbf I}_{x,T}\cap \cI_{\textsc{tr}}, \fm(\cI;\Psi_i(\cI))\geq r} \mu_n(\cI) & = \sum_{k\geq r} \sum_{\cJ\in \Psi_i(\bar{\mathbf I}_{x,T}\cap \cI_{\textsc{tr}})} \sum_{\cI\in \Psi_i^{-1}(\cJ): \fm(\cI;\cJ) = k} \frac{\mu_n(\cI)}{\mu_n(\Psi_i(\cI))} \mu_n(\cJ) \\
& \leq \sum_{\cJ \in \Psi_i(\bar{\mathbf I}_{x,T}\cap \cI_{\textsc{tr}})} \mu_n(\cJ) \sum_{k\geq r}   s^{k} e^{- (\beta -C) k}\,,
\end{align*}
where we used the shorthand $\bar{\mathbf I}_{x,T}\cap \cI_{\textsc{tr}}$ to denote the set of interfaces in $\bar{\mathbf I}_{x,T}$ with that truncation, and the inequality followed from Proposition~\ref{prop:probability-ratio-increment} and Lemma~\ref{lem:multiplicity-increment}. Since $\Psi_i(\bar{\mathbf I}_{x,T}\cap \cI_{\textsc {tr}}) \subset \bar{\mathbf I}_{x,T}\cap \cI_{\textsc {tr}}$, by integrability of exponential tails, the sum over $k\geq r$ is at most $C e^{- (\beta - C)r}$ for some universal constant $C$, leaving 
\begin{align*}
\mu_n (\fm(\cI;\Psi_i(\cI))\geq r,\cI_{\textsc {tr}}, \bar{\mathbf I}_{x,T})  \leq C \mu_n (\bar{\mathbf I}_{x,T}, \cI_{\textsc {tr}}) \exp \big[-(\beta - C) r\big]\,,
\end{align*}
for some universal constant $C$. Dividing both sides by $\mu_n(\cI_{\textsc {tr}}, \bar{\mathbf I}_{x,T})$ yields the first estimate. The matching estimate conditional also on $(\sX_{\Tsp+j})_{j<i}$ follows by repeating the argument, additionally restricting our sum to interfaces with that increment sequence, as the map $\Psi_i$ fixes all increments before the $(\Tsp+i)$-th one. 
\end{proof}

\subsection{Exponential tail conditionally on 
\texorpdfstring{$\hgt(\cP_x)\geq h$}{ht(P\_x)>=h}}

In the proofs of the existence of a limiting large deviation rate and the law of large numbers for the maximum of the interface, it will be important to work with the monotone event $\{\hgt(\cP_x)\geq h\}$ rather than $\mathbf I_{x,T}$. The fact that the map $\Psi_i$ keeps the height of a spine fixed allows us to also deduce the analogous exponential tails conditional on $\{\hgt(\cP_x)\geq h\}$. In fact, if one were only interested in estimates conditional on $\bar{\mathbf I}_{x,T}$ (as are relevant to the shape theorem and central limit theorem), the map $\Psi_i$ could be simplified to replace each increment in $(X_{j_k})_k$ by $X_\trivincr$, keeping the number of increments fixed, but shrinking the height.  

\begin{proposition}\label{prop:increment-conditional-on-height}
There exists $C>0$ such that for every $\beta>\beta_0$, every $T\leq h$, every half-integer $h_1 \leq h$, and every $T$-admissible truncation $\cI_{\textsc{tr}}$ having $\hgt(v_\Tsp)\leq  h_1$, we have 
\begin{align*}
\mu_n \big(|\cF(\cS_x \cap \cL_{h_1})| \geq 4+ r \mid \hgt(\cP_x)\geq h, \cI_{\textsc{tr}},  \bar{\mathbf I}_{x,T}\big) \leq \exp[- \beta r/C]\,.
\end{align*}
Similarly, for every $i\leq T$, and every $T$-admissible truncation $\cI_{\textsc{tr}}$ with $\Tsp<i$, and sequence $(X_j)_{j<i}$,
\begin{align}\label{eq:increment-conditional-on-height}
\mu_n(\fm(\sX_i) \geq r \mid  (X_j)_{j<i}, \cI_{\textsc {tr}} , \hgt(\cP_x)\geq h, \bar{\mathbf I}_{x,T}) \leq \exp[- \beta r/C]\,.
\end{align} 
This latter estimate also implies the analogue of Corollary~\ref{cor:increment-interaction-bound}, also conditioned on $\{\hgt(\cP_x)\geq h\}$ for $h\geq T$.
\end{proposition}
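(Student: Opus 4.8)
The plan is to prove~\eqref{eq:increment-conditional-on-height} first, by re-running verbatim the proof of Proposition~\ref{prop:exp-tails-increments} with the monotone event $\{\hgt(\cP_x)\ge h\}$ appended to every conditioning, and then to deduce the slab estimate and the analogue of Corollary~\ref{cor:increment-interaction-bound} from~\eqref{eq:increment-conditional-on-height}. The feature that makes the first step work is the one recorded in the remark preceding~\S\ref{subsec:def-incr-map}: the increment-reduction map $\Psi_i$ fixes the truncated interface $\cI_{\textsc{tr}}$ and the first $i-1$ spine increments, does not increase $\fm(\cS_x)$, and---because it replaces $\sX_{\Tsp+i}$ (and the other marked increments) by a \emph{stretch of $\hgt(\sX_{\Tsp+i})$ trivial increments rather than a single one}---leaves $\hgt(\cS_x)$, and hence $\hgt(\cP_x)$, exactly unchanged. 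Consequently $\Psi_i$ maps $\bar{\mathbf I}_{x,T}\cap\cI_{\textsc{tr}}\cap\{(\sX_j)_{j<i}=(X_j)_{j<i}\}\cap\{\hgt(\cP_x)\ge h\}$ into itself. Since the multiplicity bound (Lemma~\ref{lem:multiplicity-increment}) and the relative-weight bound $\mu_n(\cI)/\mu_n(\Psi_i(\cI))\le e^{-(\beta-C)\fm(\cI;\Psi_i(\cI))}$ (Proposition~\ref{prop:probability-ratio-increment}) are intrinsic to the map and insensitive to the extra event, the summation in the proof of Proposition~\ref{prop:exp-tails-increments} goes through unchanged, now dividing by $\mu_n(\cI_{\textsc{tr}}\cap\{(\sX_j)_{j<i}=(X_j)_{j<i}\}\cap\{\hgt(\cP_x)\ge h\}\cap\bar{\mathbf I}_{x,T})$ and using $\fm(\cI;\Psi_i(\cI))\ge\tfrac13\fm(\sX_{\Tsp+i})$. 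This gives~\eqref{eq:increment-conditional-on-height} (the conditional-on-$(\sX_j)_{j<i}$ form being obtained, as before, by further restricting the sum to interfaces with that prescribed increment prefix, which $\Psi_i$ preserves).

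For the first displayed estimate (the surface area of the spine inside the slab $\cL_{h_1}$), I would reduce to~\eqref{eq:increment-conditional-on-height}. On the event that the increment $\sX_{\Tsp+i}$ straddles $h_1$, i.e.\ $\hgt(v_{\Tsp+i})\le h_1<\hgt(v_{\Tsp+i+1})$, Remark~\ref{rem:increment-excess-area} gives $|\cF(\cS_x\cap\cL_{h_1})|\le|\cF(\sX_{\Tsp+i})|=8+\fm(\sX_{\Tsp+i})$, so $\{|\cF(\cS_x\cap\cL_{h_1})|\ge 4+r\}$ forces $\fm(\sX_{\Tsp+i})\ge r-4$. Summing the disjoint events over the straddling index, conditioning further on the increments below it (which determine $\hgt(v_{\Tsp+i})$) and applying~\eqref{eq:increment-conditional-on-height} term by term bounds the probability by a factor of at most $O(h_1)=O(h)$ times $e^{-\beta(r-4)/C}$, since each increment has height at least one and hence there are at most $h_1$ cut-points below $h_1$; this is $\le e^{-\beta r/C'}$ in the range $r\ge C''\log h$ that is all one needs in the applications. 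If one wants the clean bound for all $r$, the linear-in-$h$ factor is removed by running the map argument directly with a variant $\Psi^{h_1}$ of $\Psi_i$ that shrinks whichever increment straddles $h_1$; its energy-gain, multiplicity and height/tameness-preservation analysis is word for word that of $\Psi_i$, with the marked index read off from $h_1$ rather than fixed, so no union bound is incurred.

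Finally, the analogue of Corollary~\ref{cor:increment-interaction-bound} conditioned on $\{\hgt(\cP_x)\ge h\}$ for $h\ge T$ is immediate from the conditional-on-prefix form of~\eqref{eq:increment-conditional-on-height}: revealing the spine increments from the bottom up, that estimate shows $(\fm(\sX_{\Tsp+i}))_{i\ge1}$ is stochastically dominated, conditionally on $\cI_{\textsc{tr}}$, $\bar{\mathbf I}_{x,T}$ and $\{\hgt(\cP_x)\ge h\}$, by a sequence of i.i.d.\ $\mathrm{Exp}(\beta/C)$ variables. The exponential-moment computation and Markov's inequality in the proof of Corollary~\ref{cor:increment-interaction-bound} then apply verbatim, both in the plain and the conditional-on-an-increment-prefix versions.

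The step I expect to require the most care is the verification that $\Psi_i$ (and $\Psi^{h_1}$) genuinely stays inside the height-conditioned set and that no relative-weight penalty is paid there. A priori, conditioning on $\{\hgt(\cP_x)\ge h\}$ can bias $\sX_{\Tsp+i}$ toward being large; the point is that the required height is always carried by the increments above $\sX_{\Tsp+i}$ and by the remainder---objects the map only lengthens---so the image still lies in $\{\hgt(\cP_x)\ge h\}\cap\bar{\mathbf I}_{x,T}$ ($\fm(\cS_x)$ not increasing and $\hgt(\cS_x)$ being fixed keeps tameness), and Proposition~\ref{prop:probability-ratio-increment}'s bound, which does not see the height event, is unaffected. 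This is precisely why $\Psi_i$ was built from height-preserving replacements rather than by collapsing increments to $X_\trivincr$; the simplified map suffices only for the unconditional $\bar{\mathbf I}_{x,T}$-statements, not here.
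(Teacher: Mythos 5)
Your proposal is correct and tracks the paper's argument closely. For~\eqref{eq:increment-conditional-on-height} you have exactly the paper's observation: $\Psi_i$ was deliberately built from height-preserving replacements, so it maps $\{\hgt(\cP_x)\ge h\}\cap\cI_{\textsc{tr}}\cap\{(\sX_j)_{j<i}=(X_j)_{j<i}\}\cap\bar{\mathbf I}_{x,T}$ into itself, and the weight-ratio bound (Proposition~\ref{prop:probability-ratio-increment}) and multiplicity bound (Lemma~\ref{lem:multiplicity-increment}) are insensitive to the extra conditioning, so the summation in the proof of Proposition~\ref{prop:exp-tails-increments} goes through unchanged. For the slab estimate, the paper also uses the map $\Psi_{h_1}:=\Psi_{\tau_1(\cI)}$ where $\tau_1(\cI)$ is the (random) straddling index, as you propose; and your observation that this avoids the crude $O(h)$ factor is correct.

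The one place you are slightly imprecise is the claim that with $\Psi^{h_1}$ ``no union bound is incurred'' and that the multiplicity analysis is ``word for word'' that of $\Psi_i$. It is not: given the image $\cJ$, the index $\tau_1(\cI)$ of the pre-image's straddling increment cannot be read off directly, because replacing $\sX_{\Tsp+\tau_1}$ by a stretch of $\hgt(\sX_{\Tsp+\tau_1})$ trivial increments shifts the cut-point indexing above $\tau_1$, so the increment of $\cJ$ that straddles $\cL_{h_1}$ sits at a different index. The paper resolves this by noting that if $\fm(\cI;\cJ)=k$ then $h_1-\hgt(v_{\Tsp+\tau_1})\le k$, so reading off from $\cJ$ the increment intersecting $\cL_{h_1-k-1}$ pins $\tau_1$ down to one of the next $k$ indices; this adds a factor $k$ to the pre-image count, yielding $\sum_{k\ge r} k\, s^k e^{-(\beta-C)k}$. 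That factor is harmless, but it \emph{is} a small union bound inside the multiplicity estimate, and it is the content of the ``more subtle'' step the paper flags. Everything else in your proposal—including the deduction of the height-conditioned analogue of Corollary~\ref{cor:increment-interaction-bound} from the prefix-conditional version of~\eqref{eq:increment-conditional-on-height} via stochastic domination by i.i.d.\ exponentials—matches.
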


\begin{proof}
The proof of~\eqref{eq:increment-conditional-on-height} goes  similarly to the proof of Proposition~\ref{prop:exp-tails-increments}. Namely, if we restrict the proof therein to interfaces additionally having $\hgt(\cP_x)\geq h$, and notice that for all such interfaces, their image under $\Psi_i$ is also a subset of $\{\hgt(\cP_x)\geq h\}$. With this observation, the natural modifications yield the desired. 

The proof of the first inequality in Proposition~\ref{prop:increment-conditional-on-height} is more subtle as the increment intersecting $\mathcal L_{h_1}$ is random. For each interface $\cI$ having $|\cF(\cS_x\cap \cL_{h_1})|\geq 4+r$, let $\tau_1(\cI)$ denote its increment index such that $\mathscr X_{\Tsp+\tau_1}$ intersects $\cL_{h_1}$ non-trivially. Then let $\Psi_{h_1}$ denote the map that for each $\cI$ having $|\cF(\cS_x\cap \cL_{h_1})|\geq 4+r$, is defined by $\Psi_{h_1}(\cI) = \Psi_{\tau_1(\cI)} (\cI)$.  
Fix a $T$-admissible truncation $\cI_{\textsc{tr}}$ and express $\mu_n(|\cF(\cS_x)\cap \cL_{h_1})|\geq 4+r, \hgt(\cP_x)\geq h, \cI_{\textsc{tr}}, \bar{\mathbf I}_{x,T})$ as 
\begin{align*}
     \sum_{\substack{\cI \in \bar{\mathbf I}_{x,T}\cap\hgt(\cP_x)\geq h\cap \cI_{\textsc{tr}}  \\ |\cF(\cS_x \cap \cL_{h_1})|\geq 4+r}} \mu_n(\cI) =  \sum_{k\geq r}  \sum_{\cJ \in \Psi_{h_1}(\bar{\mathbf I}_{x,T}\cap \cI_{\textsc{tr}}\cap \hgt(\cP_x)\geq h)}  \mu_n(\cJ) \sum_{\tau_1}
    \sum_{\substack{\cI \in \Psi_{\tau_1}^{-1}(\cJ)  \\ \mathscr X_{\Tsp+\tau_1} \cap \cL_{h_1}\neq \emptyset \\ \fm(\cI;\cJ)= k}}\frac{\mu_n(\cI)}{\mu_n(\Psi_{\tau_1}(\cI))} 
\end{align*}
%
At this point, we notice that for each $\cJ$, and $k\geq r$, there are at most $k$ possible choices of $\tau_1$ such that $\{\cI: \cI \in \Psi_{\tau_1}^{-1}(\cJ), \fm(\cI;\cJ)=k\}$ is non-empty. This is because, if $X_{\Tsp+\tau_1}\cap \cL_{h_1}\neq \emptyset$ it must be that  $h_1 - v_{\Tsp+\tau_1} \leq k$ (the excess area of the map is at least $\hgt(v_{\Tsp+\tau_1 + 1}) - \hgt(v_{\Tsp + \tau_1})$). Reading off from $\mathcal J$, the increment index intersecting $\cL_{h_1 - k-1}$, one of the next $k$ increment indices must be $\tau_1$. Combining this with Propositions~\ref{prop:probability-ratio-increment} and Lemma~\ref{lem:multiplicity-increment}, we see that this is at most 
\begin{align*}
    \sum_{\cJ\in \Psi_{h_1}(\bar{\mathbf I}_{x,T}\cap \cI_{\textsc{tr}}\cap \hgt(\cP_x)\geq h)} \mu_n(\cJ) \sum_{k\geq r} ks^k e^{-(\beta - C)k}\leq C \mu_n (\bar{\mathbf I}_{x,T}\cap \cI_{\textsc{tr}}\cap \hgt(\cP_x)\geq h) e^{ - (\beta - C)r}\,.
\end{align*}
since $\Psi_{h_1}(\bar{\mathbf I}_{x,T}\cap \cI_{\textsc{tr}}\cap \{\hgt(\cP_x)\geq h\})\subset (\bar{\mathbf I}_{x,T}\cap \cI_{\textsc{tr}}\cap \{\hgt(\cP_x)\geq h\})$. Dividing both sides out by $\mu_n(\bar{\mathbf I}_{x,T}\cap \cI_{\textsc{tr}}\cap \hgt(\cP_x)\geq h)$ then yields the desired.
\end{proof}

\section{Exponential tails on the base of a pillar}\label{sec:base}
In Section~\ref{sec:increment-exp-tail}, we showed that the increments of the spine each have exponential tails on their excess areas. Here, we show that the groups of walls that constitute the interface apart from the spine, but are ``near" the spine have excess area at most order $O(\log T)$. 
As a consequence, we see that $|v_\Tsp- x|$ is at most $O(\log T)$ with high probability; this difference, and the base more generally, will be negligible as far as any $T\to\infty$ limit theorems are concerned.  

As before, take $T$ to be large and take $x$ to be a point in the ``bulk" of $\cL_0 \cap \Lambda_{n,n,\infty}$, e.g., $d(x, \partial \Lambda_n) \geq 100 T$.

\begin{proposition}\label{prop:base-exp-tail}
There exists $K,c>0$ such that for every $\beta>\beta_0$, we have for every $r\geq K\log T$, 
\begin{align}
\mu_n(\hgt(v_\Tsp) \geq r \mid \bar{\mathbf I}_{x,T}) & \leq \exp[-  c\beta r ]\,, \label{eq:hgt-vsp}\\ 
\mu_n(\diam (\rho(\sB_x)) \geq r \mid \bar{\mathbf I}_{x,T}) & \leq \exp[- c\beta r]\,. \label{eq:base-diameter}
\end{align}
where the diameter $\diam (\rho(\sB_x)) := \max_{x,y\in \rho(\sB_x)} |x-y|$. Finally, we can also deduce that for $r\geq K\log T$,
\begin{align}\label{eq:base-total-increment-decay}
\mu_n\Big(\sum_{i\leq \Tsp} \fm(\sX_i) \geq r \mid \bar{\mathbf I}_{x,T}\Big) \leq \exp [-c\beta r]\,.
\end{align} 
\end{proposition}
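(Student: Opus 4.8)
The plan is to run the Peierls-map scheme of \S\ref{subsec:tools} with a \emph{base-flattening} map $\Phi_\sB$ which, on the event that the base is large in any of the three senses, deletes the walls of $\cI$ responsible for the base — together with the lowest external wall obstructing the source point, if there is one — and re-docks the spine, rigidly lowered, onto a short straight stalk above $x$. Before constructing $\Phi_\sB$ I would reduce the three displays to essentially one event. Using $|\cF(\sX_i)|\le 8+\fm(\sX_i)$ and the fact that the projection of a $*$-connected set of $m$ faces has diameter at most $m$ (Remark~\ref{rem:increment-excess-area} and the definitions of \S\ref{sec:increment-prelim}), the base $\sB_x$ satisfies $\diam(\rho(\sB_x))\le C(\Tsp+\sum_{i\le\Tsp}\fm(\sX_i))$ deterministically; since also $\Tsp\le\hgt(v_\Tsp)$, the bound \eqref{eq:base-diameter} follows from \eqref{eq:hgt-vsp} and \eqref{eq:base-total-increment-decay} with adjusted constants. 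And if $\hgt(v_\Tsp)\ge r$ then either $\sum_{i\le\Tsp}\fm(\sX_i)\ge r/C$ — which is \eqref{eq:base-total-increment-decay} — or the base is ``tall but thin'', in which case the minimality built into Definition~\ref{def:spine} forces a wall $W'\in\cI\setminus\cP_x$, indexed by a face within distance $O(r+T)$ of $x$, attaining height $\ge cr$; as a wall of height $h$ has at least $4h$ faces, $\fm(W')\ge c'r$. Thus on the relevant event there is always a \emph{witness}: a $*$-connected set of wall faces $\cW$ (the base of $\cP_x$, or such a $W'$, or both) indexed within a $\mathrm{poly}(T)$-window about $x$ with $\fm(\cW)\ge cr$ — here the excess area of the base as a piece of the interface, which counts its $\Theta(\hgt(v_\Tsp))$ vertical ``stalk'' faces as well as $\sum_{i\le\Tsp}\fm(\sX_i)$, is what enters.

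Next I would define $\Phi_\sB$ on $\bar{\mathbf I}_{x,T}$ (restricted to the big-base event): in the standard-wall representation, delete the walls constituting the base of $\cP_x$ together with the group of walls of $W'$ if present, reconstruct via Lemma~\ref{lem:interface-reconstruction} — which vertically shifts every wall nested above the deleted ones, in particular lowering the whole spine — and re-dock the spine onto a stalk of $\Tsp-1$ trivial increments above $x$, appending a few extra trivial increments if needed so the increment count does not fall below $T$. By construction $\Phi_\sB(\cI)\in\mathbf I_{x,T}$ (it need not be tame, but that is immaterial: by Lemma~\ref{lem:tame}, $\mu_n(\mathbf I_{x,T})\le C\mu_n(\bar{\mathbf I}_{x,T})$), a legitimate $T$-source point sits at the top of the stalk, and $\fm(\cI;\Phi_\sB(\cI))\ge cr$. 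For the weight I would invoke Theorem~\ref{thm:cluster-expansion} and follow the template of Lemma~\ref{lem:dobrushin-wall-ratio}: split the faces of $\cI$ into the deleted witness faces, the faces that fill in $\rho(\cW)$ on $\cL_0$ and make up the stalk, the rigidly lowered spine faces, and the untouched faces. Two structural features keep the $\g$-differences $\le C\fm(\cI;\Phi_\sB(\cI))$. First, below $v_\Tsp$ only \emph{vertical} shifts occur, so — exactly as in Dobrushin's argument — the $\g$-contributions of faces projecting outside $\rho(\cW)$ are controlled by $xy$-distances to $\rho(\cW)$ together with the closeness criterion of Definition~\ref{def:group-of-walls}. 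Second, above $v_\Tsp$ the spine is moved \emph{rigidly}: a spine face changes its congruence radius only through its downward view of the base, giving a contribution $\lesssim \sum_{j\ge1}|\cF(\sX_{\Tsp+j})|e^{-\bar c j}$, bounded via Corollary~\ref{cor:increment-interaction-bound}; and the isolation of the spine forced by the definition of $v_\Tsp$ and by tameness puts every surviving part of $\cI$ at distance $\gtrsim T$ from the lowered spine, leaving only an $O(T^2e^{-\bar c T})$ remainder.

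For the multiplicity I would prove, as in Proposition~\ref{prop:base-shrinking-multiplicity}, that for each $\cJ$ the set $\{\cI:\Phi_\sB(\cI)=\cJ,\ \fm(\cI;\cJ)=k\}$ has size at most $\mathrm{poly}(T)\cdot s^{O(k)}$: one enumerates the position of $\rho(v_\Tsp)$ (within distance $O(k)$ of $x$, using $\diam\rho(\sB_x)\le Ck$) and the index of $W'$ (within $\cC_{v_\Tsp,x,T}$, a tube of size $O((k+T)^2)$), then enumerates the deleted base and $W'$'s group of walls as $*$-connected face sets rooted there, via Observation~\ref{obs:counting-connected} refined by Lemma~\ref{lem:counting-groups-of-walls}. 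Putting it together, for the big-base event $\cA_r$,
\[
\mu_n(\cA_r\cap\bar{\mathbf I}_{x,T})=\sum_{k\ge cr}\ \sum_{\cJ\in\mathbf I_{x,T}}\ \sum_{\substack{\cI\in\Phi_\sB^{-1}(\cJ)\\ \fm(\cI;\cJ)=k}}\mu_n(\cI)\ \le\ \sum_{k\ge cr}\mathrm{poly}(T)\,s^{O(k)}e^{-(\beta-C)k}\,\mu_n(\mathbf I_{x,T}).
\]
Since $r\ge K\log T$ with $K$ large forces $\mathrm{poly}(T)=e^{O(\log T)}\le e^{(\beta-C)cr/2}$ while $s^{O(k)}$ is absorbed by $e^{-(\beta-C)k}$ for $\beta$ large, the sum is $\le Ce^{-c\beta r}\mu_n(\mathbf I_{x,T})\le C'e^{-c\beta r}\mu_n(\bar{\mathbf I}_{x,T})$, and dividing by $\mu_n(\bar{\mathbf I}_{x,T})$ yields each of \eqref{eq:hgt-vsp}, \eqref{eq:base-diameter}, \eqref{eq:base-total-increment-decay}.

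The step I expect to be the main obstacle is making $\Phi_\sB$ genuinely well-defined together with its image: one must pin down precisely which walls below $v_\Tsp$ to remove so that (a) the lowered spine can be re-docked at height $O(1)$ without colliding with surviving walls — this is exactly where the definition of $v_\Tsp$, and the resulting isolation of the spine, is indispensable, and where one likely must carry the horizontal offset $\rho(v_\Tsp)-x$ along with the vertical lowering and delete everything the shifted spine sweeps through — and (b) the output is still an interface with $\ge T$ increments possessing a bona fide $T$-source point, and (c) the external witness $W'$ of the ``tall-but-thin'' case of \eqref{eq:hgt-vsp}, which is not part of $\cP_x$, is captured by the deletion. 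Entangled with this is the accounting of the $\g$-terms for the rigidly lowered spine, the one place where the argument goes genuinely beyond Dobrushin's two-dimensional wall estimates and where the exponential tail on spine increments from \S\ref{sec:increment-exp-tail} must be fed back in.
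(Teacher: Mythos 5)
Your overall strategy coincides with the paper's: construct a base-flattening map, bound the ratio of weights via Theorem~\ref{thm:cluster-expansion} together with Corollary~\ref{cor:increment-interaction-bound} for the spine's downward view, bound the multiplicity by enumerating the deleted groups of walls as $*$-connected face sets rooted in the tube around $x$, and absorb the $\mathrm{poly}(T)$ factors using $r\geq K\log T$. Your identification of the need for an additional witness wall $W'$ in the tall-but-thin case (the paper's $\fF_{y^\dagger}$ in step~\eqref{step:mark-extra-walls} of Definition~\ref{def:base-map}), and the reduction from $\bar{\mathbf I}_{x,T}$ to $\mathbf I_{x,T}$ via Lemma~\ref{lem:tame}, also match the paper exactly.

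The one substantive divergence — which you yourself flag as the main obstacle — is in how the spine is re-docked. You propose to \emph{lower} the spine onto a short stalk above $x$ and to resolve collisions with surviving walls by deleting everything the shifted spine sweeps through. This is not what the paper does. In Definition~\ref{def:base-map}, the stalk height is set to $h_\star := \mathbf h \vee (\hgt(v_\Tsp)+\frac12)$, where $\mathbf h-1$ is the height of the highest wall surviving in the tube $\cC_{v_\Tsp,x,T}$ after the deletions; the spine is thus shifted only \emph{horizontally} and possibly \emph{raised}, never lowered, and collisions are avoided by construction. Your alternative would create a genuine multiplicity problem: the swept-through walls could be numerous, say $m$ of them, and enumerating their indices within the tube of size $O((T+k)^2)$ costs an extra factor of order $\binom{O(T^2)}{m}$, which is not of the form $\mathrm{poly}(T)\cdot s^{O(k)}$ once $m$ is comparable to $k$. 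The paper has a single extra witness $y^\dagger$ and hence a single $\mathrm{poly}(T)$ factor for its position; the price it pays is having to verify (in Lemma~\ref{lem:base-map-properties}) that raising the stalk to $h_\star$ does not destroy the energy gain, which is done by noting that a wall raised by $\Delta$ upon deletion must have been nested inside a wall sequence of excess area $\geq 9\Delta$, while only $4\Delta$ extra stalk faces are added back.

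There is also a smaller inaccuracy in the preliminary reductions. Both the bound $\diam(\rho(\sB_x)) \leq C(\Tsp + \sum_{i\leq\Tsp}\fm(\sX_i))$ and the dichotomy ``$\hgt(v_\Tsp)\geq r$ implies $\sum_{i\leq\Tsp}\fm(\sX_i)\geq r/C$ or tall-but-thin'' overlook that the portion of the base below the first cut-point $v_1$ is not covered by any increment $\sX_i$: this piece can be both wide and tall without contributing to $\Tsp$ or to $\sum_{i\leq\Tsp}\fm(\sX_i)$. The paper therefore derives the three lower bounds on $\fm(\cI;\Phi_\sB(\cI))$ independently in Lemma~\ref{lem:base-map-properties}: the $\diam(\rho(\sB_x))$ bound comes from the outermost wall of $\fF_{[x]}$, which must enclose the entire footprint of the base; the $h_\star$ bound comes from source-point minimality forcing at least two plus cells at every height below $\hgt(v_\Tsp)$. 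This does not change the structure of your argument (the map handles all three events directly in any case), but the reduction as stated would not stand on its own.
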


\begin{remark}
In particular,~\eqref{eq:hgt-vsp} implies that for every $r\geq K\log T$, with probability $1- e^{ - c \beta r}$, we have $\Tsp\leq r$; the bound~\eqref{eq:base-diameter} immediately implies the analogous bound on $|\rho(v_\Tsp)-x|$. 
\end{remark}

\begin{remark}
Since we prove that the projection of the spine attains an order  $\sqrt T$ distance from $x$, there should be groups of walls of $\cI\setminus \cP_x$ onto which $\cP_x$ projects, that attain an excess area $c\log T$; thus the order of the bounds on $\hgt(v_{\Tsp})$ and $\diam (\rho(\sB_x))$ is correct. On the other hand, we expect that $\hgt(v_1)$ is order one, and already the increments starting from $\sX_1$ have exponential tails; the difficulty in proving this is in controlling the interactions of $\sX_1,\ldots, \sX_{\Tsp}$ with nearby groups of walls which attain a higher height.  
\end{remark}

\begin{figure}
\vspace{-0.5cm}
\centering
  \begin{tikzpicture}
    \draw[->, thick] (-2,2) -- (2,2);
    \node at (0,2.4) {$\Phi_{\sB}$};
    \node (fig1) at (-4,0) {
	\includegraphics[width=.40\textwidth]{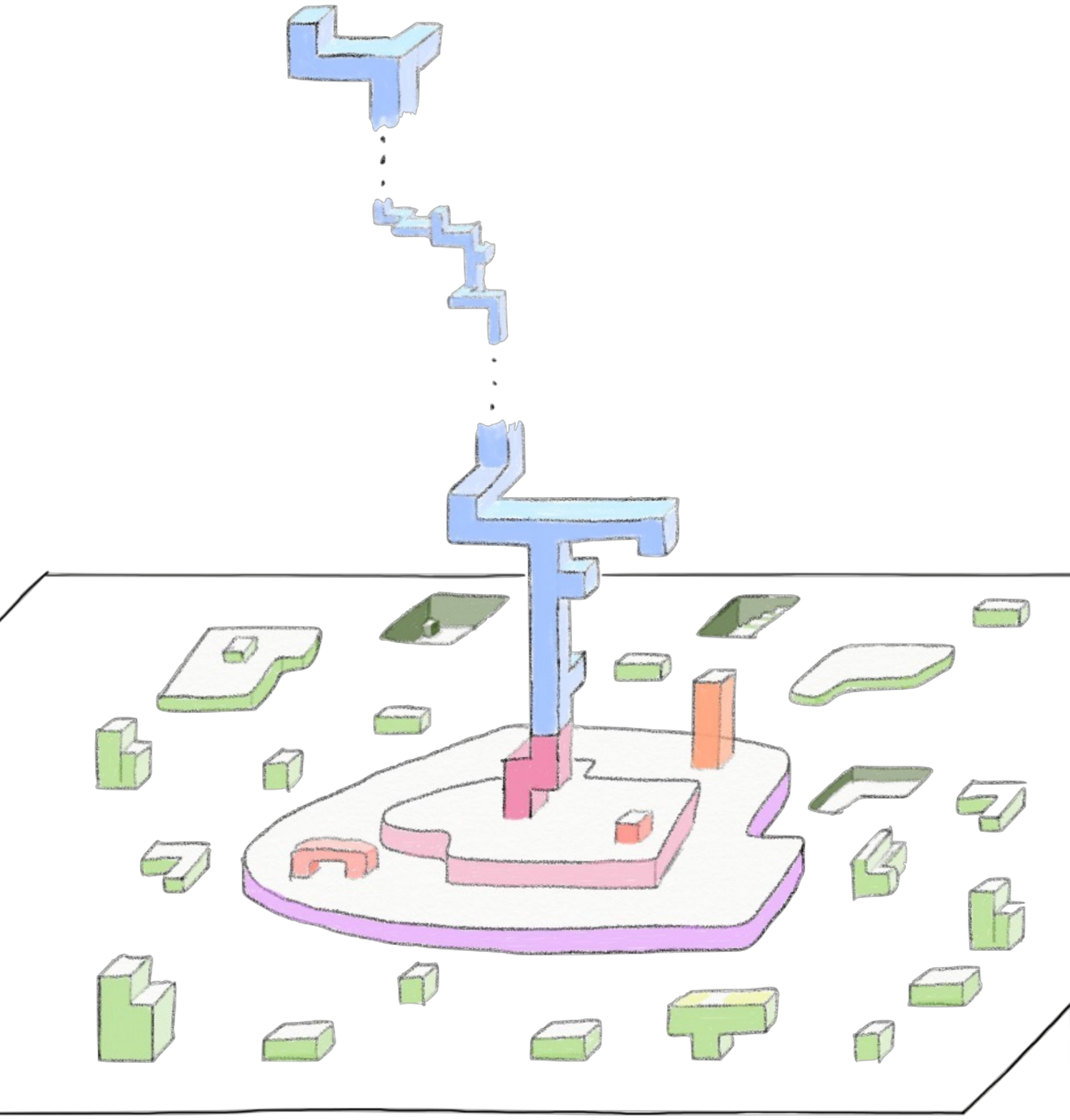}
	};
    \node (fig2) at (4,.16) {
    \includegraphics[width=.40\textwidth]{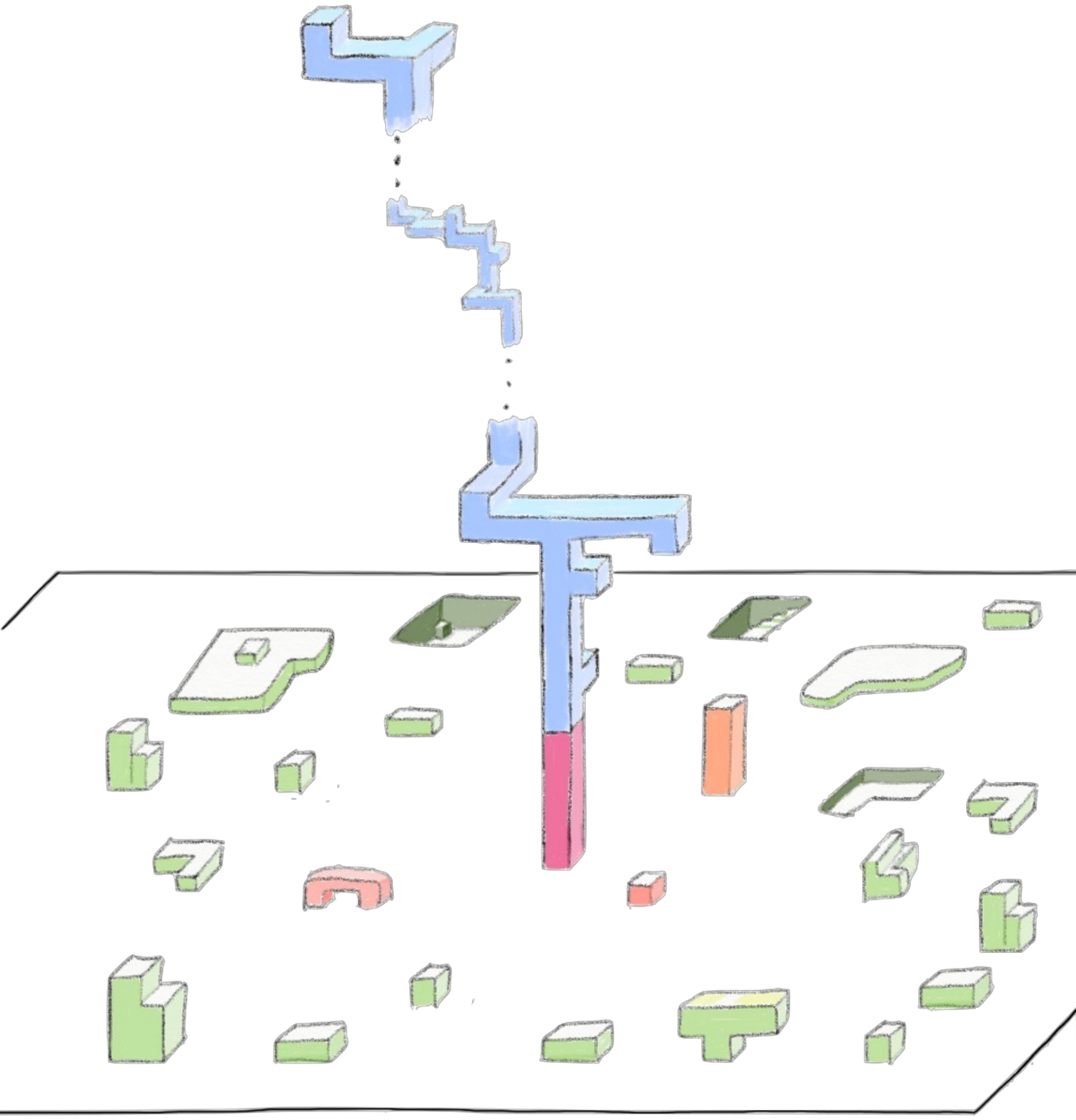}
    };
        \filldraw[draw=white, fill=white] (4.95,-1.45) rectangle (5.35,-.5);
  \end{tikzpicture}
\vspace{-1cm}
 \caption{The base reduction map $\Phi_{\sB}$ sends the interface $\cI$ (left) to $\Phi_{\sB}(\cI)$ (right) by eliminating the nested sequence of walls $\fW_{v_\Tsp}$ (pink, left), along with the vertical column which attains the height $v_\Tsp$ (orange, left), replacing it by a straight vertical column above $x$ (pink, right) and shifting the spine $\cS_x$ (blue, left) to lie above that column.}
  \label{fig:base-map}
\vspace{-0.2cm}
\end{figure}

\subsection{The base reduction map \texorpdfstring{$\Phi_{\mathscr B}$}{Phi}}\label{subsec:def-base-map}
We first define a map that eliminates at least one group of walls of excess area larger than $K\log T$ in $\cC_{v_\Tsp, x, T}$, and in doing so, allows one to lower the height of the source point for the spine. The map also shifts $v_\Tsp$ in the $xy$-plane, to lie above $x$ if  $|\rho(v_\Tsp)- x|\geq K\log T$.  

In order to study the impact of this map on $\mu$, it will help to formulate the base $\sB_x$ in terms of the definitions outlined in Section~\ref{sec:dobrushin-definitions}. In view of this, for any interface $\cI \in \mathbf I_{x,T}$, recall from Definition~\ref{def:truncated-interface} that it has a truncated interface $\cI_{\textsc {tr}}$. We can define the groups of walls corresponding to $\cI_{\textsc {tr}}$ via Lemma~\ref{lem:interface-reconstruction}. 
Fix some $K$ sufficiently large with respect to all other constants that are independent of $\beta$. We refer to Figure~\ref{fig:base-map} for a visualization of the map $\Phi_{\sB}$. 
\begin{definition}
\label{def:base-map}
Let $\Phi_{\sB}: \bar {\mathbf{I}}_{x,T}\to {\mathbf{I}}_{x,T}$
generate an interface $\Phi_{\sB} (\cI)$ from $\cI$ as
follows. Suppose $\cI_{\textsc {tr}}$ has standard wall representation $(W_z)_{z\in \cL_0}$, base $\sB_x$ and source point $v_\Tsp$, and further suppose that its spine $\cS_x$ has increment collection $(X_i)_{\Tsp \leq i\leq T}$ and remainder $X_{>T}$. 
If $\hgt(v_\Tsp)\leq K\log T$ and $\diam (\rho(\sB_x))\leq K\log T$,
then the map is set to be trivial, $\Phi_{\sB} (\cI ) = \cI$. Otherwise, if $\hgt(v_\Tsp)> K\log T$ and/or $\diam (\rho(\sB))>  K\log T$,
then construct $\cJ = \Phi_{\sB}(\cI)$, by 
\begin{enumerate}
\item \label{step:mark-walls}Mark in the standard wall representation $(W_z)_{z\in \cL_0}$ the groups of walls $\fF_{v_\Tsp}$, as well as $\fF_{[x]}=\bigcup_{f\in [x]} \fW_{f}$, where $[x]=\{x\} \cup \bigcup_{f\in \cL_0: f\sim x} \{f\}$.
\item \label{step:mark-extra-walls} If there exists  some $h< \hgt(v_\Tsp)$ such that the interface with standard wall representation $\fW_{v_\Tsp} \cup \fW_{[x]}$ intersects $\cL_h$ in exactly one plus cell, take the largest such height $h^\dagger$ and let $y^\dagger$ be an index in $\cL_0 \cap \cC_{v_\Tsp,x,T}$ of a wall that attains height $h^\dagger$ and is not included in $\fW_{[x]} \cup \fW_{v_\Tsp}$, and then mark~$\fF_{y^\dagger}$. (If $h<\hgt (v_\Tsp)$, such a wall must exist by the definition of the source point $v_\Tsp$.)
\item \label{step:rem-groups-of-walls} Remove all the marked walls, i.e., $\fF_{v_\Tsp}\cup \fF_{[x]}\cup \fF_{y^\dagger}$ from the standard wall representation of $\cI_{\textsc {tr}}$. 
\item Let $\cI'$ be the interface with the resulting standard wall representation, let $\mathbf h-1$ be the height of a highest wall indexed by $\cI'\cap \cC_{v_\Tsp,x,T}$ and let $h_\star := \mathbf h\vee (\hgt(v_{\Tsp})+\frac 12)$.
\item \label{step:add-column}To that new interface, add the new standard wall consisting of the vertical bounding faces of a column of height $\hgt(h_\star)$ above $x$: i.e., the cells $x+(0,0,\ell -\frac 12): \ell=1,\ldots,h_\star$. The resulting interface has a $T$-source point, which we will denote $v_{\Tsp({\cJ})}$ . 
\item \label{step:shift-spine} Shift the spine $\cS_x$ by the vector $x+(0,0,{h_\star} -\frac 12)-v_\Tsp$; i.e., the increment sequence of the new spine $\cS_x(\cJ)$ sourced at $v_{\Tsp({\cJ})}$ will be $h_\star-\frac 12 - \hgt(v_{\Tsp({\cJ})})$ trivial increments, followed by the increment sequence of $\cS_x$. 
\end{enumerate}
\end{definition}

\subsection{Strategy of the map \texorpdfstring{$\Phi_\mathscr B$}{Phi\_B}}\label{subsec:strategy-base-map}
As in~\S\ref{subsec:strategy-increment-map}, to obtain exponential tail bounds conditionally on $\cP_x$ having $T$ increments and/or attaining height $h$, we require that $\Phi_{\mathscr B}$ send $\bar{\mathbf I}_{x,T}\cap \{\hgt(\cP_x)\geq h\}$ to $\mathbf I_{x,T}\cap \{\hgt(\cP)\geq h\}$.

Ideally, the map would replace the base $\mathscr B_x$ with a single column of height $\hgt(v_\Tsp)$ above $x$, and have an energy gain proportional to  $\diam(\rho(\mathscr B_x))$ and $\hgt(v_\Tsp)$. Since the spine of $\cI$ starts at $v_\Tsp$ and not above $x$, we must additionally shift the spine to lie above $x$, so that together with the added column, it forms the new pillar $\cP_x^{\cJ}$. We summarize the role the different steps above play in constructing such a map. 
\begin{enumerate}
    \item Step~\eqref{step:mark-walls} above marks the nested sequence of walls supporting $v_\Tsp$, and therefore attaining $\hgt(v_\Tsp)$, for deletion. It additionally marks the nested sequence of walls of $x$, to clear out space for the spine to be shifted horizontally and reattached above $x$.
    
    \item If we only performed this first step of deletions, and then added back a column of height $\hgt(v_\Tsp)$ above $x$, the map would have an excess area proportional to $\diam (\mathscr B_x)$ but not necessarily to $\hgt(v_\Tsp)$. In particular, if the groups of walls $\fF_{v_{\Tsp}}$ were mostly composed of trivial increments, so that its excess area is not much more than $4\hgt(v_\Tsp)$, the energy gain would not be proportional to $\hgt(v_\Tsp)$. 
    
    In order to obtain an energy gain proportional to $\hgt(v_\Tsp)$, we note that by definition of $v_{\Tsp}$, every height below $\hgt(v_\Tsp)$ must have been intersected by at least two plus sites in $\sigma(\cI)\cap (\cL_0 \cap \cC_{v_\Tsp,x,T})$. Therefore, if $\fF_{v_\Tsp}\cup \fF_{[x]}$ did not contain excess energy larger than, say,  $6\hgt(v_\Tsp)$, there must exist another group of walls indexed by $\cL_0 \cap \cC_{v_\Tsp,x,T}$ whose excess energy is also comparable to $\hgt(v_\Tsp)$. Step~\ref{step:mark-extra-walls} finds one such groups of walls and additionally marks it for deletion.    
    \item The remaining steps~\eqref{step:rem-groups-of-walls}--\eqref{step:shift-spine} then shift the spine to lie above $x$, and reconnect the shifted spine to $x$ via a column of $h_\star$ plus sites. 
    The reason $\hgt(v_\Tsp)$ is possibly increased to height $h_\star$, is the following: even though the map $\Phi_{\mathscr B}$ only deletes walls of $\cI_{\textsc{tr}}$, it could be that the deletion of such a wall \emph{increases}  the height of some $W$ nested in a wall of $\fF_{[x]} \cup \fF_{\rho(v_{\Tsp}}\cup \fF_{y^\dagger}$. The effect of this would be that placing the spine at $x+(0,0,\hgt(v_\Tsp))$ could take the spine very close to the vertical shift of $W$, and their interactions could be large: in order to maintain the separation between the spine and the new truncated interface, we therefore place the spine at the higher height $\mathbf h$. 
\end{enumerate}


\subsection{Properties of the map \texorpdfstring{$\Phi_{\mathscr B}$}{Phi\_B}}\label{subsubsec:properties-base-map} 
With the above remarks in place, we now establish, formally, the fact that $\Phi_{\mathscr B}$ is well-defined, and show that it has an energy gain that is comparable to each of the quantities we prove exponential tails on, in~\eqref{eq:hgt-vsp}--\eqref{eq:base-total-increment-decay}. 

\begin{lem}
\label{lem:base-map-properties}The map $\Phi_{\sB}:\bar{\mathbf I}_{x,T} \to{\mathbf I}_{x,T}$
is well-defined, keeps the height of $\cP_x$ fixed, and,  
$$\fm(\cI; \Phi_{\sB}(\cI)) = \fm(\fF_{[x]}\cup \fF_{v_\Tsp}\cup \fF_{y^\dagger})-4h_\star \geq 2\big((h_\star-1)\vee \diam  (\rho(\sB_x))\big)\vee \frac 13 \sum_{i\leq \Tsp} \fm(X_i)\,,$$
as long as the map $\Phi_{\sB}$ was nontrivial ($\Phi_{\sB}(\cI)\neq \cI$). 
\end{lem}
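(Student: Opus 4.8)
The plan is to verify the three claimed properties of $\Phi_{\sB}$ in order: (a) well-definedness, (b) preservation of $\hgt(\cP_x)$, and (c) the excess-area identity together with the three lower bounds. For (a), assume $\Phi_{\sB}$ is nontrivial, so $\hgt(v_\Tsp)>K\log T$ or $\diam(\rho(\sB_x))>K\log T$. After Steps~\eqref{step:mark-walls}--\eqref{step:rem-groups-of-walls} we have deleted an admissible sub-collection of standard walls from the standard wall representation of $\cI_{\textsc{tr}}$ (deleting a group of walls, together with its nested walls, keeps the remaining collection admissible, exactly as in Lemma~\ref{lem:interface-reconstruction} and the proof of Lemma~\ref{lem:dobrushin-wall-ratio}); in particular $x$ and all faces of $[x]$ are now interior to no wall, so adding the straight column of height $h_\star$ above $x$ in Step~\eqref{step:add-column} produces another admissible collection. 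The key point needing care is that the shifted spine of Step~\eqref{step:shift-spine} does not collide with any wall of $\cI'$: by Remark~\ref{rem:tame}, since $\cI\in\bar{\mathbf I}_{x,T}$ the spine $\cS_x$ is confined to the cylinder of radius $r_0 T$ and height $r_0 T$ above $v_\Tsp$, hence after the horizontal shift by $x-\rho(v_\Tsp)$ (a displacement of $O(\log T)$ on the relevant event, but in any case $O(T)$) it lies in $\cC_{2r_0T}(v_\Tsp)\subset\cC_{v_\Tsp,x,T}$; and it sits at heights $\geq h_\star\geq \mathbf h$, strictly above every wall of $\cI'$ indexed in $\cC_{v_\Tsp,x,T}$, by the very choice of $\mathbf h$. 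Thus the union is a valid interface and $\Phi_{\sB}(\cI)\in\mathbf I_{x,T}$ (the new spine has at least as many increments as $\cS_x$, namely $\geq T$), so $\Tsp(\cJ)$ and $v_{\Tsp(\cJ)}$ are well-defined; I would also note here that $v_{\Tsp(\cJ)}$ really is a $T$-source point because above it the only walls in the relevant cylinder are those of $\cI'$, all of height $<\mathbf h\leq h_\star$.

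For (b), the height is preserved by bookkeeping: $\cP_x^{\cJ}$ consists of the added column of $h_\star$ cells above $x$ topped by the spine $\cS_x$ shifted up to start at height $h_\star$; since $\hgt(\cS_x)=\hgt(\cP_x)-\tfrac12-\hgt(v_\Tsp)$ (Definition~\ref{spine-excess-area}) and $h_\star=\mathbf h\vee(\hgt(v_\Tsp)+\tfrac12)$, one checks that $\hgt(\cP_x^{\cJ})=h_\star+\hgt(\cS_x)$, and we must argue this equals $\hgt(\cP_x)$. When $h_\star=\hgt(v_\Tsp)+\tfrac12$ this is immediate. When $\mathbf h>\hgt(v_\Tsp)+\tfrac12$, i.e. deleting the marked walls raised some nested wall's height to $\mathbf h-1$, I would observe that this only occurs because walls interior to $\fF_{[x]}\cup\fF_{v_\Tsp}\cup\fF_{y^\dagger}$ had their supporting ceilings lowered; but $v_\Tsp$ is the source point, so nothing in $\cI\setminus\cP_x$ indexed in $\cC_{v_\Tsp,x,T}$ reaches above $\hgt(v_\Tsp)$ — so in fact $\mathbf h\leq \hgt(v_\Tsp)+\tfrac12$ always and the second case is vacuous, giving $\hgt(\cP_x^{\cJ})=\hgt(\cP_x)$ unconditionally. (This also simplifies (a): one can take $h_\star=\hgt(v_\Tsp)+\tfrac12$ throughout.)

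For (c), the excess-area identity $\fm(\cI;\Phi_{\sB}(\cI))=|\fF_{[x]}\cup\fF_{v_\Tsp}\cup\fF_{y^\dagger}|-4h_\star$ follows from Remark~\ref{rem:excess-area-properties}: $\Phi_{\sB}$ deletes exactly the faces of the three marked groups of walls (minus the faces filling in the hole in $\cL_0$, which are accounted for in the wall excess areas) and adds exactly the $4h_\star$ vertical faces of the new column, while everything else is shifted (vertically below $v_\Tsp$, horizontally above; recall $\mathbf W\mapsto\widetilde{\mathbf W}$ from Observation~\ref{obs:1-to-1-map-faces}) and so contributes zero to the change in cardinality. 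For the lower bounds: $\fm(\fF_{[x]}\cup\fF_{v_\Tsp}\cup\fF_{y^\dagger})-4h_\star\geq 2(h_\star-1)$ is exactly the argument in the proof of Lemma~\ref{lem:increment-height-equivalence} around equation~\eqref{eq:excess-area-inequality} — below height $\hgt(v_\Tsp)$ the interface with wall representation $\fF_{[x]}\cup\fF_{v_\Tsp}\cup\fF_{y^\dagger}$ has no cut-height (this is precisely why $y^\dagger$, a second wall attaining the highest bad cut-height $h^\dagger$, is marked in Step~\eqref{step:mark-extra-walls}), so every non-cut height contributes $\geq 2$ faces of excess. The bound $\geq 2\diam(\rho(\sB_x))$ uses the last display of Remark~\ref{rem:excess-area-properties}: $\rho(\sB_x)$ lies in the union of projections of the marked walls (plus $[x]$), so its diameter is at most the total wall excess area. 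The bound $\geq\tfrac13\sum_{i\leq\Tsp}\fm(X_i)$ holds because the increments $X_1,\dots,X_\Tsp$ are all part of $\sB_x$ hence contained in the deleted wall-and-ceiling structure, and as in the increment-map analysis (proof of Proposition~\ref{prop:probability-ratio-increment}) the excess area of a column of cells dominates $\tfrac13$ of the excess area of what it replaces. The main obstacle, and the step I would write out most carefully, is the collision-freeness in (a) together with the claim $\mathbf h\leq\hgt(v_\Tsp)+\tfrac12$ in (b): both hinge on correctly exploiting the defining property of the $T$-source point $v_\Tsp$ — that no wall of $\cI\setminus\cP_x$ indexed in $\cC_{v_\Tsp,x,T}$ reaches as high as $v_\Tsp$ — to control what happens to \emph{nested} walls when their containing walls are deleted, which is the subtle interplay between horizontal and vertical shifts flagged in~\S\ref{subsec:strategy-base-map}.
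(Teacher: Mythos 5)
Your proposal has a genuine gap in the treatment of the case $h_\star = \mathbf h > \hgt(v_\Tsp)+\tfrac12$, which you incorrectly dismiss as vacuous. You argue that, since no wall of $\cI\setminus\cP_x$ indexed in $\cC_{v_\Tsp,x,T}$ reaches height $\hgt(v_\Tsp)$, one must always have $\mathbf h \leq \hgt(v_\Tsp)+\tfrac12$. This conflates heights before and after the deletion. The source-point condition constrains wall heights \emph{in $\cI$}, but $\mathbf h$ is defined via wall heights \emph{in $\cI'$}, the interface obtained after removing $\fF_{[x]}\cup\fF_{v_\Tsp}\cup\fF_{y^\dagger}$ from the standard wall representation. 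By the reconstruction mechanism of Lemma~\ref{lem:interface-reconstruction} and Observation~\ref{obs:1-to-1-map-faces}, deleting a wall induces vertical shifts on everything nested inside it: a wall $W$ sitting on an interior ceiling at negative height $-a$ of some member of $\fW_{[x]}\cup\fW_{v_\Tsp}\cup\fW_{y^\dagger}$ is shifted \emph{up} by $a$ upon deletion (your phrase ``supporting ceilings lowered'' has the direction backward). Such a $W$ can have height $< \hgt(v_\Tsp)$ in $\cI$ (respecting the source-point condition) yet height $> \hgt(v_\Tsp)$ in $\cI'$. This is precisely the scenario Step~\eqref{step:add-column} of Definition~\ref{def:base-map} is designed to handle, and \S\ref{subsec:strategy-base-map} flags it explicitly: ``even though the map $\Phi_{\mathscr B}$ only deletes walls of $\cI_{\textsc{tr}}$, it could be that the deletion of such a wall \emph{increases} the height of some $W$ nested in a wall of $\fF_{[x]}\cup\fF_{\rho(v_\Tsp)}\cup\fF_{y^\dagger}$.''

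Because you discard this case, two further steps of your argument inherit the gap. First, your collision-freeness argument in (a) relies on the shifted spine sitting at heights $\geq h_\star \geq \mathbf h$; but if you instead insist on $h_\star = \hgt(v_\Tsp)+\tfrac12 < \mathbf h$, the shifted spine \emph{can} collide with a shifted-up wall of $\cI'$, which is exactly why the paper raises the spine to start at $\mathbf h$. Second, your proof of the bound $\fm(\cI;\Phi_{\sB}(\cI)) \geq 2(h_\star-1)$ only covers the case $h_\star = \hgt(v_\Tsp)+\tfrac12$. The paper's proof handles the other case via a separate accounting: for a nested wall to be shifted up by $d := \mathbf h - \hgt(v_\Tsp) - \tfrac12$, its (deleted) nesting sequence must have created a crater of depth $d$, which costs at least $9d$ faces of excess area (the minimal configuration being a $3\times3$ column with the nested wall at its center), while the extra column segment added in Step~\eqref{step:add-column} costs only $4d$ faces, so the net gain of $\geq 2(h_\star-1)$ persists. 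This combinatorial lower bound on the cost of vertical shifts is the genuinely new ingredient your proposal is missing; the rest of your argument (well-definedness of the deletion, the identity via Remark~\ref{rem:excess-area-properties} and Observation~\ref{obs:1-to-1-map-faces}, the $\diam(\rho(\sB_x))$ bound, and the $\tfrac13\sum_{i\leq\Tsp}\fm(X_i)$ bound) follows the paper.
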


\begin{proof}
For the fact that $\Phi_\sB$ is well-defined, we observe that by definition of the source-point, for every height  $h<h(v_\Tsp)$, the set of walls indexed by faces in $\cC_{v_\Tsp,x,T}$ intersect that height in at least two plus sites; by Definition~\ref{def:nested-sequence-of-walls}, any face $f\in \cI_{\textsc {tr}}$ is also attained by the interface corresponding to the nested sequence of walls $\fW_{\rho(f)}$. Thus, the sequence(s) described by steps~\eqref{step:mark-walls}--\eqref{step:mark-extra-walls} of Definition~\ref{def:base-map} exist. At step~\eqref{step:add-column}, since all walls projecting onto $[x]$ have been removed, the standard wall being added by the vertical column of plus sites above $x$, maintains the admissibility of the remaining standard wall collection. 

Since $\cI\in \bar{\mathbf I}_{x,T}$, and $\cC_{R_0 T} (x) \subset \cC_{v_{\Tsp(\cI)},x,T}$, 
the resulting $T$-source point $v_{\Tsp(\cJ)}$ has height at most $h_\star$; moreover, when we shift the spine of $\cI$ in~\eqref{step:shift-spine}, the resulting spine will be confined to 
$\cC_{R_0 T} (x)= \cC_{v_{\Tsp({\cJ})}, x,T}$; finally, the new pillar $\cP_x(\Phi_{\sB}(\cI))$ has first $h_\star-1\geq \Tsp-1$ increments that are trivial, followed by $T- \Tsp+1$ increments from $\cS_x$, so it has at least $T$ increments total. Thus, the map yields a valid interface $\Phi_{\sB}(\cI) \in {\mathbf I}_{x,T}$.  

The lower bounds on the excess area follow from the following considerations. Since the marked sequences of walls in items~\eqref{step:mark-walls}--\eqref{step:mark-extra-walls} of Definition~\ref{def:base-map} intersected each height below $\hgt(v_\Tsp)$ in at least two sites, we removed an excess of $6(\hgt(v_\Tsp))-\frac 12)$ vertical faces from $\cI$, and added back at most $4(\hgt(v_\Tsp)+\frac 12)$ faces in step~\eqref{step:add-column}; the excess area $\fm(\cI; \Phi_{\sB}(\cI))$ is at least the difference between these. If $h_\star=\hgt(v_{\Tsp})+\frac 12$, then this implies $\fm(\cI;\Phi_{\mathscr B}(\cI)) \geq 2(h_\star-1)$, as desired. Now suppose otherwise that $h_\star = \mathbf h>\hgt(v_\Tsp)+\frac 12$. This could only have happened if there had been a wall nested inside one of $\fF_{[x]}\cup \fF_{v_\Tsp}\cup \fF_{y^\dagger}$ that was shifted vertically \emph{upward} by at least $\mathbf h -\frac 12 - \hgt(v_{\Tsp})$ upon deletion of $\fF_{[x]}\cup \fF_{v_\Tsp}\cup \fF_{y^\dagger}$ (the maximum height of that wall must have been below $\hgt(v_{\Tsp})$ in $\cI$). For such a vertical shift to be possible, its nesting sequence of walls must have had height at least 
$(\mathbf h-\hgt(v_\Tsp)-\frac 12)$ and therefore excess area at least $9 (\mathbf h-\hgt(v_\Tsp)-\frac 12)$ faces (with the extreme case being a $3\times 3$ column with the nested wall in its center). On the other hand, $4(\mathbf h-\hgt(v_\Tsp)-\frac 12)$ faces were added, so
$\fm(\cI;\Phi_{\mathscr B}(\cI)) \geq 2(h_\star-1)$ still holds. 

Let us turn to the bound w.r.t.\ $\diam (\mathscr B)$. In order for $v_\Tsp$ to be in the pillar of $x$, there must be a wall in $\cI_{\textsc{tr}}$ containing both $x$ and $\rho(v_\Tsp)$ in its interior, which will be marked and removed in item~\eqref{step:mark-walls}  of Definition~\ref{def:base-map}; in fact the maximal nested wall containing both $v_\Tsp$ and $x$ bounding the entirety of the base is marked by item~\eqref{step:mark-walls}, resulting in an excess area of at least $2\diam (\rho(\sB_x)) \geq 2|\rho(v_\Tsp)- x|$. 

Lastly, the fact that $\fm(\cI;\Phi_\sB(\cI))\geq \frac 13 \sum_{i\leq \Tsp} \fm(X_i)$ follows from the facts that all the increments $(\sX_1,\ldots,\sX_{\Tsp})$ are part of the same wall, which contains $v_\Tsp$ in its interior, so it is removed, and replaced by a straight vertical column of the same height. 
\end{proof}

\subsection{Estimating the effect of \texorpdfstring{$\Phi_{\sB}$}{Phi\_B}}
We bound the change in probability under application of the map $\Phi_{\sB}$. For ease of notation, locally in these sections we will simply denote this map by $\Phi= \Phi_{\sB}$. 

\begin{prop}
\label{prop:base-shrinking-1}There
exists $C>0$ such that the following holds for all $\beta>\beta_0$. 
For every $\cI\in\mathbf{\bar{I}}_{x,T}$ with spine increment sequence $(X_i)_{\Tsp\leq i\leq T}$ and remainder $X_{>T}$, 
\begin{align*}
 \Big|\log \frac{\mu_n(\cI)}{\mu_n(\Phi(\cI))}+ \beta \fm\big(\cI; \Phi(\cI)\big)\Big| \leq  C \Big[ \fm(\cI;\Phi(\cI))+ |\cF(X_{>T})| e^{ - \bar c (T+1 - \Tsp)} + \sum_{\Tsp\leq i\leq T} |\cF(X_i)| e^{-\bar c (i- \Tsp)}\Big] \,.
\end{align*}
\end{prop}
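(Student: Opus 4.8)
The plan is to follow the template of the proofs of Lemma~\ref{lem:dobrushin-wall-ratio} and Proposition~\ref{prop:probability-ratio-increment}: apply Theorem~\ref{thm:cluster-expansion}, which cancels the $-\beta\fm(\cI;\Phi(\cI))$ term and reduces the claim to showing that, writing $\cJ=\Phi(\cI)$ and assuming the map is nontrivial,
\[
\Big|\sum_{f\in\cI}\g(f,\cI)-\sum_{f'\in\cJ}\g(f',\cJ)\Big|\;\le\; C\Big[\fm(\cI;\cJ)+|\cF(X_{>T})|e^{-\bar c(T+1-\Tsp)}+\sum_{\Tsp\le i\le T}|\cF(X_i)|e^{-\bar c(i-\Tsp)}\Big].
\]
First I would fix the face decomposition suggested by Definition~\ref{def:base-map}: let $\mathbf E$ be the faces of the deleted groups of walls $\fF_{[x]}\cup\fF_{v_\Tsp}\cup\fF_{y^\dagger}$; let $\mathbf G\subset\cJ$ collect the newly created faces (the bounding faces of the added column over $x$, the horizontal fill-in faces over $\rho(\mathbf E)$ produced by the reconstruction of Lemma~\ref{lem:interface-reconstruction}, and the trivial increments inserted below the translated spine); let $\mathbf B=\cI_{\textsc{tr}}\setminus\mathbf E$, which by Observation~\ref{obs:1-to-1-map-faces} is in vertical-shift correspondence $f\mapsto\tilde f$ with its image in $\cJ$; and let $\cS_x(\cI)$ be the spine, carried rigidly to $\cS_x(\cJ)$ by a fixed horizontal-plus-vertical translation $f\mapsto\hat f$. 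Then bound the $\g$-difference by the four sums $\sum_{\mathbf E}|\g(\cdot,\cI)|+\sum_{\mathbf G}|\g(\cdot,\cJ)|+\sum_{\mathbf B}|\g(f,\cI)-\g(\tilde f,\cJ)|+\sum_{\cS_x(\cI)}|\g(f,\cI)-\g(\hat f,\cJ)|$ and treat these in turn.

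The first two sums are immediate from~\eqref{eq:g-uniform-bound}: they are at most $\bar K(|\mathbf E|+|\mathbf G|)$, and using~\eqref{eq:excess-area-wall-relations} together with Lemma~\ref{lem:base-map-properties} (which gives $|\mathbf E|\le 2\fm(\fF_{[x]}\cup\fF_{v_\Tsp}\cup\fF_{y^\dagger})$, $\fm(\fF_{[x]}\cup\fF_{v_\Tsp}\cup\fF_{y^\dagger})=\fm(\cI;\cJ)+4h_\star$ and $h_\star-1\le\tfrac12\fm(\cI;\cJ)$, while $|\mathbf G|\le 4h_\star+|\cF(\rho(\mathbf E))|+O(h_\star)$) this is $O(\fm(\cI;\cJ))$. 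The third sum is the Dobrushin-type term: for $f\in\mathbf B$ the radius $\br(f,\cI;\tilde f,\cJ)$ is attained either by a face of $\mathbf E\cup\mathbf G$, by a surviving wall face, or by a spine face; since $3$D distances dominate projected distances, projections of distinct walls are disjoint, and we deleted \emph{entire} groups of walls (so no surviving wall is close, per Definition~\ref{def:group-of-walls}, to a deleted one), the first two possibilities contribute, exactly as in the proof of Lemma~\ref{lem:dobrushin-wall-ratio}, at most $\bar C\big(|\cE(\rho(\mathbf E\cup\mathbf G))|+|\cF(\rho(\mathbf E\cup\mathbf G))|\big)\le\bar C\fm(\cI;\cJ)$ via integrability of $(|u|^2+1)e^{-\bar c|u|}$ and~\eqref{eq:excess-area-wall-relations}. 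The spine contribution here uses that $\mathbf B$ lies at height $\le\hgt(v_\Tsp)+\tfrac12$ while the increment $X_i$ lies at height $\ge\hgt(v_\Tsp)+(i-\Tsp)$ (each increment raises the height by at least $1$), so $d(f,X_i)\ge i-\Tsp$ and $d(f,X_{>T})\ge T+1-\Tsp$; summing $\bar K e^{-\bar c d(\cdot,\cdot)}$ over the (finitely many, by tameness) spine faces bounds it by $\bar C\big[\sum_i|\cF(X_i)|e^{-\bar c(i-\Tsp)}+|\cF(X_{>T})|e^{-\bar c(T+1-\Tsp)}\big]$.

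The fourth sum is where the real work lies, and I expect it to be the main obstacle. For $f\in X_i$ (or $f\in X_{>T}$), the portion of $\cI$ above $f$ coincides, up to the rigid translation, with the portion of $\cJ$ above $\hat f$ — the whole spine moves as one block — so $\br(f,\cI;\hat f,\cJ)$ is attained \emph{below} the spine: by a face of $\mathbf E$, of $\mathbf G$, of the base, of the new truncated interface of $\cJ$, or of a surviving wall. The key point is that all of these lie at height at most $\hgt(v_\Tsp)+O(1)$ in $\cI$ (resp.\ at most $h_\star+O(1)$ in $\cJ$) and within horizontal distance $O(T)$ of $v_\Tsp$ by tameness and the source-point property of $v_\Tsp$ and $v_{\Tsp(\cJ)}$; here the choice $h_\star=\mathbf h\vee(\hgt(v_\Tsp)+\tfrac12)$ in Definition~\ref{def:base-map} is exactly what prevents the translated spine from being pushed within $O(1)$ of a surviving wall that was raised by the deletions, keeping the attaining face at distance $\ge\hgt(f)-\hgt(v_\Tsp)-O(1)\ge(i-\Tsp)-O(1)$ from $f$. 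Hence $\br(f,\cI;\hat f,\cJ)\ge(i-\Tsp)-O(1)$, and summing $\bar K e^{-\bar c\br}$ over $f\in X_i$ and then over $i$, plus the remainder, bounds the fourth sum by $\bar C\big[\sum_{\Tsp\le i\le T}|\cF(X_i)|e^{-\bar c(i-\Tsp)}+|\cF(X_{>T})|e^{-\bar c(T+1-\Tsp)}\big]$. Collecting the four bounds gives the proposition. Beyond the fourth-sum separation estimate, the only other delicate bookkeeping is that several objects ($\mathbf G$, the translated spine, possibly a surviving wall) now project onto the column over $x$, so one must check the radii are attained by the intended faces; this is routine given the explicit geometry recorded in Lemma~\ref{lem:base-map-properties} and the localization to $\cC_{v_\Tsp,x,T}$.
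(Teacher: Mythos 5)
Your proof matches the paper's argument essentially step for step: the same decomposition into deleted faces $\mathbf E$, newly added faces (the paper separates the horizontal fill-in faces into a set $\mathbf F$ whereas you lump them into $\mathbf G$, which is immaterial), the vertically-shifted truncated interface, and the rigidly translated spine; the same use of~\eqref{eq:g-uniform-bound} for the first group, the Dobrushin projection argument for the surviving-wall interactions, and the height-separation bound $\br\geq i-\Tsp$ (via tameness, the source-point property, and the choice of $h_\star$) for the spine and truncated-interface--spine interactions. This is the paper's proof.
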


\begin{proof}
Suppose that $\cI\in \bar{\mathbf I}_{x,T}$ with truncation $\cI_{\textsc{tr}}$, increment sequence $(X_i)_{i\leq T}$ and remainder $X_{>T}$ and suppose that one of $\hgt(v_\Tsp)$ or $\diam (\rho(\sB_x))$ are at least $K\log T$, as otherwise the inequality is trivially satisfied. Set $\cJ=\Phi(\cI)$
for ease of notation; by Theorem \ref{thm:cluster-expansion},
\begin{align*}
\frac{\mu_n(\cI)}{\mu_n(\cJ)} = \exp\Big(-\beta \fm(\cI;\cJ)+\sum_{f\in\cI}\g(f,\cI)-\sum_{f'\in\cJ}\g(f',\cJ)\Big)\,.
\end{align*}
We wish to bound the absolute difference between the sums above by the right-hand side of Proposition~\ref{prop:base-shrinking-1}. 
We will decompose $\cI$ and $\cJ$ into different subsets
of faces, in order to pair up faces of $\cI$ with faces of $\cJ$ that locally do not feel the effect of $\Phi_{\sB}$. Let 
\[\fF= \fF_{[x]}\cup \fF_{v_\Tsp}\cup \fF_{y^\dagger} = (F_{x_s})_s \cup (F_{\rho(v_{\Tsp})_s})_s \cup (F_{y^\dagger_s})_s\,,
\] be the nested sequences of groups of walls marked in steps~\eqref{step:mark-walls}--\eqref{step:mark-extra-walls} that were eliminated in step~\eqref{step:rem-groups-of-walls} of Definition~\ref{def:base-map} (indexed by $[x],y^\dagger, v_\Tsp \in \cL_0\cap \cC_{v_\Tsp,x,T}$).  Sets $\mathbf E,\mathbf F,\mathbf G \subset \cI \cup \cJ$ will consist of all those faces that were
removed from $\cI$ or added to $\cJ$:
\begin{itemize}
\item Let $\mathbf E$ be the set of all faces in the groups of walls $\fF$; these were removed in step~\eqref{step:rem-groups-of-walls} of $\Phi_{\mathscr B}$. 
\item Let $\mathbf F$ be the set of $f$ in $\cJ$ such that $\rho(f)\in \rho(\cF(\mathbf E))$, added in place of a removed face in $\rho(\fF)$ to ``fill in" the interface.  
\item Let $\mathbf G$ be the set of all other faces added to form $\cJ$, namely the single wall consisting of the bounding faces of a vertical column above $x$ added in step~\eqref{step:add-column} of $\Phi_{\mathscr B}$. 
\end{itemize}
Also, for any $f\in \cI_{\textsc {tr}} \setminus \mathbf E$, we set $\tilde f$ to be the vertical translation of $f$ as governed by the interface corresponding to the remaining walls after $\mathbf E$ have been removed: see Observation~\ref{obs:1-to-1-map-faces}.
Finally, for every $f\in \cS_x= \cS_x (\cI)$, let $\theta_{v_\Tsp,x} f$ be its translation by $x+(0,0,h_\star -\frac 12)-v_\Tsp$. This decomposition allows us to expand, 
\begin{align}
\bigg|\sum_{f\in\cI}\g(f,\cI)-\sum_{f'\in\cJ}\g(f',\cJ)\bigg|\leq & 
\sum_{f\in \mathbf E}|\g(f,\cI)|+
\sum_{f\in \mathbf F}|\g(f,\cJ)|+\sum_{f\in \mathbf G}|\g(f,\cJ)| \nonumber \\
 & + \sum_{f\in \cS_x(\cI)} \Big|\g(f,\cI) - \g(\theta_{v_\Tsp,x} f,\cJ)\Big| +  \sum_{f\in\cI_{\textsc {tr}}\setminus \mathbf E} \Big| \g(f,\cI)- \g(\tilde f,\cJ)\Big|  \,. \label{eq:part-function-splitting-base}
\end{align}
Let us begin with the first three terms, for which crude bounds suffice.
By~\eqref{eq:g-uniform-bound} and Lemma~\ref{lem:base-map-properties}, there is a universal constant $C>0$ such that they are at most  
\begin{align*}
 \bar K \big( |\mathbf E| + |\mathbf F| + |\mathbf G|\big) \leq \bar K \big( 2\fm(\fF_{[x]}\cup \fF_{v_\Tsp}\cup \fF_{y^\dagger})+ \fm(\fF_{[x]}\cup \fF_{v_\Tsp}\cup \fF_{y^\dagger}) + 4h_\star \big) \leq C\fm(\cI;\cJ)\,.
 \end{align*}

Now, let us turn to the fourth term in~\eqref{eq:part-function-splitting-base}, which encodes the contributions from the spine. Since the entire spine $\cS_x (\cI)$ is translated by the same vector $x+(0,0,h_\star -\frac 12)-v_\Tsp$, for every $f\in \cS_x(\cI)$,  the radius $\br(f,\cI; \theta_{v_\Tsp,x}f,\cJ)$ is attained either by a face at height at most $\hgt(v_{\Tsp})-\frac 12$ in $\cI$ or at most $h_\star-1$ in $\cJ$, or by a face outside of $\cC_{v_\Tsp,x,T}$. However, since the increment sequence $(X_i)_{\Tsp\leq i\leq T}, X_{>T}$ is tame, and the height of the pillar is fixed by the map $\Phi_{\mathscr B}$, it must in fact be attained by a face in $\cC_{v_\Tsp,x,T}\supset \cC_{v_{\Tsp({\cJ})},x,T}$ of height at most $\hgt(v_{\Tsp})-\frac 12$ in $\cI$ or $h_\star-1$ in $\cJ$. The contribution from the fourth term in~\eqref{eq:part-function-splitting-base} is at most  
 \begin{align*}
\sum_{\Tsp\leq i \leq T+1} \sum_{f\in \cF(X_i)}   \bar K \exp[-\bar c \br(f,\cI; \theta_{v_\Tsp,x}f,\cJ)] & \leq \sum_{\Tsp\leq i\leq T+1} \sum_{f\in \cF(X_i)} \bar K e^{-\bar c (\hgt(f) - \hgt(v_\Tsp)-\frac 12)} \\
& \leq  \sum_{\Tsp\leq i\leq T+1}  \bar K |\cF(X_i) | e^{-\bar c (i- \Tsp)}\,,
 \end{align*} 
where we again used subscript ``$T+1$" to indicate $>T$ here.
(Notice that the radius $\br (f,\cI; \theta_{v_\Tsp,x}f,\cJ)$ is attained by a face whose height is at most $\hgt(h_\star)$ and if $h_\star \neq\hgt(v_\Tsp)+\frac 12$, then $\theta_{v_\Tsp,x}$ shifts the spine vertically accordingly, so that $\hgt(\theta_{v_\Tsp,x} f)-h_\star=\hgt(\theta_{v_\Tsp,x} f)-\hgt(v_{\Tsp})-\frac 12$.)
 
 It remains to control the contribution from the interactions of the truncated pillar $\cI_{\textsc{tr}}$ with the application of the map $\Phi_{\mathscr B}$. The key idea here is that either they interact through the spine, in which case the contribution is bounded as the above term, or they interact through the groups of walls in $\cI_{\textsc {tr}}$, in which case they are controlled as in the proof of Lemma~\ref{lem:dobrushin-wall-ratio}. To this end, let $\bar{\cJ}_{\textsc{tr}}$ be the image of the truncated interface under steps~\eqref{step:mark-walls}--\eqref{step:rem-groups-of-walls} of $\Phi_{\mathscr B}$, prior to the addition of the faces in $\mathbf G$.
 Then, we can bound the difference 
 $$\sum_{f\in \cI_{\textsc {tr}}, f\notin \mathbf E} \Big| \g(f,\cI)- \g( \tilde f, \cJ)\Big| \leq  \sum_{f\in \cI_{\textsc {tr}}, f\notin \mathbf E} \bar K \exp \big[ - \bar c \br(f,\cI; \tilde f,\cJ)\big]$$
 by noticing that the distance $\br(f,\cI; \tilde f,\cJ)$ is either attained by a face in $\cS_x$, a face in $\theta_{v_\Tsp,x} \cS_x$, a face in the set $\mathbf G$, or is equal to $\br(f,\cI_{\textsc {tr}}; \tilde f, \bar{\cJ}^{tr})$. This lets us bound 
 \begin{align}\label{eq:truncation-interactions}
 \sum_{f\in \cI_{\textsc {tr}} \setminus \mathbf E} \bar K e^{-\bar c \br(f,\cI; \tilde f,\cJ)} \leq & \,\,
 \sum_{f\in \cI_{\textsc {tr}}\setminus \mathbf E} \, \sum_{g\in \cS_x}\bar K  [e^{-\bar c d(f,g)}+ e^{- \bar c d(\tilde f, \theta_{v_\Tsp,x} g)}] + \sum_{f\in \cI_{\textsc {tr}}\setminus \mathbf E} \, \sum_{g\in \mathbf G}\bar K  [e^{- \bar c d(f,g)}+ e^{- \bar c d(\tilde f,g)}] \nonumber \\
 &  + \sum_{f\in \cI_{\textsc {tr}}\setminus \mathbf E} \bar K e^{- \bar c \br(f,\cI_{\textsc {tr}}; \tilde f, \bar{\cJ}_{\textsc{tr}})}\,.
\end{align} 
As argued for $f\in \cS_x$, the first term in the right-hand side of~\eqref{eq:truncation-interactions} can be bounded from above by 
\begin{align*}
 2\bar K \Big[   \sum_{\Tsp \leq i\leq T} \sum_{g\in \cF(X_{i})} \sum_{f:\, d(f,g) \geq i- \Tsp} e^{-\bar c d(f,g)} + & \, \sum_{g\in \cF(X_{>T})} \sum_{f:\,d(f,g)>T+1 - \Tsp}e^{ - \bar c d(f,g)}\Big] \\ 
 & \leq 2\bar C \Big[|\cF(X_{>T})|e^{- \bar c (T+1- \Tsp)} + \sum_{\Tsp \leq i\leq T} |\cF(X_i)| e^{-\bar c (i- \Tsp)}\Big]\,,
\end{align*}
for some universal $\bar C$, where we used that $\cS_x$ is tame and the definition of the source point. The second term in~\eqref{eq:truncation-interactions} is trivially bounded above by 
\begin{align*}
\sum_{f\in\cI_{\textsc {tr}} \setminus \mathbf E}\, \sum_{g\in \mathbf G} \bar K [e^{- \bar c d(f,g)}+ e^{- \bar c d(\tilde f,g)}] \leq 2 \sum_{g\in \mathbf G} \sum_{f\in \cF(\Z^3)} \bar K e^{- \bar c d(g,f)} \leq 2 \bar K  \bar C |\mathbf G|\,. 
\end{align*}
By Lemma~\ref{lem:base-map-properties}, we have that $|\mathbf G|= 4h_\star \leq  2\fm(\cI; \cJ)$. 
Finally, we bound the last term of~\eqref{eq:truncation-interactions} as in the proof of Lemma~\ref{lem:dobrushin-wall-ratio} and~\ref{lem:increment-height-equivalence}: by construction, for every $f, \tilde f$ pair, the distance $\br(f,\cI_{\textsc {tr}}; \tilde f, \bar {\cJ}^{tr})$ is attained by the distance to a wall face, and therefore, moving to the distance between projections,
\begin{align*}
\sum_{f\in \cI_{\textsc {tr}}\setminus \mathbf E} \bar K \max_{u\in \rho(\fF)} \exp [- \bar c d(\rho(f),u)] & \leq \sum_{u'\in \rho(\fF)^c} \bar K N_\rho(u) \max_{u\in \rho(\fF)} \exp[-\bar c d(u,u')] \\ 
& \leq \sum_{u' \in \rho(\fF)^c} \sum_{u\in \rho(\fF)} \bar K (|u-u'|^2 + 1) \exp [ - \bar c |u-u'|]\,.
\end{align*}
By integrability of exponential tails, this is at most $\bar C|\cF(\rho(\fF))|+\bar C|\cE(\rho(\fF))| \leq 2\bar C\fm(\cI; \cJ)$ for some universal constant $C$. Combining all the above estimates concludes the proof. 
\end{proof}

\subsection{Bounding the multiplicity of \texorpdfstring{$\Phi_{\sB}$}{Phi\_B}}

Here, we bound the multiplicity of the map $\Phi_{\mathscr B}$. This
is where we use the fact that the nested sequences of groups of walls we eliminated had excess
area at least $ K \log T$. 
\begin{prop}
\label{prop:base-shrinking-multiplicity} There exists $s$  independent
of $\beta$ such that for every $T$, every $\cJ\in\Phi_{\sB}(\bar{\mathbf I}_{x,T})$
and every $k$, 
$$\big|\{\cI \in \Phi_{\sB}^{-1}(\cJ): \fm(\cI ; \cJ)= k\}\big| \leq s^k\,.$$
\end{prop}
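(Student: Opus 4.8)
The plan is to show that the preimage $\cI$ is determined by its image $\cJ=\Phi_{\sB}(\cI)$ together with the collection $\fF=\fF_{[x]}\cup\fF_{v_\Tsp}\cup\fF_{y^\dagger}$ of (standardized) groups of walls deleted in steps~\eqref{step:mark-walls}--\eqref{step:rem-groups-of-walls} of Definition~\ref{def:base-map}, and then to bound, for each $k$, the number of admissible values of $\fF$ with $\fm(\cI;\cJ)=k$ by $s^k$, using Lemma~\ref{lem:counting-groups-of-walls} and Observation~\ref{obs:counting-connected}. For the reconstruction, given $\cJ$ and $\fF$ one first recovers the truncated interface $\cI_{\textsc{tr}}$: delete from $\cJ$ all the faces added by $\Phi_{\sB}$ (the fill-in faces projecting into $\rho(\fF)$ and the vertical column $\mathbf G$ above $x$, both determined by $\cJ$, $x$ and $\rho(\fF)$), then re-insert the standard walls of $\fF$, un-standardizing each at the height forced by the nesting structure it induces with the surviving walls of $\cJ$, exactly as in the reconstruction procedure of Lemma~\ref{lem:interface-reconstruction} (adding walls innermost outward) and Observation~\ref{obs:1-to-1-map-faces}. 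Since $\cI_{\textsc{tr}}$ is $T$-admissible, its source point $v_\Tsp$ is the unique top cell of the pillar $\cP_x(\cI_{\textsc{tr}})$, and the auxiliary height $\mathbf h$, hence $h_\star=\mathbf h\vee(\hgt(v_\Tsp)+\tfrac12)$, is computed from $\cI_{\textsc{tr}}$ and $\fF$. Finally $\cI$ is recovered from $\cJ$ by discarding from the spine $\cS_x(\cJ)$ its lowest $h_\star-\tfrac12-\hgt(v_{\Tsp(\cJ)})$ increments (all of which are trivial) and re-attaching the remaining increment sequence above the cell $v_\Tsp$ of $\cI_{\textsc{tr}}$.

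It remains to count admissible $\fF$. By Lemma~\ref{lem:base-map-properties}, $\fm(\fF)=\fm(\cI;\cJ)+4h_\star$ and $\fm(\cI;\cJ)\geq2(h_\star-1)$, so $\fm(\fF)\leq3k+4$; in particular $\fF$ consists of at most $3k+4$ individual groups of walls. The sub-collection $\fF_{[x]}$ is the union of the nested sequences of groups of walls attached to the at most five faces of the fixed set $[x]$; the index $\rho(v_\Tsp)$ of $\fF_{v_\Tsp}$ is interior to a wall of $\fF_{[x]}$ and so, by Remark~\ref{rem:excess-area-properties}, lies within distance $\fm(\fF_{[x]})\leq3k+4$ of $x$, giving $O(k^2)$ choices; and the index $y^\dagger$ of $\fF_{y^\dagger}$ lies in $\cL_0\cap\cC_{v_\Tsp,x,T}$, giving $O(T^2)$ choices. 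For a fixed index $z$ and target excess area $m$, the number of nested sequences of groups of walls attached to $z$ of total excess area $m$ is at most $\sum_{\ell\leq m}\binom{m-1}{\ell-1}(s')^m\leq(2s')^m$ by Lemma~\ref{lem:counting-groups-of-walls} (summing over the number $\ell$ of groups of walls and the composition of $m$ among them). Multiplying these counts and summing over the $O(k^2)$ ways of distributing $k$ among $\fF_{[x]},\fF_{v_\Tsp},\fF_{y^\dagger}$ bounds the total by $C\,k^{c_1}T^2(2s')^{3k+4}$ for absolute constants $c_1,C$. Since $\Phi_{\sB}$ acts nontrivially only when the excess area satisfies $r\geq K\log T$, we have $k=\fm(\cI;\cJ)\geq2(K\log T-1)$, hence $T^2\leq e^{(k+2)/K}$; taking $K$ large enough in Definition~\ref{def:base-map} makes the factor $T^2$, and the polynomial prefactor, absorbable into a single constant raised to the power $k$, giving $|\{\cI\in\Phi_{\sB}^{-1}(\cJ):\fm(\cI;\cJ)=k\}|\leq s^k$.

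The main obstacle I expect is the reconstruction of $\cI_{\textsc{tr}}$ and, especially, pinning down the number of trivial increments to strip off the spine of $\cJ$. This amounts to verifying that $h_\star$ --- which in Definition~\ref{def:base-map} is only an auxiliary quantity, and may genuinely exceed $\hgt(v_\Tsp)+\tfrac12$ because deleting $\fF$ can shift a wall nested inside $\fF$ vertically \emph{upward} --- is a bona fide function of $\cI_{\textsc{tr}}$ and $\fF$ alone, together with the observation (already used in Lemma~\ref{lem:base-map-properties}) that any such upward shift costs at least $9(\mathbf h-\hgt(v_\Tsp)-\tfrac12)$ in excess area, so it does not spoil the linear-in-$k$ bound $\fm(\fF)\leq3k+4$. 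The counting is otherwise a routine extension of the argument behind Lemma~\ref{lem:counting-groups-of-walls}, the only new feature being the $\mathrm{poly}(T)$ factor coming from the choice of $y^\dagger$, which is precisely what the threshold $r\geq K\log T$ in Proposition~\ref{prop:base-exp-tail} is designed to absorb.
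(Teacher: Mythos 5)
Your proposal is essentially the paper's own argument: reconstruct $\cI$ from $\cJ$ plus the deleted nested sequences of groups of walls, count those via Lemma~\ref{lem:counting-groups-of-walls} and the bound $\fm(\fF)\leq 3k+4$ from Lemma~\ref{lem:base-map-properties}, and absorb the $O(T^2)$ factor from the choice of $y^\dagger$ using $k\geq K\log T$. The paper additionally lists $v_\Tsp$ as explicit data (enumerated by at most $k^3$ choices), whereas you read it from $\cI_{\textsc{tr}}$; both work.

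One detail to tighten: as stated, your reconstruction is circular. You delete $\mathbf G$ from $\cJ$ in order to build $\cI_{\textsc{tr}}$, and then compute $h_\star$ (which determines the height of $\mathbf G$) ``from $\cI_{\textsc{tr}}$ and $\fF$.'' To break the loop, do as the paper's Claim~\ref{claim:identifying-I-from-J} does: first remove the entire pillar $\cP_x(\cJ)$ from $\cJ$ — i.e., delete the unique standard wall of $\cJ$ containing $x$ in its interior, which is precisely the column together with the shifted spine — to recover $\cI'$ directly; then add back the standard walls of $\fF$ to get $\cI_{\textsc{tr}}$, read off $v_\Tsp$, compute $\mathbf h$ as the height of the highest wall of $\cI'$ in $\cC_{v_\Tsp,x,T}$, set $h_\star = \mathbf h\vee(\hgt(v_\Tsp)+\tfrac12)$, and only then identify where the column of $\cJ$ ends and strip the appropriate number of trivial increments off $\cS_x(\cJ)$. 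With this ordering the rest of your argument goes through unchanged.
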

\begin{proof}
If $k\leq K\log T$, the map $\Phi_{\mathscr B}$ must be the identity map and we must have $k=0$, so the bound is trivially satisfied; therefore, suppose $k\geq K\log T$.
In order to bound the multiplicity of the map, we will uniquely identify any pre-image $\cI$ with several collections of admissible groups of walls, indicating the nested sequence(s) of walls that are marked in steps~\eqref{step:mark-walls}--\eqref{step:mark-extra-walls} of Definition~\ref{def:base-map}, along with their groups of walls. The requirement of $k\geq K\log T$ will allow us to pick the centers of the nested sequence of walls from step~\eqref{step:mark-walls}, amongst the faces in $\cL_0$ that were in the cylinder $\cC_{v_\Tsp,x,T}$.  

\begin{claim}\label{claim:identifying-I-from-J}
 Given $\cJ\in \Phi_{\mathscr B}(\bar{\mathbf I}_{x,T})$, one can uniquely identify $\cI\in \Phi_{\mathscr B}^{-1}(\cJ)$ from
\begin{enumerate} 
\item the site $v_\Tsp$,
\item  the groups of nested walls $\fF^1 = \fF_{[x]}$, $\fF^2 = \fF_{v_\Tsp}$,
\item a groups of nested walls $\fF^3$ which is either empty if $\fF^1\cup \fF^2$ intersect every height below $h(v_\Tsp)$ in more than one cell, or $\fF_{y^\dagger}$ for some $y^\dagger\in \cC_{v_\Tsp,x,T}$.
\end{enumerate}
\end{claim}
\begin{proof}[\textbf{\emph{Proof of Claim~\ref{claim:identifying-I-from-J}}}]To prove the claim, we reconstruct $\cI$ given this collection. First, in order to read-off $h_\star$, we need to read $\mathbf h$ by removing the pillar $\cP_x(\cJ)$ from $\cJ$ and finding the height of a highest wall in $\mathcal C_{v_\Tsp,x,T}$. With $h_\star$ in hand, take the interface $\cJ$, and remove the set of faces $\mathbf G$ from it by eliminating the column wall above $x$ up to height $h_\star$. This leaves a truncated interface $\bar{\cJ}_{\textsc{tr}}$ along with a spine $\cS$. By construction, this spine is exactly the spine $\cS_x$ of $\cI$ up to the translation $v_\Tsp - x-(0,0,h_\star -\frac{1}{2})$. The truncated interface $\bar{\cJ}_{\textsc{tr}}$ has a standard wall representation, to which we can add all standard walls in $\fF^1 \cup \fF^2\cup \fF^3$. The resulting collection of standard walls is admissible and can then be mapped back to an interface by Lemma~\ref{lem:interface-reconstruction}, which is exactly the truncated interface $\cI_{\textsc {tr}}$. Appending the spine $\cS_x$ at $v_\Tsp$ yields $\cI$. 
\end{proof}  

With the claim in hand, we begin by enumerating the number of choices we have for $v_\Tsp$; since $\fm(\cI;\cJ) \geq 2\hgt(v_\Tsp)$ and $2|\rho(v_\Tsp)- x|$ by Lemma~\ref{lem:base-map-properties}, there are at most $k^3$ possible choices of $v_\Tsp$.

We now wish to bound the number of possible pairs of (a) collections of groups of nested walls $\fF_{[x]}$ and $\fF_{v_\Tsp}$, and (b) collections of groups of walls corresponding to the nested walls $\fF_{y^\dagger}$, indexed by some face $y^\dagger$ in $\cL_0 \cap \cC_{v_\Tsp,x,T}$. 
Take the at most three sequences of nested walls identified by steps~\eqref{step:mark-walls}--\eqref{step:mark-extra-walls} in Definition~\ref{def:base-map} of $\cI$, denote them by $(W^{a}_{i})_i$ with groups of walls $\fF^a = (F^{a}_{j})_j$ for $a\in \{1,2,3\}$ (so that $W^{a}_{i}$ is nested in $W^{a}_{i+1}$). One can generate a $*$-connected set of faces out of each such sequence as follows: 
\begin{enumerate}
\item Assign to each point in $u\in \rho(\bigcup_j F_j^{a})$ the set $R_u$ of faces in $\cL_0$ a distance at most $\sqrt {N_\rho(u)}$ from $u$, 
\item For every wall $W_{i}^{a}$ nested in $W_{i+1}^{a}$, assign to it the set $R_{i+1}^{a}$ of minimal collection of faces in $\cL_0$ connecting $W_i^a$ to $W_{i+1}^{a}$. In the case of $\fF_{[x]}$ (resp., $\fF_{v_{\Tsp}}$, or $\fF_{y^\dagger}$) include also the faces of $[x]$ resp., ($\rho(v_{\Tsp})$ and $y^\dagger$) and connect them via a shortest path of faces $R_1^a$ to $W_1^a$.
\end{enumerate}
The union of the groups of walls $\fF^{a}$, along with the face sets $\bigcup_{u\in \rho(\fF^a)} R_u$ and $\bigcup_i R_i^{a}$ is a $*$-connected set of faces by the definition of groups of walls, and by construction. Moreover, given this union, one can recover the set $\fF^a$ because any face $f\in \cL_0$ in this union is in $\fF^a$ if and only if another face in the union projects onto it (otherwise it couldn't be a wall face). The cardinality of this union of faces is bounded above by  
\begin{align*}
\sum_{a\in \{1,2,3\}} \Big[|\fF^a| + \sum_{u\in \rho(\fF^a)} |R^{a}_u|+ \sum_i |R^{a}_i|\Big]\,.  
\end{align*}
By Remark~\ref{rem:excess-area-properties}, $|\fF^a|\leq 2 \fm(\fF^a)$ for each $i,a$. By construction, and the nesting of walls, $\sum_i |R_i^a| \leq  \sum_i \fm(W_{i}^a)\leq \fm(\fF^a)$. Finally, by definition of groups of walls, 
\begin{align*}
\sum_i \sum_{u\in \rho(F_i^a)} |R_u^a| \leq \sum_i \sum_{u\in \rho(F_i^a)} N_\rho(u)\leq 2 \fm(\fF^a)\,.
\end{align*}
Because $\fm(\fF^a) \leq 2\fm(\cI;\cJ)$, 
we see that for each $a\in \{1,2,3\}$, the union described above is a connected collection of at most $12\fm(\cI; \cJ)$ faces rooted at some specific face ($x$ or $\rho(v_\Tsp)$ in the cases $a=1,2$). Therefore, the number of possible such collections of groups of walls of nested walls associated to $\cI \in \Phi^{-1}(\cJ): \fm(\cI; \cJ) = k$ is bounded as follows: pick an origin $y^{\dagger}\in \cL_0 \cap \cC_{v_\Tsp,x,T}$, pick $0\leq k_1, k_2,k_3 \leq k$ and then finally, to each of $[x], v_\Tsp, y^\dagger$, associate a connected group of faces of size at mo
st $k_a$. The number of choices of origin $y^\dagger$ is at most the size of $\cL_0 \cap \cC_{v_\Tsp,x,T}$, which is at most $(R_0 T+ k)^2$. The number of total such choices is then easily seen to be at most $(R_0 T+k)^2 k^3 s^{k}$, which is at most exponential in $k$ as long as $k\geq K\log T$ for some large $K$ to make the $O(T^2)$ term negligible.   
\end{proof}

\begin{proof}[\textbf{\emph{Proof of Proposition~\ref{prop:base-exp-tail}}}]
By Lemma~\ref{lem:base-map-properties}, the event that $\cI$ has $\hgt(v_\Tsp) \geq r$ implies that $\fm(\cI;\Phi_{\sB}(\cI))\geq r$, and similarly, the event that $\diam (\sB_x)\geq r$ implies that $\fm(\cI; \Phi_{\sB}(\cI))\geq r$. As such, let us fix an $r\geq K\log T$; for ease of notation, let us denote, for the rest of this section, 
\begin{align*}
\Gamma_r = \Big\{\cI\in \bar{\mathbf I}_{x,T}: |\cF(\sX_{>T})|e^{- \bar c (T+1 - \Tsp)} + \sum_{\Tsp \leq i\leq T} |\cF(\sX_i)|e^{-\bar c (i- \Tsp)} < r\Big\}
\end{align*}
Then we can write    
\begin{align*}
\mu_n \big( \fm(\cI; \Phi(\cI))\geq r  \mid \bar{\mathbf I}_{x,T}\big) \leq  \mu_n \Big( \fm(\cI; \Phi(\cI))\geq r, \Gamma_r^c \mid \bar{\mathbf I}_{x,T}\Big) + \mu_n \Big( \fm(\cI; \Phi(\cI))\geq r, \Gamma_r \mid \bar{\mathbf I}_{x,T}\Big)\,.
\end{align*}
By Corollary~\ref{cor:increment-interaction-bound}, the first quantity on the right-hand side is at most $C \exp(- \beta r/C)$ for some universal $C$. The latter quantity can be bounded as follows by Propositions~\ref{prop:base-shrinking-1}--\ref{prop:base-shrinking-multiplicity}:
\begin{align*}
\sum_{\substack {\cI\in \bar{\mathbf I}_{x,T}\cap \Gamma_r \\ \fm(\cI; \Phi(\cI)) \geq r}} \mu_n (\cI) & =  \sum_{\cJ\in \Phi(\bar{\mathbf I}_{x,T}\cap \Gamma_r)} \sum_{k\geq r} \sum_{\substack{\cI\in \Phi^{-1}(\cJ) \\ \fm(\cI;\Phi(\cI)) = k}}  \frac{\mu_n (\cI)}{\mu_n (\Phi(\cI))} \mu_n (\cJ) \\ 
& \leq \sum_{\cJ\in \Phi(\bar{\mathbf I}_{x,T}\cap \Gamma_r)} \mu_n (\cJ) \sum_{ k \geq r}  s^k \exp\big[- (\beta - C) k\big]\,.
\end{align*}
for some other universal constant $C$ (where we absorbed the contribution from the increments to the right-hand side of Proposition~\ref{prop:base-shrinking-1}, which was at most an extra $r$, into the $C$). 
By integrability of exponential tails, and the fact that $\mu_n(\Phi(\bar{\mathbf I}_{x,T}\cap \Gamma_r)) \leq \mu_n(\mathbf I_{x,T}) \leq 2\mu_n(\bar{\mathbf I}_{x,T})$ by Lemma~\ref{lem:tame}, we have 
\begin{align*}
\mu_n \big( \fm(\cI; \Phi(\cI))\geq r \mid \bar{\mathbf I}_{x,T}\big) \leq 2e^{ - (\beta - C)r} \mu_n(\bar{\mathbf I}_{x,T})\,.
\end{align*}
Dividing both sides by $\mu_n(\bar{\mathbf I}_{x,T})$ concludes the proof.
\end{proof}

As we did in Proposition~\ref{prop:increment-conditional-on-height}, since the map $\Phi_{\sB}$ keeps the height of the pillar $\cP_x$ fixed, we can also prove the estimates of Proposition~\ref{prop:base-exp-tail}.   

\begin{proposition}\label{prop:base-conditional-on-height}
There exists $K,c>0$ such that for every $\beta>\beta_0$, we have for every $r\geq K\log h$, every~$T\in \llb \frac h2, h\rrb$,
\begin{align*}
\mu_n (\hgt(v_\Tsp) & \geq r \mid \hgt(\cP_x) \geq r, \bar{\mathbf I}_{x,T}) \leq \exp(- c\beta r) \,, \\ 
\mu_n (\diam (\rho(\sB_x)) & \geq r \mid \hgt(\cP_x) \geq r, \bar{\mathbf I}_{x,T}) \leq \exp(- c\beta r)\,.
\end{align*}
\end{proposition}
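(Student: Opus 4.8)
The plan is to obtain Proposition~\ref{prop:base-conditional-on-height} by rerunning the proof of Proposition~\ref{prop:base-exp-tail} verbatim, but with the base reduction map $\Phi_{\sB}$ applied inside the event $\{\hgt(\cP_x)\ge h\}$, exploiting the fact (established in Lemma~\ref{lem:base-map-properties}) that $\Phi_{\sB}$ keeps $\hgt(\cP_x)$ fixed. Concretely, since $\Phi_{\sB}$ does not change the height of the pillar, for any $\cI\in\bar{\mathbf I}_{x,T}\cap\{\hgt(\cP_x)\ge h\}$ we have $\Phi_{\sB}(\cI)\in\mathbf I_{x,T}\cap\{\hgt(\cP_x)\ge h\}$, so the image of the event $\bar{\mathbf I}_{x,T}\cap\{\hgt(\cP_x)\ge h\}$ under $\Phi_{\sB}$ stays inside $\mathbf I_{x,T}\cap\{\hgt(\cP_x)\ge h\}$. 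This is the analogue of the containment $\Psi_{h_1}(\bar{\mathbf I}_{x,T}\cap\cI_{\textsc{tr}}\cap\{\hgt(\cP_x)\ge h\})\subset\bar{\mathbf I}_{x,T}\cap\cI_{\textsc{tr}}\cap\{\hgt(\cP_x)\ge h\}$ used in the proof of Proposition~\ref{prop:increment-conditional-on-height}.

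First I would fix $T\in\llb\frac h2,h\rrb$ and $r\ge K\log h$ (note $K\log h\ge K\log T$, so all the earlier bounds for $r\ge K\log T$ apply). As in the proof of Proposition~\ref{prop:base-exp-tail}, decompose
\begin{align*}
\mu_n\big(\fm(\cI;\Phi(\cI))\ge r\mid \hgt(\cP_x)\ge h,\bar{\mathbf I}_{x,T}\big)
&\le \mu_n\big(\fm(\cI;\Phi(\cI))\ge r,\Gamma_r^c\mid \hgt(\cP_x)\ge h,\bar{\mathbf I}_{x,T}\big)\\
&\quad+\mu_n\big(\fm(\cI;\Phi(\cI))\ge r,\Gamma_r\mid \hgt(\cP_x)\ge h,\bar{\mathbf I}_{x,T}\big)\,,
\end{align*}
where $\Gamma_r$ is exactly the event from the proof of Proposition~\ref{prop:base-exp-tail}. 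The first term is handled by the version of Corollary~\ref{cor:increment-interaction-bound} that is also conditioned on $\{\hgt(\cP_x)\ge h\}$ --- this is precisely what Proposition~\ref{prop:increment-conditional-on-height} asserts is available for $h\ge T$, which holds here since $T\le h$. For the second term, the weight bound of Proposition~\ref{prop:base-shrinking-1} and the multiplicity bound of Proposition~\ref{prop:base-shrinking-multiplicity} are purely deterministic statements about the map $\Phi_{\sB}$ and do not reference any conditioning, so they carry over unchanged; one then sums over $\cJ\in\Phi(\bar{\mathbf I}_{x,T}\cap\Gamma_r\cap\{\hgt(\cP_x)\ge h\})$, uses $\sum_{k\ge r}s^k e^{-(\beta-C)k}\le Ce^{-(\beta-C)r}$, and finally uses $\Phi(\bar{\mathbf I}_{x,T}\cap\Gamma_r\cap\{\hgt(\cP_x)\ge h\})\subset\mathbf I_{x,T}\cap\{\hgt(\cP_x)\ge h\}$ together with $\mu_n(\mathbf I_{x,T}\cap\{\hgt(\cP_x)\ge h\})\le 2\mu_n(\bar{\mathbf I}_{x,T}\cap\{\hgt(\cP_x)\ge h\})$, the latter being~\eqref{eq:tame-conditional-on-height} from Remark~\ref{rem:tame-conditional-on-height} (valid precisely because $\frac h2\le T\le h$). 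Dividing through by $\mu_n(\bar{\mathbf I}_{x,T}\cap\{\hgt(\cP_x)\ge h\})$ yields the two stated bounds on $\hgt(v_\Tsp)$ and on $\diam(\rho(\sB_x))$ simultaneously, via the inequalities $\{\hgt(v_\Tsp)\ge r\}\subset\{\fm(\cI;\Phi(\cI))\ge r\}$ and $\{\diam(\rho(\sB_x))\ge r\}\subset\{\fm(\cI;\Phi(\cI))\ge r\}$ from Lemma~\ref{lem:base-map-properties}.

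The only place requiring genuine care --- and the main obstacle --- is verifying that conditioning additionally on $\{\hgt(\cP_x)\ge h\}$ does not disrupt any step: one must check that the tameness guarantees underlying the estimates in Proposition~\ref{prop:base-shrinking-1} (i.e.\ that spines of interfaces in $\bar{\mathbf I}_{x,T}$ remain confined to $\cC_{v_\Tsp,x,T}$) are unaffected, which they are since tameness is a property of the increment sequence alone and is preserved under $\Phi_{\sB}$; and that the input estimate of Proposition~\ref{prop:increment-conditional-on-height} is indeed applicable, which needs $h\ge T$, guaranteed by $T\le h$. Everything else is a transcription of the unconditional argument with the extra event carried along inertly, exactly as in the passage from Proposition~\ref{prop:exp-tails-increments} to Proposition~\ref{prop:increment-conditional-on-height}. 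Since the map $\Phi_{\sB}$ fixes the height, no energy-gain or multiplicity estimate changes, so the same constants $K,c$ (up to adjusting $K$ to absorb $\log h$ rather than $\log T$, which is harmless) work here.
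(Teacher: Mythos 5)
Your proposal is correct and reproduces the paper's proof essentially verbatim: decompose on $\Gamma_r$ vs.\ $\Gamma_r^c$, handle $\Gamma_r^c$ via the height-conditioned version of Corollary~\ref{cor:increment-interaction-bound} from Proposition~\ref{prop:increment-conditional-on-height} (applicable since $T\leq h$), handle $\Gamma_r$ by the deterministic weight and multiplicity bounds of Propositions~\ref{prop:base-shrinking-1}--\ref{prop:base-shrinking-multiplicity} together with the inclusion $\Phi_{\sB}(\bar{\mathbf I}_{x,T}\cap\{\hgt(\cP_x)\geq h\})\subset\mathbf I_{x,T}\cap\{\hgt(\cP_x)\geq h\}$ and Remark~\ref{rem:tame-conditional-on-height} (requiring $T\in\llb h/2,h\rrb$), then divide out and invoke Lemma~\ref{lem:base-map-properties}. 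One small imprecision: your parenthetical claim that tameness ``is preserved under $\Phi_{\sB}$'' is not asserted by the paper ($\Phi_{\sB}$ maps $\bar{\mathbf I}_{x,T}$ to $\mathbf I_{x,T}$, not to $\bar{\mathbf I}_{x,T}$), but this is not load-bearing since Proposition~\ref{prop:base-shrinking-1} only requires the \emph{input} interface to be tame, and the passage from $\mathbf I_{x,T}$ back to $\bar{\mathbf I}_{x,T}$ is exactly what Remark~\ref{rem:tame-conditional-on-height} supplies.
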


\begin{proof}
The proof is again analogous to the proof of Proposition~\ref{prop:increment-conditional-on-height} and we therefore do not include all the details. For any $h$ and any $T\leq h$, we can expand as above,
\begin{align*}
\mu_n \big( \fm(\cI; \Phi(\cI))\geq r \mid \hgt(\cP_x)\geq h, \bar{\mathbf I}_{x,T}\big) &   \leq  \mu_n \Big( \fm(\cI; \Phi(\cI))\geq r, \Gamma_r^c \mid \hgt(\cP_x)\geq h, \bar{\mathbf I}_{x,T}\Big) \\ 
& \quad + \mu_n \Big(\fm(\cI; \Phi(\cI))\geq r, \Gamma_r \mid \hgt(\cP_x)\geq h, \bar{\mathbf I}_{x,T}\Big)\,.
\end{align*}  
The same estimate on the first term on the right-hand side holds from the conditional estimate~\eqref{eq:increment-conditional-on-height} of Proposition~\ref{prop:increment-conditional-on-height}, and the analogue of Corollary~\ref{cor:increment-interaction-bound} under the measure that also conditions on $\{\hgt(\cP_x)\geq h\}$, by taking a supremum over all truncated interfaces $\cI_{\textsc {tr}}$, and noting that the exponential tails on spine increments are uniform in $\cI_{\textsc{tr}}$. The second term on the right-hand side, we also bound as in the proof of Proposition~\ref{prop:base-shrinking-1}, summing only over interfaces that also have the property that $\{\hgt(\cP_x)\geq h\}$, and using that $\mu_n(\Phi(\bar{\mathbf I}_{x,T}))\leq \mu_n(\hgt(\cP_x)\geq h, \mathbf I_{x,T}) \leq 2\mu_n(\hgt(\cP_x)\geq h, \bar{\mathbf I}_{x,T})$ as long as $T\geq \frac {h}2$ by~\eqref{eq:tame-conditional-on-height} of Remark~\ref{rem:tame-conditional-on-height}.  Following the rest of the proof with these modifications yields the desired estimates.  
\end{proof}

\section{Large deviation rate and law of large numbers for the maximum}\label{sec:ldp-lln}
In this section, we use the results of Sections~\ref{sec:increment-prelim}--\ref{sec:base}, to prove Theorem~\ref{mainthm:max-lln}. We begin, in Section~\ref{subsec:decorrelation}, with a rough equivalence between pillars and groups of walls, and recall early decorrelation estimates of Dobrushin~\cite{Dobrushin73} for groups of walls in the bulk of $\Lambda_n$. In Section~\ref{subsec:limiting-ldp-rate}, we show the existence of the limiting large deviation rate for the event $\{\hgt(\cP_x)\geq h\}$ and relate it to an infinite-volume large deviation rate under the measure $\mu_{\Z^3}^{\mp}$. The key estimate there will be the following: 

\begin{proposition}\label{prop:limiting-ldp-rate-pillar}
The limit $\alpha_\beta$ given by~\eqref{eq:alpha-beta-def} exists and moreover, there exists $\beta_0$ such that for all $\beta>\beta_0$, for every sequence $h=h_n$ such that $1\ll h \ll n$ and every $x= x_{n} \in \cL_0\cap \Lambda_{n_h}$ such that $d(x_{n}, \partial \Lambda_{n})\gg h_n$, 
\begin{align}\label{eq:ldp-rate}
\lim_{n\to\infty} -\frac{1}{h_n} \log \mu_{n}\big( \hgt(\cP_{x_{n}})\geq h_n \big)& =  \alpha_\beta\,.
\end{align}
As a consequence, the quantity $\alpha_\beta \in [4\beta - C, 4\beta + e^{ - 4\beta}]$ for a universal constant $C$. 
\end{proposition}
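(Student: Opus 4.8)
The plan is to prove existence of the limit in~\eqref{eq:alpha-beta-def} via an \emph{approximate sub-additivity} argument for the sequence $b_h := -\log\mu_{\Z^3}^\mp(A_h)$, where $A_h$ is the connection event in~\eqref{eq:alpha-beta-def}, and then to transfer this limit to the finite-volume pillar-height event~\eqref{eq:ldp-rate} using the structural results of Sections~\ref{sec:increment-prelim}--\ref{sec:base}. First I would set up the comparison between the infinite-volume plus-connectivity event $A_h$ and the finite-volume pillar event $\{\hgt(\cP_x)\geq h\}$: the crude two-sided bound $-4\beta - e^{-4\beta}\le \frac1h\log\mu_n(\hgt(\cP_x)\ge h)\le -4\beta+C$ of Proposition~\ref{prop:rate-upper-lower-bounds} already gives that any subsequential limit lies in $[4\beta-C,4\beta+e^{-4\beta}]$, so the real content is the existence of the limit and its identification with $\alpha_\beta$. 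For the finite-to-infinite passage, I would use the standard coupling (exponential tails on interface fluctuations and on bubbles, cf.\ the second Remark after Theorem~\ref{mainthm:structure}) to replace $\mu_n$ by $\mu_{\Z^3}^\mp$ at a multiplicative cost $1-O(e^{-cn})$, which is negligible since $h\ll n$; this reduces~\eqref{eq:ldp-rate} to showing $\frac1h b_h \to \alpha_\beta$, i.e.\ that $(b_h)$ is asymptotically linear.

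The core of the argument is approximate sub-additivity: I want to show $b_{h_1+h_2} \le b_{h_1} + b_{h_2} + o(h_1)+o(h_2)$ (or a one-sided version adequate for Fekete). To connect $0$ to height $h_1+h_2$ by pluses, first realize the $(+)$-component $\sP$ of $x$ above height $0$ and reveal it up to height $h_1$ on the event $A_{h_1}$; the boundary of this revealed region carries $(-)$-spins (negative information, discarded by monotonicity and the domain Markov property) but also additional $(+)$-spins at height $0$, which constitute positive information and obstruct a clean FKG/restart. The key point — this is where Theorem~\ref{mainthm:structure}\eqref{it:struct-base} (equivalently Proposition~\ref{prop:base-exp-tail}) enters — is that these extra $(+)$-spins lie in the \emph{base} $\sB_x$, whose diameter is $O(\log^2 h_1)$ with probability $1-o(1)$ conditionally on reaching height $h_1$ (using $T\asymp h_1$, cf.\ Lemma~\ref{lem:increment-height-equivalence} and Proposition~\ref{prop:base-conditional-on-height}), so that tilting the measure by all of $\sB_x$ costs at most a factor $e^{O(\log^2 h_1)} = e^{o(h_1)}$. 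After paying this cost I can restart the connection event from the tip of the revealed spine at height $\approx h_1$ to height $h_1+h_2$; a further subtlety is that, as stressed in the excerpt, the pillar $\cP_x$ and the $(+)$-component $\sP$ need not contain each other (Figure~\ref{fig:interface-nonmon}), so I would either work throughout with the pillar decomposition (which is the object Sections~\ref{sec:increment-prelim}--\ref{sec:base} control) and only at the end compare to $A_h$ using Proposition~\ref{prop:rate-upper-lower-bounds}-type forcing arguments, or carefully track the symmetric difference between $\cP_x$ and $\sP$ and absorb it into the $o(h_1)$ error via the exponential bubble tails. Combining approximate sub-additivity with the uniform linear lower bound from Proposition~\ref{prop:rate-upper-lower-bounds}, a Fekete-type lemma for approximately sub-additive sequences yields convergence of $\frac1h b_h$ to a finite limit $\alpha_\beta \in [4\beta-C,4\beta+e^{-4\beta}]$.

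The main obstacle I anticipate is making the ``restart'' step rigorous: after revealing the spine up to height $h_1$ and discarding the negative information on its boundary, the residual measure seen from the tip is not literally $\mu_{\Z^3}^\mp$ shifted, because the boundary conditions it inherits from the revealed region and from the base are not the clean Dobrushin ones, and the non-translation-invariance of $\mu_{\Z^3}^\mp$ in the $z$-direction means a connection from height $h_1$ to $h_1+h_2$ is genuinely different from (and, as the excerpt notes, exponentially less likely than) one from height $0$ to $h_2$ — this is precisely why super-multiplicativity fails. To handle this I would argue for sub-multiplicativity instead: reveal from the \emph{top}, conditioning on reaching height $h_1+h_2$ forces reaching height $h_2$ somewhere, and the portion above height $h_2$ — after conditioning on a cut-point near height $h_2$, which is typical by the increment structure (Lemma~\ref{lem:tip-highest-increment}, Proposition~\ref{prop:exp-tails-increments}) — behaves, up to $e^{o(h)}$ corrections coming from the base-sized defect at the splitting cut-point, like an independent copy of the height-$h_1$ event. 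The bookkeeping of which conditionings give positive vs.\ negative information, and ensuring all error terms are genuinely $e^{o(h)}$ and uniform in the relevant parameters, is the delicate part; everything else (the coupling to finite volume, the Fekete lemma, the final interval for $\alpha_\beta$) is routine given the machinery already developed in the excerpt.
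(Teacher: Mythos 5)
Your proposal takes essentially the same route as the paper's proof: first a two-sided comparison (via a Peierls argument on half-bubbles of minus spins) between $\mu_n(\hgt(\cP_x)\geq h)$ and $\mu_n(A_h^x)$; then approximate sub-multiplicativity of $h\mapsto \mu_{n_h}(A_h)$ established by revealing the plus-component $\sP$ of $x$ up to height $h_1$ and controlling the positive information via the base-size bound (Propositions~\ref{prop:base-exp-tail} and~\ref{prop:base-conditional-on-height}); then an approximate Fekete lemma (de Bruijn--Erd\H{o}s); and finally passage between finite and infinite volume using Corollary~\ref{cor:pillar-dependence-on-volume}. The core of your second paragraph --- reveal $\sP$, discard the minus spins on its boundary by monotonicity (which only \emph{increases} the conditional probability of the increasing event $\theta_{h_1}A_{h_2}$, hence is the right direction for an upper bound), use domain Markov to screen off everything but $\sigma(\sP)\cap\cL_{1/2}$ plus the single cell at height $h_1-\tfrac12$, and pay for that residual positive information by finite energy at cost $e^{O(\log^2 h_1)}$ --- is exactly the content of Claim~\ref{claim:limiting-rate-2} in the paper.

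Two points of confusion are worth flagging. First, you open by writing the inequality $b_{h_1+h_2}\leq b_{h_1}+b_{h_2}+o(h)$ for $b_h=-\log\mu(A_h)$; that is sub-additivity of $b_h$, i.e.\ super-multiplicativity of $\mu(A_h)$, which is precisely the direction that \emph{fails} (as you yourself observe in the third paragraph: the measure near height $h_1$ is more negative than near height $0$). The manipulations you describe in the second paragraph in fact establish the opposite bound, $b_{h_1+h_2}\geq b_{h_1}+b_{h_2}-O(\log^2 h_1)$, i.e.\ approximate sub-multiplicativity --- and this is what the paper proves, with Fekete applied to $\log\mu(A_h)$. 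You do not need the separate ``reveal from the top / split at a cut-point near height $h_2$'' argument of the third paragraph; that alternative is not the paper's and would still face the vertical non-translation-invariance issue when identifying the upper piece with $A_{h_1}$ --- which is exactly what the monotone comparison $\mu_n^{\mp,-h_1}(\theta_{h_1}A_{h_2})\leq\mu_n(A_{h_2})$ in the reveal-from-bottom argument resolves cleanly. Second, a small slip: the base \emph{diameter} bound from Proposition~\ref{prop:base-conditional-on-height} is $O(\log h_1)$; the \emph{area} of $\sigma(\sP)\cap\cL_{1/2}$, which is what determines the tilting cost, is $O(\log^2 h_1)$.
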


\noindent In Section~\ref{subsec:max-lln}, we use the decorrelation estimates for pillars and the existence of this large deviation rate to show that the maximum height of an interface satisfies a law of large numbers. 

\subsection{Decorrelation estimates for groups of walls and pillars}\label{subsec:decorrelation}
 In this section, we use the decomposition of pillars into a base and a spine, and in particular, the exponential tail on the size of the base proved in Section~\ref{sec:base}, to show that the structure of a  pillar is, with high probability, captured by the groups of walls indexed by faces within a $o(T)$ neighborhood of $x$. We use this to translate decorrelation estimates for groups of walls into decorrelation estimates for pillars.

The following is then an immediate corollary of Eq.~\eqref{eq:nested-seq-group-of-wall-exp-tail}. 
\begin{proposition}\label{prop:pillar-groups-of-walls}
With $\mu_n$-probability $1-O(e^{-c\beta r})$, the nested sequence of walls $\fW_x$ is indexed by faces a distance at most $r$ from $x$, (and therefore so are all walls nested in a wall of $\fW_x$). 
\end{proposition}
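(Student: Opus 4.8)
The plan is to deduce this directly from the exponential tail on the nested sequence of \emph{groups} of walls, namely Eq.~\eqref{eq:nested-seq-group-of-wall-exp-tail}, which was established in the course of the proof of Theorem~\ref{thm:dobrushin-rigidity}. Recall that \eqref{eq:nested-seq-group-of-wall-exp-tail} states $\mu_n(\fm(\fF_x)\geq r)\leq K e^{-(\beta-2C)r}$, where $\fF_x$ is the group of walls associated to the nested sequence $\fW_x$ of walls containing $\rho(x)$ in their interior. First I would recall, from Remark~\ref{rem:excess-area-properties}, the deterministic geometric fact that any face $y\in\cL_0$ interior to the projection $\rho(W)$ of a wall $W$ satisfies $|x-y|\leq\fm(W)$; more precisely, every wall $W_y$ in $\fW_x$ must contain $\rho(x)$ in its interior, hence $\fm(W_y)\geq d(\rho(x),y)$ for the index $y$, and since the excess area of the group of walls $\fF_y\supseteq W_y$ only dominates, $\fm(\fF_x)\geq d(\rho(x),y)$ for every index $y$ of a wall appearing in $\fW_x$ or nested therein. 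Consequently, if the nested sequence $\fW_x$ (together with all walls nested inside one of its walls) contained a wall indexed by a face at distance more than $r$ from $x$, then necessarily $\fm(\fF_x)\geq r$.

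From here the statement is immediate: the event that $\fW_x$ (and all walls nested in it) is indexed only by faces within distance $r$ of $x$ contains the event $\{\fm(\fF_x)<r\}$, whose complement has probability at most $Ke^{-(\beta-2C)r}$ by \eqref{eq:nested-seq-group-of-wall-exp-tail}. Taking $c$ to be any constant smaller than $1$ and absorbing the $2C$ into the exponent for $\beta>\beta_0$ large, one gets the claimed $O(e^{-c\beta r})$ bound. The parenthetical remark in the statement---that this also controls all walls \emph{nested} in a wall of $\fW_x$---follows because any such nested wall $W'$ has $\rho(W')$ contained in a finite component of $\rho(W_y)^c$ for some $W_y\in\fW_x$, and hence its index also lies within distance $\fm(W_y)\leq\fm(\fF_x)$ of $x$, using that $\fm(\fF_x)$ bounds the diameter of the projection of the entire nested structure. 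Alternatively one may invoke the group-of-walls version directly: the group of walls $\fF_x$ by definition already collects all walls close to those in $\fW_x$, and closeness together with $\fm(\fF_x)<r$ pins all of their projections into a ball of radius $O(r)$ about $x$, which after rescaling the constant $c$ gives the conclusion in the exact form stated.

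The proof involves no real obstacle, since all the analytic work was done in establishing \eqref{eq:nested-seq-group-of-wall-exp-tail}; the only point requiring a line of care is the passage from ``excess area of the group of walls'' to ``$xy$-distance of the wall indices from $x$,'' which is the combination of the two inequalities in \eqref{eq:excess-area-wall-relations} with the nesting structure recorded in Definition~\ref{def:nested-sequence-of-walls} and Observation~\ref{obs:pillar-wall}. I would write this out as a two-sentence reduction and then cite \eqref{eq:nested-seq-group-of-wall-exp-tail}. One should double-check that the distance in the statement is measured in $\cL_0$ between $x$ and the \emph{index} of the wall (a face of $\cL_0$), so that the deterministic bound $d(x,y)\leq\fm(\fF_x)$ applies verbatim; this is consistent with the indexing convention of Remark~\ref{rem:indexing-walls} and Remark~\ref{rem:strongest-enumerations}, under which the result holds uniformly over the choice of ordering of $\cF(\cL_0)$.
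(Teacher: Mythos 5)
Your proof is correct and follows essentially the same route as the paper, which explicitly states that the proposition is an immediate corollary of Eq.~\eqref{eq:nested-seq-group-of-wall-exp-tail}. The only content to supply is the deterministic reduction — that every wall in $\fW_x$ (and every wall nested therein) contains $x$ in its interior, so its index $y$ satisfies $|x-y|\leq \fm(W_y)\leq \fm(\fF_x)$ via Remark~\ref{rem:excess-area-properties} and the indexing convention of Remark~\ref{rem:indexing-walls} — and you carry that out correctly, including the parenthetical about nested walls.
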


With these equivalences in mind, we recall some decorrelation estimates for groups of walls proved by Dobrushin in~\cite{Dobrushin72b,Dobrushin73}. The first of these says that the dependence of the law of a group of walls $F_x$ on the containing box size $n$ decays exponentially fast in the distance between $x$ and $\partial \Lambda_n$. When combined with Proposition~\ref{prop:pillar-groups-of-walls}, this will imply that the law of the pillar above a face in $\cL_0$ (approximately) does not depend on the side-length $n$ or on the position of $x$, as long as $x$ is sufficiently far from $\partial \Lambda_n$.

\begin{proposition}[{\cite{Dobrushin72b}, \cite[Lemma 5]{Dobrushin73}, as well as \cite[Prop.~2.3]{BLP79b}}]\label{prop:dependence-on-volume}
There is a $C>0$ such that for every $\beta>\beta_0$, every $n\leq m$, for a sequence of $x=x_n \in \cL_0 \cap \Lambda_n$, 
\begin{align*}
\big\|\mu_n\big((\sF_y)_{y:|y-x|\leq r} \in \cdot \big) - \mu_m\big((\sF_y)_{y:|y-x|\leq r} \in \cdot \big)\big\|_{\tv} \leq C \exp \big( - (d(x,\partial \Lambda_n)-r)/C\big)\,.
\end{align*}
In particular, sending $m$ to $\infty$, and using tightness of $(\sF_y)_{y}$, this estimate holds if we replace $\mu_m$ by $\mu_{\Z^3}^{\mp}$. 
\end{proposition}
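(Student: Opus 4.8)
The plan is to exploit the cluster expansion representation of Theorem~\ref{thm:cluster-expansion} together with the group-of-walls decomposition, following the strategy of Dobrushin's original argument. Fix $r$ and a face $x$, and let $d:=d(x,\partial\Lambda_n)$; we may assume $d>2r$, else the bound is trivial. The key observation is that $(\sF_y)_{y:|y-x|\le r}$ is determined by the standard wall representation restricted to walls indexed by faces within distance $r$ of $x$ together with all walls nested inside them; and by~\eqref{eq:excess-area-wall-relations} (diameter of a wall bounded by its excess area) plus the exponential tail of Theorem~\ref{thm:dobrushin-rigidity}, all walls relevant to this event are, with overwhelming probability, contained in a ball $B_{d/2}(x)$. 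So it suffices to couple the two measures on the event that no wall of macroscopic excess area (comparable to $d$) appears near $x$.

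First I would set up the coupling via the cluster expansion. By Theorem~\ref{thm:cluster-expansion}, for interfaces $\cI,\cI'$ the ratio $\mu_\bullet(\cI)/\mu_\bullet(\cI')$ is governed by $-\beta\fm(\cI;\cI')+\sum_f\g(f,\cI)-\sum_{f'}\g(f',\cI')$, where the \emph{same} function $\g$ (which is defined intrinsically, via Ursell functions of polymers, independently of the box) appears for both $\mu_n$ and $\mu_m$. Thus, conditioning both measures on a fixed admissible collection of standard walls $\mathbf W$ supported inside $B_{d/2}(x)$, the conditional law of the configuration of $\mathbf W$ under $\mu_n$ versus $\mu_m$ differs only through $\g$-contributions of faces of $\mathbf W$ paired against faces outside $B_{d/2}(x)$ (in particular against $\partial\Lambda_n$ versus $\partial\Lambda_m$). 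By~\eqref{eq:g-exponential-decay}, each such contribution is at most $\bar K e^{-\bar c\,\br}$ where $\br$ is the congruence radius, which here is at least of order $d/2$ since $\cI$ and $\cI'$ agree with the flat interface out to that distance near $\partial\Lambda_n$. Summing the geometric tails over the $O(d^2)$ faces of $\partial\Lambda_n$ at distance $\ge d/2$ gives a total correction of order $e^{-\bar c d/4}$, say, which exponentiates to a Radon--Nikodym density $1+O(e^{-cd})$ between the two conditional laws. Combining the conditional bound with the a~priori estimate $\mu_n(\exists$ wall of excess area $\ge d/2$ in $B_{d/2}(x))\le e^{-cd}$ from Theorem~\ref{thm:dobrushin-rigidity} (and the matching bound for $\mu_m$), a standard coupling argument yields
\[
\big\|\mu_n\big((\sF_y)_{y:|y-x|\le r}\in\cdot\big)-\mu_m\big((\sF_y)_{y:|y-x|\le r}\in\cdot\big)\big\|_\tv \le C e^{-cd},
\]
and replacing $d$ by $d-r$ to absorb the restriction to $|y-x|\le r$ (walls contributing to the event may themselves reach distance $r$ from $x$, shrinking the effective buffer) gives the stated form. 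Sending $m\to\infty$ and using tightness of $(\sF_y)_y$ (Theorem~\ref{thm:dobrushin-rigidity}) together with the fact that $\mu_m\to\mu_{\Z^3}^\mp$ weakly with interfaces finite a.s., the bound passes to the limit with $\mu_{\Z^3}^\mp$ in place of $\mu_m$.

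The main obstacle is the bookkeeping in the cluster-expansion step: one must carefully argue that, after conditioning on the near-$x$ walls, the \emph{only} difference between the two box sizes is the boundary of the domain, and that all $\g$-interactions crossing the buffer region decay in a genuinely summable way — this requires that the congruence radius $\br(f,\cI;f',\cI')$ between the $\mu_n$- and $\mu_m$-interfaces (which agree near $x$ by construction but differ at the far boundary) is correctly lower-bounded by the distance from $f$ to $\partial\Lambda_n$, and that the multiplicity of configurations summed over does not overwhelm the exponential gain. This is exactly the content of Dobrushin's Lemma~5 in~\cite{Dobrushin73}, and I would cite that argument rather than reproduce it, noting only the modifications needed to phrase it in terms of groups of walls $\sF_y$ rather than individual walls (which is immediate since groups of walls are themselves a measurable function of the standard wall representation with the same exponential tails).
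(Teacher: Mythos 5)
The paper itself gives no proof of this proposition --- it is cited to Dobrushin's Lemma~5 in~\cite{Dobrushin73} (and the related references), just as you ultimately do. So your approach is essentially aligned with the paper's. That said, since you do attempt to reconstruct the cited argument, a few remarks on the sketch are in order.

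The central imprecision is the parenthetical claim that $\g$ is ``defined intrinsically, via Ursell functions of polymers, independently of the box.'' This is not quite right, and glossing over it hides the crux of the proposition. The $\g$ function in Theorem~\ref{thm:cluster-expansion} is a function of a face $f$ and an interface $\cI$ living inside a \emph{fixed} cylinder $\Lambda_n$: the Ursell-function sums that define it run over polymers confined to (or interacting with) the domain $\Lambda_n$, so $\g$ does carry an implicit $n$-dependence. What is true --- and what Dobrushin's Lemma~5 actually establishes --- is that this dependence stabilizes exponentially fast in the distance from $f$ to $\partial\Lambda_n$, because clusters of polymers reaching from $f$ to the boundary have exponentially small total weight. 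Asserting $n$-independence of $\g$ as a given begs the very question one is trying to answer; the exponential stabilization is the content of the lemma, not a free input.

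Two smaller points. First, ``conditioning both measures on a fixed admissible collection of standard walls $\mathbf W$ \ldots the conditional law of the configuration of $\mathbf W$'' does not parse: once you have conditioned on $\mathbf W$, its configuration is deterministic. What you want is to compare the \emph{marginal} laws of $\mathbf W$ (equivalently, of $(\sF_y)_{y:|y-x|\le r}$) under $\mu_n$ and $\mu_m$, which requires summing over all other walls --- this is exactly where the ``multiplicity'' issue you flag at the end arises, and it needs to appear already in the setup, not only in the caveat. Second, the lower bound on the congruence radius $\br(f,\cI;f',\cI')$ by $\sim d/2$ does not hold pointwise: for a generic pair $(\cI,\cI')$ with matching walls near $x$ but independent walls in the annulus $B_{d/2}(x)\setminus B_r(x)$, the congruence radius can be much smaller. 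One must instead sum over the annular configurations and show that the discrepancy contributes summably, using the exponential tails from Theorem~\ref{thm:dobrushin-rigidity}; your sketch conflates the two steps. Since you acknowledge these gaps and defer to~\cite{Dobrushin73}, the overall proposal is serviceable as a commentary on the cited proof, but it would not stand on its own as a self-contained argument.
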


\begin{corollary}\label{cor:pillar-dependence-on-volume}
There is a $C>0$ such that for every $\beta>\beta_0$, every $n, m$ and two sequences $x = x_n$ and $y=y_m$ such that $d(x,\partial \Lambda_n) \wedge d(y,\partial \Lambda_m) \geq r$,  
\begin{align*}
\|\mu_n \big(\cP_x \in \cdot\big) - \mu_m \big(\cP_y \in \cdot \big) \|_{\tv} \leq C \exp [-r/C]\,.
\end{align*}
\end{corollary}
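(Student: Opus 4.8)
\textbf{Proof proposal for Corollary~\ref{cor:pillar-dependence-on-volume}.}
The plan is to transfer the total-variation decorrelation estimate for groups of walls in Proposition~\ref{prop:dependence-on-volume} to the pillar by exploiting the fact---made precise in Proposition~\ref{prop:pillar-groups-of-walls} and Observation~\ref{obs:pillar-wall}---that $\cP_x$ is a deterministic function of the nested sequence of walls $\fW_x$ together with all walls nested inside it. First I would fix a threshold $\ell = r/(2C')$ for a suitable constant $C'$ to be calibrated against the constants appearing in Propositions~\ref{prop:dependence-on-volume} and~\ref{prop:pillar-groups-of-walls}, and introduce the ``good event'' $\cG_x^\ell = \{\fW_x \text{ is indexed by faces within distance }\ell\text{ of }x\}$. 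On $\cG_x^\ell$, Observation~\ref{obs:pillar-wall} shows that $\cP_x$ is determined by the collection of groups of walls $(\sF_z)_{z:\,|z-x|\leq \ell}$ (indeed by the walls they contain and all walls nested therein, which by definition of $\cG_x^\ell$ project within distance $\ell$ of $x$), via a measurable map $\pi_\ell$ that does not depend on $n$. The same statement holds for $y$ in $\Lambda_m$, with the identical map $\pi_\ell$ after translating $y$ to $x$ (the construction of walls, ceilings, and pillars is translation-invariant and local in the relevant sense).

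The estimate then follows from three inputs. First, by Proposition~\ref{prop:pillar-groups-of-walls} (equivalently Eq.~\eqref{eq:nested-seq-group-of-wall-exp-tail}), both $\mu_n(\cG_x^\ell{}^c)$ and $\mu_m(\cG_y^\ell{}^c)$ are at most $O(e^{-c\beta \ell})$, so up to an error $O(e^{-c\beta \ell})$ we may work on the good events. Second, on those good events $\cP_x = \pi_\ell((\sF_z)_{z:\,|z-x|\leq \ell})$ and $\cP_y = \pi_\ell((\sF_z)_{z:\,|z-y|\leq \ell})$, so for any set $A$ of pillars, $|\mu_n(\cP_x\in A,\cG_x^\ell) - \mu_m(\cP_y\in A,\cG_y^\ell)|$ is bounded by the total variation distance between the laws of $(\sF_z)_{z:\,|z-x|\leq \ell}$ under $\mu_n$ and $(\sF_z)_{z:\,|z-y|\leq \ell}$ under $\mu_m$. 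Third, by Proposition~\ref{prop:dependence-on-volume} (applied with the roles of $n,m$ and $x,y$, each compared via the infinite-volume measure $\mu_{\Z^3}^\mp$, using $d(x,\partial\Lambda_n), d(y,\partial\Lambda_m) \geq r$ and the window radius $\ell \leq r/2$), this total variation distance is at most $C\exp[-(r-\ell)/C] = O(e^{-r/(2C)})$. Combining, $\|\mu_n(\cP_x\in\cdot) - \mu_m(\cP_y\in\cdot)\|_{\tv} = O(e^{-c\beta\ell}) + O(e^{-r/(2C)}) = O(e^{-r/C''})$ for a new constant $C''$, which is the claim after renaming.

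The one point requiring genuine care---and the main obstacle---is making the reduction ``$\cP_x$ is a function of a bounded window of groups of walls'' both \emph{precise} and \emph{uniform in $n$ and in the indexing}. The subtlety is that Proposition~\ref{prop:pillar-groups-of-walls} controls the \emph{nested} sequence $\fW_x$, but $\cP_x$ also depends on walls nested inside walls of $\fW_x$ (Observation~\ref{obs:pillar-wall}); one must check that on $\cG_x^\ell$ these nested walls automatically have projections within distance $\ell$ of $x$ (since they project into a finite component of $\rho(W)^c$ for some $W \in \fW_x$, and such a component, by Remark~\ref{rem:excess-area-properties}, has diameter at most $\fm(W) \leq \ell$ on the good event—here one may need to enlarge $\ell$ by a constant factor). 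One must also verify that the map $\pi_\ell$ genuinely depends only on the \emph{unordered} collection of groups of walls within the window and not on quantities outside it, so that Proposition~\ref{prop:dependence-on-volume}, which is stated for $(\sF_y)_{y:\,|y-x|\leq r}$, applies verbatim; the uniformity over indexings noted in Remark~\ref{rem:strongest-enumerations} handles the choice of face-ordering used to index walls. These are all routine given the machinery already set up, but they are where the proof has to be written carefully rather than waved through.
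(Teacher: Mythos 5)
Your proof takes essentially the same route as the paper's: localize $\cP_x$ to a bounded window of groups of walls via Proposition~\ref{prop:pillar-groups-of-walls} and Observation~\ref{obs:pillar-wall}, compare the laws of these windows via Proposition~\ref{prop:dependence-on-volume}, and pass through the infinite-volume measure to move from $x$ to $y$---the paper packages this as a three-term triangle inequality through an auxiliary $\mu_N$ with $N\to\infty$, while you condition on the good event $\cG_x^\ell$; the content is the same. The one point you should state explicitly rather than gesture at is that the step replacing $x$ by $y$ relies on the horizontal translation invariance of $\mu_{\Z^3}^\mp$ itself (which the paper cites from Dobrushin), not merely on the translation-covariance of the map $\pi_\ell$: Proposition~\ref{prop:dependence-on-volume} only compares windows around a fixed center, so you need the laws of the $\ell$-windows of groups of walls around $x$ and around $y$ to coincide under $\mu_{\Z^3}^\mp$ before you can combine.
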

\begin{proof}
For any interface, with a standard wall representation
$(F_y)_{y\in \cL_0}$, we can set $\cI^{(R),x}$ which is the interface having only groups of walls indexed by $y:|y-x|<R$ and let $\cP_x^{(R)}$ be the pillar of $x$ in the interface $\cI^{(R),x}$.  By Observation~\ref{obs:pillar-wall} and Proposition~\ref{prop:pillar-groups-of-walls}, with probability $1-O(e^{-c\beta R})$, the pillars $\cP_x$ and $\cP_x^{(R)}$ are equal. 
Take an $N$ large which we will send to infinity, and expand the difference 
\begin{align*}
\|\mu_n(\cP_x\in \cdot) - \mu_m (\cP_y\in \cdot)\|_{\tv} & \leq \|\mu_n(\cP_x\in \cdot) - \mu_{N} (\cP_x\in \cdot)\|_{\tv} + \|\mu_{N}(\cP_x \in \cdot) - \mu_N(\cP_y\in \cdot)\|_\tv \\ 
& \qquad + \|\mu_m (\cP_y\in \cdot ) - \mu_N(\cP_y\in \cdot)\|_{\tv}\,.
\end{align*}
The first term above is bounded as follows: there exists $C>0$ such that for all $\beta>\beta_0$, 
\begin{align*}
\big\|\mu_n \big((\sF_y)_{y:|y-x|\leq r} \in \cdot\big)   - \mu_N \big((\sF_y)_{y:|y-x|\leq r}\in \cdot\big) \big\|_{\tv} + \mu_n\big( \cP_x \neq \cP_x^{(r)}\big) &  + \mu_N \big(\cP_x\neq \cP_x^{(r)} \big) \\
& \leq   Ce^{- (d(x,\partial \Lambda_n) - r)/C}+ e^{-\beta r/C}\,,
\end{align*}
as if $\cP_x$ is contained in the ball of radius $r$ around $x$, then the pillar $\cP_x$ is a marginal of the collection of groups of walls $(\sF_y)_{y:|y-x|\leq r}$. The third term is bounded analogously.  
In order to bound the second term, 
\begin{align*}
\|\mu_N(\cP_x\in \cdot) - \mu_N(\cP_y\in \cdot)\|_{\tv} \leq \|\mu_N(\cP^{(R)}_x\in \cdot) - \mu_N(\cP^{(R)}_y \in \cdot) \|_\tv + 2e^{ - R\beta /C}\,.
\end{align*}  
Taking $N \to \infty$, the first term on the right-hand side here vanishes as the infinite-volume measure $\mu^{\mp}_{\Z^3}$ is invariant under translations in the $xy$-directions~\cite{Dobrushin73}. Sending $N\to\infty$ first, then $R\to\infty$, and replacing $r$ by say $2r$, we obtain the desired inequality. 
\end{proof}

We also mention a result of Dobrushin showing that groups of walls decorrelate exponentially fast in their distance. That they decorrelate exponentially fast conditionally on the other groups of walls of the interface follows relatively straightforwardly from the cluster expansion and definition of groups of walls---however, a powerful bound of Dobrushin from~\cite{Dobrushin68,Dobrushin73} allows one to translate conditional decorrelation estimates for random fields to unconditional ones. This estimate, together with the equivalence of groups of walls and pillars, greatly simplifies the second moment estimate in Section~\ref{subsec:max-lln}. 

\begin{proposition}[{\cite{Dobrushin73}, see also~\cite[Proposition 2.1]{BLP79b}}]\label{prop:group-of-wall-correlations}
There is a $C>0$ such that for every $\beta>\beta_0$, every $n$ and two sequences $x= x_n$ and $y=y_n$, 
\begin{align*}
&\left\|\mu_n \left((\sF_s)_{|s-x|<r}\in\cdot, (\sF_t)_{|t-y|<r} \in\cdot\right )  - \mu_n \big((\sF_s)_{|s-x|<r}\in \cdot\big)  \mu_n \big((\sF_t)_{|t-y|<r} \in \cdot \big) \right\|_\tv  \leq C e^{-(|x-y|-2r)/C}\,.
\end{align*}
\end{proposition}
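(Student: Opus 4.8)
The statement asserts that two localized families of groups of walls, one around $x$ and one around $y$, are nearly independent, with total-variation error decaying exponentially in $|x-y|$ up to an additive $2r$.

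The plan is to reduce the unconditional decorrelation to a \emph{conditional} one, and then invoke Dobrushin's general comparison principle from~\cite{Dobrushin68,Dobrushin73} to remove the conditioning. First I would establish the conditional estimate directly from cluster expansion: fix any admissible collection of groups of walls $(F_z)_{z\notin B_r(x)\cup B_r(y)}$ outside the two balls, and compare the conditional law of $\bigl((\sF_s)_{|s-x|<r},(\sF_t)_{|t-y|<r}\bigr)$ to the product of its marginals. By Theorem~\ref{thm:cluster-expansion}, the ratio $\mu_n(\cI)/\mu_n(\cI')$ for two interfaces differing only in the groups of walls near $x$ is $\exp\bigl(-\beta\fm(\cI;\cI')+\sum_f\g(f,\cI)-\sum_{f'}\g(f',\cI')\bigr)$, and because $\g$ is local with exponential decay~\eqref{eq:g-exponential-decay}, changing the configuration near $x$ affects the $\g$-sum near $y$ by at most $O(e^{-\bar c(|x-y|-2r)})$ (the relevant radius $\br$ is at least the distance between the modified region near $x$ and any face near $y$, which is $\geq |x-y|-2r$). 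Summing over the at most exponentially-many (in their excess area) groups of walls near $x$ and $y$, with their exponential tails from Theorem~\ref{thm:dobrushin-rigidity} controlling the total excess area, yields that the conditional joint law and the product of conditional marginals differ by $O(e^{-(|x-y|-2r)/C})$ in total variation, \emph{uniformly} over the external configuration.

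Next I would feed this into Dobrushin's bound (as formulated, e.g., in~\cite[Prop.~2.1]{BLP79b}): if a random field has the property that, conditionally on the configuration outside two regions $U\ni x$ and $V\ni y$, the restrictions to $U$ and $V$ are approximately independent with error $\varepsilon(U,V)$, and moreover the field has a uniformly-convergent ``interaction'' decaying exponentially in distance (which here follows again from~\eqref{eq:g-exponential-decay} and the exponential tails on groups of walls), then the \emph{unconditional} restrictions to $U$ and $V$ are also approximately independent, with an error controlled by a geometric series in the pairwise interaction strengths. Concretely, one writes the unconditional correlation as an average over the external configuration of the conditional correlations plus a ``fluctuation'' term measuring how the conditional marginal near $x$ depends on the part of the external configuration near $y$; both pieces are bounded by sums of $\bar K e^{-\bar c d}$ over pairs of faces at distance $d \geq |x-y|-2r$, which is $O(e^{-(|x-y|-2r)/C})$ by integrability of exponential tails. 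Adjusting the constant $C$ absorbs the polynomial prefactors coming from the volume of $B_r(x)$ and $B_r(y)$.

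The main obstacle is the passage from conditional to unconditional decorrelation: the family of groups of walls is \emph{not} a nearest-neighbor Markov field, the map from spin configurations to standard-wall collections is nonlocal, and there is no FKG or domain-Markov property for this representation (exactly the difficulties flagged in~\S\ref{subsec:tools}). One must therefore verify carefully that the hypotheses of Dobrushin's uniqueness/decorrelation machinery apply — in particular that the effective single-site (here, single-group-of-walls) interactions, read off from the $\g$-function and the closeness criterion in Definition~\ref{def:group-of-walls}, are summable with the right exponential decay in $xy$-distance. Since this is precisely the input Dobrushin verifies in~\cite{Dobrushin73} for this model, I would cite that verification rather than redo it, and the remaining work is the bookkeeping in the previous paragraph. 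Everything else — the cluster-expansion estimate, the combinatorial counting of groups of walls, the summation of exponential tails — is routine given Theorems~\ref{thm:cluster-expansion} and~\ref{thm:dobrushin-rigidity}.
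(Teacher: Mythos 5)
Your proposal matches the paper's treatment: the paper does not reprove this proposition but cites~\cite{Dobrushin73} and~\cite[Prop.~2.1]{BLP79b}, and in the prose immediately preceding the statement it describes exactly the two-step strategy you outline — conditional decorrelation from the cluster expansion and the locality of $\g$, followed by Dobrushin's general comparison theorem from~\cite{Dobrushin68,Dobrushin73} to remove the conditioning. You correctly identify both the key ingredients and the fact that the nontrivial transfer step is Dobrushin's verification, which is cited rather than redone.
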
  

\begin{corollary}\label{cor:pillar-correlations}
There is a $C>0$ such that for every $\beta>\beta_0$, every $n$ and every two sequences $x= x_n$ and $y=y_n$ such that $d(x,y) \geq r$, we have 
\begin{align*}
\|\mu_n (\cP_x\in \cdot, \cP_y \in \cdot) - \mu_n (\cP_x \in \cdot)\mu_n (\cP_y\in \cdot) \|_\tv \leq  C\exp [- r/C]\,.
\end{align*}
\end{corollary}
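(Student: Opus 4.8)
The plan is to deduce Corollary~\ref{cor:pillar-correlations} from the group-of-walls decorrelation estimate of Proposition~\ref{prop:group-of-wall-correlations} by exactly the same device already used to pass from Proposition~\ref{prop:dependence-on-volume} to Corollary~\ref{cor:pillar-dependence-on-volume}: localize each pillar to a bounded-radius neighborhood, apply the statement for groups of walls to those neighborhoods, and absorb the localization error. Concretely, I would first recall from Observation~\ref{obs:pillar-wall} and Proposition~\ref{prop:pillar-groups-of-walls} (i.e.\ Eq.~\eqref{eq:nested-seq-group-of-wall-exp-tail}) that, for any face $z\in\cL_0$, the pillar $\cP_z$ is a deterministic function of the groups of walls $(\sF_s)_{|s-z|<R}$ on the event $\{\cP_z=\cP_z^{(R)}\}$, which has $\mu_n$-probability $1-O(e^{-c\beta R})$; here $\cP_z^{(R)}$ denotes the pillar read off from the interface keeping only the groups of walls indexed within distance $R$ of $z$, as in the proof of Corollary~\ref{cor:pillar-dependence-on-volume}.

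Next I would fix $R$ (to be optimized at the end, say $R=r/10$) and consider the coupling/triangle-inequality decomposition
\begin{align*}
\|\mu_n(\cP_x\in\cdot,\cP_y\in\cdot) - \mu_n(\cP_x\in\cdot)\mu_n(\cP_y\in\cdot)\|_{\tv}
&\leq \|\mu_n(\cP^{(R)}_x\in\cdot,\cP^{(R)}_y\in\cdot) - \mu_n(\cP^{(R)}_x\in\cdot)\mu_n(\cP^{(R)}_y\in\cdot)\|_{\tv}\\
&\quad + 2\,\mu_n(\cP_x\neq\cP^{(R)}_x) + 2\,\mu_n(\cP_y\neq\cP^{(R)}_y)\,.
\end{align*}
The last two terms are each $O(e^{-c\beta R})$ by Proposition~\ref{prop:pillar-groups-of-walls}. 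For the first term, since $(\cP^{(R)}_x,\cP^{(R)}_y)$ is a measurable function of $\big((\sF_s)_{|s-x|<R},(\sF_t)_{|t-y|<R}\big)$, the data-processing (contraction) property of total variation distance gives that it is bounded by
\[
\big\|\mu_n\big((\sF_s)_{|s-x|<R}\in\cdot,(\sF_t)_{|t-y|<R}\in\cdot\big)-\mu_n\big((\sF_s)_{|s-x|<R}\in\cdot\big)\mu_n\big((\sF_t)_{|t-y|<R}\in\cdot\big)\big\|_{\tv}\,,
\]
which by Proposition~\ref{prop:group-of-wall-correlations} is at most $C e^{-(|x-y|-2R)/C}$. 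Choosing $R$ a small fixed fraction of $r$ and using $d(x,y)\geq r$, both contributions are $O(e^{-cr})$ for $\beta>\beta_0$, and adjusting constants (and, if desired, replacing $r$ by $2r$ at the outset as in Corollary~\ref{cor:pillar-dependence-on-volume}) yields the claimed bound $C\exp[-r/C]$.

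I do not expect any genuine obstacle here; this is essentially a routine repackaging. The only point requiring a line of care is the contraction step: one must note that $\cP^{(R)}_z$ is genuinely a function of the \emph{finite} collection of groups of walls within radius $R$ (true by construction of $\cP^{(R)}_z$ via Lemma~\ref{lem:interface-reconstruction}), so that coupling the two localized group-of-walls vectors automatically couples the localized pillars; and that the event $\{\cP_z=\cP^{(R)}_z\}$ is itself measurable with respect to this finite collection, so the replacement errors are bona fide $\mu_n$-probabilities estimated by \eqref{eq:nested-seq-group-of-wall-exp-tail}. Everything else is the triangle inequality and summability of exponentials.
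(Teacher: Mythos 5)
Your proposal is correct and follows essentially the same route as the paper: localize each pillar to $\cP_z^{(R)}$, use the fact that the localized pillar is a measurable function of the nearby groups of walls (so total variation contracts), apply Proposition~\ref{prop:group-of-wall-correlations}, and absorb the localization errors via Proposition~\ref{prop:pillar-groups-of-walls}. The only difference is cosmetic—you explicitly pick $R$ as a fixed fraction of $r$ and spell out the data-processing step, whereas the paper folds these into an implicit constant adjustment; there is no gap in either.
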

\begin{proof}
Fix an $r$, recall the definition of $\cP_x^{(r)}$ and $\cP_y^{(r)}$, and use the shorthand $(\sF_s)$ and $(\sF_t)$ for  $(\sF_s)_{|s-x|<r}$  and $(\sF_t)_{|t-y|<r}$. 
Then, 
\begin{align*}
\|\mu_n(\cP_x \in \cdot, \cP_y\in \cdot) - \mu_n(\cP_x \in \cdot) \mu_n(\cP_y\in \cdot)\|_{\tv} & \leq \|\mu_n(\cP_x \in \cdot, \cP_y\in \cdot) - \mu_n ( \cP_x^{(r)}\in \cdot, \cP_y^{(r)}\in \cdot)\|_\tv \\ 
& \quad + \|\mu_{n}((\sF_s)\in \cdot, (\sF_t)\in \cdot) -\mu_n((\sF_s)\in \cdot)\mu_n((\sF_t)\in \cdot)\|_\tv\\ 
& \quad + \|\mu_n(\cP^{(r)}_x \in \cdot)\mu_n(\cP^{(r)}_y\in \cdot) - \mu_n(\cP_x\in \cdot)\mu_n(\cP_y\in \cdot)\|_{\tv}
\end{align*}
where the second term is as it is because $\cP_x^{(r)}$ is a marginal of $(\sF_s)$ and $\cP_y^{(r)}$ is a marginal of $(\sF_t)$. The second term above, then, is exactly the quantity bounded by Proposition~\ref{prop:group-of-wall-correlations}. The first and third terms are bounded by $\exp(- r/C)$ by Observation~\ref{obs:pillar-wall} and Proposition~\ref{prop:pillar-groups-of-walls}, yielding the desired. 
\end{proof}

\subsection{Limiting large deviation rate}\label{subsec:limiting-ldp-rate}
In this section, we use an approximate sub-additivity argument to demonstrate the existence of a limiting large deviation rate for the probability that $\hgt(\cP_x)$ exceeds $h$ as $h\to\infty$. We will first show how Proposition~\ref{prop:limiting-ldp-rate-pillar} follows from the existence of the limit in~\eqref{eq:alpha-beta-def}; we then prove the existence of the limit in~\eqref{eq:alpha-beta-def} leveraging the fact that connection events are increasing, to use the monotonicity and FKG property of the Ising model. 

Let us begin by proving Proposition~\ref{prop:limiting-ldp-rate-pillar} given the existence of the limit in~\eqref{eq:alpha-beta-def}. Without loss, we will change from sequences indexed by $n$ to sequences indexed by $h$, so that $n_h$ is any sequence having $n_h\gg h$ and $x_{h}$ is such that $d(x_h, \partial \Lambda_{n_h})\gg h$. Recall that $x\xleftrightarrow[A]{+} y$ denotes that there is a $*$-connected path of $+$ sites in $\mathcal C(A)$ between $x$ and $y$. 
Let us denote by $A_h$ the event, measurable with respect to the configuration on $\cC(\Z^2 \times \llb 0,h\rrb)$, defined by 
\begin{align*}
A_h = A_h^x = \left\{\sigma: x+(0,0, \tfrac 12) \xleftrightarrow[\Z^2\times\llb 0,h\rrb]{+} \cL_{h-\frac 12} \right\}\,.
\end{align*}
We will show that the limit in~\eqref{eq:ldp-rate} is equal to the following limit 
\begin{align}\label{eq:A_h-limit}
\lim_{h\to\infty} - \frac 1h \log \mu_{n_h} \big(A_{h}^{x_{h}}\big)\,,
\end{align}
which we will show exists and equals the infinite-volume limit $\alpha_\beta$ defined in~\eqref{eq:alpha-beta-def}.

\begin{proof}[\textbf{\emph{Proof of Proposition~\ref{prop:limiting-ldp-rate-pillar}, given existence of~\eqref{eq:alpha-beta-def}}}]
For every $n$ large, every $x\in \cL_0 \cap \Lambda_n$, we claim that we have the comparability of events: there exists $\epsilon_\beta$ vanishing as $\beta\to\infty$ such that
\begin{align}\label{eq:equality-of-events}
(1-\epsilon_\beta)\mu_n(A_h^x) \leq \mu_n(\hgt(\cP_x)\geq h) \leq (1+\epsilon_\beta) \mu_n (A_h^x)\,.
\end{align}
(This indicates that the connectivity event $A_h^x$ serves as a good proxy for the relevant event $\{\hgt(\cP_x)\geq h\}$: refer to Figure~\ref{fig:interface-nonmon} for examples of configurations in $\{\hgt(\cP_x)\geq h\}^c\cap A_h^x$ (left) and $\{\hgt(\cP_x)\geq h\}\cap (A_h^x)^c$.)

Letting $A_h = A_h^x$, on the one hand, by Definitions~\ref{def:pillar}--\ref{def:heights}, we have  
\begin{align*}
A_h \cap \{ \hgt(\cP_x)\geq 0\} \subset \{\hgt(\cP_x)\geq h\}\,;
\end{align*}
since $\fF_x =\emptyset$ implies $\hgt(\cP_x)\geq 0$, by~\eqref{eq:nested-seq-group-of-wall-exp-tail},  $\mu_n(\hgt(\cP_x)\geq 0)\geq 1-\epsilon_\beta$, and the FKG inequality implies the left-hand side of~\eqref{eq:equality-of-events}. 
On the other hand, given $\{\hgt(\cP_x)\geq h\}$, the event $(A_h^x)^c$ implies that there is a (nearest-neighbor) connected component of minuses separating $x+(0,0,\frac 12)$ from the inner boundary of the pillar, and in particular, from height $h$, in the slab $\cC(\Z^2 \times \llb 0,h\rrb)$. If $\hgt(\cP_x)\geq h \geq 1$, this is in the plus phase of the Ising model with interface $\cI$, and thus the probability of such a half-bubble of minuses is at most the probability that $x+(0,0,\frac 12)$ is not $*$-connected by plus sites to $\infty$ in $\mathcal C(\mathbb Z^2 \times \llb 0,\infty\rrb)$ under $\mu^+_{\Z^3}$; this probability is in turn at most $\epsilon_\beta$ by a classical Peierls argument. Thus, we can express
\begin{align*}
\mu_{n}(\hgt(\cP_x)\geq h) & = \mu_n(\hgt(\cP_x)\geq h, A_h^x) + \mu_n(\hgt (\cP_x)\geq h, (A_h^x)^c) \\ 
& \leq \mu_n (A_h^x) + \epsilon_\beta \mu_n(\hgt(\cP_x)\geq h)\,,
\end{align*}
from which the right-hand side of~\eqref{eq:equality-of-events} follows. It remains to show that the limit~\eqref{eq:A_h-limit} is given by $\alpha_\beta$. 

By Corollary~\ref{cor:pillar-dependence-on-volume} and the fact that the distance from $x_{h}$ to the boundary grows faster than $h$, if we show~\eqref{eq:ldp-rate} for one such sequence of $x_{h}$, it implies it for every such sequence (the error $e^{- c d(x_h, \partial \Lambda_{n_h})}$ vanishes after taking a logarithm, dividing by $h$ and sending $h\to\infty$). Now take a fixed $x$, say $(\frac 12, \frac 12, 0)$ and any two sequences $n_h$ and $m_h$ such that $\frac {n_h}h$ and $\frac{m_h}{h}$ go to infinity. By~\eqref{eq:equality-of-events}, and Corollary~\ref{cor:pillar-dependence-on-volume}, the following limits are equal (if they exist), 
\begin{align*}
\lim_{h\to\infty} -\frac 1h \log \mu_{n_h} (A_h) = \lim_{h\to\infty} - \frac 1h \log \mu_{m_h}(A_h)\,,
\end{align*}
and since this holds for every sequence $m_h\gg h$, both are equal to the limit in~\eqref{eq:alpha-beta-def}. Finally, since the upper and lower bounds of Proposition~\ref{prop:rate-upper-lower-bounds} on $\mu_n(\hgt(\cP_x)\geq h)$ hold for all sufficiently large $h$ and are both independent of $n$, it is clear that for every $\beta>\beta_0$, we have $\alpha_\beta \in [4\beta -C, 4\beta + e^{-4\beta}]$. 
\end{proof}

Both~\eqref{eq:alpha-beta-def} and Proposition~\ref{prop:limiting-ldp-rate-pillar} would follow if we show for a fixed $x$, say $(\frac 12, \frac 12, 0)$, and some sequence $n_h\gg h$, that the limit~\eqref{eq:A_h-limit} exists, and call it $\alpha_\beta$. To see the existence of~\eqref{eq:A_h-limit}, we rely on the fact that $A_h$ is an increasing event; we would like to leverage the monotonicity and FKG property of the Ising model to show sub/super multiplicativity of $\mu_{n_h}(A_h)$. The problem with this is that on the one hand, the event of reaching a height $h_1$ gives positive information towards the event of going from height $h_1$ to $h_2$, while on the other hand, the Ising measure $\mu^{\mp}_{n_h}$ near height $h_1$ is much more negative than it is near height $0$. We overcome this by a careful revealing procedure, that exposes the plus connected component of $x+(0,0,\frac 12)$ and controls the amount of positive information obtained by this revealing via the estimates of Sections~\ref{sec:increment-exp-tail}--\ref{sec:base}.

\begin{prop}\label{prop:limiting-rate} For every $\beta>\beta_0$, for every $h_1$ large, and  $h_2 \in \llb \frac12 h_1 , 2 h_1\rrb$ if $x$ is such that $\frac{d(x,\partial \Lambda_{n_h})}h \to\infty$, then
\[
\log \mu_{n_{h_1 + h_2 }}(A_{h_1+h_2}) \leq \log \mu_{n_{h_1}}(A_{h_1}) + \log \mu_{n_{h_2}}(A_{h_2}) + O(\log^2{[h_1 + h_2]})\,.
\]
\end{prop}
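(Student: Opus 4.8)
The plan is to prove an approximate sub-multiplicativity estimate for $a_h := \mu_{n_h}(A_h)$ by a careful two-stage revealing procedure: first reveal the $(+)$-component of $x+(0,0,\tfrac12)$ up to height $h_1$, then argue that conditionally on what is revealed, the chance of then climbing a further $h_2$ is bounded (up to a subexponential-in-$h_1$ correction) by $a_{h_2}$. Concretely, on the event $A_{h_1+h_2}$ the pillar $\cP_x$ reaches height at least $h_1+h_2$, so in particular $\hgt(\cP_x)\geq h_1$; by Lemma~\ref{lem:increment-height-equivalence} we may pass to the event $\mathbf I_{x,T_1}$ with $T_1 = \lfloor(1-C/\beta)h_1\rfloor$ at the cost of a factor $1+O(e^{-ch_1})$, and then further intersect with the tame event $\bar{\mathbf I}_{x,T_1}$ using Lemma~\ref{lem:tame}. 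On $\bar{\mathbf I}_{x,T_1}$, Proposition~\ref{prop:base-exp-tail} (specifically~\eqref{eq:hgt-vsp} and~\eqref{eq:base-diameter}, in the form of Proposition~\ref{prop:base-conditional-on-height}) tells us that, except with probability $O(e^{-c\beta K\log(h_1+h_2)}) = O((h_1+h_2)^{-cK\beta})$, the source point $v_{\Tsp}$ has $\hgt(v_\Tsp)\le K\log(h_1+h_2)$ and $\diam(\rho(\sB_x))\le K\log(h_1+h_2)$; choosing $K$ large makes this an irrelevantly small additive error after taking logs.

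The core step: condition on the truncated interface $\cI_{\textsc{tr}}$ (equivalently, on the base $\sB_x$ and the source point $v_\Tsp$) and on the first few increments up to height $h_1$, so that a $(+)$-connection from $x$ to height $h_1$ is established inside a region of horizontal extent $O(\log(h_1+h_2) \vee \sqrt{h_1})$ and vertical extent $\le h_1 + O(1)$. Everything revealed so far consists of: some $(-)$-spins on the outer boundary of the revealed $(+)$-cluster (these give \emph{negative} information and can be dropped by FKG/domain Markov, since $A_{h_2}$-type events are increasing), together with a set of $(+)$-spins at height roughly $0$ contained in $\sigma(\sB_x)$ (these give \emph{positive} information). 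Tilting the measure $\mu_{n_{h_2}}$ by conditioning on these $O(\log^2(h_1+h_2))$ extra $(+)$-spins (the base has diameter $O(\log(h_1+h_2))$, hence contains $O(\log^2(h_1+h_2))$ cells) costs a multiplicative factor at most $e^{O(\log^2(h_1+h_2))}$ — this is exactly where the $O(\log^2[h_1+h_2])$ error in the statement comes from. After dropping the negative information and paying this cost, the conditional probability of extending the revealed cluster by a further $h_2$ in height is, by the domain Markov property and translation/volume-decorrelation (Corollary~\ref{cor:pillar-dependence-on-volume}), bounded by $(1+o(1))\,\mu_{n_{h_2}}(A_{h_2})$ up to the same $e^{O(\log^2)}$ tilt; here one uses that the top of the revealed cluster sits at a single point (a cut-point, or an arbitrary point of the top slab) so the ``restart'' event is genuinely of the form $A_{h_2}$ shifted.

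The cleanest way to organize the revealing so that the positive/negative information splits as claimed is to expose $\sigma$ cell-by-cell outward from $x+(0,0,\tfrac12)$, always revealing cells adjacent to the already-discovered $(+)$-cluster; the boundary of the revealed set is then partitioned into $(+)$-cells (which extend the cluster and are all within the base once we truncate at $\hgt(v_\Tsp)+\tfrac12$) and $(-)$-cells; one stops the exploration at height $h_1$ along the spine. Summing over the $O((h_1+h_2)^{2})$-many choices of $v_\Tsp$ position and the subexponentially-many base configurations (using Lemma~\ref{lem:counting-groups-of-walls}-type bounds and Proposition~\ref{prop:base-exp-tail} again to control the total contribution) contributes only a further polynomial-in-$(h_1+h_2)$ factor, absorbed into the $O(\log^2)$ term. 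Putting the pieces together and taking logarithms yields
\[
\log \mu_{n_{h_1+h_2}}(A_{h_1+h_2}) \le \log \mu_{n_{h_1}}(A_{h_1}) + \log \mu_{n_{h_2}}(A_{h_2}) + O(\log^2[h_1+h_2])\,,
\]
and the restriction $h_2 \in \llbracket \tfrac12 h_1, 2h_1\rrbracket$ is used precisely to invoke the ``$T$ comparable to $h$'' versions of the tameness and base estimates (Remark~\ref{rem:tame-conditional-on-height} and Proposition~\ref{prop:base-conditional-on-height}), which require $T$ within a constant factor of $h$.

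I expect the main obstacle to be the bookkeeping in the revealing argument: making precise that, after conditioning on the base and the sub-$h_1$ portion of the spine, the \emph{only} positive information injected into the fresh copy of $\mu_{n_{h_2}}$ is a set of $(+)$-cells confined to the (small) base, with all the $(-)$-boundary information discardable by monotonicity — the non-monotonicity of the interface/pillar correspondence (Figure~\ref{fig:interface-nonmon}) means one must phrase everything in terms of the genuinely increasing connectivity event $A_h$ rather than $\{\hgt(\cP_x)\ge h\}$, and carefully track that the revealed $(+)$-region below the source point is exactly $\sigma(\sB_x)$ so that its size is controlled by Proposition~\ref{prop:base-exp-tail}. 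A secondary technical point is ensuring the decorrelation error from Corollary~\ref{cor:pillar-dependence-on-volume} (needed to replace the conditional ``restart'' measure by a clean $\mu_{n_{h_2}}$) is $e^{o(h_2)}$ and hence harmless; this follows since $d(x,\partial\Lambda_{n_h})\gg h$, but must be stated for the shifted base point rather than $x$.
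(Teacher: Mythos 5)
Your proposal follows essentially the same route as the paper's proof: reveal the $(+)$-cluster $\sP$ of $x+(0,0,\tfrac12)$ up to height $h_1$, restrict to a good event controlling the size of this cluster near height $0$ and at height $h_1$, discard the $(-)$-boundary by FKG, tilt by the $(+)$-spins at height $\tfrac12$, and restart via domain Markov plus the decorrelation estimates. The $O(\log^2)$ error correctly comes from the tilt, and the $h_2\in\llb\tfrac12 h_1,2h_1\rrb$ restriction is indeed there to match $T$ with $h$ in the conditional base estimates. A couple of points where the bookkeeping as written would go wrong, and which the paper handles differently:

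First, the good event must be stated in terms of the raw $(+)$-cluster $\sP$, not in terms of the pillar-derived objects $\cI_{\textsc{tr}}$, $\sB_x$, $v_\Tsp$. Conditioning on $\cI_{\textsc{tr}}$ is not a spin-measurable conditioning on which domain Markov and FKG cleanly apply; the paper's $\Gamma_{x,h_1}$ is defined directly in terms of $\sP$ (single top cell at $\cL_{h_1-\frac12}$, bounded surface area, and bounded diameter of $\sP\cap\cL_{\frac12}$). Your statement that ``the revealed $(+)$-region below the source point is exactly $\sigma(\sB_x)$'' is not correct: neither $\sP$ nor $\cP_x$ contains the other in general (Figure~\ref{fig:interface-nonmon}), and the exploration reveals $\sigma(\sP)$, not $\sigma(\sB_x)$. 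What \emph{is} true, and what the paper leans on, is the one-sided containment $\sP\subset\sigma(\cP_x)$ on $\{\hgt(\cP_x)\geq 0\}$, so that $\sigma(\sP)\cap\cL_{\frac12}\subset\sigma(\sB_x)\cap\cL_{\frac12}$; this is what lets you transfer the $\diam(\rho(\sB_x))=O(\log h)$ bound from Proposition~\ref{prop:base-conditional-on-height} to the cluster-level object and bound the tilt by $e^{C\beta(K\log h_1)^2}$.

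Second, the choice $T_1=\lfloor(1-C/\beta)h_1\rfloor$ does not always satisfy the hypothesis of Proposition~\ref{prop:base-conditional-on-height} (and Remark~\ref{rem:tame-conditional-on-height}), which requires $T\in\llb\tfrac12 h,h\rrb$ for $h=h_1+h_2$: when $h_2$ is close to $2h_1$ one has $\tfrac12 h\approx\tfrac32 h_1 > T_1$. The paper avoids this by taking $T=\tfrac12(h_1+h_2)$, which always lies in the required window for $h_2\in\llb\tfrac12 h_1,2h_1\rrb$. Finally, your ``restart'' step should be organized as in Claim~\ref{claim:limiting-rate-1}: one bounds $\mu_{n_h}(A_{h},\Gamma_{x,h_1})\geq c\mu_{n_h}(A_{h})$ (a conditional-probability bound given $\{\hgt(\cP_x)\geq h\}$ plus the equivalence~\eqref{eq:equality-of-events}), rather than an unconditional bound on $\mu(\Gamma_{x,h_1}^c)$, which would not be small by itself. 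None of these is a fatal flaw -- the architecture and the source of the $O(\log^2)$ term are right -- but the proof needs the cluster/pillar distinction and the $T$-window constraint made precise to go through.
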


Let us first conclude the proof of~\eqref{eq:alpha-beta-def} and in turn, Proposition~\ref{prop:limiting-ldp-rate-pillar}, by applying an approximate version of Fekete's sub-additivity lemma.  

\begin{proof}[\textbf{\emph{Proof of~\eqref{eq:alpha-beta-def} in Theorem~\ref{mainthm:max-lln}}}]
By an approximate version of Fekete's Lemma (\cite[Theorem 23]{deBruijnErdos}, also, see~\cite[Theorem 1.9.2]{Steele97}), since $\int t^{-2} (\log t)^2 dt <\infty$, Proposition~\ref{prop:limiting-rate} implies that 
\begin{align*}
\lim_{h\to\infty} \frac 1h \mu_{n_h} (A_h) = \alpha_\beta
\end{align*}
for some $\alpha_\beta\in [-\infty,\infty]$, and by the above proof of Proposition~\ref{prop:limiting-ldp-rate-pillar}, this $\alpha_\beta$ is also the same limit as in~\eqref{eq:ldp-rate}. As argued above, this implies that $\alpha_\beta \in [4\beta -C, 4\beta + e^{-4\beta}]$ for some universal constant $C$ given in Proposition~\ref{prop:rate-upper-lower-bounds}, so $\alpha_\beta/\beta\to4$ as $\beta\to\infty$. 
\end{proof}

We now turn to proving the approximate sub-additivity of the sequence $(\log \mu_{n_{h}}(A_h))_h$.
 
\begin{proof}[\textbf{\emph{Proof of Proposition~\ref{prop:limiting-rate}}}]
Recall that we may fix $x = (\frac 12, \frac 12, 0)$ and set $A_h = A_h^x$. We will also be interested in the vertical shift of $A_h$, defined by 
\begin{align*}
\theta_{h_1} A_{h_2}= \theta_{h_1} A_{h_2}^x = \left\{ \sigma: x+(0,0,h_1+\tfrac 12)
\xleftrightarrow[\Z^2 \times \llb h_1, h_1+h_2\rrb]{+} \cL_{h_1 + h_2- \tfrac 12} \right\}\,.
\end{align*}
By translation, it is evident that $\mu_{n}^{\mp} (\theta_{h_1} A_{h_2})= \mu_{n}^{\mp, -h_1}(A_{h_2})$, where $(\mp,-m)$ boundary conditions are those that are plus on $\partial \Lambda \cap \cL_{<-m}$ and minus on $\partial \Lambda\cap \cL_{>m}$. For every $n$, by monotonicity in boundary conditions, 
\begin{align*}
\mu_{n}^{\mp, -h_1}(A_{h_2}) \leq \mu_{n} (A_{h_2})\,.
\end{align*}
Finally, 
denote by $\sP$ the $*$-connected plus component of $x+(0,0,\frac 12)$ in $\llb -n_{h_1+h_2},n_{h_1+h_2} \rrb^2 \times \llb 0,h_1\rrb$, and notice, crucially, 
that on the event that $\hgt(\cP_x)\geq 0$, this plus component satisfies $\sP \subset \sigma(\cP_x)$. 

Our goal is to say that conditionally on a connected plus component $\mathscr P$ reaching height $h_1$, the probability of reaching a further height $h_1+h_2$ is at most $\mu_n^{\mp,-h_1}(A_{h_2})\leq \mu_n(A_{h_2})$. This does not hold true, as the fact that $\mathscr P$ reached height $h_1$ contains positive information. We define a set $\Gamma_{x,h_1}$ of plus components $\mathscr P$  which, due to our structural results on tall pillars, has positive probability on the event $A_{h_1+h_2}$, such that for every $\mathscr P \in \Gamma_{x,h_1}$, the positive information obtained from revealing $\mathscr P$ is not too large. 

More precisely, let $\Gamma_{x,h_1}$ be the event that $\sP$ satisfies (for $C, K$ to be chosen sufficiently large later)
\begin{enumerate}
\item its intersection with $\cL_{h_1- \frac 12}$ is at most a single cell,
\item its bounding face-set has size at most $Ch_1$,  
\item its intersection with $\cL_{\frac 12}$ has diameter at most $K\log h_1$. 
\end{enumerate}
(Notice that $\Gamma_{x,h_1}$ is a decreasing event.) 
The proposition will follow from the following two claims. 

\begin{claim}\label{claim:limiting-rate-2}
For every $\beta>\beta_0$, there exist choices of $C, K$ above, such that for every $h_1$ and $h_2 \in \llb \frac 12 h_1 , 2h_1\rrb$ sufficiently large, as long as $\lim_{h\to\infty} \frac{n_h}{h}=\infty$, we have
\begin{align*}
\mu_{n_{h_1+h_2}}(A_{h_1 + h_2}, \Gamma_{x,h_1}) \leq  e^{C\beta (K\log h_1)^2} \mu_{n_{h_1}} (A_{h_1}) \mu_{n_{h_2}} (A_{h_2})\,.
\end{align*}
\end{claim}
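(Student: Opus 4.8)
The plan is to prove Claim~\ref{claim:limiting-rate-2} by revealing the plus cluster $\sP$ of $x+(0,0,\tfrac12)$ inside the restricted slab $\llb -n,n\rrb^{2}\times\llb0,h_{1}\rrb$, splitting the connection event $A_{h_{1}+h_{2}}$ at height $h_{1}$, and using the three conditions defining $\Gamma_{x,h_{1}}$ to control the cost of regluing the two halves. First observe that on $A_{h_{1}+h_{2}}$ the cluster $\sP$ must reach the top layer $\cL_{h_{1}-\frac12}$ of the restricted slab: any $\ast$-path realizing $A_{h_{1}+h_{2}}$ traverses every intermediate layer, and the cell at which it first reaches $\cL_{h_1-\frac12}$ is joined to $x$ using only cells of heights $\le h_1-\tfrac12$, hence lies in $\sP$. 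On $\Gamma_{x,h_1}$ this cell is unique; call it $w$, and note that its $\ast$-neighbours inside $\cL_{h_1-\frac12}$ lie in the $\ast$-exterior boundary $\partial_{\ast}\sP$ and are therefore forced to be minus on $\{\sP=P\}$. Conditioning on the value of $\sP$,
\[
\mu_{n}(A_{h_{1}+h_{2}},\Gamma_{x,h_{1}})=\sum_{P}\mu_{n}(\sP=P)\,\mu_{n}\!\left(A_{h_{1}+h_{2}}\mid\sP=P\right),
\]
the sum running over $P\in\Gamma_{x,h_1}$ meeting $\cL_{h_1-\frac12}$. Since $\{\sP\text{ meets }\cL_{h_1-\frac12}\}$ is exactly the restricted-slab connection event $A_{h_1}$, the prefactors already obey $\sum_{P}\mu_{n}(\sP=P)\le\mu_{n_{h_1}}(A_{h_1})$ up to a $1+o(1)$ factor from the change of box size, controlled by Corollary~\ref{cor:pillar-dependence-on-volume} together with~\eqref{eq:equality-of-events} (permissible since $n_{h_1+h_2},n_{h_1}\gg h_1+h_2$).

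The main step is the continuation estimate $\sup_{P}\mu_{n}(A_{h_{1}+h_{2}}\mid\sP=P)\le e^{C\beta(K\log h_{1})^{2}}\,\mu_{n_{h_2}}(A_{h_{2}})$. Conditionally on $\{\sP=P\}=\{\sigma\equiv+1\text{ on }P,\ \sigma\equiv-1\text{ on }\partial_{\ast}P\}$, which is measurable with respect to spins of height $\le h_1-\tfrac12$, the domain Markov property reduces the law of the spins of height $\ge h_1+\tfrac12$ to an Ising model with boundary layer $\sigma|_{\cL_{h_1-\frac12}}$; because $w$ is isolated in that layer by minus spins, any continuation of the cluster of $x$ above height $h_1$ must leave through one of the (at most nine) cells of $\cL_{h_1+\frac12}$ that are $\ell^{\infty}$-adjacent to $w$. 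A union bound over these cells, translation invariance of $\mu^{\mp}_{\Z^{3}}$ in the $xy$-directions, and the monotonicity $\mu^{\mp,-h_1}_{n}\preceq\mu^{\mp}_{n}$ (so that the shifted event $\theta_{h_1}A_{h_2}$ is only more likely under the less negative measure) together bound the continuation probability by a constant times $\mu_{n_{h_2}}(A_{h_2})$. The factor $e^{C\beta(K\log h_1)^2}$ is a deliberately generous allowance for the positive information about the environment above the base that is created by conditioning on $\sP=P$: keeping only the genuinely negative part of the conditioning — the minus seal $\partial_{\ast}P$, which is precisely what pins the escape to $w$ — and discarding the remaining plus content of $\sP$ apart from its base $\sB_x$ costs at most an energetic/combinatorial factor exponential in the number of cells of $\sB_x$, which conditions (2)--(3) of $\Gamma_{x,h_1}$ confine to $O((K\log h_1)^2)$; this is handled exactly as the base surgeries of Section~\ref{sec:base}. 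Multiplying the two estimates yields the claim, with all box-size dependence absorbed into the $1+o(1)$ factors via Corollary~\ref{cor:pillar-dependence-on-volume}.

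The hard part is precisely the continuation estimate, because of the tension between two opposite effects: conditioning on $\sP=P$ injects positive information, so the region above height $h_1$ is more plus than under $\mu^{\mp}_{\Z^3}$, which tends to \emph{increase} the continuation probability; while, since $\mu^{\mp}_{\Z^3}$ is not invariant under vertical translations, a connection from height $h_1$ to $h_1+h_2$ is intrinsically rarer than $A_{h_2}$, so a naive use of FKG or of monotonicity in boundary conditions pushes the inequality the wrong way. The resolution is to retain only the minus seal, to pay $e^{O(\beta\log^2 h_1)}$ to dispose of all positive content except the small base, and to exploit the single-cell property so that the translation-invariance reduction and the $\mu^{\mp,-h_1}_n\preceq\mu^{\mp}_n$ comparison become applicable. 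A secondary technicality is to rule out ``detours'' — $\ast$-paths that climb above height $h_1$, dip back into the slab $\llb0,h_1\rrb$ through an auxiliary plus cluster disjoint from $\sP$, and climb again — but since such a return cannot re-enter $\sP$ except at $w$ (its only cell in $\cL_{h_1-\frac12}$, isolated by minus spins), its contribution is subsumed by the same union bound over the nine exit cells.
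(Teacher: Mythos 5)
Your proposal follows essentially the same high-level route as the paper's proof of this claim: reveal the $(+)$-cluster $\sP$ of $x+(0,0,\tfrac12)$ in the slab up to height $h_1$, split the connection event at height $h_1$, invoke monotonicity in boundary conditions for the vertical shift and translation invariance in $xy$, and absorb the cost of the positive information injected by conditioning on $\sP$ into the $e^{C\beta(K\log h_1)^2}$ factor using items (2)--(3) of $\Gamma_{x,h_1}$. You also correctly diagnose the central tension: naive FKG/monotonicity points in the wrong direction, because the conditioning is partly positive while $\mu^\mp$ is more negative near height $h_1$ than near height $0$.

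That said, the continuation estimate as you describe it has the logic of the monotonicity and finite-energy steps reversed in a way that is not merely expository. You propose to \emph{retain} the minus seal $\partial_* P$ because it ``pins the escape to $w$'' and to pay $e^{O(\beta\log^2 h_1)}$ to ``dispose of'' the positive content except the base. The paper does the opposite, and for a reason: since $\theta_{h_1}A_{h_2}^{Y}$ is an increasing event, the minus seal is the \emph{harmless} part of the conditioning and is discarded by monotonicity (dropping minus spins only increases its probability, which is the direction of the upper bound we want); after the revealing step it plays no probabilistic role. What pins the escape is not the seal at all but the deterministic geometric fact that a $*$-path of plus cells realizing $A_h$ must pass through $w$ before it first rises above height $h_1-\tfrac12$, because $\sP\cap\cL_{h_1-1/2}=\{w\}$. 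The $e^{O(\beta\log^2 h_1)}$ factor is paid, via the finite-energy property, to \emph{remove} the surviving plus boundary spins after a domain-Markov simplification: the only cells of $\sigma(\sP)$ that abut the unrevealed region are those in $\sigma(\sP)\cap\cL_{1/2}$ (cardinality $O((K\log h_1)^2)$ by item (3)) and the single cell $w$; the rest of $\sigma(\sP)$ is internally shielded and contributes nothing. So the correct chain is: conditional law $\preceq \mu^{\sP^\mp}_n$ (bubble-filling and FKG), $\preceq \mu^{\sP^+}_n$ (drop the minus seal by monotonicity), $\leq e^{C\beta(|\sigma(\sP)\cap\cL_{1/2}|+1)}\,\mu^\mp_n$ (finite energy on the surviving plus), and then $\mu^\mp_n(\theta_{h_1}A_{h_2}^{Y})\leq\mu^\mp_n(A_{h_2}^{Y})$ and $xy$-translation invariance.

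Separately, your dismissal of detours does not hold up. A continuing $*$-path can climb above $h_1$ through one of the exit cells near $w$, dip back into $\llb 0,h_1\rrb$ through an unrevealed plus cell far from $\sP$, traverse an auxiliary plus cluster $P_1$ disjoint from $\sP$, and then climb above $h_1$ again through a region with no $\llb h_1,h\rrb$-contact with the exit cells. Such a path never ``re-enters $\sP$,'' so the isolation of $w$ by its minus neighbours does not constrain it, and the event $A_h\cap\{\sP=P\}$ is not contained in the union $\bigcup_z\theta_{h_1}A_{h_2}^{z}$ over the nine exit cells. Asserting the contribution is ``subsumed'' therefore leaves a genuine gap in your argument at this point.
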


\begin{claim}\label{claim:limiting-rate-1}
For every $\beta>\beta_0$, there exists a constant $c>0$ and choices of $C, K$ above, such that for every $h_1$ and $h_2 \in \llb \frac 12 h_1 , 2h_1\rrb$ sufficiently large, as long as $\lim_{h\to\infty} \frac{n_h}{h}=\infty$, we have   
\begin{align*}
\mu_{n_{h_1 + h_2}}(A_{h_1 + h_2}, \Gamma_{x,h_1}) \geq c \mu_{n_{h_1+h_2}} (A_{h_1 + h_2})\,.
\end{align*}
\end{claim}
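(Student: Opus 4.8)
The plan is to transfer the statement into the language of pillars and then exhibit, with probability bounded below by an absolute constant, the three structural features demanded by $\Gamma_{x,h_1}$. Fix $h=h_1+h_2$, $n=n_h$, $x=(\tfrac12,\tfrac12,0)$ and $T=\lfloor(1-C_0/\beta)h\rfloor$ with $C_0$ the constant of Lemma~\ref{lem:increment-height-equivalence}, so that $T\in\llbracket h/2,h\rrbracket$ for $\beta$ large. First I would use~\eqref{eq:equality-of-events} to replace $\mu_n(A_h)$ by $\mu_n(\hgt(\cP_x)\ge h)$ up to a factor $1\pm\epsilon_\beta$, together with the fact that on $\{\hgt(\cP_x)\ge h\}\subset\{\hgt(\cP_x)\ge 0\}$ one has $\sP\subset\sigma(\cP_x)$; consequently requirements (1)--(3) of $\Gamma_{x,h_1}$ follow from the corresponding \emph{pillar} events: that $\cL_{h_1-\frac12}$ meets $\sigma(\cP_x)$ in exactly one cell, that $|\cF(\cP_x)|\le Ch_1$ (which bounds the bounding face-set of $\sP$), and that $\sigma(\cP_x)\cap\cL_{\frac12}$ has diameter $\le K\log h_1$. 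Combining this with the estimate $\mu_n(\hgt(\cP_x)\ge h,\,A_h^c)\le\epsilon_\beta\,\mu_n(\hgt(\cP_x)\ge h)$ supplied by the proof of~\eqref{eq:equality-of-events}, it suffices to bound below, by an absolute constant, the $\mu_n(\,\cdot\mid\hgt(\cP_x)\ge h)$-probability of these three pillar events; one then divides through and absorbs the $\epsilon_\beta$ losses for $\beta$ large.

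The surface-area and base-diameter events are in fact typical. By Lemma~\ref{lem:increment-height-equivalence} we lie in $\mathbf I_{x,T}$ up to conditional probability $O(e^{-ch})$; Remark~\ref{rem:tame-conditional-on-height} (valid since $T\in\llbracket h/2,h\rrbracket$) then places us in $\bar{\mathbf I}_{x,T}$, and Proposition~\ref{prop:base-conditional-on-height} gives $\hgt(v_\Tsp)\le K\log h$ and $\diam(\rho(\sB_x))\le K\log h$, the excluded events all having conditional probability $o(1)$. On the intersection the spine has surface area $O(T)=O(h_1)$ by tameness, while the base is confined to a box of side $O(\log h)$ and hence has surface area $O(\log^3 h)\ll h_1$, so $|\cF(\cP_x)|=O(h_1)$; choosing $C$ above the implied constant and $K$ above that of Proposition~\ref{prop:base-conditional-on-height} (using $\log h\le 2\log h_1$) secures requirements (2)--(3) with conditional probability $1-o(1)$, and also forces $\hgt(v_\Tsp)<h_1-\tfrac12$, so that the level $h_1-\tfrac12$ lies strictly above the base, inside the spine, and among the increments $\sX_1,\dots,\sX_{\sT}$ rather than the remainder (since $\hgt(v_{T+1})\ge T+1>h_1$).

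The hard part will be requirement (1): that $\cL_{h_1-\frac12}$ meets $\cP_x$ in a single cell, i.e.\ that $h_1-\tfrac12$ is a cut-height. This is a genuinely positive-probability event rather than a high-probability one, and I would establish it by a surgery using the increment-straightening map already built in Section~\ref{sec:increment-exp-tail}. Let $\mathcal G$ (resp.\ $\mathcal D$) be the intersection of $\{\hgt(\cP_x)\ge h\}$, $\bar{\mathbf I}_{x,T}$ and the base bounds of the previous paragraph with the event that $h_1-\tfrac12$ \emph{is} (resp.\ is \emph{not}) a cut-height; on $\mathcal D$ the increment $\sX_{\Tsp+\tau_1}$ straddling $\cL_{h_1-\frac12}$ has height $\ge 2$ (a level strictly interior to an increment forces height at least $2$), hence $\fm(\sX_{\Tsp+\tau_1})\ge 6$. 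Applying the map $\Psi_{h_1}$ from the proof of Proposition~\ref{prop:increment-conditional-on-height} (the analogue of $\Psi_{\tau_1}$ applied to the increment meeting $\cL_{h_1-\frac12}$), which replaces $\sX_{\Tsp+\tau_1}$ by a stretch of $\hgt(\sX_{\Tsp+\tau_1})\ge 2$ trivial increments together with the usual cascade: this keeps $\hgt(\cP_x)$ fixed, stays in $\bar{\mathbf I}_{x,T}$, leaves the base untouched and only decreases $|\cF(\cP_x)|$, and makes every level strictly between the two neighbouring cut-points---in particular $h_1-\tfrac12$---a cut-height; thus $\Psi_{h_1}(\mathcal D)\subset\mathcal G$. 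By Proposition~\ref{prop:probability-ratio-increment}, $\mu_n(\cI)/\mu_n(\Psi_{h_1}(\cI))\le e^{-(\beta-C)\fm(\cI;\Psi_{h_1}(\cI))}$ with $\fm(\cI;\Psi_{h_1}(\cI))\ge\tfrac13\fm(\sX_{\Tsp+\tau_1})\ge 2$, and by the multiplicity count in the proof of Proposition~\ref{prop:increment-conditional-on-height} (at most $k$ admissible $\tau_1$, then at most $s^k$ pre-images with $\fm(\cI;\Psi_{h_1}(\cI))=k$ via Lemma~\ref{lem:multiplicity-increment}), summing over $k\ge 2$ gives $\mu_n(\mathcal D)\le C'e^{-(\beta-C)}\mu_n(\mathcal G)$. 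For $\beta$ large this yields $\mu_n(\mathcal G)\ge\tfrac12\mu_n(\{\hgt(\cP_x)\ge h\}\cap\{(2),(3),\text{base bounds}\})\ge\tfrac13\mu_n(\hgt(\cP_x)\ge h)$, and feeding this back into the reduction of the first paragraph completes the proof with, say, $c=\tfrac14$ once $\beta$ is large. The points requiring some care are verifying that $\Psi_{h_1}$ preserves each of the ``easy'' conditions, the matching of constants between $|\cF(\cP_x)|$ and $|\partial\sP|$, and the elementary height observation above.
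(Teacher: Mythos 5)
Your proposal is correct and follows essentially the same route as the paper: transfer $\Gamma_{x,h_1}\cap A_h$ to pillar events using $\sP\subset\sigma(\cP_x)$ and the $A_h \leftrightarrow \{\hgt(\cP_x)\geq h\}$ comparison, then show the pillar events hold with high conditional probability using Lemma~\ref{lem:increment-height-equivalence}, Remark~\ref{rem:tame-conditional-on-height}, and Proposition~\ref{prop:base-conditional-on-height}. The only place you deviate is item (1): the paper simply cites the exponential tails on increments (Proposition~\ref{prop:exp-tails-increments} is the inline citation, though the statement that precisely handles the \emph{random} increment index at a fixed half-integer level is really the first assertion of Proposition~\ref{prop:increment-conditional-on-height}), whereas you re-run the $\Psi_{h_1}$ map argument from that proposition's proof inline, adapted so that it manifestly lands in $\mathcal G$. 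That detour is sound and arrives at the same conclusion, namely that item (1) fails with conditional probability $O(e^{-(\beta-C)})$; so contrary to your remark, item (1) \emph{is} a high-probability event once $\beta$ is large---what you correctly identify is that, unlike items (2)--(3), its failure probability is a $\beta$-dependent constant and does not decay as $h\to\infty$, which is why the resulting constant $c$ depends on $\beta$ rather than being absorbed into a $1-o_h(1)$.
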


Clearly, combining Claims~\ref{claim:limiting-rate-2}--\ref{claim:limiting-rate-1} and taking logarithms on both sides concludes the proof. For ease of notation, set $h= h_1 + h_2$.

\medskip
\noindent \textbf{Proof of Claim~\ref{claim:limiting-rate-2}:} Since $h_1$ and $h_2$ are comparable, and $n_h$ is such that it diverges faster than $h$, by Corollary~\ref{cor:pillar-dependence-on-volume} and the equivalence~\eqref{eq:equality-of-events}, as argued before, incurring  errors that are decaying faster than any exponential in $h$, we can switch from $\mu_{n_h}$ to  $\mu_{n_{h_1}}$ and $\mu_{n_{h_2}}$; thus it will suffice for us to show the inequality 
\begin{align*}
\mu_{n_{h}} (A_{h},\Gamma_{x,h_1})\leq e^{C\beta (K\log h_1)^2} \mu_{n_{h}}(A_{h_1})\mu_{n_{h}}(A_{h_2})\,.
\end{align*}
 We begin by using the domain Markov property, the containment  $A_{h_1} \supset A_h$, and the measurability of $A_{h_1}\cap \Gamma_{x,h_1}$, and in particular $\sP$, with respect to $\sigma_{\llb -n_{h}, n_{h}\rrb ^2 \times \llb 0,h_1\rrb}$ to express 
\begin{align}\label{eq:subadditivity-condition-h1}
\mu_{n_h}(A_{h}, \Gamma_{x,h_1}) & = \mu_{n_{h}} (A_{h_1}, \Gamma_{x,h_1}) \E_{\sP} \big[ \mu_{n_h}\big(A_{h} \mid \sP)\mid  A_{h_1} \cap \Gamma_{x,h_1}\big] \nonumber \\ 
& \leq \mu_{n_h}(A_{h_1})  \E_{\sP} \big[ \mu_{n_h}\big(\theta_{h_1} A_{h_2}^Y\mid \sP)\mid  A_{h_1} \cap \Gamma_{x,h_1}\big]\,,
\end{align}
where $Y\in \cL_0$ is the projection of the singleton dictated by item (1) of $\Gamma_{x,h_1}$ when $A_{h_1}\cap \Gamma_{x,h_1}$ occurs. The expectations are with respect to the law of $\sP$ under $\mu_{n_h}$. 

We need to bound the latter term on the right-hand side of~\eqref{eq:subadditivity-condition-h1} by the quantity $e^{C\beta (K\log h_1)^2} \mu_{n_h}(\theta_{h_1} A_{h_2})$ to obtain the claim. We investigate this latter term as follows: notice that since $A_{h_1} \cap \Gamma_{x,h_1}$ are measurable with respect to the plus $*$-connected component $\sP$, we can condition on $\sP \in A_{h_1}\cap \Gamma_{x,h_1}$ by starting from the site at $x+(0,0,\frac 12)$ and only revealing its $*$-connected plus-component in $\Lambda_{n_h,n_h,h_1}\cap \cL_{>0}$. 

This revealing process exposes $\sP$, along with minus vertices along its entire boundary (sites  in $\cC(\mathbb Z^3 \setminus \sP)$ that are $*$-adjacent to $\sP$) inside $\Lambda_{n_h,n_h,h_1}\cap \cL_{>0}$. Let $\sigma(\sP)\subset \Lambda_{n_h,n_h,h_1}\cap \cL_{>0}$ be the set of sites ``interior to" $\sP$, so that if the revealing procedure revealed a finite (nearest-neighbor) connected component of minus spins, corresponding to a minus bubble in $\Lambda_{n_h,n_h, h_1}\cap \cL_{>0}$, set them to plus and continue revealing their interior; in this manner, $\sigma(\sP)$ are the sites which we know to be in the plus phase given $\sP$. 

Let $\sP^{\mp}$ boundary conditions on $\Lambda_{n_h,n_h, \infty}\setminus \sigma(\sP)$ be the $\mp$ boundary conditions that additionally have plus spins in all of $\sigma(\sP)$, and minus spins along the boundary of $\sigma(\sP)$ in $\Lambda_{n_h,n_h,h_1}\cap \cL_{>0}$. By domain Markov, these boundary conditions are equivalent to those that have the same minus spins, but only set $\sigma(\sP) \cap \cL_{\frac 12}$ and $Y+(0,0,h_1 - \frac 12)$ to plus. Then by monotonicity and the FKG inequality, we have that
\begin{align}\label{eq:subadditivity-condition-pillar}
\E_{\sP} [\mu_{n_h}(\theta_{h_1} A_{h_2}^Y \mid \sP) \mid  A_{h_1} \cap \Gamma_{x,h_1}]  \leq  \E_{\sP} \big[\mu_{n_h}^{\sP^{\mp}}(\theta_{h_1} A_{h_2}^{Y})\mid A_{h_1}\cap \Gamma_{x,h_1}\big]\,.
\end{align} 
But then, we are able to express for any such $\sP$ in $A_{h_1}\cap \Gamma_{x,h_1}$, 
\begin{align*}
\mu_{n_h}^{\sP^{\mp}} (\theta_{h_1} A_{h_2}^Y) \leq \mu_{n_h}^{\sP^+}(\theta_{h_1} A_{h_2}^Y)\leq e^{C\beta (|\sigma(\sP)\cap \cL_{\frac 12}|+1)} \mu_{n_h}(\theta_{h_1} A_{h_2}^{Y})
\end{align*}
where $\sP^+$ boundary conditions are $\mp$ boundary conditions that additionally have plus spins in $\sigma(\sP)\cap \cL_{\frac 12}$ and $Y+(0,0, h_1 - \frac 12)$; here, the first inequality was by monotonicity, and the second inequality holds for some universal constant $C$, by application of the \emph{finite energy property} of the Ising model to set all spins at height $\frac 12$ in $\sigma(\sP)$ and the spin at $Y+(0,0,h_1 - \frac 12)$ to  be plus. As noted earlier, for every $Y\in \cL_0$, $$\mu_{n_h}(\theta_{h_1}A^{Y}_{h_2})\leq \mu_{n_h}(A_{h_2}^Y)\,.$$ 

Since $\sP\in \Gamma_{x,h_1}$, the distance $|Y-x|\leq Ch_1$; then deterministically $d(Y,\partial \Lambda_{n_{h}})$ is proportional to $d(x,\partial \Lambda_{n_h})$ so that by the coupling of Corollary~\ref{cor:pillar-dependence-on-volume} and the comparison~\eqref{eq:equality-of-events}, up to an additive error of $\exp[- c d(Y,\partial \Lambda_{n_h})]$, which goes to zero faster than any exponential decay in $h$, we can replace $\mu_{n_h}(A_{h_2}^Y)$ by $\mu_{n_h}(A_{h_2}^x)= \mu_{n_h}(A_{h_2})$. 
Plugging this into~\eqref{eq:subadditivity-condition-h1}, and using the inequality~\eqref{eq:subadditivity-condition-pillar}, we see that 
\begin{align*}
\mu_{n_h}(A_{h}, \Gamma_{x,h_1}) &  \leq  \mu_{n_h}(A_{h_1})\cdot  \mu_{n_h}(A_{h_2})\cdot \E_{\sP} \big[ e^{C\beta (|\sigma(\sP)\cap \cL_{\frac 12}|+1)} \mid A_{h_1} \cap \Gamma_{x,h_1}\big] \\ 
& \leq \mu_{n_h}(A_{h_1}) \mu_{n_h}(A_{h_2}) \sup_{\sP\in A_{h_1}\cap \Gamma_{x,h_1}} \exp\big[{C\beta (|\sigma(\sP)\cap \cL_{\frac 12}|+1)}\big]\,.
\end{align*} 
By definition of $\Gamma_{x,h_1}$, any plus component $\sP$ in $\Gamma_{x,h_1}$ has $\diam (\sP\cap\cL_{\frac 12})\leq K\log h_1$ for some sufficiently large (but independent of other parameters) $K$, so that $|\sigma(\sP)\cap \cL_{\frac 12}|\leq (K\log h_1)^2$, concluding the proof. 

\medskip
\noindent \textbf{Proof of Claim~\ref{claim:limiting-rate-1}:} We wish to lower bound the probability $\mu_{n_{h}}(\Gamma_{x,h_1} \mid A_{h})$. We will use the equivalence~\eqref{eq:equality-of-events} to translate the conditioning on $A_h$ to conditioning on $\{\hgt(\cP_x) \geq h\}$. Let us express, 
\begin{align*}
\mu_{n_h} (\Gamma_{x,h_1}, \hgt(\cP_x)\geq h) & = \mu_{n_h}(\Gamma_{x,h_1} ,  A_{h}, \hgt(\cP_x)\geq h) + \mu_{n_h} (\Gamma_{x,h_1}, A_h^c, \hgt(\cP_x)\geq h)  \\
& \leq \mu_{n_h}(\Gamma_{x,h_1}, A_h) + \mu_{n_h} (\Gamma_{x,h_1} , \hgt(\cP_x)\geq h) \mu_{n_h} (A_h^c\mid \Gamma_{x,h_1}, \hgt(\cP_x) \geq h)\\ 
& \leq \mu_{n_h}(\Gamma_{x,h_1}, A_h) + \mu_{n_h} (\Gamma_{x,h_1} , \hgt(\cP_x)\geq h)\frac{\mu_{n_h} (A_h^c, \hgt(\cP_x)\geq h)}{\mu_{n_h}(\hgt(\cP_x) \geq h, \Gamma_{x,h_1})} \,.
\end{align*}
Assume for the moment that we also have that for every $\delta>0$ there is $h$ large enough such that for the appropriate choice of sufficiently large $C$ and $K$, 
\begin{align}\label{eq:gamma-given-height}
\mu_{n_h} \big( \Gamma_{x,h_1} \mid \hgt (\cP_x) \geq h \big) \geq 1-\delta\,.
\end{align}
Then, using also that $\mu_{n_h} (A_h^c \mid \hgt(\cP_x)\geq h)\leq 1/2$ for $\beta$ large enough (as mentioned above, this is at most $\epsilon_\beta$ by the classical Peierls argument), we would obtain 
\begin{align*}
\mu_{n_h}(\Gamma_{x,h_1}, A_h) & \geq [1- \tfrac{1}{2(1-\delta)}]\mu_{n_h}(\Gamma_{x,h_1},\hgt(\cP_x)\geq h) \geq  (1-\delta)[1-\tfrac1{2(1-\delta)}] \mu_{n_h} (\hgt(\cP_x)\geq h)\\ 
&  \geq \tfrac12(1-\delta-\tfrac12) \mu_{n_h} (A_h)
\end{align*}  
by~\eqref{eq:equality-of-events}. Therefore, it suffices for us to show~\eqref{eq:gamma-given-height}. For the choice of $T= \frac 12 h$, we can express,
\begin{align*}
\mu_{n_h} ( \Gamma_{x,h_1}^c \mid \hgt(\cP_x) \geq h) & \leq \mu_{n_h}(\mathbf I_{x,T}^c  \mid \hgt(\cP_x) \geq h)+ \mu_{n_h}(\bar{\mathbf I}_{x,T}^c  \mid  \mathbf I_{x,T}, \hgt(\cP_x) \geq h) \\\
& \quad + \mu_{n_h}( \Gamma_{x,h_1}^c \mid \bar{\mathbf I}_{x,T}, \hgt(\cP_x) \geq h)\,.
\end{align*}
The first term on the right-hand side is $o(1)$ as $h\to\infty$ as long as $\beta$ is sufficiently large by Lemma~\ref{lem:increment-height-equivalence}. 
The second term is bounded from above by $O(e^{ - c\beta h})$ for some universal $c>0$ by Remark~\ref{rem:tame-conditional-on-height}. 
For the third term, we can union bound by the conditional probabilities of violating each of the three events constituting $\Gamma_{x,h_1}$; moreover, it suffices to bound the corresponding probabilities for $\cP_x$ since $\sigma(\cP_x)\supset \sP$. 

The conditional probability of violating item (2) of the definition of $\Gamma_{x,h_1}$ is simply the probability of the base having surface area at least $(C-2R_0)h_1$, since we are conditioning on the spine being tame; by Proposition~\ref{prop:base-conditional-on-height} and the fact that $h_1$ and $h$ are comparable, this is $O(\exp({ - c\beta h_1^{1/3}}))$ as long as $C$ sufficiently large. The conditional probability of item (3) of $\Gamma_{x,h_1}$ is bounded by the conditional probability of the base having diameter at least $K\log h$ which is also $o(1)$ in $h$ as long as $K$ is large enough, by Proposition~\ref{prop:base-conditional-on-height}. Finally, the conditional probability of item (1) is bounded by the probability of the base having height at least $h_1$, which is again $o(1)$ since $h$ is comparable to $h_1$, or the increment intersecting height $h_1$ in the spine being non-trivial, which is at most $\delta/2$ for $\beta>\beta_0$ by Proposition~\ref{prop:exp-tails-increments}. 

Combining these estimates, one obtains the desired for $\beta>\beta_0$ once $h$ is large enough. 
\end{proof}

\subsection{Law of large numbers for the maximum}\label{subsec:max-lln}
In this section we use Proposition~\ref{prop:limiting-ldp-rate-pillar} to obtain a law of large numbers for the maximum of the 3D interface on $\Lambda_{n,n,\infty}$. The proof follows from a simple second moment method; the fact that  the correlations between large deviations of the pillar above $x$ and $y$ decays exponentially in $|x-y|$, follows from the equivalence between groups of walls and pillars, and the decay of correlations between groups of walls shown in Section~\ref{subsec:decorrelation}.

\begin{proof}[\textbf{\emph{Proof of~\eqref{eq:lln} in Theorem~\ref{mainthm:max-lln}}}]
Fix $\alpha_\beta$ to be that given by~\eqref{eq:ldp-rate} of Theorem~\ref{prop:limiting-ldp-rate-pillar}, equal to ~\eqref{eq:alpha-beta-def}. 
We need to show that for every $\epsilon>0$, 
\begin{align*}
\lim_{n\to\infty} \mu_n \Big(\frac{1}{\log n}\max_{x\in \cL_0} \hgt(\cP_x) \leq \frac{2}{\alpha_\beta} + \epsilon\Big) = 1\,, \quad \mbox{and}\quad 
\lim_{n\to\infty} \mu_n \Big(\frac{1}{\log n} \max_{x\in \cL_0} \hgt(\cP_x) \geq \frac{2}{\alpha_\beta} - \epsilon\Big) = 1\,.
\end{align*}
\noindent \textbf{Upper bound:}
To see an upper bound on the maximum of the interface, we use a union bound as follows: for any two sequences $a_n$ and $K_n$ going to $\infty$ as $n\to\infty$, we can write 
\begin{align*}
\mu_n\big(\max_{x\in \cL_0} \hgt(\cP_x) \geq K_n\big) \leq \sum_{x:\, d(x,\partial \Lambda_n)\leq a_n K_n}\mu_n(\hgt(\cP_x)\geq K_n) + \sum_{x:\, d(x,\partial \Lambda_n)\geq a_n K_n} \mu_n(\hgt(\cP_x)\geq K_n)\,.
\end{align*}
For all of the first summands, we use the estimate of Theorem~\ref{thm:dobrushin-rigidity}, that for every $x\in \cL_0 \cap \Lambda_n$ (including those close to the boundary $\partial \Lambda_n$), the first sum is bounded above by 
\begin{align*}
\sum_{d(x,\partial\Lambda_n)\leq a_n K_n} \mu_{n}(\hgt(\cP_x)\geq K_n) \leq 4a_n  nK_n \exp (-(4\beta - C)  K_n)\,,
\end{align*} 
for some universal constant $C$. 
For the second summands, since $x$ is such that $\frac{d(x,\Lambda_{n})}{K_n}\geq a_n$ which goes to infinity as $n\to\infty$, the conditions of~\eqref{eq:ldp-rate} of Proposition~\ref{prop:limiting-ldp-rate-pillar} are met, so as long as $a_n K_n \ll n$, 
\begin{align*}
\sum_{x:\, d(x,\partial \Lambda_n)\geq a_n K_n} \mu_n(\hgt(\cP_x)\geq K_n) \leq (n-a_n K_n)^2 \exp(- \alpha_\beta K_n + T_n)\,,
\end{align*}  
for some sequence $T_n = o(K_n)$. 
Taking $$K_n = \frac{2}{\alpha_\beta}\log n+\kappa_n\,,$$ for some sequence $\kappa_n = o(K_n)$ to be chosen subsequently in terms of $T_n$, we see that 
\begin{align*}
\mu_n\Big(\max_{x\in \cL_0} \hgt(\cP_x) \geq K_n\Big) \leq C a_n n \log n e^{-\frac{2(4\beta -  C)}{\alpha_\beta} \log n} + n^2 e^{- 2\log n - \alpha_\beta \kappa_n+ T_n}\,.
\end{align*}
Since $\alpha_\beta  - 4\beta \in [-  C, e^{-4\beta}]$, as long as $\beta$ is sufficiently large, we have that $\frac{2(4\beta -  C)}{\alpha_\beta} >1$, in which case the first term is $o(1)$ so long as, say, $a_n< n^{\frac {4\beta -  C}{\alpha_\beta}- \frac 12}$. At the same time, if we take $\kappa_n$ proportional to $T_n$ such that $\frac{\kappa_n}{T_n} >\frac{1}{\alpha_\beta}$ uniformly in $n$, the latter term is also $o(1)$. Since $T_n=o(K_n)$, also $\kappa_n = o(K_n)$, so that $K_n \leq (\frac{2}{\alpha_\beta} + \epsilon)\log n$ for every $\epsilon>0$ for large enough $n$.

\medskip
\noindent \textbf{Lower bound:} In order to obtain the matching lower bound, we use an easy second moment argument.   
Fix any small $\epsilon>0$ and take 
$$ K_n = \Big(\frac{2}{\alpha_\beta} - \epsilon \Big) \log n\,.$$
Now, begin by defining the subset of faces in $\cL_0\cap \Lambda_n$, 
\begin{align*}
\overline {\cL}_0 = \big\{(x_1,x_2,0)\in \cL_0: x_1= \tfrac 12 + i \lfloor K_n^3\rfloor , x_2= \tfrac 12 + j \lfloor K_n^3\rfloor \mbox{ where } (i,j)\in \llb 1- \tfrac{n}{2K_n^3}, \tfrac n{2K_n^3} -1\rrb^2 \big\}\,.
\end{align*} 
Then, we can define the random variable, 
\begin{align*}
Z= Z_{K_n} = \sum_{x\in \overline{\cL}_0} \one_{\{E_x\}}\,, \qquad \mbox{where} \qquad E_x = \{\hgt(\cP_x) \geq K_n\}\,.
\end{align*} 
First of all, notice that for the above choice of $K_n$, we have that for $n$ sufficiently large, 
\begin{align*}
\E[ Z ] \geq \Big(\frac{n}{K_n^3}\Big)^2 e^{- \alpha_\beta K_n -T_n} \geq \Big(\frac{n}{K_n^3}\Big)^2 e^{-2\log n + \epsilon  \alpha_\beta \log n -T_n} \geq n^{\delta}\,,
\end{align*}
for some $\delta>0$ (small depending on $\epsilon$), since $T_n =o(K_n) = o(\log n)$. 

We now wish to do a second moment estimate for $Z$ and use the fact that the events therein are weakly correlated (exponentially decaying in their distance), to show that for $K_n$ as above with any $\epsilon>0$, 
\begin{align}\label{eq:Paley-Zygmund}
\P (Z >0]) \geq \frac{(\E[Z])^2}{\E [ (Z)^2]} \geq 1-o(1)\,.
\end{align}
Expanding out $\E[ Z^2] = \E[Z]+ \sum_{x\neq y\in \bar{\cL}_0} \mu_n(E_x,E_y)$, by Corollary~\ref{cor:pillar-correlations}, we have  
\begin{align*}
|\mu_n(E_x)\mu_n(E_y)- \mu_n(E_x,E_y)| & \leq \|\mu_n(\cP_x\in \cdot)\mu_n(\cP_y\in \cdot)- \mu_n(\cP_x \in\cdot, \cP_y \in \cdot)\|_\tv \leq e^{ - c K_n^2}\,,
\end{align*} 
which is smaller than any polynomially decaying function of $n$ by the choice of $K_n$ and the fact that $|x-y|\geq \frac 14 K_n^3$. Therefore, we can bound 
\begin{align*}
\sum_{x,y\in \bar{\cL}_0: x\neq y} \mu_n (E_x, E_y)\leq \sum_{x,y\in \bar{\cL_0}, x\neq y} [ \mu_n(E_x)\mu_n(E_y)+e^{- cK_n^2} ] \leq (\E[Z])^2+ O(n^2 e^{-cK_n^2})\,.
\end{align*}
Plugging this bound in, we see that as $n\to\infty$,
\begin{align*}
\frac{(\E[Z])^2}{\E[Z^2]}\geq \frac{(\E[Z])^2}{\E[Z]+ (\E[Z])^2 + o(1)} \to 1\,,
\end{align*}
as long as $\E[Z]$ is diverging, which as noted earlier, is indeed the case for our choice of $K_n$. 
\end{proof}

\section{Finer properties of the increment sequence of the spine}\label{sec:mixing-stationarity}
In this section, we begin to analyze the shape of the pillars of the interface that attain the maximum of Section~\ref{subsec:max-lln}. We show that for tall pillars consisting of $T$ increments, their spine can be decomposed into an asymptotically (as you get further
from the base or tip) stationary sequence of weakly mixing increments. In particular, the increment sequence, viewed from the $(T/2)$-th increment converges weakly to a stationary bi-infinite sequence of increments, with polynomially decaying bounds on its mixing rate. 

Since this section (and most of the remainder of the paper) is concerned with the properties of pillars under the event $\bar{\mathbf I}_{x,T}$, let us henceforth take any sequence $n  = n_T$ and $x= x_T$ satisfying $n_T\gg T$ and $d(x_T,\partial \Lambda_{n_T})\gg T$ and denote the Ising measure on $\Lambda_{n_T}$ conditional on $\bar{\mathbf I}_{x_T,T}$ by
\[
\pi_T (\cdot ) : = \mu_n(\cdot \mid \bar{\mathbf I}_{x,T})\,.
\]

\noindent In \S\ref{subsec:increment-mixing}, we prove a spatial mixing estimate for the increment sequence $(\sX_1,.., \sX_T)$:

\begin{proposition}\label{prop:increment-mixing}
 For every $\gamma$, there exist $\beta_0,K$, and $C$ such that for every $\beta>\beta_0$ and $K\log T \leq j< k \leq T$, 
\begin{align*}
\sup_{E_j\subset \fX^{j-K\log T}, E_k\subset \fX^{T-k}}\big|\pi_T((\sX_{K\log T},&\ldots, \sX_j)\in E_j,   (\sX_k, \ldots, \sX_{T})\in E_k) \\ 
 & -\pi_T((\sX_{K\log T},\ldots,\sX_j)\in E_j)\pi_T((\sX_k, \ldots, \sX_{T})\in E_k)\big|\leq C  |k-j|^{-\gamma}\,.
\end{align*}
\end{proposition}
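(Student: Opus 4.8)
\textbf{Proof strategy for Proposition~\ref{prop:increment-mixing}.}
The plan is to prove the mixing bound by constructing a family of \emph{$2$-to-$2$ maps} on pairs of interfaces, in the spirit of the maps $\Psi_i$ and $\Phi_{\mathscr B}$ but now acting on a product of two copies of $\bar{\mathbf I}_{x,T}$. The goal is \emph{not} to show that some event is unlikely, but rather to show that two events have roughly equal probability; concretely, fixing the first block $(\sX_{K\log T},\ldots,\sX_j)=(X_{K\log T},\ldots,X_j)$ and the second block $(\sX_k,\ldots,\sX_T)=(Y_k,\ldots,Y_T)$, we want to compare the joint probability of these two blocks being realized to the product of their individual probabilities. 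First I would reduce to this statement via the usual characterization of total-variation distance between a joint law and the product of its marginals as a supremum of correlation differences over product events $E_j\times E_k$; this lets us work with fixed increment sequences on the two blocks and integrate at the end. Observe that the ``interior'' stretch of increments $(\sX_{j+1},\ldots,\sX_{k-1})$, together with the truncated interface below $v_{\Tsp}$ (controlled to have diameter $O(\log T)$ by Proposition~\ref{prop:base-exp-tail}, which is why the indices start at $K\log T$), plays the role of a buffer: if the buffer were long enough that the two blocks could not ``see'' each other through the function $\g$, the correlation would vanish.

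The core construction would be a swapping map: given two interfaces $\cI^{(1)},\cI^{(2)}\in\bar{\mathbf I}_{x,T}$, each decomposed into (base/truncation, first block, middle stretch, second block, remainder), produce $\cI^{(1)'}$ by grafting the second block of $\cI^{(2)}$ onto $\cI^{(1)}$ (re-rooted at the appropriate cut-point and with the middle stretch of $\cI^{(1)}$ retained), and symmetrically $\cI^{(2)'}$. Because this is a swap, total excess area is exactly conserved, $\fm(\cI^{(1)};\cI^{(1)'})+\fm(\cI^{(2)};\cI^{(2)'}) = 0$ in the sense that the two interfaces in the image pair reassemble the same faces as the two in the domain pair up to horizontal and vertical shifts; so there is no \emph{relative energy gain}, and the only thing to control is the change in the $\g$-sum from Theorem~\ref{thm:cluster-expansion}. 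By pairing the interfaces and tracking each face of the original pair to a face of the image pair (a face in the block that got swapped from $\cI^{(1)}$ is identified with its image in $\cI^{(2)'}$, etc.), the difference $\sum\g - \sum\g$ telescopes into local terms of the form $|\g(f,\cI)-\g(\tilde f,\cI')|\le \bar K e^{-\bar c\br}$, where $\br$ is now governed by the distance from $f$ to where the swap actually changed the local picture. For a face in the second block, that distance is at least the number of cut-points separating it from $\sX_j$, i.e.\ at least $k-j$ minus the (random, exponentially-tailed) heights of the intervening increments; using the exponential tails on increment excess areas from Proposition~\ref{prop:exp-tails-increments} and Corollary~\ref{cor:increment-interaction-bound} one gets that the total $\g$-discrepancy is $O(e^{-\bar c' (k-j)})$ on the bulk of the probability space, and polynomially small after integrating over the bad event where the middle increments are atypically large. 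This yields $\mu_n(\cI^{(1)})\mu_n(\cI^{(2)}) = \mu_n(\cI^{(1)'})\mu_n(\cI^{(2)'})(1+O((k-j)^{-\gamma}))$, and the multiplicity of the $2$-to-$2$ map is $1$ (it is a bijection on pairs with matching block structure), so summing over the two fixed blocks and the free data gives $\pi_T(E_j\times E_k) = \pi_T(E_j)\pi_T(E_k)(1+O((k-j)^{-\gamma}))$ up to the additive error from the bad event. One must also verify that the map sends $\bar{\mathbf I}_{x,T}\times\bar{\mathbf I}_{x,T}$ into itself: this needs tameness (so the swapped blocks remain confined to the relevant cylinders $\cC_{v_\Tsp,x,T}$ around their new roots) and the control on $\hgt(v_\Tsp)$ and $\diam(\rho(\sB_x))$ from Proposition~\ref{prop:base-exp-tail}, which is the role of restricting to indices $\ge K\log T$.

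The main obstacle I expect is controlling the \emph{interaction through the base}: unlike the middle stretch, the truncated interface $\cI_{\textsc{tr}}$ below $v_{\Tsp}$ is common to both blocks and is not a buffer in the vertical direction, so a face in the second block can in principle have its $\br$-radius attained by a face in the base rather than by a face in the intervening spine. This is exactly the subtlety that made the base the delicate part of the analysis in Section~\ref{sec:base}; here one resolves it by noting that once $\hgt(v_\Tsp)=O(\log T)$ and $\diam(\rho(\sB_x))=O(\log T)$ (high probability under $\pi_T$ by Proposition~\ref{prop:base-exp-tail}), a face in the $k$-th increment is at vertical distance $\gtrsim k$ from the base — it is separated from $\cI_{\textsc{tr}}$ by the $k-\Tsp$ cut-points — so the base contributes only $O(e^{-\bar c(k-\Tsp)})\le O(e^{-\bar c(k-K\log T)})$, which is absorbed. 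Handling the conditioning $\bar{\mathbf I}_{x,T}$ itself (the map must preserve it, and the normalizations $\pi_T$ vs.\ $\mu_n$ must be matched) is a bookkeeping nuisance but follows the template already used repeatedly: bound $\mu_n(\bar{\mathbf I}_{x,T})$ above and below using Lemma~\ref{lem:tame} and divide through. The exponent $\gamma$ being arbitrary (rather than a fixed power like $10$) is not an issue since the $\g$-discrepancy is genuinely exponentially small in $k-j$ on the good event, and the bad-event probability is itself exponentially small in $k-j$ by Corollary~\ref{cor:increment-interaction-bound}, so any fixed polynomial rate is attainable for $\beta$ large enough.
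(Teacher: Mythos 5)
You correctly identify the paper's overall strategy: a $2$-to-$2$ swapping map on pairs of interfaces, under which total excess area is exactly conserved (no relative energy gain), so the whole estimate reduces to controlling the $\g$-discrepancy in Theorem~\ref{thm:cluster-expansion}; the restriction to indices $\geq K\log T$ does serve to neutralize the base via Proposition~\ref{prop:base-exp-tail}; the map must preserve tameness; and the normalizations $\pi_T$ vs.\ $\mu_n$ cancel out. However, your map splices at the \emph{fixed} cut-point between the middle stretch and the second block (i.e.\ at index $k$), and this makes the central estimate fail. When you pair a face $f$ in the second block of $\cI^{(1)}$ with the corresponding face $\tilde f$ in $\cI^{(2)'}$, the radius $\br(f,\cI^{(1)};\tilde f,\cI^{(2)'})$ is governed by how far below the splice the two \emph{middle stretches} of $\cI^{(1)}$ and $\cI^{(2)}$ agree --- these are unrevealed, essentially independent, and typically disagree already at index $k-1$ --- not by the distance to $\sX_j$ as you assert. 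Symmetrically, a face in $\cI^{(1)}$'s middle stretch near the splice has its radius governed by where the two fixed second blocks $Z_k$ and $Z_k'$ first disagree, which is generically at index $k$ itself. Both sources of disagreement sit at $O(1)$ distance from the splice, so the total $\g$-discrepancy is of order one, not $o(1)$; the exponential tails on increment sizes cannot help here, because the obstruction is a \emph{mismatch} between the two interfaces near the cut, not atypical size of either. This is exactly the ``$1\pm\epsilon_\beta$ weight distortion'' that \S\ref{subsec:strategy-mix} flags as the failure mode of the naive two-to-two swap, and your claimed bound $O(e^{-\bar c'(k-j)})$ for the discrepancy on the bulk does not hold.

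The ingredient you are missing is that the splice must occur at a \emph{random} index $\tau_L\in\llb j,k\rrb$ chosen so that both increment sequences consist of the trivial increment $X_\trivincr$ on a window of $2L$ consecutive indices centered at $\tau_L$, with $L\asymp\gamma\log|k-j|$ (the event $\Gamma_{\trivincr,L}$). Splicing in the middle of such a window makes every face in all four interfaces have a local neighborhood congruent, out to radius $\gtrsim L$, to the neighborhood of its paired face: the $L$ trivial increments on either side of the cut are literally identical across all four interfaces. Thus the per-face $\g$-discrepancy decays like $e^{-\bar c(L+d)}$ where $d$ is the cut-point distance to $\tau_L$, and the total is $O(e^{-\bar c L/2})=O(|k-j|^{-2\gamma})$ after summing (using that increments above the window have controlled surface areas, the event $\Gamma_{i>\tau_L}$, handled by Corollary~\ref{cor:increment-interaction-bound}). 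The probability that no such window exists among the $\sim|k-j|$ candidate positions is at most $\bigl(1-(1-\epsilon_\beta)^{4L}\bigr)^{|k-j|/(2L)}\leq e^{-|k-j|^{3/4}}$ for $\beta$ large, since each increment is trivial with probability at least $1-\epsilon_\beta$ by Proposition~\ref{prop:exp-tails-increments}. Without a trivial-window splice, the rest of your argument cannot be salvaged by adjusting constants or by further conditioning.
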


\noindent In \S\ref{subsec:increment-stationarity}, we prove that the increment sequence is asymptotically stationary away from the base and the tip. 
\begin{proposition}\label{prop:increment-stationarity}
For every $\gamma$, there exist $\beta_0, K$, and $C$ such that for every $\beta >\beta_0$, every $K\log T \leq j\leq T$ and $K\log T' \leq j'\leq T'$, and every $s\leq (T- j) \wedge (T' - j')$, 
\begin{align*}
\sup_{E\in \fX^s} |\pi_T((\sX_j, \ldots, \sX_{j+s}) \in E) &  - \pi_{T'}((\sX_{j'}, \ldots, \sX_{j'+s}) \in E)|\\
&  \leq C \big[(j-K\log T)\wedge (j'-K\log T')\big]^{ - \gamma} \vee \big[(T-j-s) \wedge (T' - j'-s)\big]^{ - \gamma}\,. 
\end{align*}
\end{proposition}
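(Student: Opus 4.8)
\textbf{Proof proposal for Proposition~\ref{prop:increment-stationarity}.}

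The plan is to establish asymptotic stationarity as a consequence of the mixing estimate (Proposition~\ref{prop:increment-mixing}) together with a single ``$2$-to-$2$'' swapping map that compares increment sequences in $\pi_T$ and $\pi_{T'}$. First I would reduce to the case where $j, j', s$ place the window $(\sX_j,\ldots,\sX_{j+s})$ far from both the base (at distance $\geq K\log T$, already assumed) and the tip; the error terms on the right-hand side are precisely the two distances $(j-K\log T)\wedge(j'-K\log T')$ and $(T-j-s)\wedge(T'-j'-s)$, so by monotonicity of the bound it suffices to prove the statement when these two quantities are both at least some large multiple of any fixed threshold, the complementary regime being trivially covered by the fact that probabilities are at most $1$. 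Write $m$ for the smaller of the two ``base distances'' and $M$ for the smaller of the two ``tip distances.'' The goal is then to show the two windowed laws differ in total variation by $O(m^{-\gamma} \vee M^{-\gamma})$.

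The core step is a coupling/swapping argument in the spirit of \S\ref{subsec:strategy-stat}: given an interface $\cI$ contributing to $\bar{\mathbf I}_{x,T}$ and an interface $\cI'$ contributing to $\bar{\mathbf I}_{x,T'}$, I would define a map on the \emph{pair} $(\cI,\cI')$ that excises the stretch of increments $(\sX_j,\ldots,\sX_{j+s})$ from $\cI$ and the stretch $(\sX_{j'},\ldots,\sX_{j'+s})$ from $\cI'$ and swaps them, re-stacking via Lemma~\ref{lem:spine-increment-bijection}. Because the two swapped stretches have the same length $s+1$ and because, by Proposition~\ref{prop:exp-tails-increments}/Corollary~\ref{cor:increment-interaction-bound}, the increments immediately above and below each stretch have exponential tails on their excess area, the heights $v_{j}, v_{j+s+1}$ and $v_{j'}, v_{j'+s+1}$ differ from the ``expected'' linear growth by $O(1)$-tailed amounts; the truncations $\cI_{\textsc{tr}}$ and the portions of the spines outside the windows are left in place (up to the rigid vertical/horizontal shift forced by re-stacking). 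The key point, exactly as in the mixing and stationarity maps already used in the paper, is that there is \emph{no net energy gain}: $\fm$ is preserved by the swap up to the $O(1)$ boundary-increment discrepancies, so the ratio $\mu_n(\cI)\mu_{n'}(\cI')/\mu_n(\widetilde\cI)\mu_{n'}(\widetilde\cI')$ is controlled purely through the $\g$-terms of Theorem~\ref{thm:cluster-expansion}. Summing the $\g$-decay $\bar K e^{-\bar c\br}$ over the shifted faces, the radius $\br$ to the nearest modified (hence possibly-non-congruent) face is the distance to the nearer of the two window-endpoints, which in turn is bounded below by (a constant times) $m$ below and $M$ above — so the total $\g$-cost is $O(e^{-\bar c'(m\wedge M)})$ on the event that the boundary increments are not anomalously large, and polynomially small after integrating the exponential tails of those boundary increments against the rare event that $m\wedge M$ of them conspire. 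Combined with a multiplicity bound of the usual $s^{k}$-type (the pre-image is identified by the two excised connected face-sets), this yields that the swap changes the pair-probability by a factor $1+O((m\wedge M)^{-\gamma})$; integrating over all configurations gives $\|\pi_T(\text{window}\in\cdot) - \pi_{T'}(\text{window}\in\cdot)\|_{\tv} = O(m^{-\gamma}\vee M^{-\gamma})$ after also invoking Proposition~\ref{prop:increment-mixing} to decouple the window from the far portions of each spine (this is what lets us treat the ``outside'' of the windows as essentially frozen, so that the swap is measure-preserving up to the stated error). Actually the cleanest route is: (i) use Proposition~\ref{prop:increment-mixing} twice to replace each of $\pi_T, \pi_{T'}$ by the product of the law of the window and the law of the complement, incurring error $O(m^{-\gamma}) + O(M^{-\gamma})$; (ii) on the product measures, the swap map is an honest bijection up to boundary-increment bookkeeping, and the $\g$-cost comparison shows the two windowed marginals agree to within $O(e^{-\bar c'(m\wedge M)})$; (iii) combine.

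The main obstacle I anticipate is step (ii): making the swap map genuinely well-defined and measure-comparable when $T \neq T'$ and $j \neq j'$, because the two spines live in cylinders of different radii and the re-stacked pillars must remain tame and admissible. One has to verify that after excising and swapping, the resulting spines are still tame with respect to $\bar{\mathbf I}_{x,T}$ and $\bar{\mathbf I}_{x,T'}$ respectively — this needs the exponential tail on the excised stretches' excess area (Corollary~\ref{cor:increment-interaction-bound}) to ensure the swapped-in stretch does not blow the $r_0 T$ or $r_0 T'$ budget, which forces a restriction to the overwhelmingly-likely event that both excised stretches have excess area $O(s)$ with the right constant, absorbed into the error term. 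A secondary subtlety is that the $\g$-radius $\br$ for a face just outside the window can be \emph{attained} by a face inside the window in the \emph{other} interface after swapping; handling this requires the same ``distance to nearest modified face $\geq$ number of intervening cut-points'' bookkeeping used in the proof of Proposition~\ref{prop:probability-ratio-increment}, now applied symmetrically above and below the window, which is where the two separate decay rates $m^{-\gamma}$ and $M^{-\gamma}$ (rather than their minimum alone) enter. I would handle all of this by quoting the machinery of \S\ref{subsec:increment-mixing}--\S\ref{subsec:increment-stationarity} essentially verbatim, since the stationarity map is a minor variant of the mixing map with the ``delete and replace by trivial'' step replaced by ``delete and transplant.''
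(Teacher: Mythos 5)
There is a genuine gap in the swap map you propose. You describe excising the exact windows $(\sX_j,\ldots,\sX_{j+s})$ and $(\sX_{j'},\ldots,\sX_{j'+s})$ and transplanting them, then claim the resulting $\g$-cost is $O(e^{-\bar c'(m\wedge M)})$ because ``the radius $\br$ to the nearest modified face is the distance to the nearer of the two window-endpoints.'' That is only true for faces that are already far from the window (e.g.\ in $\cI_{\textsc{tr}}$ or near the base); for a face at increment index $j-1$ or $j+s+1$, i.e.\ immediately adjacent to the splice, the congruence radius $\br(f,\cI;f',\widetilde{\cI})$ is $O(1)$, since generically $\sX_j\neq \sX_{j'}$ and the local neighborhoods after the swap are not congruent. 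By~\eqref{eq:g-uniform-bound}--\eqref{eq:g-exponential-decay} these boundary faces each contribute $\Theta(1)$ to the $\g$-difference, so the weight ratio is $1\pm\Theta(1)$, not $1+O((m\wedge M)^{-\gamma})$. Restricting to the event that the boundary increments are ``not anomalously large'' does not help: small nontrivial increments still break congruence, so $\br$ stays $O(1)$.

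The missing idea — and the crux of both $\Phi_{\textsc{mix}}$ and $\Phi_{\textsc{stat}}$ — is to \emph{not} splice at the window endpoints, but instead to locate, simultaneously in both interfaces, runs of $2L+1$ \emph{trivial} increments (with $L\asymp\log D$) at matching offsets $\tau_L^-$ below and $\tau_L^+$ above the window, and to splice at the middles of these trivial runs. Trivial increments are exactly identical in both interfaces, so the spliced pair is congruent for a distance $L/2$ in either direction from the cut; this is what drives $\br\geq L/2 + i$ for the $i$-th face out from the splice, making the total $\g$-cost $O(e^{-\bar c L/2})=O(D^{-\gamma})$. The probability of failing to find such matching trivial runs within distance $D$ of the window is super-polynomially small (by stochastic domination of the sequence $\one_{\{\sX_i=X_\trivincr\}}$ by Bernoulli$(1-\epsilon_\beta)$), and this is absorbed into the error. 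Your step (i) — invoking Proposition~\ref{prop:increment-mixing} to ``freeze'' the complement — also does not substitute for this: mixing bounds correlations within a single $\pi_T$, but the statement to prove compares marginal laws across two different measures $\pi_T$ and $\pi_{T'}$, which is precisely the job of the $2$-to-$2$ map and cannot be obtained from mixing alone. Once the splicing is done at trivial-increment runs, the rest of your outline (multiplicity via connected-face-set encoding, integrability of tails, bijection on a restricted good set $\Gamma_L$) is sound and matches the paper.
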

\noindent In \S\ref{subsec:limiting-distribution} we combine Propositions~\ref{prop:increment-mixing}--\ref{prop:increment-stationarity}, to define a limiting distribution on increment sequences.

\begin{corollary}\label{cor:limiting-distribution}
For every $\gamma$ large, let $\beta>\beta_0$ where $\beta_0$ is the one given by Propositions~\ref{prop:increment-mixing}--\ref{prop:increment-stationarity} for that $\gamma$. There exists a stationary distribution $\nu= \nu_\beta$ on $\fX^{\Z}$ so that, if  $a_T$ has $({a_T\vee (T- a_T)})/{\log T}\to\infty$ as $T\to\infty$, then the law of $(\ldots, \sX_{a_T-1}, \sX_{a_T} , \sX_{a_T + 1}, \ldots)$  under $\pi_T$ converges weakly to $\nu((\ldots,\sX_{-1}, \sX_{0}, \sX_1,\ldots)\in \cdot)$. 
In particular, the distribution $\nu$ satisfies 
\begin{enumerate}
    \item There exists $c>0$ (independent of $\beta$) such that $\nu(\fm(\sX_0)\geq r)\leq \exp[- c\beta r]$ for every $r$.
    \item There exists $C= C_\beta$ such that $\|\nu(\sX_{0} \in \cdot, \sX_k\in \cdot) - \nu(\sX_0 \in \cdot)\nu(\sX_k \in \cdot)\|_\tv \leq Ck^{-\gamma}$ for every $k$.
\end{enumerate}
\end{corollary}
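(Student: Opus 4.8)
The plan is to derive Corollary~\ref{cor:limiting-distribution} as a soft consequence of the mixing estimate (Proposition~\ref{prop:increment-mixing}) and the asymptotic stationarity estimate (Proposition~\ref{prop:increment-stationarity}), together with the exponential tail on individual increments (Proposition~\ref{prop:exp-tails-increments}). The key point is that these two propositions supply exactly the two ingredients needed to exhibit a consistent family of finite-dimensional distributions on $\fX^{\Z}$ via Kolmogorov extension, and then to verify the claimed weak convergence, stationarity, and the two quantitative bounds.

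First I would construct the candidate limit $\nu$. Fix any window size $s$ and a cylinder event $E\in\fX^{[0,s]}$. For a sequence $T\to\infty$ pick indices $a_T$ with $a_T/\log T\to\infty$ and $(T-a_T)/\log T\to\infty$ (e.g.\ $a_T=\lfloor T/2\rfloor$); by Proposition~\ref{prop:increment-stationarity} applied with $j=a_T$, $j'=a_{T'}$, the numbers $\pi_T((\sX_{a_T},\ldots,\sX_{a_T+s})\in E)$ form a Cauchy sequence in $T$ (the right-hand side of that proposition tends to $0$ since $\gamma>0$ and both $a_T-K\log T$ and $T-a_T-s$ diverge), so the limit exists; call it $\nu_s(E)$. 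Stationarity of Proposition~\ref{prop:increment-stationarity} under the shift of the base index shows this limit does not depend on the particular choice of $a_T$, and a further application with $j=a_T$ versus $j=a_T+1$ shows $\nu_s$ is shift-invariant, hence the family $\{\nu_s\}$ is consistent (marginalizing a length-$(s+1)$ cylinder onto its first $s$ coordinates is again a limit of the corresponding $\pi_T$-probabilities, which agree in the limit). Tightness on the (countable discrete) space $\fX$ at each coordinate is immediate from the uniform tail bound $\pi_T(\fm(\sX_{a_T})\geq r)\leq e^{-c\beta r}$ of Proposition~\ref{prop:exp-tails-increments} (uniform in $T$ and in the admissible truncation), so by Kolmogorov's extension theorem there is a unique measure $\nu$ on $\fX^{\Z}$ with these finite-dimensional marginals, and it is stationary.

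Next I would prove the weak-convergence statement for a general sequence $a_T$ with $(a_T\vee(T-a_T))/\log T\to\infty$. Here one must be slightly careful: the hypothesis only forces \emph{one} of $a_T$, $T-a_T$ to be $\gg\log T$; the other may be small or even $O(1)$. However, for a fixed window $[a_T,a_T+s]$ of \emph{finite} length $s$, Proposition~\ref{prop:increment-stationarity} comparing $\pi_T$ at index $a_T$ with $\pi_{T'}$ at index $a_{T'}=\lfloor T'/2\rfloor$ gives a bound of the form $C[(a_T-K\log T)\wedge(\cdot)]^{-\gamma}\vee C[(T-a_T-s)\wedge(\cdot)]^{-\gamma}$; since at least one of $a_T$, $T-a_T$ diverges faster than $\log T$, and $\gamma$ is as large as we like, the relevant term still goes to $0$ once $s$ is fixed (the ``bad'' side contributes a term that does not vanish only if \emph{both} sides fail to diverge, which is excluded). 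So $\pi_T((\sX_{a_T},\ldots,\sX_{a_T+s})\in E)\to\nu_s(E)$ for every fixed cylinder, which is precisely weak convergence of the re-centered increment sequence to $\nu$. The tail bound~(1) then follows by passing to the limit in Proposition~\ref{prop:exp-tails-increments} (a pointwise limit of probabilities of the increasing-in-$r$ complement events), and the mixing bound~(2) follows by passing to the limit in Proposition~\ref{prop:increment-mixing} with $j=a_T$, $k=a_T+k$ for fixed $k$: the left-hand side there converges to $\|\nu(\sX_0\in\cdot,\sX_k\in\cdot)-\nu(\sX_0\in\cdot)\nu(\sX_k\in\cdot)\|_{\tv}$ along the subsequence (using total-variation distance on the $\sigma$-algebra generated by the two coordinate windows, which is generated by cylinders), and the right-hand side $C|k|^{-\gamma}$ is uniform in $T$.

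The main obstacle I anticipate is not any single deep estimate — those are all quoted — but rather the bookkeeping around the asymmetric hypothesis $(a_T\vee(T-a_T))/\log T\to\infty$ and the fact that Proposition~\ref{prop:increment-stationarity}'s error bound mixes a ``distance from base'' term and a ``distance from tip'' term with a $\vee$ (maximum), so one genuinely needs both the bulk-to-bulk comparison and a separate argument handling windows that are close to one end. The clean way to organize this is: (a) first establish everything for $a_T=\lfloor T/2\rfloor$, getting $\nu$ and properties (1)–(2); (b) then for general $a_T$ satisfying the hypothesis, note that by symmetry (reflecting the spine, which swaps the roles of base and tip up to the $O(\log T)$ base/remainder corrections controlled by Proposition~\ref{prop:base-exp-tail} and Lemma~\ref{lem:tip-highest-increment}) it suffices to treat the case $a_T/\log T\to\infty$, and then Proposition~\ref{prop:increment-stationarity} with $j=a_T$, $j'=\lfloor T'/2\rfloor$ directly gives the convergence for each fixed window. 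A secondary, minor point to be careful about is that all the conditional estimates in Sections~\ref{sec:increment-exp-tail}–\ref{sec:mixing-stationarity} are stated under $\bar{\mathbf I}_{x,T}$ (tame interfaces) rather than $\mathbf I_{x,T}$; since $\pi_T$ is defined as conditioning on $\bar{\mathbf I}_{x,T}$ this is consistent, and one only needs Lemma~\ref{lem:tame} to know $\pi_T$ is a legitimate approximation of $\mu_n(\cdot\mid\mathbf I_{x,T})$, which is not even required for the statement of the corollary as written.
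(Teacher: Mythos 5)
Your overall strategy exactly matches the paper: use Proposition~\ref{prop:increment-stationarity} to show the cylinder marginals around $\lfloor T/2\rfloor$ are Cauchy, build $\nu$ via Kolmogorov consistency, then read off~(1) from Proposition~\ref{prop:exp-tails-increments} and~(2) from Proposition~\ref{prop:increment-mixing}. That part is correct.

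There is, however, a genuine error in your handling of the hypothesis on $a_T$. You correctly observe that as written, the condition $(a_T\vee(T-a_T))/\log T\to\infty$ is vacuous (indeed $a_T\vee(T-a_T)\geq T/2$ always), and you then try to argue that the convergence nevertheless holds even when one of $a_T$, $T-a_T$ is $O(\log T)$ or smaller. This is false, and your argument misreads the error bound. The bound in Proposition~\ref{prop:increment-stationarity} is
\[
C\big[(j-K\log T)\wedge(j'-K\log T')\big]^{-\gamma}\;\vee\;C\big[(T-j-s)\wedge(T'-j'-s)\big]^{-\gamma}\,,
\]
a \emph{maximum} of two terms. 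For it to vanish, \emph{both} terms must vanish; in particular, taking $j'=\lfloor T'/2\rfloor$ as reference, one needs $a_T-K\log T\to\infty$ to kill the first term \emph{and} $T-a_T-s\to\infty$ to kill the second. Your sentence ``the `bad' side contributes a term that does not vanish only if both sides fail to diverge'' inverts the logic: the base-side term $\big[a_T-K\log T\big]^{-\gamma}$ stays bounded away from zero as soon as $a_T=O(\log T)$, irrespective of how large $T-a_T$ is. (Indeed, the proposition cannot even be invoked unless $j\geq K\log T$.) The proposed rescue via ``reflecting the spine'' does not exist either: the increment sequence is not reversible, since the base of the pillar is anchored to the rest of the interface (with the strong interactions controlled in Section~\ref{sec:base}) while the tip is free; there is no measure-preserving map on interfaces that swaps the two ends, and at the level of the limiting law $\nu$ one would anyway be comparing with the base-conditioned marginals $\pi_T(\sX_1\in\cdot)$, which provably differ from $\nu(\sX_0\in\cdot)$.

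The resolution is that the hypothesis in the corollary should read $(a_T\wedge(T-a_T))/\log T\to\infty$, i.e., both ends must be far; this is also what the paper's own proof implicitly assumes (it reduces to $a_T\leq T/2$ and then uses the bound $C(a_T-K\log T)^{-\gamma}$, which requires $a_T\gg\log T$). Under the corrected hypothesis your step~(b) is unnecessary and the argument closes exactly as in the paper. You should drop the reflection step and instead simply observe that, after the WLOG reduction $a_T\leq T/2$, both $a_T-K\log T\to\infty$ and $T-a_T-s\to\infty$ hold, so Proposition~\ref{prop:increment-stationarity} applies directly with $j=a_T$ and $j'=\lfloor T'/2\rfloor$.
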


The key step in the proofs of Propositions~\ref{prop:increment-mixing}--\ref{prop:increment-stationarity} is the use of what we call ``two-to-two" maps, which are bijections on the set of \emph{pairs} of interfaces $\bar{\mathbf I}_{x,T} \times \bar{\mathbf I}_{x,T}$, in contrast to all the maps we have applied up to this point. The reason for this is that any ``one-to-one" map $\Phi$ that changes an increment $\sX_i$ sustains a multiplicative cost of $e^{\pm C\fm(\sX_i)}$ in the ratio $\mu_n(\cI)/\mu_n(\Phi(\cI))$, which would overwhelm the upper bounds we wish to attain. ``Two-to-two" maps give us a mechanism of avoiding any such costs, and ensuring all faces in the pair $(\cI,\cI')$ are identified with faces in $\Phi(\cI, \cI')$ with which they have congruent local neighborhoods. We explain this in more detail in Sections~\ref{subsec:strategy-mix}--\ref{subsec:strategy-stat}.

\subsection{Proof of Proposition~\ref{prop:increment-mixing}: mixing properties of the increment sequence}\label{subsec:increment-mixing}

We wish to show that the correlations between the $j$-th and $k$-th increments decay polynomially fast in their distance, with the exponent of the polynomial increasing with $\beta$.

Fix any $\gamma$ and let $K$ be such that if $c_\sB$ is the constant from Proposition~\ref{prop:base-exp-tail}, $c_{\sB} \beta K>\gamma$. Next, fix $K\log T < j< k<T$, and let $L = \lceil \frac {4\gamma}{\bar c} \log |k-j|\rceil$; due to our freedom to take $C$ as desired, we may assume without loss that $|k-j|$ is sufficiently large. Fix any $E_j\in \fX^{j-K\log T}$, $E_k\in \fX^{T-k}$, and, in order to simplify notation, let us denote the tuples $\sZ_j = (\sX_{K\log T}, \ldots, \sX_j)$ and $\sZ_k = (\sX_k, \ldots, \sX_{T})$, with fixed instantiations $Z_j = (X_{K\log T}, \ldots, X_j)$ and $Z_k = (X_k, \ldots, X_T)$.  

Let $\cA_{jk}$ denote the set of all $T$-admissible truncated interfaces, increment sequences $(X_i)_{i \in \llb j+1, k-1\rrb}$, and remainder increment $X_{>T}$. 
For any triplet $(A, Z_j, Z_k)$ where $A= A_{jk}\in \cA_{jk}$, we write $\pi_T(Z_j,Z_k,A)$ to denote the probability that the random interface under $\pi_T$ has $\sZ_j = Z_j, \sZ_k = Z_k$ and has $\cI_{\textsc {tr}}$, $(\sX_i)_{i\in \llb j+1, k-1 \rrb}$ and $\sX_{>T}$ agreeing with $A_{jk}$.

We begin by expressing the left-hand side in the proposition as  
\begin{align*}
\sum_{\substack{Z_j\in E_j, Z_k \in E_k \\ Z_j' \in \fX^{j-K\log T}, Z_k'\in \fX^{T-k}}} \Big[\sum_{A,A'\in \cA_{jk}} \pi ( Z_j, Z_k, A) \pi_T(Z_j', Z_k', A') - \sum_{\tilde A, \tilde A' \in \cA_{jk}} \pi_T(Z_j, Z_k', \tilde A) \pi_T(Z_j', Z_k, \tilde A')\Big]
\end{align*}
Define a set of nice interfaces $\Gamma_L$ for which we have good decorrelation between $\sZ_j$ and $\sZ_k$. 
First, let $\Gamma_{\trivincr,L} = \Gamma_{\trivincr, L}(j,k)$ be the set of pairs of increment sequences $\fX^T \times \fX^T$ for which there is a stretch of $2L$ consecutive indices between $j$ and $k$ on which both $A$ and $A'$ have trivial increments. That is, 
\begin{align*}
\Gamma_{\trivincr,L} = \{ ((X_i)_{i\leq T}, (X_i')_{i\leq T}): \exists \tau_L \in \llb j, k\rrb, (X_{\tau_L-L}, \ldots, X_{\tau_L+ L}) = (X'_{\tau_L - L},\ldots, X'_{\tau_L+L}) = (X_\trivincr, \ldots, X_{\trivincr})\}.
\end{align*}
Abusing notation, under the event $\Gamma_{\varnothing,L}$, let $\tau_L$ be the smallest index greater than $j$ such that the stretch of length $2L$ centered at $\tau$ satisfies the condition of $\Gamma_{\varnothing,L}$.
We can now define a map $\Phi_{\textsc{mix}}$ on pairs of increment sequences, that swaps the increment stretches above $\tau_L$: see Figure~\ref{fig:mixing-map} for a visualization.

\begin{figure}
\vspace{-0.3cm}
\centering
  \begin{tikzpicture}
    \node (fig1) at (-4.75,0) {
	\includegraphics[width=.350\textwidth]{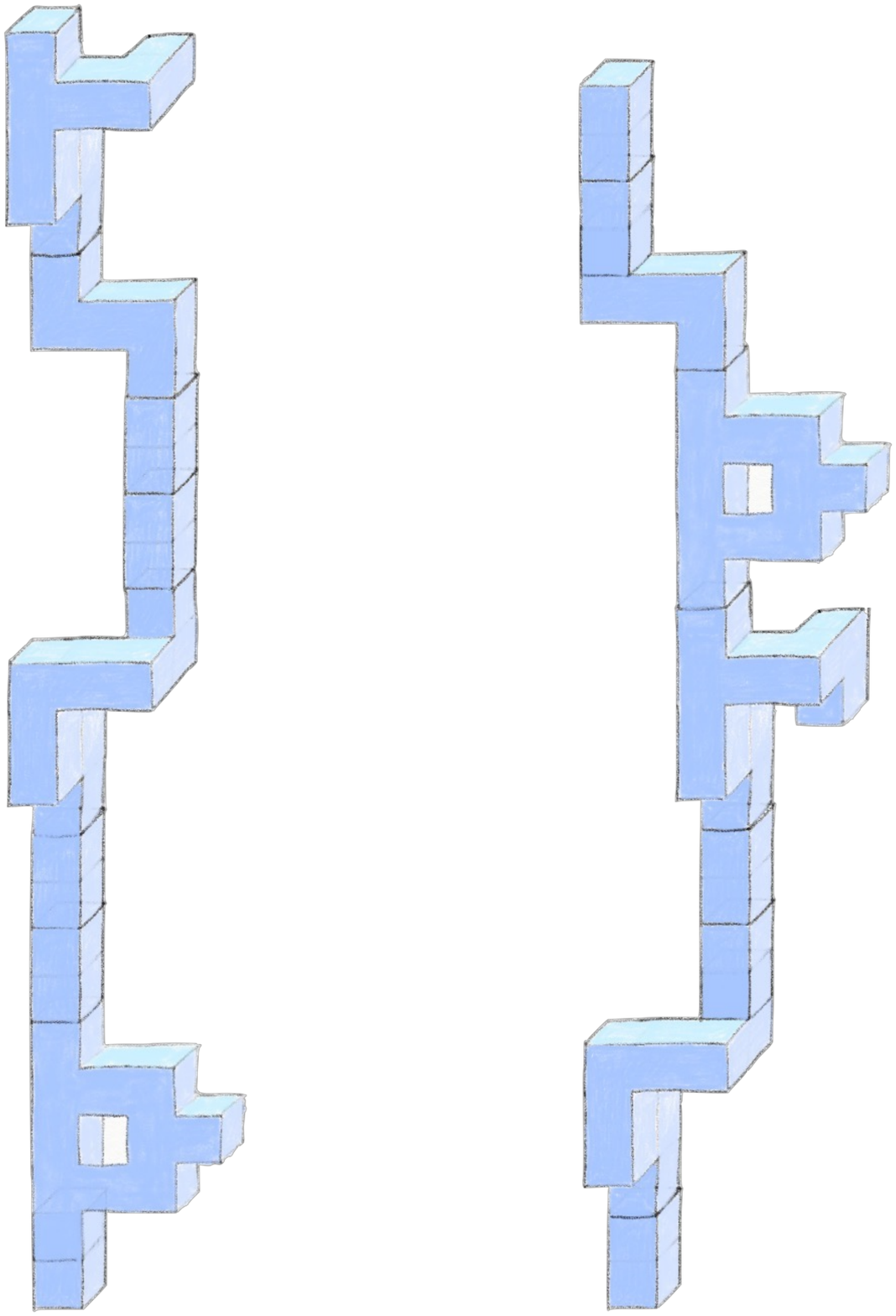}
	};
    \node (fig2) at (4.75,.1) {
    \includegraphics[width=.350\textwidth]{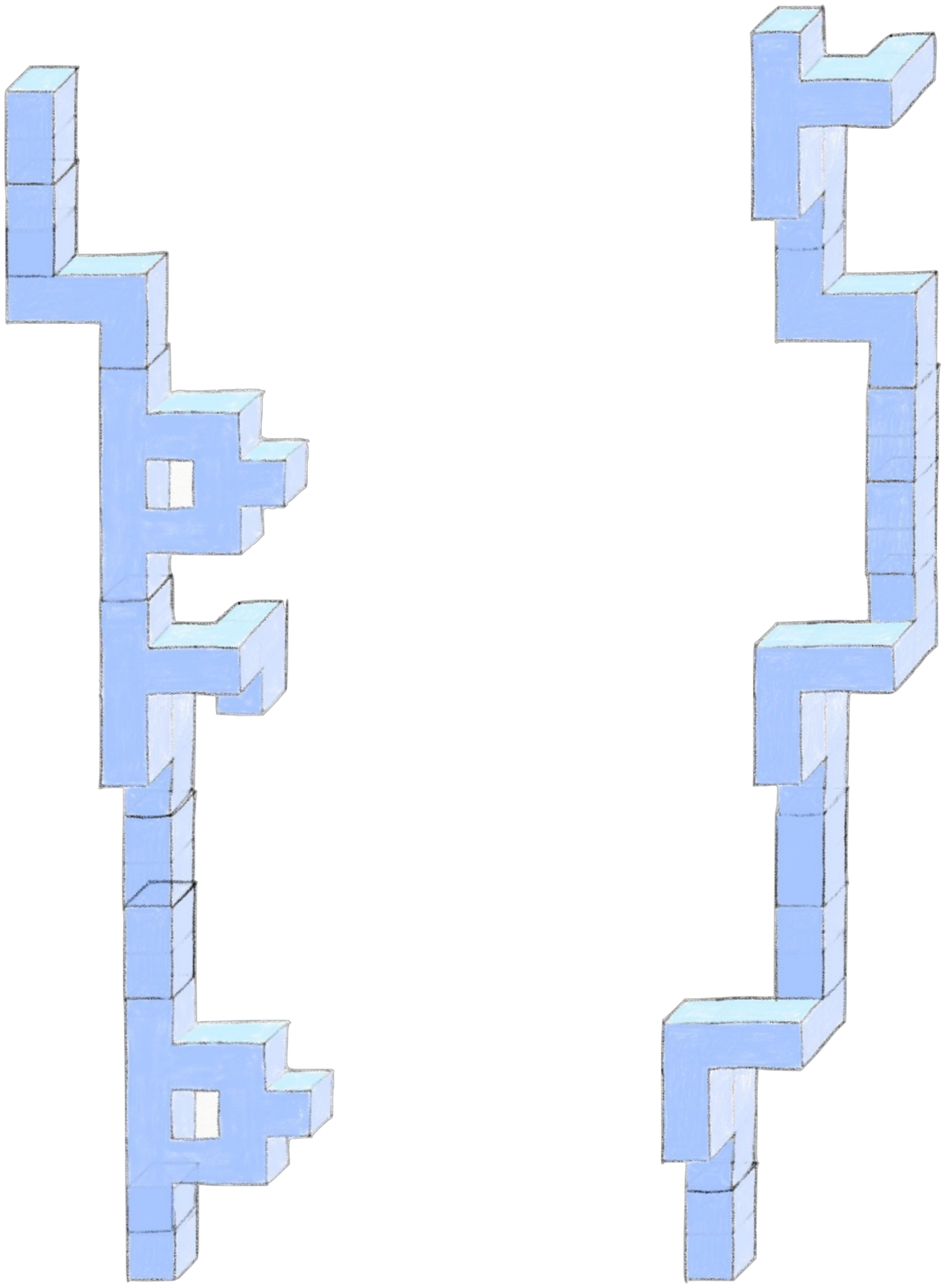}
    };
    \draw [decorate,decoration={brace,amplitude=7.5pt}, thick]
(-6,3.65) -- (-6,-1.2);
    \draw [decorate,decoration={brace,amplitude=7.5pt}, thick]
(-4.1,-1.4) -- (-4.1,3.3); 
    \draw[<->, thick] (-4.5, 1.5) to [out = 120, in = 60] (-5.6,1.5);
     \node at (2.4, -1.3) {$2L$};
     \draw[->] (2.4, -1.45)--(2.4, -1.95);
     \draw[->] (2.4, -1.15)--(2.4, -.6);
     
     \draw[<-] (3.15, -1.3)--(3.3, -1.3);
     \node at (3.5, -1.3) {$\tau_L$};
     
        \draw [decorate,decoration={brace,amplitude=5pt}]
(-3.5,-.93) -- (-3.5,.35) node [black,midway,xshift=-0.4cm] 
{\footnotesize $X'_{k}$};

        \draw [decorate,decoration={brace,amplitude=5pt}]
(-7.2,-.9) -- (-7.2,.4) node [black,midway,xshift=-0.5cm] 
{\footnotesize $X_{k}$};

        \draw [decorate,decoration={brace,amplitude=3pt}]
(-3.9,-3.36) -- (-3.9,-2.84) node [black,midway,xshift=-0.4cm] 
{\footnotesize $X'_{j}$};

        \draw [decorate,decoration={brace,amplitude=5pt}]
(-7.15,-3.35) -- (-7.15,-2) node [black,midway,xshift=-0.5cm] 
{\footnotesize $X_{j}$};

          \node at (6.95, -1.2) {$2L$};
     \draw[->] (6.95, -1.35)--(6.95, -1.85);
     \draw[->] (6.95, -1.05)--(6.95, -.5);

      \draw[->, thick] (-1.5,2) -- (1.5,2);
    \node at (0,2.4) {$\Phi_{\textsc {mix}} = \Phi_1^2 \times \Phi_2^1$};

  \end{tikzpicture}
\vspace{-0.5cm}
 \caption{The map $\Phi_{\textsc{mix}}= \Phi_1^2 \times \Phi_2^1$ acting on a pair of increment sequences in $\Gamma_{\trivincr,L}$}
  \label{fig:mixing-map}
\vspace{-0.2cm}
\end{figure}

\begin{definition}
For each $j,k$, let $\Phi_{\textsc{mix}} = \Phi_{\textsc{mix}}(j,k):(\fX^T \times \fX_{\textsc{rem}})^2 \to (\fX^T \times \fX_{\textsc{rem}})^2$ be given as follows. For any pair of increment sequences $(X^{(1)},X^{(2)}) = ((X^{(1)}_i)_{i\leq T}, X^{(1)}_{>T},(X^{(2)}_i)_{i\leq T},X^{(2)}_{>T})$ let $\Phi_{\textsc{mix}}(X^{(1)}, X^{(2)}) = (\Phi_1^2(X^{(1)},X^{(2)}), \Phi_2^1(X^{(1)}, X^{(2)}))$ be the pair of increment sequences attained as follows: if $(X^{(1)}, X^{(2)})\notin \Gamma_{\varnothing,L}$, let $\Phi_{\textsc{mix}}(X^{(1)}, X^{(2)})=(X^{(1)}, X^{(2)})$; otherwise
\begin{enumerate}
\item Find the first run of $2L$ consecutive indices between $j$ and $k$ on which both $X^{(1)}$ and $X^{(2)}$ are trivial increments, and call the middle index of this run $\tau_L\in \llb j,k\rrb$. 
\item Let $\Phi_1^2 ( X^{(1)}, X^{(2)})$ have increment sequence given by 
\begin{align*}
\Phi_{1}^{2}(X^{(1)}, X^{(2)}) = (X_1^{(1)},\ldots, X^{(1)}_{j},\ldots, X^{(1)}_{\tau_L}, X^{(2)}_{\tau_L + 1}, \ldots, X^{(2)}_{k},\ldots, X^{(2)}_{>T})\,.
\end{align*}
\item Let $\Phi^1_2(X^{(1)}, X^{(2)})$ have increment sequence given by 
\begin{align*}
\Phi^1_2(X^{(1)}, X^{(2)})= (X_1^{(2)},\ldots,X^{(2)}_{j},\ldots,X^{(2)}_{\tau_L},X^{(1)}_{\tau_L + 1},\ldots, X^{(1)}_k,\ldots,X^{(1)}_{>T})\,.
\end{align*}
\end{enumerate}
Abusing notation, we define $\Phi_{\textsc{mix}}$ on $\bar{\mathbf I}_{x,T} \times \bar{\mathbf I}_{x,T}$ that uses the same truncations of the pair $(\cI, \cI')$ and applies the map $\Phi$ to their respective pairs of increment sequences in their pillars $\cP_x$ and $\cP_x'$. If the two interfaces are both tame and also satisfy $\Tsp \vee \Tsp'\leq K\log T$, the resulting pair of interfaces would be in $\mathbf I_{x,T} \times \mathbf I_{x,T}$.
\end{definition}

\subsubsection{Strategy of the map $\Phi_{\textsc{mix}}$}\label{subsec:strategy-mix}
We briefly motivate the construction of the map $\Phi_{\textsc{mix}}$. We first describe the complications that would arise if we used a map that sent one interface to another interface, instead of acting on pairs of interfaces.  In order to prove a mixing property on the increment sequence, one would want to construct a map which maps an interface with an increment $X_j$ and an increment $X_k$, to an interface with some other increment $X_{j}'$ and the same $X_k$, say having $\fm(X_j) = \fm(X_j')$. If the weight distortion of such a map is $o(|k-j|^{-c})$ for some $c>0$, we will have shown that  that conditioning on the presence of the increment $X_j$ vs.\ $X_j'$ does not influence the conditional probability of $X_k$. Unlike the maps in Sections~\ref{sec:increment-exp-tail}--\ref{sec:base} there is no energy gain in such a map; however, the replacement of $X_j$ by $X_j'$ inevitably costs an $e^{C(|X_j|\vee |X_j'|)}$ in the weight ratio, coming from the uniform bound on $\g$~\eqref{eq:g-uniform-bound}. 

In order to obtain ratios of weights that are $o(1)$ in $|k-j|$, we use a second interface, whose increment sequence has $X_j'$ and $X_k'$, and we demonstrate that the sequence is mixing by showing that the probabilities of a pair of interfaces having increment pairs $\{(X_j,X_k),(X_j', X_k')\}$ is close to the probability of the pair having $\{(X_j, X_k'), (X_j',X_k)\}$. Then, in the control of the $\g$ term, we could identify faces from $X_j,X_k$ with one another and $X_j',X_k'$ with one another across the pairs of interfaces. However, a naive application of this kind of map would lead to a $1\pm \epsilon_\beta$ weight distortion, rather than one that is $1+o(1)$ in $|k-j|$. More precisely, every increment would feel the change in the $\g$ term in terms of its distance to the increment where we spliced the interface to perform the swap---in particular, the increments near the splicing location, if they disagree between the pair of interfaces, will contribute a constant, but not $o(1)$ to the weight distortion.  

To improve this to something decaying polynomially in $|k-j|$, the map $\Phi_{\textsc{mix}}$ relies on the existence of a sequence of consecutive  increments of logarithmic length in $|k-j|$, that are trivial in both interfaces. In that case, after the splicing, for every face in either of the interfaces, the radius of congruence is bounded by half the length of the consecutive sequence of interfaces, and by~\eqref{eq:g-exponential-decay}, the weight distortion is at most polynomially decaying in $|k-j|$ for large enough $\beta$, as desired. Refer to Figure~\ref{fig:mixing-map} for a visualization.

\subsubsection{Analysis of the map $\Phi_{\textsc{mix}}$}
We now use the map $\Phi_{\textsc{mix}}$ to define a good set of pairs of increment sequences, refining the set $\Gamma_{\varnothing,L}$, on which we will have good control on the ratio of probabilities under $\Phi_{\textsc{mix}}$. Let $\Gamma_L$ be the set of  $(\cI, \cI')\in \bar{\mathbf I}_{x,T}\times \bar{\mathbf I}_{x,T}$  such that its pair of increment sequences are in $\Gamma_{\trivincr,L}$, and additionally having 
\begin{enumerate}
\item Their source point indices $\Tsp, \Tsp'$ are both less than $K\log T$; denote this event $\Gamma_\Tsp$.
\item The pair of interfaces $(\cI, \cI')$ are such that $\Phi_{\textsc {mix}}(\cI,\cI')$ are both tame; denote this event $\bar \Gamma$. 
\item Their increment sequences $(X_i)$ and $(X_i')$ satisfy 
\begin{align*}
|\cF(X_{>T})| e^{ - \bar c (T+1-\tau_L+L)} + \sum_{i} |\cF(X_{\tau_L+i})| e^{ - \bar c (L+ i)} \leq e^{ - \bar c L /2}\,.
\end{align*}
and analogously for $(X_i')$; denote this event $\Gamma_{i>\tau_L}$. 
\end{enumerate}

We will separately consider the cases where $(A,Z_j, Z_k)\times( A', Z_j', Z_k')$ and $(\tilde A, Z_j', Z_k)\times(\tilde A', Z_k', Z_j)$ are in $\Gamma_L$ and the cases when they aren't: without loss of generality, let us consider the former pairs (the latter estimate would hold after swapping $Z_j'$ with $Z_j$). The contribution from pairs of interfaces where one is not in $\Gamma_L$ are bounded above by the sum of 
\begin{align*}
\sum_{\substack{ Z_j\in E_j, Z_k \in E_k,  Z_j', Z_k', A,A': \\ (A, Z_j, Z_k), (A', Z_j', Z_k')\in \Gamma_L^c}} \pi_T( Z_j, Z_k, A) \pi_T ( Z_j', Z_k', A') 
& \leq \pi_T^{\otimes 2} (\Gamma_L^c)\,.
\end{align*}
\begin{lemma}\label{lem:good-pairs-of-interfaces}
For the choices of $\gamma, K, L$ above, for $\beta>\beta_0$, we have $\pi_T^{\otimes 2}(\Gamma_L^c)\leq |k-j|^{-\gamma}$. 
\end{lemma}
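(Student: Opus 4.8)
The plan is to bound $\pi_T^{\otimes 2}(\Gamma_L^c)$ by a union bound over the failure of each of the four defining conditions of $\Gamma_L$: the pair of increment sequences lying in $\Gamma_{\trivincr,L}$, the source-point indices $\Tsp,\Tsp'$ being at most $K\log T$ (event $\Gamma_\Tsp$), the image pair $\Phi_{\textsc{mix}}(\cI,\cI')$ being tame (event $\bar\Gamma$), and the tail condition on the increments above $\tau_L$ (event $\Gamma_{i>\tau_L}$). Each piece should be exponentially small in either $T$ or $L=\lceil\frac{4\gamma}{\bar c}\log|k-j|\rceil$, hence polynomially small in $|k-j|$ with exponent that can be made $>\gamma$ by the choices of $K$ and $\beta_0$.

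First I would handle $\Gamma_\Tsp^c$: by Proposition~\ref{prop:base-exp-tail}, $\pi_T(\hgt(v_\Tsp)\geq K\log T)\leq \exp(-c_\sB\beta K\log T)\leq T^{-\gamma}$ since we chose $K$ so that $c_\sB\beta K>\gamma$, and this upgrades to an $\Tsp\leq K\log T$ bound via the remark following Proposition~\ref{prop:base-exp-tail}; a union bound over the two interfaces costs a factor $2$. Next, for $\bar\Gamma^c$, note that $\Phi_{\textsc{mix}}$ only permutes increments strictly above $\tau_L$ between the two pillars, so the excess area and height of each image spine are bounded by the sum of the corresponding quantities of the two input spines; hence $\{\Phi_{\textsc{mix}}(\cI,\cI')\text{ not tame}\}$ is contained in an event that $\fm(\cS_x)$ or $\hgt(\cS_x)$ of one of the inputs exceeds $\frac12 r_0 T$, which by Lemma~\ref{lem:tame} (applied under $\pi_T=\mu_n(\cdot\mid\bar{\mathbf I}_{x,T})$, where spines are already tame) is in fact a null event, or at worst exponentially small in $T$ after adjusting constants. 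For $\Gamma_{i>\tau_L}^c$, I would condition on the index $\tau_L$ and the increments up to it, then apply Corollary~\ref{cor:increment-interaction-bound} (its second, conditional form, with $i_0=\tau_L$ and the appropriate shift by $L$): the quantity $|\cF(\sX_{>T})|e^{-\bar c(T+1-\tau_L+L)}+\sum_i|\cF(\sX_{\tau_L+i})|e^{-\bar c(L+i)}$ is $e^{-\bar c L}$ times exactly the kind of exponentially-weighted sum controlled there, so it exceeds $e^{-\bar c L/2}$ with probability $O(\exp(-\tfrac12 c_0\beta(e^{\bar c L/2}-C)))$, which is much smaller than $|k-j|^{-\gamma}$; summing over the at most $T$ possible values of $\tau_L$ costs only a polynomial factor, absorbed for $|k-j|$ large.

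The main obstacle, and the step requiring genuine care, is bounding $\P(\Gamma_{\trivincr,L}^c)$, i.e.\ showing that with high probability there is a run of $2L$ consecutive trivial increments between $j$ and $k$ in \emph{both} interfaces simultaneously. The natural approach is to partition $\llb j+1,k\rrb$ into roughly $|k-j|/(2L)$ disjoint blocks of length $2L$, and use Proposition~\ref{prop:exp-tails-increments} together with the exponential-tail / near-independence structure of the increment sequence to show each block is ``all trivial in both interfaces'' with probability bounded below by a constant $p=p(\beta,L)$, then argue the blocks are sufficiently weakly dependent (revealing increments from the bottom up, as in Corollary~\ref{cor:increment-interaction-bound}, gives stochastic domination by an i.i.d.\ sequence) that the probability all blocks fail is at most $(1-p)^{\Theta(|k-j|/L)}$. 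The delicate point is that a single increment is trivial only with probability bounded away from $1$ (not close to $1$), so a block of $2L$ trivial increments has probability like $q^{2L}$ for some $q<1$, making $p$ itself only $|k-j|^{-O(1)}$-ish; one must then check that $(1-p)^{|k-j|/(2L)}$ is still $\leq|k-j|^{-\gamma}$, which holds because $|k-j|/(2L)=\Theta(|k-j|/\log|k-j|)$ grows polynomially while $p$ decays only polynomially in $|k-j|$ — so for $\beta$ large enough (making $q$ close enough to the single-increment trivial probability, and that probability close to $1$ in $\beta$) the exponent works out. I would make this precise by first establishing, via Proposition~\ref{prop:exp-tails-increments}, that $\pi_T(\sX_{\Tsp+i}=X_\trivincr\mid\text{lower increments})\geq 1-\epsilon_\beta$ with $\epsilon_\beta\to0$, so a block of $2L$ consecutive trivial increments in one interface has conditional probability at least $(1-\epsilon_\beta)^{2L}\geq e^{-2L}$ for $\beta$ large; squaring for the two independent-given-truncation interfaces and combining over $\Theta(|k-j|/L)$ blocks yields the bound, provided $\bar c$, $\gamma$, and $\beta_0$ are chosen compatibly — which is exactly the freedom reserved in the statement.
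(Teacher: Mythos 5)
Your decomposition of $\Gamma_L^c$ and the tools you invoke for each piece (Proposition~\ref{prop:base-exp-tail} for $\Gamma_\Tsp$, Lemma~\ref{lem:tame} for $\bar\Gamma$, stochastic domination from Proposition~\ref{prop:exp-tails-increments} for $\Gamma_{\trivincr,L}$, Corollary~\ref{cor:increment-interaction-bound} for $\Gamma_{i>\tau_L}$) match the paper's proof. There is, however, a genuine error in the explicit computation for $\Gamma_{\trivincr,L}^c$.

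You write that a block of $2L$ trivial increments in one interface has probability at least $(1-\epsilon_\beta)^{2L}\geq e^{-2L}$. This bound is far too lossy: with $L=\lceil\frac{4\gamma}{\bar c}\log|k-j|\rceil$, one has $e^{-4L}\leq|k-j|^{-16\gamma/\bar c}$, so the per-block success probability $p$ you get is $|k-j|^{-16\gamma/\bar c}$. But $\gamma$ is taken \emph{large} (e.g., $\gamma=20$), so $16\gamma/\bar c>1$, and then $(1-p)^{|k-j|/(2L)}\approx\exp[-|k-j|^{1-16\gamma/\bar c}/(2L)]$ tends to $1$, not to $0$. Your preceding sentence correctly identifies that the argument must exploit $\epsilon_\beta\to 0$ (``making $q$ close enough \dots''), but the line $(1-\epsilon_\beta)^{2L}\geq e^{-2L}$ discards exactly this dependence. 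The bound one actually needs is $(1-\epsilon_\beta)^{4L}\geq\exp[-8\epsilon_\beta L]\geq|k-j|^{-32\gamma\epsilon_\beta/\bar c}$, and then $\beta_0$ is chosen so that $\epsilon_\beta$ is small enough (depending on $\gamma,\bar c$) to make the exponent $32\gamma\epsilon_\beta/\bar c$ strictly less than $1$; the paper arranges for $(1-\epsilon_\beta)^{4L}|k-j|/(2L)\geq|k-j|^{3/4}$, yielding $\pi_T^{\otimes 2}(\Gamma_{\trivincr,L}^c\mid\Gamma_\Tsp)\leq\exp[-|k-j|^{3/4}]$. The quantitative interplay between $\epsilon_\beta$, $L$, and $\gamma$ is precisely the delicate point, and it must be carried through rather than replaced by the $\beta$-independent bound $e^{-2L}$.

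One further minor remark: the union bound over $\tau_L$ you suggest for $\Gamma_{i>\tau_L}^c$ is unnecessary here (though harmless). Since $\tau_L$ is the \emph{first} admissible center of a trivial run above $j$, conditioning on its value pins down the increments at and below $\tau_L+L$ without revealing anything about the increments above $\tau_L+L$ — so Corollary~\ref{cor:increment-interaction-bound} applies directly, uniformly over $\tau_L$. The analogous union bound becomes genuinely necessary only in the stationarity proof (Proposition~\ref{prop:increment-stationarity}), where the minimality of $\tau_L^-$ does constrain the increments the corollary concerns.
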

\begin{proof}[\emph{\textbf{Proof of Lemma~\ref{lem:good-pairs-of-interfaces}}}] By a union bound,  we can express 
\[ 
\pi_T^{\otimes 2}(\Gamma_L^c)\leq  2 \pi_T(\Gamma_{\Tsp}^c) + \pi_T^{\otimes 2}( \bar \Gamma^c \mid \Gamma_\Tsp) + \pi_T^{\otimes 2} (\Gamma_{\varnothing,L}^c \mid \Gamma_\Tsp)+\pi_{T}^{\otimes 2} (\Gamma_{i>\tau_L} \mid \Gamma_{\varnothing,L},  \Gamma_{\Tsp})\,.
\]
By Proposition~\ref{prop:base-exp-tail} and the fact that $\Tsp \leq \hgt(v_{\Tsp})+\frac 12$, we have that $\pi_{T}(\Gamma^c_{\Tsp})\leq e^{ - c_\sB \beta K \log T} \leq T^{-\gamma}$. 
In order for $\Phi_{\textsc{mix}} (\cI, \cI')$ to not be tame, one of $\fm_T(\cS_x)$ or $\fm_T(\cS'_x)$ must be at least $r_0/2$; by Lemma~\ref{lem:tame} and the fact that $r_0/2>8T$, then,  $\pi_{T}^{\otimes 2}(\bar{\Gamma} \mid \Gamma_\Tsp)$ is at most $2e^{-c\beta T}$ for some $c>0$. We now turn to the latter two terms above. 
By Proposition~\ref{prop:exp-tails-increments}, in particular the conditional version of it, given any $\cI_{\textsc{tr}}$ (in particular any $\Tsp$) the sequence $(\mathbf{1}\{\sX_i = X_\trivincr\})_{i\geq \Tsp}$ (which includes the increments between indices $j$ and $k$ by $\Gamma_{\Tsp}$) stochastically dominates a sequence of independent $\ber(1-\epsilon_\beta)$ coin tosses, for some $\epsilon_\beta$ going to zero as $\beta\to\infty$. 
As a consequence, $\pi_T^{\otimes 2}(\Gamma_{\varnothing,L}^c)$ is at most the probability that a set of $|k-j|$ i.i.d.\ $\ber((1-\epsilon_\beta)^2)$ coin flips has no sequence of $2L$ consecutive ones. 
Thus, for large enough $\beta$, (depending on $\gamma,\bar c$)
\[ \pi^{\otimes 2}_T (\Gamma_{\trivincr,L}^c\mid \Gamma_{\Tsp}) 
\leq (1-(1-\epsilon_\beta)^{4L})^{|k-j|/(2L)} \leq \exp\left[-(1-\epsilon_\beta)^{4L} |k-j|/(2L)\right] \leq \exp\left[ -|k-j|^{3/4}\right]\,.
\]
By Corollary~\ref{cor:increment-interaction-bound}, conditional on the entire increment sequence up to $\tau_L+L$ (which contains the information of $\Tsp, \Gamma_\Tsp, \Gamma_{\trivincr,L}$  the concentration estimate on the excess areas of subsequent increments holds (uniformly in the choice of $\tau_L$). Combining these, one sees the bound (where the conditioning on $\Gamma_{\varnothing,L}(\tau_L)$ is to say that $\Gamma_{\varnothing,L}$ happens for that specific $\tau_L$),
\begin{align*}
\pi_T^{\otimes 2}( \Gamma_{i>\tau_L} & \mid \Gamma_{\varnothing,L}, \Gamma_\Tsp) \\
& \leq 2 \sup_{\cI_{\textsc{tr}}\in \Gamma_\Tsp, \tau_L} \pi_{T}\Big(|\cF(\sX_{>T})|e^{-\bar c (T+1 - \tau_L +L)}+ \sum_{i} |\cF(X_{\tau_L+i})| e^{ - \bar c (L + i)} \geq e^{ -\bar c L /2}\mid \cI_{\textsc{tr}}, \Gamma_{\varnothing,L}(\tau_L)\Big)\,,
\end{align*}
which is at most $2\exp ( - c \beta \bar c L/2)$; therefore, $\pi_T^{\otimes 2} (\Gamma_L^c) \leq \exp[ -|k-j|^{3/4}] + 2 \exp[- c\beta \bar c L/2]$.

Our choice of $L$ was precisely such that as long as $\beta c>1$, the latter quantity is at most $|k-j|^{-2\gamma}$, which dominates the first term. 
\end{proof}

\medskip
On the other hand, when both pairs of triplets $((A,Z_j, Z_k),( A', Z_j', Z_k'))$ and $((\tilde A, Z_j', Z_k),(\tilde A', Z_k', Z_j))$ are in $\Gamma_L$, we are left to control 
\begin{align*}
\sum_{\substack{ Z_j\in E_j, Z_k \in E_k \\ Z_j', Z_k', A,A' \\ (A, Z_j, Z_k), (A', Z_j', Z_k')\in \Gamma_L}} \pi_T(Z_k, Z_j, A) \pi_T(Z_k', Z_j', A') - \sum_{\substack{ Z_j\in E_j, Z_k \in E_k \\ Z_j', Z_k', \tilde A,\tilde A' \\ (\tilde A, Z_j, Z_k'), (\tilde A', Z_j', Z_k)\in \Gamma_L}} \pi (Z_k, Z_j', \tilde A') \pi (Z_k', Z_j, \tilde A)\,.
\end{align*}

 Now that we have restricted to tame interfaces, with well-behaved increment sequences, we can naturally view $\Phi_{\textsc{mix}}$ as a map on $(\bar{\mathbf I}_{x,T} \times \bar{\mathbf I}_{x,T}) \cap \Gamma_L$. This restriction yields the following correspondence. 
 
 \begin{claim}\label{claim:mixing-bijection}
 The restriction of $\Phi_{\textsc{mix}}$ to $(\bar{\mathbf I}_{x,T} \times \bar{\mathbf I}_{x,T}) \cap \Gamma_L$ is a bijection from $(\bar{\mathbf I}_{x,T} \times \bar{\mathbf I}_{x,T}) \cap \Gamma_L$ to itself. 
 \end{claim}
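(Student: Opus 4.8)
The plan is to verify that $\Phi_{\textsc{mix}}$ restricted to $(\bar{\mathbf I}_{x,T}\times\bar{\mathbf I}_{x,T})\cap\Gamma_L$ is an involution on that set, which immediately gives that it is a bijection. First I would check that the image lands back in $\Gamma_L$. Given $(\cI,\cI')\in\Gamma_L$, both members share the source-point indices $\Tsp,\Tsp'\le K\log T$ of $\cI,\cI'$ (the truncations are unchanged by $\Phi_{\textsc{mix}}$), so $\Gamma_\Tsp$ is preserved; tameness of the images is exactly the defining condition $\bar\Gamma$ of $\Gamma_L$, so $\Phi_{\textsc{mix}}(\cI,\cI')\in\bar{\mathbf I}_{x,T}\times\bar{\mathbf I}_{x,T}$; and the swapped pair still contains the same stretch of $2L$ trivial increments centered at $\tau_L$ in both coordinates, so it lies in $\Gamma_{\trivincr,L}$, and moreover $\tau_L$ computed for the image equals $\tau_L$ for the input, since the increments below index $\tau_L+L$ in each coordinate are unchanged by the swap and hence the ``first run of $2L$ consecutive indices between $j$ and $k$ on which both coordinates are trivial'' is the same run. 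Finally the condition $\Gamma_{i>\tau_L}$ involves only $|\cF(X_{\tau_L+i})|$ for $i\ge -L$ and is symmetric under interchanging the two coordinates' tails above $\tau_L$: after the swap the first coordinate's tail above $\tau_L$ is the old second coordinate's tail and vice versa, and the sum over $i$ of $|\cF(\cdot)|e^{-\bar c(L+i)}$ for each coordinate is just the other one's, so $\Gamma_{i>\tau_L}$ is preserved. Hence $\Phi_{\textsc{mix}}$ maps $\Gamma_L\cap(\bar{\mathbf I}_{x,T}\times\bar{\mathbf I}_{x,T})$ into itself.

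Next I would argue $\Phi_{\textsc{mix}}\circ\Phi_{\textsc{mix}}=\mathrm{id}$ on this set. On the complement of $\Gamma_{\trivincr,L}$ the map is the identity by definition, so there is nothing to check there; on $\Gamma_L$ we just showed the image is again in $\Gamma_L\subset\Gamma_{\trivincr,L}$ and that the index $\tau_L$ is unchanged, so applying $\Phi_{\textsc{mix}}$ a second time performs exactly the same splice at exactly the same height $\tau_L$. Splicing the increment stretch above $\tau_L$ between the two coordinates a second time returns each coordinate's tail to its original value, since the operation $(U,V)\mapsto(U',V')$ with $U'$ the first coordinate's increments up to $\tau_L$ concatenated with the second coordinate's increments above $\tau_L$, and symmetrically for $V'$, is manifestly its own inverse; likewise the shared truncated interfaces are untouched at both steps. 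Therefore $\Phi_{\textsc{mix}}$ is an involution on $\Gamma_L\cap(\bar{\mathbf I}_{x,T}\times\bar{\mathbf I}_{x,T})$, hence a bijection of that set onto itself.

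The one point that requires a little care—and which I expect to be the main (mild) obstacle—is the claim that $\tau_L$, and more generally the whole data determining the splice location, is genuinely unchanged by the map. This is where one must use that $\tau_L\le k$ and that the defining run of $2L$ trivial increments sits at indices $\tau_L-L,\dots,\tau_L+L$, all of which are $\le\tau_L+L$; since $\Phi_{\textsc{mix}}$ only alters increments at indices strictly above $\tau_L$ in a way that preserves the property ``index $i$ is trivial in both coordinates'' for $i\le\tau_L$ (indeed it preserves triviality at index $\tau_L+1,\dots,\tau_L+L$ too, because those are trivial in both coordinates by hypothesis and swapping trivial-for-trivial is trivial), the minimality selection of $\tau_L$ returns the same value. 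A careful write-up would spell this out, together with the symmetry of the conditions $\bar\Gamma$ and $\Gamma_{i>\tau_L}$ under coordinate-tail interchange; everything else is bookkeeping. No delicate probabilistic or geometric estimate is needed for this claim—it is purely combinatorial, and the substantive content (that the swap costs little weight) is deferred to the subsequent analysis of the weight ratio under $\Phi_{\textsc{mix}}$.
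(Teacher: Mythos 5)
Your proposal follows the paper's route exactly: show $\Phi_{\textsc{mix}}$ is an involution, then verify that it maps $(\bar{\mathbf I}_{x,T}\times\bar{\mathbf I}_{x,T})\cap\Gamma_L$ into itself by checking $\Gamma_\Tsp$ (source points unchanged), tameness, $\Gamma_{\trivincr,L}$ (the splice index $\tau_L$ is a symmetric function of the unordered pair of tails, hence fixed by the swap), and $\Gamma_{i>\tau_L}$ (symmetric under exchanging tails). One small point to tighten: after establishing that the image lies in $\bar{\mathbf I}_{x,T}\times\bar{\mathbf I}_{x,T}$ and satisfies the other three conditions, you still owe the verification that the \emph{image} pair satisfies $\bar\Gamma$ (i.e., that $\Phi_{\textsc{mix}}$ applied to the image is tame); this follows at once from $\Phi_{\textsc{mix}}=\Phi_{\textsc{mix}}^{-1}$ since $\Phi_{\textsc{mix}}(\Phi_{\textsc{mix}}(\cI,\cI'))=(\cI,\cI')$, both tame by hypothesis — exactly the observation the paper uses in item~(2) of its proof. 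You flagged this dependency but left it implicit; pulling the involution step to the front, or at least stating that $\bar\Gamma$ for the image reduces to tameness of the original, closes the loop cleanly.
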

 
\begin{proof}[\textbf{\emph{Proof of Claim~\ref{claim:mixing-bijection}}}] Since $\Phi_{\textsc{mix}}= \Phi_{\textsc{mix}}^{-1}$, it suffices to show that for every pair $(\cI,\cI')\in (\bar{\mathbf I}_{x,T} \times \bar{\mathbf I}_{x,T}) \cap \Gamma_L$, we have $\Phi_{\textsc{mix}}(\cI,\cI')\in (\bar{\mathbf I}_{x,T} \times \bar{\mathbf I}_{x,T}) \cap \Gamma_L$. Indeed, as mentioned, the fact that $(\cI,\cI')\in \Gamma_{\Tsp}$ ensures that $\Phi_{\textsc{mix}}(\cI,\cI')\in \mathbf I_{x,T}\cap \mathbf I_{x,T}$; the fact that $(\cI,\cI')\in \bar \Gamma$, by definition, guarantees that $\Phi_{\textsc{mix}}(\cI,\cI')$ are both tame. Finally, the fact that $\Phi_{\textsc{mix}}(\cI,\cI')$ remains in $\Gamma_L$ holds for the following reasons: (1) holds as the source points are unchanged by the map; (2) holds as $\Phi_{\textsc{mix}}= \Phi_{\textsc{mix}}^{-1}$; (3) holds for $\Phi_{\textsc{mix}}(\cI,\cI')$ since the pair of increment sequences above $\tau_L$ in $\Phi_{\textsc{mix}}(\cI,\cI')$ are exactly the pair of increment sequences above $\tau_L$ of $(\cI,\cI')$. 
\end{proof}
 
With the claim in hand, notice that $\Phi_{\textsc{mix}}$ preserves the $\tau_L$ at which $\Gamma_{\trivincr,L}$ is attained and, we have 
\begin{align*}
\Phi_{\textsc{mix}}\Big( (Z_j, Z_k, A), (Z_j', Z_k', A')\Big) 
= \Big(\big(Z_j, Z_k', \Phi_1^2 ( A, A')\big) , \big(Z_j', Z_k, \Phi_2^1 (A, A')\big)\Big)
\end{align*}
in the sense that the $Z_k$ and $Z_k'$ get swapped by application of the map in the manner desired, as does everything else in the spine above index $\tau$. 
Using this bijection, we rewrite the difference above as
\begin{align*}
& \sum_{\substack{ Z_j\in E_j, Z_k \in E_k \\ Z_j', Z_k', A,A' \\ (A, Z_j, Z_k), (A', Z_j', Z_k')\in \Gamma_L}} \pi_T(Z_j, Z_k, A) \pi ( Z_j', Z_k', A') - \pi_T(Z_j, Z_k', \Phi_1^2(A, A')) \pi_T(Z_j', Z_k, \Phi_2^1(A,A'))\,.
\end{align*}
Now fix $Z_j\in E_j, Z_k \in E_k$, $Z_k', Z_j'$ and $(A,A')$ such that the above triplets are in $\Gamma_L$; for ease of notation, let $\tilde A = \Phi_1^2(A,A')$ and $\tilde A' = \Phi_2^1 (A,A')$. Consider the quantity 
\begin{align}\label{eq:mixing-difference-to-fraction}
|\pi_T(Z_k, Z_j, A) & \pi_T(Z_k', Z_j', A') -  \pi_T(Z_j, Z_k' ,  \tilde A) \pi_T(Z_j', Z_k, \tilde A')| \nonumber \\
& \qquad \qquad =   \pi_T(Z_j, Z_k' ,  \tilde A) \pi_T(Z_j', Z_k, \tilde A') \bigg|\frac{\mu_n(Z_j, Z_k, A) \mu_n(Z_j', Z_k', A')}{\mu_n(Z_j, Z_k', \tilde A) \mu_n(Z_j', Z_k, \tilde A')} - 1\bigg|\,;
\end{align}
since each of the triplets are in $\bar{\mathbf I}_{x,T}$,  expressing e.g., $\pi_T(Z_j, Z_k, A) = \frac{\mu_n(Z_j,Z_k,A)}{\mu_n(\bar{\mathbf I}_{x,T})}$, the contributions from $\mu_n(\bar{\mathbf I}_{x,T})$  cancel out. 
Let us now focus on the difference in the absolute value in~\eqref{eq:mixing-difference-to-fraction}, and in particular the ratio of the probabilities of the two pairs of interfaces. 
This formulation allows us to apply the machinery of Theorem~\ref{thm:cluster-expansion} to the pair of interfaces: for ease of notation, let us denote the interface given by $(Z_j,Z_k,A)$ by $\cI^{jk}$, and denote $\cI^{jk'}= (Z_j, Z_k', \tilde A)$, $\cI^{j'k}= (Z_j', Z_k, \tilde A')$ and $\cI^{j'k'}= (Z_j', Z_k', A')$ analogously. Express
\begin{align*}
\frac{\mu_n (Z_j, Z_k, A) \mu_n (Z_j', Z_k', A')}{\mu_n (Z_j, Z_k', \tilde A) \mu_n (Z_j', Z_k, \tilde A')} & = \frac{e^{\beta(|\cI^{jk}|+|\cI^{j'k'}|)}}{e^{\beta(|\cI^{jk'}|+|\cI^{j'k}|)}}\cdot \frac{e^{\sum_{f\in \cI^{j'k'}}\g(f,\cI^{j'k'}) +\sum_{f\in \cI^{jk}}\g(f,\cI^{jk})}}{e^{\sum_{f\in \cI^{j'k}}\g(f,\cI^{j'k}) +\sum_{f\in \cI^{jk'}}\g(f,\cI^{jk'})}} \\ 
& = \exp \bigg[\sum_{f\in \cI^{jk}}\g(f,\cI^{jk}) +\sum_{f\in \cI^{j'k'}}\g(f,\cI^{j'k'}) -\sum_{f\in \cI^{jk'}}\g(f,\cI^{jk'})  - \sum_{f\in \cI^{j'k}}\g(f,\cI^{j'k})  \bigg]\,.
\end{align*}
We now turn to bounding the absolute value of the exponent. Recall that $\tau_L$ denotes the first index of the increment run of $2L$ trivial increments in both $(A,A')$ (and consequently also $(\tilde A, \tilde A')$ for the same $\tau_L$). 

\begin{lem}\label{lem:mixing-part-functions}
There is a universal $\bar C$ such that for any pair $(\cI^{jk},\cI^{j'k'})\in (\bar{\mathbf I}_{x,T}\times \bar{\mathbf I}_{x,T}) \cap \Gamma_L$, we have
\begin{align*}
\Big|\sum_{f\in \cI^{jk}}\g(f,\cI^{jk})  +\sum_{f\in \cI^{j'k'}}\g(f,\cI^{j'k'}) -\sum_{f\in \cI^{jk'}}\g(f,\cI^{jk'})  - \sum_{f\in \cI^{j'k}}\g(f,\cI^{j'k})\Big| \leq \bar C \exp\big[-  \bar c L/2\big]\,.
\end{align*}
\end{lem}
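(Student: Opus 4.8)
The plan is to cash in on the ``two-to-two'' design: on $\Gamma_L$ the four interfaces are assembled from only two lower pieces $\hat L_1,\hat L_2$ and two upper pieces $\hat U_1,\hat U_2$, glued along a common straight column of $2L+1$ trivial increments around the splicing index $\tau_L$, so that (up to a vertical shift of the upper pieces) $\cI^{jk}=\hat L_1\cup\hat U_1$, $\cI^{j'k'}=\hat L_2\cup\hat U_2$, $\cI^{jk'}=\hat L_1\cup\hat U_2$, $\cI^{j'k}=\hat L_2\cup\hat U_1$. I would first record this decomposition precisely from the definition of $\Phi_{\textsc{mix}}$ and $\Gamma_{\trivincr,L}$, and observe that because $\Phi_{\textsc{mix}}$ merely permutes increment stretches one has $|\cI^{jk}|+|\cI^{j'k'}|=|\cI^{jk'}|+|\cI^{j'k}|$; this is exactly what makes the surface-energy contributions cancel in the alternating sum, leaving only differences of $\g$-values, which is the whole point of pairing interfaces.

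Next I would telescope the alternating sum of $\g$-sums by pairing $\cI^{jk}$ with $\cI^{jk'}$ and $\cI^{j'k'}$ with $\cI^{j'k}$ (each pair sharing its lower piece, so the sum over that piece becomes a sum of $\g$-\emph{differences}), and likewise pairing the interfaces that share an upper piece. This produces four sums: $\sum_{\hat L_1}(\g(\cdot,\cI^{jk})-\g(\cdot,\cI^{jk'}))$, $\sum_{\hat L_2}(\g(\cdot,\cI^{j'k'})-\g(\cdot,\cI^{j'k}))$, $\sum_{\hat U_1}(\g(\cdot,\cI^{jk})-\g(\cdot,\cI^{j'k}))$, $\sum_{\hat U_2}(\g(\cdot,\cI^{j'k'})-\g(\cdot,\cI^{jk'}))$. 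Each summand is bounded by $\bar K e^{-\bar c\br}$ via~\eqref{eq:g-exponential-decay}, and in every case the congruence radius $\br$ is at least the distance from the face to the region where the two compared interfaces actually disagree, which lies on the \emph{far} side of the $2L$-long trivial column; hence $\br\gtrsim L$ throughout. For the two ``upper'' sums this is clean: the paired interfaces differ only below the column, so a face in the increment at index $\tau_L+i$ has $\br\ge L+i$, and $\sum_i|\cF(\sX_{\tau_L+i})|e^{-\bar c(L+i)}$ (together with the remainder term) is precisely the quantity that the event $\Gamma_{i>\tau_L}$ caps at $e^{-\bar c L/2}$; this yields the claimed $\bar C e^{-\bar c L/2}$ for $\mathrm{(III)},\mathrm{(IV)}$.

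The two ``lower'' sums are where the work concentrates: here the disagreement region $U_1\oplus U_2$ sits at heights $\ge\hgt(v_{\tau_L})+L+1$, but $\hat L_1$ (resp.\ $\hat L_2$) contains the base and the truncation, hence a priori many faces, so one must still show $\sum_{f\in\hat L_1}e^{-\bar c\br(f)}$ is small uniformly on $\Gamma_L$. The plan is to use that, on $\Gamma_\Tsp\cap\bar\Gamma$, the defining property of the source point ($v_\Tsp$ lies above every wall of $\cI^{jk}\setminus\cP_x$ indexed inside $\cC_{v_\Tsp,x,T}$) together with tameness confines $U_1\oplus U_2$ to a radius-$r_0T$ cylinder about $v_\Tsp$ and forces the faces of $\hat L_1$ lying near $v_\Tsp$ to sit at height $\le\hgt(v_{\tau_L})+1$; thus each such face is either $\ge L$ below $U_1\oplus U_2$ (so it carries a factor $e^{-\bar c L}$ and the residual sum is a convergent geometric series) or $\ge 4r_0T$ from it horizontally. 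To turn this into an honest bound I expect to augment $\Gamma_L$ with the high-probability regularity events $\{\diam(\sB_x)\le K\log T\}$ and $\{$all walls of $\cI^{jk}$ have excess area $\le K\log T\}$ (plus the primed analogues), which by Proposition~\ref{prop:base-exp-tail} and Theorem~\ref{thm:dobrushin-rigidity} add only $O(T^{-c\beta K})=O(|k-j|^{-\gamma})$ to $\pi_T^{\otimes2}(\Gamma_L^c)$ and hence leave Lemma~\ref{lem:good-pairs-of-interfaces} intact. The main obstacle is exactly this bookkeeping in the lower sums --- making ``the part of the interface away from the pillar is common to the paired interfaces and therefore only contributes through large congruence radii'' into a uniform estimate --- where I would lean on the freedom (already used in this section) to take $|k-j|$ as large as needed, so that the $\mathrm{poly}(\log T)$ factors are absorbed against $e^{-\bar c L/2}=|k-j|^{-2\gamma}$. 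Combining the four bounds gives $\bar C e^{-\bar c L/2}$.
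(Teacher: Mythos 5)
Your setup is right: you identify the two lower pieces and two upper pieces, the shared $2L$-run of trivial increments as the decorrelating buffer, and you correctly reduce the alternating sum to two ``upper'' sums and two ``lower'' sums of $\g$-differences. Your treatment of the upper sums matches the paper (radius of congruence $\ge L+i$ for a face in increment $\tau_L+i$, then $\Gamma_{i>\tau_L}$ caps $\sum_i |\cF(\sX_{\tau_L+i})|e^{-\bar c(L+i)}$).

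The lower sums are where your plan has a genuine gap. You propose to bound $\sum_{f\in\hat L_1}e^{-\bar c\,\br(f)}$ by a cardinality estimate $|\hat L_1|\cdot e^{-\bar cL}$ after strengthening $\Gamma_L$ with additional regularity events on the base and on wall excess areas. This does not go through: $\hat L_1$ contains the entire truncated interface $\cI_{\textsc{tr}}$, which has $\Theta(T^2)$ faces inside the tame cylinder $\cC_{R_0T}(v_\Tsp)$ alone (and more in all of $\Lambda_n$). No plausible augmentation of $\Gamma_L$ by the kinds of regularity events you name cuts this down to $\mathrm{poly}(\log T)$, and you cannot absorb $\mathrm{poly}(T)$ against $e^{-\bar c L/2}=|k-j|^{-2\gamma}$ because $L$ is set by $|k-j|$, not by $T$; the bound must hold for $|k-j|$ as small as $K\log T$. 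The statement you are proving is also a deterministic estimate on every pair in $\Gamma_L$, so ``with high probability'' reasoning about typical base diameter has no place here except insofar as it is already baked into the definition of $\Gamma_L$ --- and the paper does not need to add anything to $\Gamma_L$.

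The paper's proof of the lower-sum estimate avoids face-counting altogether by exchanging the order of summation. For $f\in F_-^{jk}$, the congruence radius $\br(f,\cI^{jk};f,\cI^{jk'})$ is attained by some $g$ in the symmetric difference $F_+^{jk}\oplus F_+^{jk'}$, which lives at increment index $>\tau_L+L$. Bounding $e^{-\bar c\,\br}\le\sum_{g}e^{-\bar c\,d(f,g)}$ and swapping sums, one fixes $g$ in the $(\tau_L+i')$-th increment and sums over \emph{all} $f\in\cF(\Z^3)$ at height $\le\hgt(v_{\tau_L})$: by integrability of the exponential tail this inner sum is $\le\bar C e^{-\bar c(L+i')}$, uniformly over the shape and size of $\cI_{\textsc{tr}}$ (the distance lower bound $\ge \tau_L+i'-\Tsp\ge L+i'$ uses tameness and the definition of the source point, both already in $\Gamma_L$). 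The outer sum over $g$ is then controlled by the condition $\Gamma_{i>\tau_L}$ you already have. Replacing your cardinality bound with this Fubini-style bound closes the gap and keeps $\Gamma_L$ unchanged, which matters because Lemma~\ref{lem:good-pairs-of-interfaces} should not be reproved.
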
 

Let us first conclude the proof of Proposition~\ref{prop:increment-mixing} given Lemma~\ref{lem:mixing-part-functions}. By our choice $L$, the right-hand side above is at most $\bar C | k-j|^{-2\gamma}$, from which we would deduce that 
\begin{align*}
\Big|\frac{\mu_n(Z_j, Z_k, A) \mu_n(Z_j', Z_k', A')}{\mu_n(Z_j, Z_k', \tilde A) \mu_n(Z_j', Z_k, \tilde A')} - 1\Big| \leq 2\bar C|k-j|^{-2\gamma}\,.
\end{align*}
Since this upper bound is independent of $Z_j, Z_k, Z_j', Z_k', A, A'$, when we sum~\eqref{eq:mixing-difference-to-fraction}, it factors out, and the sum of the probabilities over some subset of interfaces in $\bar{\mathbf I}_{x,T}$ is of course at most one. Combining this with the contribution from terms not in $\Gamma_L$ yields an additional $|k-j|^{-\gamma}$, implying the desired estimate.  

\medskip
\noindent 
\textbf{Proof of Lemma~\ref{lem:mixing-part-functions}}.
It will be important to use the structure of the map $\Phi_{\textsc {mix}}$ to choose the right pairing of summands in the different sums on the left-hand side above. To that end, let us define the following subsets of faces of the interfaces we consider: let $\cI_{\textsc {tr}}$ and $\cI'_{\textsc {tr}}$ be the respective truncations of $A$ and $A'$. Let
\begin{align*}
F^{jk}_- & = \Big\{f\in \cI_{\textsc{tr}} \cup \mbox{$\bigcup_{i\leq \tau_L}$} \cF(X_i)\Big\}\,, & \qquad \mbox{and}  \qquad & F^{jk}_+  = \Big\{f\in \mbox{$\bigcup_{i>\tau_L}$} \cF (X_i) \cup \cF(X_{>T})\Big\}\,.
\end{align*}
be the sets of all faces ``below" $X_{\tau_L}$, and all faces ``above" $X_{\tau_L}$ respectively. 
In this manner, $\cI^{jk} =  F_-^{jk} \cup F_+^{jk}$, and we can define $F^{j'k'}_{\pm},F^{j'k}_{\pm},F^{jk'}_{\pm}$ analogously (where whether or not $j$ is primed indicates whether $\cI_{\textsc{tr}}$ or $\cI_{\textsc{tr}}'$ is used in $F_-$). By definition, we have the equalities 
\begin{align*}
F_{-}^{jk} = F_-^{jk'}\,, \qquad \mbox{and}\qquad F_-^{j'k'}=F_-^{j'k}\,.
\end{align*}
Let $\theta_{A,A'}$ be the shift map by the vector $-v_{\tau_L+1} + v'_{\tau_L+1}$  (where $v_{\tau_L+1}$ is that cut-point in $(Z_j,A)$ and $v'_{\tau_L+1}$ is that in $(Z_j',A')$) and let $\theta_{A',A}$ be the shift by $-v'_{\tau_L+1} + v_{\tau_L+1}$. Then observe that 
\[ \theta_{A',A} F_+^{j'k'} =  F_+^{jk'}\,,   \, \qquad \mbox{and}\qquad  \theta_{A,A'} F_+^{jk} = F_+^{j'k}\,.
\]
Using this decomposition of the faces in the four interfaces, we can express
\begin{align}\label{eq:part-function-splitting-mixing}
\Big|\sum_{f\in \cI^{j'k'}}\g(f,\cI^{j'k'})  +\sum_{f\in \cI^{jk}} & \g(f,\cI^{jk})  -\sum_{f\in \cI^{j'k}}\g(f,\cI^{j'k})  - \sum_{f\in \cI^{jk'}}\g(f,\cI^{jk'})\Big| \nonumber \\ 
& \leq \sum_{f\in F_{+}^{jk}} |\g(f,\cI^{jk})- \g(\theta_{A,A'}f,\cI^{j'k})| + \sum_{f\in F_{+}^{j'k'}} |\g(\theta_{A',A}f,\cI^{j'k'})- \g(f,\cI^{jk'})| \nonumber \\
& \quad + \sum_{f\in F_{-}^{jk}} |\g(f,\cI^{jk})- \g(f,\cI^{jk'})|+ \sum_{f\in F_{-}^{j'k'}} |\g(f,\cI^{j'k'})- \g(f,\cI^{j'k})|\,.
\end{align}

We begin by bounding the contributions of faces above $X_{\tau_L}$ , i.e., the first line of~\eqref{eq:part-function-splitting-mixing}; we write the bound for one of the sums as the other will  evidently be analogous: 
\begin{align*}
\sum_{f\in F_+^{jk}}\big|\g(f, \cI^{jk}) - \g(\theta_{A,A'}f,\cI^{j'k})\big| \leq \sum_{f\in F_+^{jk}} \bar K e^{ - \bar c \br(f, \cI^{jk}; \theta_{A,A'} f, \cI^{j'k})}\,.
\end{align*}
By tameness of all of $\cI^{jk}, \cI^{jk'}, \cI^{j'k}$, and $\cI^{j'k'}$, the radius $\br$ is either attained by a face below $v_\Tsp$, in which case for a face $f\in \cF(X_{\tau_L}+i)$, the radius is at least $\tau_L-\Tsp +i\geq L+i$ or, it is attained in the differences between the spines $(X_\Tsp, \ldots, X_{\tau_L})$ and $(X_\Tsp',\ldots, X_{\tau_L}')$---but since all the increments between $\tau_L-L$ and $\tau_L$ are trivial both in $\cI^{jk}$ and $\cI^{j'k}$, this distance would be at least $L + i$. The above is at most $\bar K$ times 
\begin{align*}
 \sum_{i\geq 1} \sum_{f\in \cF(X_{\tau_L+i})} e^{ - \bar c (L+i)} + \sum_{f\in \cF(X_{>T})} e^{-\bar c (T+1 - \tau_L +L )} \leq \sum_{i\geq 1} |\cF(X_{\tau_L+i})| e^{ - \bar c ( L+i )} + |\cF(X_{>T})| e^{-\bar c (T+1 - \tau_L +L )} \,,
\end{align*}
and the fact that the interfaces are both in $\Gamma_L \subset \Gamma_{i>\tau_L}$ implies this is at most $\bar K e^{ - \bar c L/2}$. The sum over $f\in F_+^{j'k'}$ in the first line of~\eqref{eq:part-function-splitting-mixing} is handled identically. 
Next, we consider the contributions from the increments below~$\tau_L$ as well as in the truncated interface, say the faces in $F_-^{jk}$ (the sum over $f\in F_-^{j'k'}$ is again identical). Notice that for these faces, the radius $\br$ is attained by a face in $F_+^{jk} \oplus F_+^{jk'}$ with increment index at least $\tau_L+L$, so that  
\begin{align*}
\sum_{f\in F_-^{jk}} |\g(f,\cI^{jk}) - \g(f, \cI^{jk'})| &  \leq \sum_{i\leq L} 4 \bar K  e^{- \bar c (L+i)}+ \sum_{\Tsp \leq i\leq \tau_L-L} \sum_{f\in X_i} \sum_{g\in F_+^{jk} \cup F_+^{jk'}} \bar K e^{ - \bar c d(f, g)} \\ 
& \quad \, + \sum_{f\in \cI_{\textsc{tr}}} \sum_{g\in F_+^{jk}\cup F_+^{jk'}} \bar K e^{ - \bar c d(f,g)}\,.
\end{align*}
By tameness, the distance between any face in $\cI_{\textsc{tr}}$ to a face $g\in F_+^{jk}\cup F_+^{jk'}$ that is in the $(\tau_L+i')$-th increment, is at least, $\tau_L + i'- \Tsp \geq i'+L$, so that
\begin{align*}
\sum_{f\in F_-^{jk}} |\g(f,\cI^{jk}) - \g(f, \cI^{jk'})|  &  \leq 4\bar C e^{- \bar c L} + \sum_{i'\geq 1}  \sum_{g\in X_{\tau_L+i'}\cup X_{\tau_L+i'}'} \sum_{\substack { f\in \cF(\Z^3) \\ \hgt( f) \leq \hgt(v_{\tau_L})} } \bar K e^{- \bar c d(g,f)} \\ 
& \leq 4\bar Ce^{- \bar c l} + \sum_{i' \geq 1} |\cF(X_{\tau+i'})|\vee |\cF(X_{\tau+i'}')| \bar C e^{ - \bar c (l + i')}\,. 
\end{align*}
which is at most $5 \bar C e^{ - \bar c L/2}$ by our assumption that the interfaces are in $\Gamma_L$.  Combining all of these in to~\eqref{eq:part-function-splitting-mixing} and using our choice of $L$ yields  Lemma~\ref{lem:mixing-part-functions}.
This completes the proof of Proposition~\ref{prop:increment-mixing}.

\subsection{Proof of Proposition~\ref{prop:increment-stationarity}: spine increments are asymptotically stationary}\label{subsec:increment-stationarity}
In this section, we prove Proposition~\ref{prop:increment-stationarity}, showing that spine increments are asymptotically stationary in the sense that changing the conditioning from $T$ to $T'$ and the location of an increment stretch from $j = j_T$ to $j' = j'_{T'}$ does not change the law much as long as $j$ and $j'$ are in the bulks of their respective spines. Up to the choice of the two-to-two map, which is tailored to proving stationarity estimates here, much of the proof will match that of the mixing and we therefore omit some repeated details. 

Fix any $\gamma$, and let $K$ be such that if $c_\sB$ is the constant from Proposition~\ref{prop:base-exp-tail}, $c_\sB \beta K >2\gamma$. Next fix $j, j'$ and $s$ satisfying the required conditions, and let 
\[
L = \lceil \tfrac{4\gamma}{\bar c} \log D\rceil\,,  \qquad \mbox{where}\qquad D = (j-K\log T) \wedge (j'- K\log T') \wedge (T' - (j'+s))\wedge (T- (j+s))\,.
\]  
Due to our freedom to take $C$ as desired, we may assume without loss that $D$ is sufficiently large. 
Let us denote the tuples $\sZ_j = (\sX_{j}, \ldots, \sX_{j+s})$ and $\sZ_{j'} = (\sX_{j'}, \ldots, \sX_{j'+s})$, with fixed instantiations $Z_j$ and $Z_{j'}$ in $\fX^{s}$. Let $\cA_{T}$ denote the set of all $T$-admissible truncated interfaces along with increment sequences $(X_i)_{i\notin \llb j, j+s \rrb}$ and remainder increment $X_{>T}$. Begin by expressing the left-hand side in Proposition~\ref{prop:increment-stationarity} as
\begin{align*}
\bigg|\sum_{Z_j \in E}\sum_{A_{T}\in \cA_T} \pi (Z_j, A_T) \sum_{\tilde Z_{j'}\in \fX^s, A_{T'}\in \cA_{T'}}\pi_{T'}(\tilde Z_{j'}, A_{T'}) -   \sum_{\tilde A_{T}\in \cA_T, \tilde Z_j\in \fX^s} \pi_{T'} (\tilde Z_{j}, \tilde A_{T}) \sum_{Z_j'\in E} \sum_{\tilde A_{T'} \in \cA_{T'}} \pi_{T} (Z_{j'}, \tilde A_{T'})\bigg|
\end{align*}
We follow the same strategy of the proof of Proposition~\ref{prop:increment-mixing}. Namely, define the events  $\Gamma_{L}^-$ and $\Gamma_{L}^+$ as the following subsets of pairs of increment sequences $(\fX^{T}\times \fX_{\textsc{rem}}) \times (\fX^{T'}\times \fX_{\textsc{rem}})$,
\begin{align*}
\Gamma_{\trivincr, L}^- & = \{ ((X_i)_{i\leq T}, (X_i')_{i\leq T'}): \exists \tau_L^-   \in \llb L, D-L\rrb \quad\;\;\mbox{ s.t.\ } X_{j-\tau_L^- +i} = X'_{j'-\tau_L^- +i} = X_\trivincr\mbox{ for all $i=-L,\ldots,L$}
\}\,, \\
\Gamma_{\trivincr, L}^+ & = \{ ((X_i)_{i\leq T}, (X_i')_{i\leq T'}): \exists \tau_L^+ \in \llb L+s, D-L\rrb \mbox{ s.t.\ } X_{j+\tau_L^+ +i} = X'_{j'+\tau_L^+ +i} = X_\trivincr\mbox{ for all $i=-L,\ldots,L$}
\}\,.
\end{align*}
We can now define a map $\Phi$ that takes a pair of interfaces and swaps the increment stretch between $j-\tau_L^-$ and $j+\tau_L^+$ in $\cI$ with the stretch between $j' - \tau_L^-$ and $j' + \tau_L^+$ in $\cI'$: refer to Figure~\ref{fig:stationarity-map} for a visualization. 

\begin{figure}
\vspace{-0.3cm}
\centering
  \begin{tikzpicture}
    \node (fig1) at (-4.5,0) {
	\includegraphics[width=.350\textwidth]{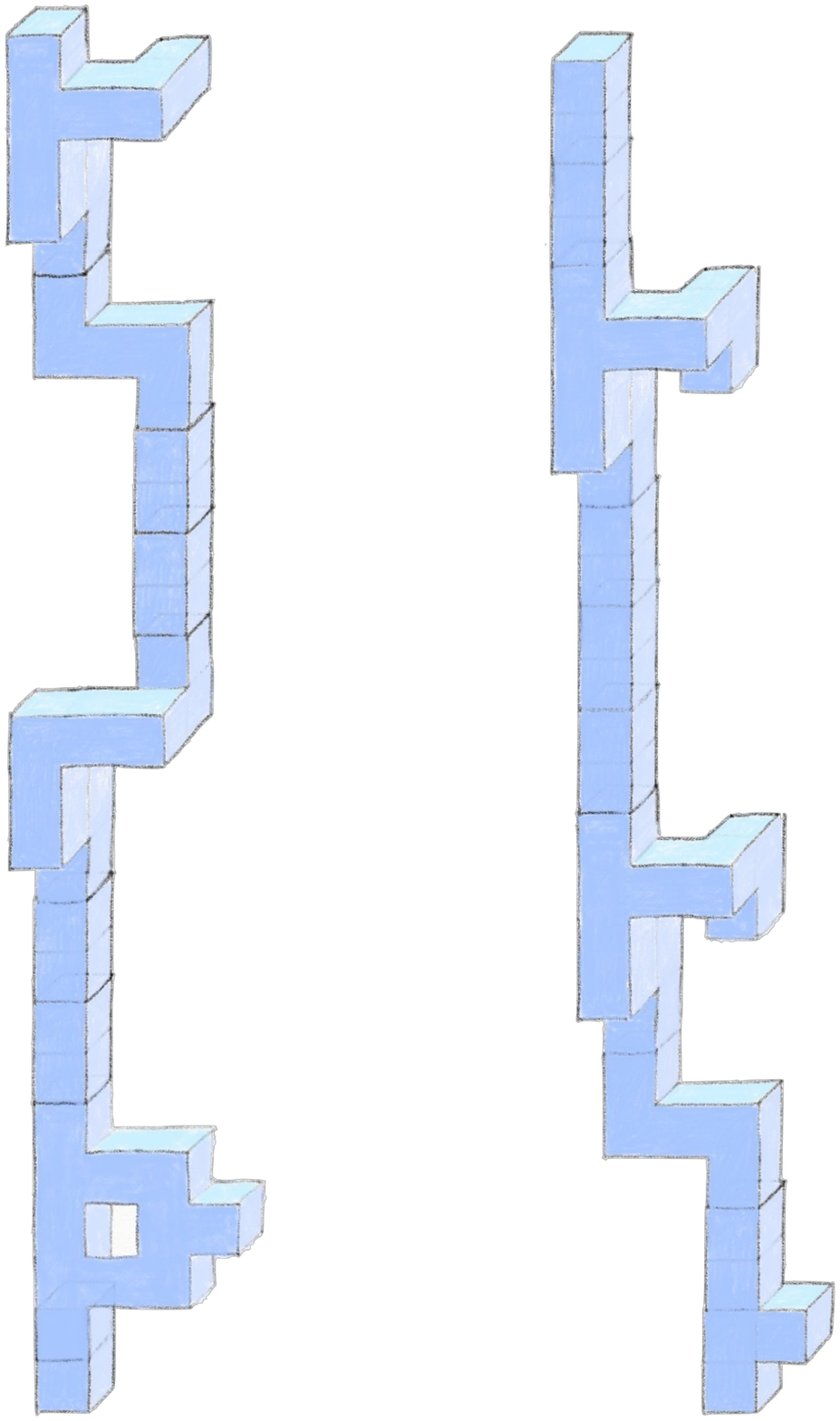}
	};
    \node (fig2) at (4.95,0) {
    \includegraphics[width=.350\textwidth]{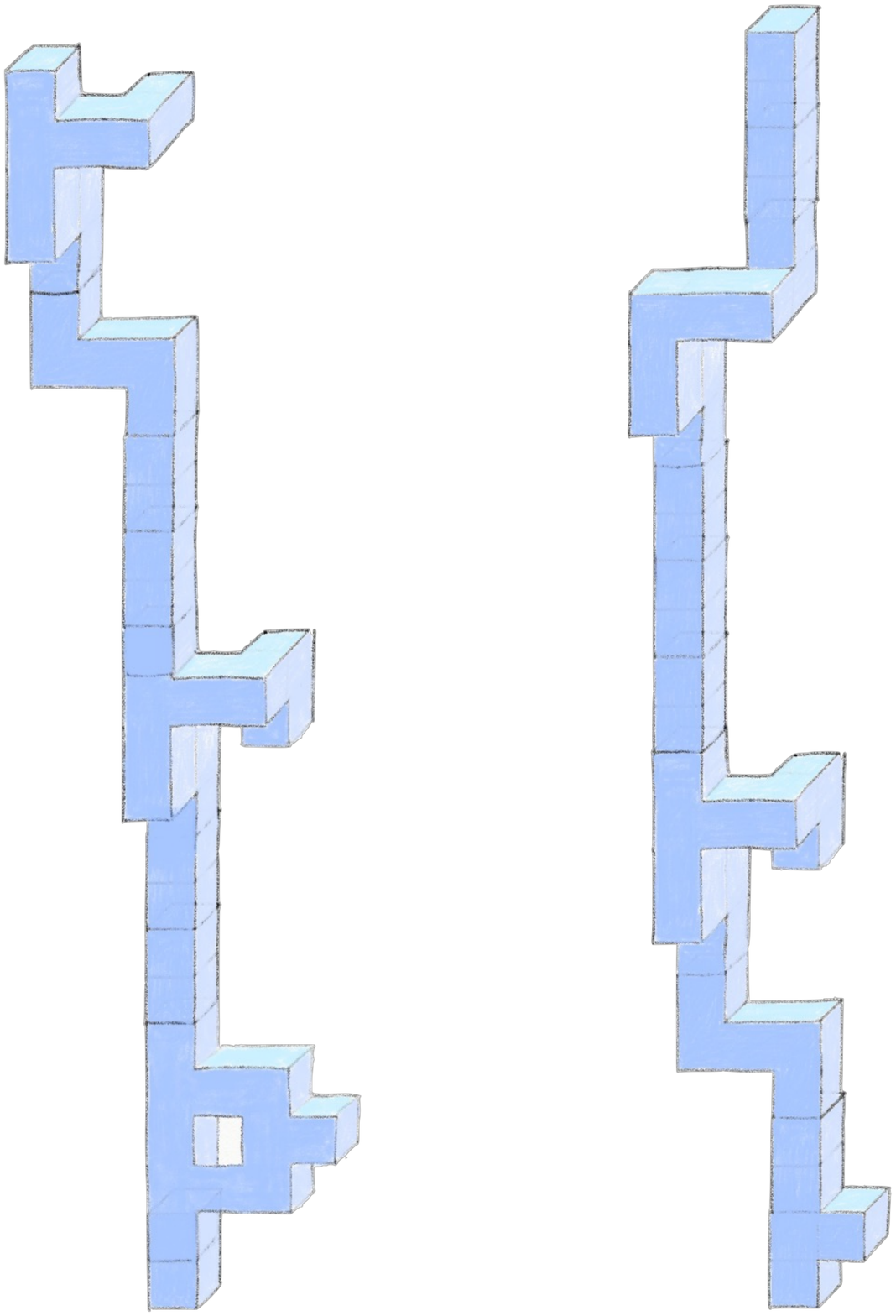}
    };
    
          \draw[->, thick] (-1.5,2) -- (1.5,2);
    \node at (0,2.4) {$\Phi_{\textsc {stat}} = \Phi^{121} \times \Phi^{212}$};
    
        \draw [decorate,decoration={brace,amplitude=5pt}, thick]
(-5.5,1.05) -- (-5.5,-1.3);
    \draw [decorate,decoration={brace,amplitude=5pt}, thick]
(-4,.55) -- (-4,2.85); 
    \draw[<->, thick] (-5.1, -.1) -- (-4.4,1.7);
    
            \draw [decorate,decoration={brace,amplitude=5pt}]
(-6.6,-.9) -- (-6.6,.4) node [black,midway,xshift=-0.5cm] 
{\footnotesize $X_{j}$};

     \draw[->] (2.92, -1.56)--(3.07, -1.56);
     \node at (2.4, -1.5) {$j- \tau_L^-$};
     
          \draw[->] (2.79, .57)--(2.94, .57);
     \node at (2.26, .56) {$j+ \tau_L^+$};
     
               \draw[->] (5.66, .45)--(5.81, .45);
     \node at (5.13, .44) {$j'- \tau_L^-$};
     
                    \draw[->] (5.66, .45)--(5.81, .45);
     \node at (5.13, .44) {$j'- \tau_L^-$};
     
                         \draw[->] (6.14, 2.83)--(6.29, 2.83);
     \node at (5.61, 2.83) {$j'+ \tau_L^+$};

        \draw [decorate,decoration={brace,amplitude=5pt}]
(-3.3,2.35) -- (-3.3,1.15) node [black,midway,xshift=0.4cm] 
{\footnotesize $X_{j'}$};

  \end{tikzpicture}
\vspace{-0.5cm}
 \caption{The map $\Phi_{\textsc{stat}}= \Phi^{121} \times \Phi^{212}$ acting on a pair of increment sequences in $\Gamma_{\trivincr, L}$. }
  \label{fig:stationarity-map}
\vspace{-0.2cm}
\end{figure}

\begin{definition}
For any $j,j'$, let  $\Phi_{\textsc{stat}}= \Phi_{\textsc{stat}}(j,j'): (\fX^T \times \fX_{\textsc{rem}})^2 \to (\fX^{T'} \times \fX_{\textsc{rem}})^2$ be given as follows. For any pair of increment sequences $(X^{(1)}, X^{(2)})= ((X^{(1)}_i)_i, X^{(1)}_{>T}, (X^{(2)}_{i})_i, X^{(2)}_{>T'}$, let 
\[\Phi_{\textsc{stat}}(X^{(1)}, X^{(2)}) = (\Phi^{121}(X^{(1)}, X^{(2)}), \Phi^{212} (X^{(1)}, X^{(2)}))
\]
be attained as follows. If $(X^{(1)}, X^{(2)})\notin \Gamma_{\trivincr,L}^- \cap \Gamma_{\trivincr,L}^+$, let $\Phi_{\textsc {stat}}(X^{(1)}, X^{(2)}) = (X^{(1)}, X^{(2)})$; otherwise
\begin{enumerate}
\item Find the smallest indices $\tau_L^-$ and $\tau_L^+$ for which the events $\Gamma_{\trivincr,L}^-$ and $\Gamma_{\trivincr,L}^+$ are satisfied.  
\item Let $\Phi^{121}(X^{(1)}, X^{(2)})$ is the pair of increment sequences given by 
\[\Phi^{121}(X^{(1)}, X^{(2)})= \big(X_1^{(1)},\ldots, X^{(1)}_{j-\tau_L^-}, X^{(2)}_{j'- \tau_L^- +1},\ldots, X^{(2)}_{j'},\ldots, X^{(2)}_{j' + \tau_L^+}, X^{(1)}_{j+\tau_L^+ +1},\ldots, X^{(1)}_{T}, X^{(1)}_{>T}\big)\,. 
\]
\item Let $\Phi^{212}(X^{(1)}, X^{(2)})$  is the pair of increment sequences given by 
\[\Phi^{212} (X^{(1)}, X^{(2)})  = (X^{(2)}_1,\ldots, X^{(2)}_{j'-\tau_L^-}, X_{j-\tau_L^-+1},\ldots, X^{(1)}_{j},\ldots, X^{(1)}_{j + \tau_L^+}, X^{(2)}_{j'+\tau_L^++1},\ldots, X^{(2)}_{T'}, X^{(2)}_{>T'}\big)\,.
\]
\end{enumerate}
\end{definition}
Abusing notation, we can define $\Phi_{\textsc{stat}}$ on $\bar{\mathbf I}_{x,T}\times \bar{\mathbf I}_{x,T}$ that uses the same truncations of the pair $(\cI, \cI')$ but applies the map $\Phi_{\textsc{stat}}$ to their increment sequences in the pillars $(\cP_x, \cP_x')$. If the two interfaces are both tame and additionally satisfy $\Tsp \leq K\log T$ and $\Tsp' \leq K\log T'$, the resulting pair would be in $\mathbf I_{x,T} \times \mathbf I_{x,T}$.

\subsubsection{Strategy of the map \texorpdfstring{$\Phi_{\textsc{stat}}$}{Phi\_stat}}\label{subsec:strategy-stat}
Similarly to the mixing map, if one were to take a naive approach of constructing a map that sends a single interface to a single interface, a possible choice would be a map that e.g., inserts an increment $X_0$ at the bottom of the increment sequence, shifting the remainder of the increment sequence and showing that the weights of interfaces with $\mathscr X_i = X$ are close to those with $\mathscr X_{i+1} = X$. (Notice that any map we construct must increase the number of increments as we wish to show not only that the law is close to stationary in shifts for fixed $T$, but that it remains stationary as $T\to\infty$.) Similar to the explanation in Section~\ref{subsec:strategy-mix}, however, the addition of an increment means that, the best one could hope for is a ratio of weights that is $1\pm \epsilon_\beta$, rather than $1+o_T(1)$.  

Instead, we use the two-to-two map which shifts an increment $X_j$ in a spine of $T$ increments, to a position $j'$ in a spine of $T'$ increments. As with $\Phi_{\textsc{mix}}$, we use that $j,j'$ are far from $1, T\wedge T'$ to find paired stretches of trivial increments equal distances above and below $X_j$ and $X_j'$. We then splice in the middle of these trivial increment sequences, and use them to decorrelate $X_j, X_j'$ from the rest of their respective interfaces, showing that the relative weight of the pair of interfaces is almost unchanged by the map $\Phi_{\textsc{stat}}$. Refer to Figure~\ref{fig:stationarity-map} for a visualization of this map.

\subsubsection{Analysis of the map \texorpdfstring{$\Phi_{\textsc{stat}}$}{Phi\_stat}}
We now define, analogously to the proof of mixing, a good set of pairs of increment sequences, denoted $\Gamma_L$ on which we can control the ratio of probabilities under the map $\Phi$. Let $\Gamma_L$ be the set of $(\cI, \cI')\in \bar{\mathbf I}_{x,T} \times \bar{\mathbf I}_{x,T}$ such that its pair of increment sequences are in $\Gamma_{\varnothing,L}^- \cap \Gamma_{\varnothing,L}^{+}$, and have  
\begin{enumerate}
\item Their source point indices satisfy $\Tsp\leq K\log T$ and $\Tsp'\leq K\log T'$; denote this event $\Gamma_\Tsp$. 
\item The pair of interfaces $(\cI, \cI')$ are such that $\Phi_{\textsc{stat}}(\cI, \cI')$ are both tame; denote this event $\bar{\Gamma}$. 
\item The increment sequence $(X_i)$ satisfies the events (denoted $\Gamma_{i>\tau_L^-}$ and $\Gamma_{i>\tau_L^+}$)
\begin{align}
|\cF(X_{>T})|e^{- \bar c(T+1- j+ \tau_L^-+L)} + \sum_{i\geq 1} (|\cF(X_{j-\tau_L^-+i})|) e^{- \bar c (L+i)} & \leq e^{ - \bar c L/2}\,, \qquad \mbox{as well as}   \label{eq:F_int-contribution} \\
|\cF(X_{>T})|e^{- \bar c(T+1- j- \tau_L^++L)} + \sum_{i\geq 1} (|\cF(X_{j+\tau_L^+ +i})| e^{ - \bar c (L+i)}  & \leq e^{ - \bar c l/2}\,. \label{eq:F_+-contribution}
\end{align}
and $(X_i')$ satisfies the analogous events with respect to $T'$(denoted $\Gamma'_{i>\tau_L^-}$ and $\Gamma'_{i>\tau_L^+}$). 
\end{enumerate}
As in the proof of Proposition~\ref{prop:increment-mixing}, we can bound the contribution from pairs of interfaces not in $\Gamma_L$ by 
\begin{align*}
\pi_T^{\otimes 2}(\Gamma_L^c) & \leq  2 \pi_T(\Gamma_{\Tsp}^c) + \pi_T^{\otimes 2}( \bar \Gamma^c \mid \Gamma_\Tsp) + \pi_T^{\otimes 2} \big((\Gamma_{\varnothing,L}^-)^c \mid \Gamma_\Tsp\big)+\pi_T^{\otimes 2} \big((\Gamma_{\varnothing,L}^+)^c \mid \Gamma_\Tsp\big) +\pi_{T} (\Gamma^c_{i>\tau_L^-} \mid \Gamma_{\varnothing,L},  \Gamma_{\Tsp})\\
& \quad + \pi_{T} (\Gamma^c_{i>\tau_L^+} \mid \Gamma_{\varnothing,L},  \Gamma_{\Tsp})+ \pi_{T}((\Gamma_{i>\tau_L^-}')^c\mid \Gamma_{\varnothing,L},  \Gamma_{\Tsp})+\pi_{T}((\Gamma'_{i>\tau_L^+})^c \mid \Gamma_{\varnothing,L},  \Gamma_{\Tsp})\,.
\end{align*}
The bounds on the first two terms above are identical to those in the proof of Lemma~\ref{lem:good-pairs-of-interfaces}, so that their contribution is at most $T^{-2\gamma}$. The bounds on the third and fourth terms are as in the proof of Lemma~\ref{lem:good-pairs-of-interfaces}, noticing that on $\Gamma_{\Tsp}$, the sequence of indicator functions $(\one_{\{\sX_{j-i} = X_\trivincr\}}\one_{\{\sX'_{j'-i}= X_{\trivincr}\}})_{i\leq D}$ stochastically dominate  i.i.d.\ $\ber((1-\epsilon_\beta)^2)$ random variables; therefore, their contribution is at most $\exp[-D^{3/4}]$ once $\beta$ is sufficiently large (independently of $j,j'$). The sixth and eight terms above are also bounded as in Lemma~\ref{lem:good-pairs-of-interfaces} by $2D^{-2\gamma}$ using the conditional version of Corollary~\ref{cor:increment-interaction-bound}. 

A crucial difference arises in the bounds on the fifth and seventh terms, since knowledge of $\tau_L^-$ gives information regarding the increment  sequence above index $\tau_L^- +L$ (namely that there is no possible smaller choice of $\tau_L^-$), so Corollary~\ref{cor:increment-interaction-bound} does not immediately bound  $\pi_T(\Gamma^c_{i>\tau_L^-} \mid \Gamma_{\varnothing,L}, \Gamma_\Tsp)$. Instead, we union bound over the $D$ possible choices of $\tau_L^-$ and sustaining this union bound, see that
\begin{align*}
\pi_T(\Gamma^c_{i>\tau_L^-} \mid \Gamma_{\varnothing,L}, \Gamma_{\Tsp}) \leq D e^{ - c\beta \bar c L/2} \leq D^{-2\gamma+1}
\end{align*}
as long as $\beta c>1$, and likewise for $\pi_T((\Gamma'_{i>\tau_L^-})^c \mid \Gamma_{\varnothing,L}, \Gamma_{\Tsp})$. Combining all these estimates yields the desired bound of $\pi_T^{\otimes 2} \leq C D^{-\gamma}$ for $\beta$ sufficiently large (depending on $\gamma$ and $\bar c$).

Now that we've restricted to nice pairs of increment sequences, we can naturally view $\Phi_{\textsc{stat}}$ as a map on $(\bar{\mathbf I}_{x,T} \times \bar{\mathbf I}_{x,T'}) \cap \Gamma_L$: as in Claim~\ref{claim:mixing-bijection}, we arrive at the following claim. 

\begin{claim}
The restriction of $\Phi_{\textsc{stat}}$ to $(\bar{\mathbf I}_{x,T} \times \bar{\mathbf I}_{x,T'}) \cap \Gamma_L$ is a bijection from $(\bar{\mathbf I}_{x,T} \times \bar{\mathbf I}_{x,T'}) \cap \Gamma_L$ to itself. 
\end{claim}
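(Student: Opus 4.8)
The plan is to mirror the argument used for $\Phi_{\textsc{mix}}$ (Claim~\ref{claim:mixing-bijection}) essentially verbatim, exploiting that $\Phi_{\textsc{stat}}$ is its own inverse. First I would observe that $\Phi_{\textsc{stat}} = \Phi_{\textsc{stat}}^{-1}$: applying the swap of the increment stretches between $j-\tau_L^-$ and $j+\tau_L^+$ in $\cI$ with the corresponding stretch in $\cI'$ twice returns the original pair, provided the indices $\tau_L^-,\tau_L^+$ selected in step (1) are unchanged by the map. Hence it suffices to prove the single containment $\Phi_{\textsc{stat}}\big((\bar{\mathbf I}_{x,T}\times\bar{\mathbf I}_{x,T'})\cap\Gamma_L\big)\subset (\bar{\mathbf I}_{x,T}\times\bar{\mathbf I}_{x,T'})\cap\Gamma_L$.

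The containment I would establish in three pieces, one for each of the three requirements defining $\Gamma_L$ on top of $\Gamma^-_{\trivincr,L}\cap\Gamma^+_{\trivincr,L}$. (a) \emph{Membership in $\mathbf I_{x,T}\times\mathbf I_{x,T'}$:} since the pair lies in $\Gamma_\Tsp$, both source-point indices are below $K\log T$ (resp.\ $K\log T'$), so after swapping, the shifted increment stretches attach to truncated interfaces whose source points are unaffected; by Remark~\ref{rem:tame} (any $T$-admissible truncation plus a tame spine yields a valid interface) the resulting pair lies in $\mathbf I_{x,T}\times\mathbf I_{x,T'}$. (b) \emph{Tameness:} this is exactly the content of the event $\bar\Gamma$ in the definition of $\Gamma_L$, so it holds by fiat for $\Phi_{\textsc{stat}}(\cI,\cI')$. (c) \emph{Membership in $\Gamma^-_{\trivincr,L}\cap\Gamma^+_{\trivincr,L}$ and in the $\Gamma_{i>\tau_L^{\pm}}$ events:} here I would use that $\Phi_{\textsc{stat}}$ only permutes (via a rigid vertical/horizontal shift) the blocks of increments \emph{strictly between} $j-\tau_L^-$ and $j+\tau_L^+$ (and the corresponding block for the primed interface), leaving untouched the increments at indices $\le j-\tau_L^-$ and $\ge j+\tau_L^+$ in each spine — in particular the trivial runs of length $2L$ certifying $\tau_L^\pm$ survive, so $\tau_L^-,\tau_L^+$ remain the smallest valid indices and the pair stays in $\Gamma^\pm_{\trivincr,L}$ with the same $\tau_L^\pm$; and the face-counts appearing in \eqref{eq:F_int-contribution}--\eqref{eq:F_+-contribution} are preserved because the relevant increments $X_{j\pm\tau_L^\pm+i}$ (those outside the swapped region, plus the remainder) are literally the same face-sets up to a shift, so the two inequalities continue to hold. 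The asymmetric bookkeeping — the swapped-in stretch below index $j-\tau_L^-$ of $\Phi^{121}(\cI,\cI')$ is the stretch from $\cI'$, so its tail-sum condition is inherited from $\cI'$'s membership in $\Gamma'_{i>\tau_L^-}$, and vice versa — is what makes the definition of $\Gamma_L$ symmetric enough for the bijection to close up.

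The main obstacle is purely combinatorial: one must check carefully that the selection rule for $\tau_L^-$ and $\tau_L^+$ (smallest indices in the prescribed windows $\llb L,D-L\rrb$ and $\llb L+s,D-L\rrb$) depends only on the parts of the two increment sequences that $\Phi_{\textsc{stat}}$ does not move — i.e.\ that after the swap, no \emph{smaller} pair $(\tilde\tau_L^-,\tilde\tau_L^+)$ of trivial-run centers becomes available in one interface but not the other. This follows because a center $\tau$ certifies $\Gamma^\pm_{\trivincr,L}$ only if the run of $2L$ trivial increments occurs in \emph{both} interfaces simultaneously at the matching offset from $j$ and $j'$; the swapped middle blocks are identical across the image pair $(\Phi^{121},\Phi^{212})$ by construction (one gets $\cI'$'s block, the other gets $\cI$'s block, but both the windows below $j-\tau_L^-$ and above $j+\tau_L^+$ are what they were), so the joint trivial-run structure in the relevant windows is exactly the joint structure of the original pair. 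Once this is verified, $\Phi_{\textsc{stat}}\circ\Phi_{\textsc{stat}}=\mathrm{id}$ on $\Gamma_L$ and the bijection claim follows. The remaining verifications are routine and parallel Claim~\ref{claim:mixing-bijection}.
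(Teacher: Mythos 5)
The overall structure of your proof (showing $\Phi_{\textsc{stat}}$ is an involution, then verifying each of the conditions defining $\Gamma_L$ is preserved) matches the paper's implicit approach — the paper simply says "as in Claim~\ref{claim:mixing-bijection}" and leaves the verification to the reader. Your verification that the selected indices $\tau_L^\pm$ are preserved — because whether a given offset $\tau$ certifies $\Gamma_{\trivincr,L}^{\pm}$ is a joint condition on both increment sequences, and the swap merely permutes which of $X^{(1)},X^{(2)}$ contributes a given entry without changing the joint trivial‐run pattern — is the right key observation, and this is where $\Phi_{\textsc{stat}}$ genuinely differs from $\Phi_{\textsc{mix}}$ in its bookkeeping.

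However, your treatment of condition (3) (the events $\Gamma_{i>\tau_L^\pm}$) is imprecise, and this is the one place where the argument for $\Phi_{\textsc{stat}}$ differs materially from Claim~\ref{claim:mixing-bijection}. You write that ``the relevant increments $X_{j\pm\tau_L^\pm+i}$ (those outside the swapped region, plus the remainder) are literally the same face-sets up to a shift.'' That is true for $\Gamma_{i>\tau_L^+}$: the sum in~\eqref{eq:F_+-contribution} runs over indices $j+\tau_L^++i$, $i\geq1$, which lie strictly above the swap window $\llb j-\tau_L^-+1,\,j+\tau_L^+\rrb$, so $Y^{(1)}_{j+\tau_L^++i}=X^{(1)}_{j+\tau_L^++i}$ and that event is exactly preserved. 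But it is false for $\Gamma_{i>\tau_L^-}$: the sum in~\eqref{eq:F_int-contribution} runs over indices $j-\tau_L^-+i$, $i\geq1$, which \emph{begin inside the swap window}. Concretely, for $1\leq i\leq\tau_L^-+\tau_L^+$ one has $Y^{(1)}_{j-\tau_L^-+i}=X^{(2)}_{j'-\tau_L^-+i}$, while for $i>\tau_L^-+\tau_L^+$ one has $Y^{(1)}_{j-\tau_L^-+i}=X^{(1)}_{j-\tau_L^-+i}$; the resulting sum is a genuine mix of contributions from $X^{(1)}$ and $X^{(2)}$, not ``inherited'' from $\cI'$ alone (and your phrase ``swapped-in stretch below index $j-\tau_L^-$'' is also off by direction — the swap occupies indices above $j-\tau_L^-$, not below). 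The best you can say a priori is that the mixed sum is bounded by the sum of the two original bounds, i.e.\ by $2e^{-\bar c L/2}$ rather than $e^{-\bar c L/2}$, so with the definition of $\Gamma_L$ as written the image pair need not literally lie in $\Gamma_L$. This is arguably a benign imprecision (it is inherited from the paper itself, and is patched by relaxing the constant in $\Gamma_L$ to $2e^{-\bar c L/2}$, or by defining~\eqref{eq:F_int-contribution} as a joint condition summing $|\cF(X_{j-\tau_L^-+i})|+|\cF(X'_{j'-\tau_L^-+i})|$, which is manifestly swap-invariant and still delivers Lemma~\ref{lem:stationarity-part-functions}), but your argument papers over exactly the step where care is required, so as written it does not close.
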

We are therefore left to bound 
\begin{align*}
\sum_{Z_j\in E} \sum_{\tilde Z_{j'} , A_T, A_{T'}} \Big | \pi_T ( Z_j, A_T) \pi_T( \tilde Z_{j'}, A_{T'}) - \pi_T \big(\Phi^{121}((Z_j,A_T), (\tilde Z_{j'},A_{T'}))\big) \pi_T\big(\Phi^{212}((Z_j,A_T), (\tilde Z_{j'},A_{T'}))\big)\Big|\,.
\end{align*} 
In order to bound the summands above, as before, let us focus on the ratio of the probabilities under application of $\Phi_{\textsc{stat}}$, and use the machinery of Theorem~\ref{thm:cluster-expansion}. 

We will use the short-hand $\cI^{ZA} = (Z_j, A_T)$ for the interface in $\bar{\mathbf I}_{x,T}$ with $\sZ_j = Z_j$ and $A_T$ elsewhere, and $\cI^{Z'A'} = (\tilde Z_{j'}, A_{T'})$ for the interface in $\bar{\mathbf I}_{x,T'}$ that has $\sZ_{j'} = \tilde Z_{j'}$ and $A_{T'}$ elsewhere.  Moreover, let 
\begin{align*}
\cI^{ZA'} = \Phi^{212}(\cI^{ZA}, \cI^{Z'A'})\,, \qquad \mbox{and}\qquad \cI^{Z'A} = \Phi^{121}(\cI^{ZA}, \cI^{Z'A'})\,.
\end{align*}
(In particular, $A'$ (resp., $A$) are interfaces of $T'$ (resp., $T$) increments and the $Z$ or $Z'$ in the superscript indicates whether the increments in indices $j'-\tau_L^-, \ldots, j'+\tau_L^+$ (resp., $j- \tau_L^-, \ldots, j+\tau_L^+$) are those coming from $j-\tau_L^-,.., j+ \tau_L^+$ in $(Z,A)$ or $j'-\tau_L^-,\ldots, j'+\tau_L^+$.) Then, for any such interfaces in $(\bar{\mathbf I}_{x,T} \times \bar{\mathbf I}_{x,T'}) \cap \Gamma_L$, 
\begin{align*}
& \frac{\pi_T (\cI^{ZA}) \pi_T ( \cI^{Z'A'})}{\pi_T( \cI^{ZA'}) \pi_T(\cI^{Z'A})}  = \frac{\mu_n(\cI^{ZA}) \mu_n ( \cI^{Z'A'})}{\mu_n( \cI^{ZA'}) \mu_n(\cI^{Z'A})} \\
& \qquad \quad= \exp\Big( \sum_{f\in \cI^{ZA}} \g(f , \cI^{ZA}) + \sum_{f\in \cI^{Z'A'}} \g(f, \cI^{Z'A'})-\sum_{\tilde f\in \cI^{Z'A}} \g(f , \cI^{Z'A}) + \sum_{f\in \cI^{ZA'}} \g(f , \cI^{ZA'})\Big)\,.
\end{align*}

Proposition~\ref{prop:increment-stationarity} then follows from the following lemma, just as in the proof of Proposition~\ref{prop:increment-mixing}. 

\begin{lem}\label{lem:stationarity-part-functions}
There is a universal constant $\bar C$ such that for any pair $(\cI,\cI')\in \Gamma_l$, we have
\begin{align*}
\Big|\sum_{f\in \cI^{ZA}} \g(f,\cI_{ZA})  +\sum_{f\in \cI^{Z'A'}}\g(f,\cI^{Z'A'}) -\sum_{f\in \cI^{Z' A}}\g(f,\cI^{Z'A})  - \sum_{f\in \cI^{ZA'}}\g(f,\cI^{ZA'})\Big| \leq \bar C \exp\big(-  \bar c L/2\big)\,.
\end{align*}
\end{lem}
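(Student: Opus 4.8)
The plan is to mirror the proof strategy of Lemma~\ref{lem:mixing-part-functions}, with the bookkeeping adapted to the fact that we now swap \emph{two} stretches of trivial increments (one below, one above the distinguished increments) rather than a single tail. First I would fix a pair $(\cI^{ZA},\cI^{Z'A'})\in(\bar{\mathbf I}_{x,T}\times\bar{\mathbf I}_{x,T'})\cap\Gamma_L$, recall that $\tau_L^-$ and $\tau_L^+$ are the smallest indices for which $\Gamma^-_{\trivincr,L}$ and $\Gamma^+_{\trivincr,L}$ hold, and decompose the four interfaces into three parts each: the faces ``below'' $v_{j-\tau_L^-}$ (i.e.\ in the truncation together with increments $X_{\Tsp},\ldots,X_{j-\tau_L^-}$), the faces ``in the middle'' (increments $X_{j-\tau_L^-+1},\ldots,X_{j+\tau_L^+}$, containing the $s+1$ increments $Z_j$), and the faces ``above'' $v_{j+\tau_L^+}$ (increments $X_{j+\tau_L^++1},\ldots,X_T$ and the remainder $X_{>T}$), and analogously with primes. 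By construction of $\Phi_{\textsc{stat}}$, the ``below'' parts of $\cI^{ZA}$ and $\cI^{Z'A}$ coincide, those of $\cI^{Z'A'}$ and $\cI^{ZA'}$ coincide; the ``above'' parts agree after the obvious vertical shift $\theta$; and the ``middle'' parts agree after another vertical shift. I would pair up faces across the four-interface sum accordingly and get a bound of the form
\[
\text{LHS}\leq \sum_{f\in F^{ZA}_-}|\g(f,\cI^{ZA})-\g(f,\cI^{Z'A})| + \sum_{f\in F^{Z'A'}_-}|\g(f,\cI^{Z'A'})-\g(f,\cI^{ZA'})| + (\text{middle terms}) + (\text{above terms})\,,
\]
where each difference is controlled by~\eqref{eq:g-exponential-decay} in terms of the congruence radius.

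The heart of the argument is then the same tameness-plus-trivial-stretch observation used for mixing: for a face in the ``below'' part, the radius $\br$ is attained either by a face in the symmetric difference of the ``above'' parts (which, by tameness and the definition of the source point, is at vertical distance at least $\tau_L^+\ge L$ plus its increment offset), or by a face in the symmetric difference of the ``middle'' parts—but since all increments in the windows $[j-\tau_L^-,j-\tau_L^-+L]$ and $[j+\tau_L^+-L,j+\tau_L^+]$ are trivial in both $\cI^{ZA}$ and $\cI^{Z'A}$ (this is exactly what $\Gamma^\pm_{\trivincr,L}$ and the minimality of $\tau_L^\pm$ give us), the discrepancy cannot begin until at least $L$ increments away. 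Summing the exponential tails and invoking $\Gamma_{i>\tau_L^-},\Gamma_{i>\tau_L^+}$ (i.e.\ the bounds~\eqref{eq:F_int-contribution}--\eqref{eq:F_+-contribution} on $\sum_i|\cF(X_{j\pm\tau_L^\pm+i})|e^{-\bar c(L+i)}$) bounds each such block by $\bar C e^{-\bar c L/2}$. The ``middle'' terms, involving faces in increments $Z_j$, are handled identically: a face in $\cF(\sX_{j+i})$ with $-\tau_L^-<i<\tau_L^+$ has congruence radius at least $(\tau_L^-+i)\wedge(\tau_L^+-i)$ once one notices that the two spliced interfaces agree with each other on everything except the ``below'' or ``above'' parts, which lie at least $L$ trivial increments away; since $s\le D-2L$, this distance is at least $L$, and again we pick up $e^{-\bar c L/2}$ times $|\cF(\sX_{j+i})|$, whose sum is bounded using that the $Z_j$-increments themselves only contribute a bounded multiple (the dependence on $|\cF(Z_j)|$ cancels because the same increments $Z_j$ appear, merely relocated, in $\cI^{Z'A}$ and $\cI^{ZA'}$). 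The ``above'' terms are the mirror image of the ``below'' ones.

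Assembling the at most eight blocks, each $O(e^{-\bar c L/2})$, gives $\text{LHS}\le\bar C e^{-\bar c L/2}$, which is Lemma~\ref{lem:stationarity-part-functions}; by our choice $L=\lceil\frac{4\gamma}{\bar c}\log D\rceil$ this is $\le\bar C D^{-2\gamma}$, whence $|\frac{\mu_n(\cI^{ZA})\mu_n(\cI^{Z'A'})}{\mu_n(\cI^{ZA'})\mu_n(\cI^{Z'A})}-1|\le 2\bar C D^{-2\gamma}$, a bound uniform in $Z_j,\tilde Z_{j'},A_T,A_{T'}$. Factoring this out of the sum over pairs in $\Gamma_L$ (the probabilities summing to at most one) and adding the $O(D^{-\gamma})$ contribution from pairs outside $\Gamma_L$ established above yields Proposition~\ref{prop:increment-stationarity}. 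The main obstacle I anticipate is the careful verification that, after the two-sided splice, \emph{every} face in the ``middle'' block genuinely has congruence radius growing with its distance to \emph{both} splice points: one must check that the two interfaces $\cI^{ZA}$ and $\cI^{Z'A}$ are congruent in a large ball around such a face despite their truncations and low increments being completely different—this works precisely because the trivial-increment windows of width $L$ above and below insulate the middle block, but making this congruence claim rigorous (rather than merely a distance estimate) requires the same attention to the definition of $\br$ via $B_r(0)$-congruence, including the allowed reflections/rotations, that was needed in Lemma~\ref{lem:mixing-part-functions}, and here it must be applied simultaneously at two interfaces of \emph{different} total heights $T$ and $T'$.
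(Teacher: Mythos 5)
Your overall decomposition is the same as the paper's: split each of the four interfaces into a ``below'' block $F_-$, a ``middle'' block $F_{\textsc{int}}$ (increments $j-\tau_L^-$ to $j+\tau_L^+$), and an ``above'' block $F_+$, pair $F_-^{ZA}$ with $F_-^{Z'A}$ (identical face sets), pair the $F_{\textsc{int}}$'s and $F_+$'s across the two sides via the shift maps $\theta^{(1)},\theta^{(2)}$, and control each paired difference via \eqref{eq:g-exponential-decay} together with tameness, the trivial windows, and the $\Gamma_L$ conditions. One small imprecision: the shifts $\theta^{(1)},\theta^{(2)}$ are general translations in $\R^3$ (the middle and above blocks of the two pillars start from cut-points $v_{j-\tau_L^-+1}$ and $v'_{j'-\tau_L^-+1}$ that need not share $xy$-coordinates), not ``vertical'' shifts.

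The substantive gap is in your treatment of the middle terms. You assert the congruence radius for a face in $\cF(\sX_{j+i})$ is at least $(\tau_L^-+i)\wedge(\tau_L^+-i)$, and from this conclude a uniform per-face bound $e^{-\bar c L/2}$, so that the middle contribution is $e^{-\bar c L/2}\sum_i|\cF(\sX_{j+i})|$. That sum is \emph{not} uniformly bounded over $\Gamma_L$: the middle block can contain $\Theta(T)$ faces, and your $L\sim\log D$ need not overwhelm $T/D$. Your proposed escape (``the dependence on $|\cF(Z_j)|$ cancels because the same $Z_j$ appears, relocated'') does not hold---the $\g$-sums over the relocated copies of $Z_j$ only agree up to $\bar K e^{-\bar c\br}$ per face, so the $|\cF(Z_j)|$ factor remains. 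What actually saves the argument is the extra $+L$ in the radius bound: for a face at index $j-\tau_L^-+\ell$ the discrepancy lies not at the splice but a further $L$ trivial increments below it, so $\br\geq L+\ell$, not $\ell$. With that, the trivial increments in the window $\ell\in\llb 1,L\rrb$ contribute $\sum_{\ell\leq L}4e^{-\bar c(L+\ell)}=O(e^{-\bar c L})$ rather than the $O(1)$ your version would give, and the remaining faces are controlled by the $\Gamma_{i>\tau_L^\pm}$ conditions \eqref{eq:F_int-contribution}--\eqref{eq:F_+-contribution}, which by design bound $\sum_{i\geq1}|\cF(X_{j-\tau_L^-+i})|\,e^{-\bar c(L+i)}\leq e^{-\bar c L/2}$ (including the $Z_j$ increments, weighted by $e^{-\bar c(L+\tau_L^-+\cdot)}$). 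The paper realizes this by splitting the $F_{\textsc{int}}$ contribution according to whether $\br$ is attained in $F_-$ or $F_+$ and reducing each case to the $F_+$-type or $F_-$-type analysis; you should do the same, and keep the $i$-dependence in the exponent throughout rather than collapsing to a uniform $e^{-\bar c L/2}$.
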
 

We wish to bound the absolute value of the quantity in the exponential by pairing various subsets of the different interfaces together in a manner that they look locally alike. We denote by $F_{\textsc {int}}^{ZA}$ the face set of the increments in $\cI^{ZA}$ between index $j- \tau_L^- $  and $j+\tau_L^+ $ and denote the two connected components of $\cI^{ZA} \setminus F_{\textsc{int}}^{ZA}$ by $F_-^{ZA}$ and $F_+^{ZA}$ respectively. Likewise define the $F_{\textsc{int}}$, $F_-$ and $F_+$ for $\cI^{Z'A'}, \cI^{ZA'}, \cI^{Z'A}$, where if the superscript is $A'$, the interior will have indices $j'- \tau_L^- $ and $j'+\tau_L^+$ (instead of $j- \tau_L^-$ and $j + \tau_L^+$).

Notice that $F_-^{ZA} = F_-^{Z'A}$ and  $F_-^{Z'A'} = F_-^{ZA'}$. We can then define the shift maps $\theta^{(1)}_{A,A'}$ which is the shift by the vector $-v_{j - \tau_L^- +1} + -v'_{j' - \tau_L^-+1}$ (where $v_{j - \tau_L^-+1}$ is the cut-point in $\cI^{ZA}$ and $v'_{j'-\tau_L^-+1}$ is the cut-point in $\cI^{Z'A'}$ and $\theta^{(2)}_{A,A'}$ which is the shift by the vector $-v_{j+ \tau_L^+} + v'_{j'+\tau_L^+}$. With these definitions, we see that 
\begin{align*}
\theta_{A,A'}^{(1)} F_{\textsc {int}}^{ZA} & = F_{\textsc {int}}^{ZA'}\,, & \qquad \theta_{A',A}^{(1)} F_{\textsc {int}}^{Z'A'} & = F_{\textsc {int}}^{Z'A}\,, \qquad \mbox{and} \\ 
 \qquad \theta_{A,A'}^{(2)}\theta^{(1)}_{A'A} F_{+}^{ZA} & = F_{+}^{Z'A}\,, & \qquad  \theta_{A',A}^{(2)}\theta_{A'A}^{(1)} F_+^{Z'A'} & = F_+^{ZA'}\,.
\end{align*}
With this decomposition, we see that  
\begin{align}\label{eq:part-function-splitting-stationarity}
\Big|\sum_{f\in \cI^{ZA}} \g(f,\cI_{ZA}) &  +\sum_{f\in \cI^{Z'A'}}\g(f,\cI^{Z'A'}) -\sum_{f\in \cI^{Z' A}}\g(f,\cI^{Z'A})  - \sum_{f\in \cI^{ZA'}}\g(f,\cI^{ZA'})\Big| \nonumber \\ 
\leq &  \sum_{f\in F_{+}^{ZA}} \Big|\g(f,\cI^{ZA}) - \g(\theta^{(2)}_{A, A'} \theta^{(1)}_{A',A} f, \cI^{Z'A})\Big| + \sum_{f\in F_+^{Z'A'}} \Big|\g(f,\cI^{Z'A'}) - \g(\theta^{(2)}_{A', A} \theta^{(1)}_{A,A'} f, \cI^{ZA'})\Big| \nonumber\\
& + \sum_{f\in F_{\textsc{int}}^{ZA}} \Big| \g(f,\cI^{ZA}) - \g(\theta^{(1)}_{A,A'}f, \cI^{ZA'})\Big| + \sum_{f\in F_{\textsc {int}}^{Z'A'}} \Big|\g(f,\cI^{Z'A'}) - \g(\theta^{(1)}_{A',A} f, \cI^{ZA'})\Big|  \nonumber\\ 
& + \sum_{f\in F_-^{ZA}} \Big| \g(f, \cI^{ZA}) - \g(f,\cI^{Z'A})\Big| + \sum_{f\in F_-^{Z'A'}} \Big|\g(f,\cI^{Z'A'}) - \g(f, \cI^{ZA'})\Big|\,. 
\end{align}
The first two terms are bounded above by $O(e^{ - \bar c L/2})$ analogously to the contribution of faces in $F_+^{jk}$ in~\eqref{eq:part-function-splitting-mixing}; by construction for a face in the $(j+\tau_L^+ +i)$-th increment, the radius $\br (f,\cI^{ZA}; \theta^{(2)}_{A,A'} \theta^{(1)}_{A',A} f, \cI^{Z'A})$ is at least $L + i$; the first $L$ such increments have exactly four faces, and their contribution is thus at most $4\bar C e^{ - \bar c L}$, while the contributions of increments above $j+ \tau_L^+ + L$ is bounded by $\bar K e^{ - \bar c L/2}$ by~\eqref{eq:F_+-contribution}.  

The last two terms in~\eqref{eq:part-function-splitting-stationarity} are bounded in the same manner as the term $F_-^{jk}$ in~\eqref{eq:part-function-splitting-mixing}; for these faces, the radius of congruence is attained by some face in $F_{\textsc {int}}^{ZA} \cup \theta^{(1)}_{A,A'} F_{\textsc {int}}^{ZA} \cup F_+^{ZA} + \theta^{(2)}_{A,A'} \theta^{(1)}_{A',A} F_+^{ZA}$. Then the set of faces can be split into those faces that are between increment $j-\tau_L^- $ and $j- \tau_L^- -L$, whose contribution is easily seen to be at most $4\bar C e^{ - \bar c L}$, and those that are below increment $j-\tau_L^- - L$ along with the truncation $\cI_{\textsc {tr}}$. The contribution of these latter faces is bounded as in the bound of~\eqref{eq:part-function-splitting-mixing}, by additionally summing over the possible faces that attain the radius of convergence, and using integrability of exponential tails to reduce this to a multiple of the quantities~\eqref{eq:F_int-contribution}--\eqref{eq:F_+-contribution}. 

It remains to bound the contribution of the middle two terms, say that of faces in $F_{\textsc{int}}^{ZA}$. These terms can be bounded by decomposing into the event that the radius of congruence is attained by a face in $F_{-}^{ZA} \cup F_-^{Z'A'}$ and the event that it is attained by a face in $F_+^{ZA} \cup \theta^{(2)}_{A',A} \theta^{(1)}_{A,A'} F_+^{Z'A'}$. In the former case, these terms are treated analogously to the first two terms, and therefore their contribution is at most  $\bar C e^{ -\bar c L/2}$ by~\eqref{eq:F_int-contribution}. In the latter case, they are treated analogously to the last two terms, swapping the summation into one over faces in $F_+^{ZA} \cup \theta^{(2)}_{A',A} \theta^{(1)}_{A,A'} F_+^{Z'A'}$, and the contribution is at most $O(e^{ - \bar cL/2})$, by~\eqref{eq:F_+-contribution}. 
\qed

\subsection{Proof of Corollary~\ref{cor:limiting-distribution}: existence of a limiting measure}\label{subsec:limiting-distribution}
We first claim that for each $k$, the subsequence of measures 
\[
\Big(\pi_T((\sX_{\frac T2 - k}\ldots, \sX_{\frac T2}, \ldots\sX_{\frac T2 + k})\in \cdot)\Big)_T
\] 
is a Cauchy sequence in the total-variation metric: indeed for every $T' \geq T$, we have by Proposition~\ref{prop:increment-stationarity} that
\begin{align*}
\|\pi_T((\sX_{\frac T2 - k}\ldots, \sX_{\frac T2}, \ldots\sX_{\frac T2 + k})\in \cdot)- \pi_{T'}((\sX_{\frac {T'}2 - k}\ldots, \sX_{\frac {T'}2}, \ldots\sX_{\frac {T'}2 + k})\in \cdot) \|_\tv \leq C \left(\tfrac T2 - k\right)^{- \gamma}\,.
\end{align*}
 By completeness of the space of probability measures on $\fX^{2k}$ with respect to the total-variation distance, this implies that for each $k$, there exists a limiting measure $\nu_k$ on $\fX^{2k}$ such that the marginals above converge to~$\nu_k$. If the family $(\nu_k)$ is viewed as marginals on $\sX_{-k},\ldots, \sX_0, \ldots, \sX_{k}$ of a limiting law $\nu$ on $\fX^{\Z}$, the Kolmogorov consistency criterion is trivially satisfied as these finite-dimensional distributions are arising as limits of marginals of a single consistent distribution (the law of $\sX_1,\ldots, \sX_T$ under $\pi_T$ viewed about $\sX_{\frac T2}$).

To see that any other sequence $a_T$ satisfying $(a_T\vee T- a_T) \gg \log T$  has the same limit, take any such $a_T$ (without loss of generality $a_T\leq T/2$) as well as any $k$, and bound 
\begin{align*}
\|\pi_{T} ((\sX_{a_T - k} , \ldots, & \sX_{a_T},  \ldots, \sX_{a_T + k} ) \in \cdot) - \nu ( (\sX_{- k} ,\ldots, \sX_{0 } ,\ldots, \sX_{k})\in \cdot)\|_{\tv}  \\
\leq  &   \| \pi_{T} ((\sX_{\frac T2 - k} , \ldots, \sX_{\frac T2}, \ldots, \sX_{\frac T2 + k} ) \in \cdot)- \nu ( (\sX_{- k} ,\ldots, \sX_{0 } ,\ldots, \sX_{k})\in \cdot)\|_{\tv} \\ 
&  + \|\pi_{T} ((\sX_{a_T - k} , \ldots, \sX_{a_T}, \ldots, \sX_{a_T + k} ) \in \cdot) - \pi_{T} ((\sX_{\frac T2 - k} , \ldots, \sX_{\frac T2}, \ldots, \sX_{\frac T2 + k} ) \in \cdot)\|_{\tv}\,.
\end{align*}
The first term on the right-hand side above is $o(1)$ as $T\to\infty$ by the convergence of $\pi_T$ to $\nu$ in total-variation. The second term on the right-hand side above is at most $C ({a_T - K\log T})^{ - \gamma}$ for $K$ satisfying $c_\sB \beta K>2\gamma$ for some $\gamma>2$ by Proposition~\ref{prop:increment-stationarity};  this is also $o(1)$ as $T\to\infty$. 
The two consequences of this follow immediately from the definition of weak convergence and Proposition~\ref{prop:exp-tails-increments} and Proposition~\ref{prop:increment-mixing}.
\qed

\section{Mean and variance of observables of the increment sequence}\label{sec:mean-variance}

In this section, we prove estimates for the mean and variance of running sums of increment observables $f:\fX \to \R$  (these appear in e.g., Theorem~\ref{mainthm:clt}). In Section~\ref{subsec:mean-zero} we prove that any function $f$ with rotational or reflective symmetries, has mean zero under $\nu$. In Section~\ref{subsec:linear-variance}, we prove that non-constant functions will have a variance that diverges linearly in $T$ variance in the central limit theorem. These will, in particular, imply the choices of the mean and covariances in items (1)--(2) of Corollary~\ref{maincor:displacement-surface-area}. 

\subsection{Anti-symmetric observables have mean zero}\label{subsec:mean-zero}
In this section, we prove that for any observable $f$ that is anti-symmetric in reflections or rotations in the $xy$-plane, its mean under $\nu$ is zero as long as $\beta$ is sufficiently large. In particular, its central limit theorem, holds without any recentering. The proof follows by applying a reflection map above an atypically long stretch of trivial increments, and seeing that this map essentially leaves the probability distribution over the increment sequence invariant. 

We say a map $\varphi: \fX \to \fX$ is a \emph{reflection} map if it is a reflection about one of the two planes with normal vector $e_1$ or $e_2$ going through the point $(\frac 12, \frac 12, \frac 12)$. We say it is a \emph{rotation map} if it is a $\frac{\pi n}4$ rotation about the $e_3$ axis through $(\frac 12, \frac 12, \frac 12)$. (Notice that the trivial increment $X_{\trivincr}$ is fixed by any of these maps.) The same $\varphi$ can naturally also be viewed as a map on remainder increments. (Note that $\varphi(X_\trivincr) = X_\trivincr$.)

\begin{proposition}\label{prop:mean-zero-observables}
 There exists $\beta_0>0$ such that the following holds for every $\beta>\beta_0$. If $f: \fX \to \R$ satisfies $|f(X)| \leq M$ and $f(X) = - f(\varphi(X))$ for all $X\in \fX$, for some $M\in\R$ and reflection or rotation map $\varphi$, then
\begin{enumerate}
\item $\E_{\nu} [ f(\sX_0 ) ] = 0$\;; 
\item if $g:  \fX \to \R$ has $|g(X)|\leq M$ and $g(\varphi(X)) = g(X)$ for every $X$, then 
\[\E_{\nu} \Big[\sum_{i\in \Z} f(\sX_0 ) g(\sX_i)\Big] = \sum_{i\in \Z} \cov_\nu (f(\sX_0), g(\sX_i)) = 0\,.
\]
\end{enumerate}
\end{proposition}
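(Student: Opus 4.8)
The plan is to exploit the asymptotic stationarity (Corollary~\ref{cor:limiting-distribution}) together with a reflection/rotation map applied to a long stretch of trivial increments, in the spirit of the two-to-two maps of \S\ref{subsec:increment-mixing}--\S\ref{subsec:increment-stationarity}, but now a \emph{one-to-one} map acting on a single interface. Since $X_\trivincr$ is fixed by $\varphi$, if a pillar has a run of $2L$ trivial increments centered at some index $\tau_L$ near the middle of the spine, we may apply $\varphi$ to the entire portion of the spine above $\tau_L$ (leaving the truncated interface, the base, and everything below $\tau_L$ unchanged), and the resulting object is again a valid interface in $\bar{\mathbf I}_{x,T}$ (after checking tameness is preserved, which it is since $\varphi$ is an isometry of $\R^3$ fixing the vertical axis). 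The key point is that the congruence radius $\br(f,\cI;f',\Psi(\cI))$ for every face $f$ not in the reflected part is bounded below by $\sim L+$(its increment-distance to $\tau_L$), because $\varphi$ fixes the column of trivial increments around $\tau_L$ and these trivial increments are congruent to their images; hence by~\eqref{eq:g-exponential-decay} the weight distortion $|\log(\mu_n(\cI)/\mu_n(\Psi(\cI)))|$ is $O(e^{-\bar cL/2})$ on a good set $\Gamma_L$ of interfaces (defined, as in Lemma~\ref{lem:good-pairs-of-interfaces}, to have $\Tsp\le K\log T$, to be tame after the map, and to have the increment-interaction tail bounds of Corollary~\ref{cor:increment-interaction-bound} above and below $\tau_L$). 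The complement of $\Gamma_L$ has $\pi_T$-probability $O(|{\tfrac T2}|^{-\gamma})$ by the same argument as in Lemma~\ref{lem:good-pairs-of-interfaces} (the existence of a run of $2L$ trivial increments near the center follows from Proposition~\ref{prop:exp-tails-increments}, which gives that $(\mathbf 1\{\sX_i=X_\trivincr\})_i$ stochastically dominates i.i.d.\ $\ber(1-\epsilon_\beta)$ coin tosses).

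\textbf{Step 1.} Fix $\gamma>2$ and choose $L=L(T)=\lceil\frac{4\gamma}{\bar c}\log T\rceil$. Define the reflection map $\Psi=\Psi_{\varphi,\tau_L}$ on $\bar{\mathbf I}_{x,T}$: on the good set $\Gamma_L$ it locates the first run of $2L$ trivial increments in the central window $\llb\frac T4,\frac{3T}4\rrb$, calls its midpoint $\tau_L$, and replaces the increment sequence $(\sX_{\tau_L+1},\ldots,\sX_{\sT},\sX_{>\sT})$ by $(\varphi(\sX_{\tau_L+1}),\ldots,\varphi(\sX_{\sT}),\varphi(\sX_{>\sT}))$; off $\Gamma_L$ it acts as the identity. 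Check that $\Psi$ is an involution on $\Gamma_L$ (it fixes $\tau_L$ and $\varphi$ is an involution, since a reflection is and a $\pm\pi/2$ rotation composed with itself is $\pm\pi$ rotation which is still an allowed congruence — here one should take $\varphi$ to be an order-two symmetry, or more carefully note that $\varphi^2$ is again a lattice symmetry fixing $X_\trivincr$ and run the argument with $\varphi$ a reflection, which suffices since any anti-symmetry of interest can be realized by a reflection). Check $\Psi(\Gamma_L)=\Gamma_L$: the source point, base, increments below $\tau_L$, and the window of trivial increments are all untouched, tameness is preserved because $\varphi$ is a vertical-axis-preserving isometry, and the interaction tails above $\tau_L$ are preserved since $\varphi$ preserves surface areas of increments.

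\textbf{Step 2.} Using Theorem~\ref{thm:cluster-expansion} and the bound on $\g$, show that for $\cI\in\Gamma_L$ one has $\bigl|\sum_{f\in\cI}\g(f,\cI)-\sum_{f'\in\Psi(\cI)}\g(f',\Psi(\cI))\bigr|\le\bar Ce^{-\bar cL/2}$, by the same face-pairing decomposition as in Lemma~\ref{lem:mixing-part-functions}: faces above $\tau_L$ are paired with their $\varphi$-images and the congruence radius is $\ge L+i$ for a face in the $(\tau_L+i)$-th increment, while faces below $\tau_L$ (including the truncated interface) have congruence radius $\ge L+i$ to the nearest face where $\cI$ and $\Psi(\cI)$ disagree (which lies above $\tau_L+L$), and summing with the $\Gamma_{i>\tau_L}$ tail bounds gives the claim. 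Since $\fm(\cI;\Psi(\cI))=0$ (reflections preserve the face count), this yields $\bigl|\log\frac{\mu_n(\cI)}{\mu_n(\Psi(\cI))}\bigr|\le\bar Ce^{-\bar cL/2}\le\bar C T^{-2\gamma}$ uniformly over $\Gamma_L$, and as $\Psi$ is a bijection of $\Gamma_L$ this gives $\bigl|\pi_T(\Gamma_L\cap A)-\pi_T(\Gamma_L\cap\Psi(A))\bigr|\le 2\bar C T^{-2\gamma}\pi_T(\Gamma_L)$ for any event $A$ measurable w.r.t.\ the increment sequence.

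\textbf{Step 3 (conclusion).} Apply the above with $A=\{\varphi(\sX_{\tau_L+\ell})=X\}$ for a fixed $X\in\fX$ and a fixed offset $\ell$ inside the central window: then $\Psi(A)=\{\sX_{\tau_L+\ell}=\varphi(X)\}$ on $\Gamma_L$, and since $\tau_L$ is itself random one sums over the $O(T)$ possible values of $\tau_L$, absorbing this into $\gamma$. We deduce that for any fixed bounded $h:\fX\to\R$ and any index $i$ in the central window, $\bigl|\E_{\pi_T}[h(\sX_i)]-\E_{\pi_T}[h(\varphi(\sX_i))]\bigr|=O(T^{-\gamma+1})+O(M\pi_T(\Gamma_L^c))=o(1)$. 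Letting $T\to\infty$ and using weak convergence of the central marginals to $\nu$ (Corollary~\ref{cor:limiting-distribution}, with $a_T$ in the central window), we obtain $\E_\nu[h(\sX_0)]=\E_\nu[h(\varphi(\sX_0))]$ for every bounded $h$, i.e.\ $\sX_0$ and $\varphi(\sX_0)$ have the same law under $\nu$ (using that $\nu$ is supported on increments of finite excess area with exponential tails, so bounded functions are dense enough). Taking $h=f$ with $f\circ\varphi=-f$ forces $\E_\nu[f(\sX_0)]=-\E_\nu[f(\sX_0)]$, hence part (1). For part (2), apply the same argument to the joint law: the map $\Psi$ with $\tau_L$ placed so that $\ell=0$ falls at a fixed central location shows $(\sX_0,\sX_i)\overset{d}{=}(\varphi(\sX_0),\varphi(\sX_i))$ under $\nu$ for each fixed $i$ (distinguishing $i>0$, $i<0$, $i=0$, in each case $0$ and $i$ lie above $\tau_L$ after placing $\tau_L$ suitably below $\min(0,i)$ within the window), whence $\cov_\nu(f(\sX_0),g(\sX_i))=\cov_\nu(-f(\sX_0),g(\sX_i))=-\cov_\nu(f(\sX_0),g(\sX_i))$ using $g\circ\varphi=g$, so each covariance vanishes; absolute summability of $\sum_i\cov_\nu(f(\sX_0),g(\sX_i))$ follows from the $\alpha$-mixing bound in Corollary~\ref{cor:limiting-distribution}(2) together with boundedness of $f,g$, so the sum is a convergent sum of zeros.

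\textbf{Main obstacle.} The delicate point is Step~2 --- ensuring the map $\Psi$ genuinely has \emph{zero} excess area and a weight distortion that is $o(1)$ rather than $1\pm\epsilon_\beta$; this is exactly why the long run of trivial increments around $\tau_L$ is essential (it pins the reflection plane to a locally-$\varphi$-invariant region so that the congruence radius grows with $L$), and why one must carefully verify, via the tameness and the $\Gamma_{i>\tau_L}$/$\Gamma_{i>\tau_L^{\pm}}$-type tail bounds of Corollary~\ref{cor:increment-interaction-bound}, that the faces of the truncated interface and the base — which are \emph{not} reflected — do not come close to the reflected portion of the spine. A secondary subtlety is handling the randomness of $\tau_L$ and the fact that $\varphi$ need only be order two for the involution property; restricting attention to reflection maps (which always have order two) and noting that any rotation anti-symmetry of a bounded observable can be reduced to a reflection anti-symmetry after composing with the relevant lattice symmetries covers all cases in the statement.
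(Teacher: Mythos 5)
Your proposal uses the same key mechanism as the paper's proof: a one-to-one map $\Phi_\varphi$ that applies $\varphi$ to the pillar above the midpoint of a run of $\Theta(\log T)$ consecutive trivial increments, exploiting that $\varphi$ fixes $X_\trivincr$ and that the congruence in Theorem~\ref{thm:cluster-expansion} is up to reflections/rotations, so the weight distortion is $T^{-\Omega(1)}$. Where you diverge is the way you extract the conclusion: the paper (i) shows directly that $\E_{\pi_T}[\sum_{i\le T} f(\sX_i)]=o(T)$ (and similarly for $\sum_{i,j} f(\sX_i)g(\sX_j)$) by conditioning on $\tau_L$ and bounding $|\E_{\pi_T}[\sum_{i>\tau_L}f(\sX_i)\mid\tau_L]|\le MT\,\E_{\pi_T}[|\pi_T(\cI)/\pi_T(\Phi_\varphi(\cI))-1|\mid\tau_L]$, then (ii) combines this with $\E_{\pi_T}[\sum_i f(\sX_i)]=O(\log T)+T(1-o(1))\E_\nu[f(\sX_0)]$; whereas you prove that the law of $\sX_0$ under $\nu$ (and the joint law of $(\sX_0,\sX_i)$) is $\varphi$-invariant by applying the bijection to event sets $A$, $\Psi(A)$ and passing $T\to\infty$. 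Your route yields a mildly stronger intermediate fact --- each covariance $\cov_\nu(f(\sX_0),g(\sX_i))$ vanishes individually, not merely the series --- at the cost of being more careful about the randomness of $\tau_L$ when translating back to a fixed central index. Both are correct; the cluster-expansion estimate at the heart of the two arguments (your Step~2, the paper's Claim~\ref{claim:ratio-of-probabilities-reflection}) is the same.

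Two small points worth flagging. The concern about $\varphi$ needing to be an involution is spurious: the argument only needs $\Psi$ to be a bijection of $\Gamma_L$, which follows from $\varphi$ being a bijection of $\fX$ that fixes $X_\trivincr$ and preserves increment heights and surface areas --- both automatic for a reflection or rotation about the vertical axis through $v_{\tau_L}$, so $\tau_L$ and tameness are preserved under all iterates of $\Psi$. Your suggested workaround (``any rotation anti-symmetry can be realized by a reflection'') is not actually true in general, but since the involution property is not needed this does not create a gap. Also, a minor sign slip in Step~3: with $A=\{\varphi(\sX_{\tau_L+\ell})=X\}$ one gets $\Psi(A)=\{\sX_{\tau_L+\ell}=X\}$; to obtain $\Psi(A)=\{\sX_{\tau_L+\ell}=\varphi(X)\}$ take $A=\{\sX_{\tau_L+\ell}=X\}$. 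Either way the conclusion is unaffected since $X$ ranges over $\fX$.
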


The proof of Proposition~\ref{prop:mean-zero-observables} relies crucially on bounding the effect of a map that reflects or rotates the pillar above some stretch of $O(\log T)$ consecutive trivial increments. To that end, let $\varphi$ be a reflection or rotation map and define the map $\Phi_\varphi:\bar{\mathbf I}_{x,T} \to \bar{\mathbf I}_{x,T}$ as follows.  
\begin{definition}\label{def:reflection-map}
For a given $L$, we can denote $\tau_{L}=\tau_L(\cI)$ as the smallest index greater than $\Tsp+L$ such that all of $X_{\tau_L -L},\ldots, X_{\tau_L} = X_\trivincr$. Then, for an interface $\cI$, let $\Phi_\varphi$ agree with $\cI$ on its truncation and its increments up to the $(\tau_{L} - \frac L2)$-th increment, then apply the map $\varphi$ to all increments with index ranging from $\tau_L - \frac L2$ to $T$, as well as the remainder increment. Notice that this is the same as applying the map $\varphi$ to the entire subset of the pillar above the $(\tau_L - \frac{L}{2})$-th increment, by correspondingly reflecting/rotating it about the $e_3$ axis going through $v_{\tau_L - \frac L2+1}$. (If $\tau_L$ does not exist, then let $\Phi_{\varphi}$ be the identity.) 
\end{definition}

\begin{claim} \label{claim:ratio-of-probabilities-reflection}
For every reflection or rotation map $\varphi:\fX \to\fX$, every $\cI\in  \bar{\mathbf I}_{x,T}$, and every $\gamma>1$, there exists some $\beta_0>0$ such that, for all $\beta>\beta_0$, 
\begin{align*}
\Big|\frac {\pi_{T}(\cI)}{\pi_T(\Phi_\varphi(\cI))}- 1\Big| \leq O(T^{-\gamma+1})\,.
\end{align*}
\end{claim}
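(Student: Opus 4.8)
\textbf{Proof strategy for Claim~\ref{claim:ratio-of-probabilities-reflection}.} The plan is to apply Theorem~\ref{thm:cluster-expansion} to the pair $\cI$ and $\cJ := \Phi_\varphi(\cI)$, and to show that the energy term vanishes exactly while the $\g$-term contributes only $O(T^{-\gamma+1})$. First, since $\varphi$ is an isometry of $\Z^3$ fixing the cell $v_{\tau_L - L/2 + 1}$ and preserving the $xy$-plane, the subset of the pillar above the $(\tau_L - \tfrac L2)$-th increment in $\cJ$ is congruent to the corresponding subset of $\cI$; in particular $|\cJ| = |\cI|$, so $\fm(\cI;\cJ) = 0$ and the energy factor $e^{-\beta \fm(\cI;\cJ)}$ is exactly $1$. (Here I use that the congruence relation $\equiv$ in Theorem~\ref{thm:cluster-expansion} is up to reflections and $\pm\tfrac\pi2$ rotations in the horizontal plane, which is why $\varphi$ must be restricted to these symmetries.) It also follows from the definition of $\tau_L$ that $\Phi_\varphi(\cI) \in \bar{\mathbf I}_{x,T}$: the increment sequence is merely permuted by an isometry, so heights, excess areas and the source point are unchanged, and tameness is preserved; thus $\mu_n(\bar{\mathbf I}_{x,T})$ cancels when we pass from $\pi_T$ to $\mu_n$ and it suffices to bound $|\mu_n(\cI)/\mu_n(\cJ) - 1|$.

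Second, I would decompose the faces of $\cI$ as $\mathbf G \cup \mathbf H$, where $\mathbf H$ is the face-set of the portion of the pillar strictly above the cut-point $v_{\tau_L - L/2 + 1}$ (the part that gets reflected/rotated) and $\mathbf G = \cI \setminus \mathbf H$ is everything at or below it (the truncation, the base, and the increments $X_{\Tsp},\ldots,X_{\tau_L - L/2}$, of which the top $L/2$ are trivial). Under $\Phi_\varphi$ there is an obvious bijection $f \mapsto \hat f$ between $\cI$ and $\cJ$: $\hat f = f$ for $f \in \mathbf G$, and $\hat f = \varphi(f)$ (recentered appropriately) for $f \in \mathbf H$. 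By Theorem~\ref{thm:cluster-expansion},
\begin{align*}
\Big|\log\frac{\mu_n(\cI)}{\mu_n(\cJ)}\Big| \leq \sum_{f \in \mathbf G} |\g(f,\cI) - \g(f,\cJ)| + \sum_{f \in \mathbf H} |\g(f,\cI) - \g(\hat f, \cJ)|\,.
\end{align*}
For the second sum: since $\varphi$ is a global isometry of the reflected/rotated piece, the local neighborhood of $f$ in $\cI$ within radius $r$ agrees with that of $\hat f$ in $\cJ$ unless that ball reaches down to the cut-point $v_{\tau_L - L/2 + 1}$ or below, i.e.\ unless $r \gtrsim \hgt(f) - \hgt(v_{\tau_L - L/2+1})$. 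Hence $\br(f,\cI;\hat f,\cJ)$ is bounded below by the number of cut-points between $f$ and that cut-point; summing $\bar K e^{-\bar c \br}$ over faces of the (tame) spine above the reflection point, using the exponential tails on increment surface areas (Proposition~\ref{prop:exp-tails-increments} and Corollary~\ref{cor:increment-interaction-bound}), gives a contribution that is $O(e^{-\bar c L/2})$ with probability close to one — but since we want a \emph{deterministic} bound valid for every $\cI \in \bar{\mathbf I}_{x,T}$, I instead observe that in the worst case the sum is $O(|\cF(\cS_x^{>})|) = O(T)$, which together with the next point suffices. Symmetrically, for $f \in \mathbf G$: $\g(f,\cI)$ and $\g(f,\cJ)$ differ only through faces of $\mathbf H$, all of which lie at vertical distance at least $L/2$ from $f$ (there are $\ge L/2$ trivial increments in between), so $\br(f,\cI;f,\cJ) \ge L/2 + (\text{excess height})$, and integrating the exponential tail over $\mathbf G$ yields $O(e^{-\bar c L/2})$ again, up to the $O(T)$ worst-case bound.

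Third, the choice of $L$ closes the argument: take $L = \lceil \tfrac{2(\gamma+1)}{\bar c}\log T \rceil$ (the same logarithmic scale used in Sections~\ref{subsec:increment-mixing}--\ref{subsec:increment-stationarity}). Then $e^{-\bar c L/2} \le T^{-\gamma-1}$, so the ``typical'' part of each $\g$-difference sum is $O(T^{-\gamma-1})$; even allowing the worst-case $O(T)$ bound on the number of terms, the total is $O(T \cdot T^{-\gamma-1}) = O(T^{-\gamma})$, and exponentiating $|\log(\mu_n(\cI)/\mu_n(\cJ))| = O(T^{-\gamma})$ gives $|\mu_n(\cI)/\mu_n(\cJ) - 1| = O(T^{-\gamma}) = O(T^{-\gamma+1})$, as claimed. \textbf{The main obstacle} I anticipate is the bookkeeping needed to make the $\g$-difference bounds \emph{uniform over all} $\cI \in \bar{\mathbf I}_{x,T}$ (rather than merely with high probability): one must verify that even when the spine above the reflection point carries large excess area, the radius-of-congruence $\br$ still grows with the number of intervening trivial cut-points, so that each individual term decays and the total number of faces is only polynomially (in fact linearly in $T$) large by tameness. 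A secondary subtlety is handling the degenerate case where no index $\tau_L$ with the required run of $L$ trivial increments exists — there $\Phi_\varphi$ is the identity and the claim is trivial; but one should note that in the proof of Proposition~\ref{prop:mean-zero-observables} this event must itself be shown to have probability $O(T^{-\gamma})$, via the stochastic domination of $(\mathbf 1\{\sX_i = X_\trivincr\})_i$ by i.i.d.\ $\ber(1-\epsilon_\beta)$ variables exactly as in Lemma~\ref{lem:good-pairs-of-interfaces}.
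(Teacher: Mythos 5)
Your overall approach matches the paper's: apply Theorem~\ref{thm:cluster-expansion}, note $\fm(\cI;\Phi_\varphi(\cI))=0$, split the faces at the cut-point $v_{\tau_L-\frac L2+1}$, and use tameness (not the probabilistic tail bounds) to sum the $\g$-differences deterministically over the at most $O(T)$ faces of the spine. Your worry that the bound should be deterministic rather than high-probability is well-founded, and you resolve it the same way the paper does, via $\fm(\cS_x)\leq r_0 T$.

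However, there is a genuine gap in your lower bound on the radius of congruence for faces $f\in\mathbf H$. You write that the local neighborhoods of $f$ and $\hat f=\varphi(f)$ agree ``unless the ball reaches down to the cut-point $v_{\tau_L-L/2+1}$ or below,'' and conclude $\br(f,\cI;\hat f,\cJ)\gtrsim \hgt(f)-\hgt(v_{\tau_L-L/2+1})$. This is too weak: for $f$ in the first few increments above the cut-point (which are trivial), this gives $\br=O(1)$, and the contribution of those $\approx 2L$ faces to $\sum_{f\in\mathbf H}\bar K e^{-\bar c\br}$ is then $\sum_{r\geq 1}4\bar K e^{-\bar c r}=O(1)$, not $O(e^{-\bar c L/2})$. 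That $O(1)$ does not vanish as $T\to\infty$ and kills the claim. The missing ingredient is the $L/2$ \emph{trivial increments below the cut-point} (indices $\tau_L-L,\ldots,\tau_L-\frac L2$, which lie in $\mathbf G$). Since these are straight columns through $\rho(v_{\tau_L-\frac L2+1})$ they are fixed by $\varphi$, so the ball $B_r(f)$ in $\cI$ continues to be congruent to $B_r(\hat f)$ in $\cJ$ as it passes through them; the congruence only breaks once the ball reaches below $v_{\tau_L-L}$ (or hits a nontrivial increment above $\tau_L$, on which it is also congruent via $\varphi$). This gives $\br(f,\cI;\hat f,\cJ)\geq i-(\tau_L-L)\geq \frac L2$ uniformly for $f$ in increment $i>\tau_L-\frac L2$, which is exactly what the paper uses, and is the estimate you need to make the $\mathbf H$-sum $O(e^{-\bar c L/2})$ plus the tameness-controlled tail. (You implicitly use precisely this reasoning on the $\mathbf G$ side---noting the $\geq L/2$ trivial increments in between---but fail to symmetrize it to the $\mathbf H$ side.) Once the radius bound is corrected, your $L=\lceil\frac{2(\gamma+1)}{\bar c}\log T\rceil$ and worst-case count $O(T)$ give the claimed $O(T^{-\gamma+1})$ as you outline.
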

\begin{proof}
Fix any $\gamma$ and let $L = \lceil \frac{2\gamma}{\bar c} \log T \rceil$. By Theorem~\ref{thm:cluster-expansion} and Definition~\ref{def:reflection-map}, for $\cI \in \bar{\mathbf I}_{x,T}$, 
\begin{align*}
\frac{\pi_T(\cI)}{\pi_T(\Phi_\varphi (\cI))} = \frac {\mu_n(\cI)}{\mu_n(\Phi_\varphi (\cI))} = \exp\Big( \sum_{f\in \cI} \g(f,\cI ) - \sum_{f'\in \Phi_\varphi (\cI)} \g(f', \Phi_\varphi (\cI))\Big)\,.
\end{align*}  
For every interface $\cI$, let us split its faces up as $\cI^-$ denoting the union of $\cI_{\textsc{tr}}$ and the increment sequence up to $X_{\tau_L- \frac L2}$, and $\cI^+$ denoting the union of the increments above $X_{\tau_L - \frac L2}$ along with the remainder $X_{>T}$. Moreover, for a face $f \in \cS_x$, let $\varphi(f)$ be the image of that face $f$ under the reflection/rotation map $\varphi$, viewed as a face in $\Phi_\varphi (\cI)$. Then,  
\begin{align*}
\Big| \sum_{f\in \cI} \g(f, \cI ) - \sum_{f'\in \Phi_\varphi (\cI)} \g(f',\Phi_\varphi (\cI))\Big|\leq  \sum_{f\in \cI^-} |\g(f, \cI) - \g(f,\Phi_\varphi (\cI))| + \sum_{f\in \cI^+} |\g(f,\cI) - \g(\varphi(f), \Phi_\varphi(\cI))|\,.
\end{align*}
It is clear that if $f\in X_i$ for $i>\tau_L - \frac L2$, the radius $\br(f,\cI; \varphi(f), \Phi_\varphi(\cI))$ is attained by a face a distance at least $i - (\tau_L- L)$, because the spine is tamed. We used crucially that in Theorem~\ref{thm:cluster-expansion}, the radius of congruence is congruence up to rotation and reflection in the $xy$-plane, and the increments between $\tau_L - L$ and $\tau_L$ are fixed by such reflection and rotations. Consequently, 
\begin{align*}
\sum_{f\in \cI^+} e^{ - \bar c \br(f,\cI; \varphi(f), \Phi_\varphi(\cI))} \leq 4 \bar C e^{ - \bar c L/2} +  \sum_{i \geq 1} \bar K |\cF(X_{\tau_L+i})| e^{ - \bar c (L + i)}
\end{align*}
which is at most $O(T^{ - \gamma+1})$ by the tameness of $\cI$ and the choice of $L$. 

At the same time, for each $f\in \cI^-$, the radius $\br ( f, \cI; f, \Phi_\varphi(\cI))$ is attained by a face in $\cI^+ \cup \Phi_\varphi(\cI^+)$, so that proceeding as usual with these terms, their contribution is bounded by $\bar K$ times
\begin{align*}
\sum_{f\in \cI^-} e^{ - \bar c \br (f,\cI; f,\Phi_\varphi (\cI))} &  \leq \sum_{i\leq L/2} 4 e^{- \bar c (i+\frac L2)}+ \sum_{g\in \cI^+ \cup \Phi_\varphi (\cI^+)} \Big[\sum_{f\in \cI_{\textsc {tr}}} e^{ - \bar c d(f, g)} + \sum_{\Tsp \leq i\leq \tau_L-L} \sum_{f\in \cF(X_i)} e^{ - \bar c d(f, g)}\Big]  \\
&  \leq 4\bar C e^{- \bar c L/2} + \sum_{i'\geq 1}  \sum_{g\in \cF(X_{\tau_L+i'})\cup \cF(\Phi_\varphi (X_{\tau_L+i'}))} \sum_{ f\in \cF(\Z^3) :\, d(f,g)\geq L+i' } e^{- \bar c d(g,f)} \\ 
& \leq 4\bar Ce^{- \bar c L/2} + \sum_{i' \geq 1} 2|\cF(X_{\tau_L+i'})| \bar C e^{ - \bar c (L+i')}\,. 
\end{align*}
which is at most $O(T^{-\gamma +1})$ since $\cI$ is tame. 
 Putting these together implies that for every tame interface $\cI$ (otherwise $\pi_T(\cI) = 0$), the log of the ratio of probabilities is $O(T^{-\gamma+1})$ as desired.
\end{proof}

\begin{proof}[\textbf{\emph{Proof of Proposition~\ref{prop:mean-zero-observables}}}]
By Corollary~\ref{cor:limiting-distribution}, and boundedness of $f$, 
\begin{align*}
\E_{\pi_T}\Big[\sum_{i\leq T}  f(\sX_i) \Big] 
 = O(\log T) + T (1-o(1))\E_{\nu}[f(\sX_i)]\,.
\end{align*} 
Consequently, if we prove that the left-hand side is $o(T)$, it will imply that $\E_{\nu} [f(\sX_0)] =0$. 
We can split up  
\begin{align*}
\E_{\pi_T} \Big[ \sum_{i \leq T} f(\sX_i)\Big] =  \E_{\pi_T} \Big[ \sum_{i=1}^{\tau_L} f(\sX_i) + \sum_{i=\tau_L+1}^{T} f(\sX_i)\Big]\,,
\end{align*} 
for $\tau_L$ as in Definition~\ref{def:reflection-map}, 
and begin by bounding the first of these sums. Recall that by Corollary~\ref{cor:increment-interaction-bound}, the sequence $(\one_{\{\sX_i= X_\trivincr\}})_{i\geq \Tsp}$ stochastically dominates a sequence of i.i.d. $\ber(1-\epsilon_\beta)$ for some $\epsilon_\beta>0$ satisfying $\epsilon_\beta\to 0$  as $\beta\to\infty$. Using this, we can estimate
\begin{align*}
\E_{\pi_T} \Big[ \sum_{i \leq \tau_L} \big|f(\sX_i)\big| \Big]  \leq MT \pi_T(\tau_L \geq T^{1/4})+ \E_{\pi_T} \Big[ \sum_{i\leq T^{1/4}}|f(\sX_i)|\Big]\,.  
\end{align*} 
In order for $\tau_L \geq T^{1/4}$, either $\Tsp \geq K\log T = O(T^{-\gamma})$ (for $K$ large enough), which by Proposition~\ref{prop:base-exp-tail} has probability $e^{ - c\beta K\log T}$, or there is no stretch of $L$ consecutive $X_\trivincr$ increments in the first $T^{1/4}- K\log T$ spine increments; as argued in the proof of Lemma~\ref{lem:good-pairs-of-interfaces}, for large enough $\beta$ (depending on $\gamma, \bar c$) this latter probability is at most $\exp[-T^{3/16}]$. The second term above is at most $MT^{1/4}$ by the bound on $f$. 

Let us now turn to $|\E_{\pi_T} [ \sum_{i= \tau_L+1 , \ldots, T} f(\sX_i)]| \leq \sup_{\tau_L} \E_{\pi_T} [ \sum_{i= \tau_L+1, \ldots, T} f(\sX_i) \mid \tau_L]$. For each instantiation of $\tau_L$, we can expand,   
\begin{align*}
\E_{\pi_T} \Big[ \sum_{i=\tau_L+1, \ldots, T} f(\sX_i)  \mid \tau_L\Big] &  \leq  MT  \E_{\pi_T} \Big[\Big| \frac{\pi_T(\cI)}{\pi_T(\Phi_\varphi(\cI))} -1\Big| \mid \tau_L\Big]\,.
\end{align*}
By Claim~\ref{claim:ratio-of-probabilities-reflection} the quantity in the expectation is $O(T^{-\gamma +1})$ for every tame interface, and therefore also in expectation under $\E_{\pi_T} [ \cdot \mid \tau_L]$ for every $\tau_L$. All in all, we have 
\begin{align*}
\E_{\pi_T} \Big[ \sum_{i \leq T} f(\sX_i) \Big] = O(T^{1/4}) + O(T^{-\gamma+1}) + O(T^{-\gamma+2})\,,
\end{align*}
which is $o(T)$ as long as $\gamma>1$ implying item (1). 

Let us now turn to the proof of item (2). The proof is analogous and we therefore do not include all details. Suppose by way of contradiction that $\E_{\nu} [\sum_{i} f(\sX_0) g(\sX_i)] \neq 0$. We claim that it suffices, in order to obtain a contradiction, to show that for sufficiently large $K$, the following is $o(T)$:
\begin{align}\label{eq:covariance-splitting}
\E_{\pi_T} \Big[ \sum_{i \leq T} \sum_{j \leq T} f(\sX_i) g(\sX_j)\Big] & = \E_{\pi_T} \Big[ \sum_{i ,j \in \llb 1, T^{1/4}\rrb \cup \llb T- T^{1/4}, T\rrb} |f(\sX_i)| |g(\sX_j)|  \Big] \nonumber\\
& + \E_{\pi_T} \Big[\sum_{ i,j \in \llb T^{1/4}, T- T^{1/4}\rrb:\, d(i,j)\geq K\log T} f(\sX_i) g(\sX_j) \Big] \nonumber\\ 
&  + \sum_{i\in \llb T^{1/4}, T- T^{1/4}\rrb} \E_{\pi_T} \Big[ \sum_{j:\, d(i,j)\leq K\log T} f(\sX_i) g(\sX_j)\Big]\,.  
\end{align} 
To see that this is sufficient, notice that the first term of~\eqref{eq:covariance-splitting} is at most $ 16 M^2 K^2 \log^2 T$. Arguing as in item (1) above, by Claim~\ref{claim:ratio-of-probabilities-reflection}, $\E_{\pi_T}[f(\sX_i)]=O(T^{-\gamma})$ if $i\geq T^{1/4}$.  Using Proposition~\ref{prop:increment-mixing}, for each $i,j$ at least $T^{1/4}$ away from $1$ and $T$, with $d(i,j)\geq K\log T$, if $K$ is sufficiently large  we have  
\begin{align*}
\E_{\pi_T} \Big[ f(\sX_i) g(\sX_i) \Big] & \leq \E_{\pi_T}[f(\sX_i)]\E_{\pi_T} [g(\sX_j)]  + M^2 \|\pi_T(\sX_i \in \cdot, \sX_j\in \cdot)- \pi_T(\sX_i\in \cdot) \pi_T(\sX_j\in \cdot)\|_\tv\,,
\end{align*}
which is at most $MT^{-\gamma}+ M^2T^{-\gamma}$ for $\gamma>2$, so that the second term in~\eqref{eq:covariance-splitting} is $o(1)$. Finally, by Proposition~\ref{prop:increment-stationarity}, specifically Corollary~\ref{cor:limiting-distribution}, together with Proposition~\ref{prop:increment-mixing}, we deduce that  
\begin{align*}
\E_{\pi_T} \Big[ \sum_{i \leq T} \sum_{j \leq T} f(\sX_i) g(\sX_j)\Big] = O(\log^2 T)+ O(T^{-\gamma+2}) + T(1-o(1)) \E_{\nu} \Big[ \sum_{j\in \Z} f(\sX_0) g(\sX_j)\Big]\,.
\end{align*}
Thus, if we showed that the left-hand side of~\eqref{eq:covariance-splitting} is $o(T)$, we would deduce that $\E_{\nu} [ \sum_{j\in \Z} f(\sX_0)g(\sX_j)] =0$. 
Proceeding as in item (1), it suffices to show that the following is $o(T)$ as $T\to\infty$:  
\begin{align*}
\E_{\pi_T} \Big[ \sum_{i,j\geq T^{1/4}} f(\sX_i) g(\sX_j)\Big] = M^2 T^2 \pi_T(\tau_L \geq T^{1/4}) + \sup_{\tau_L\leq T^{1/4}} \E_{\pi_T} \Big[\sum_{\tau_L+1 \leq i,j \leq T} f(\sX_i) g(\sX_j) \mid \tau_L \Big]\,.
\end{align*}
As before, the first term is $O(\exp[-T^{3/16}])$ for large enough $\beta$ (depending on $\gamma, \bar c$). The second term is bounded by 
\begin{align*}
M^2 T^2 \sup_{\tau_L \leq T^{1/4}} \E_{\pi_T} \Big[ \Big|\frac{\pi_T(\cI)}{\pi_{T}(\Phi_{\varphi}(\cI))} - \Big| \given \tau_L \Big] \leq O(M^2 T^2 T^{- \gamma+1})\,,
\end{align*}
for some $\gamma>2$ as long as $\beta$ is large enough, which is in turn $o(T)$ in $T$.  
\end{proof}

\subsection{Linearity of variances}\label{subsec:linear-variance}
In this section, we prove that the running sum of $T$ increment observables $f$, for every $f$ that is non-constant on the set of possible increments $\fX$, will have a variance of order $T$. This will in particular imply such a scaling for the variance of the total surface area of a pillar, the excess area of a pillar, and its $xyz$-displacements, conditional on having $T$ increments.  

\begin{proposition}\label{prop:lin-var}
There exists $\beta_0$ such that for every $\beta>\beta_0$ the following holds. If $f:\fX \to\R$ is bounded, $|f(X)|\leq M$ for all $X \in\fX$, and moreover, there exist distinct $X_A, X_B\in \fX$ such that $f(X_A) \neq f(X_B)$, then 
\begin{align*}
\sum_{j\in \Z} \cov_\nu \big(f(\sX_0), f(\sX_j)\big) = \upsigma_f^2 >0\,.
\end{align*}
\end{proposition}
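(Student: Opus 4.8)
The plan is to show that the asymptotic variance $\upsigma_f^2 := \sum_{j\in\Z}\cov_\nu(f(\sX_0),f(\sX_j))$ is \emph{strictly} positive; nonnegativity (and the fact that the sum converges absolutely) is immediate from the polynomial mixing bound of Corollary~\ref{cor:limiting-distribution}(2) together with the general fact that the spectral density of a stationary sequence is nonnegative, so the content is the lower bound $\upsigma_f^2>0$. The standard route is to observe that $\upsigma_f^2 = \lim_{T\to\infty}\frac1T\var_\nu\big(\sum_{i=1}^T f(\sX_i)\big)$ (using stationarity of $\nu$ and the summable covariances), so it suffices to exhibit a stretch of increments of length $T$ whose observable-sum has variance $\gtrsim T$. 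Equivalently, working with $\pi_T$ in place of $\nu$ (legitimate since by Corollary~\ref{cor:limiting-distribution} the bulk increment law under $\pi_T$ converges to $\nu$, and by Proposition~\ref{prop:increment-stationarity} the finite-dimensional marginals in the bulk are uniformly close), it is enough to produce a constant $c=c(\beta,f)>0$ with $\var_{\pi_T}\big(\sum_{i\le T} f(\sX_i)\big)\ge cT$ for all large $T$.

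First I would set up the ``flip one increment'' argument. Fix the two increments $X_A,X_B\in\fX$ with $f(X_A)\ne f(X_B)$; without loss of generality one of them may be taken to be $X_\trivincr$ after possibly replacing $f$ by $f-f(X_\trivincr)$ in the argument below (if neither of $X_A,X_B$ is trivial we simply carry both). The key input is Proposition~\ref{prop:exp-tails-increments}, in its conditional form: for a typical $\cI\in\bar{\mathbf I}_{x,T}$, conditionally on the truncation $\cI_{\textsc{tr}}$ and on all increments $(\sX_{\Tsp+j})_{j<i}$, the $i$-th increment equals $X_\trivincr$ with probability $1-\epsilon_\beta$, and more generally each of the finitely many increments $X_A,X_B$ occurs at index $i$ with probability bounded \emph{below} by some $p_0=p_0(\beta)>0$ uniformly in the past — this last statement is proved by the same increment-insertion/deletion maps used for Proposition~\ref{prop:exp-tails-increments} (a $2$-to-$1$ map comparing an increment sequence with $\sX_i=X_A$ to one with $\sX_i=X_B$ has bounded weight distortion $e^{\pm C\fm(X_A)+C\fm(X_B)}=O_\beta(1)$ and bounded multiplicity, since both increments are fixed). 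Thus along the bulk indices $i\in\llb T/4,3T/4\rrb$ the conditional distribution of $f(\sX_i)$ given the past has conditional variance bounded below by $c_0=c_0(\beta,f)>0$ on an event of $\pi_T$-probability $1-o(1)$ (using Proposition~\ref{prop:base-exp-tail} to control $\Tsp$ and Lemma~\ref{lem:tame} for tameness).

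Next I would convert these conditional-variance lower bounds into a lower bound on the total variance via a martingale / Doob-decomposition identity. Write $S_T=\sum_{i\le T} f(\sX_i)$ and let $\mathcal G_i=\sigma(\cI_{\textsc{tr}},\sX_{\Tsp},\dots,\sX_{\Tsp+i})$; then $M_i := \E_{\pi_T}[S_T\mid\mathcal G_i]-\E_{\pi_T}[S_T\mid\mathcal G_0]$ is a martingale and $\var_{\pi_T}(S_T)\ge \var_{\pi_T}(M_{3T/4})-\var_{\pi_T}(\E[S_T\mid\mathcal G_0])$, with the subtracted term $O(\log^2 T)$ by Proposition~\ref{prop:base-exp-tail}; and $\var(M_{3T/4})=\sum_{i} \E\big[\var(M_i-M_{i-1}\mid\mathcal G_{i-1})\big]$. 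The martingale increment $M_i-M_{i-1}$ is $f(\sX_i)-\E[f(\sX_i)\mid\mathcal G_{i-1}]$ \emph{plus} the ``downstream'' correction $\sum_{j>i}\big(\E[f(\sX_j)\mid\mathcal G_i]-\E[f(\sX_j)\mid\mathcal G_{i-1}]\big)$; by the polynomial mixing of Proposition~\ref{prop:increment-mixing} this correction has conditional $L^2$-norm $O(\sum_{j>i}(j-i)^{-\gamma})=O(1)$ small, but — and this is the main obstacle — one must rule out a conspiracy in which the correction term is strongly \emph{anticorrelated} with $f(\sX_i)-\E[f(\sX_i)\mid\mathcal G_{i-1}]$ and cancels it. This is handled by choosing $\gamma$ large (permissible: Proposition~\ref{prop:increment-mixing} holds for every $\gamma$ at the cost of taking $\beta$ larger) so that $\sum_{j\ge 1} j^{-\gamma}$ is as small as we like; then $\var(M_i-M_{i-1}\mid\mathcal G_{i-1}) \ge \tfrac12\var(f(\sX_i)\mid\mathcal G_{i-1}) - (\text{small})^2 \ge c_0/4$ on the good event, for bulk $i$. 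Summing over $i\in\llb T/4,3T/4\rrb$ gives $\var_{\pi_T}(S_T)\ge (c_0/4)(T/2)(1-o(1))-O(\log^2 T)\ge cT$, and dividing by $T$ and letting $T\to\infty$ yields $\upsigma_f^2\ge c>0$. The final bookkeeping — identifying $\lim_T\tfrac1T\var_{\pi_T}(S_T)$ with $\upsigma_f^2=\sum_j\cov_\nu(f(\sX_0),f(\sX_j))$ — uses Corollary~\ref{cor:limiting-distribution} and the dominated-convergence argument already employed in Section~\ref{subsec:mean-zero} (bounded $f$, exponential tails on $\fm(\sX_0)$, and summable mixing coefficients), so I would cite that machinery rather than redo it.

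The main obstacle, as flagged, is the possible cancellation between an increment's fluctuation and the accumulated influence it exerts on later increments through the non-Markovian dependence; the resolution is quantitative and rests entirely on the fact that we are free to make the mixing exponent $\gamma$ arbitrarily large (Proposition~\ref{prop:increment-mixing}), which makes the ``downstream'' contribution to each martingale increment negligible compared with the uniformly-positive one-step conditional variance supplied by the increment-flipping maps behind Proposition~\ref{prop:exp-tails-increments}. A secondary technical point is ensuring the lower bound $\var(f(\sX_i)\mid\mathcal G_{i-1})\ge c_0$ is genuinely uniform over the conditioning (all admissible truncations and all pasts), which again follows from the uniform-in-$\cI_{\textsc{tr}}$ and uniform-in-$(\sX_{\Tsp+j})_{j<i}$ statements in Proposition~\ref{prop:exp-tails-increments} and the bounded-distortion, bounded-multiplicity $2$-to-$1$ swap between $\sX_i=X_A$ and $\sX_i=X_B$.
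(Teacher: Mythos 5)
Your reduction to showing that $\var_{\pi_T}\big(\sum_{i\le T} f(\sX_i)\big)$ diverges is the same as the paper's (the paper uses a Claim~\ref{clm:nu-sigma-cov} to pass between $\nu$ and the linear-in-$T$ variance, and also relies on Corollary~\ref{cor:limiting-distribution}), and your use of increment-replacement maps to get a uniform lower bound on the conditional probability of $\sX_i\in\{X_A,X_B\}$ is the right spirit (cf.\ Claim~\ref{claim:increment-lower-bound}). However, there is a genuine gap in the martingale step. You claim that the downstream correction
$\sum_{j>i}\big(\E[f(\sX_j)\mid\cG_i]-\E[f(\sX_j)\mid\cG_{i-1}]\big)$
has conditional $L^2$-norm ``$O\big(\sum_{j>i}(j-i)^{-\gamma}\big)$, small,'' and that taking $\gamma$ large makes it negligible. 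This is false: the $j=i+1$ summand, $\E[f(\sX_{i+1})\mid\cG_i]-\E[f(\sX_{i+1})\mid\cG_{i-1}]$, is of order $M$ regardless of $\gamma$, since the $\alpha$-mixing bound from Proposition~\ref{prop:increment-mixing} at distance~$1$ is trivial (the bound $C\cdot 1^{-\gamma}=C$ is vacuous). Equivalently, $\sum_{j\ge 1}j^{-\gamma}\ge 1$ for every $\gamma$, so no choice of $\gamma$ makes the correction small. Nearby increments of the spine are genuinely $\Theta(1)$-dependent, so the feared cancellation between $f(\sX_i)-\E[f(\sX_i)\mid\cG_{i-1}]$ and the correction term---a coboundary-type degeneracy---is not ruled out by your argument, and the lower bound $\var(M_i-M_{i-1}\mid\cG_{i-1})\ge c_0/4$ does not follow. (There is also the secondary worry that Proposition~\ref{prop:increment-mixing} is an unconditional $\alpha$-mixing estimate under $\pi_T$, not a bound conditional on $\cG_{i-1}$, so even the short-range control you invoke would need justification.)

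The paper sidesteps this by a different decomposition: rather than a Doob martingale in the increment filtration, it picks $T^{3/4}$ indices $j_k$ that are each flanked on both sides by $L=\Theta(\log T)$ consecutive trivial increments, and conditions on the $\sigma$-algebra $\cG$ generated by the truncation, the sequence $(j_k)$, and \emph{all} increments except the central cells $\sX_{j_k}$, which are known to lie in $\{X_A,X_B\}$. Under this conditioning the future is frozen, so there is no downstream correction; each $\sX_{j_k}$ is an almost-fair $\{X_A,X_B\}$ coin (Claim~\ref{claim:increment-lower-bound} gives the two-sided bound via a bounded-distortion swap map), contributing $c_{f,\beta}>0$ to the conditional variance; and the trivial buffers make the coins nearly independent (off-diagonal covariances are $o(T^{-2})$ via a $\Phi_{\textsc{mix}}$-type two-to-two map). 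Summing the diagonal contributions gives $\var\ge c_{f,\beta}T^{3/4}$, which suffices (divergence, not linearity, is all that Claim~\ref{clm:nu-sigma-cov} requires). In short, your argument is missing the key structural device---buffered, well-separated variance-carrying cells whose complement is conditioned on---that neutralizes the short-range dependence your martingale approach cannot control.
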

\begin{proof}
It will suffice for us to prove that the following variance simply diverges as a function of $t$:
\begin{align*}
\var_\nu \Big(\mbox{$\sum_{i\in \llb -t,t\rrb}$} f(\sX_i)\Big) \to \infty \quad \mbox{ as $t\to\infty$}\,.
\end{align*}
Indeed, this will follow from the next well-known claim; we include its short proof for completeness.
\begin{claim}\label{clm:nu-sigma-cov}
Let $(Z_k)_{k\in\Z}$ be stationary with $\sum_{k\in\Z} |\cov(Z_0,Z_k)|<\infty$,
and let $V_t = \var(\sum_{k=-t}^t Z_k)$.
Then  
\[\exists \lim_{t\to\infty}\frac{V_t}{2t}=:\upsigma^2\geq 0\quad\mbox{and}\quad \upsigma^2 = \sum_{k\in\Z}\cov(Z_0,Z_k)\,.\] Furthermore, if $\sum_{k\in\Z} |k||\cov(Z_0,Z_k)|<\infty$ then $\upsigma = 0$ iff $\sup_t V_t < \infty$.
\end{claim}
\begin{proof}
Let $S_n=\sum_{k=1}^n Z_k $, so $\var(S_n) = \sum_{t=1}^n \fC_t$ for $\fC_t=\var(Z_t) + 2\sum_{k=1}^{t-1} \cov(Z_k,Z_t)$. By the stationarity, $\fC_t = \sum_{|k|\leq t-1} \cov(Z_0,Z_k)$, and since $\lim_{t\to\infty} \fC_t$ exists (by the absolute convergence hypothesis for this sum), C\'esaro's lemma shows  this limit is equal to  $\lim_{n\to\infty}\frac1n \var(S_n) = \upsigma^2 \geq 0$. For the last statement, if $\upsigma=0$ then  
$\fC_t = -\sum_{|k|\geq t}\cov(Z_0,Z_k)$, whence $|\var(S_n)|= |\sum_{t=1}^n \fC_t|\leq \sum_{t=1}^\infty |\fC_t|\leq \sum_{k\in\Z}|k||\cov(Z_0,Z_k)|$.
\end{proof}

In fact, it suffices for us to show that the following diverges as $T\to\infty$:  
\begin{align*}
\var_{\pi_T} \Big(\sum_{i\leq T} f(\sX_i)\Big)= \sum_{i,j:\,d(i,j)>K\log T} \cov_{\pi_T} \big(f(\sX_i),f(\sX_j)\big)+ \sum_{i,j:\, d(i,j)\leq K\log T} \mbox{Cov}_{\pi_T} \big(f(\sX_i),f(\sX_j)\big)\,.
\end{align*}
This is because by Proposition~\ref{prop:increment-mixing}, the first sum on the right-hand side is $O(T^{-\gamma+1})$ for $\gamma>1$ as long as $\beta$ is large enough, and by Corollary~\ref{cor:limiting-distribution}, the second sum is $(1+o(1))\var_{\nu}(\sum_{i\in \llb-\frac{T}{2}, \frac T2\rrb} f(\sX_i))$. 

The strategy to show this will be to find long stretches of trivial increments, which serve to decorrelate increments, and inject variance coming from either an $X_A$ or $X_B$ increment, into their centers. These injections will behave essentially independently, and therefore, will add some amount of variance proportional to the number of long stretches of trivial increments found. 
Fix $L= \lceil \tfrac{5}{\bar c} \log T\rceil$.
For any interface $\cI\in \bar{\mathbf I}_{x,T}$, mark the first $T^{3/4}$ indices $j_k$ in increasing order, that satisfy $j_k \geq \Tsp$ and have $j_k \geq \Tsp$, $\sX_{j_k-L},\ldots,\sX_{j_k-1}, \sX_{j_k+1},\ldots,\sX_{j_k+L} = X_\trivincr$ along with $\sX_j \in \{X_A, X_B\}$. 
Let $\cG= \cG(L)$ be the $\sigma$-algebra generated by the truncated interface $\cI_{\textsc{tr}}$,   the sequence $(j_k)_{k\leq T^{3/4}}$ and all increments $(\sX_i)_{i\notin \{j_k\}}$. By the law of total variance, we can express 
\begin{align*}
\var_{\pi_T}\Big(\mbox{$\sum_{i\in \llb 1,T\rrb}$} f(\sX_i)\Big)  \geq \E_{\pi_T} \Big [\var_{\pi_T} \Big(\mbox{$\sum_{i\in \llb 1,T\rrb}$} f(\sX_i)\given \cG\Big)\Big]\,.
\end{align*}
However, conditionally on $\cG$, the only contributions to the variance come from the increments $\sX_{j_k}$, so that this quantity is the same as 
\begin{align*}
\E_{\pi_T} \Big[ \var_{\pi_T} \Big(\sum_{k\leq T^{3/4}} f(\sX_{j_k}) \given (\sX_{j_k + \ell})_{\ell \in \llb -L,L\rrb \setminus \{0\}} = X_{\trivincr}, (\sX_{j_k}) \in \{X_A,X_B\}, (\sX_{j})_{d(j, \bigcup_k \{j_k\})>L},\cI_{\textsc{tr}}\Big)\Big]\,.
\end{align*} 
Now fix any set of indices $(j_k)_{k\leq T^{3/4}}$ which identify the trivial increments surrounding them, as well as the fact that $\sX_{j_k}$ is either $X_A$ or $X_B$, and also fix all the other increments $(\sX_j)_{d(j, \bigcup _k \{j_k\})>L}$. We will show that for most such choices, the sum $\sum_k f(\sX_{j_k})$ has a variance that diverges in $T$. 

Let us define a good set $\Gamma_L$ in $\cG$ on which we can prove the variance above is at least $T^{3/4}$, say, as follows: an element of $\cG$, given by $\cI_{\textsc{tr}}, \{j_k\}_k, (\sX_j)_{d(j,\bigcup_k j_k) >L}$ is in $\Gamma_L$ if there are indeed $T^{3/4}$ many $\{j_k\}$ and for every assignment of $X_A, X_B$ to $\{j_k\}_k$, the resulting interface is tame. We will prove that ${\pi_T} (\Gamma_L) \geq 1-o(1)$, and then that for any element of $\Gamma_L$, the variance of $\sum_{k\leq T^{3/4}} f(\sX_{j_k})$ goes to infinity with $T$. 

\begin{claim}\label{claim:increment-lower-bound}
For every $i\leq T$, every $\cI_{\textsc{tr}}$ with $\Tsp \leq i$, and every sequence of increments $(X_j)_{j<i}$, for any fixed increment $X_\star \in \fX$, we have 
\begin{align*}
\mu_n(\sX_i = X_\star \mid \cI_{\textsc {tr}}, (X_j)_{j<i}, \bar{\mathbf I}_{x,T}) \geq \exp\big[- (\beta + C)\fm(X_\star)\big]\,.
\end{align*}
The same estimate holds if we condition, e.g., on the increments above $X_i$ as long as the first $L$ are trivial: in particular, for every $k\leq T^{3/4}$ and  $X_\star \in \{X_A, X_B\}$, 
\[
\inf_{G\in \Gamma_L} \pi_T (\sX_{j_k} = X_\star \mid G ) \geq \exp\big[ - (\beta + C)\fm(X_\star)\big]\,.
\]
\end{claim}

\begin{proof}[\emph{\textbf{Proof of Claim~\ref{claim:increment-lower-bound}}}] In the interest of brevity we do not include a full proof. The first bound can be shown via a similar (simplified) version to the proof of Proposition~\ref{prop:exp-tails-increments}, with the following modifications. Define a map $\Psi_{\star}:\bar{\mathbf I}_{x,T}\to \bar{\mathbf I}_{x,T}$ which replaces the $i$-th increment of a pillar by $X_\star$; one can readily see that for every $\cI$ with $T$-admissible truncation $\cI_{\textsc{tr}}$ with $\Tsp <i$ and increment sequence $(X_i)_{i\leq T}$, we have 
\[\Big|\log \frac{\pi_T( \cI)}{\pi_T(\cI_\star)}+ \beta \fm(X_{i}; X_\star)\Big| \leq \bar K (|\cF(X_{i})|+ |\cF(X_{\star})|)+ \sum_{j>i} \bar C \bar K |\cF(X_j)|e^{ - \bar c (j-i)}\,.
\]
We can bound the latter term on the right-hand side above by Corollary~\ref{cor:increment-interaction-bound}, and we can bound the multiplicity of the map for interfaces with $\fm(\cI;\Phi_\star \cI) = k$ by $s^{k}$ via Observation~\ref{obs:counting-connected}. Together these would imply the desired estimate, as the bound of Corollary~\ref{cor:increment-interaction-bound} holds uniformly over all increment sequences below the $i$-th one, and the map $\Psi_\star$ leaves those increments fixed.

The second part (where we may condition also on the increments sequence above $i$ whilst  in $\Gamma_L$) is similar: for an interface in $\Gamma_L$, as the first $L$ increments above $\sX_{j_k}$ are trivial and the increment sequence is tame, 
\[
\sum_{j>i} |\cF(X_j)|e^{ - \bar c(j-i)} \leq 4\bar C + \bar C Te^{- \bar c L}\,, 
\]
and this is $O(1)$ by our choice of $L$. Therefore, applying the map $\Psi_\star$ for the $j_k$-th increment, we see that the probabilities of having $X_A, X_B$ at marked indices $\{j_k\}$ are comparable.
\end{proof}

First, by Proposition~\ref{prop:base-exp-tail}, with high probability the truncated interface $\cI_{\textsc{tr}}$ is such that $\Tsp \leq T^{1/4}$, so let us work only with truncated interfaces that satisfy that bound. 
By Proposition~\ref{prop:exp-tails-increments} and Claim~\ref{claim:increment-lower-bound}, for any stretch of $2L+1$ increments, the probability of the first and last $L$ being trivial increments, and the middle element being in $\{X_A, X_B\}$ is at least $e^{-(\beta + C)\fm(X_A)} (1-\epsilon_\beta)^{2L}$ for some $\epsilon_\beta$ going to zero as $\beta \to\infty$. There are $\frac{T- T^{1/4}}{2L+1}$ stretches for which this lower bound holds independently of the others; as before, a simple calculation yields that for $\beta$ sufficiently large, the probability of having $T^{3/4}$ such increment stretches of $L$ trivial increments, an element of $\{X_A,X_B\}$ then another $L$ trivial increments, is $1-o(1)$.  
Now, let us lower bound the following quantity by something diverging as $T\to\infty$: 
\begin{align*}
\inf_{G\in \Gamma_L} \var_{\pi_T} \Big(\sum_{k\leq T^{3/4}} f(\sX_{j_k}) \mid G\Big)= \inf_{G\in \Gamma_L} \sum_{k,k'\leq T^{3/4}} \cov_{\pi_T}\Big(f(\sX_{j_k}),f(\sX_{j_{k'}}) \given G\Big) \,.
\end{align*}
By the second item in Claim~\ref{claim:increment-lower-bound}, for every $G\in \Gamma_L$, the contribution of the diagonals, satisfies
\begin{align*}
\sum_{k\leq T^{3/4}} \var_{\pi_T} (f(\sX_{j_k}) \mid G) \geq c_{f,\beta} T^{3/4}\,,
\end{align*}
for some $c_{f,\beta}>0$, as $f$ takes on different values on $X_A$ and $X_B$, and both have strictly positive probability under $\pi_T (\cdot \mid G)$. On the other hand, we claim that the contribution from any off-diagonal term, $\cov_{\pi_T} (f(\sX_{j_k}), f(\sX_{j_{k'}}) \mid G)$ is at most $\bar C T e^{ -  \bar c L/2}$ for every pair $k,k'$. This can be shown via a straightforward modification of the map $\Phi_{\textsc{mix}}$ of Proposition~\ref{prop:increment-mixing}; namely, the map would use $j_k + \frac L2$ as the index above which it swaps the increment sequences. Following this through would imply that 
\[
\|\pi_T(\sX_{j_k} \in \cdot, \sX_{j_{k'}} \in \cdot) - \pi_T(\sX_{j_k}\in \cdot) \pi_{T}(\sX_{j_{k'}}\in \cdot)\|_\tv \leq \bar C T e^{ - \bar c L/2}\,,
\]
thus $\cov_{\pi_T}(f(\sX_{j_k}), f(\sX_{j_{k'}}))\leq \bar C M^2 T e^{- \bar c L/2}$ which is $o(T^{-2})$ by our choice of $L$. Therefore, we see that 
\begin{align*}
\var_{\pi_T} \Big(\sum_{i\in \llb 1,T \rrb} f(\sX_i)\Big) \geq  \inf_{G \in \Gamma_L} \var_{\pi_T} \Big(\sum_{k\leq T^{3/4}} f(\sX_{j_k}) \mid G \Big) + o(1) \geq c_{f,\beta} T^{3/4}+ o(1)\,,
\end{align*}
which diverges as $T\to\infty$, yielding the desired. 
\end{proof}

\section{Central limit theorem for observables of the increment sequence}\label{sec:clt}
In the section we prove the following proposition, which, combined with Remark~\ref{rem:clt-conditioning} yields the CLT from Theorem~\ref{mainthm:clt} as well as the expressions for the mean and variance of the limiting distribution in terms of the measure $\nu$ on bi-infinite sequences of increments that was derived in~\S\ref{sec:mixing-stationarity}, Corollary~\ref{cor:limiting-distribution}.

\begin{proposition}\label{prop:clt}
There exist $\beta_0,\kappa_0>0$ such that the following holds. For every $\beta>\beta_0$, every non-constant function of the increments $f: \fX \to \R$ such that
\begin{equation}\label{eq:f-growth} f(X) \leq \exp(\kappa_0 |\cF(X)|) \quad\mbox{for all $X\in\fX$}\,,\end{equation}
every sequence $1 \ll T_n \ll n$, and every $x_n\in\llb -n+\Delta_n , n- \Delta_n \rrb^2\times\{0\}$ for $\Delta_n\gg T_n$, the increment sequence $\{\sX_i\}$ of $\cP_{x_n}$ under $\pi_T$, the Ising measure conditioned on  $\bar{\mathbf I}_{x_n,T_n}$, satisfies
\begin{align*}
\frac1{\sqrt T_n}\sum_{t=1}^{T_n} (f(\sX_t)-\lambda)\implies  \cN(0, \upsigma_f^2) \quad\mbox{ as } n\to\infty\,,
\end{align*}
where \[ \lambda = \E_\nu [f(\sX_0)]\,,\qquad \upsigma_f^2 = \sum_{j=-\infty}^{\infty} \cov_\nu(f(\sX_0),f(\sX_j)) > 0\]  
for the measure $\nu$ on bi-infinite sequences of increments $(\sX_i)_{-\infty}^{\infty}$ given by Corollary~\ref{cor:limiting-distribution}.
\end{proposition}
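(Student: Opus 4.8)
The plan is to apply the classical CLT for stationary, $\alpha$-mixing sequences due to Bolthausen~\cite{Bolthausen82} (in the Stein's method formulation), using as input the structural facts established in Sections~\ref{sec:increment-prelim}--\ref{sec:mean-variance}: exponential tails on increment excess areas (Proposition~\ref{prop:exp-tails-increments}), negligibility of the base (Proposition~\ref{prop:base-exp-tail}), polynomial $\alpha$-mixing of the increment sequence (Proposition~\ref{prop:increment-mixing}), asymptotic stationarity (Proposition~\ref{prop:increment-stationarity}, Corollary~\ref{cor:limiting-distribution}), and linearity of the variance (Proposition~\ref{prop:lin-var}, whose positivity of $\upsigma_f^2$ gives the nondegeneracy of the limit). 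First I would reduce to the stationary measure $\nu$: since under $\pi_T$ the first $\Tsp = O(\log T)$ increments and the remainder $\sX_{>T}$ contribute a total of $O_{\mathrm p}(\log T)$ to $\sum_{t\le T} f(\sX_t)$ (by Propositions~\ref{prop:base-exp-tail} and~\ref{lem:tip-highest-increment}, together with the growth bound~\eqref{eq:f-growth} controlling $f(\sX_t)$ against $\fm(\sX_t)$ and a union bound), and since by Corollary~\ref{cor:limiting-distribution} the law of $(\sX_{a_T+i})_i$ under $\pi_T$ converges weakly to $\nu$ for $a_T$ in the bulk, it suffices to prove the CLT for $\frac{1}{\sqrt T}\sum_{t=1}^T(f(\sX_t) - \E_\nu[f(\sX_0)])$ where $(\sX_t)_{t\in\Z}$ has law $\nu$.

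\textbf{Main steps.} (1) Truncation of the observable: replace $f$ by $f_M := f\cdot \one\{\fm(\sX)\le M\}$ (or a bounded truncation at level $M = M_n\to\infty$ slowly), noting that by the exponential tail $\nu(\fm(\sX_0)\ge r)\le e^{-c\beta r}$ from Corollary~\ref{cor:limiting-distribution}(1) and the growth bound~\eqref{eq:f-growth} with $\kappa_0$ small enough relative to $c\beta$, the $L^2(\nu)$-difference $\|f - f_M\|_{L^2(\nu)}\to 0$; then a standard Slutsky/Lindeberg argument shows the truncation does not affect the limiting normal law (one also needs $\sum_j |\cov_\nu(f(\sX_0),f(\sX_j))| < \infty$, which follows from the polynomial mixing in Corollary~\ref{cor:limiting-distribution}(2) combined with the exponential moments of $f(\sX_0)$, via a standard covariance inequality for $\alpha$-mixing sequences, choosing $\gamma$ large). (2) Apply Bolthausen's theorem to the bounded, stationary, polynomially $\alpha$-mixing sequence $(f_M(\sX_t))_{t\in\Z}$: its hypotheses (mixing rate summability against the moment bound, here automatic since $f_M$ is bounded and $\alpha(k) = O(k^{-\gamma})$ with $\gamma$ as large as desired by Proposition~\ref{prop:increment-mixing}) give $\frac{1}{\sqrt T}\sum_{t=1}^T (f_M(\sX_t) - \E_\nu[f_M(\sX_0)])\implies \cN(0,\upsigma_{f_M}^2)$ with $\upsigma_{f_M}^2 = \sum_{j\in\Z}\cov_\nu(f_M(\sX_0),f_M(\sX_j))$. (3) Let $M\to\infty$: $\upsigma_{f_M}^2\to \upsigma_f^2$ by dominated convergence for the covariance series (using the $\alpha$-mixing covariance bound as the dominating summand), and $\upsigma_f^2 > 0$ by Proposition~\ref{prop:lin-var}; combine with step (1) to conclude the CLT for $f$ under $\nu$, hence under $\pi_T$. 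The identification $\lambda = \E_\nu[f(\sX_0)]$ and the variance formula are then immediate from this chain.

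\textbf{Main obstacle.} The delicate point is transferring the CLT from the idealized stationary measure $\nu$ back to the conditional measure $\pi_T$, where the sequence $(\sX_1,\dots,\sX_T)$ is neither stationary nor infinite: one must show that the edge effects (the base $\sB_x$ of diameter $O(\log T)$, and the non-stationarity near indices $1$ and $T$) are negligible at scale $\sqrt T$. For this I would use Proposition~\ref{prop:increment-stationarity} to compare, for any fixed block length $s = s_n\to\infty$ with $s_n = o(\sqrt T_n)$ chosen so the discrepancy $C(s_n)^{-\gamma}$ and also $C(\Tsp\text{-window})^{-\gamma}$ are negligible, the law of the bulk block under $\pi_T$ to that under $\nu$, truncating the sum $\sum_{t=1}^T$ to $\sum_{t=\Tsp+1}^{T}$ and further to a central window, and controlling the dropped boundary terms in $L^1$ via the exponential tails of Propositions~\ref{prop:base-exp-tail} and~\ref{lem:tip-highest-increment} (again using~\eqref{eq:f-growth} with $\kappa_0$ small). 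A clean way to package this: show $\pi_T$ and $\nu$ can be coupled so that the two increment sequences agree on the central $T - O(\log T)$ indices with probability $1-o(1)$ — but since that exact agreement is too strong, instead directly estimate the difference of characteristic functions $\E_{\pi_T}[e^{i\theta S_T/\sqrt T}] - \E_\nu[e^{i\theta S_T/\sqrt T}]$ using the total-variation bounds on finite-dimensional marginals from Proposition~\ref{prop:increment-stationarity} over a sliding family of blocks, plus the mixing from Proposition~\ref{prop:increment-mixing} to break $S_T$ into weakly dependent blocks. A secondary technical nuisance is the conditioning on $\bar{\mathbf I}_{x,T}$ (tameness) rather than on $\mathbf I_{x,T}$; but by Lemma~\ref{lem:tame} these differ by probability $O(e^{-c\beta T})$, so conditioning on tameness changes nothing at the CLT scale.

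\begin{remark}\label{rem:clt-conditioning}
The statement of Proposition~\ref{prop:clt} is conditional on $\bar{\mathbf I}_{x,T}$; since $\mu_n(\bar{\mathbf I}_{x,T}\mid \mathbf I_{x,T}) = 1 - O(e^{-c\beta T})$ by Lemma~\ref{lem:tame} and $\mathbf I_{x,T} \subset \{\hgt(\cP_x)\ge T+1\}$, the same conclusion holds conditionally on $\mathbf I_{x,T}$, i.e.\ on $\{\sT \ge T\}$, which is the form appearing in Theorem~\ref{mainthm:clt}.
\end{remark}
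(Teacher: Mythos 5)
Your overall toolkit matches the paper's: a Bolthausen-style CLT for mixing sequences, fed with the exponential tails of Proposition~\ref{prop:exp-tails-increments}, the base control from Proposition~\ref{prop:base-exp-tail}, the $\alpha$-mixing of Proposition~\ref{prop:increment-mixing}, the asymptotic stationarity of Proposition~\ref{prop:increment-stationarity}/Corollary~\ref{cor:limiting-distribution}, and the nondegeneracy from Proposition~\ref{prop:lin-var}; your truncation of $f$ at level $M$ and the passage $\upsigma_{f_M}^2\to\upsigma_f^2$ are also the paper's. But the structure of your argument is genuinely different from the paper's, and the difference is exactly where your proof has a gap. The paper does \emph{not} prove a CLT under $\nu$ and transfer it to $\pi_T$; instead it runs Bolthausen's Stein decomposition $(i\lambda - Z_T)e^{i\lambda Z_T} = \Xi_1 - \Xi_2 - \Xi_3$ \emph{directly} on the triangular array $\{Y_j\}_{j\leq T}$ under $\pi_T$, using the mixing and near-stationarity estimates locally to show $\E_{\pi_T}[\Xi_i]\to 0$ and $\fS_T = (1+o(1))\upsigma_{f_M}^2 T$. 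Your first paragraph asserts ``it suffices to prove the CLT for $\frac{1}{\sqrt T}\sum_{t=1}^T(f(\sX_t) - \E_\nu[f(\sX_0)])$ where $(\sX_t)_{t\in\Z}$ has law $\nu$'' on the strength of the weak convergence of finite windows in Corollary~\ref{cor:limiting-distribution}; this is not a valid reduction, because convergence of fixed finite-dimensional marginals does not control a sum of order-$T$ many terms normalized by $\sqrt{T}$. You are aware of this (your ``main obstacle'' paragraph), but the remedy you sketch---estimate $|\E_{\pi_T}[e^{i\theta S_T/\sqrt T}] - \E_\nu[e^{i\theta S_T/\sqrt T}]|$ via TV bounds over a sliding family of blocks, decoupled by mixing---is essentially a Bernstein block argument that would re-derive the CLT from scratch; if carried out carefully it would close the gap, but as presented it is a sketch of a second proof, not a lemma that plugs into the first. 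This is precisely the complication the paper lists in its strategy section (``a triangular array, where the individual laws $\pi_T$ change...'') and deliberately avoids by adapting Bolthausen's argument to $\pi_T$ itself rather than transferring from the $\nu$-limit.

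Two further smaller points. First, before you can even run Bolthausen under $\nu$ you also need to trim the $\ell$-prefix and $\ell$-suffix of the sum (the paper takes $\ell=\lceil T^{1/5}\rceil$ after a preliminary $\ell_0 = O(\log T)$ trim) and replace $\E_{\pi_T}f(\sX_j)$ by $\lambda=\E_\nu f(\sX_0)$; your proposal only deals with the $O(\log T)$ prefix. Controlling these longer boundary windows in $L^2$ after truncation of $f$ is easy, but you should say so, since the stationarity error near the tip scales like $(T-j)^{-\gamma}$ and that degrades near $j=T$ even after removing $O(\log T)$. Second, the ``secondary technical nuisance'' of $\bar{\mathbf I}_{x,T}$ vs.\ $\mathbf I_{x,T}$ is handled correctly in your Remark, and it matches the paper's Remark~\ref{rem:clt-conditioning}.
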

Modulo this result, the CLT readily extends to $f:\fX\to\R^d$ for any $d$:
\begin{corollary}\label{cor:clt:Rd}
In the setting of Proposition~\ref{prop:clt}, if $f : \fX \to \R^d$ for some fixed $d\geq 2$, where $f=(f_1,\ldots,f_d)$ is such that each $f_i$ is non-constant and satisfies~\eqref{eq:f-growth}, then under $\pi_T$
\[ 
\frac1{\sqrt T_n}\sum_{t=1}^{T_n} (f(\sX_t)-(\lambda_1,\ldots,\lambda_d))\implies \cN(0, \Sigma)\,,\]
where $\lambda_i = \E_\nu [f_i(\sX_0)]$ and $\Sigma_{ij} = \Sigma_{ji} = \sum_{k=-\infty}^\infty \cov_\nu(f_i(\sX_0),f_j(\sX_k))$ for $1 \leq i,j\leq d$.
\end{corollary}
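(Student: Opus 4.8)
\textbf{Proof plan for Corollary~\ref{cor:clt:Rd}.} The plan is to reduce the multivariate CLT to the univariate Proposition~\ref{prop:clt} via the Cram\'er--Wold device. First I would fix an arbitrary vector $\theta=(\theta_1,\ldots,\theta_d)\in\R^d$ and consider the scalar observable $f_\theta:=\sum_{i=1}^d \theta_i f_i$ on $\fX$. The point is that $f_\theta$ inherits the growth bound~\eqref{eq:f-growth} needed to apply Proposition~\ref{prop:clt}: since each $f_i(X)\leq \exp(\kappa_0|\cF(X)|)$, we get $|f_\theta(X)| \leq \big(\sum_i |\theta_i|\big)\exp(\kappa_0|\cF(X)|)$, and the constant prefactor can be absorbed by slightly shrinking $\kappa_0$ (equivalently, for $|\cF(X)|$ large the prefactor is dominated by $e^{(\kappa_0'-\kappa_0)|\cF(X)|}$ for any $\kappa_0'>\kappa_0$, and the finitely many small increments contribute a bounded amount which is harmless). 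So choosing $\kappa_0$ in Corollary~\ref{cor:clt:Rd} slightly smaller than the one in Proposition~\ref{prop:clt} handles the growth condition uniformly over $\theta$ in any fixed bounded set; since Cram\'er--Wold only needs each fixed $\theta$, this is immediate.

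The one genuine subtlety is the \emph{non-constancy} hypothesis of Proposition~\ref{prop:clt}: $f_\theta$ could be constant on $\fX$ even when each $f_i$ is non-constant (e.g.\ if the $f_i$ are affinely dependent along the relevant directions). I would handle this by separating two cases. If $f_\theta$ is non-constant on $\fX$, Proposition~\ref{prop:clt} applies directly and yields $T_n^{-1/2}\sum_{t=1}^{T_n}(f_\theta(\sX_t)-\lambda_\theta)\Rightarrow\cN(0,\upsigma_{f_\theta}^2)$ with $\lambda_\theta=\E_\nu[f_\theta(\sX_0)]=\sum_i\theta_i\lambda_i=\langle\theta,(\lambda_1,\ldots,\lambda_d)\rangle$ and $\upsigma_{f_\theta}^2=\sum_{k\in\Z}\cov_\nu(f_\theta(\sX_0),f_\theta(\sX_k))$. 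Expanding the covariance bilinearly (the absolute convergence of $\sum_k\cov_\nu(f_i(\sX_0),f_j(\sX_k))$ for each $i,j$ follows from the polynomial mixing bound in Corollary~\ref{cor:limiting-distribution}(2), together with the exponential tail in Corollary~\ref{cor:limiting-distribution}(1) controlling second moments of $f_i(\sX_0)$ under the growth bound, so the double series can be reordered freely) gives $\upsigma_{f_\theta}^2=\sum_{i,j}\theta_i\theta_j\Sigma_{ij}=\theta^\top\Sigma\theta$, which is exactly the variance of $\langle\theta,\cN(0,\Sigma)\rangle$. If instead $f_\theta$ is constant on $\fX$, then $T_n^{-1/2}\sum_{t=1}^{T_n}(f_\theta(\sX_t)-\lambda_\theta)\equiv 0$; one checks in this case that $\theta^\top\Sigma\theta=0$ as well (a constant observable has all covariances zero, and $\upsigma^2$ as computed from $\nu$ vanishes), so the limit $\cN(0,\theta^\top\Sigma\theta)$ is the degenerate point mass at $0$, matching. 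In either case, $\langle\theta, T_n^{-1/2}\sum_t(f(\sX_t)-\lambda)\rangle\Rightarrow\cN(0,\theta^\top\Sigma\theta)$.

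Having established convergence of every one-dimensional projection to the correct Gaussian, the Cram\'er--Wold theorem gives the joint convergence $T_n^{-1/2}\sum_{t=1}^{T_n}(f(\sX_t)-(\lambda_1,\ldots,\lambda_d))\Rightarrow\cN(0,\Sigma)$ with $\Sigma_{ij}=\sum_{k\in\Z}\cov_\nu(f_i(\sX_0),f_j(\sX_k))$, which is symmetric by stationarity of $\nu$ (Corollary~\ref{cor:limiting-distribution}) since $\cov_\nu(f_i(\sX_0),f_j(\sX_k))=\cov_\nu(f_i(\sX_{-k}),f_j(\sX_0))=\cov_\nu(f_j(\sX_0),f_i(\sX_{-k}))$ and one resums over $k\mapsto -k$. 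The only place where any real work happens is verifying that the covariance series in the definition of $\Sigma_{ij}$ converges absolutely and that the bilinear expansion of $\upsigma_{f_\theta}^2$ is legitimate; this is routine given the quantitative mixing and tail estimates already in hand, and I do not expect any obstacle there. In short, the main (and only mild) obstacle is bookkeeping around the possibly-constant projection $f_\theta$ and the transfer of the growth constant $\kappa_0$, both of which are dealt with as above; everything else is a direct application of Cram\'er--Wold to Proposition~\ref{prop:clt}.
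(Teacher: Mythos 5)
Your proposal is correct and takes the same Cram\'er--Wold approach as the paper, whose proof of Corollary~\ref{cor:clt:Rd} is a terse three-line version of exactly this reduction. One small remark: rather than absorbing the prefactor $\sum_i|\theta_i|$ by ``slightly shrinking $\kappa_0$'' (which does not literally give the bound~\eqref{eq:f-growth} for the finitely many increments with small $|\cF(X)|$), it is cleaner to apply Proposition~\ref{prop:clt} to the renormalized observable $f_\theta/\|\theta\|_1$, which satisfies~\eqref{eq:f-growth} with the original $\kappa_0$, and then scale the resulting Gaussian limit by $\|\theta\|_1$; your handling of the edge case where the projection $f_\theta$ is constant is a point the paper glosses over and is a welcome addition.
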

\begin{proof}[\textbf{\emph{Proof of Corollary~\ref{cor:clt:Rd}}}]
First note that the fact that the matrix $\Sigma$ is symmetric follows form the stationarity of the sequence $(\sX_i)$ under $ \nu$. From the expression for $\upsigma^2$ given by Proposition~\ref{prop:clt}, we see that for  every linear combination $f=\sum_i a_i f_i$ for $a\in\R^d$ of functions centered w.r.t.\ $\E_\nu$ and satisfying~\eqref{eq:f-growth}, one has  $\sum_{i=1}^T f(\sX_i)/\sqrt{T} \implies \cN(0,a^{\textsc t} \Sigma a)$ as $n\to\infty$.
The proof is concluded via the Cram\'er--Wold device.
\end{proof}

\begin{remark}\label{rem:clt-conditioning}
Both Proposition~\ref{prop:clt} and Corollary~\ref{cor:clt:Rd} hold identically under the measure $\mu_n(\cdot \mid \mathbf I_{x,T})$ (as opposed to $\pi_T = \mu_n(\cdot \mid \bar{\mathbf I}_{x,T})$). To see this, let $S_n=\frac{1}{\sqrt {T_n}} \sum_{t=1}^{T_n}(f(\sX_t)- \lambda)$; for every Borel set $B\subset \R$,
\begin{align*}
    \mu_n(S_n\in B \mid \mathbf I_{x,T})&\leq \frac{\mu_n(S_n \in B, \bar{\mathbf I}_{x,T})}{\mu_n(\mathbf I_{x,T})} + \mu_n(\bar{\mathbf I}_{x,T}^c \mid {\mathbf I}_{x,T}) = \pi_T(S_n \in B) \mu_n(\bar{\mathbf I}_{x,T}\mid \mathbf I_{x,T})+ \mu_n(\bar{\mathbf I}_{x,T}^c \mid {\mathbf I}_{x,T})\,,
\end{align*}
which is at most $ \pi_T(S_n\in B) + O(\exp(-(\beta-C)r_0 T))$ by Lemma~\ref{lem:tame}. Hence, $\limsup_{n\to\infty} \mu_n(S_n \in B \mid \mathbf I_{x,T})$ is at most $ \lim_{n\to\infty} \pi_T(S_n \in B)=\P(\cN(0,\upsigma_f^2)\in B)$, so (by Portmanteau)
 $S_n \implies \cN(0,\upsigma_f^2)$ under $\mu_n(\cdot\mid\mathbf I_{x,T})$.
\end{remark}

\subsection{Strategy of proof of the CLT}\label{subsec:strategy-clt}
We prove Proposition~\ref{prop:clt} by adapting a useful Stein's method type argument by Bolthausen~\cite{Bolthausen82} for treating stationary, mixing sequences of random variables. Our setting has several complications compared to~\cite{Bolthausen82}:
\begin{enumerate}
\item Our sequence of random variables, rather than being infinite and stationary, is a triangular array, where the individual laws $\pi_T$ change due to the conditioning on $\{\hgt(\cP_x)\geq T\}$.
\item Our $\alpha$-mixing estimates are invalid for base increments, and instead hold (see Proposition~\ref{prop:increment-mixing}) only beyond a prefix of $K\log T$ increments.
\item The increments are not stationary, and only become asymptotically stationary (see Proposition~\ref{prop:increment-stationarity}) away from the base and from the tip.  
\end{enumerate} 
The asymptotic stationarity obstacle was handled by slight modifications of Bolthausen's argument in~\cite{GLM15}; our proof follows a similar route, yet becomes somewhat simpler thanks to the nature of our $\alpha$-mixing estimates and control over higher moments of functions of the increment sequence.

\subsection{Proof of Proposition~\ref{prop:clt}}
The first step in establishing the CLT is a standard truncation argument, using our control on $\alpha$-mixing and on moments of the increment sequence. Take $b>0$ to be a large enough constant, in particular larger than the constant $K$ as given by Proposition~\ref{prop:increment-mixing} w.r.t.\ $\gamma=20$. Our first step is to truncate the prefix and suffix of the increment sequence, as well as individual increment contributions.
In what follows, recall $\lambda = \E_\nu[f(\sX_0)]$, and let $\upsigma_f^2 = \sum_{j=-\infty}^{\infty} \cov_\nu(f(\sX_0),f(\sX_j))$ for any function $f:\fX\to\R$.
\begin{claim}
In the setting of Proposition~\ref{prop:clt}, let 
\[ \ell = \lceil T^{1/5}\rceil \qquad\mbox{and}\qquad f_M(X):=f(X)\one_{\{|f(X)|\leq M\}}\,.\]
If $\frac1{\sqrt{T}}\sum_{j=\ell}^{T-\ell}(f_M(\sX_j)-\E_{\pi_T}[f_M(\sX_j)])\implies \cN(0,\upsigma_{f_M}^2)$ for every $M$ then $\frac1{\sqrt{T}}\sum_{j=1}^T (f(\sX_j)-\lambda) \implies \cN(0,\upsigma_f^2)$.
\end{claim}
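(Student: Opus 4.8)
The plan is to reduce the CLT for $\frac1{\sqrt T}\sum_{j=1}^T(f(\sX_j)-\lambda)$ to the CLT for the truncated, trimmed sum $\frac1{\sqrt T}\sum_{j=\ell}^{T-\ell}(f_M(\sX_j)-\E_{\pi_T}[f_M(\sX_j)])$ by controlling three errors: (i) replacing $f$ by $f_M$ inside, (ii) dropping the $\ell$-prefix and $\ell$-suffix, and (iii) replacing the centering constants $\E_{\pi_T}[f_M(\sX_j)]$ by $\lambda$ and the variance $\upsigma_{f_M}^2$ by $\upsigma_f^2$. The standard tool for (i)--(ii) is an $L^2$ estimate: show the $\pi_T$-variance of each discarded block is $o(T)$, so that by Chebyshev the corresponding contribution to $\frac1{\sqrt T}(\,\cdot\,)$ converges to $0$ in probability, and then conclude by Slutsky.

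For step (i), write $f = f_M + g_M$ with $g_M(X) = f(X)\one_{\{|f(X)|> M\}}$. By the growth bound~\eqref{eq:f-growth}, $|g_M(X)| \le e^{\kappa_0|\cF(X)|}\one_{\{|\cF(X)|\ge M'\}}$ for $M'$ comparable to $\log M$; combined with the exponential tail $\mu_n(\fm(\sX_j)\ge r\mid\bar{\mathbf I}_{x,T})\le e^{-c_0\beta r}$ from Proposition~\ref{prop:exp-tails-increments} (recalling $|\cF(X)|\le 5\fm(X)$ for nontrivial increments), one gets, for $\kappa_0$ small enough relative to $c_0\beta$, that $\E_{\pi_T}[g_M(\sX_j)^2]\to 0$ as $M\to\infty$ uniformly in $j$ and $T$. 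Then $\var_{\pi_T}\!\big(\sum_{j=1}^T g_M(\sX_j)\big) \le \sum_{i,j}|\cov_{\pi_T}(g_M(\sX_i),g_M(\sX_j))|$; the diagonal and near-diagonal ($|i-j|\le K\log T$) terms contribute $O(T\log T)\,\sup_j\E[g_M(\sX_j)^2]$, and the far-off-diagonal terms are controlled by the $\alpha$-mixing estimate of Proposition~\ref{prop:increment-mixing} with $\gamma=20$ (bounding $|\cov|$ by $\|g_M\|_\infty^2$ times the mixing coefficient plus a Cauchy--Schwarz piece, and summing the polynomial tail), giving a total of $o(T)$ after first sending $T\to\infty$ then $M\to\infty$. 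Step (ii) is the same kind of bound applied to the $2\ell = O(T^{1/5})$ boundary increments: $\var_{\pi_T}\big(\sum_{j<\ell}f_M(\sX_j)\big)\le C\ell^2\|f_M\|_\infty^2 = O(T^{2/5}) = o(T)$, and likewise for the suffix, so these terms vanish in probability after scaling by $T^{-1/2}$.

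For step (iii), the centering: by Corollary~\ref{cor:limiting-distribution} (asymptotic stationarity away from base and tip) and boundedness of $f_M$, $\E_{\pi_T}[f_M(\sX_j)]\to\E_\nu[f_M(\sX_0)]$ uniformly for $\ell\le j\le T-\ell$, and $\E_\nu[f_M(\sX_0)]\to\E_\nu[f(\sX_0)]=\lambda$ as $M\to\infty$ by dominated convergence (using the $\nu$-tail bound in Corollary~\ref{cor:limiting-distribution}(1)); hence $\frac1{\sqrt T}\big|\sum_{j=\ell}^{T-\ell}(\E_{\pi_T}[f_M(\sX_j)]-\lambda)\big| \le \sqrt T\,\big(\sup_j|\E_{\pi_T}[f_M(\sX_j)]-\E_\nu[f_M(\sX_0)]| + |\E_\nu[f_M(\sX_0)]-\lambda|\big)$, which is $o(1)$ provided the uniform convergence rate in Corollary~\ref{cor:limiting-distribution} beats $T^{-1/2}$ — this holds since the rate there is polynomial $(\,\cdot\,)^{-\gamma}$ with $\gamma$ taken large, and $M=M(T)\to\infty$ slowly. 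Finally, for the variance, $\upsigma_{f_M}^2 = \sum_k \cov_\nu(f_M(\sX_0),f_M(\sX_k))$ (a convergent sum by Corollary~\ref{cor:limiting-distribution}(2) and boundedness) converges to $\upsigma_f^2 = \sum_k\cov_\nu(f(\sX_0),f(\sX_k))$ as $M\to\infty$, again by the $\nu$-tail bound and dominated convergence over $k$; note $\upsigma_f^2>0$ is exactly Proposition~\ref{prop:lin-var}. Assembling: fix $\epsilon>0$, choose $M$ large so that all the $M$-dependent error bounds are $<\epsilon$ uniformly in $T$, apply the hypothesized CLT for $f_M$ to get $\cN(0,\upsigma_{f_M}^2)$, and let $\epsilon\to0$; a standard Slutsky/Lévy-continuity argument (the characteristic functions converge) upgrades this to $\cN(0,\upsigma_f^2)$ for $f$.

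The main obstacle is making the interchange of limits $T\to\infty$ and $M\to\infty$ rigorous in step (i): one needs the variance bound $\var_{\pi_T}(\sum_j g_M(\sX_j)) = o(T)$ to be quantitative enough that, after dividing by $T$, it is $\le \delta(M)$ with $\delta(M)\to0$ for all large $T$ simultaneously, which forces a careful accounting of how the near-diagonal $O(T\log T)$ term interacts with the truncation level — in particular one should truncate at $M = M(T) = (\log T)^2$ (say) so that $T^{-1}\cdot T\log T\cdot\sup_j\E[g_{M(T)}(\sX_j)^2]\to0$ while still $M(T)\to\infty$, and then verify the $f_{M(T)}$-CLT hypothesis is applied with a fixed (not $T$-dependent) $M$ by a diagonal argument. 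The rest is routine once the uniformity is pinned down.
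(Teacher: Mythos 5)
Your overall shape is right, but there is a genuine gap in the order of operations that the paper's proof is specifically designed to avoid.

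Your step~(i) tries to bound $\var_{\pi_T}\big(\sum_{j=1}^T g_M(\sX_j)\big)$, and for the near-diagonal block you invoke $\sup_{j} \E_{\pi_T}[g_M(\sX_j)^2]\to 0$ as $M\to\infty$ \emph{uniformly in $j$ and $T$}. This fails for base increments. For a spine increment, Proposition~\ref{prop:exp-tails-increments} gives an exponential tail on $\fm(\sX_j)$ at all scales, so $\E[g_M(\sX_j)^2]\to 0$ uniformly in $T$. But for $j \le \Tsp$ the only available control is Proposition~\ref{prop:base-exp-tail}, whose exponential tail on $\sum_{i\le \Tsp}\fm(\sX_i)$ kicks in only beyond $K\log T$; at scale $r < K\log T$ there is no per-increment bound uniform in $T$. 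Consequently, for fixed $M$, $\E_{\pi_T}[g_M(\sX_j)^2]$ for a base index $j$ can be as large as a power of $T$, and your $O(T\log T)\,\sup_j\E[g_M^2]$ estimate is not $o(T)$. The paper's proof resolves this by \emph{first} clipping the $\ell_0 = O(\log T)$ prefix and suffix in $L^1$ using the exponential-moment bound on $\sum_{i<\Tsp}|\cF(\sX_i)|$ (which does not require any per-increment control), and \emph{only afterwards} truncating the remaining $Y_j$'s at level $M$---at that point, each index $j\ge\ell_0$ corresponds to a spine increment with probability $1-O(T^{-5})$, so the uniform-in-$T$ moment bounds hold and the $M$-truncation error is controlled by Davydov's inequality.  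A related issue: your $g_M$ is the \emph{unbounded} tail part of $f$, so $\|g_M\|_\infty = \infty$ and the bounded-variable covariance bound $|\cov|\le C\alpha\|\cdot\|_\infty\|\cdot\|_\infty$ is inapplicable; what is needed is the third-moment version $|\cov(Z_1,Z_2)|\leq 2(\alpha(Z_1,Z_2)\E|Z_1|^3\E|Z_2|^3)^{1/3}$ that the paper applies as~\eqref{eq:davydov}.

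Your step~(iii) also has a hole that a ``diagonal argument'' with $M=M(T)$ does not close. You need to recenter by $\lambda = \E_\nu[f(\sX_0)]$, but the hypothesized CLT is for \emph{fixed} $M$, and for fixed $M$ the recentering defect $\sqrt{T}\,|\E_\nu[f_M(\sX_0)]-\lambda|$ diverges. A $T$-dependent $M$ destroys the structure of the hypothesis, and there is no uniformity available to justify a diagonal scheme. The paper sidesteps this entirely by working with $Y_j = f(\sX_j)-\E_{\pi_T}[f(\sX_j)]$ (centered at the running $\pi_T$-means, not at $\lambda$) throughout the truncation argument; the recentering to $\lambda$ is done separately and at the very end, for the \emph{untruncated} $f$, using a $T$-dependent auxiliary cutoff $f_\ell$ at level $\ell = T^{1/5}$: the TV distance between $\pi_T(\sX_j\in\cdot)$ and $\nu(\sX_j\in\cdot)$ is $O(\ell^{-\gamma})=O(T^{-2})$ on the bounded part $f_\ell$, and the unbounded part $(f-f_\ell)$ contributes $o(1/T)$ under both measures by the exponential tails. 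Neither of these bounds involves the fixed-$M$ truncation, so no diagonal argument is needed. In short: the paper's ordering---clip the $\log T$ prefix, then $M$-truncate, then widen the clip to $\ell$, then recenter with a $T$-dependent cutoff---is what makes each error term controllable with the estimates available, and your ordering does not.
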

\begin{proof}
Let us first look at the effect of omitting the $\ell_0$-prefix and $\ell_0$-suffix of the summation over
\[ Y_j = f(\sX_j) - \E_{\pi_T} f(\sX_j)\,,\]
where 
\[ \ell_0 := \lceil b\log T\rceil \] 
for some large $b>0$, taken to be at least $K$ from Proposition~\ref{prop:increment-mixing} for a choice of $\gamma=20$. Following this step, we will be able to truncate the $Y_j$'s, and thereafter omit the $\ell$-prefix and $\ell$-suffix of the sum.

Proposition~\ref{prop:base-exp-tail} (specifically, the exponential tail in~\eqref{eq:base-total-increment-decay}) implies that, for a sufficiently small $\kappa$, we have 
\[ \E_{\pi_T}\left[ e^{\kappa \sum_{i<\Tsp} |\cF(\sX_i)|}\right] \leq T^{1/3}\,.\]
For the spine increments, Proposition~\ref{prop:exp-tails-increments} (together with Lemma~\ref{lem:tame} on the tameness of the spine) shows that,
conditioned on $\{\sX_{\Tsp+j} : j < i\}$, the variable $\fm(\sX_{\Tsp+i})$ is dominated by an exponential variable with parameter $c_0\beta$ (for $c_0>0$ from that proposition). In particular, $\sum_{i=\Tsp}^{\ell_0}\fm(\sX_i)+\sum_{i> T-\ell_0} \fm(\sX_i)$ is stochastically dominated by a gamma-distributed random variable with parameters $(2\ell_0, c_0 \beta)$, which again satisfies
\[ \E_{\pi_T}\left[ e^{\kappa \big(\sum_{i=\Tsp}^{\ell_0} |\cF(\sX_i)|+\sum_{i>T-\ell_0} |\cF(\sX_i)|\big)
}\right] \leq T^{1/3}\]
(e.g., take $\kappa=(1-e^{-1/(8b)})(c_0 \beta)^{-1}$). 
Overall, the hypothesis~\eqref{eq:f-growth} implies, for a small enough $\kappa_0$, that 
\begin{align}\label{eq:ell0-prefix-L1}
\frac1{\sqrt{T}}\sum_{j\leq \ell_0} \Big(|f(\sX_j)|+|f(\sX_{T+1-j})|\Big) \xrightarrow{L^1}  0 \quad\mbox{under $\pi_T$}\,,\end{align}
so $\frac1{\sqrt{T}}\sum_{j\leq\ell_0}(Y_j + Y_{T+1-j})\to 0$ in probability, and hence does not affect the limiting law of $\frac1{\sqrt{T}}\sum_j Y_j$. 

Again recalling Proposition~\ref{prop:base-exp-tail}, each variable $Y_j$ is a function of a spine increment except with probability $\exp(-c \log T) = O(T^{-5})$ for a large enough choice of $b$. Consequently, as per the exponential tail on spine increments established by Proposition~\ref{prop:exp-tails-increments} and the hypothesis $|f(X)|\leq\exp[\kappa_0 |\cF(X)|]$ for all $X\in\fX$,
\begin{equation}\label{eq:Yj-exp-tail} 
\pi_T(|Y_j| \geq a) \leq a^{-c_0 \beta \kappa_0} + O(T^{-5}) \,.
\end{equation}
Moreover, on the event that the index $j$ is not a spine index,  $\fm(\sX_j)$ has an exponential tail beyond $K\log T$ by~\eqref{eq:base-total-increment-decay}. Combining these two implies that $Y_j$ has uniformly bounded moments of $k$-th order for small enough $\kappa_0(k)$. Namely, on the event $j>\Tsp$,  Proposition~\ref{prop:exp-tails-increments} implies that its $k$-th moment is finite as long as $\kappa <\kappa_0(k)$;  the event $j\leq \Tsp$, has probability $O(T^{-5})$, and in that case, we can bound $\E[|Y_j|^k]\leq O(e^{\kappa K\log T})$, so that an application of Cauchy--Schwarz implies that for each $k$, there exists $\kappa_0(k)$ such that for $\kappa<\kappa_0$,
\[\max_j\E_{\pi_T}[ |Y_j|^k] < C(\kappa,\beta, k)\,.\]
For random variables $Z_1, Z_2$, let $\sigma(Z_i)$ be the $\sigma$-algebra generated by $Z_i$ and define the  $\alpha$-mixing coefficient 
\[\alpha(Z_1,Z_2) := \max_{A_1 \in \sigma(Z_1), A_2 \in \sigma(Z_2)} |\P (Z_1 \in A_1, Z_2 \in A_2) - \P (Z_1 \in A_1) \P (Z_2\in A_2)|\,.
\]
Write $Y_j = Y'_j + Y''_j$ where $Y''_j = Y_j \one_{\{|Y_j|> M\}}$;  noting that
$ \E_{\pi_T} (Y_j'')^3 \leq (\E_{\pi_T} Y_j^4)^{3/4} \P(|Y_j|> M)^{1/4}$ by H\"older's inequality, and that every two random variables $Z_1, Z_2$ satisfy
\begin{equation}\label{eq:davydov}
\cov(Z_1, Z_2) \leq 2  \left(\alpha(Z_1, Z_2) \E [Z_1^{3}]\E [Z_2^{3}]\right)^{1/3}
\end{equation}
(see, e.g.,~\cite[\S1]{Rio17} for this inequality, originally by Davydov~\cite{Davydov68} with a larger constant pre-factor), one has
\begin{align*} \E_{\pi_T}\bigg( \frac1{\sqrt T} \sum_{j=\ell_0}^{T-\ell_0} (Y''_j - \E_{\pi_T} Y''_j)\bigg)^2 &\leq \frac{C}T \max_{ j\geq \ell_0} \E_{\pi_T} [(Y''_j)^3]^{\frac23} \sum_{j,k\geq\ell_0}\alpha(\sX_{ j},\sX_{ k})^{1/3} \\ 
 &\leq C' \max_{j\geq\ell_0} \sqrt{\P(|Y_j|>M)}\,,\end{align*}
using $\alpha(\sX_{j},\sX_{k})\leq C|k-j|^{-\gamma}$ for $k> j\geq \ell_0$ and
$\gamma=20$ by the above application of Proposition~\ref{prop:increment-mixing}.
As the expression on the right can be made arbitrarily small as a function of $M$, uniformly over $n$ (it is at most $C M^{-c_0 \kappa_0/2}+o(1)$  by~\eqref{eq:Yj-exp-tail}), we see that showing $\frac1{\sqrt{T}}\sum_{j=\ell_0}^{T-\ell_0} (Y'_j - \E_{\pi_T} Y'_j) \implies \cN(0,\upsigma^2_{f_M})$ as $n\to\infty$ for every fixed $M$, as well as $\upsigma_{f_M}^2\to\upsigma_f^2$, will imply that $\frac1{\sqrt{T}}\sum_{j=1}^{T} (Y_j-\E_{\pi_T} Y_j)\implies \cN(0,\upsigma_f^2)$. 

To verify that $\upsigma_{f_M}^2\to\upsigma_f^2$ as $M\to\infty$, recall from Corollary~\ref{cor:limiting-distribution} that $\nu(\fm(\sX_0)\geq r)\leq \exp(-c_0 \beta r)$, so for some $C,c>0$ we get
\[ \E_\nu \left|f(\sX_0) - f_M(\sX_0)\right|^3\leq \sqrt{\E_\nu\left[f(\sX_0)^6\right]\nu(|f(\sX_0)|\geq M)} \leq C M^{-c}
\]
using Corollary~\ref{cor:limiting-distribution} and~\eqref{eq:f-growth} to uniformly bound $\E_\nu [f(\sX_0)^6]$ and the $\nu$-probability of $|f(\sX_0)\geq M|$. Writing
\begin{align*} \upsigma_{f_M}^2 = \upsigma_{f}^2 &+ \sum_{k=-\infty}^\infty \cov\left(f_M(\sX_0)-f(\sX_0),f(\sX_k)\right) + \sum_{k=-\infty}^\infty \cov\left(f_M(\sX_0),f_M(\sX_k)-f(\sX_k)\right)\,,
\end{align*}
we can infer from the fact $\alpha(\sX_0,\sX_k)\leq C k^{-\gamma}$ under $\nu$, and another application of~\eqref{eq:davydov}, that
\[ \sum_{k=-\infty}^{\infty} \left|\cov\left(f_M(\sX_0)-f(\sX_0),f(\sX_k)\right)\right|\leq C' M^{-c'} \sum_{k} k^{-\gamma} \leq C'' M^{-c'}\,,\]
and the same holds for $\sum \cov(f_M(\sX_0),f_M(\sX_k)-f(\sX_k))$ in the same manner, implying $\upsigma_{f_M}^2\to\upsigma_f^2$.

Thus far we established that it suffices to show $\frac1{\sqrt{T}}\sum_{j=\ell_0}^{T-\ell_0}Y'_j\implies\cN(0,\upsigma_{f_M}^2)$ for every $M>0$. Note that
\[ 
\frac1T \E_{\pi_T}\Big[\Big(\sum_{j=\ell_0}^\ell \big(Y'_j +Y'_{T+1-j}\big) \Big)^2\Big] \leq \frac{(2\ell)^2}{T} \max_{\ell_0\leq j,k\leq T-\ell_0} |\E_{\pi_T}[Y'_j Y'_k]| = O\left(T^{-3/5} \right)
\]
since $|Y'_j|\leq M$ for all $j$. Hence, we may indeed replace $\ell_0$ by $\ell$ when considering $\sum Y'_j$, as the contributions to the limiting law by the $\ell$-prefix and $\ell$-suffix in $\frac1{\sqrt{T}}\sum_{j=\ell_0}^{T-\ell_0} Y'_j$ are negligible.

Finally, we wish to replace centering term $\E_{\pi_T} f(\sX_j)$ by $\lambda = \E_\nu f(\sX_0)$ for each $j=1,\ldots,T$. Recall from~\eqref{eq:ell0-prefix-L1} that $\frac1{\sqrt{T}}\sum_{j\leq\ell_0}\big(\E_{\pi_T}|f(\sX_j)|+\E_{\pi_T}|f(\sX_{T+1-j})|\big)\to 0$.
For each $j\geq \ell_0$, we have $\E_{\pi_T}|f(\sX_j)| \leq C$, as we had established above (following~\eqref{eq:Yj-exp-tail}). Therefore, we can neglect the $\ell$-prefix and $\ell$-suffix of the sequence of expectations, as $\frac1{\sqrt{T}}\sum_{j\leq\ell}\big(\E_{\pi_T}|f(\sX_j)|+\E_{\pi_T}|f(\sX_{T+1-j})|\big)=O(\ell/\sqrt{T})=o(1)$.
For each of the remaining indices $\ell\leq j\leq T-\ell$, by  Proposition~\ref{prop:increment-stationarity} (as used in the proof of Corollary~\ref{cor:limiting-distribution}), we have that
 $\|\pi_T(\sX_j\in\cdot)-\nu(\sX_j\in\cdot)\|_{\tv}\leq C \ell^{-\gamma}$.
 Hence, looking at the truncated function $f_{\ell}$, we have $|\E_{\pi_T} f_{\ell}(\sX_j) - \E_\nu f_{\ell}(\sX_j)| = O(\ell^{1-\gamma}) = O(T^{-2})$ (so the sum of these over all $j$ is $o(1)$), whereas $\E_{\pi_T}|(f-f_\ell)(\sX_j)|$ and $\E_\nu|(f-f_\ell)(\sX_j)|$ are each $O(\ell^{-c \beta \kappa_0})=o(1/T)$ by Cauchy--Schwarz, the $O(1)$ bounds on the means of $f(\sX_j)$ under $\pi_T$ and $\nu$, and the exponential tails of $\fm(\sX_j)$ together with~\eqref{eq:f-growth} and~\eqref{eq:base-total-increment-decay}.\end{proof}

Through the remainder of the proof, let $M>0$ and $Y_j = f_M(\sX_j)-\E_{\pi_T}f_M(\sX_j)$ (so that $|Y_j|\leq M$) for each $j=\ell,\ldots,T-\ell$, with the goal of showing that $\frac1{\sqrt{T}}\sum_{j=\ell}^{T-\ell}Y_j \implies \cN(0,\upsigma_{f_M}^2)$. We further assume w.l.o.g.\ that $f_M$ is non-constant (as this holds for all $M>M_0$ for some $M_0=M_0(f)>0$), whence
\[ \upsigma_{f_M}^2 = \sum_{j=-\infty}^\infty \cov_\nu (f_M(\sX_0), f_M(\sX_j)) = \lim_{t\to\infty} \frac1{2t}\var_{\nu}\bigg(\sum_{j=-t}^t f_M(\sX_j)\bigg) > 0\,,\]
by  Proposition~\ref{prop:lin-var}, applied to the bounded non-constant function $f_M$.
Defining 
\[ \fS_T := \sum_{j=\ell}^{T-\ell} \E_{\pi_T}\bigg[ Y_j \sum_{k=\ell}^{T-\ell}\one_{\{|k-j|\leq \ell\}} Y_k\bigg] \,,\]
 recall from Propositions~\ref{prop:increment-mixing}--\ref{prop:increment-stationarity} and Corollary~\ref{cor:limiting-distribution} that $ \sum_{|j|\leq \ell}\cov_\nu(f_M(\sX_0),f_M(\sX_j)) = \upsigma_{f_M}^2 + o(1)$ and
 \begin{align*} \sum_{k=\ell}^{T-\ell} &\left|\cov_{\pi_T}(f_M(\sX_j),f_M(\sX_k))\right|\one_{\{|k-j|>\ell\}} = O(T \ell^{-\gamma}) = o(1)\,,\\
  \sum_{k=\ell}^{T-\ell} &\left|\cov_{\pi_T}(f_M(\sX_j),f_M(\sX_k))-\cov_\nu(f_M(\sX_0),f_M(\sX_k))\right|\one_{\{|k-j|\leq\ell\}} =  O( \ell^{1-\gamma})=o(1)\end{align*}
for all $2\ell\leq j \leq T-2\ell$ (whereas both terms are $O(1)$ for $\ell\leq j \leq 2\ell$ and $T-2\ell\leq j \leq T-\ell$), we get
\[ \fS_T = (1+o(1))\var_{\pi_T}\Big(\sum_{k=\ell}^{T-\ell} Y_j\Big)\quad\mbox{ as well as }\quad \fS_T = (1+o(1))\upsigma^2_{f_M} T\,.\]

The following simple argument of Bolthausen~\cite{Bolthausen82} gives a convenient approach for establishing CLTs for mixing random fields, even in the situation where (unlike the original setting of~\cite{Bolthausen82}) the sequence of increments is only asymptotically  stationary. At the heart of the argument is the following observation:
\begin{lemma}[{\cite[Lemma~2]{Bolthausen82}}]
If $(Z_n)$ is a sequence of real-valued random variables with $\sup_n \E Z_n^2 < \infty$ and
\begin{equation}\label{eq:bolthausen-cond}
\lim_{n\to\infty} \E\left[(i\lambda -Z_n)e^{i\lambda Z_n}\right]	= 0\quad\mbox{ for every $\lambda\in\R$}\,,
\end{equation}
then $Z_n$ converges weakly to the standard Gaussian $\cN(0,1)$.
\end{lemma}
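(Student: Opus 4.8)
The plan is to translate the hypothesis~\eqref{eq:bolthausen-cond} into a statement about characteristic functions, recognize it as an approximate form of the first‑order ODE satisfied by the Gaussian characteristic function, and then integrate. Write $\varphi_n(\lambda) = \E[e^{i\lambda Z_n}]$. Since $\sup_n \E Z_n^2 < \infty$ gives $\sup_n \E|Z_n| < \infty$, each $\varphi_n$ is continuously differentiable with $\varphi_n'(\lambda) = \E[i Z_n e^{i\lambda Z_n}]$ (differentiation under the expectation being justified by dominated convergence, with dominating function $|Z_n|$), so that $\E[Z_n e^{i\lambda Z_n}] = -i\varphi_n'(\lambda)$. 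Expanding,
\[
\E\big[(i\lambda - Z_n)e^{i\lambda Z_n}\big] = i\lambda\varphi_n(\lambda) - \E[Z_n e^{i\lambda Z_n}] = i\big(\varphi_n'(\lambda) + \lambda\varphi_n(\lambda)\big)\,,
\]
so the hypothesis is exactly that $g_n(\lambda) := \varphi_n'(\lambda) + \lambda\varphi_n(\lambda) \to 0$ as $n\to\infty$, for every fixed $\lambda\in\R$.

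The first step is to compare with the Gaussian characteristic function $e^{-\lambda^2/2}$, which is the unique solution of $\varphi' + \lambda\varphi = 0$ with $\varphi(0)=1$. Accordingly, set $\psi_n(\lambda) := \varphi_n(\lambda)e^{\lambda^2/2}$. Then $\psi_n(0) = \varphi_n(0) = 1$ and $\psi_n'(\lambda) = \big(\varphi_n'(\lambda)+\lambda\varphi_n(\lambda)\big)e^{\lambda^2/2} = g_n(\lambda)e^{\lambda^2/2}$, whence
\[
\psi_n(\lambda) = 1 + \int_0^\lambda g_n(t)\, e^{t^2/2}\, dt\,.
\]
The second step is to pass to the limit inside this integral. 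Here the role of the $L^2$‑boundedness hypothesis is to supply an $n$‑uniform bound: by Cauchy--Schwarz (or Jensen), $|\varphi_n'(t)| \leq \E|Z_n| \leq \sqrt{C}$ with $C := \sup_n \E Z_n^2$, so $|g_n(t)| \leq \sqrt{C} + |t|$ and hence $|g_n(t)e^{t^2/2}| \leq (\sqrt{C}+|t|)e^{t^2/2}$, which is bounded on the compact interval between $0$ and $\lambda$ independently of $n$. Since $g_n(t)\to 0$ pointwise, dominated convergence gives $\int_0^\lambda g_n(t)e^{t^2/2}\,dt \to 0$, i.e.\ $\psi_n(\lambda)\to 1$, so $\varphi_n(\lambda)\to e^{-\lambda^2/2}$ for every $\lambda\in\R$. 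As the limit is continuous at $0$, L\'evy's continuity theorem yields $Z_n \implies \cN(0,1)$, as claimed.

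There is no serious obstacle in this argument; the only point requiring any care is the interchange of limit and integral, and this is precisely where the assumption $\sup_n \E Z_n^2 < \infty$ is used, via the bound $|\varphi_n'|\le \E|Z_n|\le\sqrt{C}$ to dominate $g_n$ uniformly in $n$ on compact $\lambda$‑intervals. (The hypothesis $\E Z_n^2$ bounded is also what makes each $\varphi_n$ differentiable in the first place; mere integrability of $Z_n$, uniform in $n$, would in fact already suffice for the whole argument, but we will only need it in the stated $L^2$ form.)
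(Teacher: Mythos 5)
Your proof is correct, and it takes a genuinely different route from the one sketched in the paper. The paper's parenthetical argument follows Bolthausen's original logic: use $\sup_n \E Z_n^2 < \infty$ to get tightness, pass to a subsequential weak limit $Z$, show that $Z$ satisfies the Stein equation $\E[f'(Z) - Zf(Z)] = 0$, and invoke Stein's characterization of the Gaussian. That route has two slightly delicate steps you quietly avoid: (a) to pass~\eqref{eq:bolthausen-cond} to a subsequential limit one must handle the unbounded factor $Z_n$ in $\E[Z_n e^{i\lambda Z_n}]$, which requires uniform integrability (again from the $L^2$ bound); and (b) the hypothesis only furnishes the Stein identity for the complex exponentials $f(z) = e^{i\lambda z}$, so one must extend to general test functions $f\in C^1$ (or argue directly on the characteristic function of the limit). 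Your approach instead encodes the hypothesis as the approximate ODE $\varphi_n' + \lambda\varphi_n \to 0$, integrates via the factor $e^{\lambda^2/2}$, and concludes by dominated convergence and L\'evy's continuity theorem. This avoids tightness and the subsequential-limit step entirely---Lévy's theorem absorbs both---and needs the $L^2$ bound only for the elementary domination $|g_n(t)| \le \sqrt{C}+|t|$ on compact $\lambda$-intervals. The two arguments are of course cousins (identifying the subsequential limit's characteristic function amounts to solving the same ODE), but yours is self-contained and more economical, at the cost of not exhibiting the Stein's-method structure that the paper wishes to highlight as its conceptual handle.
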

(Indeed, tightness is implied by the uniform bound on the $\E Z_n^2$, and verifying that every subsequential limit point is standard Gaussian can be derived from~\eqref{eq:bolthausen-cond}, as a variable $Z$ having the law of such a limit point has $\E[ f'(Z) - Zf(Z) ] = 0$ for every $f\in C^1(\R)$, hence must be standard Gaussian by Stein's characterization.) Define the random variables $Z_T$ and for $r\in \llb \ell, T- \ell\rrb$, $Z_{r,T}$ by 
\[ Z_T = \frac{1}{\sqrt{\fS_T}}  \sum_{j=\ell}^{T-\ell} Y_j\,, \qquad \mbox{and} \qquad Z_{r,T} = \frac1{\sqrt{\fS_T}} \sum_{j=\ell}^{T-\ell}\one_{\{|j-r|\leq \ell\}} Y_j\,,
\]
We aim to verify~\eqref{eq:bolthausen-cond} for the random variables $(Z_T)$ 
 via the following useful decomposition of the term $(i\lambda - Z_n)e^{i\lambda Z_n}$ given in~\cite{Bolthausen82}: 
for every $\lambda\in\R$,
\[ (i\lambda - Z_T)e^{i \lambda Z_T}  = \Xi_1 - \Xi_2 - \Xi_3\,,\]
for
\begin{align}
\Xi_1 &= i\lambda e^{i\lambda Z_T} \bigg(1- \frac{1}{\sqrt{\fS_T}} \sum_{r=\ell}^{T-\ell} Y_r  Z_{r,T}\bigg)\,, \\
\Xi_2 &= \frac{1}{\sqrt{\fS_T}} e^{i\lambda Z_T} \sum_{r=\ell}^{T-\ell} Y_r\Big(1- e^{-i\lambda Z_{r,T}}  -i \lambda Z_{r,T}\Big)\,, \\ 
\Xi_3 &= \frac{1}{\sqrt {\fS_T}}  \sum_{r=\ell}^{T-\ell} Y_r  e^{i \lambda (Z_T - Z_{r,T})}\,,
\end{align}
(where the equality used only that $Z_T = \frac1{\sqrt{\fS_T}}\sum_{j=\ell}^{T-\ell} Y_j$, irrespective of the definitions of $Z_{r,T}$ and $\ell$).
Thus, it will suffice to show that $\E_{\pi_T}[\Xi_i]\to 0$ as $n\to\infty$ for each $i=1,2,3$ in order to verify~\eqref{eq:bolthausen-cond} for $(Z_T)$.

For the first of these terms, recall that by the definition of $\fS_T$ and $Z_{r,T}$ one has that
\[  \E_{\pi_T} \bigg[ \frac{1}{\sqrt{\fS_T}}\sum_{r=\ell}^{T-\ell} Y_r Z_{r,T} \bigg] = \frac{1}{\fS_T}
 \sum_{r=\ell}^{T-\ell} \E_{\pi_T}\bigg[ Y_r \sum_{j=\ell}^{T-\ell}\one_{\{|j-r|\leq \ell\}} Y_j\bigg] = 1\,,\]
whence
\begin{align*} \E_{\pi_T}|\Xi_1|^2 &= \lambda^2 \var_{\pi_T} \bigg(\frac1{\sqrt{\fS_T}}\sum_{r=\ell}^{T-\ell}Y_r Z_{r,T}\bigg)
=\frac{\lambda^2}{\fS_T^2} \var_{\pi_T}\bigg(\sum_{r=\ell}^{T-\ell}Y_r \sum_{\substack{j\in\llb\ell,T-\ell\rrb \\ |j-r|\leq \ell}} Y_j\bigg) \\
&\leq \frac{\lambda^2}{\fS_T^2} \sum_{r,r'\in\llb\ell,T-\ell\rrb} \sum_{\substack{j,j'\in\llb\ell,T-\ell\rrb\\ |j-r|\leq \ell|, |j'-r'|\leq\ell}} \left|\cov_{\pi_T}(Y_j Y_r, Y_{j'} Y_{r'})\right|\,.
\end{align*}
Splitting the sum over $r,r'$ according to $|r-r'|$, we see that if $|r'-r|\geq 3\ell$ then each of the $O(T^2\ell^2)$ summands satisfies
\[ \big|\cov_{\pi_T}(Y_j Y_r, Y_{j'} Y_{r'})\big| \leq M^4 c \ell^{-\gamma} = O(T^{-3})\]
for some $c>0$, by Proposition~\ref{prop:increment-mixing}; on the other hand, there are $O(T\ell^3)$ summands $r,r',j,j'$ with $|r-r'|\leq 3\ell$, each of which is uniformly bounded by $M^4$. Altogether, recalling that $\fS_T= (1+o(1))\upsigma_{f_M}^2 T$, we deduce
\[ \E_{\pi_T}|\Xi_1|^2 = O(\ell^3 / T) = O\big(T^{-2/5}\big)\,.\]

For the second term, observe that
\[ |\Xi_2| \leq M \frac{1}{\sqrt{\fS_T}} \sum_{r=\ell}^{T-\ell}\left| 1-e^{-i\lambda Z_{r,T}}-i\lambda Z_{r,T}\right| \leq M\lambda^2 \frac1{\sqrt{\fS_T}}  \sum_{r=\ell}^{T-\ell}\left| Z_{r,T} \right|^2 \,,\]
where the last inequality was obtained by Taylor expanding $1-\cos(\lambda Z_{r,T})$ and $\sin(\lambda Z_{r,T})-\lambda Z_{r,T}$ (the real and imaginary parts of of each summand, respectively). Since $|Z_{r,T}|\leq (2\ell+1)M/\sqrt{\fS_T}$ by its definition (and the truncation bound on the $Y_j$'s), it follows that
\[ |\Xi_2| = O\bigg(\frac{T\ell^2}{\fS_T^{3/2}}\bigg) = O\left(T^{-1/10}\right)\,.\]

Finally, when treating $\Xi_3$, we can use Proposition~\ref{prop:increment-mixing} to decompose $\E_{\pi_T} \left[Y_r   e^{i\lambda(Z_T-Z_{r,T})}\right]$ as follows:
\begin{align*} \bigg|\E_{\pi_T} \left[Y_r e^{i\lambda(Z_T-Z_{r,T})}\right]\bigg| &= \bigg|\E_{\pi_T} \bigg[Y_r  \prod_{j=\ell}^{r-\ell} e^{i\lambda \fS_T^{-1/2} Y_j} 
\prod_{j=r+\ell}^{T-\ell} e^{i\lambda \fS_T^{-1/2} Y_j}\bigg]  \bigg|\\
& \leq  \bigg|\E_{\pi_T} \bigg[Y_r  \prod_{j=\ell}^{r-\ell} e^{i\lambda \fS_T^{-1/2} Y_j}\bigg] 
\E_{\pi_T}\bigg[\prod_{j=r+\ell}^{T-\ell} e^{i\lambda \fS_T^{-1/2} Y_j}\bigg] \bigg|+ M c \ell^{-\gamma}
\,,
\end{align*}
(using that the variables in the two expectations in the last line are at most $M$ and $1$ in absolute value, respectively). 
Since $|\E_{\pi_t}[\prod_j e^{i\lambda\fS_T^{-1/2}Y_j}]|\leq 1$, we can apply Proposition~\ref{prop:increment-mixing} again to obtain that
the last expression is, in turn, at most
\begin{align*} \bigg|\E_{\pi_T} \bigg[Y_r  \prod_{j=\ell}^{r-\ell} e^{i\lambda \fS_T^{-1/2} Y_j}\bigg]\bigg|+M c \ell^{-\gamma}  &\leq 
 \bigg(\bigg|\E_{\pi_T}[Y_r] \E_{\pi_T}\bigg[\prod_{j=\ell}^{r-\ell} e^{i\lambda \fS_T^{-1/2} Y_j}\bigg]\bigg| + M c\ell^{-\gamma}\bigg)
+ M c\ell^{-\gamma}	= 2M c \ell^{-\gamma}\,,
\end{align*}
using the fact that $\E_{\pi_T} Y_r = 0$. This concludes the proof.
\qed

\subsection{Proof of Corollary~\ref{maincor:displacement-surface-area}}
We wish to apply Theorem~\ref{mainthm:clt} with specific choices of observables, that contain the information about the distribution of the tip and volume and surface area of the pillar. We begin with item (1), regarding the distribution of the tip, $(Y_1, Y_2, Y_3)$. Define observables $f_i: \fX\to \R$  for $i=1,2,3$, by
\[
f_i(X) = (v_{+}(X) - v_-(X))\cdot e_i,
\]
where $v_+(X)$ is the midpoint of the highest cell of a rooted increment $X\in \fX$, and $v_-(X)$ is the midpoint of its lowest cell, i.e., $(\frac 12 , \frac 12, \frac 12)$. 
Then, we can express, for $(Y_1, Y_2,Y_3)$ the tip of $\cP_x$,
\begin{align*}
    \Big| \frac{Y_i - x_i}{\sqrt T} - \frac{1}{\sqrt T}\sum_{j=1}^{T}f_i (\sX_j)\Big|\leq \frac{1}{\sqrt T}\big(\diam (\sB_x)+ \hgt(v_{\Tsp})+\frac 12  + |\cF(\sX_{>T})|\big)\,.
\end{align*}
By Proposition~\ref{prop:base-exp-tail}, both $\E_{\pi_T} [\diam (\sB_x)^2]$ and $\E[\hgt(v_{\Tsp})^2]$ are $ O(\log^2 T)$, and by Lemma~\ref{lem:tip-highest-increment}, $\E[|\cF(\sX_{>T})|^2]$ is $O(1)$. Hence, the right-hand side goes to 0 in probability as $T\to\infty$, and a CLT for $\frac{1}{\sqrt T}\sum_{j=1}^{T}f_i(\sX_j)$ yields the same CLT for $\frac{Y_i- x_i}{\sqrt T}$. Since $|f_i(X)|\leq \fm(X)$ for every $X\in \fX$, by Corollary~\ref{cor:clt:Rd}, 
\begin{align*}
    \frac{(Y_1, Y_2, Y_3)- (x_1, x_2,0) - (\lambda_1,\lambda_2, \lambda_3)T}{\sqrt T}\implies \cN\big(0,\Sigma\big)\,,
\end{align*}
where $\lambda_{i}= \E_{\nu}[f_i(\sX_0)]$ and $\Sigma_{i,j} = \sum_{k\in \Z} \cov(f_i(\sX_0)f_j(\sX_k))$. 

The observables $f_1,f_2$ are anti-symmetric with respect to reflections about the plane with outward normal $e_1,e_2$, so by item (1) Proposition~\ref{prop:mean-zero-observables}, they have $\E_{\nu} [ f_i(\sX_0)]=\lambda_i = 0$ (though $f_i$ are not bounded, this follows by truncating $f_i$, and using Corollary~\ref{cor:limiting-distribution} to deduce that the truncated means converge to the true means). The heights $f_3$ are  at least one, and thus $\lambda_3 \geq 1$. 
Since $f_i$ are  invariant under reflection about the plane with outward normal $e_j$ (for $j \neq i$, $j\in \{1,2\}$), by item (2) of Proposition~\ref{prop:mean-zero-observables}, the off-diagonals of $\Sigma$ are $0$ (again truncating the observables and noticing that the truncated covariances converge to the true covariances). By Proposition~\ref{subsec:linear-variance} and the observation that $f_i$ are non-constant on $\fX$, the diagonals of $\Sigma$ are positive, say $\upsigma_{f_i}^2>0$. It remains to verify that $\upsigma^2_{f_1} = \upsigma^2_{f_2}$. Note that for every $\Tsp\leq j,k\leq T$ the observable  $g(\sX_j,\sX_k)=f_1(\sX_j)f_1(\sX_k)-f_2(\sX_j) f_2(\sX_k)$ is anti-symmetric in application of the map that rotates the increments above some $\tau_L$ by $\frac{\pi}{2}$, as long as $j>\tau_L$. Analogously to  Proposition~\ref{prop:mean-zero-observables}, we would then see that $\E_\nu[g(\sX_j, \sX_k)] = 0$ for every $T^{\frac 14}\leq j<k\leq T$ as long as $|k-j|\leq T^{\frac 14}$. By linearity and the decay estimate of item (2) of Corollary~\ref{cor:limiting-distribution}, we see that $\E_\nu[\sum_{j\in \Z} f_1 (\sX_0)f_1(\sX_j)] = \E_{\nu} [\sum_{j\in \Z} f_2(\sX_0)f_2(\sX_j)]$.  

Item (2) follows in a similar fashion. Let $f_V(X) = |\cC(X)| -1$ and $f_A(X)= |\cF(X)| -4$. We can bound 
\begin{align*}
    \Big||\cC(\cP_x)| -\sum_{i\leq T}f_V(\sX_i)\Big| & \leq (\diam  (\sB_x))^2(\hgt(v_{\Tsp})+\tfrac 12)+|\cF(\sX_{>T})|^3\,, \qquad \mbox{and} \\ \Big||\cF(\cP_x)|-\sum_{i\leq T} f_A(\sX_i)\Big| & \leq 4 (\diam  (\sB_x))^2 (\hgt(v_{\Tsp})+\tfrac 12) + |\cF(\sX_{>T})|\,.
\end{align*}
Thus, as before, by Proposition~\ref{prop:base-exp-tail} and Lemma~\ref{lem:tip-highest-increment}, a CLT for $\frac{1}{\sqrt T}\sum_i f_{V}(\sX_i)$ implies the same CLT for $\frac 1{\sqrt T} |\cC(\cP_x)|$ and a CLT for $\frac{1}{\sqrt T} \sum_i f_A(\sX_i)$ implies the same CLT for $\frac{1}{\sqrt T}|\cF(\cP_x)|$. Since both $f_V(X)$ and $f_A(X)$ are positive, are at most $4+\fm(X)^2$, and are non-constant, they satisfy central limit theorems with positive means and variances, implying the same for the volume and surface area of $\cP_x$. \qedhere 

\subsection*{Acknowledgment} We thank the referees for valuable suggestions. E.L.~was supported in part by NSF grant DMS-1812095.

\bibliographystyle{abbrv}
\bibliography{ising}

\end{document}